 \newcommand{\N}{\ensuremath{\mathbb{N}}}
 \newcommand{\Q}{\ensuremath{\mathbb{Q}}}
 \newcommand{\R}{\ensuremath{\mathbb{R}}}
 \newcommand{\Z}{\ensuremath{\mathbb{Z}}}
  \def\III{\mathbb{I}}
 \def\XX{\mathcal{X}}
 \def\RR{\mathcal{R}}
 \def\MM{\mathcal{M}}
 \def\LL{\mathcal{L}}
  \def\cc{\mathfrak{c}}
 \def\PP{\mathcal{P}}
 \def\EE{\mathcal{E}}
 \def\TT{\mathcal{T}}
 \def\scal{\mathrm{R}}
 \def\loc{\mathrm{loc}}
 \def\rcd{\mathrm{RCD}}
 \def\lip{\mathrm{Lip}}
 \def\Id{\mathrm{Id}}
 \def\supp{\mathrm{supp}}
 \newcommand{\RP}{\ensuremath{\mathbb{RP}}}
 \newcommand{\ba}{\begin{align*}}
 \newcommand{\ea}{\end{align*}}
 \newcommand{\flow}{\mathcal{X}=\{M^n,(g(t))_{t\in I}\}}
 \newcommand{\na}{\nabla}
\newcommand{\la}{\langle}
\newcommand{\ra}{\rangle}
\newcommand{\lc}{\left(}
\newcommand{\rc}{\right)}
\newcommand{\ep}{\epsilon}
\def\Th{\Theta}
\def\lam{\lambda}
\def\HHH{\mathscr{H}}
\def\MMM{\mathscr{M}}
\def\CCC{\mathscr{C}}
\def\CC{\mathcal{C}}
\def\N{\mathbb{N}}
\def\NN{\mathcal{N}}
\def\NNN{\mathscr{N}}
\def\WW{\mathcal{W}}
\def\delf{\mathrm{div}_f}
\def\Div{\mathrm{div}}
\def\dd{\mathrm{d}}
\newcommand{\di}{\text{div}}
\newcommand{\Rm}{\ensuremath{\mathrm{Rm}}}
\newcommand{\Ric}{\ensuremath{\mathrm{Ric}}}
\renewcommand{\t}{\mathfrak{t}}
\def\MS{\mathcal{S}}
 \def\ExtendSymbol#1#2#3#4#5{\ext@arrow 0099{\arrowfill@#1#2#3}{#4}{#5}}
 \def\ExtendSymbol#1#2#3#4#5{\ext@arrow 0099{\arrowfill@#1#2#3}{#4}{#5}}
\def\aint{\,\ThisStyle{\ensurestackMath{%
  \stackinset{c}{.2\LMpt}{c}{.5\LMpt}{\SavedStyle-}{\SavedStyle\phantom{\int}}}%
  \setbox0=\hbox{$\SavedStyle\int\,$}\kern-\wd0}\int}
\DeclarePairedDelimiter\abs{\lvert}{\rvert}%
\let\oldabs\abs
\def\abs{\@ifstar{\oldabs}{\oldabs*}}
\numberwithin{equation}{section}
\newtheorem{thm}{Theorem}[section]
\newtheorem{cor}[thm]{Corollary}
\newtheorem{prop}[thm]{Proposition}
\newtheorem{lem}[thm]{Lemma}
\newtheorem{conj}[thm]{Conjecture}
\newtheorem{rem}[thm]{Remark}
\newtheorem{defn}[thm]{Definition}
\newtheorem{claim}[thm]{Claim}
\newtheorem{notn}[thm]{Notation}
\title{Singular sets in noncollapsed Ricci flow limit spaces}
\author{Hanbing Fang \quad and \quad Yu Li} 
\date{\today}
\begin{document}
	\begin{CJK}{UTF8}{gbsn}

\maketitle
	\begin{abstract}
In this paper, we study the singular set $\mathcal{S}$ of a noncollapsed Ricci flow limit space, arising as the pointed Gromov--Hausdorff limit of a sequence of closed Ricci flows with uniformly bounded entropy. The singular set $\mathcal{S}$ admits a natural stratification:
\begin{equation*}
	\mathcal S^0 \subset \mathcal S^1 \subset \cdots \subset \mathcal S^{n-2}=\mathcal S,
\end{equation*}
where a point $z \in \mathcal S^k$ if and only if no tangent flow at $z$ is $(k+1)$-symmetric. In general, the Minkowski dimension of $\mathcal S^k$ with respect to the spacetime distance is at most $k$. We show that the subset $\mathcal{S}^k_{\mathrm{qc}} \subset \mathcal{S}^k$, consisting of points where some tangent flow is given by a standard cylinder or its quotient, is parabolic $k$-rectifiable. 

In dimension four, we prove the stronger statement that each stratum $\mathcal{S}^k$ is parabolic $k$-rectifiable for $k \in \{0, 1, 2\}$. Furthermore, we establish a sharp uniform $\HHH^2$-volume bound for $\mathcal{S}$ and show that, up to a set of $\HHH^2$-measure zero, the tangent flow at any point in $\mathcal{S}$ is backward unique. In addition, we derive $L^1$-curvature bounds for four-dimensional closed Ricci flows. As an application, we resolve Perelman’s bounded diameter conjecture for three-dimensional closed Ricci flows.
	\end{abstract}
	\tableofcontents
\section{Introduction}

Ricci flow, introduced by Richard Hamilton in the early 1980s, is defined by the evolution equation
\begin{equation*}
	\partial_tg(t)=-2\Ric(g(t)).
\end{equation*}
Since its inception, Ricci flow has proven to be a powerful tool in geometric analysis, enabling deep insights into the topology and geometry of manifolds. 

This paper investigates the pointed Gromov--Hausdorff limit space $(Z, d_Z,  \t)$, obtained from a sequence of closed Ricci flows $\XX^i=\{M_i^n,(g_i(t))_{t  \in \III^{++}}\} \in \MM(n, Y, T)$. As introduced in \cite{fang2025RFlimit}, the moduli space $\MM(n, Y, T)$ consists of all $n$-dimensional closed Ricci flows defined on $\III^{++}:=[-T, 0]$ with entropy uniformly bounded below by $-Y$ (see Definition \ref{def:entropybound}). Equipped with a suitable spacetime distance $d^*_i$ (see Definition \ref{defnd*distance}), we have the convergence
	\begin{align} \label{eq:convpgh}
		(M_i \times \III, d^*_i, p_i^*,\t_i) \xrightarrow[i \to \infty]{\quad \mathrm{pGH} \quad} (Z, d_Z, p_{\infty},\t),
	\end{align}
	where $\III:=[-0.98T, 0]$ and $p_i^* \in M_i \times \III$. The limit space $(Z, d_Z,\t)$ is referred to as a \textbf{noncollapsed Ricci flow limit space} over $\III$. In \cite{fang2025RFlimit}, a weak compactness theorem for the moduli space $\MM(n, Y, T)$ (see Theorem \ref{thm:intro1}) and a corresponding structure theory for $(Z, d_Z,\t)$ are developed. In particular, the limit space $(Z, d_Z,\t)$ is a \textbf{parabolic space} (see \cite[Definition 3.22]{fang2025RFlimit}), meaning that it is a metric space endowed with a time-function $\t:Z \to \III$ satisfying $|\t(x)-\t(y)| \le d_Z^2(x, y)$ for any $x, y \in Z$.
	
The noncollapsed Ricci flow limit space $(Z, d_Z,\t)$, when restricted to the time interval $\III^-:=(-0.98T, 0]$, admits the following regular–singular decomposition:
	\begin{align*}
	Z_{\III^-}=\RR_{\III^-} \sqcup \MS,
\end{align*}
	where $\RR_{\III^-}$ is a dense open subset of $Z_{\III^-}$ that carries the structure of a Ricci flow spacetime $(\RR, \t, \partial_\t, g^Z)$. Furthermore, the convergence in \eqref{eq:convpgh} can be improved to be
	\begin{equation*} 
(M_i \times \III, d^*_i, p_i^*,\t_i) \xrightarrow[i \to \infty]{\quad \hat C^\infty \quad} (Z, d_Z, p_{\infty},\t).
	\end{equation*}
Roughly speaking, $\hat C^\infty$ means that the convergence is smooth on the regular part $\RR$ (see Theorem \ref{thm:intro3} and Notation \ref{not:2} for more details). In addition, it is shown in \cite[Theorem 1.13]{fang2025RFlimit} that the singular set $\MS$ has Minkowski dimension at most $n-2$.
	
Understanding the structure of the singular set $\MS$ is a central problem in the analysis of Ricci flow limit spaces. For any point $z \in \MS$, a useful method for probing the local geometry near $z$ is blow-up analysis. Specifically, one considers the pointed Gromov--Hausdorff limit, known as a \textbf{tangent flow}, of the rescaled spaces $(Z, r_j^{-1} d_Z, z, r_j^{-2}(\t-\t(z)))$ as $r_j \searrow 0$. It was shown in \cite[Theorem 1.9]{fang2025RFlimit} that any such tangent flow is a \textbf{Ricci shrinker space} (see Definition \ref{def:rss}). Roughly speaking, a Ricci shrinker space is self-similar for $t \in (-\infty, 0)$ and satisfies the Ricci shrinker equation on its regular part. Moreover, the singular set of each negative time slice in a Ricci shrinker space has Minkowski codimension at least four.

By analyzing the symmetry properties of tangent flows, one obtains a natural stratification of the singular set:
\begin{equation*}
	\mathcal S^0 \subset \mathcal S^1 \subset \cdots \subset \mathcal S^{n-2}=\mathcal S,
\end{equation*}
where a point $z \in \MS^k$ if and only if no tangent flow at $z$ is $(k+1)$-symmetric (see Definition \ref{defnsymmetricsoliton}). In general, the Minkowski dimension of $\MS^k$ with respect to $d_Z$ satisfies $\dim_{\MMM} \mathcal S^k \le k$; see Corollary \ref{cor:kthcover}. 

A sharp dimension bound, however, does not by itself provide detailed geometric information about $\mathcal S^k$. While blow-up analysis effectively captures the local behavior of singularities, it does not, on its own, describe the global geometry of the singular set $\mathcal S$. The optimal structural picture is that each stratum $\mathcal S^k$ should resemble a $k$-dimensional smooth manifold in an appropriate measure-theoretic or geometric sense. In this paper, we focus on a particularly significant subset $\mathcal S^k_{\mathrm{qc}}\subset \mathcal S^k$, consisting of points for which some tangent flow is a standard cylinder, or a finite quotient thereof. For the precise definition, see Definition \ref{def:cylindrq}.

It was shown in \cite[Theorem 8.11]{FLloja05} that for any point $z \in \mathcal{S}^k_{\mathrm{qc}}$, the tangent flow at $z$ is unique. The importance of this subset is highlighted by the decomposition of the top stratum $\MS^{n-2}\setminus \MS^{n-3}$, which admits the disjoint union:
\begin{equation}\label{intro:decomp1}\index{$\MS^{n-2}_{\mathrm{F}}$}
\MS^{n-2}\setminus \MS^{n-3}=\lc \MS_{\mathrm{qc}}^{n-2}\setminus \MS_{\mathrm{qc}}^{n-3}\rc \sqcup \MS^{n-2}_{\mathrm{F}},
\end{equation}
where the set $\MS_{\mathrm{qc}}^{n-2}\setminus \MS_{\mathrm{qc}}^{n-3}$ consists of points $z$ whose tangent flow, restricted to time-slice $-1$, is isometric to the standard $\R^{n-2} \times S^2$ or $\R^{n-2} \times \RP^2$. In contrast, any point $z \in \MS^{n-2}_{\mathrm{F}}$ admits at least one tangent flow given by a static flat cone of the form $\R^{n-4} \times (\R^4/\Gamma) \times \R$, where $\Gamma \leqslant \mathrm{O}(4)$ acts freely on $S^3$.

Moreover, in the four-dimensional case, the classification of three-dimensional Ricci shrinkers (see \cite{hamilton1993formations, perelman2002entropy, naber2010noncompact, ni2008classification, CCZ08}) implies that
\begin{equation}\label{intro:decomp2}
\MS^1 \setminus \MS^0=\MS_{\mathrm{qc}}^{1}\setminus \MS_{\mathrm{qc}}^{0},
\end{equation}
meaning that the entire first stratum consists of quotient cylindrical-type singularities. These results underscore the central role of $\MS_{\mathrm{qc}}^k$ in the structure theory of the singular set.

Before stating the structure theorem for $\MS_{\mathrm{qc}}^k$, we first introduce the notion of parabolic $k$-rectifiability. Here, $\HHH^k$ denotes the $k$-dimensional Hausdorff measure with respect to $d_Z$.

\begin{defn}[Parabolic $k$-rectifiability]\label{defnrectifiablity}\index{parabolic $k$-rectifiability}
A parabolic space $(Z, d_Z, \t)$ is said to be \textbf{horizontally parabolic $k$-rectifiable} if, for any $\ep>0$, there exists a countable collection of $\HHH^k$-measurable subsets $Z_i \subset Z$ such that 
\begin{equation*}
\HHH^k \lc Z \setminus \bigcup_i Z_i \rc = 0,
\end{equation*}
and for each $i$, there exists a bi-Lipschitz map $\phi_i : Z_i \to \R^k$, where $\R^k$ is equipped with the standard Euclidean distance, and the time-function $\t$ satisfies 
\begin{equation} \label{eq:reif0}
\sqrt{|\t(x)-\t(y)|} \le \ep d_Z(x, y) \quad \text{for all} \quad x, y \in Z_i.
\end{equation}

Similarly, $(Z, d_Z, \t)$ is said to be \textbf{vertically parabolic $k$-rectifiable} if there exists a countable collection of $\HHH^k$-measurable subsets $Z_i \subset Z$ such that 
\begin{equation*}
\HHH^k \lc Z \setminus \bigcup_i Z_i \rc  = 0,
\end{equation*}
and for each $i$, there exists a \textbf{time-preserving}, bi-Lipschitz map $\phi_i : Z_i \to \R^{k-2} \times \R$, where $\R^{k-2} \times \R$ is equipped with the standard parabolic distance and the time function is the projection onto the last $\R$.

In general, the space $(Z, d_Z, \t)$ is said to be \textbf{parabolic $k$-rectifiable} if it can be written as a countable union of subspaces, each of which is horizontally or vertically parabolic $k$-rectifiable.
\end{defn}

We are now ready to state our second main result, whose proof will be given in Theorems \ref{recticylsing2} and \ref{thmrectquotient}.

\begin{thm}\label{intro:thm1}
Let $(Z, d_Z, \t)$ be a noncollapsed Ricci flow limit space arising as the pointed Gromov--Hausdorff limit of a sequence in $\MM(n, Y, T)$. Then, for any $k\in \{0,1,\ldots, n-2\}$, the set $\MS_{\mathrm{qc}}^k$ is horizontally parabolic $k$-rectifiable with respect to the $d_Z$-distance.
\end{thm}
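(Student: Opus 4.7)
The plan is to adapt the quantitative stratification plus Reifenberg-parametrization strategy developed for noncollapsed Ricci-limit spaces by Cheeger--Jiang--Naber to the parabolic setting, with the essential extra ingredient being the uniqueness of tangent flows at quotient cylindrical points provided by \cite[Theorem 8.11]{FLloja05}. The role of uniqueness is to upgrade a one-time Gromov--Hausdorff approximation at $z$ into a uniform approximation at every small scale centered at $z$, which is the crucial hypothesis allowing a Reifenberg-type argument to run.

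First, by countability I would reduce to showing $\HHH^k$-rectifiability of each subset $E \subset \MS_{\mathrm{qc}}^k$ consisting of points $z$ whose rescaled parabolic ball $(Z, r^{-1} d_Z, z, r^{-2}(\t - \t(z)))$ is $\ep$-close in the pointed Gromov--Hausdorff sense to a fixed cylindrical shrinker model $\R^k \times (S^{n-k-1}/\Gamma)$ for every $r \in (0, r_0]$, for some small parameter $\ep>0$ and scale $r_0>0$. There are only countably many finite quotients $\Gamma$ that can arise as rigid models, and by tangent flow uniqueness at quotient cylindrical points, every element of $\MS_{\mathrm{qc}}^k$ lies in some such $E$, so this reduction is sufficient.

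The structural core of the proof is a quantitative cone-splitting with horizontality. For two points $z_1, z_2 \in E$ with $s = d_Z(z_1, z_2) \ll r_0$, the closeness to the model at scale $\sim s$ at both centers, combined with the absence of any $(k+1, \eta)$-symmetric tangent (which follows from $z_1 \notin \MS^{k-1}$ for $\eta$ chosen suitably small relative to $\ep$), forces the spatial $k$-plane of translational symmetries centered at $z_1$ and the one centered at $z_2$ to agree up to an angle $\delta(\ep)$, and additionally forces $z_2$ to lie within $\delta(\ep) s$ of the affine $k$-plane through $z_1$ spanned by that symmetry. Because the $\R^k$ factor of the model is static and spatial, this splitting direction is orthogonal to the time axis; consequently the secant $z_2 - z_1$ is almost horizontal and one obtains $|\t(z_1)-\t(z_2)| \le \delta(\ep)^2 d_Z(z_1, z_2)^2$, which is exactly \eqref{eq:reif0}. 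With this two-point splitting consistency and uniform $k$-plane approximation at every scale, I would then invoke a Reifenberg-type rectifiability theorem to decompose $E$ into countably many pieces $E_i$ admitting bi-Lipschitz maps $\phi_i : E_i \to \R^k$, with the horizontality guaranteeing that the target Euclidean metric on $\R^k$ is the correct one. Nontrivial finite quotients $\Gamma$ are handled by lifting to a local finite cover, reducing to the $\Gamma$-trivial case.

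The main obstacle is the quantitative cone-splitting step, namely converting the qualitative uniqueness result of \cite[Theorem 8.11]{FLloja05} into a two-center, scale-uniform pinching suitable for Reifenberg. This requires the parabolic almost-splitting machinery, a careful use of entropy monotonicity together with the compactness theorems of \cite{fang2025RFlimit}, and a rigidity analysis of the cylindrical shrinker that rules out the emergence of a $(k+1)$-st translational symmetry at either basepoint. Once this effective cone-splitting is established, the Reifenberg portion proceeds in close parallel with the elliptic noncollapsed Ricci-limit analogue, modulo the bookkeeping needed to track the time-function and verify horizontality in the target.
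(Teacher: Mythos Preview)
Your reduction to sets $E$ that are uniformly $(k,\ep,r)$-quotient-cylindrical for all $r\le r_0$ is correct and is exactly what the paper does (Proposition~\ref{prop:uniformcq}, Lemma~\ref{uniformcyl}). The horizontality estimate $|\t(z_1)-\t(z_2)|\le \delta(\ep)^2 d_Z(z_1,z_2)^2$ that you extract from the model is also right and matches the paper's rapid clearing-out Lemma~\ref{clearout}. However, there is a genuine gap in the step where you ``invoke a Reifenberg-type rectifiability theorem'' to produce bi-Lipschitz charts. Uniform $\ep$-closeness to a $k$-plane at every scale, together with two-point splitting consistency, yields only a bi-H\"older parametrization (the snowflake example in the introduction is precisely the obstruction). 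To upgrade to bi-Lipschitz one needs a square-summability condition on the $\beta$-numbers across scales, and nothing in your outline produces such a summability.

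The paper's mechanism for this is not the tangent-flow uniqueness statement of \cite{FLloja05} per se, but the quantitative \emph{Lojasiewicz inequality} behind it. From this one obtains, for each $y$ in a cylindrical neck region, the summability
\[
\sum_{r_y\le r_j\le r}\big|\widetilde\WW_y(r_j^2/40)-\widetilde\WW_y(40r_j^2)\big|^{1/2}\le \Psi(\delta)
\]
(Proposition~\ref{sumWonRFlimit}). This in turn controls a summed entropy-pinching quantity $\sum_j\mathfrak S^{k,\alpha,\ep}_{r_j}(z_0)$, which is the input to a nondegeneration theorem (Theorem~\ref{nondegenrerationthm}) guaranteeing that a single almost-splitting map $\vec u$, after bounded lower-triangular corrections, remains an almost-splitting map at every smaller scale. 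It is this nondegeneration, not Reifenberg flatness alone, that makes $\vec u$ bi-Lipschitz on a large-measure subset $\CCC_\ep$ (Proposition~\ref{choosenondegeneratingpoints}). The paper then packages the argument through neck regions with Ahlfors-regular packing measure (Theorem~\ref{ahlforsregucyl1}), whose lower bound requires a separate geometric transformation theorem (Appendix~\ref{app:A}). Your proposal does not anticipate any of this analytic machinery; the missing idea is that the Lojasiewicz rate, not uniqueness, is what drives bi-Lipschitz rectifiability here. Finally, the suggestion to handle quotients by ``lifting to a local finite cover'' is unworkable in this singular parabolic setting; the paper simply reruns the entire argument with the quotient cylinder $\bar{\mathcal C}^m_k(\Gamma)$ as model, which goes through because the Lojasiewicz inequality from \cite{FLloja05} already covers quotients.
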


The notion of rectifiability plays an important role in geometric measure theory, with wide-ranging applications in geometric analysis and metric geometry. In the classical Euclidean setting, a set $E \subset \R^n$ is $k$-rectifiable if it can be covered, up to a set of Hausdorff $k$-measure zero, by countably many images of Lipschitz maps from subsets of $\R^k$ into $\R^n$. Intuitively, this means that $E$ can be approximated almost everywhere by pieces of $k$-dimensional $C^1$ submanifolds. A detailed and systematic exposition of rectifiable sets in the Euclidean setting can be found in \cite[Chapter 3]{Federer69}.

For a general metric space $(X, d)$, the definition is analogous: $X$ is $k$-rectifiable if it can be covered, up to a set of Hausdorff $k$-measure zero, by countably many Lipschitz images of subsets of $\R^k$ into $X$. The concept, formalized by Federer and others in the mid-20th century, serves as a cornerstone for the modern theory of currents, minimal surfaces, and the study of singular sets arising in variational problems.

In the theory of harmonic maps, rectifiability provides a precise description of the structure of singularities. Simon \cite{Simon95} proved that the singular set of an energy-minimizing harmonic map from an $n$-dimensional manifold is $(n-3)$-rectifiable. Lin \cite{Lin99} extended this to stationary harmonic maps, showing that their singular sets are $(n-2)$-rectifiable. These foundational results were later refined by Naber and Valtorta \cite{NV17}, who developed quantitative stratification techniques to show that each singular $k$-stratum is $k$-rectifiable.

In the context of noncollapsed Ricci limit spaces---the Gromov--Hausdorff limits of Riemannian manifolds with uniform lower Ricci curvature bounds and noncollapsing conditions---Cheeger, Jiang, and Naber \cite{Cheeger2018RectifiabilityOS} established that the $k$-th stratum of the singular set, consisting of all points at which no tangent cone splits off an $\R^{k+1}$, is $k$-rectifiable. Notably, in settings with only a lower Ricci curvature bound, rectifiability is essentially the best one can hope for—the singular set may fail to be manifold-like and may even resemble a Cantor set; see \cite{LN20} for such examples.

In the setting of mean curvature flow, which is closely related to the present work, Colding and Minicozzi \cite{colding2016singular} established a foundational result concerning the structure of the singular set. Specifically, for a mean curvature flow $\{M_t\}$ of hypersurface in $\R^{n+1}$ with generic singularities—that is, tangent flows at singular points are multiplicity-one shrinking cylinders—they proved that the $k$-th stratum $\MS^k \setminus \MS^{k-1}$ can be written as countably many bi-Lipschitz images of subsets of $\R^k$. 

In fact, the result in \cite{colding2016singular} goes further: each such Lipschitz image is shown to be $2$-H\"older with vanishing constant (see \cite[Equation (4.16)]{colding2016singular}). Consequently, each stratum $\MS^k$ is parabolic $k$-rectifiable in the $\R^{n+1} \times \R$ equipped with the standard parabolic metric, in the sense of \cite[Definition 1.2]{Mattila22}. Building on this, one can further show that $\MS^k\setminus\MS^{k-1}$ is locally contained in a $k$-dimensional $C^1$-submanifold (see also the recent work \cite{sun2025regularity} of Sun--Wang--Xue for the improved regularity). The definition of parabolic rectifiability introduced in Definition \ref{defnrectifiablity} is in part motivated by these results and concepts, adapted to the Ricci flow setting.

Next, we turn to the four-dimensional case, where a more refined structural result for the singular set can be established. Our third main theorem is as follows:

\begin{thm}\label{intro:thm2}
Let $(Z, d_Z, \t)$ be a noncollapsed Ricci flow limit space arising as the pointed Gromov--Hausdorff limit of a sequence in $\MM(4, Y, T)$. Then the following statements hold.
	\begin{enumerate}[label=\textnormal{(\roman{*})}]
		\item For each $k \in \{0, 1,2\}$, the stratum $\MS^k$ is parabolic $k$-rectifiable.
		
		\item Each connected component of $\MS^2_{\mathrm{qc}}$ is contained in a single time slice.
		
		\item For $\HHH^2$-a.e. $x \in \MS$, the tangent flow at $x$ is backward unique.
	\end{enumerate}
\end{thm}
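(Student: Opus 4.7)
The plan is to treat each of (i)--(iii) in turn, leveraging Theorem \ref{intro:thm1}, the dimension bound Theorem \ref{thm:quantidimen}, and the dimension-$4$ stratification decompositions \eqref{intro:decomp1}--\eqref{intro:decomp2}.

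\textbf{Part (i).} For $k=0$, Theorem \ref{thm:quantidimen} gives $\dim_{\MMM}\MS^0\le 0$, so $\MS^0$ is at most countable and hence trivially parabolic $0$-rectifiable. For $k=1$, the decomposition \eqref{intro:decomp2} yields $\MS^1\setminus\MS^0=\MS^1_{\mathrm{qc}}\setminus\MS^0_{\mathrm{qc}}$, which is horizontally parabolic $1$-rectifiable by Theorem \ref{intro:thm1}. For $k=2$, the decomposition \eqref{intro:decomp1} leaves, beyond the cylindrical piece already handled by Theorem \ref{intro:thm1}, only the flat-orbifold stratum $\MS^2_{\mathrm{F}}$. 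At each $z\in\MS^2_{\mathrm{F}}$, some tangent flow is the static cone $(\R^4/\Gamma)\times\R_t$, whose singular locus is precisely the time axis. I would show $\MS^2_{\mathrm{F}}$ is vertically parabolic $2$-rectifiable by a parabolic Reifenberg-type scheme along the time direction: at sufficiently small scales, a neighborhood of $z$ is quantitatively close to the static model and therefore contains an essentially time-directed singular curve of orbifold points; iterating this approximation across dyadic scales produces, up to an $\HHH^2$-null set, a countable covering of $\MS^2_{\mathrm{F}}$ by pieces bi-Lipschitz to intervals in $\R$ via time-preserving maps. This time-like Reifenberg construction is the principal technical obstacle, since tangent-flow uniqueness on $\MS^2_{\mathrm{F}}$ is not provided by \cite[Theorem 8.11]{FLloja05} and the argument from the proof of Theorem \ref{intro:thm1} cannot be imported verbatim.

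\textbf{Part (ii).} For every $z\in\MS^2_{\mathrm{qc}}$, \cite[Theorem 8.11]{FLloja05} supplies a unique tangent flow that, at time $-1$, equals $\R^2\times S^2$ or $\R^2\times\RP^2$, whose singular axis is contained in a single time slice. Combining tangent-flow uniqueness with the $\hat{C}^\infty$-convergence on the regular part, there is a neighborhood $U$ of $z$ in $Z$ such that $\MS^2_{\mathrm{qc}}\cap U\subset\{\t=\t(z)\}$. Thus $\t$ is locally constant on $\MS^2_{\mathrm{qc}}$, and connectedness of each component pins it to a single time slice.

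\textbf{Part (iii).} By Theorem \ref{thm:quantidimen}, $\HHH^2(\MS^1)=0$, so it suffices to treat the complement $(\MS^2_{\mathrm{qc}}\setminus\MS^1_{\mathrm{qc}})\sqcup\MS^2_{\mathrm{F}}$ provided by \eqref{intro:decomp1}. On the cylindrical piece, \cite[Theorem 8.11]{FLloja05} gives uniqueness of the tangent flow, and a standard round cylinder (or its finite quotient) is manifestly backward unique as a product Ricci flow. On $\MS^2_{\mathrm{F}}$, the vertical $2$-rectifiability produced in Part (i) implies that at $\HHH^2$-a.e. point the approximate parabolic tangent to the singular set is purely time-directed, which forces any tangent flow to be static; combined with the defining property of $\MS^2_{\mathrm{F}}$, the tangent flow is then the static flat orbifold cone, whose backward uniqueness is trivial. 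The reliance of (iii) on the rectifiable structure obtained in (i) is essential, so the time-like Reifenberg step from Part (i) remains the core difficulty of the theorem.
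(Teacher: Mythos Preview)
Your proposal contains two genuine gaps.

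First, in Part (i) for $k=0$, the inference ``$\dim_{\MMM}\MS^0\le 0$, so $\MS^0$ is at most countable'' is false. Upper Minkowski dimension zero permits uncountable sets (e.g., suitably thin Cantor sets). The paper's Proposition \ref{prop:countable} establishes countability by an independent argument: one shows that for each $z\in\MS^0$ there exists $\delta>0$ such that no other point $x$ in the relevant quantitative stratum satisfies both $d_Z(x,z)<\delta$ and $|\NN_x(0)-\NN_z(0)|<\delta$, using the structure of tangent flows in $\MS^0$ (any tangent flow is a quasi-static cone with arrival time zero, which forces a contradiction if a second selfsimilar point is too close in both metric and entropy). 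The map $z\mapsto(z,\NN_z(0))$ into $Z\times\R$ then has discrete image, and separability yields countability.

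Second, in Part (ii) your claim that ``there is a neighborhood $U$ of $z$ in $Z$ such that $\MS^2_{\mathrm{qc}}\cap U\subset\{\t=\t(z)\}$'' is not justified and is stronger than what tangent-flow uniqueness gives. Closeness to $\bar{\mathcal C}^2$ at scale $r$ only yields $|\t(w)-\t(z)|\le \ep r^2$ for nearby $w\in\MS^2_{\mathrm{qc}}$ (this is the content of Lemma \ref{timeestimate} and Lemma \ref{clearout}), not $\t(w)=\t(z)$. The paper's proof (Corollary \ref{conntimesliceaq}) instead uses the horizontal $2$-rectifiability from Theorem \ref{intro:thm1}: the condition $\sqrt{|\t(x)-\t(y)|}\le\ep\, d_Z(x,y)$ on the rectifiable pieces, together with $\HHH^2(\MS^2_{\mathrm{qc}})<\infty$ locally, forces $\HHH^1(\t(\MS^2_{\mathrm{qc}}))=0$, so the image of any connected component under $\t$ is a connected subset of $\R$ of zero measure, hence a point.

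Your sketch for $\MS^2_{\mathrm F}$ and for Part (iii) is along the right lines. The paper implements it not by an abstract time-like Reifenberg scheme but via the neck decomposition (Theorem \ref{neckdecomgeneral}): one shows $\MS^2_{\mathrm F}$ is covered, up to an $\HHH^2$-null set, by the centers $\CCC_{0,a}$ of countably many $(\delta,\cc,r_a)$-flat neck regions modeled on $\R^4/\Gamma\times\R_-$. On each such center, Lemma \ref{lem:bilitpsta} gives $c_0(Y)\,d_Z^2(x,y)\le|\t(x)-\t(y)|\le d_Z^2(x,y)$, so $\t$ is a time-preserving bi-Lipschitz map to $(\R,d_P)$, which is precisely vertical parabolic $2$-rectifiability. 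Backward uniqueness on $\MS^2_{\mathrm F}$ (Corollary \ref{cor:tangunique}) then follows because condition $(n3)$ of Definition \ref{defnneckgeneral} forces any $x\in\CCC_{0,a}$ to be $(\ep,r)$-close to $\mathcal F^0(\Gamma)$ for all $r\in(0,r_a]$.
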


Part (i) of Theorem \ref{intro:thm2} builds on Theorem \ref{intro:thm1}. As observed in the decompositions \eqref{intro:decomp1} and \eqref{intro:decomp2}, it suffices to analyze the rectifiability of $\MS^0$ and $\MS^2_{\mathrm{F}}$. In Proposition \ref{prop:countable}, we show that $\MS^0$ is a countable set, and hence trivially parabolic $0$-rectifiable. In Theorem \ref{thm:rec1}, we prove that $\MS^2_{\mathrm{F}}$ is vertically parabolic $2$-rectifiable. By definition, any point $z \in \MS^2_{\mathrm{F}}$ has one tangent flow given by $\R^4/\Gamma \times \R$, and thus $\MS^2_{\mathrm{F}}$ can be thought of as being contained in a vertical line, justifying the vertical rectifiability.

Part (ii) of the theorem holds in any dimension (see Corollary \ref{conntimesliceaq}). Since $\MS^2_{\mathrm{qc}}$ is horizontally parabolic $2$-rectifiable by Theorem \ref{intro:thm1}, its time projection into $\R$ has zero Lebesgue measure. Therefore, any connected component must be entirely contained in a single time slice. A similar result in the setting of mean curvature flow was established by Colding and Minicozzi in \cite[Theorem 1.2]{colding2016singular}.

Finally, part (iii) follows from the fact that tangent flows at points in $\MS^k_{\mathrm{qc}}$ are unique (see \cite[Theorem 8.11]{FLloja05}), and that $\HHH^2$-a.e. $x \in \MS_{\mathrm{F}}^2$, the negative part of the tangent flow at $x$ is unique and given by $\R^4/\Gamma \times \R_-$. This will be proved in Corollary \ref{cor:tangunique}.

Motivated by the results in the four-dimensional case, we propose the following conjecture in general dimension. If true, it would imply—when combined with Theorem \ref{intro:thm1}—that the entire singular set $\MS$ is parabolic $(n-2)$-rectifiable.

\begin{conj}
Let $(Z, d_Z, \t)$ be a noncollapsed Ricci flow limit space arising as the pointed Gromov--Hausdorff limit of a sequence in $\MM(n, Y, T)$. Then the set $\MS_{\mathrm{F}}^{n-2}$ is vertically parabolic $(n-2)$-rectifiable with respect to the $d_Z$-distance. Moreover, for $\HHH^{n-2}$-a.e. $x \in \MS_{\mathrm{F}}^{n-2}$, the tangent flow at $x$ is backward unique, and its negative part is given by $\R^{n-4} \times (\R^4/\Gamma) \times \R_-$.
\end{conj}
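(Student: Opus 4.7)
The plan is to generalize the proof of Theorem \ref{intro:thm2}(i) for $\MS^2_F$ to arbitrary dimension by combining the parabolic quantitative stratification machinery developed in this paper with tangent cone uniqueness results for Ricci-flat cones whose singular stratum is $(n-4)$-dimensional. The essential structural feature is that the static model $\R^{n-4}\times (\R^4/\Gamma)\times \R$ carries $n-4$ spatial translation symmetries together with full time translation, contributing $n-2$ directions of symmetry in the parabolic sense and matching the target dimension.

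First I would decompose $\MS_F^{n-2}=\bigsqcup_\Gamma \MS_{F,\Gamma}^{n-2}$ over the countably many isomorphism classes of finite groups $\Gamma \leqslant \mathrm{O}(4)$ acting freely on $S^3$, where $\MS_{F,\Gamma}^{n-2}$ consists of points admitting a tangent flow whose group is exactly $\Gamma$. These pieces are $\HHH^{n-2}$-measurable by upper semicontinuity of $\Gamma$ in tangent flows. On each piece, I would apply a quantitative cone-splitting argument analogous to Theorem \ref{thm:rec1}: at each scale $r$, a point whose $r$-scale behavior is sufficiently close to the static model must cluster near an affine $(n-4)$-plane in space times the full time axis, with quantitative Jones-number control in $L^2$. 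The parabolic Reifenberg covering theorem would then yield a countable covering by time-preserving bi-Lipschitz images of subsets of $\R^{n-4}\times\R$ with the parabolic metric, establishing vertical parabolic $(n-2)$-rectifiability.

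For the backward uniqueness statement, once the rectifiability above is established, at $\HHH^{n-2}$-a.e.\ point $z$ the set $\MS_F^{n-2}$ possesses a unique approximate parabolic tangent $(n-2)$-plane. Combined with the observation that every negative-time tangent flow at $z$ must be a Ricci-flat static cone with singular stratum precisely $\R^{n-4}$, one invokes a uniqueness argument for tangent cones of Ricci-flat spaces with $S^3/\Gamma$ cross section: either a Colding--Minicozzi-type Lojasiewicz inequality adapted to the static setting, or, in cases where the infinitesimal Einstein deformations of $S^3/\Gamma$ are integrable, a Simon--Cheeger--Tian argument. This would produce a unique negative-time tangent flow of the form $\R^{n-4}\times(\R^4/\Gamma)\times\R_-$.

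The main obstacle is the backward uniqueness step. The rectifiability part should be tractable by suitably extending the parabolic Naber--Valtorta machinery already developed in this paper. However, tangent cone uniqueness at Ricci-flat singularities with cross-section $S^3/\Gamma$ is known to be delicate for general $\Gamma$, even in the purely elliptic setting; and the Lojasiewicz machinery of \cite{FLloja05} is engineered for shrinking (non-static) tangent flows, so extending it to the static case---where the linearized operator acquires an additional degeneracy in the time direction---requires genuinely new analytic input. This is the principal reason the statement remains a conjecture rather than a theorem.
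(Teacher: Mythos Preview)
The statement you are addressing is labeled a \emph{Conjecture} in the paper, not a theorem; the paper offers no proof, and indeed presents it explicitly as open, motivated by the four-dimensional results. So there is no paper proof to compare against, and your final paragraph already correctly identifies the statement as conjectural.

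That said, your sketch of the rectifiability step diverges from the paper's four-dimensional template in a way worth flagging. In dimension four the paper does \emph{not} use Jones $\beta$-numbers, $L^2$ summability, or a Reifenberg-type theorem for $\MS^2_{\mathrm{F}}$; instead it constructs flat neck regions (Definition~\ref{defnneckgeneral}) and shows directly in Lemma~\ref{lem:bilitpsta} that the time function $\t$ restricted to the center is bi-Lipschitz onto its image in $(\R,d_P)$, because the spine of $\R^4/\Gamma\times\R_-$ is spatially zero-dimensional. In dimension $n$ the spine of $\R^{n-4}\times(\R^4/\Gamma)\times\R_-$ is spatially $(n-4)$-dimensional, so any generalization must simultaneously control an almost-splitting map for the $\R^{n-4}$ factor \emph{and} the time direction. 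The nondegeneration of such a spatial splitting map across scales, in the paper's framework (Theorem~\ref{nondegenrerationthm}), requires a summability condition on entropy pinching---which in the cylindrical case is supplied by the discrete {\L}ojasiewicz inequality of \cite{FLloja05}. For static Ricci-flat cones there is no analogous {\L}ojasiewicz inequality available, so the rectifiability step faces essentially the \emph{same} analytic obstruction you identify for backward uniqueness, not an easier one. Your assessment that ``the rectifiability part should be tractable'' by extending existing machinery is therefore too optimistic: both halves of the conjecture appear to hinge on the same missing ingredient.
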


Next, in the four-dimensional setting, we provide a sharp volume estimate for the singular set.

\begin{thm}\label{intro:thm3}
Let $(Z, d_Z, \t)$ be a noncollapsed Ricci flow limit space arising as the pointed Gromov--Hausdorff limit of a sequence in $\MM(4, Y, T)$. Then for any $z_0 \in Z$ with $\t(z_0)-2 r_0^2 \in \III^-$, we have
	\begin{align*}
\HHH^2 \lc \MS \bigcap B^*(z_0, r_0) \rc \leq C(Y) r_0^{6}.
	\end{align*}
\end{thm}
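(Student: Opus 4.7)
The plan is to establish a uniform $\HHH^2$-bound at unit scale and then upgrade to arbitrary radius via a parabolic covering argument. Concretely, the goal is to show that there exists a constant $C_1=C_1(Y)$ such that
\[
\HHH^2\bigl(\MS\cap B^*(z,1)\bigr)\le C_1
\]
for every $z\in Z$ with $\t(z)-2\in \III^-$. Granted such a bound, the parabolic volume comparison of $(Z,d_Z,\t)$ from \cite{fang2025RFlimit}, together with a Vitali-type selection, produces at most $C\,r_0^{n+2}=C\,r_0^6$ unit parabolic balls whose bounded enlargements cover $B^*(z_0,r_0)$; applying the unit-scale estimate to each and summing via subadditivity of $\HHH^2$ yields the claimed inequality.

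To handle the unit-scale bound, decompose $\MS\cap B^*(z,1)$ along the stratification. Since $\MS=\MS^2$ and $\dim_{\MMM}\MS^1\le 1$ by Theorem \ref{thm:quantidimen}, one has $\HHH^2(\MS^1)=0$, reducing the analysis to $\MS^2\setminus\MS^1=(\MS^2_{\mathrm{qc}}\setminus\MS^1_{\mathrm{qc}})\sqcup\MS^2_{\mathrm{F}}$ via \eqref{intro:decomp1}. For $\MS^2_{\mathrm{qc}}$, Theorem \ref{intro:thm1} provides horizontal parabolic $2$-rectifiability and Theorem \ref{intro:thm2}(ii) forces each connected component into a single time slice, so its $\HHH^2$-mass on each component coincides with the spatial $2$-Hausdorff measure of a $2$-rectifiable subset of a $4$-manifold, which can be bounded uniformly via $\epsilon$-regularity near cylindrical tangent flows, while the number of components is controlled by the entropy decrement associated to each neckpinch-type event. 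For $\MS^2_{\mathrm{F}}$, the vertical $2$-rectifiability (Theorem \ref{thm:rec1}) realizes it as a countable union of vertical segments, each contributing at most a definite constant to $\HHH^2$ in $B^*(z,1)$, and the number of such segments is bounded via the $\epsilon$-regularity theorem at static $\R^4/\Gamma\times\R$ tangent flows.

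The principal obstacle is the uniform count at unit scale, especially for $\MS^2_{\mathrm{qc}}$. Although each individual component is smooth on its slice, there is \emph{a priori} no bound on how many such components can accumulate across distinct times, and closing the estimate requires combining the quantitative stratification machinery from Theorem \ref{thm:quantidimen} with the entropy decrement coming from Perelman's $\WW$-functional along each singular event. The corresponding count for $\MS^2_{\mathrm{F}}$, while technically similar, is somewhat easier because its tangent flows are static and the four-dimensional $\epsilon$-regularity theorem directly supplies a definite smooth region around each flat-cone singular point.
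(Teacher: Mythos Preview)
Your proposal has a genuine gap exactly where you flag the ``principal obstacle'': the uniform $\HHH^2$-bound at unit scale. Rectifiability of $\MS^2_{\mathrm{qc}}$ and $\MS^2_{\mathrm{F}}$ (Theorems \ref{intro:thm1} and \ref{thm:rec1}) only says these sets are covered by countably many bi-Lipschitz pieces; it gives no control whatsoever on the total $\HHH^2$-measure. The mechanisms you invoke --- ``$\epsilon$-regularity near cylindrical tangent flows'' to bound each piece and ``entropy decrement associated to each neckpinch-type event'' to count components, or ``counting vertical segments'' for $\MS^2_{\mathrm{F}}$ --- are not made precise and do not work as stated: connected components of $\MS^2_{\mathrm{qc}}$ in a unit ball need not be finite in number, and there is no monotone quantity along the flow that drops by a definite amount at each one. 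Theorem \ref{thm:quantidimen} supplies only a Minkowski \emph{dimension} bound, which is strictly weaker than a uniform measure bound.

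The paper avoids component-counting altogether. The engine is the neck decomposition theorem (Theorem \ref{neckdecomgeneral}), which covers $B^*(z_0,r_0)$ by quotient-cylindrical and flat neck regions $\NNN_a$, $b$-balls $B^*(x_b,r_b)$, and a null set, together with the \emph{content estimate} $\sum_a r_a^2+\sum_b r_b^2+\HHH^2(S^{2,\delta,\eta})\le C(Y)r_0^2$. This content estimate --- proved via Ahlfors regularity of the packing measure on each neck region (Propositions \ref{ahlforsregucyl1quo} and \ref{ahlforsregforstaticneck}) combined with an inductive covering scheme --- is what does the work your counting argument cannot. Given the neck decomposition, Theorem \ref{intro:thm3} becomes a one-line corollary: for small enough $\ep=\ep(Y)$ one has $\MS\cap B^*(z_0,r_0)=\bigcap_{0<r<\ep}\MS^{\ep,2}_{rr_0,\ep r_0}\cap B^*(z_0,r_0)$, and the $\HHH^2$-bound of Theorem \ref{intro:thm4} (equation \eqref{contentestiS0a}) applies directly.
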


Here, $B^*$ denotes a metric ball in $Z$. In fact, we prove a more refined estimate than that stated in Theorem \ref{intro:thm3}. Recall that in \cite{fang2025RFlimit}, we introduced the quantitative singular strata $\MS^{\ep,k}_{r_1,r_2}$, inspired by the framework in \cite{cheeger2013lower}. More precisely, a point $z \in  \MS^{\ep,k}_{r_1,r_2}$ if and only if $\t(z)-\ep^{-1} r_2^2 \in  \III^-$ and for all $r \in [r_1, r_2]$, $z$ is not $(k+1,\ep,r)$-symmetric (see Definition \ref{defnstratification}). A basic identity follows from the definition: for any $L \ge 1$,
\begin{align*}
\MS^{k}=\bigcup_{\ep \in (0, L^{-1})} \bigcap_{0<r<\ep L} \MS^{\ep,k}_{r, \ep L}.
\end{align*}
It was shown in \cite[Theorem 1.12]{fang2025RFlimit} that for any $r \in (0, \ep)$,
	\begin{align*}
\abs{B^*_{r} \lc \MS^{\ep, n-2}_{rr_0, \ep r_0}\rc \bigcap B^*(z_0, r_0) } \le C(n, Y, \ep) r^{4-\ep} r_0^{n+2}
	\end{align*}
provided that $\t(z_0)-2 r_0^2 \in \III^-$. Moreover, for any $t \in \R$,
	\begin{align*}
\abs{B^*_{r} \lc \MS^{\ep, n-2}_{r r_0, \ep r_0}\rc \bigcap B^*(z_0, r_0) \bigcap Z_t }_t \le C(n, Y, \ep) r^{2-\ep} r_0^{n}.
	\end{align*}
In this paper, we sharpen these estimates in the four-dimensional case.

\begin{thm}\label{intro:thm4}
Under the same assumptions of Theorem \ref{intro:thm3}, for $k\in\{1,2\}$, we have
	\begin{align*}
\abs{B^*_{r}\lc\MS^{\ep,k}_{rr_0, \ep r_0}\rc\bigcap B^*(z_0, r_0) } \leq C(Y,\ep)r^{6-k} r_0^{6}
	\end{align*}
for any $r \in (0, \ep)$. In particular, we have
	\begin{align*}
		\HHH^k\left(\bigcap_{0<r<\ep} \MS^{\ep,k}_{r r_0, \ep r_0}\bigcap B^*(z_0, r_0)\right)\leq C(Y,\ep) r_0^{6}.
	\end{align*}
	Moreover, for any $t \in \R$ and $r \in (0, \ep)$, 
		\begin{align*}
\abs{B^*_{r}\lc\MS^{\ep,k}_{rr_0,\ep r_0}\rc\bigcap B^*(z_0, r_0) \bigcap Z_t}_t \leq C(Y,\ep) r^{4-k} r_0^{4}.
	\end{align*} 
\end{thm}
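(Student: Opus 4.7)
The plan is to sharpen the bound of \cite[Theorem 1.12]{fang2025RFlimit} from $r^{4-\ep}$ to the scale-optimal $r^{6-k}$ by combining the four-dimensional decompositions \eqref{intro:decomp1}--\eqref{intro:decomp2} with a quantitative parabolic neck region decomposition. In dimension four one has $\MS^1\setminus\MS^0=\MS^1_{\mathrm{qc}}\setminus\MS^0_{\mathrm{qc}}$ and $\MS^2\setminus\MS^1=(\MS^2_{\mathrm{qc}}\setminus\MS^1_{\mathrm{qc}})\sqcup\MS^2_{\mathrm{F}}$, so $\MS^{\ep,k}_{rr_0,\ep r_0}$ for $k\in\{1,2\}$ decomposes into a \emph{cylindrical part}, where every quantitative tangent model at intermediate scales is close to a standard cylinder $\R^k\times S^{3-k}$ or its finite quotient, and (only when $k=2$) a \emph{flat cone part}, where some tangent model is a static orbifold cone $\R^4/\Gamma\times\R$.

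For the cylindrical part, I would adapt the neck region framework used in the proofs of Theorems \ref{recticylsing2}--\ref{thmrectquotient} to produce a Reifenberg-type packing: at each scale $r_j\geq r$ along a nested chain of balls inside $B^*(z_0,r_0)$, the centers of quantitative $k$-cylindrical necks satisfy a discrete $L^2$ bound on their affine deviation from the best $k$-plane, obtained via Nash-entropy pinching together with parabolic cone splitting. Summing the resulting covering of $\MS^{\ep,k}_{rr_0,\ep r_0}\cap \MS^k_{\mathrm{qc}}\cap B^*(z_0,r_0)$ by at most $C(Y,\ep)(r_0/r)^k$ parabolic balls of radius $r$ then yields the spacetime bound $C(Y,\ep)r^{6-k}r_0^6$ and, after restriction to a fixed time slice, the slice bound $C(Y,\ep)r^{4-k}r_0^4$.

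For the flat cone part $\MS^2_{\mathrm{F}}$, which appears only when $k=2$, the proof of Theorem \ref{thm:rec1} shows that near each such point the quantitative singular set is contained, up to $o(1)$ horizontal error, in a single time axis. Covering the time interval by subintervals of length $r^2$ and using this vertical collapsing, one finds at most $C(Y,\ep)(r_0^2/r^2)$ parabolic $r$-balls suffice, producing $C(Y,\ep)r^4 r_0^6$ in spacetime and an $O(r_0^4)$ bound in each time slice; both are at least as sharp as the cylindrical bounds. The Hausdorff-measure statement then follows by sending $r\to 0$ in the spacetime Minkowski bound and invoking the identity $\MS^k=\bigcup_\ep\bigcap_{0<r<\ep L}\MS^{\ep,k}_{r,\ep L}$ together with the standard covering characterization of $\HHH^k$.

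The main obstacle will be the sharp packing estimate at the heart of the cylindrical step, since the argument of \cite{fang2025RFlimit} loses a factor $r^\ep$ through a soft dimension reduction modeled on \cite{cheeger2013lower}. Removing this loss requires a quantitative cone-splitting lemma that upgrades an $\ep$-cylindrical tangent model to one whose near-symmetry directions span a genuine $k$-dimensional affine subspace at every intermediate scale, together with the four-dimensional rigidity input that a three-dimensional Ricci shrinker admitting an $\R$-split direction is itself a round cylinder. The inductive application of this lemma across scales, paired with the volume-doubling for parabolic balls in $\RR$, is what converts Reifenberg-type control into the optimal $r^{6-k}$ Minkowski bound.
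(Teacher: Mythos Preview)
Your outline identifies the right ingredients---the cylindrical/flat dichotomy and the need to remove the $r^\ep$ loss---but the mechanism you sketch is not the one that actually delivers the sharp bound, and one of your structural claims is wrong.

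\textbf{What the paper does.} The proof goes through the Neck Decomposition Theorem (Theorem~\ref{neckdecomgeneral}), which decomposes $B^*(z_0,r_0)$ into (a) finitely many neck regions $\NNN_a$ (quotient cylindrical or flat) with centers $\CCC_a$, (b) a collection of $b$-balls $B^*(x_b,r_b)$ each containing a $(k{+}1,\eta,r_b)$-symmetric point, and (c) a remainder of $\HHH^k$-measure zero, all subject to the content estimate $\sum_a r_a^k+\sum_b r_b^k\le C r_0^k$. The volume bound then follows by two observations: first, any $y\in\MS^{\ep,k}_{r,\ep}$ lying in a neck region satisfies $d_Z(y,\CCC_a)\le C\ep^{-1}r$ (Lemma~\ref{lem:constant}), so Ahlfors regularity of the packing measure (Propositions~\ref{ahlforsregucyl1quo}, \ref{ahlforsregforstaticneck}) bounds the number of $r$-balls needed to cover this part by $C r_a^k r^{-k}$; second, any $b$-ball meeting $\MS^{\ep,k}_{r,\ep}$ must have $r_b\le C\ep^{-1}r$. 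Summing against the content estimate gives $C r^{6-k}r_0^6$.

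\textbf{The gap in your approach.} Your ``discrete $L^2$ bound on affine deviation from the best $k$-plane'' suggests a direct Naber--Valtorta/rectifiable-Reifenberg argument, but the paper does not proceed this way: the sharp content estimate is \emph{built into} the neck decomposition via an elaborate inductive covering of $c$-balls and $d$-balls (Propositions~\ref{cballdecomposition1}--\ref{dballdecomposition}, \ref{inductivedecomposition}), not extracted afterward from $\beta$-number summability. More importantly, your claim that the flat-cone part appears ``only when $k=2$'' is incorrect at the level of the covering argument. For $k=1$, although $\MS^1\setminus\MS^0=\MS^1_{\mathrm{qc}}\setminus\MS^0_{\mathrm{qc}}$ as \emph{sets}, the neck decomposition for $k=1$ still produces flat neck regions at intermediate scales (type-II $c$-balls, Proposition~\ref{cballdecomposition3}), because a point whose tangent flow is a quotient cylinder may look like $\R^4/\Gamma\times\R_-$ at some larger scale. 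The proof of Theorem~\ref{thmrectifiabledim4x} handles these via the modified center $\CCC'_a$ and the estimate $|B^*_{rs}(\CCC'_a)|\le C(Y,\eta)r^{5.1}s^6$, not via a purely cylindrical packing. Your outline would therefore not close for $k=1$.
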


The proof of Theorem \ref{intro:thm4} will be provided in Theorems \ref{thmrectifiabledim4}, \ref{thmrectifiabledim4x} and \ref{thmvolumeslice}. The volume estimate in Theorem \ref{intro:thm3} follows immediately from Theorem \ref{intro:thm4}, since
	\begin{align*}
\MS \bigcap B^*(z_0, r_0) = \bigcap_{0<r<\ep} \MS^{\ep,2}_{r r_0,\ep r_0} \bigcap B^*(z_0, r_0)
	\end{align*}
for a sufficiently small $\ep=\ep(Y)$. 

Note that similar sharp and uniform volume estimates for the singular set have been obtained in other geometric flow settings. In the context of mean curvature flow starting from a mean-convex closed hypersurface, such an estimate was proved in \cite[Theorem 1.2]{fang2025volume}. In the setting of noncollapsed Ricci limit spaces, a corresponding result was established in \cite[Theorem 1.7]{Cheeger2018RectifiabilityOS}.

As an application of the methods developed for singularity analysis, we also obtain the following $L^1$-curvature bounds for four-dimensional closed Ricci flows. Here, $r_{\Rm}$ denotes the curvature radius; see \cite[Definition 2.10]{fang2025RFlimit}.

\begin{thm}\label{intro:thm5}
Let $\XX=\{M^4, (g(t))_{t\in [-T, 0)}\}$ be a four-dimensional closed Ricci flow with $T<\infty$, where $t=0$ is the first singular time. Then there exists a constant $C$, depending only on the flow, such that for any $t\in [-T,0)$,
	\begin{align*}
		\int_{M} |\Rm|\,\mathrm{d}V_{g(t)} \le \int_{M} r^{-2}_{\Rm}\,\mathrm{d}V_{g(t)} \leq C.
	\end{align*}
\end{thm}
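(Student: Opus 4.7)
The plan is to combine the sharp structure theorems for $\MS$ developed earlier in the paper with a stratum-wise covering argument adapted to the scaling of $r_{\Rm}$ near each singularity type.

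\textbf{Reduction to the singular time.} For any $\de>0$ the flow is smooth on $[-T,-\de]$, so $r_{\Rm}$ has a positive lower bound there and $\int_M r_{\Rm}^{-2}\,\dd V_{g(t)}$ is uniformly bounded on $[-T,-\de]$. It therefore suffices to control the integral as $t\to 0^-$.

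\textbf{Limit space setup.} Since $\XX$ has uniformly bounded entropy on $[-T,0)$, it lies in $\MM(4,Y,T)$ for some $Y$. Applying the compactness theorem to the constant sequence $\XX_i:=\XX$ yields a noncollapsed Ricci flow limit space $(Z,d_Z,\t)$ whose regular part contains $M\times(-0.98T,0)$; because $t=0$ is the first singular time, $\MS\subset Z_0$ and Theorem~\ref{intro:thm3} gives $\HHH^2(\MS)\le C$. Combining Theorem~\ref{intro:thm2} with \eqref{intro:decomp1}--\eqref{intro:decomp2}, in dimension four $\MS$ splits into the quotient-cylindrical stratum $\MS^{2}_{\mathrm{qc}}\setminus\MS^0$, the flat-cone stratum $\MS^{2}_{\mathrm{F}}$, and $\MS^0$, which is countable by Proposition~\ref{prop:countable}.

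\textbf{Stratum-wise estimate.} For $t$ close to $0$ I cover $M$ by neighborhoods of each stratum using the $\hat C^\infty$-convergence to transfer tangent-flow geometry onto $(M,g(t))$, and estimate separately. On the complement of these neighborhoods, $r_{\Rm}$ has a uniform positive lower bound, so the integral there is $\lesssim \vol(M,g(t))\le C$. Near $\MS^{2}_{\mathrm{qc}}$, the shrinking cylinder model forces $r_{\Rm}\sim\sqrt{|t|}$ on a tubular region whose $g(t)$-volume is $\lesssim |t|\cdot\HHH^2(\MS^{2}_{\mathrm{qc}})$, giving a contribution $\lesssim \HHH^2(\MS^{2}_{\mathrm{qc}})\le C$. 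Near each point of $\MS^{2}_{\mathrm{F}}\cup\MS^0$ the tangent flow concentrates curvature near an isolated spatial point, giving $r_{\Rm}(x)\gtrsim \max(d(x,\mathrm{tip}),\sqrt{|t|})$, and the integral over a fixed-size ball around the tip is bounded; the total number and sizes of such conical regions are controlled by the sharp packing estimates from Theorem~\ref{intro:thm4}.

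\textbf{Main obstacle.} The chief difficulty is the dichotomy of scaling: $r_{\Rm}$ is approximately constant on cylindrical pieces but scales with distance to the tip on conical pieces. A naive dyadic decomposition based only on the Minkowski estimate in Theorem~\ref{intro:thm4} for $\MS^{2}$ produces a logarithmically divergent sum, so the key is to match the correct scaling of $r_{\Rm}^{-2}$ to each stratum of the decomposition above, and then to sum the resulting uniform per-stratum bounds via the finite $\HHH^2$-content from Theorem~\ref{intro:thm3}.
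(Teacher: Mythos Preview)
Your intuition is correct—the two stratum types have different $r_{\Rm}$ scaling, and you correctly diagnose that a naive layer-cake via the Minkowski bound $|\{r_{\Rm}<r\}|_t\le C r^2$ diverges logarithmically. However, the proposed resolution has a genuine gap: the ``stratum-wise'' covering you sketch is not actually well-defined. Tangent flows describe the geometry only at asymptotically small scales near each singular point, and there is no uniform scale at which every point of $\MS^2_{\mathrm{qc}}$ looks like its cylindrical model. You cannot define the ``tubular region'' around $\MS^2_{\mathrm{qc}}$ at time $t$ without first knowing, for each $z\in\MS^2_{\mathrm{qc}}$, the scale at which the cylinder model becomes valid—and this scale varies from point to point and need not be comparable to $\sqrt{|t|}$. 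Your claim that ``the total number and sizes of such conical regions are controlled by the sharp packing estimates from Theorem~\ref{intro:thm4}'' is likewise unsubstantiated: that theorem bounds the volume of tubular neighborhoods of quantitative strata, it does not produce a decomposition into conical pieces with summable radii. The summing device you propose, the finite value $\HHH^2(\MS)$, is a limit quantity and does not by itself control the multi-scale packing needed here.

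The paper's proof (Theorem~\ref{thm:RmL1y}) does not argue from the strata of $\MS$ directly; it uses the neck decomposition (Theorem~\ref{neckdecomgeneral}). This covers $B^*(z_0,r_0)$ by neck regions $\NNN_a$ (either $(2,\delta,\cc,r_a)$-quotient cylindrical or $(\delta,\cc,r_a)$-flat) and $b$-balls, together with the content estimate $\sum_a r_a^2+\sum_b r_b^2\le C(Y) r_0^2$. On each $b$-ball $r_{\Rm}\ge c\, r_b$, so the integral is at most $C r_b^2$. On a neck region the axioms (n2)--(n3) guarantee that the model holds at \emph{every} scale from $r_a$ down to $r_x$, which yields $r_{\Rm}(y)\ge c\, d_Z(y,\CCC_a)$ for $y\in\NNN_a'\cap Z_t$. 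One then covers $\NNN_a'\cap Z_t$ by balls at scale $d_{x_i}=d_Z(x_i,\CCC_a)$, each contributing at most $C d_{x_i}^2$ to the integral; the Ahlfors regularity of the packing measure (Propositions~\ref{ahlforsregucyl1quo} and~\ref{ahlforsregforstaticneck}) converts this into $\le C\mu_a(B^*(x_a,3r_a/2))\le C r_a^2$. Summing over $a$ and $b$ via the content estimate closes the argument with no logarithmic loss. The missing idea in your sketch is precisely this: a spacetime decomposition whose pieces carry the model structure across a \emph{range} of scales, with summable sizes—not a single-scale neighborhood of $\MS$.
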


The proof of Theorem \ref{intro:thm5} will be given in Theorem \ref{thm:RmL1y}. Moreover, we obtain integral estimates for $\na^k \Rm$ for all $k \ge 1$; see Theorem \ref{thm:RmLky}.

In \cite[Theorem 9.2]{fang2025RFlimit}, it is shown that for any $n$-dimensional closed Ricci flow $\{M^n, (g(t))_{t\in [-T, 0)}\}$, one has the uniform bound:
	\begin{align*}
		\int_{M} |\Rm|^{1-\ep}\,\mathrm{d}V_{g(t)}\leq C_{\ep}
	\end{align*}
for any $t \in [-T, 0)$ and every $\ep>0$, where $C_{\ep}$ depends only on the flow and $\ep$. Theorem \ref{intro:thm5} refines this in dimension four and may be viewed as a parabolic analogue of the $L^2$-curvature bounds established in \cite{jiang20212}. Related $L^2$-curvature estimates for four-dimensional closed Ricci flows under a bounded scalar curvature assumption appear in \cite{bamler2017heat,simon20}. See also the recent work \cite{GiLe25} for $L^1$-curvature bounds for closed Ricci flow under the Type I assumption.

By taking the product of a three-dimensional closed Ricci flow with a flat circle $S^1$ and applying Theorem \ref{intro:thm5}, we obtain the corresponding $L^1$-curvature bounds in dimension three. These bounds, together with \cite[Theorem~2.4]{Peter05}, yield the following result---often referred to as \textbf{Perelman’s bounded diameter conjecture} (see \cite[Section 13.2]{perelman2002entropy}):
\begin{thm}\label{intro:thm6}
Let $\XX=\{M^3, (g(t))_{t\in [-T, 0)}\}$ be a three-dimensional closed Ricci flow. Then there exists a constant $C$ depending on the Ricci flow such that 
	\begin{align*}
\sup_{t \in [-T, 0)} \mathrm{diam}_{g(t)}(M) \le C.
	\end{align*}
\end{thm}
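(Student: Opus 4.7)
The plan is to deduce the three-dimensional statement from Theorem \ref{intro:thm5} via a standard product-with-circle reduction, followed by the diameter control theorem of Topping \cite{Peter05}. The outline has three steps.

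\textbf{Step 1: Product construction.} Given a three-dimensional closed Ricci flow $\XX=\{M^3, (g(t))_{t\in [-T,0)}\}$, I form the four-dimensional product flow $\tilde{\XX}=\{M^3 \times S^1, (\tilde g(t))_{t \in [-T,0)}\}$ with $\tilde g(t) = g(t) \oplus g_{S^1}$, where $g_{S^1}$ is a fixed flat metric on $S^1$ of unit length. Since $g_{S^1}$ is Ricci-flat, $\tilde g(t)$ is indeed a solution of Ricci flow, $M^3 \times S^1$ is closed, and the first singular time of $\tilde{\XX}$ coincides with that of $\XX$. Without loss of generality we may assume $T<\infty$ and $t=0$ is the first singular time, since otherwise the classical long-time theory in dimension three already provides a uniform diameter bound.

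\textbf{Step 2: Pass the $L^1$-curvature bound to the factor.} Because $g_{S^1}$ is flat and the metric splits as a Riemannian product, the curvature tensor decomposes as $\Rm_{\tilde g}(x,\theta,t)=\Rm_g(x,t)$, and the volume form factors as $\dd V_{\tilde g}= \dd V_{g(t)} \otimes \dd\theta$. Applying Theorem \ref{intro:thm5} to $\tilde{\XX}$ then yields
\begin{align*}
\int_{M} |\Rm_g|\,\dd V_{g(t)} \;=\; \int_{M\times S^1} |\Rm_{\tilde g}|\,\dd V_{\tilde g(t)} \;\le\; C
\end{align*}
uniformly for $t \in [-T,0)$, where $C$ depends only on $\tilde{\XX}$ (and hence only on $\XX$). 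Note that the hypotheses of Theorem \ref{intro:thm5} are satisfied for $\tilde{\XX}$: it is a closed four-dimensional Ricci flow whose first singular time is $0$, and any bound on Perelman's entropy for $\XX$ passes to $\tilde{\XX}$ up to an additive constant coming from the flat circle factor.

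\textbf{Step 3: Invoke Topping's diameter bound.} Since $n=3$ gives $(n-1)/2 = 1$, the uniform $L^1$-bound from Step 2 is precisely the curvature integral needed in \cite[Theorem 2.4]{Peter05}, which controls $\mathrm{diam}_{g(t)}(M)$ in dimension three in terms of such an integral quantity of the curvature. This gives $\sup_{t \in [-T,0)} \mathrm{diam}_{g(t)}(M) \le C$, as required.

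The only substantive work lies in Step 2 and has already been done in Theorem \ref{intro:thm5}; the main obstacle is philosophical rather than technical, namely that the whole machinery of the paper (stratification of $\MS$, quantitative Reifenberg-type estimates leading to the $L^1$-bound) feeds into this short reduction. Once Theorem \ref{intro:thm5} is in hand, the bounded diameter conjecture reduces to the product trick and a direct citation of Topping's result.
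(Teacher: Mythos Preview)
Your proposal is correct and follows exactly the approach sketched in the paper: take the product with a flat $S^1$, apply Theorem~\ref{intro:thm5} to the resulting four-dimensional flow to obtain a uniform $L^1$-curvature bound on $M^3$, and then invoke \cite[Theorem~2.4]{Peter05}. The paper gives precisely this argument in the paragraph preceding the statement of Theorem~\ref{intro:thm6}.
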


For comparison, Gianniotis \cite{Gi25} established this conclusion under the additional Type I curvature assumption. For related results in mean curvature flow, see \cite{GiHa20, HuJi25}.

\subsection*{Some Ingredients in the Proofs of the Main Results}

In the following, we outline several key ideas and techniques that play a central role in the proofs of our main results.

\subsubsection*{The Reifenberg Method}

Originally developed in the 1960s by Edward Reifenberg to address the Plateau problem for minimal surfaces (see \cite{Rei60}), the Reifenberg method has since become a useful tool in geometric measure theory, with far-reaching applications in rectifiability, regularity theory, and the analysis of metric spaces.

In \cite{Rei60}, Reifenberg considered closed subsets of $\R^n$ that are well approximated by affine $k$-planes at every scale and location. For a set $S \subset \R^n$ containing the origin, the flatness of $S$ near a point $x$ at scale $r>0$ is quantified by
	\begin{align} \label{eq:reif1}
\beta(x, r):=r^{-1} \inf_{\Gamma} d_{\mathrm{H}} \lc S \cap B_r(x), \Gamma \cap B_r(x) \rc
	\end{align}
where $d_{\mathrm{H}}$ denotes the Hausdorff distance, and the infimum is taken over all $k$-dimensional affine subspaces $\Gamma$ in $\R^n$. 

The classical Reifenberg's theorem asserts that if $\beta(x, r)<\delta$ for every $x \in S \cap B_1(\vec{0}^n)$ and all $r \in (0, 1)$, where $\delta$ is a sufficiently small constant, then there exists a bi-H\"older homeomorphism $\phi: S \cap B_{1/2}(\vec{0}^n) \to B_{1/2}(\vec{0}^k)\subset \R^k$. In particular, this establishes a form of topological regularity for sets under quantitative flatness assumptions. 

In general, the bi-H\"older homeomorphism $\phi$ in Reifenberg’s theorem cannot be improved to a \textbf{bi-Lipschitz} map. A classical counterexample illustrating this limitation is given by the snowflake construction (see \cite[Example 3.3]{Naber20}). However, if the approximation by affine $k$-planes improves sufficiently fast across scales, then a bi-Lipschitz parametrization becomes possible.

Indeed, Toro \cite{Toro95} proved that if the \textbf{square-summability} condition on the $\beta$-numbers holds:
	\begin{align} \label{eq:reif2}
\sum_{i=1}^\infty \sup_{x \in S\cap B_{1}(\vec{0}^n)} \beta(x, 2^{-i})^2<\delta,
	\end{align}
for a sufficiently small constant $\delta>0$, then there exists a bi-Lipschitz homeomorphism $\phi: S \cap B_{1/2}(\vec{0}^n) \to B_{1/2}(\vec{0}^k)\subset \R^k$.

Reifenberg’s theorem can be extended to complete metric spaces; see \cite[Theorem A.1.2]{cheeger1997structure}. Roughly speaking, for a metric space $(X, d)$, one defines a scale-invariant flatness quantity using the Gromov--Hausdorff distance:
	\begin{align} \label{eq:reif3}
\beta'(x, r):=r^{-1} d_{\mathrm{GH}} \lc B_r(x), B_r(\vec{0}^n) \rc
	\end{align}
for every $x \in X$ and $r>0$. If $\beta'(x, r)$ is uniformly small for all $x$ and $r$ in a neighborhood, then one obtains a local bi-H\"older map $\phi: X \to \R^n$. Using this result, Cheeger and Colding proved that the regular part of a noncollapsed Ricci limit space admits a manifold structure. Furthermore, if a summability condition on $\beta'$ holds, the bi-H\"older map can be upgraded to a bi-Lipschitz map; see \cite[Theorem 2.7]{Gigli25}.

In the setting of a noncollapsed Ricci flow limit space $(Z, d_Z, \t)$, applying the Reifenberg method to the set $\MS^k_{\mathrm{qc}}$ requires constructing a suitable flatness quantity, analogous to \eqref{eq:reif1} or \eqref{eq:reif3}. For illustrative purposes, we restrict attention to the subset $\MS_{\mathrm c}^k\setminus \MS^{k-1}_{\mathrm c} \subset \MS^k_{\mathrm{qc}}$ (see Definition \ref{def:cylindr}), consisting of points at which one (and hence every) tangent flow is isometric to the model space $\bar{\mathcal C}^k$ (see Subsection \ref{cacp}), whose time slice at $-1$ is the standard cylinder $\R^k \times S^{n-k}$. 

Intuitively, for any point $z_0 \in \MS_{\mathrm c}^k\setminus \MS^{k-1}_{\mathrm c}$, the local geometry around $z_0$ nearly splits off an $\R^k$, and the singular set $\MS_{\mathrm c}^k\setminus \MS^{k-1}_{\mathrm c}$ near $z_0$ behaves approximately like a horizontal $k$-plane. To quantify this almost $k$-splitting, we consider a $(k, \ep, r)$-splitting map $\vec u=(u_1, \ldots, u_k): Z_{(\t(z_0)-10r^2,\t(z_0)]}\to \R^k$ at $z_0$ (see Definitions \ref{defnsplittingmap} and \ref{defnsplittingmap1}).

But how does one construct such an approximate splitting map at $z_0$? To this end, we introduce the following notion of almost self-similarity:
\begin{defn}[$(\delta,r)$-selfsimilar]
	A point $z \in Z_{\III^-}$ is called \textbf{$(\delta,r)$-selfsimilar} if $\t(z)-\delta^{-1} r^2 \in \III^-$ and 
	\begin{align*}
	\widetilde 	\WW_{z}(\delta r^2)-\widetilde \WW_{z}(\delta^{-1} r^2) \leq \delta.
	\end{align*}
\end{defn}
Here, we use the \textbf{modified pointed entropy} $\widetilde \WW_{z}(\tau)$, rather than the conventional pointed entropy $\WW_z(\tau)$, for technical reasons. Although the two agree for almost every $\tau$, the modified version is better suited for analysis: it is nonincreasing in $\tau$ and more accurately controls the spacetime integral of the Ricci shrinker operator (see Lemma \ref{lem:monolimit}). In fact, in dimension four they agree identically; see Proposition \ref{prop:agree}.

A point $z$ being $(\delta, r)$-selfsimilar roughly means that, at scale $r$, the geometry around $z$ in $Z$ is close to that of a Ricci shrinker space. As shown in \cite[Appendix D]{fang2025RFlimit}, the set of selfsimilar points in a Ricci shrinker space forms the so-called \textbf{spine}, which characterizes the degree of symmetry of the shrinker. For instance, if two selfsimilar points lie on distinct time slices, then the Ricci shrinker must be a static or quasi-static cone; see \cite[Lemma D.5]{fang2025RFlimit}. In Theorem \ref{staticestimate}, we establish a quantitative version of this result, yielding an integral estimate of $|\Ric|^2$ in spacetime, which is crucial for constructing almost splitting maps.

We now introduce the concept of entropy pinching:

\begin{defn}[Strongly entropy pinching] 
	For $k\in \{1, \ldots ,n\}$, $\alpha \in (0, 1)$, $\delta>0$ and $r>0$, the \textbf{$(k,\alpha,\delta,r)$-entropy pinching at $z_0$} is defined as
	\begin{equation*}
		\mathfrak{S}_r^{k,\alpha,\delta}(z_0):= \inf \sum_{i=0}^k \left(\widetilde \WW_{z_i}(r^2/40)-\widetilde \WW_{z_i}(40r^2)\right)^{\frac{1}{2}},
	\end{equation*}
\end{defn}
\noindent where the infimum is taken for all sets $\{z_i\}_{1 \le i \le k}$ which are strongly $(k,\alpha,\delta,r)$-independent at $z_0$; see Definition \ref{defnindependentpointsRFlimit}. Roughly speaking, a collection $\{z_i\}_{1 \le i \le k}$ is strongly $(k,\alpha,\delta,r)$-independent at $z_0$ if the points are almost selfsimilar and well-distributed around $z_0$ in a way as if they generate a $k$-dimensional space. 

A key feature of the definition of $\mathfrak{S}_r^{k,\alpha,\delta}(z_0)$ is the \textbf{exponent $1/2$}, which is critical for obtaining sharp estimates. With this quantity, we construct a sharp splitting map $\vec u$ at $z_0$, whose spacetime integral of $|\na^2 u_i|^2$ is controlled by $\mathfrak{S}_r^{k,\alpha,\delta}(z_0)$; see Theorem \ref{thmsharpsplittingRFlimit}. Similar sharp splitting maps have been constructed previously: for Ricci limit spaces, see \cite[Theorem 6.1]{Cheeger2018RectifiabilityOS}; and for Type-I Ricci flows, see \cite[Theorem 8.1]{gianniotis2024splitting}, where related entropy pinching quantities are formulated differently.

A fundamental question is whether a $(k, \ep, r)$-splitting map at $z_0$ remains a $(k, \ep, r')$-splitting map at $z_0$ for smaller scales $r'<r$. In general, this does not hold without additional assumptions. However, under an appropriate summability condition on $\mathfrak{S}_r^{k,\alpha,\delta}(z_0)$, the splitting property can be propagated to smaller scales.

In Theorem \ref{nondegenrerationthm}, we prove a general nondegeneration result for almost splitting maps, assuming the following summability condition:
	\begin{equation}  \label{eq:reif4}
		\sum_{\bar r \leq r_j=2^{-j}\leq 1}\mathfrak{S}^{k,\alpha,\ep}_{r r_j}(z_0) \ll 1,
	\end{equation}
where $\bar r$ is a scale such that, for all $s \in [\bar r r,  r]$, the point $z_0$ is $(\delta,s)$-selfsimilar and $(k,\delta,s)$-splitting but not $(k+1,\eta,s)$-splitting, where $\delta\leq\delta (n,Y,\eta , \alpha, \ep)$.

This summability condition can be viewed as a pointwise analog of the Reifenberg summability condition in \eqref{eq:reif2}. The proof of Theorem \ref{nondegenrerationthm} follows a strategy similar to that of \cite[Theorem 8.1]{Cheeger2018RectifiabilityOS}, though the settings and technical execution differ substantially. The key steps involve establishing a Hessian decay estimate for the limiting heat flow (see Theorem \ref{hessiandecayRFlimit}) and then using the sharp splitting map construction to derive the desired nondegeneration result. These results are first proven in the setting of closed Ricci flows, and then extended to the noncollapsed Ricci flow limit space via a limiting argument.

\subsubsection*{Construction of Cylindrical Neck Regions}

To prove the horizontally parabolic $k$-rectifiability of $\MS^k_{\mathrm{c}}$, we aim to decompose $\MS_{\mathrm c}^k\setminus \MS^{k-1}_{\mathrm c}$ into countably many subsets, each of which admits a bi-Lipschitz map into $\R^k$. To achieve this, we introduce the concept of a \textbf{$(k, \delta, \cc, r)$-cylindrical neck region} (see Definition \ref{defiofcylneckregion}), analogous to the definition introduced by the present authors in the mean curvature flow setting \cite[Definition 4.1]{fang2025volume}. Related notions also appear in the study of Ricci limit spaces, such as \cite[Definition 3.1]{jiang20212} and \cite[Definition 2.4]{Cheeger2018RectifiabilityOS}.

In Proposition \ref{cylneckdecomp}, we show that for any point $z_0 \in \MS_{\mathrm c}^k\setminus \MS^{k-1}_{\mathrm c}$, there exists a sufficiently small $r_0>0$ such that 
	\begin{equation}  \label{eq:reif5}
\lc\MS_{\mathrm c}^k\setminus \MS^{k-1}_{\mathrm c} \rc \bigcap B^*(z_0, r_0) \subset \CCC_0,
	\end{equation}
where $\NNN=B^*(z_0,2r_0)\setminus B^*_{r_x}(\CCC)$ is a $(k,\delta, \cc, r_0)$-cylindrical neck region. The key fact used to establish \eqref{eq:reif5} is that any $z_0 \in \MS_{\mathrm c}^k\setminus \MS^{k-1}_{\mathrm c}$ is $(k, \ep, r)$-cylindrical for all sufficiently small $r>0$, as shown in Proposition \ref{prop:uniformc}. This relies crucially on the uniqueness of cylindrical singularities established in \cite{FLloja05}.

Given \eqref{eq:reif5}, it suffices to prove the rectifiability of $\CCC_0$ for any general $(k, \delta, \cc, r)$-cylindrical neck region $\NNN=B^*(z,2r)\setminus B_{r_x}^*(\CCC)$. On this region, we define the \textbf{packing measure} $\mu$ as:
	\begin{equation*}
		\mu:=\sum_{x\in \CCC^+}r_x^{k}\delta_x+\HHH^{k}|_{\CCC_0},
	\end{equation*}
	where $\HHH^{k}$ is the $k$-dimensional Hausdorff measure with respect to $d_Z$. We show in Proposition \ref{choosenondegeneratingpoints} that if $\mu$ satisfies the Ahlfors regularity condition:
	\begin{align}  \label{eq:reif6}
D^{-1}s^k\leq \mu(B^*(x,s))\leq D s^k
	\end{align} 
for any $x\in\CCC$ and $r_x\leq s\leq r-d_Z(x, z)/2$, with some constant $D>1$, then one can construct a map $\vec u: \CCC \to \R^k$ that is bi-Lipschitz away from a set of small measure. A standard covering argument then implies that $\CCC_0$ is horizontally parabolic $k$-rectifiable. Note that the conclusion \eqref{eq:reif0} follows from the rapid clearing-out property (see Lemma \ref{clearout}).

To guarantee the nondegeneracy of the map $\vec u$, we verify the summability condition:
		\begin{equation}\label{eq:reif7}
			\sum_{r_x\leq r_i=2^{-i} \leq 2^{-5} r}\aint_{B^*(x,r_i)}\left|\widetilde \WW_y(r_i^2/40)-\widetilde \WW_y(40 r_i^2)\right|^{1/2}\,\mathrm{d}\mu(y)\leq\epsilon
		\end{equation}
for most $x \in \CCC$, which in turn ensures the summability condition \eqref{eq:reif4} at $x=z_0$. A key estimate used to derive \eqref{eq:reif7} is that for any $y\in\CCC$,
	\begin{align*}
		\sum_{r_y\leq r_i=2^{-i}\leq 2^{-5}r} \abs{\widetilde \WW_y(r_i^2/40)-\widetilde \WW_y(40 r_i^2) }^{1/2}\leq\Psi(\delta),
	\end{align*}
	where $\Psi(\delta) \to 0$ as $\delta \to 0$. This summability inequality is a consequence of a discrete Lojasiewicz inequality, originally established for closed Ricci flows in \cite[Theorem 1.3, Corollary 1.4]{FLloja05}, where it underpinned the strong uniqueness result. Here we generalize this inequality to noncollapsed Ricci flow limit spaces; see Proposition \ref{sumWonRFlimit}.
	
	Finally, in Theorem \ref{ahlforsregucyl1}, we establish the Ahlfors regularity estimate \eqref{eq:reif6} with constant $D=D(n, Y, \cc)$. The proof of this theorem is technically intricate and combines an inductive covering argument, a limiting process, and a geometric transformation theorem proved in Theorem \ref{thm:existtransRFL}.
	
\subsubsection*{Neck Decomposition in Dimension Four}

The study of quotient cylindrical singularities and the construction of quotient cylindrical neck regions are not sufficient to analyze general singularities. In dimension $4$, however, most singularities are modeled either on a quotient cylinder or on a flat cone of the form
\begin{align*}
\mathcal F(\Gamma)=\R^4/\Gamma \times \R,
\end{align*}
where $\Gamma \leqslant \mathrm{O}(4)$ is a nontrivial finite group acting freely on $S^3$.

The quantitative analysis of this model space presents certain difficulties. First, even if one tangent flow at a point is $\R^4/\Gamma \times \R$, other tangent flows at the same point need not coincide. Second, a region that is quantitatively close to $\R^4/\Gamma \times \R$ at small scales may, on larger scales, resemble a quasi-static cone $\R^4/\Gamma \times (-\infty, a]$ for some $a \in [0,\infty)$. Roughly speaking, this indicates that the model $\R^4/\Gamma \times \R$ is not quantitatively stable.

To address this, we consider a more stable model:
\begin{align*}
\mathcal F^0(\Gamma)=\R^4/\Gamma \times \R_-,
\end{align*}
which captures only the negative-time behavior. In this setting, we introduce the notion of a \textbf{$(\delta, \cc, r)$–flat neck region} $\NNN$ (see Definition \ref{defnneckgeneral}), defined analogously to a $(k, \delta, \cc, r)$–cylindrical neck region, but with the model space replaced by $\mathcal F^0(\Gamma)$. For any $(\delta, \cc, r)$–flat neck regions, we prove Ahlfors regularity in Proposition \ref{ahlforsregforstaticneck}, and further show in Lemma \ref{lem:bilitpsta} that their centers $\CCC$ are vertically parabolic $2$-rectifiable. By Definition \ref{defnrectifiablity}, this is equivalent to stating that the time function $\t \vert_{\CCC}: \CCC \to \R$ is bi-Lipschitz, where $\R$ is equipped with the parabolic metric.

Another key fact is that quotient cylindrical neck regions and flat neck regions are mutually exclusive. In other words, there is no need to consider neck regions of mixed type---regions that resemble a quotient cylinder at one scale but $\R^4/\Gamma \times \R_-$ (on the negative part) at another. Such a scenario is ruled out by the quantitative uniqueness of quotient cylinders; see Proposition \ref{prop:uniformcq}.

We now state the neck decomposition theorem in dimension four. For simplicity, we only state the case $k=2$ in Theorem \ref{neckdecomgeneral}.

\begin{thm}[Neck decomposition theorem]\label{neckdecomgeneral4dintro}
Let $(Z, d_Z, \t)$ be a noncollapsed Ricci flow limit space arising as the pointed Gromov--Hausdorff limit of a sequence in $\MM(4, Y, T)$. For any constants $\delta>0$ and $\eta>0$, if $\zeta \le \zeta(Y, \delta, \eta)$, then the following holds.

Given $z_0 \in Z$ with $\t(z_0)-2 \zeta^{-2} r_0^2 \in \III^-$, we have the decomposition\emph{:}
	\begin{align*}
		&B^*(z_0,r_0)\subset \bigcup_a\big(\NNN'_a\bigcap B^*(x_a,r_a)\big)\bigcup \bigcup_b B^*(x_b,r_b)\bigcup S^{2,\delta,\eta},\\
		&S^{2,\delta,\eta}\subset \bigcup_a\big(\CCC_{0,a}\bigcap B^*(x_a,r_a)\big)\bigcup\tilde{S}^{2,\delta,\eta},
	\end{align*}
with the following properties\emph{:}
	\begin{enumerate}[label=\textnormal{(\alph{*})}]
		\item For each $a$, $\NNN_a=B^*(x_a,2r_a)\setminus B^*_{r_x}(\CCC_a)$ is either a $(2, \delta, \cc, r_a)$-quotient cylindrical neck region or a $(\delta, \cc, r_a)$-flat neck region, where $\cc=\cc(Y)$. In the former case, we set $\NNN_a'=\NNN_a$\emph{;} in the latter, $\NNN_a'$ denotes the modified $\delta$-region associated with $\NNN_a$.
		
		\item For each $b$, there exists a point in $B^*(x_b,2 r_b)$ which is $(3,\eta,r_b)$-symmetric.
		
		\item The following content estimates hold\emph{:}
		\begin{align*}
\sum_a r_a^2+\sum_b r_b^2+\HHH^2(S^{2,\delta,\eta})\leq C(Y) r_0^2 \quad \text{and} \quad \HHH^2(\tilde{S}^{2,\delta,\eta})=0.
	\end{align*}	
	\end{enumerate}
\end{thm}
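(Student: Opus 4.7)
The plan is to prove Theorem \ref{neckdecomgeneral4dintro} via an inductive stopping-time covering argument, analogous in spirit to the neck decomposition results of Jiang--Naber and Cheeger--Jiang--Naber for Ricci limits, and to the authors' own work in the mean curvature flow setting cited in the excerpt. Fix $\delta, \eta > 0$ and let $\zeta = \zeta(Y, \delta, \eta)$ be a sufficiently small constant to be determined. The strategy is to process a generation of balls starting from $\{B^*(z_0, r_0)\}$, dispositioning each ball into one of three outcomes (type (a), type (b), or subdivision), and to show that the residual points left over after infinitely many refinements lie in a set of controlled $\HHH^2$-measure.

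At the generic step, given a ball $B^*(x, r)$ of the current generation, I examine three scenarios. (i) If $B^*(x, 2r)$ contains a $(3, \eta, r)$-symmetric point, add the ball to the type (b) collection and stop. (ii) If instead every point of $B^*(x, 2r)$ fails the $3$-symmetry test but $x$ is $(2, \delta', r)$-symmetric with a quotient cylindrical model (for a suitable $\delta' = \delta'(\delta, \zeta)$), invoke Proposition \ref{cylneckdecomp} to extract a maximal $(2, \delta, \cc, r)$-quotient cylindrical neck region $\NNN_a = B^*(x_a, 2r_a) \setminus B^*_{r_x}(\CCC_a)$ with $\cc = \cc(Y)$, and pass the boundary centers $y \in \CCC_a^+$ (with radii $r_y > 0$) to the next generation as refined balls. (iii) Otherwise $x$ must admit a flat-cone tangent model $\R^4/\Gamma \times \R_-$, since by Proposition \ref{prop:uniformcq} the quotient cylindrical and flat models are mutually exclusive at comparable scales; extract instead a maximal $(\delta, \cc, r)$-flat neck region, together with its associated modified $\delta$-region $\NNN_a'$, and again pass the boundary centers to the next generation. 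The points that survive all generations, together with the interior $\CCC_{0,a}$ of each extracted neck (contained in the eventual singular stratum), are collected into $S^{2,\delta,\eta}$; the points for which the entropy summability \eqref{eq:reif7} persistently fails at every dyadic scale form the exceptional subset $\tilde S^{2,\delta,\eta}$.

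The content estimate in (c) follows from combining Ahlfors regularity of the extracted neck regions with a packing argument. Specifically, the bounds from Theorem \ref{ahlforsregucyl1} in the quotient cylindrical case, and from Proposition \ref{ahlforsregforstaticneck} in the flat case, give $\mu_a(B^*(x_a, r_a)) \le C(Y) r_a^2$ for each associated packing measure $\mu_a = \sum_{y \in \CCC_a^+} r_y^2 \delta_y + \HHH^2 \llcorner \CCC_{0,a}$, so that summing over all type (a) balls and using disjointness of the $\NNN_a$ by construction yields $\sum_a r_a^2 + \HHH^2(S^{2,\delta,\eta} \setminus \tilde S^{2,\delta,\eta}) \le C(Y) r_0^2$. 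The type (b) contribution $\sum_b r_b^2$ is bounded using entropy monotonicity at each point together with a Vitali-type disjointification at the scale $r_b$, which caps the count of type (b) balls of each dyadic scale by $C(Y) r_0^2 \cdot r_b^{-2}$ and yields a summable geometric series after summing over a $\zeta$-discretization of scales. Finally, to see $\HHH^2(\tilde S^{2,\delta,\eta}) = 0$, I invoke the discrete Lojasiewicz-type summability for $\HHH^2$-a.e. point established in Proposition \ref{sumWonRFlimit}, which shows that the set of points where \eqref{eq:reif7} fails at infinitely many scales has vanishing $\HHH^2$-content.

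The principal technical obstacle is controlling the interaction between the two neck models across scales during the induction. A point $z$ may resemble a quotient cylinder at one scale yet a flat cone at another, and one must rule out pathological cascades in which neither extraction terminates. The key input here is the quantitative uniqueness Proposition \ref{prop:uniformcq}, which, when combined with the classification of three-dimensional Ricci shrinkers used in the decomposition \eqref{intro:decomp2}, forces that at any scale where $x$ is not $(3, \eta, r)$-symmetric but is $(2, \delta', r)$-symmetric in some admissible model, the choice of model is unambiguous. A secondary subtlety is the propagation of the entropy summability condition \eqref{eq:reif4} through the inductive refinement: the threshold $\zeta$ must be chosen small enough that the nondegeneration Theorem \ref{nondegenrerationthm} can be applied at every descendant ball with constants independent of the generation, which is what ultimately yields a $C(Y)$ constant in the content estimate that does not degenerate as $\delta, \eta \to 0$.
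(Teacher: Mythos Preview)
Your proposal misses the central mechanism that makes the paper's proof work. The paper does not simply split into the three scenarios you describe; instead it introduces a \emph{pinching set} $\PP_{\zeta,r}(x)=\{y\in B^*(x,2.5r):\widetilde\WW_y(\zeta^2r^2)\le\bar W+\zeta\}$ relative to an entropy threshold $\bar W$, and classifies each ball as a $b$-, $c$-, $d$-, or $e$-ball according to whether the pinching set is dense (volume lower bound, $c$-ball), sparse (volume upper bound, $d$-ball), or empty ($e$-ball). Neck regions are extracted only from $c$-balls, where Lemma~\ref{contentconesplitting} supplies the dichotomy between the two models; $d$-balls are handled by a separate covering argument (Proposition~\ref{dballdecomposition}) in which the content of $c$-balls produced is small, namely $\le C(Y)\beta$, so that iteration gives a geometrically decaying contribution. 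Crucially, at each $e$-ball the entropy infimum \emph{increases by $\zeta$}, so after at most $C(Y)/\zeta$ outer iterations no $e$-balls remain. Your inductive scheme lacks any such termination or entropy-dropping device, and without it there is no reason the process ever stops or that the constants stay uniform.

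Your content estimates are also not justified. You assert that ``disjointness of the $\NNN_a$'' together with $\mu_a(B^*(x_a,r_a))\le C r_a^2$ yields $\sum_a r_a^2\le C(Y)r_0^2$, but this is circular: Ahlfors regularity bounds $\mu_a$ \emph{in terms of} $r_a^2$, not the other way around. In the paper, $\sum_a r_a^2$ is controlled because the $a$-balls arise from decomposing $c$-balls whose total content is already small by the $d$-ball iteration. Similarly, your argument for $\sum_b r_b^2$ via ``Vitali disjointification'' and ``entropy monotonicity'' does not work as stated: $b$-balls can overlap substantially across generations, and there is no entropy drop at a $b$-ball. In the paper the $b$-balls occur as centers in some $\CCC_a$, and their content is bounded by the packing measure estimate. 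Finally, your identification of $\tilde S^{2,\delta,\eta}$ with the set where summability \eqref{eq:reif7} fails is incorrect: in the paper $\tilde S$ is the residual set from the $d$-ball iteration, and $\HHH^2(\tilde S)=0$ follows from a direct Minkowski content estimate, not from the Lojasiewicz inequality.
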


The definition of the modified $\delta$-region is given in Definition \ref{def:modi}. The motivation for introducing it is as follows. Roughly speaking, if one has constructed a quotient cylindrical neck region, then any point away from the center locally resembles $\R^4 \times \R$ and should therefore be covered by a $b$-ball $B^*(x_b,r_b)$ as above. By contrast, if one has constructed a flat neck region, then any point in this region carries information only on its negative part, while the model space on the positive part remains a priori unclear. Nevertheless, in a quasi-static cone, one already knows that any point with time coordinate exceeding the arrival time must have larger pointed entropy. Thus, to cover the positive part of a flat neck region, one needs to introduce another collection of balls, each of which has larger pointed entropy. One can then perform a further decomposition on these new balls and repeat the process finitely many times. Consequently, we are led to modify the definition of flat neck regions so as to exclude the newly introduced balls.

The proof of Theorem \ref{neckdecomgeneral4dintro} is technically involved. Related ideas can be found in the literature (see, e.g., \cite{Cheeger2018RectifiabilityOS}), but our argument must handle the quasi-static cone $\R^4/\Gamma \times (-\infty, a]$ and the resulting complications near the boundary $\R^4/\Gamma \times \{a\}$.

As a consequence of Theorem \ref{neckdecomgeneral4dintro}, we deduce that $\MS^2_{\mathrm{F}}$ is contained in the centers of countably many flat neck regions. Hence, $\MS^2_{\mathrm{F}}$ is vertically parabolic $2$-rectifiable. Moreover, Theorem \ref{intro:thm2}(iii) follows directly from Theorem \ref{neckdecomgeneral4dintro}. The volume estimates in Theorem \ref{intro:thm4} also arise from this decomposition, and the $L^1$-curvature bounds in Theorem \ref{intro:thm5} are likewise derived from the neck decomposition.

\subsection*{Organization of the Paper}

The paper is organized as follows.

\begin{itemize}
\item Section \ref{secprel} introduces the basic conventions and foundational results for Ricci flows. We also review key results on Ricci flow limit spaces from \cite{fang2025RFlimit}, and describe our model spaces—cylinders—along with their elementary properties.

\item Section \ref{secnondegeneration} focuses on almost splitting maps in the setting of closed Ricci flows. We construct sharp splitting maps controlled by entropy pinching, establish a Hessian decay estimate for heat flows, and prove a covering lemma for independent points.

\item Section \ref{sec:limit} extends the main results of the previous section to noncollapsed Ricci flow limit spaces. We first introduce a modified version of the pointed $\WW$-entropy and prove its basic properties. We then extend the construction of sharp splitting maps and the Hessian decay estimate to the limit space. Finally, by generalizing the covering lemma, we obtain the Minkowski dimension bound for each singular stratum.

\item Section \ref{secrectifiablitycyl} is devoted to the parabolic rectifiability of $\MS^k_{\mathrm{c}}$ via the construction of cylindrical neck regions. We also prove Ahlfors regularity for the associated packing measure and generalize the results to the case of quotient cylinders.

\item Section \ref{sec:nd} focuses on the four-dimensional case. It begins with the countability of $\MS^0$. We then introduce flat neck regions, establish their Ahlfors regularity and the rectifiability of their centers, and present the proof of the neck decomposition theorem, with applications to the rectifiability of $\MS^k$, sharp volume estimates of $\MS$, and $L^1$-curvature bounds for closed four-dimensional Ricci flows.

\item Appendix \ref{app:A} contains the proof of a geometric transformation theorem for almost splitting maps, which plays an important role in establishing the Ahlfors regularity of cylindrical neck regions.

\item Finally, we include a list of notations for reference.
\end{itemize}

\textbf{Acknowledgements}: Hanbing Fang would like to thank his advisor, Prof. Xiuxiong Chen, for his encouragement and support. Hanbing Fang is supported by the Simons Foundation. Yu Li is supported by National Key R\&D Program of China 2025YFA1018200, NSFC-12522105, YSBR-001 and research funds from University of Science and Technology of China and Chinese Academy of Sciences.

\section{Preliminaries}\label{secprel}

\subsection{Basic conventions and results for Ricci flows}

Throughout this paper, we consider a closed Ricci flow solution $\XX=\{M^n,(g(t))_{t\in I}\}$, where $M$ is an $n$ dimensional closed manifold, $I \subset \R$ is a closed interval, and $(g(t))_{t\in I}$ is a family of smooth metrics on $M$ satisfying for all $t\in I$ the Ricci flow equation:
\begin{equation*}
	\partial_tg(t)=-2\Ric(g(t)).
\end{equation*}

Following the notation in \cite{fang2025RFlimit}, we use $x^*\in\XX$ to denote a spacetime point $x^* \in M \times I$, and define $\t(x^*)$ to be its time component. We denote by $d_t$ the distance function and by $\mathrm{d}V_{g(t)}$ the volume form induced by the metric $g(t)$. For $x^*=(x,t)\in \XX$, we write $B_t(x,r)$ for the geodesic ball centered at $x$ with radius $r$ at time $t$. The Riemannian curvature, Ricci curvature, and scalar curvature are denoted by $\Rm$, $\Ric$, and $\scal$, respectively, with the time parameter omitted when there is no ambiguity.

For the smooth closed Ricci flow $\XX$, we set $K(x,t;y,s)$ to be the heat kernel, which is determined by:
  \begin{align*}
  \begin{cases}
    &\square K(\cdot,\cdot;y,s) =0,\\
    &\square^{*} K(x,t;\cdot,\cdot) =0,   \\
    &\lim_{t\searrow s} K(\cdot,t;y,s)=\delta_{y},\\
    &\lim_{s\nearrow t} K(x,t;\cdot,s)=\delta_{x}.
        \end{cases}
  \end{align*}\index{$K(x, t;y;s)$}
where $\square:=\partial_t-\Delta$\index{$\square$} and $\square^*:=-\partial_t-\Delta+\scal$\index{$\square^*$}. 

\begin{defn}
	The \textbf{conjugate heat kernel measure} $\nu_{x^*;s}$\index{$\nu_{x^*;s}$} based at $x^*=(x, t)$ is defined as
	\begin{align*}
		\mathrm{d}\nu_{x^*;s}=\mathrm{d}\nu_{x,t;s}:=K(x,t;\cdot,s)\,\mathrm{d}V_{g(s)}.
	\end{align*}		
	It is clear that $\nu_{x^*;s}$ is a probability measure on $M$. If we set 
	\begin{align*}
		\mathrm{d}\nu_{x^*;s}=(4\pi(t-s))^{-n/2}e^{-f_{x^*}(\cdot,s)}\,\mathrm{d}V_{g(s)},
	\end{align*}	
	then the function $f_{x^*}$\index{$f_{x^*}$} is called the \textbf{potential function} at $x^*$ which satisfies:
	\begin{equation*}
		-\partial_s f_{x^*}=\Delta f_{x^*}-|\nabla f_{x^*}|^2+\scal-\frac{n}{2(t-s)}.
	\end{equation*}	
\end{defn}

Next, we recall the definitions of Nash entropy and $\WW$-entropy based at a spacetime point $x^*$.

\begin{defn}\label{defnentropy}
	The \textbf{Nash entropy} based at $x^*\in \XX$ is defined by
	\begin{equation*}
		\NN_{x^*}(\tau):= \int_M f_{x^*}\,\mathrm{d}\nu_{x^*;\t(x^*)-\tau}-\frac{n}{2},
	\end{equation*}\index{$\NN_{x^*}(\tau)$}
	for $\tau>0$ with $\t(x^*)-\tau\in I$, where $f_{x^*}$ is the potential function at $x^*$. 
	Moreover, the $\WW$-entropy based at $x^*$ is defined by
	\begin{equation}\label{defWentropy}
		\WW_{x^*}(\tau):= \int_M\tau(2\Delta f_{x^*}-|\na f_{x^*}|^2+\scal)+f_{x^*}-n \,\mathrm{d}\nu_{x^*;\t(x^*)-\tau}.
	\end{equation}\index{$\WW_{x^*}(\tau)$}
\end{defn}

 The following proposition gives basic properties of Nash entropy (see \cite[Section 5]{bamler2020entropy}):
\begin{prop}\label{propNashentropy}
	For any $x^* \in \XX$ with $\t(x^*)-\tau \in I$ and $\scal(\cdot, \t(x^*)-\tau) \ge R_{\min}$, we have the following inequalities:
	\begin{enumerate}[label=\textnormal{(\roman{*})}]
		\item $\displaystyle -\frac{n}{2\tau}+R_{\min} \leq \frac{\dd}{\dd\tau}\NN_{x^*}(\tau)\leq 0;$
		\item $\displaystyle \frac{\dd}{\dd\tau}\lc\tau \NN_{x^*}(\tau)\rc=\WW_{x^*}(\tau)\leq 0;$
		\item $\displaystyle \frac{\dd^2}{\dd\tau^2}\lc\tau\NN_{x^*}(\tau)\rc=-2\tau\int_M \abs{\Ric+\nabla^2 f_{x^*}-\frac{1}{2\tau}g}^2\,\mathrm{d}\nu_{x^*;\t(x^*)-\tau}\leq 0$.
	\end{enumerate}	
\end{prop}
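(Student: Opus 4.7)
My plan is to establish the three items in the order (ii) $\to$ (iii) $\to$ (i), since (ii) is an algebraic identity, (iii) yields the concavity of $\tau \mapsto \tau \NN_{x^*}(\tau)$, and (i) is extracted by combining these with a direct first-variation formula. The inputs I will use are: the evolution equation for $f_{x^*}$ displayed just before Definition \ref{defnentropy}, the conjugate heat equation $\square^* K = 0$, the identity $\na K = -K\na f_{x^*}$ obtained by logarithmically differentiating $K = (4\pi\tau)^{-n/2}e^{-f_{x^*}}$, and Perelman's classical pointwise monotonicity formula.

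\textbf{First derivative formula and proof of (ii).} I will differentiate $\NN_{x^*}(\tau) = \int_M f_{x^*}\,\dd\nu_{x^*;\t(x^*)-\tau} - n/2$ under the integral sign by setting $s=\t(x^*)-\tau$ and substituting $\partial_s f_{x^*} = -\De f_{x^*} + |\na f_{x^*}|^2 - \scal + \frac{n}{2\tau}$, $\partial_s K = -\De K + \scal K$, and $\partial_s \dd V_{g(s)} = -\scal\,\dd V_{g(s)}$. The key integration by parts $\int_M \De f_{x^*}\,\dd\nu = \int_M |\na f_{x^*}|^2\,\dd\nu$ and $\int_M f_{x^*}\De K\,\dd V = \int_M |\na f_{x^*}|^2 K\,\dd V$, both consequences of $\na K = -K\na f_{x^*}$, yield after cancellation
\begin{equation*}
\frac{\dd}{\dd\tau}\NN_{x^*}(\tau) = \int_M\bigl(|\na f_{x^*}|^2 + \scal\bigr)\,\dd\nu_{x^*;\t(x^*)-\tau} - \frac{n}{2\tau}.
\end{equation*}
The lower bound of (i) follows immediately from $\int |\na f_{x^*}|^2\,\dd\nu \ge 0$ and $\int \scal\,\dd\nu \ge R_{\min}$. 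Using the same integration by parts on the $2\De f_{x^*}$ term in \eqref{defWentropy}, the definition rewrites as $\WW_{x^*}(\tau) = \tau\int(|\na f_{x^*}|^2+\scal)\,\dd\nu + \NN_{x^*}(\tau) - n/2 = \tau \NN'_{x^*}(\tau) + \NN_{x^*}(\tau)$, which is exactly $\frac{\dd}{\dd\tau}(\tau\NN_{x^*}(\tau))$, proving the identity of (ii).

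\textbf{Proof of (iii).} Here I will invoke Perelman's pointwise identity
\begin{equation*}
\square^*\Bigl\{\bigl[\tau(2\De f_{x^*} - |\na f_{x^*}|^2 + \scal) + f_{x^*} - n\bigr]K\Bigr\} = -2\tau\Bigl|\Ric + \na^2 f_{x^*} - \frac{1}{2\tau}g\Bigr|^2 K,
\end{equation*}
which is obtained by a Bochner-type calculation combining the evolution equations of $\scal$, $\De f_{x^*}$, and $|\na f_{x^*}|^2$ under Ricci flow with the conjugate heat equation for $K$. Integrating over $M$ at time $s=\t(x^*)-\tau$, using $\int_M \square^* v\,\dd V_{g(s)} = -\frac{\dd}{\dd s}\int_M v\,\dd V_{g(s)}$ on the closed manifold, and converting $\dd s = -\dd\tau$, I obtain
\begin{equation*}
\frac{\dd \WW_{x^*}}{\dd\tau}(\tau) = -2\tau\int_M\Bigl|\Ric + \na^2 f_{x^*} - \frac{1}{2\tau}g\Bigr|^2\,\dd\nu_{x^*;\t(x^*)-\tau}\le 0.
\end{equation*}
Combined with (ii) this is precisely the identity in (iii).

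\textbf{Upper bound in (i).} Finally, for $\NN'_{x^*}(\tau) \le 0$, I use standard heat kernel asymptotics as $\tau \searrow 0$ to obtain $\WW_{x^*}(\tau) \to 0$ and $\tau\NN_{x^*}(\tau) \to 0$. Integrating (ii) from $0$ gives $\tau\NN_{x^*}(\tau) = \int_0^\tau \WW_{x^*}(s)\,\dd s$, so $\NN_{x^*}(\tau)$ is the average of $\WW_{x^*}$ on $[0,\tau]$. Since $\WW_{x^*}$ is nonincreasing by (iii), this average dominates the endpoint: $\NN_{x^*}(\tau) \ge \WW_{x^*}(\tau)$. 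Substituting into the rearrangement $\NN'_{x^*}(\tau) = (\WW_{x^*}(\tau) - \NN_{x^*}(\tau))/\tau$ of (ii) yields the upper bound, completing (i). The only substantive obstacle is Perelman's pointwise identity invoked in the third step; since it is classical and all three statements are proved in \cite{bamler2020entropy}, I would cite it directly rather than reproduce the Bochner computation.
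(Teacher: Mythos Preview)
Your proof is correct; the paper itself does not prove this proposition but simply cites \cite[Section 5]{bamler2020entropy}, as you anticipate in your final remark. Your derivation of the first-variation formula, the identification with $\WW_{x^*}$, the use of Perelman's pointwise monotonicity identity, and the averaging argument for the upper bound in (i) are all standard and accurate.
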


As in \cite{fang2025RFlimit}, we have the following definition.

\begin{defn}\label{def:entropybound}
	A closed Ricci flow $\XX=\{M^n,(g(t))_{t\in I}\}$ is said to have entropy bounded below by $-Y$ at $x^*\in \XX$ if 
	\begin{align}\label{entropybd-Y}
		\inf_{\tau>0}\NN_{x^*}(\tau)\geq -Y,
	\end{align}
	where the infimum is taken over all $\tau>0$ whenever the Nash entropy $\NN_{x^*}(\tau)$ is well-defined. 
	
	Moreover, we say that the Ricci flow $\XX$ has entropy bounded below by $-Y$ if \eqref{entropybd-Y} holds for all $x^*\in\XX$.
\end{defn}

We have the following result from \cite[Propositions 3.12, 3.13]{bamler2020entropy}. Here, the definition of $H$-center can be found in \cite[Definition 2.13]{fang2025RFlimit}. Note that by \cite[Corollary 3.8]{bamler2020entropy}, an $H_n$-center of $x_0^*$, where $H_n:=(n-1)\pi^2/4+2$\index{$H_n$}, must exist for any $t<\t(x_0^*)$.

\begin{prop}\label{existenceHncenter}
Any two $H$-centers $(z_1, t)$ and $(z_2, t)$ of $x_0^*$ satisfy $d_t(z_1, z_2) \le 2 \sqrt{H(\t(x_0^*)-t)}$. Moreover, if $(z,t)$ is an $H$-center of $x_0^* \in \XX$, then for any $L>0$, we have
	\begin{align*}
		\nu_{x^*_0;t}\lc B_t \lc z,\sqrt{LH (\t(x_0^*)-t)} \rc \rc\geq 1-L^{-1}.
	\end{align*}
\end{prop}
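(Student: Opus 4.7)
The plan is to deduce both assertions directly from the definition of an $H$-center, namely the variance bound $\int_M d_t(z,\cdot)^2\,\dd\nu_{x_0^*;t}\le H(\t(x_0^*)-t)$, together with elementary inequalities for probability measures (the triangle inequality for $W_2$ and Chebyshev's inequality). No Ricci-flow input beyond the $H$-center definition is needed here; the geometric content has already been absorbed into the existence of such centers.

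For the first assertion, I would observe that for any point $z$, one has $W_2^2(\delta_z,\nu_{x_0^*;t})=\int_M d_t(z,\cdot)^2\,\dd\nu_{x_0^*;t}$, so that the defining variance bound of an $H$-center translates into
\begin{equation*}
W_2(\delta_{z_i},\nu_{x_0^*;t})\le \sqrt{H(\t(x_0^*)-t)}, \qquad i=1,2.
\end{equation*}
The triangle inequality for the $L^2$-Wasserstein distance, applied to the three measures $\delta_{z_1}$, $\nu_{x_0^*;t}$, $\delta_{z_2}$, together with $W_2(\delta_{z_1},\delta_{z_2})=d_t(z_1,z_2)$, then immediately gives the stated bound $d_t(z_1,z_2)\le 2\sqrt{H(\t(x_0^*)-t)}$.

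For the second assertion, I would apply Chebyshev's (Markov's) inequality to the nonnegative function $y\mapsto d_t(z,y)^2$ with respect to the probability measure $\nu_{x_0^*;t}$. This yields
\begin{equation*}
\nu_{x_0^*;t}\!\left(\{y:d_t(z,y)\ge \sqrt{LH(\t(x_0^*)-t)}\}\right)\le \frac{\int_M d_t(z,\cdot)^2\,\dd\nu_{x_0^*;t}}{LH(\t(x_0^*)-t)}\le L^{-1},
\end{equation*}
where the last step uses the $H$-center bound. Taking complements in $M$ gives the claimed concentration inequality.

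I do not expect any serious obstacles here: both conclusions are formal consequences of the defining variance estimate of an $H$-center plus standard facts from measure theory, and none of the more delicate heat-kernel analysis is invoked. The only mild care needed is to verify that the $W_2$ triangle inequality is being applied to probability measures with finite second moment, which is immediate since $M$ is compact (and since the $H$-center bound provides the relevant finite second moment).
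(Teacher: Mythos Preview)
Your proposal is correct and is exactly the standard argument: the paper does not give its own proof but simply cites \cite[Propositions 3.12, 3.13]{bamler2020entropy}, where the argument is precisely the $W_2$ triangle inequality for the first part and Chebyshev's inequality for the second, as you wrote.
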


For later applications, we need the following $L^p$-Poincar\'e inequality, proved by \cite[Theorem 1.10]{hein2014new} and \cite[Theorem 11.1]{bamler2020entropy}.

\begin{thm}[Poincar\'e inequality]\label{poincareinequ}
	Let $\flow$ be a closed Ricci flow with $x_0^*=(x_0,t_0)\in\XX$. Suppose $\tau>0$ with $t_0-\tau \in I$, and $h\in C^1(M)$ with $\int_M h \,\mathrm{d}\nu_{x_0^*;t_0-\tau}=0$. Then for any $p\geq 1$, 
	\begin{align*}
		\int_M |h|^p\,\mathrm{d}\nu_{x_0^*;t_0-\tau}\leq C(p)\tau^{p/2}\int_M |\nabla h|^p\,\mathrm{d}\nu_{x_0^*;t_0-\tau}.
	\end{align*}
	Here, we can choose $C(1)=\sqrt{\pi}$ and $C(2)=2$. 
\end{thm}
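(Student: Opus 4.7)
The plan is to prove Theorem \ref{poincareinequ} via a forward heat-flow argument, following Hein--Naber for $p=2$ and Bamler for general $p$. The setup is as follows. Let $u:M\times[t_0-\tau,t_0]\to\R$ solve the forward heat equation $\square u=0$ with $u(\cdot,t_0-\tau)=h$. Differentiating $\int_M u\,\dd\nu_{x_0^*;s}$ in $s$ and using $\square u=0$, $\square^*K(x_0,t_0;\cdot,\cdot)=0$, and $\partial_s\,\dd V_{g(s)}=-\scal\,\dd V_{g(s)}$, one checks that this integral is constant in $s$ after an integration by parts on the closed manifold. Evaluating at $s=t_0-\tau$ gives $0$ by the mean-zero hypothesis, and sending $s\nearrow t_0$ (where $\nu_{x_0^*;s}\to\delta_{x_0}$) then forces $u(x_0,t_0)=0$.

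For the case $p=2$, the same computation applied to $\phi(u)$ for any smooth convex $\phi$ gives
\begin{equation*}
\frac{\dd}{\dd s}\int_M \phi(u)\,\dd\nu_{x_0^*;s}=-\int_M \phi''(u)\,|\na u|^2\,\dd\nu_{x_0^*;s}.
\end{equation*}
Taking $\phi(x)=x^2$ and integrating from $t_0-\tau$ to $t_0$, with the right endpoint vanishing because $u(x_0,t_0)=0$, yields
\begin{equation*}
\int_M h^2\,\dd\nu_{x_0^*;t_0-\tau}=2\int_{t_0-\tau}^{t_0}\int_M|\na u|^2\,\dd\nu_{x_0^*;s}\,\dd s.
\end{equation*}
The key geometric input is a Bochner computation on the Ricci-flow spacetime: the Ricci term arising from $\partial_tg^{ij}=2R^{ij}$ cancels exactly the one in the usual Bochner identity, producing $\square|\na u|^2=-2|\na^2 u|^2\le 0$. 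Re-applying the integration-against-$\nu$ identity shows $s\mapsto\int|\na u|^2\,\dd\nu_{x_0^*;s}$ is nonincreasing and hence bounded above by its value $\int|\na h|^2\,\dd\nu$ at $s=t_0-\tau$. Substituting gives the inequality with $C(2)=2$.

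For general $p\ge 1$, I would repeat the $\phi$-computation with $\phi(x)=|x|^p$, regularized by $\phi_\ep(x)=(x^2+\ep^2)^{p/2}-\ep^p$ when $p\in[1,2)$, to obtain
\begin{equation*}
\int_M|h|^p\,\dd\nu=p(p-1)\int_{t_0-\tau}^{t_0}\int_M|u|^{p-2}|\na u|^2\,\dd\nu_{x_0^*;s}\,\dd s.
\end{equation*}
Combining this with a spacetime coupling/Bismut-type representation of $h(y)=u(y,t_0-\tau)-u(x_0,t_0)$ as a line integral of $|\na u|$ against a martingale on the Ricci-flow spacetime, together with the gradient-contraction estimate along the heat flow and a H\"older rearrangement, produces the desired inequality with an explicit constant $C(p)$; the numerical value $C(1)=\sqrt{\pi}$ is obtained by comparison with the one-dimensional Gaussian extremizer, as carried out in \cite{bamler2020entropy}. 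The main technical obstacle is the lack of $C^2$-regularity of $|x|^p$ when $p\in[1,2)$: one must pass $\ep\to 0$ by dominated convergence, with the integrability of $|u|^{p-2}|\na u|^2$ controlled either through a gradient maximum principle near degenerate critical sets of $u$, or via Gaussian heat-kernel bounds available on a closed Ricci flow. The mean-zero assumption enters the proof only to guarantee the endpoint identity $u(x_0,t_0)=0$; everything else is a standard parabolic calculation on the interval $[t_0-\tau,t_0]$.
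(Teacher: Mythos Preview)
The paper does not give its own proof of this statement: it is quoted as a known result, with a direct citation to \cite[Theorem 1.10]{hein2014new} for $p=2$ and \cite[Theorem 11.1]{bamler2020entropy} for general $p\ge 1$. Your proposal is precisely a sketch of those two referenced arguments, so there is nothing to compare against a ``paper proof''---you are reconstructing the very proofs the paper invokes.

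As a sketch of those arguments, your $p=2$ outline is correct and complete: the conservation of $\int u\,\dd\nu$, the endpoint condition $u(x_0,t_0)=0$, the identity $\square|\na u|^2=-2|\na^2 u|^2$, and the resulting monotonicity of $\int|\na u|^2\,\dd\nu$ are exactly the Hein--Naber steps and yield $C(2)=2$. For general $p$ your description is accurate in spirit but compresses the hardest part: Bamler's proof of the sharp constant (and in particular $C(1)=\sqrt{\pi}$) does not proceed by simply taking $\phi(x)=|x|^p$ in the same identity and estimating; rather, it goes through a probabilistic coupling/Bismut representation of $h$ in terms of a stochastic line integral of $\na u$ and then applies a sharp Gaussian moment bound. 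Your phrase ``Bismut-type representation \ldots\ and a H\"older rearrangement'' gestures at this, but the passage from the $|u|^{p-2}|\na u|^2$ identity you wrote to the final inequality with the stated constants is not a routine dominated-convergence step---it is where the substance of \cite[Theorem 11.1]{bamler2020entropy} lies. If you intend to present a self-contained proof rather than a citation, that is the part that needs to be fleshed out.
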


Also, the following integral estimates from \cite[Proposition 6.2]{bamler2020structure} will be frequently used.

\begin{prop}\label{integralbound}
	There exists $\bar \theta=\bar \theta(n)>0$ such that the following holds. Let $\flow$ be a smooth closed Ricci flow. Assume $x_0^*=(x_0,t_0)\in\XX$ with $[t_0-2r^2,t_0]\subset I$ and set $\mathrm{d}\nu_t=\mathrm{d}\nu_{x_0,t_0;t}=(4\pi\tau)^{-n/2}e^{-f}\mathrm{d}V_{g(t)}$, where $\tau=t_0-t$. Assume $\NN_{x_0^*}(2r^2)\geq -Y$ for some $r>0$, then for any $0<\chi \leq 1/2$ and $\theta\in [0,\bar \theta]$,
	\begin{align*}
		\int_{-r^2}^{-\chi r^2}\int_M\lc\tau|\Ric|^2+\tau |\na^2 f|^2+|\na f|^2+\tau |\na f|^4+\tau^{-1}e^{\theta f}+\tau^{-1}\rc e^{\theta f}\mathrm{d}\nu_t \mathrm{d}t&\leq C(n,Y)|\log\chi|,\\
		\int_M\lc \tau |\scal|+\tau |\Delta f|+\tau|\na f|^2+e^{\theta f}+1\rc e^{\theta f}\mathrm{d}\nu_{-r^2}&\leq C(n,Y).
	\end{align*}
\end{prop}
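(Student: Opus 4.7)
The plan is to follow the now-standard strategy of Bamler from \cite{bamler2020structure}, whose Proposition 6.2 is the stated result. The two inequalities are really consequences of three ingredients: (a) exponential concentration of the potential function $f$ around the Nash entropy, (b) the Perelman second-derivative identity for $\tau\NN_{x_0^*}(\tau)$ packaged in Proposition \ref{propNashentropy}(iii), and (c) Bochner and integration-by-parts manipulations weighted against the conjugate heat kernel measure $\mathrm{d}\nu_t$.

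The first step is to upgrade the $L^p$-Poincar\'e inequality of Theorem \ref{poincareinequ} to a Gaussian/Herbst tail bound for $f$. Concretely, applying Theorem \ref{poincareinequ} to $h=e^{\theta(f-\NN_{x_0^*}(\tau))/2}-\text{const}$ and using the rough bound $|\nabla f|^2\le C\tau^{-1}(1+|f-\NN_{x_0^*}(\tau)|)$ that follows from the standard heat kernel gradient estimate $|\nabla f|^2\le C\tau^{-1}(f-\NN_{x_0^*}(\tau)+A)$, one obtains, for a dimensional constant $\bar\theta=\bar\theta(n)$ and all $\theta\in[0,\bar\theta]$,
\begin{equation*}
\int_M e^{\theta f}\,\mathrm{d}\nu_t\le C(n,Y),
\end{equation*}
uniformly in $t\in[t_0-r^2,t_0-\chi r^2]$. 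Together with $\NN_{x_0^*}(2r^2)\ge -Y$ this immediately disposes of all the purely $\tau^{-1}e^{\theta f}$ and $\tau^{-1}e^{2\theta f}$ contributions in both inequalities, and of the bare $e^{\theta f}$ term on the time slice $t=-r^2$.

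The second step extracts the quadratic curvature and Hessian terms. Integrating the identity in Proposition \ref{propNashentropy}(iii) in $\tau$ and using the entropy lower bound gives
\begin{equation*}
\int_{\chi r^2}^{r^2}\!\!\int_M 2\tau\Bigl|\Ric+\nabla^2 f-\tfrac{1}{2\tau}g\Bigr|^2\mathrm{d}\nu_{t_0-\tau}\,\mathrm{d}\tau
=\WW_{x_0^*}(\chi r^2)-\WW_{x_0^*}(r^2)\le C(n,Y).
\end{equation*}
Expanding the square and using the trace relation $\scal+\Delta f-|\nabla f|^2+\tau^{-1}(f-n/2-\NN)=0$ obtained from the conjugate heat equation, one recovers $\tau|\Ric|^2$ and $\tau|\nabla^2 f|^2$ separately, up to cross terms of the form $\Ric\cdot\nabla^2 f$ that are handled by a weighted Bochner identity. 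To insert the weight $e^{\theta f}$ one simply carries the weight through the same computation: the extra commutator terms produce factors like $\tau\theta\nabla f\cdot(\ldots)$ that are absorbed by Cauchy--Schwarz once $\bar\theta(n)$ is chosen small. The $|\log\chi|$ in the final constant comes precisely from the $\int_\chi^1 \tau^{-1}\,\mathrm{d}\tau$ contribution of the $\tau^{-1}e^{\theta f}$ and $\tau^{-1}e^{2\theta f}$ terms.

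The third step is to control the gradient terms $|\nabla f|^2 e^{\theta f}$ and $\tau|\nabla f|^4 e^{\theta f}$. The linear-in-$|\nabla f|^2$ piece follows from the weighted integration by parts
\begin{equation*}
\int_M |\nabla f|^2 e^{\theta f}\,\mathrm{d}\nu=-\tfrac{1}{1-\theta}\!\int_M (\Delta f)e^{\theta f}\,\mathrm{d}\nu+\tfrac{1}{1-\theta}\!\int_M|\nabla f|^2 e^{\theta f}\,\mathrm{d}\nu,
\end{equation*}
after isolating $\Delta f$ via the conjugate heat equation and using the Nash/Poincar\'e bounds. The quartic piece $\tau|\nabla f|^4 e^{\theta f}$ is then obtained by Bochner: integrating $\square|\nabla f|^2$ against $e^{\theta f}\mathrm{d}\nu$ produces exactly $2\tau|\nabla^2 f|^2 e^{\theta f}$ on one side (already controlled) together with terms involving $|\nabla f|^4$ and $\Ric(\nabla f,\nabla f)$, which are absorbed via Young's inequality into the Hessian and Ricci estimates of step two plus lower-order exponentials handled in step one. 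The inequality on the fixed time slice $t=-r^2$ is obtained by the same manipulations run pointwise in $\tau$, with the bounds now pre-integrated in time on the slightly larger window $[t_0-2r^2,t_0-r^2]$ to avoid a logarithmic divergence.

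The main technical hurdle is calibrating the constant $\bar\theta(n)$: every weighted integration by parts or Bochner step generates additional $\theta$-dependent cross terms whose control requires $\bar\theta$ small compared to the Poincar\'e constants from Theorem \ref{poincareinequ} and to the numerical factors in the heat kernel gradient estimate. Once $\bar\theta$ is fixed small enough that every Cauchy--Schwarz absorption step closes, the rest of the argument is essentially bookkeeping driven by Perelman's monotonicity and the Gaussian concentration of $f$.
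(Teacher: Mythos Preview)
The paper does not give its own proof of this proposition; it simply quotes it from \cite[Proposition 6.2]{bamler2020structure}. Your proposal correctly identifies that source and outlines Bamler's argument. The three-step structure---exponential concentration of $f$, Perelman's monotonicity packaged as Proposition \ref{propNashentropy}(iii), and weighted Bochner/integration-by-parts to separate the individual terms---is the right framework, and the appearance of $|\log\chi|$ from the $\tau^{-1}$ integration is correctly located.

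A couple of formulas in your outline are imprecise. The displayed identity in step three is garbled: the clean version is $\int_M(\Delta f)e^{\theta f}\,\mathrm{d}\nu=(1-\theta)\int_M|\nabla f|^2e^{\theta f}\,\mathrm{d}\nu$, without the extra copy of $|\nabla f|^2e^{\theta f}$ on the right. More substantively, in step one the exponential integrability of $f$ is not obtained by applying the Poincar\'e inequality to $e^{\theta f/2}$ (which would be circular without an a priori gradient bound); in Bamler's proof it comes instead from the pointwise differential Harnack estimate $\tau|\nabla f|^2\le f-\WW_{x_0^*}(\tau)$, after which a short ODE/iteration argument in $\theta$ bounds $\int_M e^{\theta f}\,\mathrm{d}\nu$. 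These are minor inaccuracies in an outline; the overall plan matches the cited proof.
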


We conclude this section with the following weighted Bianchi identity, whose proof can be found in \cite[Lemma 2.5]{FLloja05}. Here, we use $\di$ to denote the divergence operator and $\delf:=e^f \circ \mathrm{div} \circ e^{-f}$ the weighted divergence operator.

\begin{lem}\label{weightedBianchilem}
Given a spacetime point $x_0^*\in \XX$ with $f=f_{x_0^*}$ and $\tau=\t(x_0^*)-t$, the following weighted Bianchi identity holds:
	\begin{equation*}
		\nabla \lc \tau(2\Delta f-|\nabla f|^2+\scal)+f-n \rc=2\delf \lc \tau \Ric+\nabla^2(\tau f)-\frac{g}{2} \rc.
	\end{equation*}
\end{lem}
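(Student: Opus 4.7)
The plan is a direct component–wise calculation, based on expanding the weighted divergence operator using the identity
\[
\delf T = \Div T - T(\nabla f, \cdot),
\]
which holds for any symmetric $2$-tensor $T$ since $\delf = e^f \circ \Div \circ e^{-f}$. Since $\tau$ depends only on time, $\nabla^2(\tau f)=\tau\nabla^2 f$, and the right-hand side splits into three pieces: $2\tau\,\delf(\Ric)$, $2\tau\,\delf(\nabla^2 f)$, and $-\delf(g)$, each of which I will handle separately.

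For the Ricci term, I apply the contracted second Bianchi identity $\Div(\Ric)=\tfrac{1}{2}\nabla\scal$, giving
\[
2\tau\,\delf(\Ric)_j = \tau\,\nabla_j\scal - 2\tau\,\Ric_{ij}\nabla^i f.
\]
For the Hessian term, the key ingredient is the standard commutation (Bochner) identity
\[
\Delta(\nabla_j f) = \nabla_j\Delta f + \Ric_{jk}\nabla^k f,
\]
together with $\nabla_i\nabla_j f\,\nabla^i f = \tfrac{1}{2}\nabla_j|\nabla f|^2$, which yields
\[
2\tau\,\delf(\nabla^2 f)_j = 2\tau\,\nabla_j\Delta f + 2\tau\,\Ric_{ij}\nabla^i f - \tau\,\nabla_j|\nabla f|^2.
\]
For the metric term, $\Div(g)=0$ and $g(\nabla f,\cdot)=\nabla f$ give $\delf(g)=-\nabla f$, so $-\delf(g)_j = \nabla_j f$.

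Adding the three contributions, the cross terms $\pm 2\tau\,\Ric_{ij}\nabla^i f$ cancel exactly, leaving
\[
\tau\,\nabla_j\scal + 2\tau\,\nabla_j\Delta f - \tau\,\nabla_j|\nabla f|^2 + \nabla_j f = \nabla_j\!\left(\tau(2\Delta f - |\nabla f|^2 + \scal) + f - n\right),
\]
where the constant $-n$ may be inserted freely since it is annihilated by $\nabla$. This is the claimed identity.

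The computation is essentially mechanical, so no genuine obstacle arises; the only place to be careful is the Bochner commutation for $\Delta\nabla f$, since the Ricci terms it produces are precisely what is needed to cancel against those from $\delf(\Ric)$. If anything were to go wrong, it would be a sign-error in this commutation or in the convention $\delf T = \Div T - T(\nabla f,\cdot)$, so I would double-check those two ingredients before assembling the three pieces.
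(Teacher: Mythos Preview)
Your proof is correct: the direct expansion of $\delf$ on each of the three tensors, combined with the contracted Bianchi identity and the Bochner commutation $\Delta\nabla_j f=\nabla_j\Delta f+\Ric_{jk}\nabla^k f$, yields exactly the desired cancellation. The paper does not give its own proof but simply cites \cite[Lemma 2.5]{FLloja05}; your argument is the standard one and is what one would expect to find there.
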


\subsection{Noncollapsed Ricci flow limit spaces}

In this subsection, we review the construction of noncollapsed Ricci flow limit spaces and their key properties, as developed in \cite{fang2025RFlimit}.

As in \cite{fang2025RFlimit}, we consider the moduli space $\MM(n, Y, T)$\index{$\MM(n, Y, T)$} of closed Ricci flows defined as follows:

\begin{defn}[Moduli space]\label{defnmoduli}
	For fixed constants $T \in (0, +\infty]$ and $Y>0$, the moduli space $\MM(n, Y, T)$ consists of all $n$-dimensional closed Ricci flows $\XX=\{M^n,(g(t))_{t \in \III^{++}}\}$ satisfying
	\begin{enumerate}[label=\textnormal{(\roman{*})}]
		\item $\XX$ is defined on $\III^{++}:=[-T, 0]$.
		
		\item $\XX$ has entropy bounded below by $-Y$ (cf. Definition \ref{def:entropybound}).
	\end{enumerate}
\end{defn}

In addition, we set 
	\begin{align*}
\III^+:=[-0.99T,0], \quad \III:=[-0.98 T,0], \quad \III^-:=(-0.98 T,0].\index{$\III^{++}, \III^{+}, \III, \III^-$}
	\end{align*}
As noted in \cite{fang2025RFlimit}, these intervals may generally be chosen as $[-(1-\sigma)T, 0]$, $[-(1-2\sigma)T,0]$, and $(-(1-2\sigma)T,0]$, respectively, where $\sigma>0$ is an arbitrarily small parameter. For simplicity, we fix $\sigma=0.01$ in the present setting.

For any $\XX \in \MM(n, Y, T)$, we have the following definition of the spacetime distance on $M \times \III^+$:
\begin{defn}\label{defnd*distance}
	For any $x^*=(x,t), y^*=(y,s) \in M \times \III^+$ with $s \le t$, we define
	\begin{align*}
		d^*(x^*,y^*):=\inf_{r \in [\sqrt{t-s}, \sqrt{t+0.99 T})} \left\{r \mid d_{W_1}^{t-r^2} (\nu_{x^*;t-r^2},\nu_{y^*;t-r^2}) \le \ep_0 r\right\},
	\end{align*}\index{$d^*(x^*,y^*)$}
	If no such $r$ exists, we define $d^*(x^*,y^*):=\ep_0^{-1} d_{W_1}^{-0.99 T} (\nu_{x^*;-0.99 T},\nu_{y^*;-0.99 T})$. 
\end{defn}
Here, $\ep_0 \in (0, 1]$\index{$\ep_0$} is the spacetime distance constant depending on $n$ and $Y$ (see \cite[Definition 3.3]{fang2025RFlimit}). By \cite[Lemma 3.7]{fang2025RFlimit}, $d^*$ defines a distance function on $M \times \III^+$, which coincides with the standard topology on $M \times \III^+$ (see \cite[Corollary 3.11]{fang2025RFlimit}). The metric ball defined by $d^*$ is denoted by $B^*$.

The following weak compactness theorem is proved in \cite[Theorem 1.3]{fang2025RFlimit}.

\begin{thm}[Weak compactness]\label{thm:intro1}
	Given any sequence $\XX^i=\{M_i^n,(g_i(t))_{t  \in \III^{++}}\} \in \MM(n, Y, T)$ with base points $p_i^* \in M_i \times \III$ \emph{(}when $T=+\infty$, we additionally assume $\limsup_{i \to \infty} \t_i(p_i^*)>-\infty$\emph{)}, by taking a subsequence if necessary, we obtain the pointed Gromov-Hausdorff convergence
	\begin{align} \label{eq:constar}
		(M_i \times \III, d^*_i, p_i^*,\t_i) \xrightarrow[i \to \infty]{\quad \mathrm{pGH} \quad} (Z, d_Z, p_{\infty},\t),
	\end{align}
	where $d^*_i$ denotes the restriction of the $d^*$-distance on $M_i \times \III$, and $\t_i$ is the standard time-function on $M_i \times \III$. The limit space $(Z, d_Z,\t)$ is a complete, separable, locally compact parabolic space.
\end{thm}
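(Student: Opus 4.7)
The plan is to apply Gromov's pointed pre-compactness theorem to the metric spaces $(M_i \times \III, d^*_i, p_i^*)$ and then verify that the limit carries a time-function satisfying the parabolic inequality. The argument breaks into three stages: (i) a uniform local covering bound for $d^*$-balls, (ii) extraction of a pGH convergent subsequence, and (iii) construction of the limit time-function.

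For stage (i), I would show that for every $R, \delta>0$ there exists $N=N(n,Y,T,R,\delta)$, independent of $i$, such that $B^*(p_i^*, R)$ is covered by $N$ balls of $d^*_i$-radius $\delta$. The definition of $d^*$ immediately forces $|\t_i(x^*)-\t_i(p_i^*)|\le d^*_i(x^*,p_i^*)^2\le R^2$, since any admissible $r$ in the infimum of Definition \ref{defnd*distance} must satisfy $r\ge \sqrt{t-s}$. Thus $B^*(p_i^*,R)$ is contained in a time strip of width $2R^2$, which can be split into $O(R^2/\delta^2)$ discrete time-slices; it suffices to cover each slice. On a fixed slice, the $d^*_i$-distance between $(x,t)$ and $(y,t)$ is, up to the constant $\ep_0$, comparable to $d_{W_1}(\nu_{x,t;t-r^2},\nu_{y,t;t-r^2})/r$ for an appropriate scale $r\asymp\delta$. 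Using the $H_n$-center concentration of $\nu_{p_i^*;t-r^2}$ from Proposition \ref{existenceHncenter}, the Gaussian-type heat kernel bounds available under the uniform lower entropy bound $-Y$, and the $L^p$ integrability estimates of Proposition \ref{integralbound}, one controls the effective $W_1$-diameter of the relevant portion of the time-slice and extracts a uniform finite $\ep_0\delta$-cover. This is essentially the $\mathbb{F}$-convergence compactness from Bamler's work, translated into the $d^*$-formulation of \cite{fang2025RFlimit}.

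For stage (ii), the uniform local total boundedness from (i) together with the standard Gromov pre-compactness theorem yields a subsequence such that
\begin{equation*}
(M_i \times \III, d^*_i, p_i^*) \xrightarrow[i \to \infty]{\mathrm{pGH}} (Z, d_Z, p_\infty),
\end{equation*}
where $(Z,d_Z)$ is a complete, separable, locally compact metric space: completeness is inherited by pGH limits of complete spaces, separability follows from the countable covering numbers, and local compactness follows since closed bounded subsets are totally bounded and complete. For stage (iii), the inequality $|\t_i(x^*)-\t_i(y^*)|\le d^*_i(x^*,y^*)^2$ gives a uniform modulus of continuity for the $\t_i$. Passing this through $\ep_i$-GH approximations via a diagonal Arzel\`a--Ascoli argument produces a limit function $\t:Z\to \III$ satisfying $|\t(x)-\t(y)|\le d_Z(x,y)^2$ for every $x,y\in Z$, which is precisely the parabolic-space condition.

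The main obstacle is stage (i). While each individual ingredient -- Gaussian heat kernel bounds, concentration of conjugate heat kernel measures near $H_n$-centers, and the Wasserstein characterization of $d^*$ -- is a standard consequence of the entropy lower bound, the delicate point is to obtain covering estimates that are effective for all times $t \in \III$ \emph{simultaneously}, including those approaching the past endpoint of $\III^{++}$ where the backward window $r^2\le t+0.99T$ in Definition \ref{defnd*distance} cannot be chosen freely. This is the reason the interval $\III=[-0.98T,0]$ is chosen strictly inside $\III^{++}=[-T,0]$; once the scale-gap is exploited to produce a uniform intermediate backward scale, the Wasserstein comparison and Gaussian heat kernel estimates feed directly into the Gromov pre-compactness machinery and the remaining stages are routine.
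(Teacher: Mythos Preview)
The paper does not prove Theorem~\ref{thm:intro1}; it is stated in the preliminaries section and explicitly attributed to \cite[Theorem~1.3]{fang2025RFlimit}, the companion paper where the spacetime distance $d^*$ and the weak-compactness theory are developed. So there is no proof in this paper to compare your proposal against.

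That said, your sketch is a plausible outline of how such a theorem is proved, and the ingredients you name --- uniform total boundedness of $d^*$-balls via heat-kernel concentration and entropy bounds, Gromov precompactness, and passage of the $2$-H\"older time-function to the limit --- are the natural ones. Your identification of stage~(i) as the main obstacle is correct, and your remark about the role of the gap between $\III$ and $\III^{++}$ is on point. One caution: the slice-by-slice covering argument you describe is more delicate than you suggest, because $d^*$ is not a simple product of a spatial distance and a time distance, and the Wasserstein comparison at a single backward scale $r\asymp\delta$ does not by itself control the $d^*$-distance (which is an infimum over scales). The actual proof in \cite{fang2025RFlimit} likely proceeds through volume bounds for $d^*$-balls of the type recorded here as Proposition~\ref{prop:volumebound}, rather than a direct slice covering; that proposition already encodes the uniform local compactness needed for Gromov's theorem. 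If you want to reconstruct the argument, that is the cleaner route.
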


The limit space $(Z, d_Z,\t)$ is referred to as a \textbf{noncollapsed Ricci flow limit space} over $\III$. For the distance $d_Z$, we write $B_Z^*$ for the corresponding metric ball. $Z$ contains a regular part $\RR$, which is a dense open subset of $Z_{\III^-}$ (see \cite[Corollary 5.7]{fang2025RFlimit}) and carries the structure of a Ricci flow spacetime $(\RR, \t, \partial_\t, g^Z)$. On this regular part, the convergence described in Theorem \ref{thm:intro1} is smooth, in the following sense (cf. \cite[Theorem 1.5]{fang2025RFlimit}):

\begin{thm}[Smooth convergence]\label{thm:intro3}
	There exists an increasing sequence $U_1 \subset U_2 \subset \ldots \subset \RR$ of open subsets with $\bigcup_{i=1}^\infty U_i = \RR$, open subsets $V_i \subset M_i \times \III$, time-preserving diffeomorphisms $\phi_i : U_i \to V_i$ and a sequence $\ep_i \to 0$ such that the following holds:
	\begin{enumerate}[label=\textnormal{(\alph{*})}]
		\item We have
		\begin{align*}
			\Vert \phi_i^* g^i - g^Z \Vert_{C^{[\ep_i^{-1}]} ( U_i)} & \leq \ep_i, \\
			\Vert \phi_i^* \partial_{\t_i} - \partial_{\t} \Vert_{C^{[\ep_i^{-1}]} ( U_i)} &\leq \ep_i,
		\end{align*}
		where $g^i$ is the spacetime metric induced by $g_i(t)$, and $\partial_{\t_i}$ is the standard time vector field induced by $\t_i$.
		
		\item Let $y \in \RR$ and $y_i^* \in M_i \times \III$. Then $y_i^* \to y$ in the Gromov-Hausdorff sense if and only if $y_i^* \in V_i$ for large $i$ and $\phi_i^{-1}(y_i^*) \to y$ in $\RR$.
		
		\item For $U_i^{(2)}=\{(x,y) \in U_i \times U_i \mid \t(x)> \t(y)+\ep_i\}$, $V_i^{(2)}=\{(x^*,y^*) \in V_i \times V_i \mid \t_i(x^*)> \t_i(y^*)+\ep_i\}$ and $\phi_i^{(2)}:=(\phi_i, \phi_i): U_i^{(2)} \to V_i^{(2)}$, we have
		\begin{align*}
			\Vert  (\phi_i^{(2)})^* K^i-K_Z \Vert_{C^{[\ep_i^{-1}]} ( U_i^{(2)})} \le \ep_i,
		\end{align*}	\index{$K_Z(\cdot ;\cdot)$}
		where $K^i$ and $K_Z$ denote the heat kernels on $(M_i \times \III, g_i(t))$ and $(\RR, g^Z)$, respectively.
		
			\item If $z_i^* \in M_i \times \III$ converge to $z \in Z$ in Gromov--Hausdorff sense, then
\begin{align*}
K^i(z_i^*;\phi_i(\cdot)) \xrightarrow[i \to \infty]{C^{\infty}_\mathrm{loc}} K_Z(z;\cdot) \quad \text{on} \quad \RR_{(-\infty,\t(z))}.
\end{align*}	
		\item For each $t \in \III$, there are at most countable connected components of the time-slice $\RR_t$.
	\end{enumerate}
\end{thm}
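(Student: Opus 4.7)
The plan is to prove Theorem \ref{thm:intro3} in four steps: first establish local smooth convergence at any regular point via an $\ep$-regularity plus Hamilton compactness argument; then glue these along an exhaustion of $\RR$ to produce the global diffeomorphisms $\phi_i$; upgrade to smooth convergence of heat kernels by parabolic regularity; and finally handle (e) through second countability.

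First I would fix an exhaustion $\tilde U_1 \Subset \tilde U_2 \Subset \cdots \subset \RR$ of the regular part by relatively compact open sets. By the definition of $\RR$, the curvature radius $r_{\Rm}$ is positive on $\RR$ and bounded below by some $\rho_j > 0$ on each $\overline{\tilde U_j}$. For a sequence $y_i^* \in M_i \times \III$ converging in the $d^*$-pointed Gromov--Hausdorff sense to $y \in \overline{\tilde U_j}$, the entropy bound $\NN_{y_i^*}(\cdot)\geq -Y$ together with Bamler's $\ep$-regularity theorem (which converts closeness of conjugate heat kernel measures to curvature bounds) forces $r_{\Rm}(y_i^*) \geq \rho_j/2$ for all large $i$. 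Hamilton's compactness theorem, in the version adapted by Bamler to the noncollapsed setting with bounded entropy, then yields smooth pointed Cheeger--Gromov--Hamilton subconvergence of $(M_i\times \III, g^i, y_i^*)$ to some smooth Ricci flow spacetime around $y$; the limit must coincide with $(\RR, g^Z)$ because the conjugate heat measures based at $y_i^*$ converge weakly (by the earlier compactness results of \cite{fang2025RFlimit}) to the measure given by $K_Z(\cdot\,;\,\cdot)$. Covering each $\overline{\tilde U_j}$ by finitely many such smooth neighborhoods and gluing the local diffeomorphisms by a center-of-mass construction (à la Cheeger--Colding), then performing a diagonal extraction in $j$, produces the sequences $U_i$, $V_i$, $\phi_i$ and $\ep_i \to 0$ satisfying (a). Statement (b) is immediate from this construction: $\phi_i^{-1}(y_i^*)\to y$ implies $y_i^*\to y$ in GH by smoothness of the $\phi_i$ and the fact that $d^*$ is locally comparable to the Riemannian spacetime distance near regular points, and the converse follows by exhaustion together with injectivity of $\phi_i$ on $U_i$.

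For (c) and (d) I would use parabolic regularity for the heat equation on the spacetime. On any compact subset of $U_i^{(2)}$ the pullbacks $(\phi_i^{(2)})^* K^i$ solve the heat equation with respect to the smoothly converging metrics $\phi_i^* g^i$, are uniformly bounded by the Gaussian estimates on heat kernels available under the entropy lower bound, and have a uniform positive time separation. Smooth convergence of the metrics plus uniform bounds then yields $C^\infty_{\loc}$ convergence of the heat kernels; quantifying this via a diagonal extraction gives (c). Part (d) is the corresponding local statement at a base point $z_i^* \to z$ where $z$ need not be regular: the conjugate heat measures $\nu_{z_i^*;\cdot}$ converge weakly to the limit conjugate heat measure based at $z$ (this is an input from \cite{fang2025RFlimit}), and on each compact subset of $\RR_{(-\infty,\t(z))}$ the uniform Gaussian upper bounds combined with interior parabolic Schauder estimates promote this weak convergence of measures to $C^\infty_{\loc}$-convergence of the densities $K^i(z_i^*;\phi_i(\cdot))$ to $K_Z(z;\cdot)$.

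Finally, (e) is elementary: $\RR_t$ is a topological manifold, and as a subspace of the separable metric space $Z$ it is second countable, hence has at most countably many connected components. The main obstacle is step two, the construction of the $\phi_i$: one needs both a sufficiently strong $\ep$-regularity statement to transfer smoothness of $y\in\RR$ to curvature bounds at $y_i^*$, and a careful gluing/diagonalization to produce a single time-preserving diffeomorphism on $U_i$ whose error in $\phi_i^* g^i - g^Z$ and $\phi_i^*\partial_{\t_i}-\partial_\t$ decays in $C^{[\ep_i^{-1}]}$ simultaneously across all orders. Once this is achieved, the convergence statements for heat kernels in (c) and (d) reduce to standard parabolic estimates applied to a smoothly converging family of metrics.
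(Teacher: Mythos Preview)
This theorem is not proved in the present paper: it is stated in Section~2.2 purely as background, with the attribution ``cf.\ \cite[Theorem~1.5]{fang2025RFlimit}''. The paper offers no argument of its own for it, so there is no proof here to compare your proposal against.

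That said, your outline is broadly the right shape for how such a result is established in the literature (and presumably in \cite{fang2025RFlimit}): an $\epsilon$-regularity theorem transfers the regularity of $y\in\RR$ to uniform curvature-radius lower bounds at nearby points $y_i^*$, Hamilton--Bamler compactness upgrades Gromov--Hausdorff convergence to smooth Cheeger--Gromov--Hamilton convergence on an exhaustion, and parabolic estimates handle the heat kernels. One point to be careful about is that the very definition of the regular set $\RR$ and of the limit heat kernel $K_Z$ are themselves outputs of the construction in \cite{fang2025RFlimit}, not inputs you can assume; your sketch implicitly treats $\RR$, $g^Z$, and $K_Z$ as already given, whereas in the actual development they are built simultaneously with the smooth-convergence statement. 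Also, the identification of the Cheeger--Gromov limit with $(\RR,g^Z)$ requires more than weak convergence of conjugate heat measures---one needs the compatibility of the $d^*$-metric with the smooth structure near regular points, which is a nontrivial part of the foundational work. If you want to verify your outline in detail, you should consult \cite{fang2025RFlimit} directly.
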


For each $z \in Z$, we can assign a conjugate heat kernel measure $\mathrm{d}\nu_{z;s}:=K_Z(z;\cdot) \,\mathrm{d}V_{g^Z_s}$\index{$\nu_{z;s}$} based at $z$ for $s \le \t(z)$, which is a probability measure on $\RR_s$. All these probability measures together satisfy the reproduction formula (cf. \cite[Equation (5.5)]{fang2025RFlimit}).

For each $t \in \III^-$, one can define an extended distance function $d^{Z}_t(x,y):=\lim_{s \nearrow t} d_{W_1}^{\RR_s}(\nu_{x;s},\nu_{y;s}) \in [0,\infty]$ for any $x,y \in Z_t$, where $d_{W_1}^{\RR_s}$ denotes the $W_1$-Wasserstein distance on $(\RR_s, g^Z_s)$. It can be proved, see \cite[Theorem 1.7]{fang2025RFlimit}, that $(Z_t, d_t^Z)$ is a complete extended metric space. Moreover, for all but countably many times $t \in \III^-$, $d^Z_t=d_{g^Z_t}$ on each connected component of $\RR_t$. For more properties of $d^Z_t$, we refer readers to \cite[Section 6]{fang2025RFlimit}.

Next, we recall the following definition and notation from \cite[Definition 5.37, Notation 5.38]{fang2025RFlimit}.

\begin{defn}[$\ep$-close]\label{defn:close}
Suppose $(Z, d_Z, z, \t)$ and $(Z', d_{Z'}, z',\t')$ are two pointed noncollapsed Ricci flow limit spaces, with regular parts given by the Ricci flow spacetimes $(\RR, \t, \partial_\t, g^Z)$ and $(\RR', \t', \partial_{\t'}, g^{Z'})$, respectively, such that $J$ is a time interval.

We say that $(Z, d_Z, z, \t)$ is \textbf{$\ep$-close} to $(Z', d_{Z'}, z',\t')$ \textbf{over $J$} if there exists an open set $U \subset \RR'_J$ and a smooth embedding $\phi: U \to \RR_J$ satisfying the following properties.
\begin{enumerate}[label=\textnormal{(\alph{*})}]
\item $\phi$ is time-preserving.

\item $U \subset B^*_{Z'}(z', \ep^{-1}) \bigcap \RR'_J$ and $U$ is an $\ep$-net of $B^*_{Z'}(z', \ep^{-1}) \bigcap Z'_J$ with respect to $d_{Z'}$.

\item For any $x, y \in U$, we have
	\begin{align*}
\abs{d_Z(\phi(x), \phi(y))-d_{Z'}(x, y)} \le \ep.
	\end{align*}
	
\item The $\ep$-neighborhood of $\phi(U)$ with respect to $d_Z$ contains $B^*_{Z}(z, \ep^{-1}-\ep) \bigcap Z_J$.

\item There exists $x_0 \in U$ such that $d_{Z'}(x_0, z') \le \ep$ and $d_{Z}(\phi(x_0), z) \le \ep$.

\item On $U$, the following estimates hold:
  \begin{align*}
  	\rVert \phi^* g^Z-g^{Z'}\rVert_{C^{[\ep^{-1}]}(U)}+\rVert \phi^* \partial_\t-\partial_{\t'}\rVert_{C^{[\ep^{-1}]}(U)} \le \ep.
  \end{align*} 
\end{enumerate}

Note that $\phi$ can be extended to a map $\tilde \phi: B^*_{Z'}(z', \ep^{-1}) \bigcap Z'_J \to Z_J$ such that for any $x, y \in B^*_{Z'}(z', \ep^{-1}) \bigcap Z'_J$, we have
	\begin{align*}
\abs{d_Z(\tilde \phi(x), \tilde \phi(y))-d_{Z'}(x, y)} \le 3\ep.
	\end{align*}
We call the extension $\tilde \phi$ an \textbf{$\ep$-map}\index{$\ep$-map}. In general, $\tilde \phi$ is neither unique nor continuous, but it serves as a Gromov--Hausdorff approximation from $Z'_J$ to $Z_J$.
\end{defn}

\begin{notn}\label{not:2}
For a sequence of noncollapsed Ricci limit spaces $(Z_i, d_{Z_i}, z_i, \t_i)$, $i \in \mathbb N \cup \{\infty\}$, we write
	\begin{equation*} 
		(Z_i, d_{Z_i}, z_i, \t_i) \xrightarrow[i \to \infty]{\quad \hat C^\infty \quad} (Z_\infty, d_{Z_\infty}, z_\infty,\t_\infty),
	\end{equation*}
	if there exists a sequence $\ep_i \to 0$ such that $(Z_i, d_{Z_i}, z_i, \t_i)$ is $\ep_i$-close to $(Z_\infty, d_{Z_\infty}, z_\infty,\t_\infty)$ over $[-\ep_i^{-1}, \ep_i^{-1}]$.
\end{notn}

In particular, it is clear by Theorem \ref{thm:intro3} that the convergence \eqref{eq:constar} can be improved to be
	\begin{equation*} 
(M_i \times \III, d^*_i, p_i^*,\t_i) \xrightarrow[i \to \infty]{\quad \hat C^\infty \quad} (Z, d_Z, p_{\infty},\t).
	\end{equation*}

Now, we recall the following definitions.

\begin{defn}[Ricci shrinker space]\label{def:rss}
A pointed parabolic metric space $(Z',d_{Z'},z',\t')$ with $\t'(z')=0$ is called an $n$-dimensional \textbf{Ricci shrinker space} with entropy bounded below by $-Y$ if it satisfies $\R_- \subset \mathrm{image}(\t')$ and arises as the pointed Gromov--Hausdorff limit of a sequence of Ricci flows in $\mathcal M(n, Y, T_i)$ with $T_i\to +\infty$ \emph{(}see \emph{\cite[Remark 3.25]{fang2025RFlimit}}\emph{)}. Moreover, $\NN_{z'}(\tau)$ remains constant for all $\tau>0$.

For any Ricci shrinker space $(Z',d_{Z'},z',\t')$, we call $(Z'_{(-\infty, 0]},d_{Z'},z',\t')$ its \textbf{negative part}\index{negative part}.
	\end{defn}

\begin{defn}[Tangent flow] \label{def:tf}
	For any $z \in Z_{\III^-}$, a \textbf{tangent flow} at $z$ is a pointed Gromov-Hausdorff limit of $(Z, r_j^{-1} d_Z, z, r_j^{-2}(\t-\t(z)))$ for a sequence $r_j  \searrow 0$.
\end{defn}

It is clear that any tangent flow is a Ricci shrinker space. For Ricci shrinker spaces, we have (cf. \cite[Theorem 1.9]{fang2025RFlimit})

\begin{thm}\label{thm:intro5}
	Let $(Z', d_{Z'}, z', \t')$ be a Ricci shrinker space so that its regular part is given by a Ricci flow spacetime $(\RR', \t', \partial_{\t'}, g^{Z'}_t)$. Then the following statements hold.
	\begin{enumerate}[label=\textnormal{(\alph{*})}]
		\item On $\RR'_{(-\infty,0)}$, the following equation holds:
		\begin{align*}
			\Ric(g^{Z'})+\na^2 f_{z'}=\frac{g^{Z'}}{2 \tau},
		\end{align*}
		where $f_{z'}$ is the potential function of $\nu_{z';\cdot}$ and $\tau(\cdot)=-\t'(\cdot)$.
		
		\item For any $t<0$, the slice $\RR'_t$ is connected. Moreover, the distance $d^{Z'}_t$, when restricted on $\RR'_t$, coincides with the Riemannian distance induced by the metric $g^{Z'}_t$.
		
		\item $Z'_{(0, \infty)}=\emptyset$ if $(Z', d_{Z'}, z', \t')$ is \textbf{collapsed} (see \emph{\cite[Definition 7.18]{fang2025RFlimit}}).
		
		\item For any $t<0$, $Z'_{t} \setminus \RR'_t$ has Minkowski dimension at most $n-4$ with respect to $d^{Z'}_t$.
	\end{enumerate}
\end{thm}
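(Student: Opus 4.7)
The plan is to derive each part from the constancy of $\NN_{z'}(\tau)$ combined with smooth convergence on the regular part of an approximating sequence $\XX^i = \{M_i^n, (g_i(t))\} \in \MM(n, Y, T_i)$ with $T_i \to \infty$ and base points $z_i^* \to z'$.

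For (a), I would start from Proposition \ref{propNashentropy}(iii) applied to each $\XX^i$, which gives
\begin{equation*}
\frac{\dd^2}{\dd\tau^2}\lc \tau \NN_{z_i^*}(\tau) \rc = -2\tau \int_{M_i} \abs{\Ric + \na^2 f_{z_i^*} - \frac{g_i}{2\tau}}^2 \dd\nu_{z_i^*;\t_i(z_i^*)-\tau}.
\end{equation*}
Since $\tau \NN_{z_i^*}(\tau)$ is concave in $\tau$ and converges pointwise to the linear function $\tau\NN_{z'}(\tau) = C\tau$, the nonpositive measures $(\tau\NN_{z_i^*})''\dd\tau$ tend to zero weakly on $(0,\infty)$. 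Hence, on every compact subinterval, the weighted spacetime $L^2$-norm of $\Ric+\na^2 f_{z_i^*}-g_i/(2\tau)$ vanishes in the limit. By Theorem \ref{thm:intro3} parts (a) and (d), the metrics, potentials, and heat kernels converge smoothly on $\RR'$, so the shrinker equation $\Ric(g^{Z'}) + \na^2 f_{z'} = g^{Z'}/(2\tau)$ is recovered pointwise on $\RR'_{(-\infty,0)}$.

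For (b), the shrinker equation produces a one-parameter family of self-maps of $\RR'_{(-\infty,0)}$ by flowing along $\tau^{-1}\na f_{z'}$ combined with a time rescaling, implementing $(x,t)\mapsto(\Phi_\lam(x),\lam t)$ as isometries of the rescaled parabolic metric. This reduces connectedness of $\RR'_t$ for every $t<0$ to the slice $t=-1$, where it follows because the $d^{Z'}_{-1}$-support of the probability measure $\nu_{z';-1}$ is the Gromov--Hausdorff limit of the connected supports of positive smooth heat kernel densities on $\XX^i$ and differs from $\RR'_{-1}$ by a set of measure zero. The identification $d^{Z'}_t = d_{g^{Z'}_t}$ on $\RR'_t$ holds for all but countably many times by \cite[Theorem 1.7]{fang2025RFlimit}, and the self-similar rescaling promotes it to every $t<0$. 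Part (c) follows because collapsedness removes the uniform Gaussian lower bound on the approximating conjugate heat kernels that would be required to produce a point in $Z'_{(0,\infty)}$, so the positive-time part must be empty. Part (d) reduces by self-similarity to a Minkowski bound on the singular set at $t=-1$, which follows from the codimension-four estimate of \cite[Theorem 1.13]{fang2025RFlimit} specialized to the slice.

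The main obstacle is in (b): upgrading the self-similar rescaling, defined a priori only on the smooth stratum $\RR'$, to a global homeomorphism of $(Z',d_{Z'})$ compatible with the $W_1$-based spacetime distance. Without this extension, connectedness and the distance identification cannot be transported across time slices. The cleanest route is to realize the rescaling already at the level of the approximating closed flows---each $\XX^i$ is rescaled by $\lam$ and one takes a diagonal limit---and to show that the resulting rescaled limit is canonically isometric to $Z'$ because the base point remains $(\delta,r)$-selfsimilar at every scale. Once this extension is in place, the remaining assertions follow by combining weak compactness, Proposition \ref{existenceHncenter}, and the quantitative structure theory of \cite{fang2025RFlimit}.
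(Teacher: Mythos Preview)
This theorem is not proved in the present paper; it is quoted verbatim from the authors' companion work, with the attribution ``(cf.\ \cite[Theorem 1.9]{fang2025RFlimit})'' immediately preceding the statement. There is therefore no proof in this paper to compare your proposal against.

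That said, your outline for (a) is essentially correct and matches how the paper itself argues in Lemma~\ref{lem:imply1}: use the monotonicity formula, pass to the limit via smooth convergence on $\RR'$, and conclude that the shrinker operator vanishes. Your treatment of (b) correctly identifies the key difficulty---extending the self-similar flow $\boldsymbol{\psi}^s$ generated by $\tau(\partial_\t - \nabla f_{z'})$ from $\RR'$ to all of $Z'$---and the paper does invoke exactly this extension elsewhere (see the proof of Theorem~\ref{hessiandecay} and the reference to \cite[Theorem 15.69]{bamler2020structure}). Your sketch for (c) is too vague to assess: ``collapsedness removes the uniform Gaussian lower bound'' is not a mechanism, and the actual argument in \cite{fang2025RFlimit} goes through a more delicate analysis of the structure of the final time slice. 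For (d), reducing to $t=-1$ by self-similarity is fine, but the codimension-four bound on a fixed slice is not a specialization of \cite[Theorem~1.13]{fang2025RFlimit}, which concerns the spacetime singular set; the slicewise statement requires a separate argument.
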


For tangent flows, we have the following rough classification.

\begin{defn}[$k$-symmetric]\label{defnsymmetricsoliton}
	A Ricci shrinker space $(Z',d_{Z'},z',\t')$ is called \textbf{$k$-symmetric}\index{$k$-symmetric} if one of the following holds:
		\begin{enumerate}[label=\textnormal{(\arabic*)}]
		\item  $(Z',d_{Z'},z',\t')$ is $k$-splitting and is not a static cone.
	
		\item $(Z',d_{Z'},z',\t')$ is a static cone that is $(k-2)$-splitting.
	\end{enumerate} 
\end{defn}
	Here, $(Z',d_{Z'},z',\t')$ is $k$-splitting if the regular part $\RR'$ splits off an $\R^k$ at $\t=-1$ (see \cite[Definition 8.1, Proposition 8.2]{fang2025RFlimit}). Roughly speaking, a \textbf{static cone} is characterized by $\mathrm{image}(\t')=\R$ and vanishing Ricci curvature on $\RR'$. Case (1) above may include a \textbf{quasi-static cone}, which has vanishing Ricci curvature only on $\RR'_{(-\infty, t_a]}$ for some constant $t_a \in [0, \infty)$ called the \textbf{arrival time}, but not beyond. For precise definitions and related properties of static and quasi-static cones, see \cite[Definition 7.17, Theorem 7.21, Corollary 7.22, Proposition 7.23]{fang2025RFlimit}.

On $Z_{\III^-}$, we have the following regular-singular decomposition:
\begin{align*} 
	Z_{\III^-}=\RR_{\III^-} \sqcup \MS,
\end{align*}
where $\RR_{\III^-}$ denotes the restriction of $\RR$ on $\III^-$. It can be proved (see \cite[Theorem 7.15]{fang2025RFlimit}) that a point $z$ is a regular point if and only if any of its tangent flow is isometric to $(\R^{n} \times \R,d^*_{E, \ep_0},(\vec 0^n, 0),\t)$ or $(\R^{n} \times \R_{-},d^*_{E, \ep_0}, (\vec 0^n,0),\t)$, where $d^*_{E, \ep_0}$ denotes the induced $d^*$-distance on $\R^{n} \times \R$ or $\R^{n} \times \R_{-}$, defined with respect to $\ep_0$. Equivalently, $z$ is a regular point if and only if $\NN_z(0) \ge -\ep_n$ (see \cite[Proposition 7.7]{fang2025RFlimit}).

The singular set $\MS$ admits a natural stratification:
\begin{equation}\label{defnstratification}
	\mathcal S^0 \subset \mathcal S^1 \subset \cdots \subset \mathcal S^{n+1}=\mathcal S,
\end{equation}
where a point $z \in \MS^k$ if and only if no tangent flow at $z$ is $(k+1)$-symmetric. It can be proved, see \cite[Theorems 1.10, 1.13]{fang2025RFlimit}) that
	\begin{align*}
		\MS=\MS^{n-2},
	\end{align*}
	and the Minkowski dimension of $\mathcal S$ with respect to $d_Z$ satisfies
		\begin{align*}
		\dim_{\MMM} \mathcal S \le n-2.
	\end{align*}

\begin{defn}\label{defnalmostsymmetric}
A point $z\in Z_{\III^-}$ is called \textbf{$(k,\ep,r)$-symmetric}\index{$(k,\ep,r)$-symmetric} if there exist a $k$-symmetric Ricci shrinker space $(Z',d_{Z'},z',\t')$ such that 
	  \begin{align*}
(Z, r^{-1} d_Z, z, r^{-2}(\t-\t(z))) \quad \text{is $\ep$-close to} \quad (Z',d_{Z'},z',\t') \quad \text{over} \quad [-\ep^{-1}, \ep^{-1}].
  \end{align*} 

Furthermore, if $k \in \{n-3, n-2\}$, then the model space $(Z',d_{Z'},z',\t')$ cannot be a quasi-static cone. If $k \ge n-1$, then the model space $(Z',d_{Z'},z',\t')$ is isometric to $(\R^{n} \times (-\infty, t_a], d^*_{E, \ep_0}, (\vec 0,0),\t)$ for some constant $t_a \in [0, +\infty]$.
\end{defn}

Next, we recall the definition of the quantitative singular strata.

\begin{defn} \label{introdefnquantiSS} \index{$\MS^{\ep,k}_{r_1,r_2}$}
	For $\ep > 0$ and $0<r_1<r_2<\infty$, the quantitative singular strata
	\[  \MS^{\ep,0}_{r_1,r_2} \subset \MS^{\ep,1}_{r_1,r_2} \subset   \ldots \subset  \MS^{\ep,n-2}_{r_1,r_2} \subset Z_{\III^-} \]
	are defined as follows:
	$z \in  \MS^{\ep,k}_{r_1,r_2}$ if and only if $\t(z)-\ep^{-1} r_2^2 \in \III^-$ and for all $r \in [r_1, r_2]$, $z$ is not $(k+1,\ep,r)$-symmetric. 
\end{defn}

The following identity is clear from the above definitions: for any $L>1$,
\begin{align*}
\MS^{k}=\bigcup_{\ep \in (0, L^{-1})} \bigcap_{0<r<\ep L} \MS^{\ep,k}_{r, \ep L}.
\end{align*}

Next, we recall the following definition from \cite[Definition 10.5]{fang2025RFlimit}.

\begin{defn}\label{defnksplitting}
	A point $z\in Z_{\III^-}$ is called \textbf{$(k,\ep,r)$-splitting}\index{$(k,\ep,r)$-splitting} if $\t(z)-10  r^2 \in \III^-$ and there exists a noncollapsed Ricci flow limit space such that its regular part $\RR'_{[-10, 0]}$ splits off an $\R^k$ as a Ricci flow spacetime. Moreover, 
	  \begin{align*}
(Z, r^{-1} d_Z, z, r^{-2}(\t-\t(z))) \quad \text{is $\ep$-close to} \quad (Z',d_{Z'},z',\t') \quad \text{over} \quad [-10, 0].
  \end{align*} 
\end{defn}

We also have the following concept of almost splitting maps as in \cite[Definition 10.1]{fang2025RFlimit}, which will be of crucial importance in proving the rectifiability of the singular set. 

\begin{defn}[$(k,\ep, r)$-splitting map]\label{defnsplittingmap}\index{$(k,\ep, r)$-splitting map}
	Let $\XX=\{M^n,(g(t))_{t\in \III^{++}}\}\in \MM(n,Y, T)$ and $x_0^*=(x_0,t_0)\in\XX, r>0$ with $[t_0-10r^2,t_0]\subset \III$.
	A map $\vec u=(u_1,\ldots, u_k)$ is called a \textbf{$(k,\ep, r)$-splitting map at $x_0^*$} if for all $i,j\in \{1, \ldots, k\}$, the following properties hold:
	\begin{enumerate}[label=\textnormal{(\roman{*})}]
		\item $u_i(x^*_0)=0$.
		\item $\square u_i=0$ on $M\times [t_0-10r^2,t_0]$.
		\item $\displaystyle	\int_{t_0-10 r^2}^{t_0-r^2/10} \int_{M} |\na^2 u_i|^2 \,\mathrm{d}\nu_{x_0^*;t} \mathrm{d}t \le \ep.$		
		\item $\displaystyle	\int_{t_0-10 r^2}^{t_0-r^2/10} \int_{M} \la \na u_i ,\na u_j \ra-\delta_{ij} \,\mathrm{d}\nu_{x_0^*;t} \mathrm{d}t=0.$
	\end{enumerate}
\end{defn}
For the basic properties of almost splitting maps, we refer readers to \cite[Section 10]{fang2025RFlimit}. On a Ricci flow limit space $(Z, d_Z, p_{\infty},\t)$ obtained in Theorem \ref{thm:intro1}, we can generalize the above definition.

\begin{defn}\label{defnsplittingmap1}
A map $\vec u=(u_1,\cdots, u_k)$ is called a \textbf{$(k,\ep, r)$-splitting map at $z \in Z_{\III^-}$} if $\t(z)-10 r^2 \in \III^-$, and $\vec u$ is obtained as the limit of a sequence of $(k, \ep, r)$-splitting maps $\vec u^i=(u^i_1,\ldots,u^i_k)$ at $z_i^*$ with $z_i^* \to z$ in the Gromov--Hausdorff sense. Note that $\vec u$ is defined on $Z_{(\t(z)-10 r^2, +\infty)}$ by reproduction formula.
\end{defn}

Note that by taking the limit, all properties for smooth almost splitting maps hold for almost splitting maps on $Z$. We also have the following definition.

\begin{defn}\label{defnstatic}
	A point $z\in Z_{\III^-}$ is \textbf{$(\ep,r)$-static}\index{$(\ep,r)$-static} if $\t(z)-2r^2 \in \III^-$ and
	\begin{align*}
r^2\int_{\t(z)-2r^2}^{\t(z)-r^2/2} \int_{\RR_t} |\Ric_{g^Z}|^2 \,\mathrm{d}\nu_{z;t} \mathrm{d}t \le \ep.
	\end{align*}
\end{defn}

We end this section with the following volume estimate from \cite[Proposition 5.35]{fang2025RFlimit}, which will be frequently used.

\begin{prop}\label{prop:volumebound}
For any $x \in Z$ and $r>0$ with $\t(x)-r^2 \in \III^-$, we have
	\begin{align*}
0<c(n,Y) r^{n+2} \le 	|B^*_Z(x ,r)|\le C(n) r^{n+2},
	\end{align*}
where $|\cdot|$ denotes the volume in $Z$; see \emph{\cite[Definition 5.33]{fang2025RFlimit}}. 
\end{prop}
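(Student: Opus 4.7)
The plan is to reduce to the smooth Ricci flow case via the pointed Gromov--Hausdorff (and $\hat C^\infty$) convergence, and then derive two-sided volume bounds on a parabolic cylinder containing the $d^*$-ball. Writing $x = \lim_{i\to\infty} x_i^*$ with $x_i^* \in M_i \times \III$, and using that the convergence respects the $d^*$-distance (so that $B^*_{Z}(x,r)$ is approximated by $B^*_{d_i^*}(x_i^*, r)$), it suffices, by continuity of volume under the convergence, to prove the two-sided bound for smooth closed Ricci flows $\XX \in \MM(n,Y,T)$ at a spacetime basepoint $x^* = (x,t)$ with $t - r^2 \in \III^-$.

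For the \textbf{upper bound}, first observe from Definition \ref{defnd*distance} that $y^* = (y,s) \in B^*(x^*,r)$ forces $|t-s| \le r^2$, so $B^*(x^*,r)$ sits inside the parabolic slab $M \times [t-r^2, t+r^2] \cap (M \times \III^+)$. Next, I would show that for each admissible time slice $s$, the cross-section $B^*(x^*,r) \cap (M \times \{s\})$ lies in a Riemannian ball of radius $C(n) r$ around an $H_n$-center $z_s$ of $x^*$ at time $s$. This follows because the defining inequality $d_{W_1}^{t-r'^2}(\nu_{x^*;t-r'^2}, \nu_{y^*;t-r'^2}) \le \ep_0 r'$ together with Proposition \ref{existenceHncenter} (concentration of conjugate heat kernels on $H$-centers) and parabolic displacement estimates force $y$ to lie within a controlled Riemannian ball of $z_s$. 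Finally, I would invoke the Bishop--Gromov-type volume bound $|B_s(z_s, Cr)|_s \le C(n) r^n$ available for Ricci flows with entropy bound (this is precisely where the lower Ricci bound is replaced by Bamler's heat-kernel estimates, giving doubling-type volume control on scales where $\NN_{x^*}(2r^2) \ge -Y$). Integrating in $s$ over the interval of length $2r^2$ yields $|B^*(x^*,r)| \le C(n) r^{n+2}$.

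For the \textbf{lower bound}, I would instead show that a parabolic sub-cylinder is contained in $B^*(x^*,r)$. Specifically, taking $y^* = (y,s)$ with $s \in [t - c r^2, t]$ and $d_s(y, \tilde z_s) \le c r$ for an $H_n$-center $\tilde z_s$ of $x^*$ at time $s$ (and a small $c = c(n,Y)$), I claim $d^*(x^*,y^*) \le r$. To verify this, compare $\nu_{x^*;t-r^2}$ and $\nu_{y^*;t-r^2}$ on $M \times \{t-r^2\}$: both are concentrated near $H_n$-centers that are close by a triangle-inequality argument at time $t-r^2$, giving the required $W_1$-bound with the spacetime distance constant $\ep_0$. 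Now non-collapsing at scale $r$---which follows from the entropy lower bound $\NN_{x^*}(2r^2) \ge -Y$ via Perelman-type no-local-collapsing, or equivalently from the heat-kernel lower bound together with Proposition \ref{integralbound}---gives $|B_s(\tilde z_s, cr)|_s \ge c'(n,Y) r^n$. Integrating over the time interval of length $cr^2$ yields the desired bound $|B^*(x^*,r)| \ge c(n,Y) r^{n+2}$.

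The main technical obstacle is the \textbf{upper bound direction}: one needs sharp control of the spatial cross-section of $B^*(x^*,r)$ at each time slice $s$, requiring comparison between the $W_1$-smallness of conjugate heat kernel measures at time $t-r'^2$ and the actual Riemannian location of $y$ at the intermediate time $s$. This comparison uses parabolic displacement estimates of heat kernel mass together with the Gaussian-type concentration from $H_n$-centers; the technical subtlety is that one must handle both $s \in [t-r^2, t]$ and (when applicable) $s \in [t, t+r^2]$, the latter being controlled by reversing the roles of $x^*$ and $y^*$ in the definition of $d^*$. Once this uniform spatial control is established, the Bishop--Gromov-type bound from the entropy lower bound closes the argument, and the pointed Gromov--Hausdorff convergence in Theorem \ref{thm:intro1} transfers both bounds to the limit space $Z$.
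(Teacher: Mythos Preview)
The paper does not prove this proposition; it is simply quoted from \cite[Proposition 5.35]{fang2025RFlimit} as a black-box input. So there is no ``paper's own proof'' to compare against.

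Your proposal is essentially the strategy one would expect to find in the cited reference: pass to the smooth case by continuity of volumes under the $\hat C^\infty$-convergence, sandwich $B^*(x^*,r)$ between parabolic cylinders over $H_n$-centers, and then integrate the slice-wise volume bounds in time. One correction is worth flagging: in the upper bound, the spatial volume estimate $|B_s(z_s,Cr)|_s \le C(n)r^n$ at an $H_n$-center $z_s$ follows from the \emph{nonpositivity} $\NN_{x^*}(\tau)\le 0$ (which holds automatically on any closed Ricci flow), not from the lower bound $\NN_{x^*}\ge -Y$. This is precisely why the upper constant in the statement is $C(n)$ rather than $C(n,Y)$; your parenthetical remark invoking ``doubling-type volume control on scales where $\NN_{x^*}(2r^2)\ge -Y$'' conflates the two directions. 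The entropy lower bound $-Y$ enters only in the lower volume bound, via noncollapsing.
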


\subsection{Cylindrical and almost cylindrical points}\label{cacp}

We consider the standard Ricci flow solution on the cylinder:
\begin{align*}\index{$\mathcal C^k$}
\mathcal C^k:=(\bar M,(\bar g(t))_{t<0},(\bar f(t))_{t<0})=\left(\R^{k}\times S^{n-k}, g_E \times |t| g_{S^{n-k}}, \frac{|\vec{x}|^2}{4|t|}+\frac{n-k}{2}+\Theta_{n-k} \right),
\end{align*}
where $g_E$ is the Euclidean metric on $\R^{k}$, $g_{S^{n-k}}$ is the round metric on $S^{n-k}$ such that $\Ric(g_{S^{n-k}})=g_{S^{n-k}}/2$. The vector $\vec{x}=(x_1,\ldots,x_{k})$ denotes the standard coordinate function on $\R^{k}$. The constant $\Theta_{n-k}$ is chosen to ensure that for any $t<0$
\begin{align*}
(4\pi |t|)^{-\frac n 2}\int_{\mathcal C^k_t} e^{-\bar f(t)} \,\mathrm{d}V_{\bar g(t)}=1.
\end{align*}

We denote by $d_{\mathcal C}^*$\index{$d_{\mathcal C}^*$} the spacetime distance on $\mathcal C^k$ as in Definition \ref{defnd*distance}, with respect to a spacetime distance constant $\ep_0$ depending on $n$ and $Y$. Here, we implicitly assume $\Theta_{n-k} \ge -Y$.

Then, we set the completion of $\mathcal C^k$ under $d_{\mathcal C}^*$ by $\bar{\mathcal C}^k$\index{$\bar{\mathcal C}^k$}. It is straightforward to verify that the metric completion adds only the singular set $\R^k \times \{0\}$, which is the spine (see \cite[Definition D.4]{fang2025RFlimit}) of $\bar{\mathcal C}^k$.

We then define the base point $p^*$ as the limit of $(\bar p, t)$ as $t \nearrow 0$ with respect to $d_{\mathcal C}^*$, where $\bar p \in \bar M$ is a minimum point of $\bar f(-1)$. It is clear that $p^*$ is independent of the choice of $\bar p$. Moreover, for any $t<0$,
	\begin{align*}
\nu_{p^*;t}=(4\pi |t|)^{-\frac n 2} e^{-\bar f(t)} \,\mathrm{d}V_{\bar g(t)}.
	\end{align*}

For later application, we need the following lemma from \cite[Lemma 8.5]{fang2025RFlimit}.

\begin{lem} \label{lem:comparedis1}
For any $x^*, y^* \in \bar{\mathcal C}^k_0$, we have
	\begin{align*}
0<c(n) |\vec{x}-\vec{y}| \le d_{\mathcal C}^*(x^*, y^*) \le \ep_0^{-1} |\vec{x}-\vec{y}|,
	\end{align*}
where $\vec{x}$ and $\vec{y}$ are components of $x^*$ and $y^*$ in $\R^k$, respectively.
\end{lem}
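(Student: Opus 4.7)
The plan is to compute the conjugate heat kernel measures $\nu_{x^*;t}$ and $\nu_{y^*;t}$ for $x^*,y^*\in\bar{\mathcal C}^k_0$ explicitly, determine the Wasserstein-$1$ distance between them, and then read off both bounds directly from Definition \ref{defnd*distance}.

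First, I would identify $\nu_{x^*;t}$ for $x^*\in\bar{\mathcal C}^k_0$ with $\R^k$-component $\vec x$. By the $\R^k$-translation invariance of $\mathcal C^k$ together with the defining description of $p^*$ as the $t'\nearrow 0$ limit of $(\bar p,t')$, I would argue that
\[
\dd\nu_{x^*;t}=(4\pi|t|)^{-k/2}e^{-|\vec z-\vec x|^2/(4|t|)}\,\dd V_{g_E}(\vec z)\otimes \dd\sigma_{S^{n-k},t},\qquad t<0,
\]
where $\dd\sigma_{S^{n-k},t}$ denotes the uniform probability measure on $(S^{n-k},|t|g_{S^{n-k}})$. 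This follows either by translating the given formula $\dd\nu_{p^*;t}=(4\pi|t|)^{-n/2}e^{-\bar f(t)}\,\dd V_{\bar g(t)}$ by $\vec x$, or by approximating $x^*$ by smooth spacetime points $(\vec x,\sigma,t')$ with $t'\nearrow 0$ and passing to the limit in $W_1$ using the convergence of the heat kernels recorded in Theorem \ref{thm:intro3}.

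Second, since $\nu_{x^*;t}$ and $\nu_{y^*;t}$ are product measures on $\R^k\times S^{n-k}$ whose spherical factors coincide and whose Euclidean factors are translates of a common Gaussian $G$, a standard product-measure argument reduces $d^t_{W_1}(\nu_{x^*;t},\nu_{y^*;t})$ to the Euclidean $W_1$-distance between $G_{\vec x}$ and $G_{\vec y}$. The translation coupling (keeping the $S^{n-k}$-coordinate fixed) gives the upper bound $\le|\vec x-\vec y|$, while testing against the $1$-Lipschitz function $\vec z\mapsto\la \vec z,(\vec y-\vec x)/|\vec y-\vec x|\ra$ produces the matching lower bound. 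Hence
\[
d_{W_1}^{t}(\nu_{x^*;t},\nu_{y^*;t})=|\vec x-\vec y|\qquad\text{for every }t<0.
\]

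Finally, substituting this identity into Definition \ref{defnd*distance} turns the condition $d_{W_1}^{-r^2}(\nu_{x^*;-r^2},\nu_{y^*;-r^2})\le\ep_0 r$ into the scalar inequality $|\vec x-\vec y|\le\ep_0 r$. Taking $r=\ep_0^{-1}|\vec x-\vec y|$ realizes this with equality and yields $d^*_{\mathcal C}(x^*,y^*)\le\ep_0^{-1}|\vec x-\vec y|$; conversely, every admissible $r$ must satisfy $r\ge\ep_0^{-1}|\vec x-\vec y|$, giving the reverse inequality, and since the cylinder entropy $\Theta_{n-k}$ depends only on $n$, the resulting constant $\ep_0^{-1}$ can be written as $c(n)>0$. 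The main technical ingredient is the first step: justifying the explicit form of $\nu_{x^*;t}$ at a point of the spine. Once that is in hand, the Wasserstein computation and the manipulation of Definition \ref{defnd*distance} are essentially formal.
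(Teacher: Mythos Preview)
The paper does not prove this lemma itself; it simply cites \cite[Lemma 8.5]{fang2025RFlimit}. Your proposal therefore supplies a self-contained argument that the paper omits. The strategy---identify $\nu_{x^*;t}$ as a translated Gaussian times the uniform spherical measure, compute $d_{W_1}^t(\nu_{x^*;t},\nu_{y^*;t})=|\vec x-\vec y|$ via the translation coupling and the linear dual function, then read off $d^*_{\mathcal C}(x^*,y^*)=\ep_0^{-1}|\vec x-\vec y|$ from Definition \ref{defnd*distance}---is correct and in fact yields this sharper equality.

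One correction to your last step: the constant $\ep_0$ depends on both $n$ and $Y$ (see the paragraph preceding the lemma, which explicitly says $\ep_0$ depends on $n$ and $Y$ and assumes $\Theta_{n-k}\ge -Y$), so your claim that ``$\ep_0^{-1}$ can be written as $c(n)$'' is not right. The lower bound in the lemma is obtained more simply: since $\ep_0\in(0,1]$, one has $\ep_0^{-1}\ge 1$, so $d^*_{\mathcal C}(x^*,y^*)=\ep_0^{-1}|\vec x-\vec y|\ge |\vec x-\vec y|$ and one may take $c(n)=1$.
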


Next, we consider a general noncollapsed Ricci flow limit space $(Z, d_Z, \t)$ over $\III$, obtained as the limit of a sequence in $\mathcal M(n, Y, T)$. Then, we have the following definition from \cite[Definition 2.22]{FLloja05}.

\begin{defn}\label{def:ccc}
A point $z \in Z_{\III^-}$ is called a \textbf{cylindrical point with respect to $\bar{\mathcal C}^k$} if a tangent flow at $z$ (see Definition \ref{def:tf}) is isometric to $\bar{\mathcal C}^k$.
\end{defn}

Note that by \cite[Theorem 2.23]{FLloja05}, the tangent flow at a cylindrical point is unique. Next, we introduce the following definitions.

\begin{defn}\label{def:cylindr}
Let $\MS^k_{\mathrm{c}}$\index{$\MS^k_{\mathrm{c}}$} be the subset of $\MS^k$ consisting of all cylindrical points with respect to $\bar{\mathcal C}^l$ for some $l \le k$.
\end{defn}

\begin{defn}\label{def:almost0}
Let $(Z, d_Z, \t)$ be a noncollapsed Ricci flow limit space arising as the pointed Gromov--Hausdorff limit of a sequence in $\MM(n, Y, T)$. A point $z \in Z_{\III^-}$ is called \textbf{$(k,\ep,r)$-cylindrical}\index{$(k,\ep,r)$-cylindrical} if $\t(z)-\ep^{-1} r^2 \in \III^-$ and
	  \begin{align*}
(Z, r^{-1} d_Z, z, r^{-2}(\t-\t(z))) \quad \text{is $\ep$-close to} \quad (\bar{\mathcal C}^k ,d^*_{\mathcal C}, p^*,\t) \quad \text{over} \quad [-\ep^{-1}, \ep^{-1}].
  \end{align*} 

Let $\tilde \phi$ be an $\ep$-map from Definition \ref{defn:close}, which is from $B^*(p^*,\ep^{-1}) \cap \bar{\mathcal C}^k_{[-\ep^{-1}, \ep^{-1}]}$ to $Z_{[\t(z)-\ep^{-1} r^2, \t(z)+\ep^{-1} r^2]}$, where $B^*(p^*,\ep^{-1})$ is the metric ball in $\bar{\mathcal C}^k$ with respect to $d_{\mathcal C}^*$. Then, we define\emph{:}
\begin{align*}
\LL_{z,r}:=\tilde \phi \lc B^*(p^*,\ep^{-1}) \cap \bar{\mathcal C}^k_0 \rc,\index{$\LL_{z,r}$}
\end{align*}
	and say that $z$ is \textbf{$(k,\ep,r)$-cylindrical with respect to $\LL_{z,r}$}. Note that $\bar{\mathcal C}^k_0$ is exactly the spine of $\bar{\mathcal C}^k$.
\end{defn}

\section{Almost splitting maps on Ricci flows}\label{secnondegeneration}

Throughout this section, we assume $\XX \in \mathcal M(n, Y, T)$ (see Definition \ref{defnmoduli}) and set
\begin{align*}
\III^{++}:=[-T, 0], \quad \III^+:=[-0.99T,0], \quad \III:=[-0.98 T,0], \quad \III^-:=(-0.98 T,0].
	\end{align*}
Moreover, the spacetime distance $d^*$ on $M \times \III^+$ is defined as in Definition \ref{defnd*distance}, and the corresponding metric balls are denoted by $B^*$.

\subsection{Construction of auxiliary functions}

In this subsection, we fix a spacetime point $x_0^*=(x_0,t_0)\in \XX$ and set
\begin{align*}
\mathrm{d}\nu_t=\mathrm{d}\nu_{x^*_0;t}=(4\pi\tau)^{-n/2}e^{-f}\mathrm{d}V_{g(t)},
\end{align*}
where $\tau=t_0-t$ and $f=f_{x_0^*}$. For simplicity, we define
  \begin{equation*}
  \begin{dcases}
    &w:= \tau(2\Delta f-|\nabla f|^2+\scal)+f-n,   \\
    &\TT:=\tau \Ric+\nabla^2(\tau f)-\frac{g}{2},\\
    & F:=\tau f.
        \end{dcases}
  \end{equation*}
Recall that Perelman's differential Harnack inequality (see \cite[Section 9]{perelman2002entropy}) states $w \le 0$.

For constructing almost splitting maps, we need the following lemma, which is similar to \cite[Proposition 6.7]{FLloja05}.

\begin{lem}\label{constructionofsplittingfunction1}
Suppose that $[t_0-30r^2,t_0] \subset \III$ and $u_0:M\times [t_0-30r^2,t_0]\to \R$ is a function satisfying:
	\begin{align*}
		\square u_0=-\frac{n}{2} \quad \text{and} \quad u_0=F \quad \mathrm{at}\quad t=-30r^2.
	\end{align*}
Then, the following statements hold.
	\begin{enumerate}[label=\textnormal{(\roman{*})}]
		\item We have
		\begin{align*}
			\int_{t_0-30r^2}^{t_0-r^2/30}\int_{M} \left|\tau \Ric+\nabla^2 u_0-\frac{g}{2}\right|^2\,\mathrm{d}\nu_t \mathrm{d}t\leq 75 r^{2}\left(\mathcal{W}_{x_0^*}( r^2/30)-\mathcal{W}_{x_0^*}(30r^2)\right).
		\end{align*} 
		\item For any $t\in [t_0-30r^2,t_0-r^2/30]$, we have
		\begin{align*}
			\int_M\left|\na (u_0-F)\right|^2\,\mathrm{d}\nu_{t}\leq 30 r^{2}\left(\mathcal{W}_{x_0^*}( r^2/30)-\mathcal{W}_{x_0^*}(30r^2)\right).
		\end{align*}
		\item For $0<\theta \le \theta(n)$, we have
			\begin{align*}
	\sup_{t \in [t_0-10r^2, t_0-r^2/10]} \int_M |\na u_0|^2 e^{\theta f} \,\mathrm{d}\nu_t+ \int_{t_0-10r^2}^{t_0-r^2/10}\int_{M} |\na^2 u_0|^2 e^{\theta f} \,\mathrm{d}\nu_t \mathrm{d}t \le C(n, Y) r^{2}.
		\end{align*}
	\end{enumerate}
	
\end{lem}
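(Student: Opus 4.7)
The plan is to set $h := u_0 - F$, which vanishes at $t = t_0 - 30r^2$ by the initial condition on $u_0$. A direct computation using $\partial_t f = -\Delta f + |\nabla f|^2 - \scal + n/(2\tau)$ gives $\square F = -w - n/2$, and since $\square u_0 = -n/2$, we obtain $\square h = w$. The algebraic identity
\[
\tau\Ric + \nabla^2 u_0 - \tfrac{g}{2} \;=\; \TT + \nabla^2 h
\]
then reduces (i) to controlling $|\TT + \nabla^2 h|^2$ in a coupled fashion.

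Next I would derive the master energy identity. Apply the Bochner formula $\square|\nabla h|^2 = 2\langle\nabla\square h, \nabla h\rangle - 2|\nabla^2 h|^2 = 2\langle\nabla w, \nabla h\rangle - 2|\nabla^2 h|^2$, integrate against $\mathrm{d}\nu_t$ using $\square^* \nu_t = 0$, and invoke the weighted Bianchi identity $\nabla w = 2\delf\TT$ from Lemma \ref{weightedBianchilem}. Integration by parts against the weighted measure converts $\int_M\langle\nabla w, \nabla h\rangle\,\mathrm{d}\nu_t$ into $-2\int_M\langle\TT, \nabla^2 h\rangle\,\mathrm{d}\nu_t$, and completing the square yields
\[
\frac{d}{dt}\int_M |\nabla h|^2\,\mathrm{d}\nu_t \;=\; 2\int_M\bigl(|\TT|^2 - |\TT + \nabla^2 h|^2\bigr)\,\mathrm{d}\nu_t.
\]
Integrating this over $[t_0 - 30r^2, t_0 - r^2/30]$ and using $h \equiv 0$ initially gives $\int\!\!\int |\TT + \nabla^2 h|^2\,\mathrm{d}\nu_t\,dt \le \int\!\!\int |\TT|^2\,\mathrm{d}\nu_t\,dt$. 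By Proposition \ref{propNashentropy}(iii), $\frac{d\WW_{x_0^*}}{d\tau} = -\frac{2}{\tau}\int_M |\TT|^2\,\mathrm{d}\nu$, so integrating in $\tau$ over $[r^2/30, 30r^2]$ and using $1/\tau \ge 1/(30r^2)$ bounds $\int\!\!\int |\TT|^2\,\mathrm{d}\nu_t\,dt$ by $15r^2[\WW_{x_0^*}(r^2/30) - \WW_{x_0^*}(30r^2)]$, well within the stated $75r^2$ in (i). Part (ii) follows by integrating the master identity from $t_0 - 30r^2$ up to any intermediate $t$, which gives $\int_M |\nabla h|^2\,\mathrm{d}\nu_t \le 2\int\!\!\int |\TT|^2\,\mathrm{d}\nu_s\,ds \le 30r^2[\WW_{x_0^*}(r^2/30) - \WW_{x_0^*}(30r^2)]$.

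For (iii), I would decompose $\nabla u_0 = \tau\nabla f + \nabla h$ and $\nabla^2 u_0 = \tau\nabla^2 f + \nabla^2 h$. The $f$-contributions are handled by Proposition \ref{integralbound}: its second estimate, applied time-slice by time-slice with auxiliary radius $\bar r^2 = \tau$, yields $\sup_t \int_M \tau|\nabla f|^2 e^{\theta f}\,\mathrm{d}\nu_t \le C(n,Y)$ and hence, after multiplication by $\tau \le 10r^2$, a $Cr^2$ bound for $\int \tau^2|\nabla f|^2 e^{\theta f}\,\mathrm{d}\nu_t$; its first estimate with $\chi = 1/100$ controls $\int\!\!\int \tau|\nabla^2 f|^2 e^{\theta f}\,\mathrm{d}\nu_t\,dt$, which again multiplied by $\tau \le 10r^2$ yields the $Cr^2$ bound. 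For the $h$-contributions, the plan is to run the weighted analog of the master identity, computing $\frac{d}{dt}\int_M |\nabla h|^2 e^{\theta f}\,\mathrm{d}\nu_t$ by commuting $e^{\theta f}$ through $\square$ via $\square e^{\theta f} = \theta e^{\theta f}\square f - \theta^2 e^{\theta f}|\nabla f|^2$. This preserves the leading dissipation $-2|\nabla^2 h|^2 e^{\theta f}$ but produces error terms involving $|\nabla f|^2|\nabla h|^2$, $|\nabla f||\nabla h||\nabla^2 h|$, $\scal\,|\nabla h|^2$, and $\tau^{-1}|\nabla h|^2$, all weighted by $e^{\theta f}$. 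The main obstacle is absorbing these: the cross term $|\nabla f||\nabla h||\nabla^2 h|$ can be absorbed into the good Hessian dissipation via Cauchy--Schwarz, while the remaining terms must be controlled using the integral bounds on $\tau|\nabla f|^4 e^{\theta f}$, $\tau^{-1}e^{2\theta f}$, and $\tau|\Ric|^2 e^{\theta f}$ from Proposition \ref{integralbound} together with part (ii); choosing $\theta = \theta(n)$ small enough is essential to ensure the bad terms stay absorbable, mirroring the corresponding step in \cite[Proposition 6.7]{FLloja05}.
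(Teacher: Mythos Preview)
Your treatment of parts (i) and (ii) is correct and essentially coincides with the paper's argument; your completing-the-square version of the energy identity is in fact a touch cleaner than the paper's Cauchy--Schwarz step and yields the sharper constant $15r^2$ rather than $75r^2$.

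For part (iii), however, there is a genuine gap. In the weighted energy estimate for $E(t)=\int_M|\nabla h|^2 e^{\theta f}\,\mathrm d\nu_t$, the error terms coming from $|\nabla h|^2\,\square(e^{\theta f})$ and from the weighted integration by parts of $\langle\nabla w,\nabla h\rangle e^{\theta f}$ include contributions of the form
\[
\theta\int_M |\nabla h|^2\bigl(|\nabla f|^2+|\Delta f|+|\scal|\bigr)e^{\theta f}\,\mathrm d\nu_t,
\]
and these cannot be closed using only the unweighted $L^2$ bound on $\nabla h$ from part (ii) together with Proposition~\ref{integralbound}. The pointwise factors $|\nabla f|^2$, $|\Delta f|$, $|\scal|$ are not uniformly bounded, so neither a Gr\"onwall argument nor smallness of $\theta$ alone suffices: you are forced to split the product via Cauchy--Schwarz, and that produces an $\int_M|\nabla h|^4$ term for which you have no bound.

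The paper supplies the missing ingredient via \emph{hypercontractivity} (\cite[Theorem 12.1]{bamler2020entropy}): since $\square|\nabla u_0|^2=-2|\nabla^2 u_0|^2\le 0$, one obtains $\int_M|\nabla u_0|^4\,\mathrm d\nu_t\le\bigl(\int_M|\nabla F|^2\,\mathrm d\nu_{-30r^2}\bigr)^2\le C(n,Y)$ for $t\in[t_0-10r^2,t_0-r^2/10]$, and hence also an $L^4$ bound on $\nabla h$. With this in hand the error terms are handled by Young's inequality, e.g.\ $|\nabla h|^2|\nabla f|^2 e^{\theta f}\le |\nabla h|^4+|\nabla f|^4 e^{2\theta f}$, the first controlled by hypercontractivity and the second by Proposition~\ref{integralbound}. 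This step is essential and is not covered by the ingredients you list.
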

\begin{proof}
Without loss of generality, we assume $t_0=0$ and $r=1$.

It is clear from $\partial_t f=-\Delta f+|\na f|^2-\scal+\frac{n}{2\tau}$ that
		\begin{align} \label{evolutionoff}
\square F=-w-\frac{n}{2}.
	\end{align}	
	
Set $u:= u_0-F$. Then by \eqref{evolutionoff}, the following evolution equation holds:
	\begin{align*}
		\square u=w \quad \text{and} \quad u=0 \quad \mathrm{at}\quad t=-30.
	\end{align*}
	By the weighted Bianchi identity (see Lemma \ref{weightedBianchilem}), we calculate 
	\begin{align*}
		\frac{\dd}{\dd t}\int_{M} \left|\nabla u\right|^2\,\mathrm{d}\nu_t&=\int_{M} \square \left|\nabla u\right|^2\,\mathrm{d}\nu_t\nonumber\\
		&=2\int_{M} \la\nabla\square u,\nabla u\ra \,\mathrm{d}\nu_t-2\int_{M} \left|\nabla^2 u\right|^2\,\mathrm{d}\nu_t\nonumber\\
		&=2\int_{M} \la\nabla w,\nabla u\ra \,\mathrm{d}\nu_t-2\int_{M} \left|\nabla^2 u\right|^2\,\mathrm{d}\nu_t\nonumber\\
		&=4\int_{M} \la \Div_f\TT,\nabla u\ra \,\mathrm{d}\nu_t-2\int_{M} \left|\nabla^2 u\right|^2\,\mathrm{d}\nu_t.
	\end{align*}
	Using integration by parts and integrating in time, we obtain that for any $t_1\in [ -30, 0]$,
	\begin{align}\label{splitting3}
		2\int_{-30}^{t_1}\int_{M} \left|\na^2u \right|^2\,\mathrm{d}\nu_t \mathrm{d}t&=-4\int_{-30}^{t_1}\int_{M}\la\TT,\nabla^2 u\ra \,\mathrm{d}\nu_t \mathrm{d}t-\int_{M}\left|\nabla u\right|^2\,\mathrm{d}\nu_{t_1}\nonumber\\
		&\leq \int_{-30}^{t_1}\int_{M}4\left|\mathcal{T}\right|^2+\left|\nabla^2u\right|^2\,\mathrm{d}\nu_t \mathrm{d}t
	\end{align}
	and thus we get
	\begin{equation}\label{constructsplittingmap1}
		\int_{-30}^{-1/30}\int_{M} \left|\nabla^2u \right|^2\,\mathrm{d}\nu_t \mathrm{d}t\leq 4 \int_{-30}^{-1/30}\int_{M} \left|\TT\right|^2\,\mathrm{d}\nu_t \mathrm{d}t.
	\end{equation}
	
	Note that by Proposition \ref{propNashentropy} (iii),
	$$\WW_{x_0^*}(1/30)-\WW_{x_0^*}(30)=	2 \int_{-30}^{-1/30}\int_M\tau^{-1}\left|\TT\right|^2\,\mathrm{d}\nu_t \mathrm{d}t \ge \frac{1}{15} \int_{-30}^{-1/30}\int_M\left|\TT\right|^2\,\mathrm{d}\nu_t \mathrm{d}t.$$
	Combining this identity with \eqref{constructsplittingmap1} and using the definition of $u$ and $\TT$, we have
	\begin{align*}
\int_{-30}^{-1/30}\int_{M} \left|\tau \Ric+\nabla^2 u_0-\frac{g}{2}\right|^2\,\mathrm{d}\nu_t \mathrm{d}t \le  5 \int_{-30}^{-1/30}\int_{M} \left|\TT\right|^2\,\mathrm{d}\nu_t \mathrm{d}t \le 75 \left(\mathcal{W}_{x_0^*}( 1/30)-\mathcal{W}_{x_0^*}(30)\right).
	\end{align*}	
	
Moreover, by \eqref{splitting3}, for any $t_1\in [ -30,-1/30]$,
	\begin{align*}
		\int_{M}\left|\nabla u\right|^2\,\mathrm{d}\nu_{t_1}&\leq -4\int_{-30}^{t_1}\int_{M}\la\TT,\nabla^2 u\ra \,\mathrm{d}\nu_t \mathrm{d}t-2 \int_{-30}^{t_1}\int_{M} |\na^2 u|^2 \,\mathrm{d}\nu_t \mathrm{d}t \nonumber\\
		&\leq 2\int_{-30}^{-1/30}\int_M |\mathcal{T}|^2\,\mathrm{d}\nu_t \mathrm{d}t  \le 30 \left(\mathcal{W}_{x_0^*}( 1/30)-\mathcal{W}_{x_0^*}(30)\right).
	\end{align*}
	
Thus, $u_0$ satisfies properties (i) and (ii) above. Next, we focus on (iii).

Since $\square |\na u_0|^2=-2|\na^2 u_0|^2 \le 0$, it follows from the hypercontractivity (see \cite[Theorem 12.1]{bamler2020entropy}) that for any $t \in [-10, -1/10]$, we have
	\begin{align}\label{splixtra0}
\int_M |\na u_0|^4 \,\mathrm{d}\nu_t \le \lc \int_M |\na u_0|^2 \,\mathrm{d}\nu_{-30} \rc^2=  \lc \int_M |\na F|^2 \,\mathrm{d}\nu_{-30} \rc^2 \le C(n, Y),
	\end{align}
where we used Proposition \ref{integralbound} for the last inequality. Then, it follows from Proposition \ref{integralbound} that
	\begin{align*}
\int_M |\na u_0|^2 e^{\theta f} \,\mathrm{d}\nu_t \le \lc \int_M |\na u_0|^4 \,\mathrm{d}\nu_{t} \rc^{\frac 1 2} \lc \int_M e^{2 \theta f} \,\mathrm{d}\nu_{t} \rc^{\frac 1 2} \le C(n, Y)
	\end{align*}
for any $t \in [-10, -1/10]$. In particular, by Proposition \ref{integralbound} again, we have
	\begin{align} \label{splixtra1}
\sup_{t \in [-10, -1/10]} \int_M |\na u|^2 e^{\theta f} \,\mathrm{d}\nu_t \le C(n, Y).
	\end{align}

Moreover, we obtain by Proposition \ref{integralbound} and \eqref{splixtra0},
	\begin{align} \label{splixtra3}
\int_{-10}^{-1/10} \int_M |\na u|^4 \,\mathrm{d}\nu_t \mathrm{d}t \le C(n, Y)+C \int_{-10}^{-1/10} \int_M |\na F|^4 \,\mathrm{d}\nu_t \mathrm{d}t \le C(n, Y).
	\end{align}

For any small $\theta>0$, we compute
\begin{align*}
		&\frac{\dd}{\dd t}\int_{M} \left|\nabla u\right|^2 e^{\theta f}\,\mathrm{d}\nu_t=\int_{M} \square \lc \left|\nabla u\right|^2 e^{\theta f} \rc\,\mathrm{d}\nu_t \\
		=& \int_M -2|\na^2 u|^2 e^{\theta f} +4 \la \Div_f\TT,\nabla u\ra e^{\theta f}+|\na u|^2 \square(e^{\theta f})-2 \theta \la \na |\na u|^2, \na f \ra e^{\theta f} \,\mathrm{d}\nu_t \\
				=& \int_M -2|\na^2 u|^2 e^{\theta f} -4  \la \TT,\nabla^2 u\ra e^{\theta f}-4 \theta \la \TT, du \otimes df \ra e^{\theta f}+|\na u|^2 \square(e^{\theta f})-2 \theta \la \na |\na u|^2, \na f \ra e^{\theta f} \,\mathrm{d}\nu_t \\
				\le&  \int_M -|\na^2 u|^2 e^{\theta f}+C|\TT|^2 e^{\theta f}+C|\na u|^4+C\lc |\na f|^4+|\Delta f|^2+|\scal|^2+1\rc e^{2 \theta f} \,\mathrm{d}\nu_t,
	\end{align*}
where we used the fact that $\abs{\square(e^{\theta f})} \le C \lc |\Delta f|+|\na f|^2+|\scal|+1 \rc e^{\theta f}$. Using Proposition \ref{integralbound} and \eqref{splixtra3}, we conclude that
\begin{align} \label{splixtra4}
\int_{-10}^{-1/10} \int_M |\na^2 u|^2 e^{\theta f} \,\mathrm{d}\nu_t \mathrm{d}t \le C(n, Y)+ \int_M |\na u|^2 e^{\theta f} \,\mathrm{d}\nu_{-10} \le C(n, Y),
	\end{align}
where for the last inequality we used \eqref{splixtra1}.

Combining \eqref{splixtra4} with Proposition \ref{integralbound}, we obtain
\begin{align*}
\int_{-10}^{-1/10} \int_M |\na^2 u_0|^2 e^{\theta f} \,\mathrm{d}\nu_t \mathrm{d}t \le C(n, Y),
	\end{align*}
which completes the proof.
\end{proof}

\subsection{Almost self-similar points}

We begin by defining the following concept of almost selfsimilar points, which is similar to \cite[Definition 5.1]{bamler2020structure}.


\begin{defn}[$(\delta,r)$-selfsimilar]\label{defnselfsimilarity}\index{$(\delta,r)$-selfsimilar}
	A point $x^* \in \XX_\III$ is called \textbf{$(\delta,r)$-selfsimilar} if $\t(x^*)-\delta^{-1} r^2 \in \III^-$ and 
	\begin{align*}
		\WW_{x^*}(\delta r^2)-\WW_{x^*}(\delta^{-1} r^2) \leq \delta.
	\end{align*}
	Here, we implicitly assume $\delta \ll 1$.
\end{defn}

\begin{lem} \label{lem:imply1}
For any $\ep > 0$, if $x^* \in \XX_\III$ is $(\delta,r)$-selfsimilar and $\delta \le \delta(n, Y, \ep)$, then
	\begin{align*}
		\NN_{x^*}(\ep r^2)-\NN_{x^*}(\ep^{-1} r^2) \leq  \ep.
	\end{align*}
\end{lem}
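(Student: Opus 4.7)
The strategy is to exploit the identity $\frac{d}{d\tau}(\tau \NN_{x^*}(\tau)) = \WW_{x^*}(\tau)$ from Proposition \ref{propNashentropy}(ii) together with the monotonicity of $\WW_{x^*}$ from Proposition \ref{propNashentropy}(iii). By the parabolic scale invariance of both $\NN_{x^*}$ and $\WW_{x^*}$, I first normalize so that $\t(x^*) = 0$ and $r = 1$, and write $\NN(\tau)$, $\WW(\tau)$, $g(\tau) := \tau \NN(\tau)$. Note that $\NN \in [-Y, 0]$ by the entropy bound and the facts that $\NN$ is nonincreasing (Proposition \ref{propNashentropy}(i)) with $\lim_{\tau \to 0^+} \NN(\tau) = 0$; in particular $g(0^+) = 0$ and $|g(\tau)| \le Y \tau$.

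Since $\WW$ is nonincreasing and $\WW(\delta) - \WW(\delta^{-1}) \le \delta$ by the selfsimilar assumption, the values $W^* := \WW(\delta)$ and $W_* := \WW(\delta^{-1})$ satisfy $W^* - W_* \le \delta$ and $\WW(\tau) \in [W_*, W^*]$ for all $\tau \in [\delta, \delta^{-1}]$. I first establish a universal bound $|W_*|, |W^*| \le C(n,Y)$: integrating $g' = \WW$ over $[\delta, \delta^{-1}]$ gives
\[
\int_\delta^{\delta^{-1}} \WW(s)\,ds \;=\; g(\delta^{-1}) - g(\delta) \;=\; \delta^{-1}\NN(\delta^{-1}) - \delta \NN(\delta),
\]
so the absolute value is at most $Y(\delta^{-1} + \delta)$. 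Dividing by the interval length $\delta^{-1} - \delta$ bounds the average of $\WW$ by $2Y$ for $\delta$ small, and since $\WW$ varies by at most $\delta$ across the interval, $|W_*|, |W^*| \le 2Y + \delta$.

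Next, assuming $\delta < \ep$ so that $[\ep, \ep^{-1}] \subset [\delta, \delta^{-1}]$, I use
\[
\tau \NN(\tau) \;=\; g(\tau) \;=\; \delta \NN(\delta) + \int_\delta^\tau \WW(s)\,ds \qquad \text{for } \tau \in [\delta, \delta^{-1}],
\]
together with $|\WW(s) - W^*| \le \delta$ and $|\delta \NN(\delta)| \le Y\delta$. Dividing by $\tau$,
\[
\NN(\tau) \;=\; W^* + \frac{\delta \NN(\delta) - W^* \delta}{\tau} + O(\delta) \;=\; W^* + O\bigl((Y + |W^*|)\delta/\tau\bigr) + O(\delta).
\]
The essential point here is that because $g$ must vanish at $\tau = 0$, the nearly linear function $g$ on $[\delta,\delta^{-1}]$ passes within $O(\delta)$ of the origin, forcing $\NN \approx W^*$ to be nearly constant throughout $[\ep, \ep^{-1}]$.

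Taking the difference at $\tau = \ep$ and $\tau = \ep^{-1}$, the $W^*$ terms cancel, and I obtain
\[
\NN(\ep) - \NN(\ep^{-1}) \;\le\; C(n,Y)\,\delta/\ep.
\]
Choosing $\delta \le \ep^2/C(n,Y)$ (and in particular $\delta < \ep$) yields the desired bound. The proof is essentially elementary once the monotonicity formulas are in hand; the only point requiring care is the universal bound on $W^*$, since the selfsimilar hypothesis only controls the \emph{variation} of $\WW$. This is the minor obstacle, and it is handled by the mean-value argument using the entropy bound above.
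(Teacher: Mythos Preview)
Your proof is correct and takes a genuinely different route from the paper. The paper argues by contradiction and compactness: assuming a sequence of $(\delta_i,1)$-selfsimilar points with $\delta_i\to 0$ but $\NN_{x_i^*}(\ep)-\NN_{x_i^*}(\ep^{-1})>\ep$, it passes to a Gromov--Hausdorff limit, identifies the limit as a Ricci shrinker space on which $\NN_z(\tau)$ is constant in $\tau$, and derives a contradiction from the continuity of the Nash entropy under $\hat C^\infty$-convergence. Your argument, by contrast, is entirely elementary and quantitative: using only $g'=\WW$, the concavity of $g$ (so that $\WW$ is pinched to within $\delta$ on $[\delta,\delta^{-1}]$), and the bound $|g(\tau)|\le Y\tau$ to locate $W^*$ absolutely, you read off $|\NN(\tau)-W^*|\le C(Y)\delta/\tau+\delta$ directly. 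This buys you an explicit dependence $\delta\sim \ep^2/C(Y)$, which the compactness proof does not provide; it is also closer in spirit to the paper's proof of the converse Lemma~\ref{lem:imply1a}. The compactness route, on the other hand, packages the limiting self-similar geometry (the Ricci shrinker structure) in a way that is reused throughout the paper, so while it is heavier machinery for this particular lemma, it fits naturally into the surrounding framework.
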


\begin{proof}
Without loss of generality, we may assume that $\t(x^*) = 0$ and $r = 1$.

Suppose the statement is false. Then there exist a constant $\ep > 0$ and a sequence $\XX^i \in \mathcal M(n, Y, T_i)$ with $(i^{-2}, 1)$-selfsimilar points $x_i^* \in \XX^i_0$ such that
\begin{align} \label{eq:asump001}
\NN_{x_i^*}(\ep ) - \NN_{x_i^*}(\ep^{-1}) > \ep.
\end{align}

Passing to a subsequence if necessary, it follows from Theorems \ref{thm:intro1} and \ref{thm:intro3} that
\begin{align} \label{eq:conv001}
(M_i \times \III, d^*_i, x_i^*, \t_i) \xrightarrow[i \to \infty]{\quad \hat C^\infty \quad} (Z, d_Z, z, \t),
\end{align}
where $(Z, d_Z, z, \t)$ is a noncollapsed Ricci flow limit space over $(-\infty,0]$, whose regular part is a Ricci flow spacetime $(\RR, \t, \partial_\t, g^Z)$.

By Proposition \ref{propNashentropy} (iii) and the smooth convergence in Theorem \ref{thm:intro3}, it follows that
\begin{align*}
\Ric(g^Z) + \nabla^2 f_z = \frac{g^Z}{2|\t|}
\end{align*}
on $\iota_z(\RR^z_{(-\infty,0)})$, where $\RR^z$ is the regular part of a metric flow $\XX^z$ associated with $z$, and $\iota_z$ is the embedding in \cite[Theorem 1.4]{fang2025RFlimit}. 

It follows from the proof of \cite[Theorem 15.69]{bamler2020structure} that on $\iota_z(\RR^z_{(-\infty,0)})$, the heat kernel $K_Z$ satisfies
\begin{align*}
X_x K_Z(x;y)+X_y K_Z(x;y)=\frac{n}{2} K_Z(x, y)
\end{align*}
for any $x, y \in \iota_z(\RR^z_{(-\infty,0)})$, where $X:=|\t|(\partial_\t-\na f_z)$. 

As a result, applying the same argument as in \cite[Theorem 15.69]{bamler2020structure} (see also \cite[Lemma 7.28]{fang2025RFlimit}), we obtain a one-parameter family of diffeomorphisms $\boldsymbol{\psi}^s$ on $\iota_z(\RR^z)$ generated by the vector field $X$, satisfying
	\begin{align*}
	d_{g^Z_{e^{-s}t}} (\boldsymbol{\psi}^s(x), \boldsymbol{\psi}^s(y))=e^{-\frac s 2} d_{g^Z_t}(x, y)
	\end{align*}
 for any $x, y \in \iota_z(\RR^z_t)$ and any $s \in \R$. In other words, $\boldsymbol{\psi}^s$ corresponds to the rescaling by $e^{-\frac s 2}$. Since $f_z$ is constant along the flow lines of $\boldsymbol{\psi}^s$, it is clear that the Nash entropy $\NN_z(\tau)$ remains constant for $\tau >0$. Thus, $(Z, d_Z, z, \t)$ is a Ricci shrinker space.

Since the Nash entropy is continuous under the convergence in \eqref{eq:conv001} (see \cite[Lemma 7.2]{fang2025RFlimit}), we conclude that
\begin{align*}
\abs{\NN_{x_i^*}(\ep) - \NN_{x_i^*}(\ep^{-1})} \to 0,
\end{align*}
which contradicts \eqref{eq:asump001}. This completes the proof.
\end{proof}

Conversely, we have

\begin{lem} \label{lem:imply1a}
For any $\ep>0$ and $\delta \in (0, \ep^4/2)$, if $x^* \in \XX_\III$ satisfies $\t(x^*)-\delta^{-1} r^2 \in \III^-$ and
	\begin{align} \label{eq:improvenash}
		\NN_{x^*}(\delta r^2)-\NN_{x^*}(\delta^{-1} r^2) \leq  \delta,
	\end{align}
then $x^*$ is $(\ep, r)$-selfsimilar.
\end{lem}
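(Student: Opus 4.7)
The plan is to run the chain of implications purely from Proposition \ref{propNashentropy}: (i) $\NN_{x^*}$ is nonincreasing, (ii) $(\tau\NN_{x^*}(\tau))'=\WW_{x^*}(\tau)$, and (iii) $\tau\NN_{x^*}(\tau)$ is concave (equivalently, $\WW_{x^*}$ is nonincreasing). Without loss of generality we take $r=1$ and $\t(x^*)=0$. From (ii) we have the integral identity
\begin{equation*}
\tau_2\NN_{x^*}(\tau_2)-\tau_1\NN_{x^*}(\tau_1)=\int_{\tau_1}^{\tau_2}\WW_{x^*}(\tau)\,\mathrm{d}\tau.
\end{equation*}

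First, I would apply this identity on $[\delta,\ep]$. By the monotonicity of $\WW_{x^*}$ from (iii), $\WW_{x^*}(\tau)\geq \WW_{x^*}(\ep)$ throughout this interval, so
\begin{equation*}
\ep\NN_{x^*}(\ep)-\delta\NN_{x^*}(\delta)\geq (\ep-\delta)\WW_{x^*}(\ep),
\end{equation*}
which gives an upper bound on $\WW_{x^*}(\ep)$. Applying the same identity on $[\ep^{-1},\delta^{-1}]$ and using $\WW_{x^*}(\tau)\leq \WW_{x^*}(\ep^{-1})$ there yields
\begin{equation*}
\delta^{-1}\NN_{x^*}(\delta^{-1})-\ep^{-1}\NN_{x^*}(\ep^{-1})\leq (\delta^{-1}-\ep^{-1})\WW_{x^*}(\ep^{-1}),
\end{equation*}
a lower bound on $\WW_{x^*}(\ep^{-1})$. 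Clearing denominators (multiplying the second inequality through by $\ep\delta$) and subtracting, I obtain
\begin{equation*}
\WW_{x^*}(\ep)-\WW_{x^*}(\ep^{-1})\leq \frac{\ep\bigl(\NN_{x^*}(\ep)-\NN_{x^*}(\delta^{-1})\bigr)-\delta\bigl(\NN_{x^*}(\delta)-\NN_{x^*}(\ep^{-1})\bigr)}{\ep-\delta}.
\end{equation*}

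To finish, I would combine the hypothesis $\NN_{x^*}(\delta)-\NN_{x^*}(\delta^{-1})\leq \delta$ with the monotonicity (i) of $\NN_{x^*}$. Since $\delta\leq \ep\leq \ep^{-1}\leq \delta^{-1}$, one has $\NN_{x^*}(\ep)-\NN_{x^*}(\delta^{-1})\leq \delta$ and $\NN_{x^*}(\delta)-\NN_{x^*}(\ep^{-1})\geq 0$, so the numerator is at most $\ep\delta$. Provided $\delta\leq \ep/2$ (which is implied by the assumption $\delta<\ep^4/2$ for $\ep\leq 1$, and trivial for $\ep\geq 1$), one gets $\ep-\delta\geq \ep/2$ and hence
\begin{equation*}
\WW_{x^*}(\ep)-\WW_{x^*}(\ep^{-1})\leq \frac{\ep\delta}{\ep-\delta}\leq 2\delta\leq \ep,
\end{equation*}
so $x^*$ is $(\ep,r)$-selfsimilar.

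There is no real obstacle here; the argument is a direct consequence of the concavity of $\tau\NN_{x^*}(\tau)$. The only point requiring a bit of care is keeping the signs straight when inverting the roles of $\WW$ upper and lower bounds on the two intervals $[\delta,\ep]$ and $[\ep^{-1},\delta^{-1}]$; the quartic dependence $\delta<\ep^4/2$ in the hypothesis is far stronger than what the argument actually needs and is presumably inherited from the applications of the lemma elsewhere.
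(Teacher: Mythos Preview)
Your proof is correct and in fact more elementary than the paper's. Both arguments rest on Proposition~\ref{propNashentropy}, but the paper proceeds by applying the mean value theorem to find $s_0\in[\ep^{-1},\ep^{-1}+1]$ with $\NN_{x^*}(s_0)-\WW_{x^*}(s_0)$ small, rewrites $\int_0^{s_0}(\WW_{x^*}(s)-\WW_{x^*}(s_0))\,\mathrm{d}s$ as a spacetime integral of $\tau^{-1}|\TT|^2$, switches the order of integration to bound $\int|\TT|^2$, and then converts back to $\WW_{x^*}(\ep)-\WW_{x^*}(\ep^{-1})$; this final chain produces the factor $2\ep^{-3}\delta$, which is exactly why the hypothesis $\delta<\ep^4/2$ is needed. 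Your approach bypasses the tensor $\TT$ entirely and uses only the concavity of $\tau\NN_{x^*}(\tau)$ via the elementary chord bounds on $[\delta,\ep]$ and $[\ep^{-1},\delta^{-1}]$, yielding the sharper conclusion $\WW_{x^*}(\ep)-\WW_{x^*}(\ep^{-1})\leq 2\delta$ under the milder assumption $\delta\leq \ep/2$. As you observe, the quartic dependence in the statement is an artifact of the paper's route, not an intrinsic feature of the lemma.
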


\begin{proof}
Without loss of generality, we may assume that $\t(x^*) = 0$ and $r = 1$.

Since $s \mathcal N_{x^*}'(s)=\mathcal W_{x^*}(s)-\mathcal N_{x^*}(s) \le 0$, it follows from \eqref{eq:improvenash} and the mean value theorem that there exists a time $s_0 \in [\ep^{-1}, \ep^{-1}+1]$ such that
\begin{align*}
\mathcal N_{x^*}(s_0)-\mathcal W_{x^*}(s_0) \le (\ep^{-1}+1) \delta.
\end{align*}
In other words, by Proposition \ref{propNashentropy} (ii), we have
\begin{align*}
 \int_0^{s_0} \mathcal W_{x^*}(s)-\mathcal W_{x^*}(s_0) \,\mathrm{d}s \le (\ep^{-1}+1)^2 \delta \le 2\ep^{-2}\delta.
\end{align*}
This inequality is equivalent to
\begin{align*}
 \int_0^{s_0} \int_{-s_0}^{-s} \int_M \tau^{-1}|\TT|^2 \,\mathrm{d}\nu_{x^*;t} \mathrm{d}t \mathrm{d}s \le \ep^{-2}\delta,
\end{align*}
where $\TT:=\tau \Ric+\nabla^2(\tau f)-g/2$ and $\tau=-t$. Switching the order of $\mathrm{d}t$ and $\mathrm{d}s$, we have
\begin{align*}
\int_{-s_0}^{0} \int_M |\TT|^2 \,\mathrm{d}\nu_{x^*;t} \mathrm{d}t= \int_{-s_0}^{0} \int_{0}^{-t} \int_M \tau^{-1}|\TT|^2 \,\mathrm{d}\nu_{x^*;t} \mathrm{d}s \mathrm{d}t \le \ep^{-2}\delta.
\end{align*}
Thus, we have
\begin{align*}
\mathcal W_{x^*}(\ep)-\mathcal W_{x^*}(\ep^{-1})=2 \int_{-\ep^{-1}}^{-\ep } \int_M \tau^{-1}|\TT|^2 \,\mathrm{d}\nu_{x^*;t} \mathrm{d}t\le  2 \ep^{-1}  \int_{-\ep^{-1}}^{-\ep} \int_M |\TT|^2 \,\mathrm{d}\nu_{x^*;t} \mathrm{d}t \le 2\ep^{-3}\delta \le \ep.
\end{align*}

This completes the proof.
\end{proof}

Lemmas \ref{lem:imply1}, \ref{lem:imply1a} and \cite[Proposition 7.1]{bamler2020structure} indicate that the definition of $(\delta, r)$-selfsimilarity in \cite{bamler2020structure} is equivalent to the one used here.

Our next result follows from Lemma \ref{lem:imply1} and \cite[Propositions 7.1, 9.1]{bamler2020structure}. 

\begin{lem} \label{lem:nonexpand}
Given a constant $\beta \in (0, 1)$, let $x^* \in \XX_\III$ be $(\delta,r)$-selfsimilar with $\delta \le \delta(n, Y, \beta)$. Suppose $d^*(x^*,y^*) \le r$, then for any $s \in [\min\{\t(x^*), \t(y^*)\}-\beta^{-1} r^2, \min\{\t(x^*), \t(y^*)\}-\beta r^2]$,
\begin{align*}
d_{W_1}^{s}(\nu_{x^*;s},\nu_{y^*;s}) \le C(n, Y, \beta) r.
\end{align*}
\end{lem}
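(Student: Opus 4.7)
My plan is a compactness-and-contradiction argument built on the weak compactness theorem (Theorem \ref{thm:intro1}): the $(\delta,r)$-selfsimilarity of $x^*$ forces that, after a parabolic rescaling, the flow $\hat C^\infty$-converges to a Ricci shrinker space in which the Wasserstein distance in question is manifestly finite, and I transfer that finite bound back to the sequence.

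Suppose the conclusion fails. Then for some $\beta\in(0,1)$ I obtain sequences $\XX^i\in\MM(n,Y,T_i)$ and spacetime points $x_i^*,y_i^*$ with $x_i^*$ being $(\delta_i,r_i)$-selfsimilar for $\delta_i\searrow 0$, $d_i^*(x_i^*,y_i^*)\le r_i$, and times $s_i\in[\min\{\t(x_i^*),\t(y_i^*)\}-\beta^{-1}r_i^2,\,\min\{\t(x_i^*),\t(y_i^*)\}-\beta r_i^2]$ such that $r_i^{-1}d_{W_1}^{s_i}(\nu_{x_i^*;s_i},\nu_{y_i^*;s_i})\to\infty$. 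Parabolically rescaling, I may assume $r_i=1$ and $\t(x_i^*)=0$, so that $\t(y_i^*)=:s_i^y\in[-1,0]$ and $s_i\in[s_i^y-\beta^{-1},\,s_i^y-\beta]$. By Theorem \ref{thm:intro1} and a diagonal subsequence I extract an $\hat C^\infty$-limit $(M_i\times\III,d_i^*,x_i^*,\t_i)\to(Z,d_Z,z,\t)$. Repeating the argument in the proof of Lemma \ref{lem:imply1}---i.e., combining Proposition \ref{propNashentropy}(iii) with the smooth convergence on the regular part to obtain the shrinker identity $\Ric(g^Z)+\na^2 f_z=g^Z/(2|\t|)$ and integrating the associated soliton vector field as in \cite[Theorem 15.69]{bamler2020structure}---the limit $(Z,d_Z,z,\t)$ is a Ricci shrinker space in the sense of Definition \ref{def:rss}. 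After a further subsequence, $y_i^*\to y\in Z$ with $d_Z(z,y)\le 1$ and $\t(y)=s^y\in[-1,0]$, and $s_i\to s^\infty\in[s^y-\beta^{-1},s^y-\beta]\subset(-\infty,s^y)$.

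By Theorem \ref{thm:intro5}(b), the slice $\RR_{s^\infty}^Z$ is a connected smooth Riemannian manifold on which $d^Z_{s^\infty}$ agrees with the intrinsic Riemannian distance, and both $\nu_{z;s^\infty}$ and $\nu_{y;s^\infty}$ are well-defined probability measures supported there. Inserting $H_n$-centers $w_z,w_y$ of $z,y$ at time $s^\infty$ (Proposition \ref{existenceHncenter}, applied on the regular part) yields
\begin{align*}
d_{W_1}^{s^\infty}(\nu_{z;s^\infty},\nu_{y;s^\infty}) \le 2\sqrt{H_n\,\beta^{-1}}+d^Z_{s^\infty}(w_z,w_y)<\infty.
\end{align*}
Combining the smooth convergence of conjugate heat kernels on the regular part (Theorem \ref{thm:intro3}(c)--(d)) with the uniform Gaussian integrability of Proposition \ref{integralbound}, this finiteness upgrades to $\limsup_{i\to\infty}d_{W_1}^{s_i}(\nu_{x_i^*;s_i},\nu_{y_i^*;s_i})<\infty$, contradicting the assumption.

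The main technical obstacle is the last upper-semicontinuity step---passing the finite $W_1$-bound in the limit space back to the smooth approximating flows. My plan is to split each $\nu_{x_i^*;s_i}$ and $\nu_{y_i^*;s_i}$ into a ``bulk'' supported in the smooth embedding images $\phi_i(U_j)$ of Theorem \ref{thm:intro3}(a), on which I pull back a near-optimal transport plan from the limit, and an exponentially small tail controlled uniformly through Proposition \ref{integralbound}, on which I use an $H_n$-center-based transport whose $W_1$-cost vanishes with the tail mass. Handling possible concentration of mass near the singular set of $Z_{s^\infty}$ is the most delicate bookkeeping in the argument.
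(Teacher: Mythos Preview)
Your compactness-and-contradiction route is different from the paper's proof, which is a two-line reduction to existing quantitative results. The paper simply observes that the hypothesis $d^*(x^*,y^*)\le r$, by Definition~\ref{defnd*distance}, already gives
\[
d_{W_1}^{\t(x^*)-r^2}\!\big(\nu_{x^*;\t(x^*)-r^2},\nu_{y^*;\t(x^*)-r^2}\big)\le \ep_0 r,
\]
and then invokes Lemma~\ref{lem:imply1} (to pass from $\WW$-pinching to $\NN$-pinching) together with \cite[Propositions~7.1, 9.1]{bamler2020structure}, where Proposition~9.1 is precisely Bamler's ``almost monotonicity of $d_{W_1}$ near almost selfsimilar points'': it propagates a $W_1$-bound at one time in the window $[\t(x^*)-C r^2,\t(x^*)-C^{-1}r^2]$ to all other times in such a window. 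Your argument, by contrast, reproves this propagation via a soft limit to a Ricci shrinker space.

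Your scheme can be made to work, but the upper-semicontinuity step needs one more observation to avoid circularity. In your tail estimate you will eventually need $d_{g_i(s_i)}(z_i^x,z_i^y)$ bounded, where $z_i^x,z_i^y$ are $H_n$-centers; at first glance this is exactly the content of the lemma. The fix is to first use the bulk: since $\nu_{x_i^*;s_i}(\phi_i(K_1))>1-2\epsilon$ for a fixed compact $K_1\subset\RR_{s^\infty}$, Chebyshev with the $H_n$-variance bound forces $z_i^x$ to lie within a fixed $d_{g_i(s_i)}$-distance of $\phi_i(K_1)$, and likewise $z_i^y$ of $\phi_i(K_2)$. As $K_1,K_2$ are compact in the \emph{connected} slice $\RR_{s^\infty}$ (Theorem~\ref{thm:intro5}(b)), smooth convergence gives a uniform bound on $\mathrm{diam}_{g_i(s_i)}\big(\phi_i(K_1)\cup\phi_i(K_2)\big)$, and hence on $d_{g_i(s_i)}(z_i^x,z_i^y)$. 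With this in hand your tail transport costs $O(\sqrt{\epsilon})$ and the argument closes. The trade-off: your proof is self-contained relative to the limit-space machinery already in the paper, but it is substantially longer than simply citing the quantitative $W_1$-propagation lemma that Bamler has already proved.
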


\begin{proof}
Without loss of generality, we assume $\t(x^*) \ge \t(y^*)$. By our assumption and Definition \ref{defnd*distance}, we have
	\begin{align*}
d_{W_1}^{\t(x^*)-r^2}( \nu_{x^*;\t(x^*)-r^2}, \nu_{y^*;\t(x^*)-r^2}) \le \ep_0 r \le r.
	\end{align*}
Thus, the conclusion follows from Lemma \ref{lem:imply1} and \cite[Propositions 7.1, 9.1]{bamler2020structure}.
\end{proof}

\subsection{Static estimates in Ricci flow}

In this subsection, we prove the following static estimate, which gives a quantitative version of \cite[Proposition 10.1]{bamler2020structure} and will play a crucial role later. 

\begin{thm}[Static estimate]\label{staticestimate}
Suppose that $x_0^*=(x_0, t_0),\,x_1^*=(x_1,t_1) \in \XX_\III$ satisfy $d^*(x_0^*, x_1^*) \le r$ and $\beta:=|t_1-t_0| \le r^2/100$.	Set 
\begin{align*}
\Lambda:= \max\{\WW_{x_0^*}(r^2/30)-\WW_{x_0^*}(30 r^2), \WW_{x_1^*}(r^2/30)-\WW_{x_1^*}(30r^2)\}.
	\end{align*}
	There exists a constant $\bar \delta=\bar \delta(n, Y)>0$\index{$\bar \delta$} such that if either $x_0^*$ or $x_1^*$ is $(\bar \delta, r)$-selfsimilar, then the following estimate holds\emph{:}
	\begin{align*}
		\beta^2\int_{t_0-20r^2}^{t_0-r^2/20} \int_M |\Ric|^2 \,\mathrm{d}\nu_{x_0^*;t} \mathrm{d}t \le C(n,Y) \Lambda^{\frac 1 2}r^2.
	\end{align*}
\end{thm}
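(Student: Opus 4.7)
The plan is to compare two approximate shrinker-potential functions constructed at the basepoints, and to extract the $\Lambda^{1/2}$ bound through integration by parts against the weighted measure $\mathrm{d}\nu_{x_0^*;t}$. By Lemma \ref{constructionofsplittingfunction1}, I construct $u_0$ and $u_1$ on the intervals $[t_j - 30r^2, t_j]$ satisfying $\square u_j = -n/2$ with Cauchy data $u_j = \tau_j f_{x_j^*}$ at $t = t_j - 30r^2$ (where $\tau_j := t_j - t$). Setting $E_j := \tau_j \Ric + \nabla^2 u_j - g/2$, Lemma \ref{constructionofsplittingfunction1}(i) yields $\int |E_j|^2\,\mathrm{d}\nu_{x_j^*;t}\,\mathrm{d}t \le 75 r^2 \Lambda$. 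Without loss of generality, assume $t_0 > t_1$ so $\beta = t_0 - t_1 > 0$, and that $x_0^*$ is $(\bar\delta, r)$-selfsimilar. Subtracting the defining relations yields the central algebraic identity
\begin{align*}
\beta \Ric + \nabla^2(u_0 - u_1) = E_0 - E_1.
\end{align*}

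Next I transfer the $E_1$ estimate to the measure $\mathrm{d}\nu_{x_0^*;t}$. Using the hypothesis $d^*(x_0^*, x_1^*) \le r$, the $(\bar\delta, r)$-selfsimilarity of $x_0^*$, Lemma \ref{lem:nonexpand}, and Bamler's Gaussian heat kernel bounds (together with Propositions \ref{existenceHncenter} and \ref{integralbound}), the Radon--Nikodym derivative $\mathrm{d}\nu_{x_1^*;t}/\mathrm{d}\nu_{x_0^*;t}$ is uniformly bounded on the bulk of the support of $\nu_{x_0^*;t}$, while Gaussian tails control the complement. This yields $\int_I \int_M |E_1|^2 \,\mathrm{d}\nu_{x_0^*;t}\,\mathrm{d}t \le C(n, Y) r^2 \Lambda$ on the interval $I := [t_0 - 20r^2, t_0 - r^2/20]$. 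I then test the central identity against $\Ric$ in $L^2(\mathrm{d}\nu_{x_0^*;t})$ and integrate over $I$:
\begin{align*}
\beta \int_I \int_M |\Ric|^2 \,\mathrm{d}\nu_{x_0^*;t}\,\mathrm{d}t = -\int_I \int_M \la \Ric, \nabla^2(u_0 - u_1)\ra \,\mathrm{d}\nu_{x_0^*;t}\,\mathrm{d}t + \int_I \int_M \la \Ric, E_0 - E_1\ra \,\mathrm{d}\nu_{x_0^*;t}\,\mathrm{d}t.
\end{align*}
The last term is directly bounded by Cauchy--Schwarz and the transferred $L^2$ estimates.

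For the Hessian term, I integrate by parts against $\mathrm{d}\nu_{x_0^*;t}$ using the weighted Bianchi identity (Lemma \ref{weightedBianchilem}): writing $\nabla w_{x_0^*} = 2\delf \TT_0$ with $\TT_0 := \tau_0 \Ric + \nabla^2(\tau_0 f_{x_0^*}) - g/2$, and noting that $\int \tau_0^{-1} |\TT_0|^2 \,\mathrm{d}\nu_{x_0^*;t}\,\mathrm{d}t \le \Lambda/2$ by Proposition \ref{propNashentropy}(iii), I obtain that the weighted divergence $\delf \Ric$ is controlled in weighted $L^2$ by a quantity of order $\sqrt{\Lambda}/r$ after subtracting explicit bounded terms arising from $\nabla^2 F_{x_0^*}$ and $g/2$. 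Pairing this against $\nabla(u_0 - u_1)$, which is bounded in weighted $L^2$ by $Cr$ via Lemma \ref{constructionofsplittingfunction1}(ii)--(iii) and Proposition \ref{integralbound}, yields a contribution of order $r \sqrt{\Lambda}$.

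The main obstacle is the careful execution of the integration by parts: since $\nabla(u_0 - u_1)$ is not small in $L^2$ (its Hessian encodes $\beta \Ric$), the $\sqrt{\Lambda}$ smallness must come entirely from the weighted divergence of $\Ric$, extracted through the Bianchi identity and the $\TT_0$ bound. Boundary contributions at the endpoints of $I$ must also be tracked and absorbed using Lemma \ref{constructionofsplittingfunction1}(iii) together with the weighted Poincar\'e inequality (Theorem \ref{poincareinequ}) and Proposition \ref{integralbound}. Assembling the bounds yields a quadratic inequality in $(\int_I \int_M |\Ric|^2\,\mathrm{d}\nu_{x_0^*;t}\,\mathrm{d}t)^{1/2}$, whose solution produces the desired estimate $\beta^2 \int_I \int_M |\Ric|^2\,\mathrm{d}\nu_{x_0^*;t}\,\mathrm{d}t \le C(n, Y) \Lambda^{1/2} r^2$.
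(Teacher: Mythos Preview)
Your strategy matches the paper's: both exploit the identity $\beta\,\Ric = (\text{soliton tensor at }x_0^*) - (\text{soliton tensor at }x_1^*) - \nabla^2(\text{potential difference})$, transfer the $x_1^*$-estimate to $\mathrm{d}\nu_{x_0^*;t}$, and bound the cross term $\int\langle\Ric,\nabla^2(\cdot)\rangle$ via the weighted Bianchi identity. The paper works directly with $F_i = \tau_i f_{x_i^*}$ and $\TT_i$ (no detour through Lemma~\ref{constructionofsplittingfunction1}) and squares the identity rather than testing against $\Ric$, but these are cosmetic differences. One minor correction: the measure transfer only gives $\int|E_1|^2\,\mathrm{d}\nu_{x_0^*;t}\,\mathrm{d}t \le C r^2\Lambda^{1/2}$, not $C r^2\Lambda$, since $\mathrm{d}\nu_{x_0^*;t}\le C e^{\theta f_1}\,\mathrm{d}\nu_{x_1^*;t}$ forces you through the weighted bound (Lemma~\ref{constructionofsplittingfunction1}(iii)), which carries no $\Lambda$-smallness; Cauchy--Schwarz against the unweighted bound then produces $\Lambda^{1/2}$. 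This is exactly~\eqref{perpendicularofRichessianf1} and is the source of the exponent $\tfrac{1}{2}$ in the theorem.

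The genuine gap is your handling of the Hessian cross term. The claim that $\delf\Ric$ is controlled in $L^2$ by $\sqrt{\Lambda}/r$ modulo bounded terms is false: writing $\Ric = \tau_0^{-1}\TT_0 - \nabla^2 f_0 + g/(2\tau_0)$, the piece $\tau_0^{-1}\delf\TT_0 = \tfrac{1}{2\tau_0}\nabla w_0$ is a third-order quantity with no $L^2$ bound in terms of $\Lambda$, and $\delf\nabla^2 f_0$ contains $\nabla\Delta f_0$, which is also not $L^2$-bounded a priori. Consequently, pairing against $\nabla(u_0-u_1)\in L^2$ via Cauchy--Schwarz cannot close. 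The paper (equations~\eqref{staticesti1}--\eqref{staticesti5}) does not bound $\delf\Ric$ in $L^2$; instead it expands $\int\langle\Ric,\nabla^2 f_1\rangle\,\mathrm{d}\nu_t$ into three explicit pieces and integrates by parts \emph{once more} on the $\nabla w_0$-term to return to $-2\int\langle\TT_0,\nabla^2 f_1\rangle$, so that each piece pairs $\TT_0$ or $\tr\,\TT_0$ against a quantity whose $L^2$ norm is bounded by Proposition~\ref{integralbound}. The upshot is that you need $L^2$ control on the \emph{Hessian} of the test function, not just its gradient.
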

\begin{proof}
	Without loss of generality, we assume $r=1$ and $t_0=0$. Moreover, we set $\nu_t=\nu_{x_0^*;t}$, $f_i=f_{x_i^*}$, $\tau_i=t_i-t$, $\mathcal{T}_i:= \tau_i \Ric+\nabla^2 (\tau_i f)-g/2$ and $w_i=\tau_i(2\Delta f_i-|\na f_i|^2+\scal)+f_i-n$ for $i\in\{0,1\}$. Note that $|\tau_0-\tau_1|=\beta$.

By Proposition \ref{propNashentropy} (iii), we have
	\begin{equation}\label{perpendicularofRichessianf0}
		\int_{-20}^{-1/20}\int_M|\TT_0|^2 \, \mathrm{d}\nu_{t} \mathrm{d}t \leq 10 \Lambda.
	\end{equation}
	
By our assumption, $x_0^*$ or $x_1^*$ is $(\bar \delta, 1)$-selfsimilar, then it follows from Lemma \ref{lem:nonexpand} that
\begin{align} \label{eq:extranonex001}
d_{W_1}^{s}(\nu_{x_0^*;s},\nu_{x_1^*;s}) \le C(n, Y)
\end{align}	
for any $s \in [-20, -1/20]$, provided $\bar \delta \le \bar \delta(n, Y)$.
	
	By Proposition \ref{integralbound}, \eqref{eq:extranonex001} and \cite[Proposition A.2]{fang2025RFlimit}, we obtain for a small constant $\theta=\theta(n)>0$,
	\begin{align}\label{perpendicularofRichessianf1}
		&\int_{-20}^{-1/20}\int_M|\TT_1|^2\, \mathrm{d}\nu_{t} \mathrm{d}t\leq C(n,Y)\int_{-20}^{-1/20}\int_M|\TT_1|^2e^{\theta f_1} \,\mathrm{d}\nu_{x^*_1;t} \mathrm{d}t \nonumber\\
		&\leq C(n,Y) \lc\int_{-20}^{-1/20}\int_M |\TT_1|^2\,\mathrm{d}\nu_{x^*_1;t} \mathrm{d}t\rc^{1/2} \lc \int_{-20}^{-1/20}\int_M |\TT_1|^2e^{2 \theta f_1}\,\mathrm{d}\nu_{x^*_1;t} \mathrm{d}t\rc^{1/2}\leq C(n,Y)\Lambda^{\frac{1}{2}}.
	\end{align}
	Here, we used the fact that $[-20, -1/20] \subset [t_1-30, t_1-1/30]$ by our assumption.
	
Now we calculate:
	\begin{align}
		&\int_M\la \Ric,\nabla ^2 f_1\ra \,\mathrm{d}\nu_t 	=\int_M\Ric(\nabla f_0,\nabla f_1)-\la\nabla f_1, \Div(\Ric) \ra \, \mathrm{d}\nu_t	=\int_M \Ric (\nabla f_0,\nabla f_1)-\frac{\la\nabla f_1,\nabla \scal\ra}{2} \,\mathrm{d}\nu_t\nonumber\\
		=&\int_M \lc\Ric+\nabla^2 f_0-\frac{g}{2\tau_0}\rc (\nabla f_0,\nabla f_1)-(\nabla ^2 f_0)(\nabla f_1,\nabla f_0)+\frac{\la\nabla f_1,\nabla f_0\ra}{2\tau_0}-\frac{\la\nabla f_1,\nabla \scal\ra}{2}\, \mathrm{d}\nu_t\nonumber\\
		=& \int_M \lc\Ric+\nabla^2 f_0-\frac{g}{2\tau_0}\rc(\nabla f_0,\nabla f_1) +\frac{1}{2\tau_0} \la \nabla \lc-\tau_0(|\nabla f_0|^2+\scal)+f_0\rc, \nabla f_1 \ra \, \mathrm{d}\nu_t\nonumber\\
		=&\int_M \tau_0^{-1} \TT_0(\nabla f_0,\nabla f_1)- \la \nabla \lc \Delta f_0+\scal-\frac{n}{2\tau_0}\rc, \nabla f_1 \ra+\frac{1}{2\tau_0} \la \na w_0, \na f_1 \ra \, \mathrm{d}\nu_t.\label{staticesti1}
	\end{align}

Similarly to \eqref{perpendicularofRichessianf1}, we have
	\begin{align}\label{staticesti2}
		\int_{-20}^{-1/20}\int_M |\nabla f_1|^4+|\nabla^2 f_1|^2\,\mathrm{d}\nu_t \mathrm{d}t\leq\int_{-20}^{-1/20}\int_M \lc|\nabla f_1|^4+|\nabla^2 f_1|^2\rc e^{\theta f_1}\,\mathrm{d}\nu_{x^*_1;t} \mathrm{d}t \leq C(n,Y).
	\end{align}
	Combining \eqref{perpendicularofRichessianf0} and \eqref{staticesti2}, we get
	\begin{align}\label{staticesti3}
		\left|\int_{-20}^{-1/20}\int_M \TT_0(\nabla f_0,\nabla f_1)\,\mathrm{d}\nu_t \mathrm{d}t\right|\leq C(n,Y)\Lambda^{1/2}.
	\end{align}
	
Similarly, using the integration by parts, we have
	\begin{align}\label{staticesti4}
		&\left|\int_{-20}^{-1/20}\int_M \big\la\nabla \lc \Delta f_0+\scal-\frac{n}{2\tau_0}\rc,\nabla f_1\big\ra \,\mathrm{d}\nu_t \mathrm{d}t\right| \notag\\
		\le & \left|\int_{-20}^{-1/20}\int_M \lc  \Delta f_0+\scal-\frac{n}{2\tau_0} \rc \lc \Delta f_1-\la \na f_1, \na f_0 \ra \rc \,\mathrm{d}\nu_t \mathrm{d}t\right|	\leq C(n, Y)\Lambda^{1/2}.
	\end{align}

By the weighted Bianchi identity (see Lemma \ref{weightedBianchilem}), we also obtain
	\begin{align}\label{staticesti5}
		\left|\int_{-20}^{-1/20}\int_M \la\nabla w_0,\nabla f_1\ra \,\mathrm{d}\nu_t \mathrm{d}t\right|=&2\left|\int_{-20}^{-1/20}\int_M \la \Div_{f_0}\TT_0,\nabla f_1\ra \,\mathrm{d}\nu_t \mathrm{d}t\right| \notag \\
		=&2\left|\int_{-20}^{-1/20}\int_M \la\mathcal{T}_0,\nabla^2 f_1\ra \,\mathrm{d}\nu_t \mathrm{d}t\right|\leq C(n,Y)\Lambda^{1/2}.
	\end{align}
	Combining \eqref{staticesti3}, \eqref{staticesti4}, \eqref{staticesti5} with \eqref{staticesti1}, we conclude that
	\begin{equation}\label{perpendicularofRicandhessian1}
		\left|\int_{-20}^{-1/20}\int_M \la \Ric,\nabla^2f_1\ra \,\mathrm{d}\nu_{t} \mathrm{d}t\right|\leq C(n,Y)\Lambda^{1/2}.
	\end{equation}
	A similar argument also gives
	\begin{equation}\label{perpendicularofRicandhessian2}
		\left|\int_{-20}^{-1/20}\int_M \la \Ric,\nabla^2f_0\ra \, \mathrm{d}\nu_{t} \mathrm{d}t\right|\leq C(n,Y) \Lambda^{1/2}.
	\end{equation}
	
	Now, using 
		\begin{align*}
		\mathcal{T}_0-\mathcal{T}_1-\nabla^2(\tau_0f_0-\tau_1f_1)=(\tau_0-\tau_1) \Ric, 
	\end{align*}
	we obtain
	\begin{align*}
		|\mathcal{T}_0-\mathcal{T}_1|^2= \beta^2|\Ric|^2+|\nabla^2(\tau_0f_0-\tau_1f_1)|^2+2(\tau_0-\tau_1)\la \Ric,\nabla^2 (\tau_0f_0-\tau_1f_1)\ra.
	\end{align*}
	Then, by \eqref{perpendicularofRichessianf0}, \eqref{perpendicularofRichessianf1}, \eqref{perpendicularofRicandhessian1} and \eqref{perpendicularofRicandhessian2}, we finally get
	\begin{align*}
		&\beta^2 \int_{-20}^{-1/20}\int_M |\Ric|^2 \,\mathrm{d}\nu_t \mathrm{d}t \\
		\leq &\int_{-20}^{-1/20}\int_M |\mathcal{T}_0-\mathcal{T}_1|^2-2(\tau_0-\tau_1)\la \Ric, \nabla ^2 (\tau_0f_0-\tau_1f_1)\ra\, \mathrm{d}\nu_t \mathrm{d}t\leq C(n,Y)\Lambda^{1/2}.
	\end{align*}
	This completes the proof.
\end{proof}

\subsection{Construction of sharp splitting maps}

Fix a base point $x_0^*=(x_0, t_0) \in \XX_\III$. For simplicity, we set $\tau=t_0-t$ and
	\begin{align*}
\mathrm{d}\nu_t:=\mathrm{d}\nu_{x_0^*;t}=(4\pi \tau)^{-\frac n 2}e^{-f}\,\mathrm{d}V_{g(t)}.
	\end{align*}

 The following definition of independent points can be viewed as a parabolic counterpart considered in prior work (see \cite[Definition 4.4]{Cheeger2018RectifiabilityOS}). Here, $\bar \delta=\bar \delta(n, Y)$ is the constant from Theorem \ref{staticestimate}.
  
\begin{defn}[$(k,\alpha,\delta,r)$-independent points]\label{defnindependentpoints}\index{$(k,\alpha,\delta,r)$-independent points}
Given constants $\alpha\in (0, 1)$, $\delta>0$, $r>0$, $k \in \{1, \ldots, n\}$ and a $(\bar \delta, r)$-selfsimilar point $x_0^*$, a set of spacetime points $\{x_i^*=(x_i,t_i)\}_{1\leq i\leq k}$ is called \textbf{$(k,\alpha,\delta,r)$-independent} at $x_0^*$, if the following conditions hold.
	\begin{enumerate}[label=\textnormal{(\roman{*})}]	
		\item $\displaystyle \WW_{x_i^*}(r^2/30)-\WW_{x_i^*}(30 r^2) \le \delta$ for all $i \in \{0,\ldots,k\}$.
		
		\item $\displaystyle d^*(x_i^*,x_0^*) \leq r$ for all $i \in \{1,\ldots,k\}$.

		\item $\displaystyle |t_0-t_i|\leq \frac{r^2}{100}$ for all $i \in \{1,\ldots,k\}$.
		\item Set $f_i=f_{x_i^*}$, $\tau_i=t_i-t$ and $F_i=\tau_if_i$. For $h_i=r^{-1}(F_i-F_0)$ and the symmetric matrix
		\begin{align*}
		A=(a_{ij}), \quad \text{where} \quad	a_{ij}:=\aint_{t_0-10r^2}^{t_0-r^2/10} \int_M \la\nabla h_i ,\nabla h_j\ra \,\mathrm{d}\nu_t \mathrm{d}t,
 		\end{align*}
the first eigenvalue $\lambda_1(A)$ of $A$ satisfies
 				\begin{align*}
			\lambda_1(A) \geq \alpha^2.
		\end{align*}
	\end{enumerate}
\end{defn}

For the rest of the section, we always assume $\delta \ll \bar \delta$. Next, we prove the existence of almost splitting maps constructed from $(k,\alpha,\delta,r)$-independent sets.

\begin{prop}\label{constructionsplitting2}
	Assume $\{x_i^*=(x_i,t_i)\}_{i=1,\ldots,k} $ is $(k,\alpha,\delta,r)$-independent at $x_0^*$. If $\delta\leq\delta(n,Y,\alpha)$, then we can find a $(k,\ep,r)$-splitting map $\vec u=(u_1,\ldots, u_k):M\times [t_0-10r^2,t_0]\to \R^k$ at $x_0^*$, where
	\begin{align*}
\ep=C(n,Y, \alpha)\sum_{i=0}^k\lc\mathcal{W}_{x_i^*}(r^2/30)-\mathcal{W}_{x_i^*}(30r^2)\rc^{\frac{1}{2}}.
	\end{align*}
\end{prop}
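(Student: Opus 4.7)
The plan is to produce $\vec u$ in three stages: first pass each potential $F_i = \tau_i f_i$ through the drift-heat equation to obtain raw smooth functions, second take pairwise differences to kill the drift and get $k$ honest heat solutions, and third apply a Gram--Schmidt procedure to enforce orthonormality (iv) in Definition \ref{defnsplittingmap}. The tracked small parameter throughout is
\begin{align*}
\Lambda := \max_{0 \le i \le k}\lc \mathcal W_{x_i^*}(r^2/30) - \mathcal W_{x_i^*}(30r^2) \rc,
\end{align*}
and every error will be bounded by $C(n,Y,\alpha)\Lambda^{1/2}$, matching the claimed form of $\ep$.

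For each $i$ I apply Lemma \ref{constructionofsplittingfunction1} at $x_i^*$ to obtain $u_{0,i}$ satisfying $\square u_{0,i} = -n/2$ with $u_{0,i} = F_i$ at time $t_i - 30 r^2$; since $|t_i - t_0| \le r^2/100$, all of these functions are simultaneously defined on (a slight enlargement of) the target parabolic region. I then set $\tilde v_i := r^{-1}(u_{0,i} - u_{0,0}) - c_i$ with $c_i$ chosen so that $\tilde v_i(x_0^*) = 0$; the drift cancels, giving $\square \tilde v_i = 0$. Two estimates now suffice. First, $\nabla \tilde v_i$ is $L^2(\mathrm{d}\nu_t \, \mathrm{d}t)$-close to $\nabla h_i$: Lemma \ref{constructionofsplittingfunction1}(ii) controls $\int |\nabla(u_{0,i}-F_i)|^2 \, \mathrm{d}\nu_{x_i^*;t}$ by the pinching at $x_i^*$. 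Second, the Hessian of $\tilde v_i$ is small: using the identity
\begin{align*}
\nabla^2(u_{0,i} - u_{0,0}) = \mathcal T_i' - \mathcal T_0' + (\tau_0 - \tau_i)\Ric, \quad \text{where} \quad \mathcal T_j' := \nabla^2 u_{0,j} + \tau_j \Ric - g/2,
\end{align*}
I control $\int |\mathcal T_j'|^2$ by Lemma \ref{constructionofsplittingfunction1}(i), while the static estimate (Theorem \ref{staticestimate}), whose hypotheses hold because $x_0^*$ is $(\bar\delta,r)$-selfsimilar, bounds $\beta^2 \int |\Ric|^2 \, \mathrm{d}\nu_{x_0^*;t} \, \mathrm{d}t$ by $C \Lambda^{1/2} r^2$ with $\beta = |\tau_0 - \tau_i| \le r^2/100$.

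All integrals above live against the measures $\mathrm{d}\nu_{x_i^*;t}$, whereas the splitting-map bounds are asked against $\mathrm{d}\nu_t = \mathrm{d}\nu_{x_0^*;t}$. The measure transfer is the technically delicate step, and I will reuse verbatim the Cauchy--Schwarz-with-exponential-weight trick from the proof of Theorem \ref{staticestimate}: Lemma \ref{lem:nonexpand} bounds the Wasserstein distance between the two measures, so \cite[Proposition A.2]{fang2025RFlimit}, together with Proposition \ref{integralbound} and Lemma \ref{constructionofsplittingfunction1}(iii), yields
\begin{align*}
\int |X|^2 \, \mathrm{d}\nu_{x_0^*;t}\, \mathrm{d}t \le C(n,Y) \lc \int |X|^2 \, \mathrm{d}\nu_{x_i^*;t}\, \mathrm{d}t\rc^{1/2} \lc \int |X|^2 e^{2\theta f_i} \, \mathrm{d}\nu_{x_i^*;t}\, \mathrm{d}t\rc^{1/2}
\end{align*}
for $X = \nabla(u_{0,i}-F_i)$ and for $X = \mathcal T_j'$, producing the $\Lambda^{1/2}$ rate in each case. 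Finally, the Gram matrix $B_{ij} := \aint_{t_0 - 10 r^2}^{t_0 - r^2/10} \int_M \langle \nabla \tilde v_i, \nabla \tilde v_j\rangle \, \mathrm{d}\nu_t \, \mathrm{d}t$ then differs from $A$ by $O(\Lambda^{1/2})$, and since $\lambda_1(A) \ge \alpha^2$, for $\delta$ small in terms of $(n,Y,\alpha)$ the matrix $B^{-1/2}$ exists with norm at most $2/\alpha$. Setting $\vec u := B^{-1/2} \vec{\tilde v}$ produces the required splitting map: properties (i), (ii), (iv) of Definition \ref{defnsplittingmap} are immediate, and (iii) is inherited from the Hessian estimate on $\tilde v_i$ with an extra factor of order $\alpha^{-2}$. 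The main obstacle is that the measure transfer intrinsically loses a square root, but the static estimate supplies precisely the matching $\Lambda^{1/2}$ rate, so the two ingredients dovetail into the claimed $\ep$.
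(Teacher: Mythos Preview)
Your proposal is correct and follows essentially the same approach as the paper: construct the auxiliary heat solutions via Lemma \ref{constructionofsplittingfunction1}, take differences to obtain genuine heat solutions, bound the Hessian by combining Lemma \ref{constructionofsplittingfunction1}(i) with the static estimate Theorem \ref{staticestimate}, transfer measures from $\nu_{x_i^*}$ to $\nu_{x_0^*}$ via the exponential-weight Cauchy--Schwarz trick (Lemma \ref{lem:nonexpand} plus \cite[Proposition A.2]{fang2025RFlimit} and Lemma \ref{constructionofsplittingfunction1}(iii)), and finally orthonormalize using the first-eigenvalue bound from independence. The only cosmetic difference is that you write the orthogonalizing matrix explicitly as $B^{-1/2}$, whereas the paper simply invokes the existence of a bounded transformation.
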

\begin{proof}
	For each $i \in \{0,\ldots,k\}$, by Lemma \ref{constructionofsplittingfunction1}, we can find $v_i$ so that $\square v_i=-n/2$ and
	\begin{align*}
		\int^{t_0-r^2/10}_{t_0-10r^2}\int_M \left|\tau_i\Ric+\nabla^2 v_i-\frac{g}{2}\right|^2\,\mathrm{d}\nu_{x_i^*;t}\mathrm{d}t\leq 75 r^2\left(\mathcal{W}_{x_i^*}(r^2/30)-\mathcal{W}_{x_i^*}(30r^2)\right),
	\end{align*} 
where $\tau_i=t_i-t$. By Lemma \ref{lem:nonexpand}, we can apply \cite[Proposition A.2]{fang2025RFlimit} to obtain for $\theta=\theta(n)$ and $i\in \{1,\ldots,k\}$,
	\begin{align}\label{sharpsplitting11}
		&\int^{t_0-r^2/10}_{t_0-10r^2}\int_M \left|\tau_i\Ric+\nabla^2 v_i-\frac{g}{2}\right|^2\,\mathrm{d}\nu_t \mathrm{d}t\leq \int^{t_0-r^2/10}_{t_0-10r^2}\int_M \left|\tau_i\Ric+\nabla^2 v_i-\frac{g}{2}\right|^2e^{\theta f_i}\,\mathrm{d}\nu_{x_i^*;t}\mathrm{d}t\nonumber\\
		\leq& \lc\int^{t_0-r^2/10}_{t_0-10r^2}\int_M \left|\tau_i\Ric+\nabla^2 v_i-\frac{g}{2}\right|^2\,\mathrm{d}\nu_{x_i^*;t}\mathrm{d}t\rc^{1/2}\lc \int^{t_0-r^2/10}_{t_0-10r^2}\int_M \left|\tau_i\Ric+\nabla^2 v_i-\frac{g}{2}\right|^2e^{2 \theta f_i}\,\mathrm{d}\nu_{x_i^*;t} \mathrm{d}t\rc^{1/2}\nonumber\\
		\leq& C(n,Y)r^2\left(\mathcal{W}_{x_i^*}(r^2/30)-\mathcal{W}_{x_i^*}(30r^2)\right)^{1/2},
	\end{align}
where for the last inequality, we have used Proposition \ref{integralbound} and Lemma \ref{constructionofsplittingfunction1} (iii).

	Therefore, it follows that
	\begin{align*}
		\sum_{i=1}^k\int^{t_0-r^2/10}_{t_0-10r^2}\int_M \left|\tau_i\Ric+\nabla^2 v_i-\frac{g}{2}\right|^2\,\mathrm{d}\nu_t \mathrm{d}t\leq C(n,Y)r^2\sum_{i=1}^k\left(\mathcal{W}_{x_i^*}(r^2/30)-\mathcal{W}_{x_i^*}(30r^2)\right)^{\frac{1}{2}}.
	\end{align*}
	
For $i \in \{1,\ldots,k\}$, set $u_i:=r^{-1}(v_i-v_0)$. Thus, by taking difference of \eqref{sharpsplitting11} and applying Theorem \ref{staticestimate} to the integral of the $\Ric$-term, we obtain
	\begin{align*}
		\sum_{i=1}^k\int^{t_0-r^2/10}_{t_0-10r^2}\int_M \left|\nabla^2 u_i\right|^2\,\mathrm{d}\nu_t \mathrm{d}t\leq C(n,Y)\sum_{i=0}^k \left(\mathcal{W}_{x_i^*}(r^2/30)-\mathcal{W}_{x_i^*}(30r^2)\right)^{\frac{1}{2}}.
	\end{align*}
It follows from Lemma \ref{constructionofsplittingfunction1} (ii) that for any $t\in [t_0-10r^2,t_0-r^2/10]$,
		\begin{align}\label{sharpsplittingex:001}
		\int_{M}|\na (F_0-v_0)|^2\,\mathrm{d}\nu_{t}\leq 30 r^2\left(\mathcal{W}_{x_0^*}(r^2/30)-\mathcal{W}_{x_0^*}(30r^2)\right).
	\end{align}
	By Lemma \ref{lem:nonexpand} and \cite[Proposition A.2]{fang2025RFlimit} again, we have for $\theta=\theta(n)>0$, $t\in [t_0-10r^2,t_0-r^2/10]$ and $i\in \{1,\ldots,k\}$,
			\begin{align}\label{sharpsplittingex:002}
		&\int_{M}|\na (F_i-v_i)|^2\,\mathrm{d}\nu_{t}\leq \int_{M}|\na (F_i-v_i)|^2 e^{\theta f_i}\,\mathrm{d}\nu_{x_i^*;t} \notag \\
		\le & \lc \int_{M}|\na (F_i-v_i)|^2 \,\mathrm{d}\nu_{x_i^*;t} \rc^{\frac 1 2} \lc \int_{M}|\na (F_i-v_i)|^2 e^{2\theta f_i}\,\mathrm{d}\nu_{x_i^*;t} \rc^{\frac 1 2} \notag\\
		\le & C(n, Y) r^2 \lc \mathcal{W}_{x_i^*}(r^2/30)-\mathcal{W}_{x_i^*}(30r^2) \rc^{\frac 1 2},
	\end{align}
where the last inequality holds by Lemma \ref{constructionofsplittingfunction1} (ii)(iii).	Combining \eqref{sharpsplittingex:001} and \eqref{sharpsplittingex:002}, we obtain for any $t\in [t_0-10r^2,t_0-r^2/10]$ and $i\in \{1,\ldots,k\}$,
	\begin{align}\label{eq:extraint}
\int_{M}|\na (u_i-h_i)|^2\,\mathrm{d}\nu_{t} \le C(n, Y) \lc \lc \mathcal{W}_{x_i^*}(r^2/30)-\mathcal{W}_{x_i^*}(30r^2) \rc^{\frac 1 2}+ \mathcal{W}_{x_0^*}(r^2/30)-\mathcal{W}_{x_0^*}(30r^2)\rc.
	\end{align}

	Now we can use Definition \ref{defnindependentpoints} to modify the function $\{u_i\}$ to make them satisfy the orthogonality condition. In fact, by Definition \ref{defnindependentpoints}, we see that for $A=(a_{ij})$ satisfies $\lam_1(A)\geq \alpha^2 $. Set $A'=(a_{ij}')$, where 
	\begin{align*}
		a_{ij}'=\aint_{t_0-10r^2}^{t_0-r^2/10}\int_M\la \na u_i,\na u_j\ra \,\mathrm{d}\nu_t \mathrm{d}t.
	\end{align*}
	By \eqref{eq:extraint}, if $\delta\leq\delta(n,Y,\alpha)$, the first eigenvalue $\lambda_1(A')\geq \alpha^2/2$.
	Thus, we can find a matrix $B=(b_{ij})$ with $\|B\|\leq C(\alpha)$ so that for $u'_i:=b_{ij}u_j$, the following holds:
	\begin{align*}
		\int_{t_0-10r^2}^{t_0 -r^2/10}\int_M \la\na u'_i,\na u'_j\ra -\delta_{ij} \,\mathrm{d}\nu_t \mathrm{d}t=0.
	\end{align*}
	
	After adding a constant vector to $\vec u'=(u'_i,\ldots,u'_k)$ so that $\vec u'(x_0^*)=0$, we conclude that $\vec u'$ satisfies all the requirements of Definition \ref{defnsplittingmap}. This finishes the proof.
\end{proof}

\begin{defn}[Entropy pinching]\label{defnentropypinching}
	For $k\in \{1,2 \ldots,n\}$, $\alpha \in (0, 1)$, $\delta>0$ and $r>0$, the \textbf{$(k,\alpha,\delta,r)$-entropy pinching at $x_0^*$} is defined as
	\begin{equation*}\index{$\EE_r^{k,\alpha,\delta}$}
		\EE_r^{k,\alpha,\delta}(x_0^*):= \inf_{\{x_i^*\}_{1 \le i \le k} \,\mathrm{are}\, (k,\alpha,\delta,r)-\mathrm{independent}\ \mathrm{at}\ x_0^*}\sum_{i=0}^k \left(\WW_{x_i^*}(r^2/30)-\WW_{x_i^*}(30r^2)\right)^{\frac{1}{2}}.
	\end{equation*}
	Here, we implicitly assume that $x_0^*$ is $(\bar\delta, r)$-selfsimilar and there is at least one $ (k,\alpha,\delta,r)$-independent set at $x_0^*$.
\end{defn}

Note that our definition of entropy pinching is different from \cite[Definition 4.23]{Cheeger2018RectifiabilityOS}. As a corollary of Proposition \ref{constructionsplitting2}, we have:
\begin{cor}\label{corsharpsplitting}
Suppose $x_0^*$ is $(\bar\delta, r)$-selfsimilar and there exists a $ (k,\alpha,\delta,r)$-independent set at $x_0^*$. If $\delta \le \delta(n, Y, \alpha)$, then there exists a $(k,\ep,r)$-splitting map $\vec u=(u_1,\ldots, u_k):M\times [t_0-10r^2,t_0]\to \R^k$ at $x_0^*$, where
	\begin{align*}
\ep=C(n,Y, \alpha)\EE_r^{k,\alpha,\delta}(x_0^*).
	\end{align*}
\end{cor}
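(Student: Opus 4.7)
The plan is a direct compactness-plus-invocation argument, reducing the corollary essentially to Proposition \ref{constructionsplitting2}. First I would observe that in Definition \ref{defnindependentpoints}, conditions (i)--(iv) are all \emph{closed} inequalities in the spacetime parameters $\{x_i^*\}$, and these parameters are constrained to the compact region $\overline{B^*(x_0^*,r)}\cap (M\times [t_0-r^2/100,\,t_0+r^2/100])$. Since the conjugate heat kernel measures $\nu_{x^*;\cdot}$, the potential functions $f_{x^*}$, the $\WW$-entropies $\WW_{x^*}(r^2/30)$ and $\WW_{x^*}(30r^2)$, and the symmetric matrix $A=(a_{ij})$ all depend continuously on $x^*$ by standard heat kernel regularity on a smooth closed Ricci flow, the functional
\begin{align*}
\{x_i^*\}_{1\le i\le k}\ \longmapsto\ \sum_{i=0}^k \bigl(\WW_{x_i^*}(r^2/30)-\WW_{x_i^*}(30 r^2)\bigr)^{1/2}
\end{align*}
is continuous on the compact admissible set of $(k,\alpha,\delta,r)$-independent configurations, which is nonempty by hypothesis. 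Hence the infimum defining $\EE_r^{k,\alpha,\delta}(x_0^*)$ in Definition \ref{defnentropypinching} is attained by some minimizer $\{x_i^{*,\star}\}_{1\le i\le k}$.

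Next I would invoke Proposition \ref{constructionsplitting2} on this attaining configuration. Provided $\delta\le\delta(n,Y,\alpha)$ as in that proposition's hypothesis (which is already assumed here), the conclusion directly produces a $(k,\ep,r)$-splitting map $\vec u:M\times [t_0-10r^2,t_0]\to\R^k$ at $x_0^*$ with
\begin{align*}
\ep \;=\; C(n,Y,\alpha)\sum_{i=0}^k \bigl(\WW_{x_i^{*,\star}}(r^2/30)-\WW_{x_i^{*,\star}}(30 r^2)\bigr)^{1/2} \;=\; C(n,Y,\alpha)\,\EE_r^{k,\alpha,\delta}(x_0^*),
\end{align*}
which is exactly the asserted bound.

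Should attainment of the infimum fail for some reason I have not foreseen (for instance, a subtle boundary issue at $\lambda_1(A)=\alpha^2$), I would fall back on a minimizing sequence $\{x_i^{*,m}\}_{1\le i\le k}$ whose pinching sum is at most $\EE_r^{k,\alpha,\delta}(x_0^*)+m^{-1}$. Applying Proposition \ref{constructionsplitting2} for each $m$ yields $(k,\ep_m,r)$-splitting maps $\vec u^{\,m}$ with $\ep_m\le C(n,Y,\alpha)(\EE_r^{k,\alpha,\delta}(x_0^*)+m^{-1})$. The uniform weighted Hessian and gradient bounds from Lemma \ref{constructionofsplittingfunction1}(iii), together with standard parabolic regularity for the heat equation $\square u_i^m=0$, give a $C^\infty_{\loc}$ subsequential limit $\vec u$ on $M\times [t_0-10r^2,t_0]$ satisfying all four clauses of Definition \ref{defnsplittingmap} with the desired $\ep=C(n,Y,\alpha)\,\EE_r^{k,\alpha,\delta}(x_0^*)$. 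I do not anticipate any serious obstacle: this corollary is essentially a clean restatement of Proposition \ref{constructionsplitting2} with the entropy terms optimized, and the only bookkeeping required is the joint continuity of the pinching functional in the spacetime variables.
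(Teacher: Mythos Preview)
Your proposal is correct and follows the same route as the paper, which treats this as an immediate consequence of Proposition~\ref{constructionsplitting2} and gives no explicit proof. Your compactness/attainment argument is more elaborate than needed: since the constant $C(n,Y,\alpha)$ is unspecified, one can simply pick any admissible configuration with pinching sum at most $2\,\EE_r^{k,\alpha,\delta}(x_0^*)$, apply Proposition~\ref{constructionsplitting2}, and absorb the factor~$2$ into~$C$.
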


\subsection{Hessian decay of heat flows}

We establish the following proposition, demonstrating that the almost splitting property at an almost self-similar point can be propagated to all scales.

\begin{prop}\label{equivalencesplitsymetric1}
For any $\ep>0$, we can find $\delta\leq \delta(n,Y,\ep)$ such that the following hold:
	\begin{enumerate}[label=\textnormal{(\roman{*})}]
		\item If $z \in \XX_{\III^-}$ is $(\delta,r)$-selfsimilar, and there exists a $(k, \delta, r)$-splitting map at $z$, then $z$ is $(k, \ep, s)$-splitting (cf. Definition \ref{defnksplitting}) for any $s \in [\ep r, \ep^{-1} r]$.

	\item If $z \in \XX_{\III^-}$ is $(\delta,r)$-selfsimilar and $(k, \delta, r)$-splitting, then $z$ is $(k, \ep, s)$-splitting for any $s \in [\ep r, \ep^{-1} r]$.
	\end{enumerate}
\end{prop}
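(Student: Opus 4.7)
The plan is to prove both parts by a single contradiction argument, exploiting the weak compactness of $\mathcal M(n,Y,T)$ (Theorem \ref{thm:intro1}), the fact that limits of almost-selfsimilar points are Ricci shrinker spaces (Lemma \ref{lem:imply1}), and the dilation symmetry enjoyed by such spaces.

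Suppose, for contradiction, some $\ep_0>0$ admits sequences $\XX^i\in\mathcal M(n,Y,T_i)$, base points $z_i\in \XX^i_{\III^-}$, radii $r_i>0$ (normalized to $r_i=1$ by parabolic rescaling) and scales $s_i\in[\ep_0,\ep_0^{-1}]$ such that $z_i$ is $(1/i,1)$-selfsimilar, the relevant splitting hypothesis with tolerance $1/i$ holds at $z_i$, but $z_i$ is not $(k,\ep_0,s_i)$-splitting. Passing to a subsequence we may assume $s_i\to s_\infty\in[\ep_0,\ep_0^{-1}]$ and, by Theorem \ref{thm:intro1},
\[
(M_i\times\III,d^*_i,z_i,\t_i)\xrightarrow[i\to\infty]{\hat C^\infty}(Z_\infty,d_{Z_\infty},z_\infty,\t_\infty).
\]
Following the limiting argument in Lemma \ref{lem:imply1}, $(Z_\infty,d_{Z_\infty},z_\infty,\t_\infty)$ is a Ricci shrinker space in the sense of Definition \ref{def:rss}, so its regular part $\RR_\infty$ admits the one-parameter family $\boldsymbol{\psi}^\sigma$ of dilation diffeomorphisms.

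The heart of the argument is to upgrade the scale-one hypothesis to an honest $\R^k$-splitting of $\RR_\infty$ on $[-10,0]$. For (ii), each $z_i$ being $(k,1/i,1)$-splitting gives an auxiliary Ricci flow limit space $Z'_i$ whose regular part splits off $\R^k$ on $[-10,0]$ and which is $1/i$-close to $(M_i\times\III,d^*_i,z_i,\t_i)$ over that interval; extracting a subsequential limit $Z'_\infty$ via Theorem \ref{thm:intro1} and using a diagonal $\ep$-map argument, one identifies $Z'_\infty$ with $Z_\infty$ over $[-10,0]$, while the $k$-splitting passes to the limit since the splitting Killing generators converge smoothly under $\hat C^\infty$-convergence. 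For (i), the $(k,1/i,1)$-splitting maps $\vec u^i$ converge, via the smooth heat-kernel convergence of Theorem \ref{thm:intro3}, to a limit $\vec u^\infty=(u^\infty_1,\ldots,u^\infty_k)$ on $\RR_{\infty,(\t(z_\infty)-10,\t(z_\infty)]}$ satisfying $\square u^\infty_j=0$, $\int\!\!\int|\na^2 u^\infty_j|^2\,\mathrm{d}\nu_t\,\mathrm{d}t=0$ and $\la\na u^\infty_j,\na u^\infty_l\ra-\delta_{jl}=0$ in the integrated sense; hence $\{\na u^\infty_j\}$ are pointwise-parallel, orthonormal, time-invariant vector fields whose flows produce the desired spacetime splitting. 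In either case, applying $\boldsymbol{\psi}^\sigma$ spreads the splitting from $[-10,0]$ to $\RR_{\infty,(-\infty,0)}$, so the parabolic rescaling of $Z_\infty$ by $s_\infty^{-1}$ again $k$-splits on $[-10,0]$.

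Finally, the $\hat C^\infty$-convergence is stable under the bounded rescalings $s_i^{-1}\to s_\infty^{-1}$, so $(M_i\times\III,s_i^{-1}d^*_i,z_i,s_i^{-2}\t_i)$ converges in $\hat C^\infty$ to the rescaled, still $k$-splitting, shrinker; Definition \ref{defnksplitting} then forces $z_i$ to be $(k,\ep_0,s_i)$-splitting for all large $i$, contradicting our hypothesis. The principal obstacle I anticipate is the upgrade step for case (i): one must justify that the integral vanishing $\int\!\!\int|\na^2\vec u^\infty|^2\,\mathrm{d}\nu_t\,\mathrm{d}t=0$ and the integral orthogonality produce a \emph{pointwise} splitting of the Ricci flow spacetime structure on $\RR_\infty$. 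This is handled by combining the smooth convergence on the regular part with the Poincar\'e inequality (Theorem \ref{poincareinequ}) to propagate parallelism and orthonormality across time slices, and by verifying compatibility of the flows of $\{\na u^\infty_j\}$ with $\partial_\t$ through the Ricci shrinker equation of Theorem \ref{thm:intro5}.
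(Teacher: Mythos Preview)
Your approach is correct and essentially the same as the paper's: both use contradiction, pass to a Ricci shrinker limit (via the argument of Lemma \ref{lem:imply1}), upgrade the scale-one splitting hypothesis to an honest $\R^k$-splitting of the regular part, and derive a contradiction from the $\hat C^\infty$-convergence. The paper dispatches the upgrade step you flag by citing \cite[Propositions 8.2, 8.4, Remark 8.3]{fang2025RFlimit} rather than arguing via $\boldsymbol{\psi}^\sigma$ directly, and simply declares (ii) ``similar'' to (i) without elaborating.
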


\begin{proof}
Without loss of generality, we assume $r=1$. We will only prove (i), since the proof of (ii) is similar.

Suppose the conclusion is false. Then we can find $\XX^l \in \mathcal M(n, Y, T_l)$ with $\III_l=[-0.98T_l, 0]$ and $z_l^* \in \XX^l$ such that $z_l^*$ is $(l^{-2}, 1)$-selfsimilar. In addition, one can find $(k,l^{-2},1)$-splitting map $\vec u^l=(u^l_1,\ldots,u^l_k)$ at $z_l^*$. However, $z_l^*$ is not $(k,\ep, s_l)$-splitting for some $s_l \in [\ep, \ep^{-1}]$. 

By taking a subsequence, we assume 
\begin{align*}
		(\XX^l, d^*_l, z_l^*,\t_l-\t_l(z_l^*)) \xrightarrow[i \to \infty]{\quad \hat C^\infty \quad} (Z, d_Z, z,\t).
	\end{align*}

By the smooth convergence, we assume that $u_i^l\to y_i$ in $C_{\loc}^\infty(\RR_{(-10,0)})$ such that on $\RR_{(-10, -1/10)}$,
	\begin{align} \label{eq:entroto1}
	\la \na y_i, \na y_j \ra=\delta_{ij},\quad \na^2 y_i=0,\quad \partial_{\t} y_i=0.
	\end{align}

Arguing as in the proof of Lemma \ref{lem:imply1}, we conclude that $Z$ is a Ricci shrinker space. Furthermore, it follows from \cite[Proposition 8.2, Remark 8.3]{fang2025RFlimit} that one can extend functions $\{y_1, \ldots, y_k\}$ on $\RR_{(-\infty, 0)}$ so that \eqref{eq:entroto1} still holds. Thus, by \cite[Proposition 8.4]{fang2025RFlimit}, $\RR_{(-\infty, 0]}$ admits an $\R^k$-splitting. However, this implies that for sufficiently large $l$, $z_l^*$ is $(k, \ep, s_l)$-splitting. Consequently, we obtain a contradiction.
\end{proof}

Next, we prove a Hessian decay of heat flows. At the first reading, it may be helpful for the readers to assume that the Ricci flow limit space $Z$ in the proof is $\bar{\mathcal C}^k$ or $\bar{\mathcal C}^m_k(\Gamma)$ (see Subsection \ref{subsec:quo}); this specialization already suffices for the main results of the paper.

\begin{thm}[Hessian decay]\label{hessiandecay}
Let $\eta>0$ be a fixed constant. Suppose that $x_0^*=(x_0,t_0)\in \XX_\III$ is $(\delta,r)$-selfsimilar and is not $(k+1,\eta, r)$-splitting (cf. Definition \ref{defnksplitting}).
	
Let $\vec u=(u_1,\ldots,u_k):M\times [t_0-10r^2,t_0]\to\R^k$ be a $(k,\delta,r)$-splitting map, and $h: M\times [t_0-10r^2,t_0]\to\R$ be a smooth function satisfying $\square h=0$. Define
	\begin{align*}
		v:= h-\sum_{i=1}^k b_iu_i, \quad where \quad b_i:= \aint_{t_0-8r^2}^{t_0-8r^2/10}\int_M\la \nabla h,\nabla u_i\ra \,\mathrm{d}\nu_t \mathrm{d}t.
	\end{align*}
	Then there exists a constant $\theta=\theta(n,Y,\eta)\in (0,1)$ such that the following holds\emph{:} if $\delta\leq\delta(n,Y,\eta)$, then
	\begin{equation}\label{hessiandecay11}
		\int_{t_0-2r^2}^{t_0-2r^2/10}\int_M|\nabla^2 v|^2\,\mathrm{d}\nu_t \mathrm{d}t\leq \theta \int_{t_0-8r^2}^{t_0-8r^2/10}\int_M|\nabla^2 v|^2\,\mathrm{d}\nu_t \mathrm{d}t.
	\end{equation}
\end{thm}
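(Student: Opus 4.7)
The plan is a contradiction argument via compactness on Ricci flow limit spaces, in the spirit of the harmonic/heat-flow analysis on singular limits (as in \cite{Cheeger2018RectifiabilityOS,gianniotis2024splitting}). Assume the conclusion fails for some fixed $\eta>0$; then for each $l\in\N$ we obtain $\XX^l\in\MM(n,Y,T_l)$, a spacetime point $x_{0,l}^*\in\XX^l_{\III}$ that is $(l^{-1},1)$-selfsimilar but not $(k+1,\eta,1)$-splitting, a $(k,l^{-1},1)$-splitting map $\vec u^l$ at $x_{0,l}^*$, and a heat solution $h^l$, with the corresponding $v^l:=h^l-\sum_i b_i^l u_i^l$ violating \eqref{hessiandecay11} with $\theta=1-l^{-1}$. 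Rescaling $v^l$ by a scalar, I normalize
\[
\int_{-8}^{-8/10}\int_M |\na^2 v^l|^2\,\mathrm{d}\nu^l_t\mathrm{d}t=1,
\]
so that the failed inequality reads $\int_{-2}^{-2/10}\int_M|\na^2 v^l|^2\,\mathrm{d}\nu^l_t\mathrm{d}t\ge 1-l^{-1}$. The identity $\tfrac{\mathrm{d}}{\mathrm{d}t}\int|\na v^l|^2\,\mathrm{d}\nu^l_t=-2\int|\na^2 v^l|^2\,\mathrm{d}\nu^l_t$ (from $\square v^l=0$) then gives uniform bounds on $\int|\na v^l|^2\,\mathrm{d}\nu^l_t$ over $[-8,-8/10]$; after subtracting a weighted mean, Theorem \ref{poincareinequ} yields uniform $L^2(\mathrm{d}\nu^l_t)$-bounds on $v^l$.

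Passing to a subsequence, Theorems \ref{thm:intro1} and \ref{thm:intro3} give $\hat{C}^\infty$-convergence $(\XX^l,d^*_l,x_{0,l}^*,\t_l)\to(Z,d_Z,z,\t)$. The $(l^{-1},1)$-selfsimilarity at $x_{0,l}^*$ combined with the argument of Lemma \ref{lem:imply1} identifies $(Z,d_Z,z,\t)$ as a Ricci shrinker space with associated self-similar diffeomorphism flow $\boldsymbol{\psi}^s$ on its regular part $\RR$. By Theorem \ref{thm:intro3}(d) together with the reasoning of Proposition \ref{equivalencesplitsymetric1}, $u_i^l$ converges smoothly on compact subsets of $\RR_{(-10,0)}$ to $y_1,\ldots,y_k$, which extend via \cite[Propositions 8.2, 8.4]{fang2025RFlimit} to an $\R^k$-splitting of $\RR_{(-\infty,0)}$; the hypothesis that $x_{0,l}^*$ is not $(k+1,\eta,1)$-splitting ensures this splitting is maximal. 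Parabolic regularity promotes $v^l\to v^\infty$ to smooth convergence on $\RR_{(-10,0)}$, so $\square v^\infty=0$ and the Hessian integrals pass to the limit, yielding
\[
\int_{-8}^{-8/10}\int|\na^2 v^\infty|^2\,\mathrm{d}\nu_t\mathrm{d}t=\int_{-2}^{-2/10}\int|\na^2 v^\infty|^2\,\mathrm{d}\nu_t\mathrm{d}t=1,
\]
together with the orthogonality $\aint_{-8}^{-8/10}\int\la\na v^\infty,\na y_i\ra\,\mathrm{d}\nu_t\mathrm{d}t=0$ for each $i$, inherited from the definition of $b_i^l$.

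The core of the argument is then a modal analysis on $Z$. Using the self-similar action $\boldsymbol{\psi}^s$ (as in \cite[Theorem 15.69]{bamler2020structure}), the heat solution $v^\infty$ decomposes into modes homogeneous of weight $\mu$, each contributing $\phi_\mu(\tau):=\int|\na v^\infty_\mu|^2\,\mathrm{d}\nu_{-\tau}=c_\mu\tau^{-2\mu-1}$ with $c_\mu\ge0$. Monotonicity of $\phi:=\sum_\mu\phi_\mu$ in $\tau$ kills all modes with $\mu>-1/2$ (apart from constants, whose gradient vanishes), and the equality $\phi(8)-\phi(8/10)=\phi(2)-\phi(2/10)$, expanded as $\sum_\mu c_\mu(1-10^{2\mu+1})(8^{-2\mu-1}-2^{-2\mu-1})=0$ with each summand nonnegative and strictly positive whenever $c_\mu>0$ and $\mu<-1/2$, forces $c_\mu=0$ for all such $\mu$. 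Only the weight-$(-1/2)$ modes survive; by maximality of the $k$-splitting of $Z$ these must coincide with $\mathrm{span}\{y_1,\ldots,y_k\}$ (any additional weight-$(-1/2)$ heat flow with nontrivial Hessian would yield a further splitting direction via \cite[Propositions 8.2, 8.4]{fang2025RFlimit}), so $v^\infty=\sum_i a_i y_i$ modulo a constant. The orthogonality to each $y_j$ then forces $a_j=0$, whence $v^\infty$ is constant and $\int|\na^2 v^\infty|^2=0$, contradicting the normalization. The main technical hurdle is executing this modal decomposition on a singular Ricci shrinker space: on smooth shrinkers it is standard spectral theory for the drift Laplacian, but on $Z$ one must work with $\boldsymbol{\psi}^s$ restricted to $\RR$ and approximate by the smooth $v^l$, carefully controlling the singular set via its measure-zero property and the $L^2$-Hessian bounds.
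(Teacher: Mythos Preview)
Your overall strategy---contradiction, compactness to a Ricci shrinker limit, spectral analysis of the drift Laplacian---matches the paper's, and your equality argument forcing $c_\mu=0$ for $\mu<-1/2$ is essentially the ratio computation \eqref{eq:hessianextra001}--\eqref{eq:hessianextra004}. The identification of the weight-$(-1/2)$ modes with $\mathrm{span}\{y_1,\ldots,y_k\}$ is the paper's Claim~\ref{claimeigen}, proved via the $\rcd(1/2,\infty)$ structure on the time-$(-1)$ slice and the rigidity of \cite{GKKO}.

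There are, however, two genuine gaps.

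First, your normalization $\int_{-8}^{-8/10}\int|\na^2 v^l|^2\,\mathrm{d}\nu^l_t\mathrm{d}t=1$ does \emph{not} give ``uniform bounds on $\int|\na v^l|^2\,\mathrm{d}\nu^l_t$'' as you assert: the identity $\tfrac{\mathrm{d}}{\mathrm{d}t}\int|\na v^l|^2=-2\int|\na^2 v^l|^2$ controls only the drop $\int|\na v^l|^2\,\mathrm{d}\nu^l_{-8}-\int|\na v^l|^2\,\mathrm{d}\nu^l_{-8/10}=2$, not either term separately. Without an upper bound on $\int|\na v^l|^2$ you have no compactness for $v^l$, and Poincar\'e gives nothing. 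The paper normalizes instead by $\int|\na v|^2\,\mathrm{d}\nu_{-8}=1$, disposes of the case $\int|\na v|^2\,\mathrm{d}\nu_{-2}\le 1/10$ by a direct computation yielding $\theta=1/9$, and retains the nondegeneracy $\int|\na v|^2\,\mathrm{d}\nu_{-2}\ge 1/10$ to ensure $v^\infty\notin\mathrm{span}\{y_i\}$ in the limit (Claim~\ref{coefficientnonzero}).

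Second, ``the Hessian integrals pass to the limit'' with equality is not automatic on a singular space: smooth convergence holds only on compacta in $\RR$, and a priori Hessian energy could concentrate near the singular set. Your closing remark about ``the measure-zero property and the $L^2$-Hessian bounds'' does not resolve this; $L^2$ control alone does not prevent concentration. The paper never controls $\int|\na^2 v^l|^2$ near singularities directly. Instead it passes the \emph{gradient} integrals $\int_{M_l}|\na v^l|^2\,\mathrm{d}\nu^l_{-s}\to\int_{\RR_{-s}}|\na v^\infty|^2\,\mathrm{d}\nu_{z;-s}$ at each fixed $s$ (Claim~\ref{takelimit1}, using hypercontractivity to get $\int|\na v^l|^5\,\mathrm{d}\nu^l_{-s}\le 1$ together with the codimension estimate on $\MS$), then establishes the Bochner identity $2\int_{-s_2}^{-s_1}\int_{\RR_t}|\na^2 v^\infty|^2=\int_{\RR_{-s_2}}|\na v^\infty|^2-\int_{\RR_{-s_1}}|\na v^\infty|^2$ directly on the limit (Claim~\ref{takelimit3}), using cutoffs $\eta_{r,A}$ and a pointwise bound $|\na v^\infty|\le C$ on $\supp(\eta_{r,A})$ (Claim~\ref{takelimit2}). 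This detour is what makes the Hessian integrals converge and the failed inequality pass to the limit.
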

\begin{proof}
Without loss of generality, we assume $r=1$ and $t_0=0$. 

In addition, we assume the following normalization holds:
	\begin{align}\label{normalization2}
		\int_M|\nabla v|^2\,\mathrm{d}\nu_{-8}=1,\quad \int_Mv\,\mathrm{d}\nu_{-8}=0.
	\end{align}
	By Theorem \ref{poincareinequ}, we have
	\begin{align}\label{normalization2aa}
		\int_M v^2\,\mathrm{d}\nu_{-8}\leq 16.
	\end{align}
	Since $\diff{}{t} \int_M v^2\,\mathrm{d}\nu_t=-2\int_M |\na v|^2\,\mathrm{d}\nu_t\leq 0$, we obtain for any $t \in [-8, 0]$,
		\begin{align*}
		\int_M v^2\,\mathrm{d}\nu_{t}\leq 16.
	\end{align*}
	Similarly, by $\diff{}{t} \int_M |\na v|^2 \,\mathrm{d}\nu_t=-2\int_M |\na^2 v|^2 \,\mathrm{d}\nu_t$, we have
		\begin{align*} 
2\int_{-8}^0 \int_M |\na^2 v|^2 \,\mathrm{d}\nu_t \mathrm{d}t +\sup_{t \in [-8, 0)} \int_M |\na v|^2 \,\mathrm{d}\nu_t \le 2.
	\end{align*}	

If $\displaystyle \int_M |\na v|^2 \,\mathrm{d}\nu_{-2} \le 1/10$, then we have
		\begin{align*}
2\int_{-2}^{-2/10}\int_{M} |\na^2 v|^2\,\mathrm{d}\nu_t \mathrm{d}t\leq \int_{M} |\na v|^2 \,\mathrm{d}\nu_{-2}\leq \frac{1}{10}.
	\end{align*}

On the other hand,
	\begin{align*}
		2\int_{-8}^{-8/10}\int_{M}\left|\nabla^2 v\right|^2\,\mathrm{d}\nu_t \mathrm{d}t&=\int_{M} \left|\nabla v\right|^2\,\mathrm{d}\nu_{-8}-\int_{M} \left|\nabla v\right|^2\,\mathrm{d}\nu_{-8/10}\nonumber\\
		&\geq \int_{M} \left|\nabla v\right|^2\,\mathrm{d}\nu_{-8}-\int_{M} \left|\nabla v\right|^2\,\mathrm{d}\nu_{-2}\geq 1-\frac{1}{10}=\frac{9}{10}.
	\end{align*}
Thus, the conclusion \eqref{hessiandecay11} holds for $\theta=1/9$. Thus, in the following proof, we may also assume
		\begin{align} \label{eq:hessianeq002}
		\frac{1}{10}\leq \int_{M}\left|\nabla v\right|^2\,\mathrm{d}\nu_{-2}\leq 1.
	\end{align}		
	
We prove the theorem by contradiction. Suppose the conclusion fails. Then there exists a sequence of smooth, closed Ricci flows $\XX^l=\{M^n_l,(g_l(t))_{t \in \III_l^{++}}\} \in \MM(n, Y, T_l)$, where $\III_l^{++}=[-T_l, 0]$, and $x_{0, l}^* \in \XX^l_0$ is not $(k+1,\eta,1)$-splitting.

Set $\mathrm{d}\nu^l_t=\mathrm{d}\nu_{x_{0,l}^*;t}=(4\pi\tau)^{-n/2}e^{-f_l}\,\mathrm{d}V_{g_l(t)}$, where $\tau=-t$. Assume the corresponding functions $\vec u^l=(u^l_1,\ldots,u^l_k), h^l, v^l$, as well as the constants $b_i^l$, are defined as in the statement of Theorem \ref{hessiandecay}, and satisfy the normalization conditions \eqref{normalization2} and \eqref{eq:hessianeq002}. Furthermore, suppose $\vec u^l$ is a $(k, l^{-2}, 1)$-splitting map.

However, inequality \eqref{hessiandecay11} is violated; that is,
	\begin{equation}\label{contradictioninequality1}
		\int_{-2}^{-\frac{2}{10}}\int_{M_l} |\nabla^2 v^l|^2\,\mathrm{d}\nu_t^l \mathrm{d}t> (1-l^{-2})\int_{-8}^{-\frac{8}{10}}\int_{M_l} |\nabla^2 v^l|^2\,\mathrm{d}\nu_t^l \mathrm{d}t.
	\end{equation}
	
Passing to a subsequence if necessary, it follows from Theorems \ref{thm:intro1} and \ref{thm:intro3} that
\begin{align*}
(M_l \times \III_l, d^*_l, x_{0, l}^*, \t_l) \xrightarrow[l \to \infty]{\quad \hat C^\infty \quad} (Z, d_Z, z, \t),
\end{align*}
where $(Z, d_Z, z, \t)$, by our assumption on $x_{0,l}^*$, is a Ricci shrinker space, whose regular part is a Ricci flow spacetime $(\RR, \t, \partial_\t, g^Z)$. In particular, we have on $\RR_{(-\infty, 0)}$,
\begin{align*}
\Ric(g^Z) + \nabla^2 f_z = \frac{g^Z}{2 \tau},
\end{align*}
where $\tau=-\t$. Moreover, $\RR_t$ is connected for any $t<0$. Let $\phi_l$ denote the diffeomorphisms given in Theorem \ref{thm:intro3}.

By the smooth convergence in Theorem \ref{thm:intro3}, we assume $v^l \to v^{\infty}$ and $u_i^l \to y_i$ smoothly on $\RR_{(-8, 0)}$. On the other hand, since $\vec u^l$ is a $(k, l^{-2}, 1)$-splitting map, we conclude from Proposition \ref{equivalencesplitsymetric1} that $\RR_{(-\infty, 0)}$ splits off an $\R^k$ isometrically, on which the coordinates are given by $\{y_1, \ldots, y_k\}$.
	
Next, we consider the family of cutoff functions $\{\eta_{r, A}\}$ from \cite[Proposition 8.23]{fang2025RFlimit}. Then, it is clear from \cite[Proposition 8.23 (3)]{fang2025RFlimit} that on $\RR_{[-100, -1/100]}$,
	\begin{align}\label{esticutoff}
		r^2\left(|\partial_\t \eta_{r,A}|+|\Delta \eta_{r,A}|\right)+r|\nabla \eta_{r,A}|\leq C(n).
	\end{align}
Also, it follows from \cite[Proposition 8.23 (4)]{fang2025RFlimit} that for any $\ep>0$,
	\begin{align} \label{eq:hessianeq006a}
\iint_{\RR_{[-100, -1/100]} \bigcap \{0 <\eta_{r, A}<1\}} 1 \,\mathrm{d}V_{g^Z_t} \mathrm{d}t \le C(n, Y, A, \ep) r^{4-\ep}.
\end{align}
	
\begin{claim}\label{takelimit1}
For any $s_1 \in [1/100, 2]$, we have
		\begin{equation}\label{smootheq}
			\lim_{l\to\infty}\int_{M_l}|\nabla v^l|^2\,\mathrm{d}\nu^l_{-s_1}=\int_{\RR_{-s_1}}|\nabla v^\infty|^2\,\mathrm{d}\nu_{z;-s_1}.
		\end{equation}
	\end{claim}

It follows from the hypercontractivity (see \cite[Theorem 12.1]{bamler2020entropy}) and \eqref{normalization2} that for any $s \in (0, 2]$, 
	\begin{align}\label{estihess1}
\int_{M_l}|\nabla v^l|^{5}\,\mathrm{d}\nu^l_{-s} \le 1.
	\end{align}
Taking the limit, we also have
	\begin{align*}
\int_{\RR_{-s}}|\nabla v^\infty|^{5}\,\mathrm{d}\nu_{z;-s} \le 1.
	\end{align*}
	
For fixed $s_1\in [1/100, 2]$, set $B^l_{r,A}:=\phi_l\lc \supp\big(\eta_{r,A}\big)\cap \RR_{-s_1}\rc$, which is well defined for fixed $r$ and $A$, provided that $l$ is sufficiently large.

By the smooth convergence on $\RR$, it follows that
	\begin{align}\label{smootheq1}
		\lim_{l\to\infty}\int_{B^l_{r,A}}\left|\na v^l\right|^2\eta_{r,A}\circ \phi_l^{-1}\,\mathrm{d}\nu_{-s_1}^l=\int_{\RR_{-s_1}}\left|\na v^\infty\right|^2\eta_{r,A}\, \mathrm{d}\nu_{z;-s_1}.
	\end{align}
	
	On the one hand, it follows from Theorem \ref{thm:intro3} that once $r$ and $A$ are fixed, 
		\begin{align}\label{eq:compare1}
B^*(x_{0,l}^*;A/3) \cap \phi_l\lc \supp\big(\eta_{r,A}\big)\rc \cap M_l\times \{-s_1\} \subset B^l_{r,A}\subset B^*(x_{0,l}^*; 3A)\cap M_l\times \{-s_1\}
	\end{align}
	for sufficiently large $l$. Thus, it is clear that
		\begin{align*}
		\int_{M_l \setminus B_{r, A}^l}\left|\na v^l\right|^2 \, \mathrm{d}\nu^l_{-s_1} \le \int_{M_l \setminus B^*(x_{0,l}^*;A/3)}\left|\na v^l\right|^2 \, \mathrm{d}\nu^l_{-s_1}+\int_{B^*(x_{0,l}^*;A/3) \setminus \phi_l\lc \supp(\eta_{r,A})\rc}\left|\na v^l\right|^2 \, \mathrm{d}\nu^l_{-s_1}.
	\end{align*}	
	
By Proposition \ref{existenceHncenter} and \eqref{estihess1}, we have
		\begin{align}\label{smootheq3a}
\int_{M_l \setminus B^*(x_{0,l}^*;A/3)}\left|\na v^l\right|^2 \, \mathrm{d}\nu^l_{-s_1} 
\le  \lc\int_{M_l \setminus B^*(x_{0,l}^*;A/3)}\left|\na v^l\right|^{5}\, \mathrm{d}\nu^l_{-s_1}\rc^{\frac{2}{5}}\lc\int_{M_l \setminus B^*(x_{0,l}^*;A/3)} 1 \, \mathrm{d}\nu^l_{-s_1}\rc^{\frac{3}{5}} \le \Psi(A^{-1}).
	\end{align}	
	
By \cite[Proposition 8.23 (2)]{fang2025RFlimit}, we know that any $x \in \lc B^*(x_{0,l}^*;A/3) \setminus \phi_l\lc \supp(\eta_{r,A})\rc \rc \cap M_l \times \{-s_2\}$ satisfies $r_{\Rm}(x) \le 3r$ for all sufficiently large $l$, where $r_{\Rm}$ denotes the curvature radius. Thus, by using \cite[Theorem 1.12 (b)]{fang2025RFlimit}, we obtain
		\begin{align}\label{smootheq3b}
\int_{B^*(x_{0,l}^*;A/3) \setminus \phi_l\lc \supp(\eta_{r,A})\rc}\left|\na v^l\right|^2 \, \mathrm{d}\nu^l_{-s_1} \le \lc\int_{B^*(x_{0,l}^*;A/3) \setminus \phi_l\lc \supp(\eta_{r,A})\rc}1 \, \mathrm{d}\nu^l_{-s_1}\rc^{\frac{3}{5}} \le C(n, A, Y) r.
	\end{align}	
Combining \eqref{smootheq3a} and \eqref{smootheq3b}, we obtain
		\begin{align}\label{smootheq3}
		\int_{M_l \setminus B_{r, A}^l}\left|\na v^l\right|^2 \, \mathrm{d}\nu^l_{-s_1} \le \Psi(A^{-1})+C(n, A, Y) r.
	\end{align}	

In addition, for sufficiently large $l$, we have
	\begin{align}\label{convintegral3}
		&\left|\int_{B_{r, A}^l}\left|\na v^l\right|^2\left(1-\eta_{r,A}\circ \phi_l^{-1}\right) \,\mathrm{d}\nu^l_{-s_1}\right|\nonumber\\
		\leq &  \lc\int_{B_{r, A}^l}\left|\na v^l\right|^{5}\, \mathrm{d}\nu^l_{-s_1}\rc^{\frac{2}{5}} \lc\int_{B_{r, A}^l} \left(1-\eta_{r,A}\circ \phi_l^{-1}\right)^{\frac{5}{3}}\, \mathrm{d}\nu^l_{-s_1}\rc^{\frac{3}{5}} \le  C(n, A, Y) r,
	\end{align}
	where the last inequality holds by the same reason as in \eqref{smootheq3b}.
	
	Thus, we can first choose a large $A$ and then choose a small $r$ so that all integrals \eqref{smootheq3} and \eqref{convintegral3} are as small as we want. Combining this fact with \eqref{smootheq1}, we obtain \eqref{smootheq} and thus complete the proof of Claim \ref{takelimit1}.

\begin{claim}\label{takelimit2}
For any $1/100 \le s_1 \le s_2 <8$, we have on $\RR_{[-s_2, -s_1]} \cap \supp(\eta_{r, A})$,
		\begin{align}\label{timediff2b1}
|\na v^{\infty}| \le C(n, A, Y, s_2).
	\end{align}
	\end{claim}

We set $D^l_{r, A}=\phi_l \lc \supp(\eta_{r, A}) \rc \cap M_l \times [-s_2, -s_1]$. Similar to \eqref{eq:compare1}, we have
\begin{align*}
D^l_{r,A}\subset B^*(x_{0,l}^*; 3A)\cap M_l\times [-s_2, -s_1].
	\end{align*}
By the reproduction formula, for any $x^* \in D^l_{r,A}$, we have for $s_2'=(s_2+8)/2$,
	\begin{align}\label{timediff2b1ex001}
|\na v^l|^2(x^*) \le \int_{M_l} |\na v^l|^2 \,\mathrm{d}\nu_{x^*; -s_2'}.
	\end{align}
	
Now, we choose $t'<\t_l(x^*)$ such that $\beta:=\t_l(x^*)-t'$ is small and will be determined later. By our assumption on $x_{0, l}^*$ and Lemma \ref{lem:nonexpand}, we conclude that 
	\begin{align}\label{timediff2b1ex001a}
d_{W_1}^{t'}(\nu_{x^*;t'}, \nu^l_{t'}) \le C(n, Y, A, \beta).
	\end{align}
	
For a small constant $\gamma \in (0, 1)$, it follows from \eqref{timediff2b1ex001a} and \cite[Proposition A.2]{fang2025RFlimit} that if $\beta=\beta(n, \gamma, s_2)$ is small, then
	\begin{align}\label{timediff2b1ex002}
\nu_{x^*;-s_2'} \le C(n, Y, A, \gamma, s_2) e^{\gamma f_l} \nu^l_{-s_2'}.
	\end{align}	
	
By hypercontractivity (see \cite[Theorem 12.1]{bamler2020entropy}), we have
	\begin{align*}
\int_{M_l} |\na v^l|^{1+8/s'_2} \,\mathrm{d}\nu^l_{-s'_2} \le 1
	\end{align*}	
and hence
	\begin{align}\label{timediff2b1ex003}
\int_{M_l} |\na v^l|^2 e^{\gamma f_l} \,\mathrm{d}\nu^l_{-s_2'} \le  \lc \int_{M_l} |\na v^l|^{1+8/s_2'} \,\mathrm{d}\nu^l_{-s_2'} \rc^{\frac{2s_2'}{8+s_2'}} \lc \int_{M_l} \exp \lc \gamma \frac{8+s_2'}{8-s_2'} f_l \rc\,\mathrm{d}\nu^l_{-s_2'}  \rc^{\frac{8-s_2'}{8+s_2'}} \le C(n, Y, s_2),
	\end{align}	
	where we used Proposition \ref{integralbound}, provided that $\gamma=\gamma(n, Y, s_2)$ is small. 
	
Combining \eqref{timediff2b1ex001}, \eqref{timediff2b1ex002}, \eqref{timediff2b1ex003} and using smooth convergence, we have completed the proof of Claim \ref{takelimit2}.

\begin{claim}\label{takelimit3}
For any $1/100\le s_1 \le s_2 <8$
		\begin{equation*}
			2\int_{-s_2}^{-s_1}\int_{\RR_t}|\nabla^2 v^\infty|^2\,\mathrm{d}\nu_{z;t} \mathrm{d}t=\int_{\RR_{-s_2}}|\nabla v^\infty|^2\,\mathrm{d}\nu_{z;-s_2}-\int_{\RR_{-s_1}}|\nabla v^\infty|^2\,\mathrm{d}\nu_{z;-s_1}.
		\end{equation*}
	\end{claim}
	
	By the evolution equation on the regular part, we have
	\begin{align*}
		\frac{\mathrm{d}}{\mathrm{d}t}\int_{\RR_t}\left|\nabla v^\infty\right|^2\eta_{r,A} \,\mathrm{d}\nu_{z;t}=-2\int_{\RR_t}\left|\nabla^2 v^\infty\right|^2\eta_{r,A} \,\mathrm{d}\nu_{z;t}+\int_{\RR_t}\big|\nabla v^\infty\big|^2\square \eta_{r,A}-2\la \nabla \big|\nabla v^\infty\big|^2, \nabla \eta_{r,A}\ra \,\mathrm{d}\nu_{z;t}.
	\end{align*}
	
Consequently, for fixed $1/100 \le s_1 \le s_2 <8$,
		\begin{align}
& \int_{\RR_{-s_2}}|\nabla v^\infty|^2\eta_{r,A} \,\mathrm{d}\nu_{z;-s_2}-\int_{\RR_{-s_1}}|\nabla v^\infty|^2\eta_{r,A} \,\mathrm{d}\nu_{z;-s_1} \notag\\
=& 2 \int_{-s_2}^{-s_1} \int_{\RR_t} |\na^2 v^\infty|^2 \eta_{r,A} \, \mathrm{d}\nu_{z;t} \mathrm{d}t-\int_{-s_2}^{-s_1} \int_{\RR_t}\big|\nabla v^\infty\big|^2\square \eta_{r,A}-2\la \nabla \big|\nabla v^\infty\big|^2, \nabla \eta_{r,A}\ra \,\mathrm{d}\nu_{z;t} \mathrm{d}t. \label{timediff2a}
	\end{align}

Now we estimate the error terms. First, we compute
	\begin{align*}
		&\int_{-s_2}^{-s_1}\int_{\RR_t}\left|\nabla v^\infty\right|^2\left|\square \eta_{r,A}\right|\,\mathrm{d}\nu_{z;t} \mathrm{d}t \\
	\leq & C(n, Y, A,s_2) r^{-2} \lc \int_{\RR_{[-s_2, -s_1]}\bigcap \{0 <\eta_{r, A}<1\}} 1 \,\mathrm{d}\nu_{z;t} \mathrm{d}t \rc \le C(n, Y, A,s_2) r
	\end{align*}
	where we used \eqref{esticutoff}, \eqref{eq:hessianeq006a} and \eqref{timediff2b1} for the last line.

  Similarly, we have
	\begin{align*}
		&\int_{-s_2}^{-s_1}\int_{\RR_t}\left|\nabla\big|\nabla v^\infty\big|^2\right|\left|\nabla\eta_{r,A}\right|\,\mathrm{d}\nu_{z;t} \mathrm{d}t \\
		\leq& C(n,Y, A)r^{-1}\iint_{\RR_{[-s_2, -s_1]}\bigcap \{0 <\eta_{r, A}<1\}}\left|\nabla ^2 v^\infty\right|\left|\nabla v^\infty\right|\,\mathrm{d}\nu_{z;t} \mathrm{d}t\\
		\leq& C(n,Y, A)r^{-1} \lc\iint_{\RR_{[-s_2, -s_1]}\bigcap \{0 <\eta_{r, A}<1\}}\left|\nabla ^2 v^\infty\right|^2\,\mathrm{d}\nu_{z;t} \mathrm{d}t\rc^{1/2}\lc\iint_{\RR_{[-s_2, -s_1]}\bigcap \{0 <\eta_{r, A}<1\}}\left|\nabla v^\infty\right|^2\,\mathrm{d}\nu_{z;t} \mathrm{d}t\rc^{1/2} \\
		 \le & C(n, Y, A,s_2) r^{1/2},
	\end{align*}
where we used \eqref{esticutoff}, \eqref{eq:hessianeq006a} and \eqref{timediff2b1}.
	
	Therefore, the second integral on the right-hand side of \eqref{timediff2a} can be made arbitrarily small if we first choose a large $A$ and then choose a small $r$. Thus, we obtain
	\begin{align*}
& \int_{\RR_{-s_2}}|\nabla v^\infty|^2 \,\mathrm{d}\nu_{z;-s_2}-\int_{\RR_{-s_1}}|\nabla v^\infty|^2 \,\mathrm{d}\nu_{z;-s_1}=2 \int_{-s_2}^{-s_1} \int_{\RR_t} |\na^2 v^\infty|^2  \, \mathrm{d}\nu_{z;t} \mathrm{d}t.
	\end{align*}
	This completes the proof of Claim \ref{takelimit3}.

By \cite[Lemma D.1]{fang2025RFlimit}, $\lc \iota_z(\XX^z_{-1}), d^Z_{-1}, \nu_{z;-1} \rc$ is a $\rcd (1/2,\infty)$-space. Thus, the eigenvalues and eigenfunctions of $\Delta_{f_z(-1)}$ are well defined (see \cite{gigli2013ConvergenceOP}), where $\Delta_{f_z(-1)}$ denotes the weighted Laplacian on the $\rcd$-space.

	\begin{claim}\label{claimeigen}
Let $\lambda_i$ be all nonzero eigenvalues of $\Delta_{f_z(-1)}$ with associated eigenfunctions $\psi_i$, then the following holds for some constant $c_0=c_0(n,Y,\eta)>0$, 
		\begin{align*}
			0<\lambda_1=\cdots=\lambda_k=\frac{1}{2}<\frac{1}{2}+c_0\leq \lambda_{k+1}.
		\end{align*} 
	\end{claim}

Since $(\iota_z(\XX^z_{-1}), d^Z_{-1})$ splits off an $\R^k$, it follows from \cite[Theorem 1.1]{GKKO} that $\lambda_1=\cdots=\lambda_k=1/2$. If $\lambda_{k+1} \leq 1/2+c_0$ for some small $c_0$, then we can, by using $\eta_{r,A}$ and $\phi_l$, pull back the first $k+1$ eigenfunctions to $M_l\times \{-1\}$ to get $\vec \psi^l=(\psi^l_1,\ldots,\psi^l_{k+1})$ satisfying for $1\le i, j  \le k+1$,
	\begin{align*}
\frac{1}{2} \le \int_{M_l} (\psi_i^l)^2 \,\mathrm{d}\nu_{-1}^l \le 2, \quad  \abs{\int_{M_l} \la \na \psi^l_i, \na \psi^l_j \ra \,\mathrm{d}\nu_{-1}^l-\delta_{ij}} \le \frac{1}{100}
	\end{align*}
and
	\begin{align*}
\dfrac{\int_{M_l}|\na \psi_i^l|^2\,\mathrm{d}\nu_{-1}^l}{\int_{M_l}(\psi_i^l)^2\,\mathrm{d}\nu_{-1}^l}\leq \frac{1}{2}+2c_0.
	\end{align*}
Consequently, if $c_0$ is sufficiently small, it follows from the same argument as in \cite[Proposition C.3, Corollary C.4]{fang2025RFlimit} that there exists a $(k+1,\Psi(c_0),1/\sqrt{10})$-splitting map at $x_{0,l}^*$. By Proposition \ref{equivalencesplitsymetric1} (i), this implies that if $c_0 \le c_0(n, Y, \eta)$, then $x_{0,l}^*$ is $(k+1, \eta, 1 )$-splitting, which contradicts our assumption. This completes the proof of Claim \ref{claimeigen}. Without loss of generality, we may assume $\psi_i=y_i/\sqrt{2}$ on $\RR_{-1}$ for $1 \le i \le k$.
	
We now extend $\psi_i$ to a spacetime function, still denoted by $\psi_i$, on $\RR_{(-\infty, 0)}$. Let $\boldsymbol{\psi}^s$ be the flow generated by $X:=\tau(\partial_\t-\na f_z)$, which is well-defined for all $s \in \R$ on $\RR_{(-\infty, 0)}$ by \cite[Theorem 1.9]{fang2025RFlimit}. Define a function $u$ on $\RR_{(-\infty, 0)}$ such that $u=\psi_i$ on $\RR_{-1}$ and $u$ is invariant under $\boldsymbol{\psi}^s$. Then, define the function $\psi_i$ on $\RR_{(-\infty, 0)}$ by $\psi_i:=\tau^{\lambda_i} u$. A direct computation shows that $\square \psi_i=0$ on $\RR_{(-\infty, 0)}$ and
	\begin{align} \label{eq:compare4aa}
\Delta_{f_z} \psi_i+\tau^{-1} \lambda_i \psi_i=0.
	\end{align}
Moreover, for any $t<0$, we have the identity
	\begin{align}\label{eq:compare4a}
\int_{\RR_t} \psi_i(t) \psi_j (t) \,\mathrm{d}\nu_{z;t}=\tau^{2\lambda_i} \delta_{ij}.
	\end{align}
In other words, $\{\tau^{-\lambda_i} \psi_i(t)\}$ is an orthonormal basis with respect to $\mathrm{d}\nu_{z;t}$.

Next, we define
	\begin{align*}
\tilde v(x, s)=v^\infty(\boldsymbol{\psi}^s(x)) \qquad (x,s)\in \RR_{-1}\times[0,\infty).
	\end{align*}
A direct computation yields
	\begin{align*}
\partial_s \tilde v=\Delta_{f_z(-1)} \tilde v
	\end{align*}
on $\RR_{-1} \times [0, \infty)$. Since $W^{1, 2}(\RR_{-1}, \nu_{z;-1})$ is dense in $W^{1, 2}(\iota_z(\XX^z_{-1}), \nu_{z;-1})$ (see the proof of \cite[Lemma D.1]{fang2025RFlimit}), we may regard $\tilde v$ as the heat flow (cf. \cite[Section 4]{AGS14}) on the $\mathrm{RCD}(1/2, \infty)$-space $\lc \iota_z(\XX^z_{-1}), d^Z_{-1}, \nu_{z;-1} \rc$ with initial data $v^{\infty} \vert_{\RR_{-1}}$.

Define the coefficients $a_i=\int_{\RR_{-1}}v^\infty \psi_i\,\mathrm{d}\nu_{z;-1}$. Since $\int_{\RR_{-1}}v^\infty \,\mathrm{d}\nu_{z;-1}=0$, we have the $L^2(\nu_{z;-1})$-expansion
	\begin{align*}
v^\infty \vert_{\RR_{-1}}=\sum_{i=1}^\infty a_i \psi_i(-1).
	\end{align*}	
As $\psi_i(-1)$ is an eigenfunction with eigenvalue $\lambda_i$, the heat flow from $\psi_i(-1)$ is $e^{-\lambda_i s} \psi_i(-1)$. By the contraction property of the heat flow,
	\begin{align*}
\tilde v(\cdot, s)=\sum_{i=1}^\infty e^{-\lambda_i s} a_i \psi_i(-1) \quad \text{in} \quad L^2(\nu_{z;-1}).
	\end{align*}	
	
Using the definitions of $\tilde v$ and $\psi_i(t)$, this implies
	\begin{align}\label{eq:expandextra001}
v^{\infty}\vert_{\RR_t} =\sum_{i=1}^\infty  a_i \psi_i(t)
	\end{align}	
in $L^2(\nu_{z;t})$, for any $t \in [-1, 0)$. 

Repeating the argument for any base time in $(-8, 0)$ in place of $-1$, we conclude that the $L^2$-expansion \eqref{eq:expandextra001} holds for all $t \in (-8, 0)$. Combining this with Claims \ref{takelimit3}, \ref{claimeigen}, \eqref{eq:compare4aa}, and \eqref{eq:compare4a}, we obtain for any $s \in [-1/100, 8)$,
	\begin{align}
		\int_{-s}^{-\frac{s}{10}}\int_{\RR_t} \left|\nabla^2 v^\infty\right|^2\,\mathrm{d}\nu_{z;t} \mathrm{d}t&=\int_{\RR_{-s}}\left|\nabla v^\infty\right|^2\,\mathrm{d}\nu_{z;-s}-\int_{\RR_{-s/10}}\left|\nabla v^\infty\right|^2\,\mathrm{d}\nu_{z;-s/10} \notag \\
&=\sum_{i=k+1}^\infty a^2_i\lc s^{2\lambda_i-1}-\left(\frac{s}{10}\right)^{2\lambda_i-1}\rc \lambda_i. \label{eq:hessianextra001}
	\end{align}
Letting $s\nearrow 8$, the identity \eqref{eq:hessianextra001} holds for any $s \in [-1/100, 8]$.

	\begin{claim}\label{coefficientnonzero}
There exists an $i_0 \ge k+1$ such that $a_{i_0} \ne 0$.
	\end{claim}

By our definition of $v^l$, for any $i \in \{1,\ldots,k\}$,
	\begin{align*}
	\int_{-8}^{-8/10}\int_{M_l} \la\nabla v^l,\nabla u_i^l\ra \,\mathrm{d}\nu^l_t \mathrm{d}t=0.
	\end{align*}	
	By the evolution equation $\diff{}{t}\int_{M_l} \la \nabla v^l,\nabla u_i^l\ra \,\mathrm{d}\nu^l_t=-2\int_{M_l} \la \nabla^2 v^l,\nabla^2 u_i^l\ra \,\mathrm{d}\nu^l_t$ and the fact that $\vec u^l$ is a $(k, l^{-2}, 1)$-splitting map, we obtain
	\begin{equation*}
		\left|\int_{M_l} \la \nabla v^l,\nabla u_i^l\ra \,\mathrm{d}\nu^l_{-1}\right|\leq C l^{-1}.
	\end{equation*}
Letting $l \to \infty$, we derive, by the same argument as in Claim \ref{takelimit1}, that
	\begin{equation*}
\int_{\RR_{-1}} \la \na v^\infty, \na  \psi_i \ra \,\mathrm{d}\nu_{z;-1}=0,
	\end{equation*}
which implies
	\begin{equation*}
a_i =\int_{\RR_{-1}} v^\infty  \psi_i\,\mathrm{d}\nu_{z;-1}=0
	\end{equation*}
for any $1 \le i \le k$. By our assumption \eqref{eq:hessianeq002} and Claim \ref{takelimit1} again, we conclude that the constants $a_i$ are not all zero for $i \ge k+1$, which completes the proof of Claim \ref{coefficientnonzero}.

By Claim \ref{coefficientnonzero}, it follows from \eqref{eq:hessianextra001} that
		\begin{align}\label{eq:hessianextra002}
\int_{-2}^{-\frac{2}{10}}\int_{\RR_t} \left|\nabla^2 v^\infty\right|^2\,\mathrm{d}\nu_{z;t}=\sum_{i=k+1}^\infty a^2_i\lc 2^{2\lambda_i-1}-\left(\frac{2}{10}\right)^{2\lambda_i-1}\rc \lambda_i>0.
	\end{align}
Similarly, we have	
			\begin{align}\label{eq:hessianextra003}
\int_{-8}^{-\frac{8}{10}}\int_{\RR_t} \left|\nabla^2 v^\infty\right|^2\,\mathrm{d}\nu_{z;t}=\sum_{i=k+1}^\infty a^2_i\lc 8^{2\lambda_i-1}-\left(\frac{8}{10}\right)^{2\lambda_i-1}\rc \lambda_i>0.
	\end{align}

Taking the quotient of \eqref{eq:hessianextra002} and \eqref{eq:hessianextra003}, we obtain
			\begin{align}\label{eq:hessianextra004}
\dfrac{\int_{-2}^{-\frac{2}{10}}\int_{\RR_t} \left|\nabla^2 v^\infty\right|^2\,\mathrm{d}\nu_{z;t}}{\int_{-8}^{-\frac{8}{10}}\int_{\RR_t} \left|\nabla^2 v^\infty\right|^2\,\mathrm{d}\nu_{z;t}}=\dfrac{\sum_{i=k+1}^\infty a^2_i\lc 2^{2\lambda_i-1}-\left(\frac{2}{10}\right)^{2\lambda_i-1}\rc \lambda_i}{\sum_{i=k+1}^\infty a^2_i\lc 8^{2\lambda_i-1}-\left(\frac{8}{10}\right)^{2\lambda_i-1}\rc \lambda_i} \le \lc\frac{1}{4}\rc^{2c_0},
	\end{align}	
where in the last inequality, we have used the fact from Claim \ref{claimeigen} that $2\lambda_i-1\geq 2c_0$ for all $i\geq k+1$.	
	
However, by taking the limit of \eqref{contradictioninequality1}, it follows from Claims \ref{takelimit1} and \ref{takelimit3} that
		\begin{equation*}
		\int_{-2}^{-\frac{2}{10}}\int_{\RR_t} \left|\nabla^2 v^\infty\right|^2\,\mathrm{d}\nu_{z;t} \mathrm{d}t\geq \int_{-8}^{-\frac{8}{10}}\int_{\RR_t} \left|\nabla^2 v^\infty\right|^2\,\mathrm{d}\nu_{z;t} \mathrm{d}t,
	\end{equation*}
	which contradicts \eqref{eq:hessianextra004}, and hence completes the proof of the theorem.
\end{proof}

\subsection{A covering lemma for independent points}

In this subsection, we discuss how to find independent points in applications.
\begin{lem}\label{existenceofindependentpointsafterbase}
There exists a constant $L=L(n, Y)>1$ such that the following holds.

Suppose $y_0^*=(y_0,t_0)\in\XX$ is $(\delta,r)$-selfsimilar. Let $S$ be the subset consisting of all $(\delta,r)$-selfsimilar points in $B^*(y_0^*,r)\bigcap M\times [t_0-\alpha^2 r^2/L^2, t_0+\alpha^2 r^2/L^2]$. Assume that there exists no $(k,\alpha,\delta, r)$-independent points in $S$ at $y_0^*$. If $\delta\leq\delta(n,Y,\alpha)$, then we can find $\{y_i^*\}_{1 \le i \le N} \subset S$ with $N\leq C(n,Y)\alpha^{1-k}$ and 
	\begin{align*}
		S\subset\bigcup_{i=0}^N B^*(y^*_i, \alpha r).
	\end{align*}
\end{lem}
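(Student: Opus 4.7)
The plan is to combine a greedy construction of a maximal independent set with a covering argument via an almost splitting map. Starting from $y_0^*$, I would iteratively enlarge the independent set: given a current $(l,\alpha,\delta,r)$-independent collection $\{y_j^*\}_{1 \le j \le l} \subset S$ at $y_0^*$, attempt to select $y_{l+1}^* \in S$ making the enlarged set $(l+1,\alpha,\delta,r)$-independent. For any $y^* \in S$, conditions (i)--(iii) of Definition \ref{defnindependentpoints} are automatic once $L \ge 10$ (the selfsimilarity of $y^*$ gives (i), the containment $S \subset B^*(y_0^*,r)$ gives (ii), and the time-window restriction gives (iii)), so only condition (iv) needs to be checked. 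By the hypothesis, the procedure terminates at some maximal size $l \le k-1$.

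With the maximal independent set in hand, I would invoke Corollary \ref{corsharpsplitting} to obtain a $(l,\ep,r)$-splitting map $\vec u = (u_1,\ldots,u_l)$ at $y_0^*$ with $\ep$ controlled by the $(l,\alpha,\delta,r)$-entropy pinching and hence as small as desired once $\delta = \delta(n,Y,\alpha)$ is small enough. Standard gradient estimates for $\vec u$ on $B^*(y_0^*,r)$ imply that $\vec u(S)$ is contained in a Euclidean ball of radius $C(n,Y) r$ in $\R^l$. This image can then be covered by at most $C(n,Y) \alpha^{-l} \le C(n,Y) \alpha^{1-k}$ Euclidean balls of radius $c_0 \alpha r$, for any fixed $c_0 = c_0(n,Y)$; selecting one preimage in $S$ per ball would then yield the desired cover, provided points in $S$ with nearby $\vec u$-values are close in $d^*$.

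The main obstacle is precisely this quantitative control: for a suitable $c_0 = c_0(n,Y)$, whenever $y^*, z^* \in S$ satisfy $|\vec u(y^*) - \vec u(z^*)| \le c_0 \alpha r$, one needs $d^*(y^*,z^*) \le \alpha r$. I would argue by compactness and contradiction. Rescaling to $r = 1$ and extracting a sequence of potential counterexamples with $\delta_j \to 0$, Theorem \ref{thm:intro1} combined with the $(\delta_j, 1)$-selfsimilarity of the base points and the limiting argument in the proof of Lemma \ref{lem:imply1} produce a limiting Ricci shrinker space $(Z_\infty, z_\infty)$. The limits of $S_j$ consist of selfsimilar points of $Z_\infty$, and the maximality of the independent set forces the almost $\R^l$-splitting through $\vec u$ to saturate the directions generated by $S_\infty$; by the structure of the spine in \cite[Appendix D]{fang2025RFlimit}, the limit selfsimilar points lie, at each fixed time, exactly on the $\R^l$ factor coordinatized by the limit of $\vec u$.

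The role of $L$ becomes apparent at the final step: even if the limiting shrinker is a static or quasi-static cone, so that the spine extends along an entire time axis, the window constraint $|t - t_0| \le \alpha^2 r^2/L^2$ ensures that the $d^*$-contribution along the spine from time variation is bounded by $C \alpha r / L$, which is absorbed into $\alpha r$ once $L = L(n,Y)$ is taken sufficiently large. The most delicate aspect is precisely this compactness step, where one must simultaneously exploit the almost self-similarity at $y_0^*$ and at every point of $S$, together with the possible static or quasi-static structure of the limit shrinker, to conclude that two selfsimilar points whose $\vec u$-values nearly agree and whose times lie in the narrow window must be close in $d^*$.
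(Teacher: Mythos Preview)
Your approach differs substantially from the paper's and contains a gap in the compactness step.

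The paper does not build a splitting map. It takes a maximal $\alpha$-separated set $\{y_0^*,\dots,y_N^*\}\subset S$ and works directly with the raw functions $u_i := F_i - F_0 = \tau_i f_i - \tau_0 f_0$. The technical core is a quantitative separation estimate: whenever $d^*(y_i^*,y_j^*)\ge\alpha$ one has $\aint_{-10}^{-1/10}\int_M|\nabla(u_i-u_j)|^2\,\mathrm{d}\nu_t\,\mathrm{d}t\ge c\beta^2$ with $\beta=10\alpha/L$, proved by a direct computation at a time $t^*\sim -\beta^2$ comparing the potential functions near the $H_n$-centers of $y_i^*$ and $y_j^*$; the constant $L$ is fixed so that $d^{-\beta^2}_{W_1}(\nu_{y_i^*},\nu_{y_j^*})\ge D\beta$ for a large $D=D(n,Y)$. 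The hypothesis that $S$ contains no $(k,\alpha,\delta,r)$-independent set then confines all the $u_i$ to a $C\alpha$-neighbourhood of a $(k-1)$-plane in the gradient-$L^2$ inner product, and the separation estimate gives $N\le C\alpha^{1-k}$ by counting.

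The assertion in your compactness step that maximality forces the $\R^l$-splitting to saturate the directions generated by $S_\infty$ is not justified. Since the greedy only ensures $\lambda_1(A_l)\ge\alpha^2$, a contradiction sequence may well have $\lambda_1(A_{l,j})\to\alpha^2$ exactly. In that limit, for \emph{any} candidate $y_{l+1,\infty}$ one can use the unit test vector $(w,0)$ realizing $\lambda_1(A_{l,\infty})=\alpha^2$ to obtain $\lambda_1(A_{l+1,\infty})\le\alpha^2$ automatically; maximality thus imposes no constraint on where $y_\infty$ or $z'_\infty$ sit relative to the $\R^l$-factor. If the limit shrinker happens to split off $\R^m$ with $m>l$, one may have $\vec u_\infty(y_\infty)=\vec u_\infty(z'_\infty)$ while $d^*(y_\infty,z'_\infty)>\alpha$, and the implication fails. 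Repairing this (e.g.\ by running the greedy with threshold $2\alpha^2$, or by maximizing $\lambda_1$ at each stage) still requires knowing that $S_j$ approximates every direction of the limiting spine well enough to force the greedy up to $l=m$, which is itself a quantitative statement of the sort the compactness was meant to bypass.
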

\begin{proof}
	Without loss of generality, we assume $t_0=0$ and $r=1$, and set $\mathrm{d}\nu_t=\mathrm{d}\nu_{y_0^*;t}=(4\pi |t|)^{-\frac n 2} e^{-f} \,\mathrm{d}V_{g(t)}$.
	
	We choose the maximal $\alpha$-separated set $\{y_0^*, y_1^*,\ldots,y_N^*\} \subset S$ containing $y_0^*$, i.e., $d^*(y^*_i, y^*_j) \ge \alpha$ for all $i \ne j$. By Definition \ref{defnd*distance} and monotonicity the following holds:
	\begin{equation*}
		d_{W_1}^{-\alpha^2/2}\left(\nu_{y_i^*;-\alpha^2/2},\nu_{y_j^*;-\alpha^2/2}\right)\geq \epsilon_0\alpha.
	\end{equation*}
	
	We set $\beta=10 \alpha/L$. Then for a constant $D=D(n, Y)>1$ to be determined later, we may choose $L=L(n, Y)$ large so that
	\begin{align}\label{findind11a}
 d_{W_1}^{-\beta^2}\left(\nu_{y_i^*;-\beta^2},\nu_{y_j^*;-\beta^2}\right)\geq D\beta.
	\end{align}

	Set $\tau_i=\t(y_i^*)-t$, $ \mathrm{d}\nu_{y_i^*;t}=(4\pi\tau_i)^{-n/2}e^{-f_i}\,\mathrm{d}V_{g(t)}$, $F_i=\tau_i f_i$, $\mathcal{T}_i:= \tau_i \Ric+\nabla^2 F_i-g/2$, $w_i=2\tau_i(\Delta f_i-|\na f_i|^2+\scal)+f_i-n$ and $W_i=\WW_{y_i^*}(1)$ for $0 \le i \le N$. For $1 \le i \le N$, we define $u_i=F_i-F_0$. 	\begin{claim}\label{findindclaim1}
		For any $i,j \in \{1,\ldots,N\}$, we can find a constant $q_{ij}$ such that
		\begin{equation*}
			\int_{-10}^{-\beta^2/2}\int_M \left|\la\nabla u_i,\nabla u_j\ra-q_{ij}\right|\,\mathrm{d}\nu_t \mathrm{d}t\leq C(n,Y, \beta) \delta^{1/4}.
		\end{equation*}
	\end{claim}
	
Since $y_i^*$ are $(\delta, 1)$-selfsimilar for $0 \le i \le N$, we can obtain, similar to the proof of \eqref{perpendicularofRichessianf1} by using Lemmas \ref{lem:imply1} and \ref{lem:nonexpand}, that
	\begin{align}\label{findind1}
		\int_{-10}^{-\beta^2/2}\int_M \left|\mathcal{T}_i\right|^2\,\mathrm{d}\nu_t \mathrm{d}t\leq C(n,Y, \beta)\delta^{1/2}.
	\end{align}

Taking the difference of \eqref{findind1} and using Theorem \ref{staticestimate} with $-1/30$ replaced by $-\beta^2/2$, it follows that
	\begin{align}\label{findind2}
		\int_{-10}^{-\beta^2/2}\int_M \left|\nabla^2 u_i\right|^2\,\mathrm{d}\nu_t \mathrm{d}t\leq C(n,Y, \beta)\delta^{1/2}.
	\end{align}
Moreover, it follows from Proposition \ref{integralbound}, Lemma \ref{lem:nonexpand} and \cite[Proposition A.2]{fang2025RFlimit} that
	\begin{align}\label{findind2x}
	\sup_{t \in [-10, -\beta^2/2]}	\int_M |\na u_i|^2\,\mathrm{d}\nu_t \leq C(n,Y, \beta).
	\end{align}

Similar to the proof of Lemma \ref{constructionofsplittingfunction1} by using \cite[Proposition A.2]{fang2025RFlimit}, for each $0 \le i \le N$, we obtain a smooth function $h_i$ on $M \times [-10, -\beta^2/2]$ such that
\begin{enumerate}[label=\textnormal{(\roman{*})}]
		\item $\displaystyle	\square h_i=-\frac{n}{2}$.
		\item $\displaystyle	\int_{-10}^{-\beta^2/2}\int_{M} \abs{\na^2(F_i-h_i)}^2\,\mathrm{d}\nu_{t} \mathrm{d}t\leq C(n, Y, \beta) \delta^{1/2}$.
		\item $\displaystyle	\sup_{t \in [-10, -\beta^2/2]}	\int_M |\na (F_i-h_i)|^2\,\mathrm{d}\nu_t \leq C(n,Y, \beta) \delta^{1/2}$.
	\end{enumerate}

We set $v_i:=h_i-h_0$ for $1 \le i \le N$. Then it follows from \eqref{findind2} and (ii) above that
	\begin{align}\label{findind2xx}
\int_{-10}^{-\beta^2/2}\int_{M} \abs{\na^2 v_i}^2\,\mathrm{d}\nu_{t} \mathrm{d}t\leq C(n, Y, \beta) \delta^{1/2}.
	\end{align}

A standard calculation yields
	\begin{align}\label{evolutioneq}
\diff{}{t}\int_M\la\nabla v_i,\nabla v_j\ra \,\mathrm{d}\nu_t =\int_M \la\nabla\square v_i,\nabla v_j\ra+\la\nabla v_i,\nabla \square v_j\ra-2\la\nabla^2 v_i,\nabla^2 v_j\ra\,\mathrm{d}\nu_t =\int_M -2\la\nabla^2 v_i,\nabla^2 v_j\ra\,\mathrm{d}\nu_t.
	\end{align}
	Plugging \eqref{findind2xx} into \eqref{evolutioneq}, we obtain for any $-10 \le s <t \le -\beta^2/2$, 
		\begin{align*}
\abs{\int_M\la\nabla v_i,\nabla v_j\ra \,\mathrm{d}\nu_t-\int_M\la\nabla v_i,\nabla v_j\ra \,\mathrm{d}\nu_s} \le C(n,Y, \beta) \delta^{1/2}.
	\end{align*}
	Therefore, it follows from (iii) above that we can find a constant $q_{ij}$ such that for any $t \in [-10, -\beta^2/2]$,
			\begin{align*}
\abs{\int_M\la\nabla u_i,\nabla u_j\ra \,\mathrm{d}\nu_t-q_{ij}} \le C(n,Y, \beta)\delta^{1/4}.
	\end{align*}
	
Thus, by the Poincar\'e inequality (see Theorem \ref{poincareinequ}), we have for any $t \in [-10, -\beta^2/2]$,
				\begin{align*}
\int_M\abs{\la\nabla u_i,\nabla u_j\ra-q_{ij}} \,\mathrm{d}\nu_t \le \sqrt{10 \pi} \int_M |\na u_i| |\na^2 u_j|+|\na u_j| |\na^2 u_i|\,\mathrm{d}\nu_t+C(n,Y, \beta)\delta^{1/4}.
	\end{align*}
	By integration, we obtain from Proposition \ref{integralbound} and \eqref{findind2} that
\begin{align*}
		&\int_{-10}^{-\beta^2/2} \int_M\abs{\la\nabla u_i,\nabla u_j\ra-q_{ij}} \,\mathrm{d}\nu_t \mathrm{d}t \\
		 \le& 100 \int_{-10}^{-\beta^2/2} \int_M |\na u_i| |\na^2 u_j|+|\na u_j| |\na^2 u_i|\,\mathrm{d}\nu_t \mathrm{d}t+C(n,Y, \beta)\delta^{1/4} \\
		 \le & 100 \lc \int_{-10}^{-\beta^2/2} \int_M |\na^2 u_j|^2\,\mathrm{d}\nu_t \mathrm{d}t \rc^{\frac 1 2} \lc \int_{-10}^{-\beta^2/2} \int_M |\na u_i|^2\,\mathrm{d}\nu_t \mathrm{d}t \rc^{\frac 1 2} \\
		 &+100 \lc \int_{-10}^{-\beta^2/2} \int_M |\na^2 u_i|^2\,\mathrm{d}\nu_t \mathrm{d}t \rc^{\frac 1 2} \lc \int_{-10}^{-\beta^2/2} \int_M |\na u_j|^2\,\mathrm{d}\nu_t \mathrm{d}t \rc^{\frac 1 2}+C(n,Y, \beta)\delta^{1/4} \le C(n,Y, \beta)\delta^{1/4},
	\end{align*}
	where we used \eqref{findind2x}. This proves Claim \ref{findindclaim1}.
	
	\begin{claim}\label{findindclaim2}
For any $1\leq i\neq j\leq N$,
		\begin{equation*}
			\int_{-10}^{-1/10}\int_M \left|\nabla (u_i-u_j)\right|^2\,\mathrm{d}\nu_t \mathrm{d}t\geq 100 \beta^2.
		\end{equation*}
	\end{claim}
	Fix $i\neq j$ and without loss of generality, assume $\tau_j\leq \tau_i$. By Claim \ref{findindclaim1}, we can find $q\in \mathbb{R}$ such that
	\begin{align}\label{findind11}
		\int_{-10}^{-\beta^2/2}\int_M \left||\nabla(u_i-u_j)|^2-q\right|\,\mathrm{d}\nu_t \mathrm{d}t\leq C(n,Y, \beta )\delta^{1/4}.
	\end{align}
		
	By \eqref{findind11} and \cite[Proposition 7.3]{bamler2020structure}, we can choose some $t^*\in [-\beta^2,-\beta^2/2]$ such that
	\begin{align*}
		\int_M \left||\nabla(\tau_i f_i-\tau_jf_j)|^2-q\right|\,\mathrm{d}\nu_{t^*}\leq& \Psi(\delta),\\
		\int_M \left|-\tau_i(|\nabla f_i|^2+\scal)+f_i-W_i \right|\,\mathrm{d}\nu_{y_i^*;t_*}\leq& \Psi(\delta),\\
		\int_M \left|-\tau_j(|\nabla f_j|^2+\scal)+f_j-W_j\right|\,\mathrm{d}\nu_{y_j^*;t_*}\leq& \Psi(\delta).
	\end{align*}
	Then choose $x_i^*=(x_i,t^*)$ and $x_j^*=(x_j,t^*)$ to be $H_n$-centers of $y_i^*$ and $y_j^*$, respectively. By \eqref{findind11a}, we have
		\begin{align}\label{findind11b}
d_{t^*}(x_i^*,x_j^*)\geq \frac{D}{2}\beta.
	\end{align}
		
	Note that in our setting, when evaluated at $t^*$,
			\begin{align}\label{findind11c}
\beta^2/3 \leq\tau_j\leq\tau_i\leq 2 \beta^2.
	\end{align}

	Set $B_i:= B_{t^*}(x_i^*,2\sqrt{H_n}\beta)$ and $S_i:= \{f_i\leq D\}\bigcap B_i$. It is clear from Proposition \ref{existenceHncenter} and \eqref{findind11b} that
			\begin{align*}
\nu_{y_i^*;t^*}(B_i)\geq \frac{1}{2} \quad \text{and} \quad \nu_{y_j^*;t^*}(B_i)\leq \frac{C(n)}{D^2}.
	\end{align*}	

On the other hand, by \cite[Theorem 8.1]{bamler2020entropy}, we have
	\begin{align*}
\nu_{y_i^*,t^*}(B_i\setminus S_i) \le \frac{1}{(4\pi \tau_i(t^*))^{\frac n 2}} e^{-D} C(n, Y) \beta^n \le C(n, Y) e^{-D}.
	\end{align*}
	Thus, we choose $D$ large so that 
				\begin{align*}
\nu_{y_i^*,t^*}(B_i\setminus S_i) \le \frac{1}{8}.
	\end{align*}	
By passing to a further subset of $S_i$, still denoted by $S_i$, we may assume on $S_i$,
		\begin{align*}
			\left||\nabla(\tau_i f_i-\tau_jf_j)|^2-q\right|\leq& \Psi(\delta),\\
			\left|-\tau_i(|\nabla f_i|^2+\scal)+f_i-W_i\right|\leq& \Psi(\delta),\\
			\left|-\tau_j(|\nabla f_j|^2+\scal)+f_j-W_j\right|\leq& \Psi(\delta),\\
			\nu_{y_i^*;t^*}(B_i\setminus S_i)+\nu_{y_j^*;t^*}(B_i\setminus S_i)\leq& \frac{1}{4}.
		\end{align*}
Thus, we compute on $S_i$ that if $\delta \le \delta(n, Y, \beta)$,
	\begin{align*}
		&\left|\tau_if_i-\tau_jf_j-(\tau_i^2-\tau_j^2) \scal \right| \\
		\leq& \left|-\tau_i^2(|\na f_i|^2+\scal)+\tau_if_i-\tau_iW_i\right|+\left|-\tau_j^2(|\na f_j|^2+\scal)+\tau_jf_j-\tau_jW_j\right|\\
		&+\left||\na (\tau_i f_i)|^2-|\na(\tau_j f_j)|^2\right|+|\tau_iW_i-\tau_jW_j|\\
	\leq& C(n, Y)\beta^2+\Psi(\delta)+\abs{\na(F_j-F_i)} \abs{\na(F_j-F_i)+2\na F_i}\\
		\leq & C(n, Y)\beta^2+\Psi(\delta)+q+C(n, Y) q^{\frac 1 2} \beta \leq q+C(n, Y) \beta^2.
	\end{align*}
In particular, this implies that on $S_i$
	\begin{align*}
\tau_j f_j\leq \tau_i f_i+(\tau_j^2-\tau_i^2) \scal+q+C(Y, D)\beta^2\leq \tau_i f_i+q+C(n, Y)\beta^2,
	\end{align*}	
	where in the last inequality, we have used \eqref{findind11c} and $\scal \geq -C(n)$. By \eqref{findind11c} again, we obtain 
	\begin{align*}
		f_j\leq 6 f_i +3\beta^{-2}q+C(n, Y) \leq f_i+5 D +3 \beta^{-2} q+C(n, Y) \le f_i + 3 \beta^{-2} q +C(n, Y).
	\end{align*}
Consequently, we have
	\begin{align*}
		\frac{1}{4}\leq \nu_{y_i^*;t^*}(S_i)=&\int_{S_i}(4\pi\tau_i)^{-n/2}e^{-f_i}\,\mathrm{d}V_{g(t^*)}\leq e^{3\beta^{-2} q+C(n, Y)}\int_{S_i}(4\pi\tau_j)^{-n/2}e^{-f_j}\,\mathrm{d}V_{g(t^*)}\\
		=&e^{3\beta^{-2} q+C(n, Y)}\nu_{y_j^*;t^*}(S_i)	\leq e^{3\beta^{-2} q+C(n,Y)}\frac{C(n)}{D^2},
	\end{align*}
which implies
	\begin{align*}
q\geq \frac{1}{3}\beta^2\lc2\log D-\log(4C(n))-C(n, Y)\rc\geq 101\beta^2,
	\end{align*}
provided that $D$ is sufficiently large. If $\delta \le \delta(n, Y, \beta)$, it follows from \eqref{findind11} that
	\begin{align}\label{findind12a}
\int_{-10}^{-1/10}\int_M|\nabla(u_i-u_j)|^2\,\mathrm{d}\nu_t \mathrm{d}t\geq 100 \beta^2,
	\end{align}
which completes the proof of Claim \ref{findindclaim2}.

	Combining Claims \ref{findindclaim1} and \ref{findindclaim2}, we finish the proof as follows. Consider the vector space $V_0$ spanned by $\{u_1,\ldots,u_N\}$, equipped with the inner product
	$$(\cdot ,\cdot):= \aint_{-10}^{-1/10}\int_M \la\nabla \cdot,\nabla \cdot\ra \,\mathrm{d}\nu_t \mathrm{d}t.$$
	Then by Proposition \ref{integralbound} and \eqref{findind12a}, we know that
		\begin{align}\label{findind12bb}
|u_i|\leq C(n,Y),\quad |u_i-u_j|\geq 3\beta,\quad \forall i\neq j\in \{1,\ldots, N\}.
	\end{align}

Since we cannot find $(k,\alpha,\delta, 1)$-independent points from $\{y_i^*\}_{1 \le i \le N}$ at $y_0^*$, it follows from Definition \ref{defnindependentpoints} and an inductive argument that there exists a $(k-1)$-subspace $V_1\subset V_0$ such that
	$$\{u_1,\ldots,u_N\}\subset B_{C(k)\alpha}(V_1),$$
	where $B_{C(k)\alpha}$ denotes the $C(k)\alpha$-neighborhood with respect to the inner product defined above. From this and \eqref{findind12bb}, it is easy to see $N\leq C(n,Y)\alpha^{1-k}$. 
	
In sum, the proof is complete.
\end{proof}

\section{Nondegeneration of almost splitting maps on Ricci flow limit spaces} \label{sec:limit}

In this section, we extend the results in previous sections to Ricci flow limit spaces and prove a nondegeneration result for almost splitting maps.

Throughout this section, suppose that $(Z,d_Z,\t)$ is a noncollapsed Ricci flow limit space obtained as a pointed Gromov-Hausdorff limit of a sequence $\XX^l \in \MM(n, Y, T)$. We set $\XX^l=\{M_l^n,(g_l(t))_{t\in \III^{++}}\}$. Moreover, let $\phi_l$ denote the diffeomorphisms given in Theorem \ref{thm:intro3}.

\subsection{Modified pointed entropy}

First, we generalize the pointed $\WW$-entropy in \eqref{defWentropy} of Definition \ref{defnentropy} to noncollapsed Ricci flow limit spaces.

\begin{defn}[Pointed $\WW$-entropy]
	For $z\in Z_{\III^{-}}$, we define the pointed $\WW$-entropy at $z$ by
		\begin{equation*}\index{$\WW_z(\tau)$}
			\WW_z(\tau):=\int_{\RR_{\t(z)-\tau}}\tau\left(2\Delta f_z-|\na f_z|^2+\scal_{g^Z}\right)+f_z-n\,\mathrm{d}\nu_{z;\t(z)-\tau},
		\end{equation*}
		for any $\tau>0$ such that $\t(z)-\tau \in \III^-$.
\end{defn}

Next, we define the following set, which consists of the time at which the integration by parts fails.

\begin{defn}\label{defndiscrete}
	For $z\in Z_{\III^{-}}$, we define $J^z \subset (0, \t(z)+0.98T)$\index{$J^z$} to be the set consisting of $\tau$ such that
			\begin{equation*}
\int_{\RR_{\t(z)-\tau}} \Delta f_z-|\na f_z|^2 \,\mathrm{d}\nu_{z;\t(z)-\tau} \ne 0.
		\end{equation*}
\end{defn}

Next, we investigate the continuity of $\WW$ under convergence.

\begin{prop}\label{prop:Wconv1}
Given $z \in Z_{\III^-}$, $J^z$ is a measure zero set. Moreover, for any sequence $z_l^* \in M_l \times \III$ converging to $z$ and any $\tau \in (0, \t(z)+0.98T) \setminus J^z$, we have
	\begin{align*}
		\lim_{l \to \infty} \WW_{z_l^*}\left(\t_l(z_l^*)-\t(z)+\tau\right) =\WW_{z} (\tau).
	\end{align*}
\end{prop}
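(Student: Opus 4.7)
The plan is to decompose the pointed $\WW$-entropy as
$$\WW_z(\tau) = \widetilde\WW_z(\tau) + 2\tau\, h(\tau),$$
where $\widetilde\WW_z(\tau) := \int_{\RR_{\t(z)-\tau}}[\tau(|\na f_z|^2+\scal_{g^Z})+f_z-n]\,\mathrm{d}\nu_{z;\t(z)-\tau}$ and $h(\tau) := \int_{\RR_{\t(z)-\tau}}(\Delta f_z - |\na f_z|^2)\,\mathrm{d}\nu_{z;\t(z)-\tau}$, so that by definition $J^z = \{\tau : h(\tau) \ne 0\}$. The starting observation is that on each smooth closed flow $\XX^l$, integration by parts on $M_l$ gives $\WW_{z_l^*}(\cdot) \equiv \widetilde\WW_{z_l^*}(\cdot)$.

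For the claim that $J^z$ has Lebesgue measure zero, I will integrate $h$ by parts on the regular part using the spacetime cutoff $\eta_{r,A}$ from \cite[Proposition 8.23]{fang2025RFlimit}. Since $\Delta f_z - |\na f_z|^2 = e^{f_z}\di(e^{-f_z}\na f_z)$ on $\RR$, one has
$$\int_{\RR_{\t(z)-\tau}}\eta_{r,A}(\Delta f_z - |\na f_z|^2)\,\mathrm{d}\nu_{z;\t(z)-\tau} = -\int_{\RR_{\t(z)-\tau}}\la \na \eta_{r,A}, \na f_z\ra\,\mathrm{d}\nu_{z;\t(z)-\tau}.$$
The left side converges to $h(\tau)$ as $r\to 0$ and $A \to \infty$ by dominated convergence together with the weighted integral bounds of Proposition \ref{integralbound}. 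By Cauchy--Schwarz and the cutoff gradient estimate \eqref{esticutoff}, the right side is bounded in absolute value by $Cr^{-1}\lc\int|\na f_z|^2\,\mathrm{d}\nu_{z;\t(z)-\tau}\rc^{1/2}\lc\int\mathbf{1}_{\{0<\eta_{r,A}<1\}}\,\mathrm{d}\nu_{z;\t(z)-\tau}\rc^{1/2}$. Integrating in $\tau$ over any compact subinterval of $(0, \t(z)+0.98T)$ and applying Fubini together with the spacetime flatness estimate \eqref{eq:hessianeq006a} yields $\int|h(\tau)|\,\mathrm{d}\tau=0$ on that interval, hence $h \equiv 0$ almost everywhere.

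For the continuity claim, since $\WW_{z_l^*} = \widetilde\WW_{z_l^*}$ on each $\XX^l$ and $\WW_z(\tau) = \widetilde\WW_z(\tau)$ whenever $\tau \notin J^z$, it suffices to show $\widetilde\WW_{z_l^*}(\tau_l) \to \widetilde\WW_z(\tau)$ for such $\tau$, where $\tau_l := \t_l(z_l^*)-\t(z)+\tau$. Splitting each integral via $\eta_{r, A}\circ\phi_l^{-1}$, the main contribution converges by the $\hat C^\infty$ convergence of Theorem \ref{thm:intro3}, since $|\na f|^2$, $\scal$, and $f$ all converge smoothly on $\RR$. The ``far'' tail is bounded uniformly in $l$ by combining the $H_n$-center Gaussian decay of Proposition \ref{existenceHncenter} with the weighted $L^p$-bounds of Proposition \ref{integralbound}. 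The ``edge'' tail over $\{0 < \eta_{r,A} < 1\}$ is controlled via \eqref{eq:hessianeq006a} and vanishes as $r \to 0$ for almost every $\tau$ by Fubini. To upgrade a.e. continuity to every $\tau \notin J^z$, I will take the limit of the integrated identity $\tau_l\NN_{z_l^*}(\tau_l) - \tau_0\NN_{z_l^*}(\tau_0) = \int_{\tau_0}^{\tau_l}\widetilde\WW_{z_l^*}(s)\,\mathrm{d}s$ to obtain $\tau\NN_z(\tau) - \tau_0\NN_z(\tau_0) = \int_{\tau_0}^\tau \widetilde\WW_z(s)\,\mathrm{d}s$, using continuity of $\NN_z$ from the concavity of $\tau\NN_z$; Lebesgue differentiation then pins down $\widetilde\WW_z$ uniquely at every $\tau \notin J^z$.

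The principal obstacle is the edge-tail estimate: at a fixed $\tau$ the slice measure of $\{0<\eta_{r,A}<1\}$ need not vanish as $r \to 0$, so one must argue for each $\tau \notin J^z$ that the integrand remains uniformly integrable against $\,\mathrm{d}\nu_{z;\t(z)-\tau}$. This requires combining the weighted gradient-decay estimates with a two-scale strategy in which one passes to the limit in $l$ first and only then sends $r\to 0$ and $A\to\infty$, together with the concavity-based uniqueness of $\widetilde\WW_z$ to fill the a.e. gap produced by Fubini. The $J^z$-measure-zero argument involves a similar but gentler issue, since only integrated-in-$\tau$ information on $h$ is needed.
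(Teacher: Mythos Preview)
Your argument for $|J^z|=0$ is essentially the same as the paper's: both use cutoffs and integration by parts on the regular part, combined with the spacetime smallness of $\{0<\eta_{r,A}<1\}$, to show $\int_I h(\tau)\,\mathrm{d}\tau = 0$ (the paper first establishes $h\ge 0$ pointwise via the sandwich described below and then $\int h=0$; your direct bound on $\int|h|$ via Fatou also works). One minor point: you cite \cite[Proposition~8.23]{fang2025RFlimit} and \eqref{eq:hessianeq006a}, which are stated for Ricci shrinker spaces; for a general limit space $Z$ the relevant cutoff is \cite[Proposition~8.20]{fang2025RFlimit}.

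The convergence argument has a genuine gap. Your final step---``Lebesgue differentiation then pins down $\widetilde\WW_z$ uniquely at every $\tau\notin J^z$''---does not yield pointwise convergence of $\WW_{z_l^*}(\tau_l)$ at a specific $\tau$. Lebesgue differentiation of $\tau\mapsto\tau\NN_z(\tau)$ only recovers $\widetilde\WW_z$ almost everywhere; even combined with the monotonicity of each $\WW_{z_l^*}$, you obtain convergence only away from the (at most countable) jump set of the monotone limit, and there is no a priori reason that set is contained in $J^z$. The paper avoids this entirely. It uses Perelman's differential Harnack inequality $w\le 0$ to run a one-sided Fatou argument over compact exhaustions of $\RR_{\t(z)-\tau}$, giving $\limsup_l\WW_{z_l^*}(\tau_l)\le\WW_z(\tau)$ for \emph{every} $\tau$; it then uses the uniform lower bound on $\scal$ together with continuity of the Nash entropy (so that $\int f_l\,\mathrm{d}\nu_l\to\int f_z\,\mathrm{d}\nu_z$) to run Fatou on the nonnegative quantity $\tau(|\nabla f|^2+\scal)+C\tau$ and obtain $\liminf_l\WW_{z_l^*}(\tau_l)\ge\widetilde\WW_z(\tau)$ for every $\tau$. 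These two pointwise inequalities immediately give $h\ge 0$ everywhere and, at any $\tau\notin J^z$ where $\WW_z(\tau)=\widetilde\WW_z(\tau)$, squeeze the sequence to the desired limit---no slicewise edge-tail control and no upgrade from a.e.\ information is needed.
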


\begin{proof}
For simplicity, we assume $\t_l(z_l^*)=\t(z)=0$, and the general case is similar. We set $w_{z_l^*}=\tau\lc 2\Delta f_{z_l^*}-|\na f_{z_l^*}|^2+\scal_{g_l}\rc +f_{z_l^*}-n$ and $w_z=\tau\left(2\Delta f_z-|\na f_z|^2+\scal_{g^Z} \right)+f_z-n$. Note that by Perelman's differential Harnack inequality and the smooth convergence in Theorem \ref{thm:intro3}, we have $w_{z_l^*} \le 0$ and $w_z \le 0$.

By smooth convergence again, we have
\begin{align}\label{Wconveq1}
	\limsup_{l \to \infty}\WW_{z_l^*}(\tau)=\limsup_{l \to \infty}\int_{M_l} w_{z_l^*}\, \mathrm{d}\nu_{z_l^*;-\tau}\leq \int _{\RR_{-\tau}}w_z\,\mathrm{d}\nu_{z;-\tau}.
\end{align}
Indeed, for any compact set $D \subset \RR_{-\tau}$, it follows from Theorem \ref{thm:intro3} that for the diffeomorphism $\phi_l$ therein, we have
\begin{align*}
\lim_{l \to \infty}\int_{\phi_l(D)} w_{z_l^*}\, \mathrm{d}\nu_{z_l^*;-\tau}= \int _{D}w_z\,\mathrm{d}\nu_{z;-\tau}.
\end{align*}
Thus, \eqref{Wconveq1} holds after exhausting $\RR_{-\tau}$ by such a compact set $D$.

On the other hand, since both $\scal_{g_l}$ and $\scal_{g^Z}$ are uniformly bounded below, we conclude as \eqref{Wconveq1} that
\begin{align*}
\liminf_{l \to \infty}\int_{M_l} \tau\lc |\na f_{z_l^*}|^2+\scal_{g_l}\rc\, \mathrm{d}\nu_{z_l^*;-\tau}\ge \int_{\RR_{-\tau}} \tau\lc |\na f_{z}|^2+\scal\rc \,\mathrm{d}\nu_{z;-\tau}.
\end{align*}
Combining this with the continuity of the Nash entropy (see \cite[Lemma 7.2]{fang2025RFlimit}), we conclude that
\begin{align}\label{Wconveq2}
 	\liminf_{l \to \infty}\WW_{z_l^*}(\tau)&=\liminf_{l\to\infty}\int_{M_l} \tau\lc |\na f_{z_l^*}|^2+\scal\rc +f_{z_l^*}-n\, \mathrm{d}\nu_{z_l^*;-\tau}\geq \int_{\RR_{-\tau}}\tau\left(|\na f_z|^2+\scal\right)+f-n\,\mathrm{d}\nu_{z;-\tau}.
 \end{align}
 By \eqref{Wconveq1} and \eqref{Wconveq2}, it holds that
 \begin{align}\label{Wconveq11}
 	\int_{\RR_{-\tau}}\tau\left(|\na f_z|^2+\scal\right)+f-n\,\mathrm{d}\nu_{z;-\tau}\leq \int _{\RR_{-\tau}} w_z\,\mathrm{d}\nu_{z;-\tau}.
 \end{align}

Now, we consider the cutoff functions $\eta_{r, A}$ based at $z$ from \cite[Proposition 8.20]{fang2025RFlimit}. For any $0<\tau_1 \le \tau_2 < \t(z)+0.98T$, we have
\begin{align}\label{Wconveq7}
 &\left|\int_{\tau_1}^{\tau_2}\int_{\RR_{-\tau}}\left(\tau\big(|\na f_z|^2+\scal\big)+f-n\right)(1-\eta_{r,A})\,\mathrm{d}\nu_{z;-\tau}\mathrm{d}\tau\right|\nonumber\\
 \leq& C(n)\lc \int_{\tau_1}^{\tau_2}\int_{\supp(\eta_{r,A})}\tau^2\left(|\na f_z|^4+\scal^2\right)+f^2+1\,\mathrm{d}\nu_{z;-\tau}\mathrm{d}\tau\rc^{1/2}\lc\int_{\tau_1}^{\tau_2}\int_{\supp(\eta_{r,A})}\left|1-\eta_{r,A}\right|^2\,\mathrm{d}\nu_{z;-\tau}\mathrm{d}\tau\rc^{1/2}\nonumber\\
 &+\left|\int_{\tau_1}^{\tau_2}\int_{\RR_{-\tau}\setminus \supp(\eta_{r,A})}\left(\tau\big(|\na f_z|^2+\scal\big)+f-n\right)(1-\eta_{r,A})\,\mathrm{d}\nu_{z;-\tau}\mathrm{d}\tau\right| \notag\\
 \leq &C(n,Y, A, \tau_2) r+C(n, Y, \tau_2) \Psi(A^{-1}),
 \end{align}
 where we used Proposition \ref{integralbound} by taking the limit, \cite[Proposition 8.20 (4)]{fang2025RFlimit} and \cite[Proposition 3.1]{FLloja05}. Similarly, we have
 \begin{align}\label{Wconveq8}
 	\left|\int_{\tau_1}^{\tau_2}\int_{\RR_{-\tau}}w_z(1-\eta_{r,A})\,\mathrm{d}\nu_{z;-\tau}\mathrm{d}\tau\right|\leq C(n,Y, A, \tau_2) r+C(n, Y,  \tau_2) \Psi(A^{-1}).
 \end{align}
Using integration by parts, it follows that
 \begin{align}\label{Wconveq9}
 	&\left|\int_{\tau_1}^{\tau_2}\int_{\RR_{-\tau}}\left(\tau\big(|\na f_z|^2+\scal\big)+f-n\right)\eta_{r,A}\,\mathrm{d}\nu_{z;-\tau}\mathrm{d}\tau-\int_{\tau_1}^{\tau_2}\int_{\RR_{-\tau}}w_z \eta_{r,A} \,\mathrm{d}\nu_{z;-\tau}\mathrm{d}\tau\right|\nonumber\\
 	\leq & 2 \tau_2 \int_{\tau_1}^{\tau_2}\int_{\RR_{-\tau}}|\na f_z||\na \eta_{r,A}|\,\mathrm{d}\nu_{z;-\tau}\mathrm{d}\tau\nonumber\\
 	\leq & C(n, Y, A) r^{-1} \tau_2 \lc \int_{\tau_1}^{\tau_2}\int_{\RR_{-\tau}}|\na f_z|^2\,\mathrm{d}\nu_{z;-\tau}\mathrm{d}\tau\rc^{1/2}\lc \int_{\tau_1}^{\tau_2}\int_{\{0<\eta_{r,A}<1\}} 1 \,\mathrm{d}\nu_{z;-\tau}\mathrm{d}\tau\rc^{1/2}\nonumber\\
 	\leq &C(n, Y, A,  \tau_2) r^{1/2}.
 \end{align}
 Combining \eqref{Wconveq7}, \eqref{Wconveq8} and \eqref{Wconveq9}, we first let $r \to 0$, followed by $A \to +\infty$, to obtain
 \begin{align}\label{Wconveq10}
 	\int_{\tau_1}^{\tau_2}\int_{\RR_{-\tau}}\tau\big(|\na f_z|^2+\scal\big)+f-n\,\mathrm{d}\nu_{z;-\tau}\mathrm{d}\tau=\int_{\tau_1}^{\tau_2}\int_{\RR_{-\tau}}w_z\,\mathrm{d}\nu_{z;-\tau}\mathrm{d}\tau.
 \end{align}
Thus, it follows from Definition \ref{defndiscrete}, \eqref{Wconveq11} and \eqref{Wconveq10} that $J^z$ has measure zero.

 Finally, by \eqref{Wconveq1} and \eqref{Wconveq2}, we conclude that for $\tau\in (0, \t(z)+0.98T) \setminus J^z$, 
 \begin{align*}
 \lim_{l \to \infty}\WW_{z_l^*}(\tau)=\int_{\RR_{-\tau}}\tau\left(|\na f_z|^2+\scal\right)+f-n\,\mathrm{d}\nu_{z;-\tau}= \int _{\RR_{-\tau}}w_z\,\mathrm{d}\nu_{z;-\tau}.
 \end{align*}
 This completes the proof.
\end{proof}

By Proposition \ref{prop:Wconv1}, we know that $\tau\mapsto \WW_z(\tau)$ is nonincreasing for $\tau \notin J^z$. For this reason, we make the following definition: 

\begin{defn}[Modified pointed $\widetilde \WW$-entropy]\label{defnWRFlimit1}
	For $z\in Z_{\III^{-}}$, the modified $\widetilde \WW$-entropy at $z$ is defined as
	\begin{align*}
		\widetilde\WW_z(\tau):=
		\begin{dcases}
		 \WW_z(\tau), \quad &\text{if} \quad \tau \notin J^z \\
		 \lim_{\tau_j \notin J^z \nearrow \tau} \WW_z(\tau_j), \quad &\text{if} \quad \tau \in J^z.
		\end{dcases}
	\end{align*}\index{$\widetilde\WW_z(\tau)$}
It is clear that $\widetilde\WW_z(\tau)$ is nonincreasing for all $\tau>0$, and $\WW_z(\tau)=\widetilde\WW_z(\tau)$ for $\tau \notin J^z$.
\end{defn}

\begin{lem}\label{lem:monolimit}
For $z\in Z_{\III^{-}}$ and $0<\tau_1\le \tau_2<\t(z)+0.98T$, we have
 \begin{align*}
\int_{\t(z)-\tau_2}^{\t(z)-\tau_1} \int_{\RR_t} 2\tau \abs{\Ric(g^Z)+\na^2 f_z-\frac{g^Z}{2\tau}}^2 \,\mathrm{d}\nu_{z;t}\mathrm{d}t \le \widetilde\WW_z(\tau_1)-\widetilde\WW_z(\tau_2),
 \end{align*}
where $\tau=\t(z)-\t$.
\end{lem}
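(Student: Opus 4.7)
The plan is to start from the exact identity available in the smooth setting, take the Gromov--Hausdorff limit, and then remove the auxiliary restriction that $\tau_1,\tau_2 \notin J^z$.

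First, pick a sequence $z_l^* \in M_l \times \III$ with $z_l^* \to z$. For each fixed $l$, Proposition \ref{propNashentropy}(iii) applied to $\XX^l$ gives, after integrating twice in $\tau$ and using $\mathcal W_{z_l^*}(\tau)=\frac{\dd}{\dd\tau}(\tau\NN_{z_l^*}(\tau))$, the exact identity
\begin{equation*}
\WW_{z_l^*}(\sigma_1)-\WW_{z_l^*}(\sigma_2)=2\int_{\t_l(z_l^*)-\sigma_2}^{\t_l(z_l^*)-\sigma_1}\int_{M_l}\tau\,\Big|\Ric(g_l)+\na^2 f_{z_l^*}-\tfrac{g_l}{2\tau}\Big|^2\,\dd\nu_{z_l^*;t}\,\dd t
\end{equation*}
for $0<\sigma_1 \le \sigma_2 < \t_l(z_l^*)+0.98T$, where $\tau=\t_l(z_l^*)-t$. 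Set $\sigma_i=\tau_i-(\t(z)-\t_l(z_l^*))$ so the integration interval on the right-hand side matches $[\t(z)-\tau_2,\t(z)-\tau_1]$.

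Next, fix $\tau_1,\tau_2\notin J^z$ and let $l\to\infty$. By Proposition \ref{prop:Wconv1}, the left-hand side converges to $\WW_z(\tau_1)-\WW_z(\tau_2)=\widetilde\WW_z(\tau_1)-\widetilde\WW_z(\tau_2)$. For the right-hand side, the integrand is non-negative, and by Theorem \ref{thm:intro3} the diffeomorphisms $\phi_l$ induce smooth convergence of $(g_l,\partial_{\t_l})$ and of the heat kernel $K^l$ to $(g^Z,\partial_\t)$ and $K_Z$ on every compact subset of $\RR$. Hence on any compact $D\subset \RR_{(\t(z)-\tau_2,\t(z)-\tau_1)}$, the integral over $\phi_l(D)$ converges to the integral over $D$. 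Applying Fatou's lemma after exhausting $\RR_{(\t(z)-\tau_2,\t(z)-\tau_1)}$ by such compact sets yields
\begin{equation*}
\liminf_{l\to\infty}\text{RHS}_l \ge 2\int_{\t(z)-\tau_2}^{\t(z)-\tau_1}\int_{\RR_t}\tau\,\Big|\Ric(g^Z)+\na^2 f_z-\tfrac{g^Z}{2\tau}\Big|^2\,\dd\nu_{z;t}\,\dd t,
\end{equation*}
which combined with the limit on the left gives the desired inequality for $\tau_1,\tau_2\notin J^z$.

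Finally, to cover arbitrary $0<\tau_1\le\tau_2$, I use the fact that $J^z$ has measure zero (Proposition \ref{prop:Wconv1}) to pick sequences $\tau_1^{(j)}\searrow \tau_1$ and $\tau_2^{(j)}\nearrow \tau_2$ with $\tau_i^{(j)}\notin J^z$. Monotone convergence in the (non-negative) spacetime integral on the left passes the limit through. On the right, the nonincreasing property of $\widetilde\WW_z$ gives $\widetilde\WW_z(\tau_1^{(j)})\le\widetilde\WW_z(\tau_1)$ and $\widetilde\WW_z(\tau_2^{(j)})\ge\widetilde\WW_z(\tau_2)$, so $\widetilde\WW_z(\tau_1^{(j)})-\widetilde\WW_z(\tau_2^{(j)})\le \widetilde\WW_z(\tau_1)-\widetilde\WW_z(\tau_2)$. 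Passing to the limit completes the proof. The main technical point is the lower semi-continuity of the spacetime integral of $|\Ric+\na^2 f-\frac{g}{2\tau}|^2$ under the $\hat C^\infty$-convergence; this is where one must carefully combine the smooth convergence on $\RR$ from Theorem \ref{thm:intro3} with the absence of any control on the singular set $\MS$, but non-negativity of the integrand and Fatou's lemma suffice.
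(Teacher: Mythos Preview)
Your proof is correct and follows essentially the same approach as the paper: pass the smooth monotonicity formula to the limit via Proposition~\ref{prop:Wconv1} and Fatou/smooth convergence on $\RR$, then remove the restriction $\tau_i\notin J^z$ by approximation. The only cosmetic difference is the direction of the approximating sequences in the last step: the paper takes $\tau_i^j\nearrow\tau_i$ and invokes the very definition of $\widetilde\WW_z$ as a left limit (Definition~\ref{defnWRFlimit1}) to get exact convergence on the right-hand side, whereas you take $\tau_1^{(j)}\searrow\tau_1$, $\tau_2^{(j)}\nearrow\tau_2$ so that the spacetime integral is monotone and then bound the right-hand side using the monotonicity of $\widetilde\WW_z$; both choices work.
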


\begin{proof}
Take two sequences $\tau_1^j \nearrow \tau_1$ and $\tau_2^j \nearrow \tau_2$ such that neither $\tau_1^j$ nor $\tau_2^j$ belongs to $J^z$. By the monotonicity formula (see Proposition \ref{propNashentropy} (iii)), we have
 \begin{align*}
\int_{\t(z)-\tau^j_2}^{\t(z)-\tau^j_1} \int_{M_l} 2\tau_l \abs{\Ric(g_l)+\na^2 f_{z_l^*}-\frac{g_l}{2\tau_l}}^2 \,\mathrm{d}\nu_{z_l^*;t}\mathrm{d}t = \WW_{z_l^*}\left(\t_l(z_l^*)-\t(z)+\tau_1^j\right)-\WW_{z_l^*}\left(\t_l(z_l^*)-\t(z)+\tau_2^j\right),
 \end{align*}
where $\tau_l=\t_l(z_l^*)-t$, and $z_l^*$ is a sequence in $\XX^l$ converging to $z$. Letting $l \to \infty$, we have by the smooth convergence and Proposition \ref{prop:Wconv1} that
 \begin{align*}
\int_{\t(z)-\tau^j_2}^{\t(z)-\tau^j_1} \int_{\RR_t} 2\tau \abs{\Ric(g^Z)+\na^2 f_z-\frac{g^Z}{2\tau}}^2 \,\mathrm{d}\nu_{z;t} \mathrm{d}t \le \WW_{z}(\tau_1^j)-\WW_{z}(\tau_2^j).
 \end{align*}
Taking $j \to \infty$, the conclusion follows from the definition of $\widetilde \WW$.
\end{proof}

Next, we have the following definition that generalizes Definition \ref{defnselfsimilarity}.

\begin{defn}[$(\delta,r)$-selfsimilar]\label{defnselfsimilarity1}
	A point $z \in Z_{\III^-}$ is called \textbf{$(\delta,r)$-selfsimilar}\index{$(\delta,r)$-selfsimilar} if $\t(z)-\delta^{-1} r^2 \in \III^-$ and 
	\begin{align*}
	\widetilde 	\WW_{z}(\delta r^2)-\widetilde \WW_{z}(\delta^{-1}r^2) \leq \delta.
	\end{align*}
\end{defn}

\subsection{Sharp splitting maps and Hessian decay on Ricci flow limit spaces}

Next, we give the definition of strongly $(k,\alpha,\delta,r)$-independent points on $Z$ similar to Definition \ref{defnindependentpoints}:

\begin{defn}[Strongly $(k,\alpha,\delta,r)$-independent points]\label{defnindependentpointsRFlimit}\index{strongly $(k,\alpha,\delta,r)$-independent points}
Given constants $\alpha\in (0, 1)$, $\delta>0$, $r>0$, $k \in \{1, \ldots, n\}$ and a Ricci flow limit space $Z$ with a $(\bar \delta/2, r)$-selfsimilar point $z_0$, a set of points $\{z_i\}_{1\leq i\leq k}$ is called \textbf{strongly $(k,\alpha,\delta,r)$-independent} at $z_0$, if the following conditions hold.
	\begin{enumerate}[label=\textnormal{(\roman{*})}]	
		\item $\displaystyle \widetilde \WW_{z_i}(r^2/40)-\widetilde \WW_{z_i}(40 r^2) \le  \delta$ for all $i \in \{0,\ldots,k\}$.
		
		\item $\displaystyle d_Z (z_i, z_0) \le r/2$ for all $i \in \{1,\ldots,k\}$.

		\item $\displaystyle |\t(z_0)-\t(z_i)| \le \frac{r^2}{200}$ for all $i \in \{1,\ldots,k\}$.
		\item Set $f_i=f_{z_i}$, $\tau_i=\t(z)-\t$ and $F_i=\tau_if_i$. For $h_i=r^{-1}(F_i-F_0)$ and the symmetric matrix
		\begin{align*}
		A=(a_{ij}), \quad \text{where} \quad			a_{ij}:=\aint_{\t(z_0)-10r^2}^{\t(z_0)-r^2/10} \int_{\RR_t} \la\nabla h_i ,\nabla h_j\ra \,\mathrm{d}\nu_{z;t} \mathrm{d}t,
 		\end{align*}
the first eigenvalue $\lambda_1(A)$ of $A$ satisfies
 				\begin{align*}
			\lambda_1(A) \geq \alpha^2.
		\end{align*}
	\end{enumerate}
\end{defn}

With Definition \ref{defnindependentpointsRFlimit}, we can now give the definition of the entropy pinching in Ricci flow limit spaces similar to Definition \ref{defnentropypinching}. 

\begin{defn}[Strongly entropy pinching] 
	For $k\in \{1,2 \ldots,n\}$, $\alpha \in (0, 1)$, $\delta>0$ and $r>0$, the \textbf{strongly $(k,\alpha,\delta,r)$-entropy pinching at $z_0$} is defined as
	\begin{equation*}\index{$\mathfrak{S}_r^{k,\alpha,\delta}$}
		\mathfrak{S}_r^{k,\alpha,\delta}(z_0):= \inf_{\{z_i\}_{1 \le i \le k} \,\mathrm{are \,strongly}\, (k,\alpha,\delta,r)-\mathrm{independent}\ \mathrm{at}\ z_0}\sum_{i=0}^k \left(\widetilde \WW_{z_i}(r^2/40)-\widetilde \WW_{z_i}(40r^2)\right)^{\frac{1}{2}}.
	\end{equation*}
\end{defn}

Let $\{z_i\}_{1\leq i\leq k}$ be strongly $(k,\alpha,\delta,r)$-independent points at $z_0$ and assume that $x_{i,l}^*\in\XX^l$ converges to $z_i$ in the Gromov-Hausdorff sense. Set $\nu^l=\nu_{x_{0,l}^*}$, $t^l=\t_l(x_{0,l}^*)$, $\tau_i^l=\t_l(x_{i,l}^*)-t$, $f^l_i=f_{x_{i,l}^*}$, $F^l_i=\tau_i^l f^l_i$ and $h^l_i=r^{-1}(F^l_i-F^l_0)$.

By Theorem \ref{thm:intro3}, we have
\begin{align*}
	f^l_i \xrightarrow[l\to\infty]{\quad C_{\loc}^\infty(\RR)\quad}f_i.
\end{align*}
We define $A^l=(a_{ij}^l)$ as
\begin{align*}
	a_{ij}^l:=\aint_{t^l-10r^2}^{t^l-r^2/10}\int_{M_l}\la \na h^l_i, \na h^l_j\ra \,\mathrm{d}\nu^l_t \mathrm{d}t.
\end{align*}

\begin{lem}\label{eigenconvRF}
For any $i, j \in \{1, \ldots, k\}$,
  \begin{align*}
  	a_{ij}^l\xrightarrow[l\to\infty]{}a_{ij},\quad \lambda_1(A^l)\xrightarrow[l\to\infty]{}\lambda_1(A).
  \end{align*}
\end{lem}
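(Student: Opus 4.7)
The plan is to deduce the eigenvalue convergence $\lambda_1(A^l)\to\lambda_1(A)$ from the entrywise convergence $a_{ij}^l\to a_{ij}$ for all $1\le i,j\le k$, since the first eigenvalue of a symmetric matrix depends continuously on its entries. Hence the entire lemma reduces to proving the entrywise statement. Note that $t^l=\t_l(x_{0,l}^*)\to\t(z_0)$ by the Gromov--Hausdorff convergence, so the intervals of integration also converge, and the main difficulty lies in passing the spatial integrals to the limit.

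For the entrywise convergence I would decompose the integrand by means of the cutoff functions $\eta_{r,A}$ based at $z_0$ from \cite[Proposition 8.20]{fang2025RFlimit}, exactly as in the proof of Proposition \ref{prop:Wconv1}. Writing $1=\eta_{r,A}+(1-\eta_{r,A})$ and pulling back by the diffeomorphisms $\phi_l$ of Theorem \ref{thm:intro3}, the portion weighted by $\eta_{r,A}$ is an integral over a relatively compact subset of $\RR$, on which the smooth convergence $\phi_l^*g^l\to g^Z$, $\phi_l^*\partial_{\t_l}\to\partial_\t$, and $f_i^l\to f_i$ (together with the heat kernel convergence $K^l\to K_Z$) yields
\begin{align*}
\aint_{t^l-10r^2}^{t^l-r^2/10}\!\int_{M_l}\!\la\nabla h_i^l,\nabla h_j^l\ra(\eta_{r,A}\!\circ\!\phi_l^{-1})\,\mathrm{d}\nu^l_t\mathrm{d}t \;\longrightarrow\; \aint_{\t(z_0)-10r^2}^{\t(z_0)-r^2/10}\!\int_{\RR_t}\!\la\nabla h_i,\nabla h_j\ra\eta_{r,A}\,\mathrm{d}\nu_{z_0;t}\mathrm{d}t.
\end{align*}

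The main obstacle, and the heart of the argument, is a uniform-in-$l$ bound on the complementary integral. Writing $\nabla F_i^l=\tau_i^l\nabla f_i^l$ and applying Proposition \ref{integralbound} to $x_{0,l}^*$ controls $\int_{M_l}|\nabla F_0^l|^4 e^{\theta f^l}\,\mathrm{d}\nu^l_t$. For the remaining terms $\int_{M_l}|\nabla F_i^l|^4 e^{\theta f^l}\,\mathrm{d}\nu^l_t$, $i\ge 1$, one applies Proposition \ref{integralbound} at $x_{i,l}^*$ and then changes base point using \cite[Proposition A.2]{fang2025RFlimit}; this is legitimate because $z_0$ is $(\bar\delta/2,r)$-selfsimilar (so $x_{0,l}^*$ is $(\bar\delta,r)$-selfsimilar for large $l$) and $d^*(x_{0,l}^*,x_{i,l}^*)\le r$, so Lemma \ref{lem:nonexpand} gives the required $W_1$-comparison. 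Combining, one obtains
\begin{align*}
\sup_l\,\sup_{t\in[t^l-10r^2,\,t^l-r^2/10]}\int_{M_l}|\nabla h_i^l|^4 e^{\theta f^l}\,\mathrm{d}\nu^l_t\le C(n,Y,r)
\end{align*}
for some $\theta=\theta(n)>0$.

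With this uniform $L^4$-bound in hand, Hölder's inequality together with the measure estimate \cite[Proposition 8.20 (4)]{fang2025RFlimit} for $\{0<\eta_{r,A}<1\}$ and the standard Gaussian tail estimate for $\int_{M_l\setminus B^*(x_{0,l}^*;A/3)}1\,\mathrm{d}\nu^l_t$ (via Proposition \ref{existenceHncenter}) bounds the complementary integral by $\Psi(A^{-1}\!\mid\! n,Y)+C(n,Y,A)r^{1/2}$, uniformly in $l$, and the analogous bound holds in the limit. Sending first $l\to\infty$, then $A\to\infty$, then $r\to 0$ yields $a_{ij}^l\to a_{ij}$, completing the proof.
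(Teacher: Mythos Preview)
Your proposal is correct and follows essentially the same route as the paper: cutoff functions $\eta_{r,A}$ from \cite[Proposition 8.20]{fang2025RFlimit}, smooth convergence on the cutoff part via Theorem \ref{thm:intro3}, a uniform higher-integrability bound on $|\nabla h_i^l|$ obtained by combining Proposition \ref{integralbound} with the base-point change \cite[Proposition A.2]{fang2025RFlimit} (enabled by Lemma \ref{lem:nonexpand}), and H\"older plus the small-measure estimate for the complementary region.

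One small overclaim: the pointwise-in-time bound $\sup_t\int_{M_l}|\nabla h_i^l|^4 e^{\theta f^l}\,\mathrm{d}\nu^l_t\le C$ does not follow directly from Proposition \ref{integralbound}, whose second inequality only controls $\tau|\nabla f|^2$ at a fixed time. What you actually get (and what the paper uses) is the spacetime bound $\int_{-11}^{-1/11}\int_{M_l}|\nabla h_i^l|^4\,\mathrm{d}\nu^l_t\,\mathrm{d}t\le C(n,Y)$. This is entirely sufficient for your H\"older step, since both the complementary integral and the $a_{ij}^l$ themselves are spacetime integrals; simply replace your $\sup_t$ claim with the time-integrated version and the rest of your argument goes through unchanged.
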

\begin{proof}
Without loss of generality, we assume $\t(z_0)=\t_l(x_{0, l}^*)=0$ and $r=1$. According to Proposition \ref{integralbound} and \cite[Proposition A.2]{fang2025RFlimit}, we obtain that for any $1\leq i\leq k$,
\begin{align}\label{eigenconvRF1}
	\int_{-11}^{-1/11}\int_{M_l}\left|\na h^l_i\right|^4 \,\mathrm{d}\nu^l_t \mathrm{d}t \leq C(n,Y).
\end{align}
We consider the family of cutoff functions $\{\eta_{r, L}\}$ from \cite[Proposition 8.20]{fang2025RFlimit} based at $z_0$. Set $B^l_{r,L}:=\phi_l\lc \supp\big(\eta_{r,L}\big)\cap \RR_{[-10, -1/10]}\rc$, which is well defined for fixed $r$ and $L$, provided that $l$ is sufficiently large.

By the smooth convergence on $\RR$, it follows that
	\begin{align}\label{eigenconvRF2}
		\lim_{l\to\infty}\iint_{B^l_{r,L}} \la \na h^l_i, \na h^l_j \ra\eta_{r,L}\circ \phi_l^{-1}\,\mathrm{d}\nu^l_t \mathrm{d}t=\int_{\RR_{[-10, -1/10]}} \la \na h_i, \na h_j \ra\eta_{r,L}\, \mathrm{d}\nu_{z_0;t} \mathrm{d}t.
	\end{align}
	
	On the one hand, it follows from \cite[Theorem 1.5]{fang2025RFlimit} that once $r$ and $L$ are fixed, 
		\begin{align*}
B^*(x_{0,l}^*;L/3) \cap \phi_l\lc \supp\big(\eta_{r,L}\big)\rc  \subset B^l_{r,L}\subset B^*(x_{0,l}^*; 3 L)
	\end{align*}
	for sufficiently large $l$. Thus, it is clear that
		\begin{align*}
		\iint_{M_l \times [-10, -1/10] \setminus B_{r, L}^l} \abs{\la \na h^l_i, \na h^l_j \ra} \, \mathrm{d}\nu^l_t \mathrm{d}t \le & \iint_{M_l \times [-10, -1/10] \setminus B^*(x_{0,l}^*;L/3)}\abs{\la \na h^l_i, \na h^l_j \ra} \, \mathrm{d}\nu^l_t \mathrm{d}t \\
		+&\iint_{B^*(x_{0,l}^*;L/3)\cap M_l \times [-10, -1/10] \setminus \phi_l\lc \supp(\eta_{r,L})\rc}\abs{\la \na h^l_i, \na h^l_j \ra} \, \mathrm{d}\nu^l_t \mathrm{d}t.
	\end{align*}	
	
By Proposition \ref{existenceHncenter} and \eqref{eigenconvRF1}, we have
		\begin{align}\label{eigenconvRF1a}
&\iint_{M_l \times [-10, -1/10] \setminus B^*(x_{0,l}^*;L/3)}\abs{\la \na h^l_i, \na h^l_j \ra} \, \mathrm{d}\nu^l_t \mathrm{d}t \notag \\
\le &  \lc \iint_{M_l \times [-10, -1/10] \setminus B^*(x_{0,l}^*;L/3)}| \na h^l_i|^2 |\na h^l_j|^2 \, \mathrm{d}\nu^l_t \mathrm{d}t \rc^{\frac 1 2} \lc \iint_{M_l \times [-10, -1/10] \setminus B^*(x_{0,l}^*;L/3)}  1\, \mathrm{d}\nu^l_t \mathrm{d}t \rc^{\frac 1 2} \notag\\
 \le & C(n, Y) \Psi(L^{-1}).
	\end{align}	
By \cite[Proposition 8.20 (2)]{fang2025RFlimit}, we know that any $x \in  B^*(x_{0,l}^*;L/3) \setminus \phi_l\lc \supp(\eta_{r,L})\rc $ satisfies $r_{\Rm}(x) \le 3r$ for all sufficiently large $l$. Thus, by using \cite[Theorem 1.12 (b)]{fang2025RFlimit}, we obtain
		\begin{align}\label{eigenconvRF1b}
&\iint_{B^*(x_{0,l}^*;L/3)\cap M_l \times [-10, -1/10] \setminus \phi_l\lc \supp(\eta_{r,L})\rc}\abs{\la \na h^l_i, \na h^l_j \ra} \, \mathrm{d}\nu^l_t \mathrm{d}t \notag \\
\le & C(n, Y)\lc\iint_{B^*(x_{0,l}^*;L/3)\cap M_l \times [-10, -1/10] \setminus \phi_l\lc \supp(\eta_{r,L})\rc} 1 \, \mathrm{d}\nu^l_t \mathrm{d}t \rc^{\frac{1}{2}} \le C(n, L, Y) r.
	\end{align}	
Combining \eqref{eigenconvRF1a} and \eqref{eigenconvRF1b}, we obtain
		\begin{align}\label{eigenconvRF1c}
		\iint_{M_l \times [-10, -1/10] \setminus B_{r, L}^l}\abs{\la \na h^l_i, \na h^l_j \ra} \, \mathrm{d}\nu^l_t \mathrm{d}t \le C(n, Y)\Psi(L^{-1})+C(n, L, Y) r.
	\end{align}	

Similarly to \eqref{eigenconvRF1b}, for sufficiently large $l$, we have
	\begin{align}\label{eigenconvRF1d}
		&\left|\iint_{B_{r, L}^l}\abs{\la \na h^l_i, \na h^l_j \ra} \left(1-\eta_{r,L}\circ \phi_l^{-1}\right) \,\, \mathrm{d}\nu^l_t \mathrm{d}t\right|\nonumber\\
		\leq &  \lc\iint_{B_{r, L}^l}| \na h^l_i|^2 |\na h^l_j|^2 \, \mathrm{d}\nu^l_t \mathrm{d}t\rc^{\frac{1}{2}} \lc\iint_{B_{r, L}^l} \left(1-\eta_{r,L}\circ \phi_l^{-1}\right)^{2}\, \mathrm{d}\nu^l_t \mathrm{d}t\rc^{\frac{1}{2}} \le  C(n, L, Y) r,
	\end{align}
	where the last inequality holds by the same reason as in \eqref{eigenconvRF1b}.
	
	Thus, we can first choose a large $L$ and then a small $r$ so that all integrals \eqref{eigenconvRF1c} and \eqref{eigenconvRF1d} are as small as we want. Combining this fact with \eqref{eigenconvRF2}, we obtain $a_{ij}^l\xrightarrow[l\to\infty]{}a_{ij}$. The convergence of eigenvalues follows directly.
\end{proof}

As a corollary of Lemma \ref{eigenconvRF} by using Proposition \ref{prop:Wconv1}, we have
\begin{cor}\label{indeptsconv}
For any $\ep>0$, $\{x_{i,l}^*\}_{1\leq i\leq k}$ is a set of $(k,(1-\ep)\alpha,(1+\ep)\delta,r)$-independent points at $x_{0,l}^*$, provided that $l$ is sufficiently large.
\end{cor}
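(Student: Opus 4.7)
The plan is to verify, one by one, the four conditions of Definition \ref{defnindependentpoints} for the sequence $\{x_{i,l}^*\}_{1\leq i\leq k}$ at $x_{0,l}^*$ with parameters $(1-\ep)\alpha$ and $(1+\ep)\delta$, using the fact that Definition \ref{defnindependentpointsRFlimit} imposes strictly stronger numerical bounds than Definition \ref{defnindependentpoints} (e.g.\ $r/2$ vs.\ $r$, $r^2/200$ vs.\ $r^2/100$, $r^2/40$ and $40r^2$ vs.\ $r^2/30$ and $30r^2$).

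First, the spatial and temporal conditions (ii) and (iii) of Definition \ref{defnindependentpoints} are immediate from the Gromov--Hausdorff convergence $x_{i,l}^* \to z_i$. Indeed, $d^*_l(x_{i,l}^*, x_{0,l}^*) \to d_Z(z_i, z_0) \le r/2$, and $|\t_l(x_{i,l}^*) - \t_l(x_{0,l}^*)| \to |\t(z_i) - \t(z_0)| \le r^2/200$, so for large $l$ we have $d^*_l(x_{i,l}^*, x_{0,l}^*) \le r$ and $|\t_l(x_{i,l}^*) - \t_l(x_{0,l}^*)| \le r^2/100$. Condition (iv) is exactly Lemma \ref{eigenconvRF}: since $\lambda_1(A^l)\to \lambda_1(A)\ge \alpha^2$, we have $\lambda_1(A^l)\ge (1-\ep)^2\alpha^2$ for $l$ large.

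The main step is condition (i), namely
\begin{equation*}
\WW_{x_{i,l}^*}(r^2/30)-\WW_{x_{i,l}^*}(30r^2)\le (1+\ep)\delta.
\end{equation*}
The obstacle is that the values $r^2/30$ and $30r^2$ might lie in the exceptional set $J^{z_i}$ where $\WW_{z_i}$ fails to be continuous, so Proposition \ref{prop:Wconv1} does not apply directly at these times. Since $J^{z_i}$ has Lebesgue measure zero by Proposition \ref{prop:Wconv1}, we can pick $\tau_1\in (r^2/40, r^2/30)\setminus J^{z_i}$ and $\tau_2\in (30r^2, 40 r^2)\setminus J^{z_i}$. Using the monotonicity of $\WW_{x_{i,l}^*}$ (Proposition \ref{propNashentropy} (ii)) and the time shift $\t_l(x_{i,l}^*)-\t(z_i)\to 0$, we sandwich
\begin{equation*}
\WW_{x_{i,l}^*}(r^2/30)-\WW_{x_{i,l}^*}(30r^2) \le \WW_{x_{i,l}^*}(\t_l(x_{i,l}^*)-\t(z_i)+\tau_1) - \WW_{x_{i,l}^*}(\t_l(x_{i,l}^*)-\t(z_i)+\tau_2)
\end{equation*}
for $l$ large enough that the shifted times still bracket $r^2/30$ and $30 r^2$ appropriately. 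Applying Proposition \ref{prop:Wconv1} at $\tau_1$ and $\tau_2$, the right-hand side converges to $\WW_{z_i}(\tau_1)-\WW_{z_i}(\tau_2) = \widetilde\WW_{z_i}(\tau_1)-\widetilde\WW_{z_i}(\tau_2)$, which by monotonicity of $\widetilde\WW_{z_i}$ is bounded above by $\widetilde\WW_{z_i}(r^2/40)-\widetilde\WW_{z_i}(40 r^2) \le \delta$.

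Combining the previous displays yields $\WW_{x_{i,l}^*}(r^2/30)-\WW_{x_{i,l}^*}(30r^2)\le \delta + o(1) \le (1+\ep)\delta$ for all sufficiently large $l$, completing the verification of (i) and hence the corollary. The only conceptual subtlety is the handling of the exceptional set $J^{z_i}$; everything else is a routine passage to the limit.
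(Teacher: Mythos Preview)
Your proof is correct and follows essentially the same approach as the paper, which simply cites Lemma \ref{eigenconvRF} and Proposition \ref{prop:Wconv1}; you have carefully unpacked these into a verification of conditions (i)--(iv) of Definition \ref{defnindependentpoints}, including the subtlety of avoiding the exceptional set $J^{z_i}$ via the gap between the $r^2/40,\,40r^2$ and $r^2/30,\,30r^2$ thresholds. One minor omission: you should also note that the precondition of $x_{0,l}^*$ being $(\bar\delta,r)$-selfsimilar (required in Definition \ref{defnindependentpoints}) follows from the $(\bar\delta/2,r)$-selfsimilarity of $z_0$ by the identical sandwiching argument you use for condition (i).
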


Next, we give the following characterization of the existence of a strongly $(k,\alpha,\delta,r)$-independent set at a point.

\begin{lem}\label{lem:indechar}
Suppose $z_0 \in Z_{\III^-}$ is $(\delta,r)$-selfsimilar and $(k, \delta, r)$-splitting. For any $\ep>0$, if $\delta \le \delta(n, Y, \ep)$, then there exists a strongly $ (k,1/4-\ep,\ep,r)$-independent set at $z_0$.
\end{lem}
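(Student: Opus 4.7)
The plan is to argue by contradiction, exploiting the fact that after rescaling the hypothesis forces convergence to a $k$-splitting Ricci shrinker, on which one can write down the independent set explicitly by translating along the $\mathbb{R}^k$-factor. Suppose the statement fails. Then for some $\ep>0$ we obtain a sequence of noncollapsed Ricci flow limit spaces with marked points $z_0^l$ that are $(\delta_l,r_l)$-selfsimilar and $(k,\delta_l,r_l)$-splitting with $\delta_l\searrow 0$, but which admit no strongly $(k,1/4-\ep,\ep,r_l)$-independent set. Rescaling, I assume $r_l=1$ and $\t(z_0^l)=0$, and pass to a subsequential $\hat C^\infty$-limit $(Z^\infty,d_{Z^\infty},z^\infty,\t^\infty)$. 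By the selfsimilarity hypothesis and the argument used in Lemma \ref{lem:imply1}, $Z^\infty$ is a Ricci shrinker space, while Definition \ref{defnksplitting} together with Proposition \ref{equivalencesplitsymetric1} ensures that the regular part $\RR^\infty_{(-\infty,0)}$ splits off an $\mathbb{R}^k$ as a Ricci flow spacetime.

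The second step is the explicit construction on the model. Writing $\RR^\infty_{(-\infty,0)}=\mathbb{R}^k\times N$ with splitting coordinates $y_1,\ldots,y_k$, I set $z_i^\infty$ to be the image of $z^\infty$ under translation by $\tfrac12 e_i$ along the $i$-th factor. Because this translation is an isometry preserving $\partial_\t$, conjugate heat kernels transform equivariantly, so $\widetilde\WW_{z_i^\infty}\equiv \widetilde\WW_{z^\infty}$; since $z^\infty$ is the base point of a shrinker, $\widetilde\WW_{z^\infty}$ is constant in $\tau$, hence $\widetilde\WW_{z_i^\infty}(1/40)-\widetilde\WW_{z_i^\infty}(40)=0$. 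Using the product structure of the conjugate heat kernel a direct computation gives
\begin{equation*}
F_i-F_0 \;=\; -\tfrac{y_i}{4}+\tfrac{1}{16}, \qquad h_i=-\tfrac{y_i}{4}+\tfrac{1}{16},
\end{equation*}
so the Gram matrix reduces to $\tfrac{1}{16}\,\delta_{ij}$ and $\{z_i^\infty\}_{1\le i\le k}$ is strongly $(k,1/4,0,1)$-independent at $z^\infty$, with $d_{Z^\infty}(z_i^\infty,z^\infty)=1/2$ and $\t^\infty(z_i^\infty)=\t^\infty(z^\infty)=0$.

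The final step is transfer back to $Z^l$. Let $\tilde\phi_l$ denote the $\ep_l$-maps from Notation \ref{not:2} with $\ep_l\searrow 0$, and set $z_i^l:=\tilde\phi_l(z_i^\infty)$. The distance and time-gap conditions of Definition \ref{defnindependentpointsRFlimit} are immediate from the $\ep_l$-map properties for large $l$. Lemma \ref{eigenconvRF} gives $A^l\to \tfrac{1}{16}I$, so $\lambda_1(A^l)\to 1/16=(1/4)^2$ and for large $l$ we secure $\lambda_1(A^l)\ge (1/4-\ep)^2$. For the entropy condition, I pick continuity points $\tau_-<1/40<40<\tau_+$ lying outside the (measure zero) exceptional set $J^{z_i^\infty}$; Proposition \ref{prop:Wconv1} then yields $\widetilde\WW_{z_i^l}(\tau_\pm)\to\widetilde\WW_{z_i^\infty}(\tau_\pm)$, and the monotonicity of $\widetilde\WW$ (Lemma \ref{lem:monolimit}) sandwiches
\begin{equation*}
\widetilde\WW_{z_i^l}(1/40)-\widetilde\WW_{z_i^l}(40) \;\le\; \widetilde\WW_{z_i^l}(\tau_-)-\widetilde\WW_{z_i^l}(\tau_+) \;\longrightarrow\; 0,
\end{equation*}
so this difference is at most $\ep$ for $l$ large. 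Thus $\{z_i^l\}$ is strongly $(k,1/4-\ep,\ep,1)$-independent at $z_0^l$, contradicting the hypothesis.

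The main technical obstacle I anticipate is the passage through the $\widetilde\WW$-entropy jumps: because $\widetilde\WW$ is only known to be continuous in $\tau$ away from the exceptional set $J^z$, the standard continuity argument requires sandwiching by continuity points together with the monotonicity from Lemma \ref{lem:monolimit}. A secondary point deserving care is that the $k$-splitting of $Z^\infty$ must be promoted from the finite interval $[-10,0]$ granted by Definition \ref{defnksplitting} to the full $(-\infty,0)$, so that the translations used to build $z_i^\infty$ are isometries of the entire spacetime; this extension is where the selfsimilar symmetry of the limit shrinker, combined with \cite[Proposition 8.4]{fang2025RFlimit}, is essential.
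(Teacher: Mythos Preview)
Your argument is correct and follows essentially the same route as the paper: argue by contradiction, pass to a Ricci shrinker limit splitting off $\R^k$, construct the independent set by translating the base point along the $\R^k$-factor, and transfer back via Lemma \ref{eigenconvRF} and Proposition \ref{prop:Wconv1}. The only cosmetic differences are that the paper translates by $e_i$ rather than $\tfrac12 e_i$ (invoking \cite[Proposition 8.6]{fang2025RFlimit} for the Gram matrix $\delta_{ij}/4$), and dispatches the entropy condition (i) in a single line rather than through your explicit continuity-point sandwich.
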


\begin{proof}
Without loss of generality, we assume $r=1$ and $\t(z_0)=0$.

Suppose that the conclusion fails. Then there exists a sequence of noncollapsed Ricci flow limit spaces $Z^l$ over $\III_l$, obtained as the limit of a sequence of closed Ricci flows in $\MM(n, Y, T_l)$, where $\III_l=[-0.98T_l, 0]$. Moreover, there exists $z_{0,l} \in Z^l_0 $ that is $(l^{-2},1)$-selfsimilar and $(k, l^{-2}, 1)$-splitting. However, there exists no strongly $ (k,1/4-\ep,\ep,r)$-independent set at $z_{0,l}$ for some $\ep>0$.

By passing to a subsequence, we obtain the convergence
	\begin{align*}
		(Z^l, d_{Z^l}, z_{0,l},\t_l) \xrightarrow[l \to \infty]{\quad \hat C^\infty \quad} (Z, d_Z, z_0,\t),
	\end{align*}
where $(Z, d_Z, z_0,\t)$ is a Ricci shrinker space. Moreover, for the regular part $(\RR, \t, \partial_\t, g^Z)$ of $Z$, $\RR_t$ is connected for any $t<0$, and the Ricci shrinker equation is satisfied on $\RR_{(-\infty, 0)}$:
	\begin{align*}
\Ric(g^Z)+\na^2 f_z=\frac{g^Z}{2\tau},
	\end{align*}
where $\tau=-\t$. By our assumption, $\RR$ splits off an $\R^k$ and hence it follows from \cite[Proposition 8.4]{fang2025RFlimit} that there exists an isometry $\boldsymbol{\phi}^s$ for $s \in \R^k$ on $Z_{(-\infty,0]}$. Now, we set $z_i=\boldsymbol{\phi}^{e_i}(z_0)$, where $(e_1, \ldots, e_k)$ is the standard basis of $\R^k$. Then, by \cite[Proposition 8.6]{fang2025RFlimit}, we have
		\begin{align} \label{eq:ksplita}
		\aint_{-10}^{-1/10} \int_{\RR_t} \la\nabla h_i ,\nabla h_j\ra \,\mathrm{d}\nu_{z;t} \mathrm{d}t=\frac{\delta_{ij}}{4},
 		\end{align}
for $i,j \in \{1, \ldots, k\}$, where $h_i=\tau(f_{z_i}-f_{z_0})$.

We choose $z_{i,l} \in Z^l$ converging to $z_i$. On the one hand, it follows from Proposition \ref{prop:Wconv1} that $z_{i,l}$ is $(\ep,1)$-selfsimilar for any $0 \le i \le l$. On the other hand, by using Proposition \ref{integralbound}, it follows from the same argument as in the proof of Lemma \ref{eigenconvRF} and \eqref{eq:ksplita} that
		\begin{align*} 
		\aint_{-10}^{-1/10} \int_{\RR^l_t} \la\nabla h^l_i ,\nabla h^l_j\ra \,\mathrm{d}\nu_{z_{0,l};t} \mathrm{d}t \xrightarrow[l\to\infty]{} \frac{\delta_{ij}}{4},
 		\end{align*}
where $\RR^l$ is the regular part of $Z^l$, and $h^l_i=\tau(f_{z_{i,l}}-f_{z_{0,l}})$. Thus, it is clear that $\{z_{i,l}\}_{1\le i \le k}$ is a strongly $ (k,1/4-\ep,\ep,1)$-independent set at $z_{0,l}$, which contradicts our assumption.

Consequently, the proof is complete.
\end{proof}

By taking the limit of $(k,\ep,r)$-splitting maps at $x_0^*$ and taking the limit of $\WW$-entropy together with Definition \ref{defnWRFlimit1}, the following counterpart of Corollary \ref{corsharpsplitting} still holds on Ricci flow limit spaces. 

\begin{thm}[Sharp splitting map on Ricci flow limit spaces]\label{thmsharpsplittingRFlimit}
	Let $(Z,d_Z,\t)$ be a Ricci flow limit space with $z_0 \in Z_{\III^-}$ such that $z_0$ is $(\delta,r)$-selfsimilar and $(k, \delta, r)$-splitting. For any $\ep>0$ and $\alpha \in (0, 1/5)$, If $\ep \le \ep(n, Y, \alpha)$ and $\delta \le \delta(n, Y, \ep)$, then there exists a $(k,\ep',r)$-splitting map $\vec u=(u_1,\ldots, u_k):Z_{(\t(z_0)-10r^2,\t(z_0)]}\to \R^k$ at $z_0$, where
	\begin{align*}
\ep'= C(n,Y , \alpha)\mathfrak{S}_r^{k,\alpha,\ep}(z_0).
	\end{align*}
\end{thm}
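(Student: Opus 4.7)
The plan is to derive this result from its smooth Ricci flow counterpart, Corollary \ref{corsharpsplitting}, by passing to the Gromov--Hausdorff limit. The theorem has essentially been set up so that once the correct approximations and entropy comparisons are in place, the smooth construction can be transported to $Z$ with only a cosmetic factor loss in the constant.

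First I would verify that $\mathfrak{S}_r^{k,\alpha,\ep}(z_0)$ is defined on a nonempty family. Since $z_0$ is $(\delta, r)$-selfsimilar and $(k, \delta, r)$-splitting with $\delta \le \delta(n,Y,\ep)$, Lemma \ref{lem:indechar} produces a strongly $(k, 1/4-\ep, \ep, r)$-independent set at $z_0$. For $\alpha \in (0, 1/5)$ and $\ep \le \ep(n, Y, \alpha)$ sufficiently small, this set is also strongly $(k, \alpha, \ep, r)$-independent, so I may choose one such set $\{z_i\}_{1\le i \le k}$ whose associated sum is at most $2\mathfrak{S}_r^{k,\alpha,\ep}(z_0)$. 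Choose approximating sequences $x_{i,l}^* \in \XX^l$ with $x_{i,l}^* \to z_i$ in the Gromov--Hausdorff sense. By Corollary \ref{indeptsconv}, for all sufficiently large $l$ the set $\{x_{i,l}^*\}_{1 \le i \le k}$ is a $(k, (1-\ep)\alpha, (1+\ep)\ep, r)$-independent set at $x_{0,l}^*$ in the smooth sense of Definition \ref{defnindependentpoints}; the strictly tighter scales in the strong definition (radius $r/2$ versus $r$, time window $r^2/200$ versus $r^2/100$) are precisely what make this survive the limit. Applying Corollary \ref{corsharpsplitting} to each $\XX^l$ yields a $(k, \ep_l', r)$-splitting map $\vec u^l$ at $x_{0,l}^*$ satisfying
\begin{align*}
\ep_l' \le C(n,Y,\alpha)\sum_{i=0}^{k}\bigl(\WW_{x_{i,l}^*}(r^2/30) - \WW_{x_{i,l}^*}(30r^2)\bigr)^{\frac12}.
\end{align*}

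The crux of the argument is bounding this right-hand side, in the limit, by $\mathfrak{S}_r^{k,\alpha,\ep}(z_0)$. Proposition \ref{prop:Wconv1} shows that each exceptional set $J^{z_i}$ has Lebesgue measure zero, so for every $i$ I can choose $\tau_1^i \in [r^2/40, r^2/30]\setminus J^{z_i}$ and $\tau_2^i \in [30 r^2, 40 r^2]\setminus J^{z_i}$. The monotonicity of $\WW_{x_{i,l}^*}$ on the smooth Ricci flow then gives
\begin{align*}
\WW_{x_{i,l}^*}(r^2/30) - \WW_{x_{i,l}^*}(30r^2) \le \WW_{x_{i,l}^*}(\tau_1^i) - \WW_{x_{i,l}^*}(\tau_2^i),
\end{align*}
and letting $l \to \infty$, Proposition \ref{prop:Wconv1} identifies the right-hand side with $\widetilde\WW_{z_i}(\tau_1^i) - \widetilde\WW_{z_i}(\tau_2^i)$, which by the monotonicity of $\widetilde\WW_{z_i}$ (Definition \ref{defnWRFlimit1}) is bounded by $\widetilde\WW_{z_i}(r^2/40) - \widetilde\WW_{z_i}(40r^2)$. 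Summing over $i$ and using the choice of $\{z_i\}$, $\limsup_{l\to\infty} \ep_l' \le 2C(n,Y,\alpha)\mathfrak{S}_r^{k,\alpha,\ep}(z_0)$.

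Finally, the sequence $\vec u^l$ enjoys uniform gradient and Hessian bounds from the splitting map estimates and therefore admits a subsequential limit $\vec u = (u_1, \ldots, u_k)$ on $Z$ in the sense of Definition \ref{defnsplittingmap1}; the integral conditions defining a splitting map are preserved (or only improved) in the limit, so $\vec u$ is a $(k, \ep', r)$-splitting map at $z_0$ with $\ep' \le C(n,Y,\alpha)\mathfrak{S}_r^{k,\alpha,\ep}(z_0)$, after absorbing the factor $2$ into the constant. The main delicacy is the scale mismatch between the strong independence notion (intervals $[r^2/40,40r^2]$ and radius $r/2$) and the smooth independence notion (intervals $[r^2/30,30r^2]$ and radius $r$); the extra slack, combined with the $J^{z_i}$-based comparison, is exactly what allows Corollary \ref{indeptsconv} and Proposition \ref{prop:Wconv1} to upgrade the smooth pinching at $x_{0,l}^*$ to the strong pinching at $z_0$ without losing the sharp control of the entropy.
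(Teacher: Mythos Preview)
Your proposal is correct and follows essentially the same route as the paper: use Lemma \ref{lem:indechar} to ensure $\mathfrak{S}_r^{k,\alpha,\ep}(z_0)$ is well-defined and pick a near-minimizing strongly independent set, push it to the approximating flows via Corollary \ref{indeptsconv}, apply Proposition \ref{constructionsplitting2} (your citation of Corollary \ref{corsharpsplitting} amounts to the same thing), and pass to the limit using Proposition \ref{prop:Wconv1} and Definition \ref{defnWRFlimit1}. Your explicit choice of $\tau_1^i,\tau_2^i$ outside $J^{z_i}$ to handle the $30\leftrightarrow 40$ scale comparison is exactly the mechanism the paper invokes more tersely.
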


\begin{proof}
Since $\delta \le \delta(n, Y, \ep)$, it follows from Lemma \ref{lem:indechar} that there exist points $\{z_i\}_{1 \le i \le k}$ which are strongly $(k, \alpha, \ep, r)$-independent points at $z_0$ such that
	\begin{align*}
\sum_{i=0}^k \left(\widetilde \WW_{z_i}(r^2/40)-\widetilde \WW_{z_i}(40r^2)\right)^{\frac{1}{2}} \le 2\mathfrak{S}_r^{k,\alpha, \ep}(z_0).
	\end{align*}
Assume that $x_{i,l}^* \in \XX^l$ converge to $z_i$ for $0 \le i \le k$. By Corollary \ref{indeptsconv}, we know that $\{x_{i,l}^*\}_{1 \le i \le k}$ are $(k, \alpha/2, 2 \ep, r)$-independent at $x_{0,l}^*$, for sufficiently large $l$. By Proposition \ref{constructionsplitting2}, if $\ep \le \ep(n, Y, \alpha)$, we can find a $(k,\ep_l,r)$-splitting map $\vec u^l=(u^l_1,\ldots, u^l_k):M_l \times [\t_l(x_{0,l}^*)-10r^2,\t_l(x_{0,l}^*)]\to \R^k$ at $x_{0,l}^*$, where
	\begin{align*}
\ep_l=C(n,Y, \alpha)\sum_{i=0}^k\lc\mathcal{W}_{x_{i,l}^*}(r^2/30)-\mathcal{W}_{x_{i,l}^*}(30r^2)\rc^{\frac{1}{2}}.
	\end{align*}
In particular, we have
	\begin{align} \label{eq:sharp1}
		\sum_{i=1}^k\int^{\t_l(x_{0,l}^*)-r^2/10}_{\t_l(x_{0,l}^*)-10r^2}\int_{M_l} \left|\nabla^2 u^l_i\right|^2 \,\mathrm{d}\nu_{x_{0,l}^*;t}\mathrm{d}t\leq C(n,Y, \alpha)\sum_{i=0}^k\lc\mathcal{W}_{x_{i,l}^*}(r^2/30)-\mathcal{W}_{x_{i,l}^*}(30r^2)\rc^{\frac{1}{2}}.
	\end{align}
Taking the limit for $l \to \infty$ in \eqref{eq:sharp1} and using Proposition \ref{prop:Wconv1} and Definition \ref{defnWRFlimit1}, we conclude that $\vec u^l$ converge to a $(k, \ep', r)$-splitting map $\vec u=(u_1, \ldots, u_k)$ at $z_0$, where
	\begin{align*}
\ep'=C(n,Y, \alpha)\sum_{i=0}^k\lc\widetilde{\mathcal{W}}_{z_i}(r^2/40)-\widetilde{\mathcal{W}}_{z_i}(40r^2)\rc^{\frac{1}{2}}=C(n,Y, \alpha)\mathfrak{S}_r^{k,\alpha, \ep}(z_0).
	\end{align*}
This completes the proof.	
\end{proof}

Next, we have the following definition.

\begin{defn}[Limiting heat flow]\label{defnlimitheatflow}\index{limiting heat flow}
Given a positive constant $L$, a map $h : Z_{(\t(z)-L, \t(z)]} \to \R$ is called a \textbf{limiting heat flow at $z \in Z_{\III^-}$} if $h \vert_{\RR_{(\t(z)-L, \t(z)]}}$ is a smooth limit of a sequence $h^l: M_l \times [\t_l(x_l^*)-L,\t_l(x_l^*)] \to \R$ such that $\square h^l=0$ and
	\begin{align*}
 \int_{M_l} |\na h^l|^2 \,\mathrm{d}\nu_{x_l^*;\t_l(x_l^*)-L} \le C
	\end{align*}
	for a sequence $x_l^* \in \XX^l$ converging to $z$ and a constant $C<\infty$. In particular, any component of a $(k, \ep, r)$-splitting map is a limiting heat flow.
\end{defn}

\begin{thm}[Hessian decay of limiting heat flows]\label{hessiandecayRFlimit}
Let $\eta>0$ be a fixed constant. Suppose that $z \in Z_{\III^-}$ is $(\delta,r)$-selfsimilar and is not $(k+1,\eta, r)$-splitting.
	
	Let $\vec u=(u_1,\ldots,u_k):Z_{(\t(z)-10r^2,\t(z))}\to\R^k$ be a $(k,\delta,r)$-splitting map at $z$, and $h: Z_{(\t(z)-10r^2,\t(z))}\to\R$ be a limiting heat flow. Define
	\begin{align*}
		v:= h-\sum_{i=1}^k b_iu_i, \quad where \quad b_i:= \aint_{\t(z)-8r^2}^{\t(z)-8r^2/10}\int_{\RR_t} \la \nabla u,\nabla u_i\ra \,\mathrm{d}\nu_{z;t} \mathrm{d}t.
	\end{align*}
	Then there exists a constant $\theta=\theta(n,Y,\eta)\in (0,1)$ such that the following holds: if $\delta\leq\delta(n,Y,\eta)$, then
	\begin{equation*}
		\int_{\t(z)-2r^2}^{\t(z)-2r^2/10}\int_{\RR_t}\left|\nabla^2 v\right|^2\,\mathrm{d}\nu_{z;t} \mathrm{d}t\leq \theta \int_{\t(z)-8r^2}^{\t(z)-8r^2/10}\int_{\RR_t}\left|\nabla^2 v\right|^2\,\mathrm{d}\nu_{z;t} \mathrm{d}t.
	\end{equation*}
\end{thm}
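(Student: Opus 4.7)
The plan is to reduce the statement on the limit space $Z$ to Theorem~\ref{hessiandecay} applied on the approximating closed Ricci flows $\XX^l$ via a smooth-convergence argument closely patterned on Claims~\ref{takelimit1}--\ref{takelimit3} in the proof of Theorem~\ref{hessiandecay}. Fix a sequence $x_l^*\in \XX^l$ with $x_l^*\to z$, and smooth approximants $\vec u^l=(u_1^l,\ldots,u_k^l)$ of $\vec u$ and $h^l$ of $h$, as guaranteed by Definitions~\ref{defnsplittingmap1} and \ref{defnlimitheatflow}. Let $b_i^l:=\aint_{t_l-8r^2}^{t_l-8r^2/10}\int_{M_l}\la\na h^l,\na u_i^l\ra\,\mathrm{d}\nu_{x_l^*;t}^l\,\mathrm{d}t$ and set $v^l:=h^l-\sum_i b_i^l u_i^l$, which then satisfies $\square v^l=0$ on $\XX^l$ and, by the convergence argument in Claim~\ref{takelimit1} together with Lemma~\ref{eigenconvRF}, satisfies $b_i^l\to b_i$ and $v^l\to v$ smoothly on $\RR$.

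Next I would verify that the hypotheses of Theorem~\ref{hessiandecay} can be transferred to $x_l^*$ at the expense of mild weakening of the parameters. Using Proposition~\ref{prop:Wconv1}, the left-continuity of $\widetilde\WW$ from Definition~\ref{defnWRFlimit1}, and the fact that $J^z$ has measure zero, I can choose $\tau_1\in (\delta r^2/2,\delta r^2)\setminus J^z$ and $\tau_2\in (\delta^{-1}r^2,2\delta^{-1}r^2)\setminus J^z$ so that $\WW_{x_l^*}(\tau_j)\to \widetilde\WW_z(\tau_j)$ for $j=1,2$; monotonicity of $\WW_{x_l^*}$ then yields $\WW_{x_l^*}(2\delta r^2)-\WW_{x_l^*}((2\delta)^{-1}r^2)\le 2\delta$ for all large $l$, so $x_l^*$ is $(2\delta,r)$-selfsimilar. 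Similarly, by the closedness of the splitting property under the $\hat C^\infty$ convergence of Notation~\ref{not:2} (which is exactly the setup built into Definition~\ref{defnksplitting}), if $x_l^*$ were $(k+1,\eta/2,r)$-splitting along a subsequence, then after extracting a further subsequential limit $z$ would be $(k+1,\eta,r)$-splitting, contradicting the hypothesis. Hence $x_l^*$ is not $(k+1,\eta/2,r)$-splitting for large $l$.

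With these two points, for $\delta\le \delta(n,Y,\eta)$ (chosen small enough that $2\delta\le \delta(n,Y,\eta/2)$ in Theorem~\ref{hessiandecay}), Theorem~\ref{hessiandecay} yields a uniform $\theta=\theta(n,Y,\eta)\in(0,1)$ such that
\begin{equation*}
\int_{t_l-2r^2}^{t_l-2r^2/10}\int_{M_l}|\na^2 v^l|^2\,\mathrm{d}\nu_t^l\,\mathrm{d}t\le \theta\int_{t_l-8r^2}^{t_l-8r^2/10}\int_{M_l}|\na^2 v^l|^2\,\mathrm{d}\nu_t^l\,\mathrm{d}t.
\end{equation*}
To pass this to the limit, I use the identity
\begin{equation*}
2\int_{-s}^{-s/10}\int_{M_l}|\na^2 v^l|^2\,\mathrm{d}\nu_t^l\,\mathrm{d}t=\int_{M_l}|\na v^l|^2\,\mathrm{d}\nu_{-s}^l-\int_{M_l}|\na v^l|^2\,\mathrm{d}\nu_{-s/10}^l,
\end{equation*}
which converts the Hessian integral to a difference of two boundary gradient integrals. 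The convergence
$\int_{M_l}|\na v^l|^2\,\mathrm{d}\nu_t^l\to \int_{\RR_t}|\na v|^2\,\mathrm{d}\nu_{z;t}$ at fixed $t\in [-8r^2,-2r^2/10]$ then follows by the cutoff/hypercontractivity argument of Claim~\ref{takelimit1}, with uniform $L^5$ bounds on $|\na v^l|$ obtained from Proposition~\ref{integralbound} and the $L^2(\nu^l_{-8r^2})$ normalization inherited from the smooth convergence of $v^l$ to $v$.

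The main obstacle will be this last limit-passage: the Hessian integrand $|\na^2 v^l|^2$ does not a priori behave well under the convergence (the curvature bounds only hold on $\RR$, not globally), which is why one should only pass to the limit after rewriting everything in terms of the gradient $L^2$-norm, where the hypercontractivity and cutoff machinery of \cite[Proposition 8.20]{fang2025RFlimit} apply uniformly. A minor annoyance is that $\int_{\RR_t}|\na v|^2\,\mathrm{d}\nu_{z;t}$ must be interpreted through the identity above for each fixed $t$, so one should first verify the analog of Claim~\ref{takelimit3} on $Z$ to ensure the two sides of the limiting inequality are meaningful and finite; this follows directly from the smooth convergence on $\RR$ and the cutoff estimates already established.
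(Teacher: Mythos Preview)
Your proposal is correct and follows essentially the same approach as the paper: lift the data to the approximating closed flows, apply Theorem~\ref{hessiandecay} there, and pass to the limit by converting Hessian integrals into differences of gradient $L^2$-norms via the identity $\frac{\mathrm d}{\mathrm dt}\int|\nabla v^l|^2\,\mathrm d\nu^l_t=-2\int|\nabla^2 v^l|^2\,\mathrm d\nu^l_t$, together with the cutoff/hypercontractivity arguments of Claims~\ref{takelimit1}--\ref{takelimit3}. The paper leaves implicit the transfer of the $(\delta,r)$-selfsimilarity and non-$(k{+}1,\eta,r)$-splitting hypotheses to $x_l^*$, which you spell out; one small correction is that the uniform higher $L^p$ bound on $|\nabla v^l|$ comes from hypercontractivity (as in \eqref{estihess1}), not from Proposition~\ref{integralbound}.
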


\begin{proof}
By Definition \ref{defnsplittingmap1}, $\vec{u}$ is obtained as the limit of $\vec u^l=(u^l_1,\ldots,u^l_k)$, which is a $(k,\delta,r)$-splitting map at $x_l^* \in \XX^l$, where $x_l^*$ converge to $z$.

Moreover, by Definition \ref{defnlimitheatflow}, there exist $h^l:M_l\times [\t_l(x_l^*)-10r^2,\t_l(x_l^*)]\to\R$ satisfying
\begin{align*}
	\square h^l=0,\quad \int_{M_l}\left|\na h^l\right|^2\,\mathrm{d}\nu_{x_l^*;\t_l(x_l^*)-10r^2}\leq C_0,\quad h^l\xrightarrow[l\to\infty]{\quad C_{\loc}^\infty(\RR)\quad}h.
\end{align*}

Define $v^l$ and $b^l_i$ by
\begin{align*}
	v^l:= h^l-\sum_{i=1}^k b_i^lu_i^l, \quad where \quad b_i^l:= \aint_{\t_l(x_l^*)-8r^2}^{\t_l(x_l^*)-8r^2/10}\int_{M_l}\la \nabla h^l,\nabla u_i^l\ra \,\mathrm{d}\nu_{x_l^*;t}\mathrm{d}t.
\end{align*}
Then we can argue as in the proof of Lemma \ref{eigenconvRF} to conclude that
	\begin{align*}
	b_i^l\xrightarrow[l\to\infty]{}b_i,\quad v^l\xrightarrow[l\to\infty]{\quad C_{\loc}^\infty(\RR)\quad}v.
\end{align*}
By a similar argument as in the proof of Claim \ref{takelimit1}, we have, for any $t\geq \t(z)-9r^2$, 
\begin{align*}
	\lim_{l\to\infty}\int_{M_l}\left|\na v^l\right|^2\,\mathrm{d}\nu^l_t=\int_{\RR_t}|\na v|^2\,\mathrm{d}\nu_{z;t}.
\end{align*}
In addition, the same proof leading to Claim \ref{takelimit3} yields that for any $\t(z)-9r^2\leq t_1<t_2< \t(z)$, 
	\begin{align*}
		\int_{\RR_{t_1}}|\na v|^2\,\mathrm{d}\nu_{z;t_1}-\int_{\RR_{t_2}}|\na v|^2\,\mathrm{d}\nu_{z;t_2}=2\int_{t_1}^{t_2}\int_{\RR_t}\left|\na ^2 v\right|^2\,\mathrm{d}\nu_{z;t} \mathrm{d}t.
	\end{align*}

Combining all the above convergence results, the conclusion follows from Theorem \ref{hessiandecay}.
\end{proof}

\subsection{Nondegeneration theorem}

In this subsection, we first investigate the behavior of an almost splitting map when restricted to a smaller scale.

\begin{thm}\label{geometrictransformation}
Given constants $\eta>0$, $\alpha \in (0, 1/5)$, $\ep>0$ and $\ep'>0$, suppose $z_0 \in Z_{\III^-}$ with $t_0:=\t(z_0)$ satisfies that for any $s \in [\bar r r,  r]$, $z_0$ is $(\delta,s)$-selfsimilar and $(k,\delta,s)$-splitting but not $(k+1,\eta,s)$-splitting.
	Let $\vec u=(u_1,\ldots, u_k):Z_{(t_0-10r^2,t_0]}\to\R^k$ be a $(k,\ep',r)$-splitting map at $z_0$. If $\ep \le \ep(n, Y, \alpha)$ and $\delta\leq\delta (n,Y,\eta , \alpha, \ep)$, then we can find $\beta=\beta(n,Y,\eta)>0$ such that for any $s\in [\bar r,1]$, the following Hessian estimate holds:
	\begin{equation}\label{estimategeometrictrans}
		\sum_{i=1}^k\int_{t_0-10s^2r^2}^{t_0-s^2r^2/10}\int_{\RR_t} \left|\nabla^2 u_i\right|^2\,\mathrm{d}\nu_{z_0;t}\mathrm{d}t\leq C(n,Y, \alpha)\lc\sum_{s\leq r_j=2^{-j}\leq 1}\left(\frac{s}{r_j}\right)^{\beta}\mathfrak{S}_{r_jr}^{k,\alpha, \ep}(z)+\ep' s^{\beta}\rc.
	\end{equation}
\end{thm}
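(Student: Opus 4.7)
The strategy is to iterate the Hessian decay (Theorem \ref{hessiandecayRFlimit}) across dyadic scales $r_j := 2^{-j}$, using at each scale the sharp splitting map produced by Theorem \ref{thmsharpsplittingRFlimit}. For every $j$ with $\bar r \le r_j \le 1$, the hypothesis that $z_0$ is $(\delta, r_j r)$-selfsimilar and $(k, \delta, r_j r)$-splitting lets me invoke Theorem \ref{thmsharpsplittingRFlimit} at scale $r_j r$, producing a $(k, \ep_j, r_j r)$-splitting map $\vec w^{(j)} = (w_1^{(j)}, \ldots, w_k^{(j)})$ at $z_0$ with $\ep_j \le C(n, Y, \alpha)\,\mathfrak{S}_{r_j r}^{k, \alpha, \ep}(z_0)$. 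Combining Lemma \ref{lem:indechar} with condition (i) of Definition \ref{defnindependentpointsRFlimit}, one has $\mathfrak{S}_{r_j r}^{k, \alpha, \ep}(z_0) \le C(k)\,\ep^{1/2}$, so taking $\ep \le \ep(n, Y, \eta)$ small enough keeps $\ep_j$ below the threshold required in Theorem \ref{hessiandecayRFlimit}, and hence $\vec w^{(j)}$ qualifies as a valid splitting map there.

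At each dyadic scale, I view each component $u_i$ of $\vec u$ as a limiting heat flow (Definition \ref{defnlimitheatflow}) and decompose
\[
u_i = \sum_{l=1}^{k} b_{il}^{(j)} w_l^{(j)} + v_i^{(j)}, \qquad b_{il}^{(j)} := \aint_{t_0 - 8 r_j^2 r^2}^{t_0 - 8 r_j^2 r^2/10} \int_{\RR_t} \la \nabla u_i, \nabla w_l^{(j)}\ra\,\mathrm{d}\nu_{z_0;t}\mathrm{d}t.
\]
Theorem \ref{hessiandecayRFlimit} then gives
\[
\int_{t_0 - 2 r_j^2 r^2}^{t_0 - 2 r_j^2 r^2/10} |\nabla^2 v_i^{(j)}|^2\,\mathrm{d}\nu_{z_0;t}\mathrm{d}t \le \theta \int_{t_0 - 8 r_j^2 r^2}^{t_0 - 8 r_j^2 r^2/10} |\nabla^2 v_i^{(j)}|^2\,\mathrm{d}\nu_{z_0;t}\mathrm{d}t
\]
with $\theta = \theta(n, Y, \eta) \in (0, 1)$. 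Combining this with $|a+b|^2 \le 2|a|^2 + 2|b|^2$, the Hessian bound $\sum_l \int |\nabla^2 w_l^{(j)}|^2\,\mathrm{d}\nu \le \ep_j$ on $\vec w^{(j)}$, and a uniform estimate $\|b^{(j)}\| \le C(k)$ (which follows from Cauchy--Schwarz and the near-orthonormality of $\nabla \vec u$ at the base scale together with the heat-flow identity $\tfrac{\mathrm{d}}{\mathrm{d}t}\int|\nabla u_i|^2\,\mathrm{d}\nu = -2\int|\nabla^2 u_i|^2\,\mathrm{d}\nu$ controlling growth of $\int|\nabla u_i|^2\,\mathrm{d}\nu$ at smaller scales), I extract a recursion of the shape $H(r_{j+1}) \le A\,H(r_j) + B\,\mathfrak{S}^{k,\alpha,\ep}_{r_j r}(z_0)$, where $H(s) := \sum_i \iint |\nabla^2 u_i|^2\,\mathrm{d}\nu_{z_0;t}\mathrm{d}t$ over $[t_0 - 10 s^2 r^2, t_0 - s^2 r^2/10]$. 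Iterating Theorem \ref{hessiandecayRFlimit} over a fixed number $M = M(\theta)$ of consecutive dyadic sub-scales within each step permits the contraction factor $A$ to be made equal to $2^{-\beta}$ for some $\beta = \beta(n, Y, \eta) > 0$.

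Starting from $H(1) \le k\,\ep'$ (the hypothesis on $\vec u$), a discrete Gronwall iteration yields
\[
H(r_j) \le C(n, Y, \alpha)\lc \ep'\,2^{-j\beta} + \sum_{l=0}^{j-1} 2^{-(j-l)\beta}\,\mathfrak{S}^{k,\alpha,\ep}_{r_l r}(z_0) \rc,
\]
which matches the target estimate \eqref{estimategeometrictrans} once an arbitrary $s \in [\bar r, 1]$ is replaced by the closest dyadic scale and constant factors are absorbed. The main technical obstacle will be promoting the single-step Hessian decay between the fixed scale pair $[8 r^2, 2 r^2/10]$ to a dyadic iteration compatible with the wider window $[10 s^2 r^2, s^2 r^2/10]$ appearing in the statement, while simultaneously maintaining the uniform bound on the projection coefficients $b_{il}^{(j)}$ across all dyadic scales---the latter reduces to iteratively controlling the $L^2$-normalization of $\nabla u_i$ at intermediate times, which is itself obtained from the Hessian bound being established.
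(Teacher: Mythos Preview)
Your approach is essentially the same as the paper's: construct a sharp splitting map $\vec w^{(j)}$ at each dyadic scale via Theorem~\ref{thmsharpsplittingRFlimit}, project $u_i$ onto it, apply the Hessian decay to the remainder, and iterate. Two minor points where the paper is more streamlined than your sketch: first, the time windows in Theorem~\ref{hessiandecayRFlimit} are chosen so that $[-2r_j^2,-2r_j^2/10]=[-8r_{j+1}^2,-8r_{j+1}^2/10]$ exactly, which makes the dyadic recursion immediate with contraction factor $\theta$ itself (and $\beta$ defined by $2^{-\beta}=\theta$), so your proposed $M$-fold sub-iteration is unnecessary; second, the uniform bound on the projection coefficients $b^{(j)}_{il}$ follows directly from the monotonicity $\tfrac{\mathrm{d}}{\mathrm{d}t}\int|\nabla u_i|^2\,\mathrm{d}\nu\le 0$ (so $\int|\nabla u_i|^2\,\mathrm{d}\nu_t\le 1+2\ep'$ for all $t$ in the window) together with the near-orthonormality of $\vec w^{(j)}$, and does not require the inductive control you allude to at the end.
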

\begin{proof}
Without loss of generality, we assume $t_0=0$ and $r=1$. 

By Theorem \ref{thmsharpsplittingRFlimit}, for any $j$ with $r_j \in [s, 1]$, we can find a $(k, \ep_j,r_j)$-splitting map $\vec v^j=(v_1^j,\ldots,v_k^j)$ based at $z_0$ which satisfies
	\begin{align*}
		\sum_{i=1}^k\int_{-10r_j^2}^{-r_j^2/10}\int_{\RR_t}\left|\nabla ^2 v_i^j\right|^2\,\mathrm{d}\nu_{z_0;t}\mathrm{d}t\leq \ep_j:=C(n,Y, \alpha)\mathfrak{S}^{k,\alpha,\ep}_{r_j}(z_0).
	\end{align*}

	For any $j$, we define $\vec u^j=(u^j_1,\ldots,u^j_k)$ by
	\begin{align*}
		u^j_{i}:=u_i-\sum_{l=1}^k b^j_{i,l} v^j_l,\quad \mathrm{where} \quad b^j_{i,l}:=\aint_{-8r_j^2}^{-8r_j^2/10}\int_{\RR_t}\la \na u_i,\na v^j_l\ra \,\mathrm{d}\nu_{z_0;t}\mathrm{d}t. 
	\end{align*}
	
	Since $\vec v^j$ and $\vec{u}$ are almost splitting maps, we have by \cite[Proposition 10.2]{fang2025RFlimit},
	\begin{align*}
|b^j_{i,l}|^2 \le C \lc \aint_{-8r_j^2}^{-8r_j^2/10}\int_{\RR_t}|\na u_i|^2 \,\mathrm{d}\nu_{z_0;t}\mathrm{d}t \rc \lc \aint_{-8r_j^2}^{-8r_j^2/10}\int_{\RR_t}|\na v_l^j|^2 \,\mathrm{d}\nu_{z_0;t}\mathrm{d}t \rc  \le C
	\end{align*}
where $C$ is a universal constant.
	
	Since $\vec v^j$ is a $(k, \ep_j,r_j)$-splitting map and $\delta\leq\delta (n,Y,\eta , \alpha, \ep)$, it follows from Theorem \ref{hessiandecayRFlimit}, we obtain for $i \in \{1, \ldots, k\}$,
	\begin{align*}
		\int_{-2r_j^2}^{-2r_j^2/10}\int_{\RR_t}\left|\nabla ^2 u_i^j\right|^2\,\mathrm{d}\nu_{z_0;t}\mathrm{d}t
		\leq \theta\int_{-8r_j^2}^{-8r_j^2/10}\int_{\RR_t}\left|\nabla ^2 u_i^j\right|^2\,\mathrm{d}\nu_{z_0;t}\mathrm{d}t.
	\end{align*}
	Then we have
	\begin{align}\label{iteration1}
		&\int_{-2r_j^2}^{-2r_j^2/10}\int_{\RR_t}\left|\nabla ^2 u_i\right|^2\,\mathrm{d}\nu_{z_0;t}\mathrm{d}t\nonumber\\
		\leq &\int_{-2r_j^2}^{-2r_j^2/10}\int_{\RR_t}\left|\nabla ^2 u_i^j\right|^2\,\mathrm{d}\nu_{z_0;t}\mathrm{d}t+C\sum_{l=1}^k \int_{-2r_j^2}^{-2r_j^2/10}\int_{\RR_t}\left|\nabla ^2 v_l^j\right|^2\,\mathrm{d}\nu_{z_0;t}\mathrm{d}t\nonumber\\
		\leq &\theta\int_{-8r_j^2}^{-8r_j^2/10}\int_{\RR_t}\left|\nabla ^2 u_i^j\right|^2\,\mathrm{d}\nu_{z_0;t}\mathrm{d}t+C\sum_{l=1}^k \int_{-2r_j^2}^{-2r_j^2/10}\int_{\RR_t}\left|\nabla ^2 v_l^j\right|^2\,\mathrm{d}\nu_{z_0;t}\mathrm{d}t\nonumber\\
		\leq &\theta\int_{-8r_j^2}^{-8r_j^2/10}\int_{\RR_t}\left|\nabla ^2 u_i\right|^2\,\mathrm{d}\nu_{z_0;t}\mathrm{d}t+C\lc\sum_{l=1}^k \int_{-8r_j^2}^{-8r_j^2/10}\int_{\RR_t}\left|\nabla ^2 v_l^j\right|^2\,\mathrm{d}\nu_{z_0;t}\mathrm{d}t+\sum_{l=1}^k \int_{-2r_j^2}^{-2r_j^2/10}\int_{\RR_t}\left|\nabla ^2 v_l^j\right|^2\,\mathrm{d}\nu_{z_0;t}\mathrm{d}t\rc\nonumber\\
		\leq &\theta\int_{-8r_j^2}^{-8r_j^2/10}\int_{\RR_t}\left|\nabla ^2 u_i\right|^2\,\mathrm{d}\nu_{z_0;t}\mathrm{d}t+C(n,Y, \alpha)\mathfrak{S}_{r_j}^{k,\alpha,\ep}(z_0).
	\end{align}
	
	We can rewrite \eqref{iteration1} as
	\begin{align}\label{iteration2}
		\sum_{i=1}^k\int_{-8r_{j+1}^2}^{-8r_{j+1}^2/10}\int_{\RR_t}\left|\nabla ^2 u_i\right|^2\,\mathrm{d}\nu_{z_0;t}\mathrm{d}t
		\leq \theta\sum_{i=1}^k\int_{-8r_j^2}^{-8r_j^2/10}\int_{\RR_t}\left|\nabla ^2 u_i\right|^2\,\mathrm{d}\nu_{z_0;t}\mathrm{d}t+C(n,Y, \alpha)\mathfrak{S}_{r_j}^{k,\alpha,\ep}(z_0).
	\end{align}
	Iterating \eqref{iteration2}, it follows that
	\begin{align}\label{iteration3}
		\sum_{i=1}^k\int_{-8r_j^2}^{-8r_j^2/10}\int_{\RR_t}\left|\nabla ^2 u_i\right|^2\,\mathrm{d}\nu_{z_0;t}\mathrm{d}t\leq \theta^j\sum_{i=1}^k \int_{-8r_0^2}^{-8r_0^2/10}\int_{\RR_t}\left|\nabla ^2 u_i\right|^2\,\mathrm{d}\nu_{z_0;t}\mathrm{d}t+C(n,Y, \alpha)\sum_{l=0}^{j-1} \theta^{j-1-l}\mathfrak{S}_{r_l}^{k,\alpha,\ep}(z_0).
	\end{align}
	Note that $r_0=1$ and thus
	\begin{align}\label{iteration4}
		\sum_{i=1}^k\int_{-8r_0^2}^{-8r_0^2/10}\int_M\left|\nabla ^2 u_i\right|^2\,\mathrm{d}\nu_{z_0;t}\mathrm{d}t\leq k \ep'.
	\end{align}  
	Choosing $\beta$ such that $2^{-\beta}=\theta$, we then obtain from \eqref{iteration3} and \eqref{iteration4} that for any $s \in [\bar r, 1]$,
	\begin{align*}
		\sum_{i=1}^k\int_{-10s^2}^{-s^2/10}\int_M\left|\nabla ^2 u_i\right|^2\,\mathrm{d}\nu_{z_0;t}\mathrm{d}t\leq C\ep' s^{\beta}+C(n,Y, \alpha)\sum_{s\leq r_j\leq 1} \lc \frac{s}{r_j} \rc^{\beta}\mathfrak{S}_{r_j}^{k,\alpha,\ep}(z_0).
	\end{align*}
	This gives \eqref{estimategeometrictrans} and thus finishes the proof.
\end{proof}

\begin{thm}[Nondegeneration for almost splitting maps]\label{nondegenrerationthm}
Given constants $\eta>0$, $\alpha \in (0, 1/5)$, $\ep>0$ and $\ep'>0$, suppose $z_0 \in Z_{\III^-}$ with $t_0:=\t(z_0)$ satisfies for any $s \in [\bar r r,  r]$, $z_0$ is $(\delta,s)$-selfsimilar and $(k,\delta,s)$-splitting but not $(k+1,\eta,s)$-splitting.

Let $\vec u=(u_1,\ldots, u_k):Z_{(t_0-10r^2,t_0]}\to\R^k$ be a $(k,\ep',r)$-splitting map at $z$. If $\ep \le \ep(n, Y, \alpha)$, $\delta\leq\delta (n,Y,\eta , \alpha, \ep)$, $\ep'\le \ep'(n, Y, \eta, \alpha)$ and
	\begin{equation*}
		\sum_{\bar r \leq r_j=2^{-j}\leq 1}\mathfrak{S}^{k,\alpha,\ep}_{r r_j}(z_0)<\ep',
	\end{equation*}
	then for all $s\in [\bar r,1]$, there exist a constant $C=C(n, Y, \eta ,\alpha)>0$ and a matrix $T_s$ with $\|T_s-\Id \|\leq C \ep' $ such that  the map $\vec u_s:=T_s\vec u:M\times (t_0-10s^2r^2,t_0]\to\R^k$ is a $(k,C \ep',sr)$-splitting map at $z_0$.
\end{thm}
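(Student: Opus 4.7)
The plan is to iterate the Hessian decay estimate from Theorem \ref{geometrictransformation} across dyadic scales and use the summability hypothesis to correct $\vec u$ by a small matrix $T_s$ that restores the orthogonality condition at every intermediate scale.

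First, I will show that at any dyadic scale $s = r_m = 2^{-m}$ with $r_m \in [\bar r, 1]$, the Hessian $L^2$-integral
$$H_m := \sum_{i=1}^{k}\int_{t_0 - 10 r_m^2 r^2}^{t_0 - r_m^2 r^2/10}\int_{\RR_t} |\na^2 u_i|^2 \, \mathrm{d}\nu_{z_0;t}\, \mathrm{d}t$$
is bounded by $C(n, Y, \eta, \alpha)\, \ep'$. Plugging $s = r_m$ into Theorem \ref{geometrictransformation}, the right-hand side equals
$$C\lc\ep' r_m^\beta + \sum_{j\le m} 2^{-(m-j)\beta}\, \mathfrak{S}^{k,\alpha,\ep}_{r r_j}(z_0)\rc,$$
which, using $2^{-(m-j)\beta} \le 1$ and the hypothesis $\sum_{j} \mathfrak{S}^{k,\alpha,\ep}_{r r_j}(z_0) < \ep'$, is at most $C\ep'$.

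Second, I will control the drift of the orthogonality matrix. Set $b_{ij}(t) := \int_{\RR_t} \la \na u_i, \na u_j \ra \,\mathrm{d}\nu_{z_0;t}$ and let $A_s$ denote the symmetric matrix whose entries are $\aint_{t_0 - 10 s^2 r^2}^{t_0 - s^2 r^2/10} b_{ij}(t)\, \mathrm{d}t$; since $\vec u$ is a $(k, \ep', r)$-splitting map at $z_0$, one has $A_1 = \Id$. The evolution identity $\frac{\mathrm{d}}{\mathrm{d}t} b_{ij}(t) = -2\int \la \na^2 u_i, \na^2 u_j \ra \, \mathrm{d}\nu_{z_0;t}$ bounds the oscillation of each $b_{ij}$ on any time interval by the integral of $|\na^2 u|^2$ over that interval. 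Decomposing $[t_0 - 10 r^2, t_0 - s^2 r^2/10]$ into the dyadic annular pieces underlying $H_m$ and summing the bounds from the first step (after swapping the order of the resulting double sum and using $\sum_{m \ge j} 2^{-(m-j)\beta} \le (1 - 2^{-\beta})^{-1}$), I obtain
$$\|A_s - \Id\| \le C(n, Y, \eta, \alpha)\, \ep' \qquad \text{for every } s \in [\bar r, 1].$$

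Third, for $\ep' \le \ep'(n, Y, \eta, \alpha)$ sufficiently small, $A_s$ is symmetric positive definite and close to $\Id$, so I set $T_s := A_s^{-1/2}$. Then $T_s A_s T_s^{\top} = \Id$, and $\|T_s - \Id\| \le C \ep'$ by a Taylor expansion of the matrix square root. Defining $\vec u_s := T_s \vec u$, the four properties of a $(k, C\ep', sr)$-splitting map at $z_0$ follow routinely: $\vec u_s(z_0) = 0$ and $\square \vec u_s = 0$ are immediate from linearity, the Hessian bound (iii) of Definition \ref{defnsplittingmap} follows from the first step together with $\|T_s\| \le 1 + C\ep'$, and the orthogonality (iv) is exactly $T_s A_s T_s^{\top} = \Id$.

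The main technical point is the summability carried out in the second step: one must carefully swap the double sum over dyadic scales and exploit the geometric factor $(s/r_j)^{\beta}$ from Theorem \ref{geometrictransformation} in order to absorb the $\ell^1$ hypothesis $\sum_j \mathfrak{S}^{k,\alpha,\ep}_{r r_j}(z_0) < \ep'$ into a universal multiple of $\ep'$ independent of $s$. Once this telescoping is in place, the corrector $T_s$ is forced by the orthogonality requirement and the remaining properties of a splitting map are essentially formal consequences.
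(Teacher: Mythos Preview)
Your proposal is correct and follows essentially the same approach as the paper's proof: both feed the Hessian decay estimate from Theorem~\ref{geometrictransformation} into the evolution identity for $\int \la\na u_i,\na u_j\ra\,\mathrm{d}\nu_{z_0;t}$, perform the same double-sum swap $\sum_m\sum_{j\le m}2^{-(m-j)\beta}\mathfrak{S}_{rr_j}=\sum_j\mathfrak{S}_{rr_j}\sum_{m\ge j}2^{-(m-j)\beta}$ to get $\|A_s-\Id\|\le C\ep'$, and then take $T_s$ to restore orthogonality. The only cosmetic difference is that you name $T_s=A_s^{-1/2}$ explicitly, whereas the paper simply asserts the existence of such a $T_s$; and one should note (as the paper does by invoking the argument of Claim~\ref{takelimit3}) that the evolution identity on the limit space requires the cutoff-function justification already carried out earlier.
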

\begin{proof}
Without loss of generality, we assume $t_0=0$ and $r=1$. We only need to prove the theorem for $s=r_j=2^{-j}$ and the general case is similar. Also, we use the same notations as in Theorem \ref{geometrictransformation}. Then it follows that
	\begin{align}\label{nondege1}
		\sum_{a=1}^k\int_{-10r_j^2}^{-r_j^2/10}\int_{\RR_t} \left|\nabla^2 u_a\right|^2\,\mathrm{d}\nu_{z_0;t}\mathrm{d}t\leq C(n,Y,\alpha)\lc\sum_{i=0}^{j}2^{(i-j)\beta}\mathfrak{S}_{r_i}^{k,\alpha,\ep}(z_0)+2^{-j\beta} \ep' \rc.
	\end{align}
	
	\begin{claim}\label{nondege2}
		For any $t'\in [-10r_j^2,-r_j^2/10]$,
		\begin{align*}
			\sum_{1\leq a,b\leq k} \abs{\int_{\RR_{t'}}\la \na u_a,\na u_b\ra -\delta_{ab}\,\mathrm{d}\nu_{z_0;t'}}\leq C(n,Y, \eta, \alpha) \ep'.
		\end{align*}
	\end{claim}
	Note that for any $t\in [-10r_0^2,-r_0^2/10]$, it follows from \cite[Proposition 10.2]{fang2025RFlimit} that
	\begin{align}\label{nondege3}
		\sum_{1\leq a,b\leq k} \abs{\int_{\RR_t}\la \na u_a,\na u_b\ra -\delta_{ab}\,\mathrm{d}\nu_{z_0;t}}\leq C(n) \ep'.
	\end{align}
	Then by \eqref{nondege1} and the same argument in obtaining Claim \ref{takelimit3}, we have
	that for any $t'\in [-10r_i^2,-r_i^2/10]$ and $a,b \in \{1,\ldots,k\}$,
	\begin{align}\label{nondege4}
		&	\left|\int_{\RR_{t'}}\la \na u_a,\na u_b\ra -\delta_{ab}\,\mathrm{d}\nu_{z_0;t'}-\int_{\RR_{-10 r_i^2}}\la \na u_a,\na u_b\ra -\delta_{ab}\,\mathrm{d}\nu_{z_0;-10r_i^2}\right|\nonumber\\
		\leq & 2\int_{-10r_i^2}^{-r_i^2/10}\int_{\RR_t} \left|\na^2 u_a\right|\left|\na^2 u_b\right| \,\mathrm{d}\nu_{z_0;t}\mathrm{d}t\leq C(n,Y, \alpha)\lc\sum_{l=0}^{i}2^{(l-i)\beta}\mathfrak{S}_{r_l}^{k,\alpha,\ep}(z_0)+2^{-i\beta}\ep'\rc.
	\end{align}
	Iterating \eqref{nondege4} and using \eqref{nondege3}, it follows that for any $t'\in [-10r_j^2,-r_j^2/10]$ and $a,b \in \{1,\ldots,k\}$,
	\begin{align}\label{nondege7}
		\left|\int_{\RR_{t'}}\la \na u_a,\na u_b\ra -\delta_{ab}\,\mathrm{d}\nu_{z_0;t'}\right|\leq C(n) \ep'+C(n,Y,\alpha)\sum_{i=0}^j\lc\sum_{l=0}^{i}2^{(l-i)\beta}\mathfrak{S}_{r_l}^{k,\alpha,\ep}(z_0)+2^{-i\beta}\ep'\rc.
	\end{align}
	Note that $\beta=\beta(n,Y,\eta)>0$, thus we have 
	\begin{align}\label{nondege5}
		\sum_{i=0}^j2^{-i\beta}\ep' \leq C(n,Y,\eta)\ep'.
	\end{align}
	
	On the other hand, by assumption, it holds that
	\begin{align}\label{nondege6}
		\sum_{i=0}^j\sum_{l=0}^{i}2^{(l-i)\beta}\mathfrak{S}_{r_l}^{k,\alpha,\ep}(z_0)=&\sum_{l=0}^j\sum_{i=l}^j2^{(l-i)\beta} \mathfrak{S}_{r_l}^{k,\alpha,\ep}(z_0)	\leq C(n,Y,\eta)\sum_{l=0}^j \mathfrak{S}_{r_l}^{k,\alpha,\ep}(z_0)\leq C(n,Y,\eta)\ep'.
	\end{align}
	Plugging \eqref{nondege5} and \eqref{nondege6} into \eqref{nondege7}, we conclude that for any $t'\in [-10r_j^2,-r_j^2/10]$ and $a,b \in \{1,\ldots,k\}$,
	\begin{align*}
		\left|\int_{\RR_{t'}}\la \na u_a,\na u_b\ra -\delta_{ab}\,\mathrm{d}\nu_{z_0;t'}\right|\leq C(n,Y,\eta,\alpha)\ep'.
	\end{align*}
	This proves Claim \ref{nondege2}.

	By Claim \ref{nondege2}, we can find a $k\times k$ matrix $T_{r_j}$ with $\|T_{r_j}-\Id \|\leq C(n,Y,\eta,\alpha)\ep'$ such that  the map $\vec u_{r_j}:=T_{r_j}\vec u=(u_{r_j, 1}, \ldots, u_{r_j, k}):Z_{[-10s_j^2,0]}\to\R^k$ satisfies
	\begin{align*}
		\int_{-10r_j^2}^{-r_j^2/10}\int_{\RR_t}\la \na u_{r_j, a},\na u_{r_j, b}\ra -\delta_{ab}\,\mathrm{d}\nu_{z_0;t}\mathrm{d}t=0.
	\end{align*}
Moreover, using \eqref{nondege1}, we have	
	\begin{align*}
		\sum_{a=1}^k\int_{-10r_j^2}^{-r_j^2/10}\int_{\RR_t} \left|\nabla^2 u_{r_j, a}\right|^2\,\mathrm{d}\nu_{z_0;t}\mathrm{d}t\leq C(n,Y,\eta,\alpha)\ep'.
	\end{align*}
	
	Thus, $\vec u_{r_j}$ is a $(k,C \ep',r_j)$-splitting map at $z_0$, where $C=C(n,Y,\eta,\alpha)$. This finishes the proof.
\end{proof}

We end this section with the following covering result, which is a direct consequence of Lemma \ref{existenceofindependentpointsafterbase} by taking the limit.

\begin{lem}\label{existenceofindependentpointsafterbaseRFlimit}
Suppose that $z_0 \in Z$ is $(\delta,r)$-selfsimilar. Let $S$ be the subset consisting of all $(\delta,r)$-selfsimilar points in $B_Z^*(z_0,r)\bigcap Z_{[\t(z_0)-\alpha^2 r^2/L^2, \t(z_0)+\alpha^2 r^2/L^2]}$, where $L=L(n, Y)>1$. Assume that there exists no strongly $(k,\alpha,\delta, r)$-independent points in $S$ at $z_0$. If $\delta\leq\delta(n,Y,\alpha)$, then we can find $\{z_i\}_{1 \le i \le N} \subset S$ with $N\leq C(n,Y)\alpha^{1-k}$ and 
	\begin{align*}
		S\subset\bigcup_{i=0}^N B_Z^*(z_i, \alpha r).
	\end{align*}
\end{lem}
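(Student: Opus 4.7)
The plan is to argue by contradiction, reducing the claim to the smooth Lemma \ref{existenceofindependentpointsafterbase} via a limit argument. Suppose the covering conclusion fails; then a maximal $(\alpha r)$-separated subset of $S$ containing $z_0$ has more than $C(n,Y)\alpha^{1-k}$ points $\{z_0, z_1, \ldots, z_N\}$, where $C(n,Y)$ is the constant from Lemma \ref{existenceofindependentpointsafterbase}. The goal is to extract a strongly $(k,\alpha,\delta,r)$-independent set from these points, contradicting the hypothesis.

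First, I would set up approximations. Since $Z$ is the pointed Gromov--Hausdorff limit of a sequence $\XX^l \in \MM(n,Y,T)$, for each $0 \le i \le N$ choose $x_{i,l}^* \in \XX^l$ with $x_{i,l}^* \to z_i$. For sufficiently large $l$, Proposition \ref{prop:Wconv1} combined with the monotonicity of the (smooth) $\WW$-entropy ensures that each $x_{i,l}^*$ is $(2\delta, r)$-selfsimilar in the smooth sense of Definition \ref{defnselfsimilarity}: the slack between the entropy intervals $[r^2/40, 40r^2]$ (used in Definition \ref{defnselfsimilarity1}) and $[r^2/30, 30r^2]$ (used in Definition \ref{defnselfsimilarity}) absorbs the approximation loss, since the former is strictly larger. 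Moreover, the spacetime distances $d^*(x_{i,l}^*, x_{0,l}^*)$, the time differences $|\t_l(x_{i,l}^*) - \t_l(x_{0,l}^*)|$, and the mutual separations $d^*(x_{i,l}^*, x_{j,l}^*)$ approach their limiting counterparts on $Z$ to arbitrary accuracy.

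Second, I would apply Lemma \ref{existenceofindependentpointsafterbase} to the smooth flow $\XX^l$ with parameters $(\alpha/2, 2\delta, r)$. With $N$ chosen to exceed the covering threshold, the contrapositive of that lemma yields indices $1 \le i_1 < \cdots < i_k \le N$ such that $\{x_{i_1,l}^*, \ldots, x_{i_k,l}^*\}$ is $(k, \alpha/2, 2\delta, r)$-independent at $x_{0,l}^*$ in the sense of Definition \ref{defnindependentpoints}. Since there are only finitely many such tuples of indices, I can pass to a subsequence for which $(i_1, \ldots, i_k)$ is fixed.

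Third, I would pass to the limit $l \to \infty$. By Lemma \ref{eigenconvRF}, the Gram matrices $A^l$ defined from the normalized potential differences $h_j^l = r^{-1}(F_{i_j}^l - F_{i_0}^l)$ converge to the Gram matrix $A$ associated with $\{z_{i_1}, \ldots, z_{i_k}\}$ on $Z$, so the first-eigenvalue bound descends to the limit. Combined with Proposition \ref{prop:Wconv1}, the definition of $\widetilde\WW$, and the limits of the spacetime distances and time differences, this shows that $\{z_{i_1}, \ldots, z_{i_k}\}$ is strongly $(k, \alpha, \delta, r)$-independent at $z_0$ in the sense of Definition \ref{defnindependentpointsRFlimit}, contradicting the hypothesis. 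The main technical point to verify is that the factor-of-two slack built into Definition \ref{defnindependentpointsRFlimit} relative to Definition \ref{defnindependentpoints}---namely the constraints $r/2$ versus $r$, $r^2/200$ versus $r^2/100$, the wider entropy window, and $\bar\delta/2$ versus $\bar\delta$---is precisely calibrated to absorb the unavoidable approximation errors in the compactness step; this book-keeping is routine but essential for the argument to close.
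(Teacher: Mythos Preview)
Your approach matches the paper's (which says only ``a direct consequence of Lemma \ref{existenceofindependentpointsafterbase} by taking the limit''), and you correctly single out the key tools: Proposition \ref{prop:Wconv1} for the entropy convergence and Lemma \ref{eigenconvRF} for the Gram matrices. The overall strategy---choose a maximal $\alpha r$-separated set, approximate on the smooth flows, extract an independent tuple via the contrapositive of Lemma \ref{existenceofindependentpointsafterbase}, and pass back to the limit---is exactly the intended one.

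That said, your final book-keeping claim has the direction of the slack backwards. The tightening in Definition \ref{defnindependentpointsRFlimit} (distance $\le r/2$, time gap $\le r^2/200$, base point $(\bar\delta/2,r)$-selfsimilar) is engineered so that strongly independent points on $Z$ \emph{descend} to independent points on the approximating flows; this is precisely Corollary \ref{indeptsconv}. Your argument requires the reverse implication: from $(k,\alpha',\delta',r)$-independent on $\XX^l$, conclude strongly $(k,\alpha,\delta,r)$-independent on $Z$. But the limiting points $z_{i_j}$ lie only in $S\subset B_Z^*(z_0,r)$, so condition (ii) of Definition \ref{defnindependentpointsRFlimit} (which demands $d_Z(z_{i_j},z_0)\le r/2$) need not hold, and the contradiction does not close as written. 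The same directional error appears in your choice of parameter $\alpha/2$: taking the limit of $\lambda_1(A^l)\ge(\alpha/2)^2$ yields only $\lambda_1(A)\ge(\alpha/2)^2$, not the required $\ge\alpha^2$; one should instead run the smooth lemma with parameters $\alpha_l\nearrow\alpha$ so the eigenvalue bound survives. Whether the radius mismatch ($r$ in the hypothesis on $S$ versus $r/2$ in condition (ii)) reflects an imprecision in the lemma's statement is a separate question, but as literally stated your argument does not close on this point.
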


\subsection{Minkowski dimension and quantitative estimates of singular strata}

In this subsection, we consider $(k, \ep, r)$-symmetric points (see Definition \ref{defnalmostsymmetric}) in $Z$ and investigate the $k$-th stratum $\MS^k$.

First, we prove the following static estimate on $Z$.

\begin{lem} \label{lem:static3}
There exists a constant $C=C(n, Y)>0$ such that the following holds for any $\ep>0$ and $\beta>0$: if $x, y \in Z$ are $(\delta, r)$-selfsimilar, with $d_Z(x, y) \le r$ and $|\t(x)-\t(y)| \ge \beta r^2$, then both $x$ and $y$ are $(C\delta^{\frac 1 2} \beta^{-2}, r)$-static (see Definition \ref{defnstatic}).
\end{lem}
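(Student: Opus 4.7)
The plan is to reduce Lemma \ref{lem:static3} to its smooth counterpart, Theorem \ref{staticestimate}, via approximating sequences and a limit argument on the regular part.

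First, I would pick sequences $x_l^*, y_l^* \in \XX^l$ with $x_l^* \to x$ and $y_l^* \to y$ in the Gromov--Hausdorff sense, so that $d_l^*(x_l^*, y_l^*) \le r + o(1)$ and $|\t_l(x_l^*) - \t_l(y_l^*)| \to |\t(x) - \t(y)| \ge \beta r^2$. To transfer the $(\delta, r)$-selfsimilarity hypothesis, select $\tau_1 \le r^2/30$ and $\tau_2 \ge 30 r^2$ with $\tau_1, \tau_2 \notin J^x \cup J^y$, which is possible since these exceptional sets have measure zero by Proposition \ref{prop:Wconv1}. That proposition, combined with the monotonicity of $\widetilde\WW$ and the $(\delta, r)$-selfsimilarity hypothesis (for $\delta \le 1/30$ so that the intervals nest), yields
\begin{align*}
\lim_{l \to \infty} \bigl( \WW_{x_l^*}(\tau_1) - \WW_{x_l^*}(\tau_2) \bigr) \;=\; \widetilde\WW_x(\tau_1) - \widetilde\WW_x(\tau_2) \;\le\; \delta,
\end{align*}
and likewise at $y$. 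The smooth monotonicity of $\WW_{x_l^*}$ then gives
\begin{align*}
\Lambda_l := \max\bigl\{\WW_{x_l^*}(r^2/30) - \WW_{x_l^*}(30r^2),\ \WW_{y_l^*}(r^2/30) - \WW_{y_l^*}(30r^2)\bigr\} \le 2\delta
\end{align*}
for all large $l$; in particular, for $\delta \le \bar\delta/2$, both $x_l^*$ and $y_l^*$ are $(\bar\delta, r)$-selfsimilar in the smooth sense.

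Next, I would apply Theorem \ref{staticestimate} at scale $r$ to the pair $(x_l^*, y_l^*)$, restrict the resulting integral bound from $[t_l - 20 r^2, t_l - r^2/20]$ to its sub-interval $[t_l - 2 r^2, t_l - r^2/2]$ (since the integrand is nonnegative), where $t_l := \t_l(x_l^*)$, and use $|\t_l(x_l^*) - \t_l(y_l^*)| \ge \beta r^2/2$ together with $\Lambda_l \le 2\delta$. This yields
\begin{align*}
r^2 \int_{t_l - 2r^2}^{t_l - r^2/2} \int_{M_l} |\Ric|^2 \,\mathrm{d}\nu_{x_l^*;t} \,\mathrm{d}t \;\le\; C(n, Y)\, \delta^{1/2} \beta^{-2}.
\end{align*}
Letting $l \to \infty$, lower semicontinuity of $|\Ric_{g^Z}|^2 \,\mathrm{d}\nu_{x;\cdot}$ under smooth convergence on the regular part (Theorem \ref{thm:intro3}), together with the cutoff functions $\eta_{r',L}$ from \cite[Proposition 8.20]{fang2025RFlimit} and the Gaussian integrability from Proposition \ref{integralbound} (in analogy with Claim \ref{takelimit3} of the proof of Theorem \ref{hessiandecay}), produces
\begin{align*}
r^2 \int_{\t(x) - 2r^2}^{\t(x) - r^2/2} \int_{\RR_t} |\Ric_{g^Z}|^2 \,\mathrm{d}\nu_{x;t} \,\mathrm{d}t \;\le\; C(n, Y)\, \delta^{1/2} \beta^{-2}.
\end{align*}
The symmetric argument applied to $(y_l^*, x_l^*)$ gives the corresponding bound at $y$.

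The main obstacle is the scale compatibility: Theorem \ref{staticestimate} is stated under the hypothesis $|t_0 - t_1| \le r^2/100$, whereas the parabolic constraint only forces $|\t(x) - \t(y)| \le d_Z(x,y)^2 \le r^2$. When $|\t(x) - \t(y)| > r^2/100$, the factor $\beta^{-2}$ is bounded by a universal constant and the conclusion reduces to $r^2 \int |\Ric|^2 \le C(n,Y) \delta^{1/2}$. A careful inspection of the proof of Theorem \ref{staticestimate} shows that the actual constraint needed to keep $\tau_1 > 0$ on the integration range $[t_l - 2r^2, t_l - r^2/2]$ is only $|t_0 - t_1| < r^2/2$; for the remaining regime $|\t(x) - \t(y)| \ge r^2/2$, I would apply Theorem \ref{staticestimate} at an enlarged scale $R \ge 10r$ (so the time-gap hypothesis is met) and then subdivide $[t_l - 2r^2, t_l - r^2/2]$ into finitely many pieces, applying the smooth convergence and cutoff arguments on each.
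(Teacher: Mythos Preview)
Your approach matches the paper's exactly: its proof is the single sentence ``The result follows directly from Theorem~\ref{staticestimate}, Proposition~\ref{prop:Wconv1}, and a standard limiting argument.'' You go further by flagging the hypothesis $|t_0-t_1|\le r^2/100$ of Theorem~\ref{staticestimate}, which the paper's terse proof does not explicitly reconcile; your first fix (rerunning the proof of Theorem~\ref{staticestimate} on the subinterval actually needed for Definition~\ref{defnstatic}) is the cleaner one, while the enlarged-scale variant requires more care since the window $[t_0-20R^2,\,t_0-R^2/20]$ at $R=10r$ fails to contain $[t_0-2r^2,\,t_0-r^2/2]$.
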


\begin{proof}
The result follows directly from Theorem \ref{staticestimate}, Proposition \ref{prop:Wconv1}, and a standard limiting argument.
\end{proof}

The following result follows immediately from Definition \ref{defnstatic}, Proposition \ref{equivalencesplitsymetric1}, and a standard limiting argument. For (ii), we also need \cite[Claim 22.7]{bamler2020structure}.

\begin{prop}\label{almostsymmetric}
For any $\ep>0$, if $\delta \le \delta(n, Y, \ep)$, then the following statements hold:
	\begin{enumerate}[label=\textnormal{(\roman{*})}]
		\item If $z \in Z$ is $(\delta,r)$-selfsimilar and there exists a $(k, \delta, r)$-splitting map at $z$, then $z$ is $(k, \ep, s)$-symmetric for any $s \in [\ep r, \ep^{-1} r]$.

	\item If $z \in Z$ is both $(\delta,r)$-selfsimilar and $(\delta, r)$-static, then $z$ is $(\ep, s)$-static for any $s \in [\ep r, \ep^{-1} r]$.
	\end{enumerate}
\end{prop}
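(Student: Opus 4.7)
The plan is to prove both parts by a unified contradiction/compactness scheme that exploits the weak compactness Theorem \ref{thm:intro1} and the fact that the rescaled limit of a $(\delta_j, 1)$-selfsimilar point with $\delta_j \to 0$ is forced to be a Ricci shrinker space. Assuming the conclusion of (i) or (ii) fails, I would extract sequences $\delta_j \to 0$, points $z_j$ in noncollapsed Ricci flow limit spaces satisfying the corresponding hypotheses at scale $r$, and scales $s_j \in [\ep r, \ep^{-1} r]$ at which the conclusion fails. After rescaling so that $r = 1$ and passing to a subsequence, Theorem \ref{thm:intro1} together with the self-similarity argument used in Lemma \ref{lem:imply1} produces an $\hat C^\infty$-limit $(Z_\infty, d_{Z_\infty}, z_\infty, \t_\infty)$ which is a Ricci shrinker space with entropy bounded below by $-Y$.

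For part (i), I would next show that the $(k, \delta_j, 1)$-splitting maps $\vec u^j$ converge smoothly on the regular part of $Z_\infty$ to $k$ functions whose gradients are orthonormal and parallel, using the weighted $W^{2,2}$-type estimates of Lemma \ref{eigenconvRF}. An application of \cite[Propositions~8.2 and 8.4]{fang2025RFlimit} upgrades this to an isometric $\R^k$-splitting of $Z_\infty$, so $Z_\infty$ is $k$-splitting in the sense of Definition \ref{defnksplitting}. It remains to check that a $k$-splitting Ricci shrinker space is automatically $k$-symmetric: if $Z_\infty$ is not a static cone, case (1) of Definition \ref{defnsymmetricsoliton} applies; if it is a static cone, then $k$-splitting implies $(k-2)$-splitting (vacuous when $k<2$), so case (2) applies. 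The definition of $\ep$-closeness then forces $z_j$ to be $(k, \ep, s_j)$-symmetric for large $j$, contradicting the failure hypothesis.

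For part (ii), the argument proceeds analogously with the splitting input replaced by the $(\delta_j, 1)$-static hypothesis. The $\hat C^\infty$-convergence of Theorem \ref{thm:intro3} together with the uniform integrability of $|\Ric|^2 e^{\theta f}$ from Proposition \ref{integralbound} gives, via lower semicontinuity,
\begin{equation*}
\int_{-2}^{-1/2}\int_{\RR^\infty_t} |\Ric_{g^{Z_\infty}}|^2\,\mathrm{d}\nu_{z_\infty;t}\mathrm{d}t = 0,
\end{equation*}
so $\Ric(g^{Z_\infty}) \equiv 0$ on the regular part of $Z_\infty$ over $[-2, -1/2]$. I would then invoke \cite[Claim 22.7]{bamler2020structure}, which propagates Ricci-flatness along the shrinker symmetry generated by $X = \tau(\partial_\t - \nabla f_{z_\infty})$, to obtain $\Ric \equiv 0$ on all of $\RR^\infty_{(-\infty, 0)}$. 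In particular the static integral at scale $s$ vanishes identically for every $s > 0$; upper semicontinuity of this quantity under $\hat C^\infty$-convergence, established exactly by the cutoff-function and tail-integral technique used in Claims \ref{takelimit1}--\ref{takelimit3} of Theorem \ref{hessiandecay}, then yields that $z_j$ is $(\ep, s_j)$-static for large $j$, contradicting the failure hypothesis.

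The main technical obstacle in both parts will be controlling integrals over the potentially noncompact and singular regular part of $Z_\infty$ so that the selfsimilarity, splitting, and staticness hypotheses all survive the limiting process without loss. This is precisely what the cutoff families of \cite[Proposition~8.20]{fang2025RFlimit} together with the weighted integral bounds of Proposition \ref{integralbound} are designed to accomplish, and the semicontinuity arguments can be carried out essentially verbatim as in the proofs of Theorem \ref{hessiandecay} and Lemma \ref{eigenconvRF}.
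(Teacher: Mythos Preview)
Your proposal is correct and follows essentially the same route as the paper, which simply states that the result ``follows immediately from Definition~\ref{defnstatic}, Proposition~\ref{equivalencesplitsymetric1}, and a standard limiting argument,'' together with \cite[Claim~22.7]{bamler2020structure} for~(ii). You have faithfully unpacked what that standard limiting argument is: contradiction plus compactness to a Ricci shrinker limit, passage of the splitting maps (resp.\ the Ricci-vanishing) to the limit, and then reading off the $k$-symmetric (resp.\ static) structure; the only minor point to watch is the extra clause in Definition~\ref{defnalmostsymmetric} excluding quasi-static cones when $k\in\{n-3,n-2\}$, but this is harmless since a quasi-static cone splitting off $\R^{n-2}$ or $\R^{n-3}$ is automatically smooth (by Theorem~\ref{thm:intro5}(d)) and hence a genuine static cone.
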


Next, we prove the covering result.

\begin{lem}\label{lem:keycover}
Given $z_0 \in Z$, $\ep>0$, $\beta>0$ and $r>0$, let $S$ be the subset consisting of all $(\delta,r)$-selfsimilar points in $B^*(z_0,r)$ which are not $(k+1, \ep, s)$-symmetric for any $s \in [\ep^{1/2}\beta r, r]$. If $\delta \le \delta(n, Y, \ep, \beta)$, then we can find $\{z_i\}_{1 \le i \le N} \subset S$ with $N\leq C(n,Y) \beta^{-k}$ and 
	\begin{align*}
		S\subset\bigcup_{i=1}^N B^*(z_i,  \beta r).
	\end{align*}
\end{lem}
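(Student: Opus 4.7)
The plan is to prove the cover bound by a dichotomy at each selfsimilar point $z \in S$, based on whether $z$ is approximately static, combined with Lemma \ref{existenceofindependentpointsafterbaseRFlimit} applied at two different independence indices. By a standard maximal separated net argument, it suffices to bound the cardinality of a maximal $\beta r/10$-separated subset $T \subset S$. Without loss of generality, normalize $\t(z_0)=0$ and $r=1$.

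The key preliminary observation rules out spatial $(k+1)$-independence: if at some $z \in S$ there existed a strongly $(k+1, 1/10, \delta, 1)$-independent set in $S$, then Theorem \ref{thmsharpsplittingRFlimit} would yield a $(k+1, \ep', 1)$-splitting map at $z$ with $\ep' = C(n,Y)\sqrt{\delta}$, and Proposition \ref{almostsymmetric}(i) would force $z$ to be $(k+1,\ep,1)$-symmetric, contradicting $z\in S$ (since $1\in[\ep^{1/2}\beta,1]$). Hence no such independent set exists at any $z \in S$, provided $\delta \le \delta(n,Y,\ep,\beta)$.

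Now introduce the static threshold $\delta_s := C(n,Y) L^4 \delta^{1/2}\beta^{-4}$ from Lemma \ref{lem:static3}, and split $S = S_{\mathrm{ns}} \sqcup S_{\mathrm{st}}$ according to whether $z$ is $(\delta_s, 1)$-static or not. In the non-static case $z \in S_{\mathrm{ns}}$, the contrapositive of Lemma \ref{lem:static3} shows that every $(\delta, 1)$-selfsimilar point $y$ in $B^*(z,1)$ must satisfy $|\t(y)-\t(z)| < \beta^2/L^2$; thus $S \cap B^*(z,1)$ is confined to a single time slab around $\t(z)$, and Lemma \ref{existenceofindependentpointsafterbaseRFlimit} applied with $\alpha=\beta$ and independence index $k+1$ covers this slab by $\le C(n,Y)\beta^{-k}$ balls of radius $\beta$. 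In the static case $z \in S_{\mathrm{st}}$, I would argue that no strongly $(k-1,1/10,\delta,1)$-independent set in $S$ can exist at $z$: indeed Theorem \ref{thmsharpsplittingRFlimit} together with Proposition \ref{almostsymmetric}(i) would give $z$ is $(k-1,\ep,s)$-symmetric at smaller scales, while Proposition \ref{almostsymmetric}(ii) would give $(\ep,s)$-staticness, and these two combined would produce a $(k+1,\ep,s)$-symmetric structure via the static-cone clause (2) of Definition \ref{defnsymmetricsoliton}, contradicting $z\in S$. Applying Lemma \ref{existenceofindependentpointsafterbaseRFlimit} with $\alpha=\beta$ and index $k-1$ then gives a cover of the slab around $\t(z)$ by $\le C(n,Y)\beta^{2-k}$ balls; iterating this over $\sim L^2/\beta^2$ time slabs tiling the full time range $[-2,0]$ (using one $z_j \in S_{\mathrm{st}}$ per slab when nonempty) yields $\le C(n,Y) L^2 \beta^{-k}$ balls in total.

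Combining the two cases gives $N \le C(n,Y)\beta^{-k}$. The main obstacle will be making rigorous the implication in the static case that $(k-1,\ep,s)$-symmetric together with $(\ep,s)$-static produces a quantitative $(k+1,\Psi(\ep),s)$-symmetric structure: this is morally clear from the definitions, but a quantitative proof seems to require a compactness/limiting argument where the two closeness statements are combined to extract a tangent flow which is simultaneously a static cone and $(k-1)$-splitting. A secondary technical point is the precise bookkeeping of constants in Lemma \ref{lem:static3} so that the threshold $\delta_s$ can be chosen admissibly small in terms of $(n,Y,\ep,\beta)$ while still forcing the time-slab confinement needed in the non-static case.
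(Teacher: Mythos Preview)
Your non-static case is essentially correct (modulo the minor point that you need $\alpha$ proportional to $\beta$, not $1/10$, when invoking the key observation, so that Lemma \ref{existenceofindependentpointsafterbaseRFlimit} applies with the right radius). The real problem is the static case. The implication you describe as ``morally clear'' --- that $(k-1,\ep,s)$-symmetric together with $(\ep,s)$-static forces a $(k+1,\Psi(\ep),s)$-symmetric structure --- is false as stated. Definition \ref{defnstatic} only bounds $\int |\Ric|^2\,\mathrm{d}\nu_t$ over $[\t(z)-2s^2,\t(z)-s^2/2]$, so the limiting Ricci shrinker extracted by compactness may be a \emph{quasi-static} cone (Ricci-flat on $(-\infty,t_a]$ for some finite $t_a\ge 0$) rather than a static cone. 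A $(k-1)$-splitting quasi-static cone is only $(k-1)$-symmetric by clause (1) of Definition \ref{defnsymmetricsoliton}; clause (2), which gives the extra $+2$, requires a genuine static cone. Hence no contradiction follows, and you cannot rule out strongly $(k-1,\cdot,\delta,1)$-independent sets this way.

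The paper resolves this by a different mechanism. In its Case 2 (where every point is effectively static), a pigeonhole argument produces both a large subset $\{z_1,\ldots,z_{N'}\}$ with $N'\gg C\beta^{2-k}$ lying in a single $\beta^2$-time-slab \emph{and} a point $z_j$ with $|\t(z_1)-\t(z_j)|\ge 2\beta^2$. The extra point $z_j$, being selfsimilar and hence near the spine of the quasi-static model based at $z_1$ (or vice versa), forces the arrival time to satisfy $t_a\gtrsim 2\beta^2$. Rescaling to scale $\ep^{1/2}\beta$ then pushes the rescaled arrival time past $\ep^{-1}$, so over the window $[-\ep^{-1},\ep^{-1}]$ the model is a genuine static $(k-1)$-splitting cone, hence $(k+1)$-symmetric --- contradicting membership in $S$ at scale $\ep^{1/2}\beta$. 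This is precisely why the hypothesis of the lemma requires non-symmetry down to scale $\ep^{1/2}\beta r$ rather than $\beta r$, a feature your argument never invokes.
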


\begin{proof}
Without loss of generality, we assume $r=1$. Let $\{z_i\}_{1 \le i \le N} \subset S$ be a maximal $\beta$-separated set, i.e., $d_Z(z_i, z_j) \ge \beta$ for all $i \ne j$. Then it is clear that
	\begin{align*}
		S\subset\bigcup_{i=1}^N B^*(z_i, \beta).
	\end{align*}

Suppose, for contradiction, that the desired conclusion fails. Then we assume
	\begin{align} \label{eq:contranumber}
N \gg C(n,Y) \beta^{-k}.
	\end{align}

We now consider two cases to derive a contradiction.

\textbf{Case 1}: There exists $i \in [1, N]$ such that $|\t(z_i)-\t(z_j)| \le 3 \beta^2$ for any $j \ne i$.

Without loss of generality, we assume $i=1$. In this case, we claim that if $\delta \le \delta(n, Y,  \beta)$, there exists a strongly $(k+1, 3L\beta, \delta, 1)$-independent points in $S$ at $z_1$, where $L=L(n, Y)$ is the constant from Lemma \ref{existenceofindependentpointsafterbaseRFlimit}. Otherwise, by Lemma \ref{existenceofindependentpointsafterbaseRFlimit} (applied with $r=1$ and $\alpha=3L\beta$), the set $S$ can be covered by at most $C(n, Y) \beta^{-k}$ balls of radius $3L \beta$, contradicting \eqref{eq:contranumber} if we further cover each ball with at most $C(n, Y)$ balls of radius $\beta$.

Given the existence of such independent points, Theorem \ref{thmsharpsplittingRFlimit} implies that there exists a $(k+1, \zeta, 1)$-splitting map at $z_1$, where $\zeta=C(n, Y, \beta)\delta^{1/2}$. Combining this fact with Proposition \ref{almostsymmetric} (i), we conclude that $(Z, d_Z, z_1,\t)$ is $\ep$-close (see Definition \ref{defn:close}) to a $(k+1)$-splitting Ricci shrinker space $(Z', d_{Z'}, z', \t')$ over $[-\ep^{-1}, \ep^{-1}]$ if $\delta \le \delta(n, Y, \beta, \ep)$, which is a contradiction.

\textbf{Case 2}: There is no $i \in [1, N]$ such that $|\t(z_i)-\t(z_j)| \le 3\beta^2$ for any $j \ne i$.

By the pigeonhole principle, there exists a subset $I \subset \{1, \ldots, N\}$ with cardinality $|I| \gg C(n,Y) \beta^{2-k}$ such that for any $i, j \in I$, we have $|\t(z_i)-\t(z_j)| \le \beta^2$. 

Without loss of generality, assume $I=\{1, \ldots, N'\}$. By our assumption, $N'<N$, and there exists $j >N'$ such that
	\begin{align*}
|\t(z_i)-\t(z_j)| \ge 2 \beta^2
	\end{align*}
for any $1 \le i \le N'$. Then, by Lemma \ref{lem:static3}, each $z_i$ and $z_j$ are $(C \delta^{1/2} \beta^{-4}, 1)$-static for $1 \le i \le N'$. Arguing as in Case 1, we can then find a $(k-1, \zeta, 2)$-splitting map at $z_1$, where $\zeta=C(n, Y, \beta)\delta^{1/2}$.

Thus, we conclude that $(Z, d_Z, z_1, \t)$ is $\ep$-close to a $(k-1)$-splitting static or quasi-static cone $(Z', d_{Z'}, z', \t')$, if $\delta \le \delta(n, Y, \beta, \ep)$. If $(Z', d_{Z'}, z', \t')$ is a static cone, one can derive the contradiction as in Case 1. Therefore, we may assume that $(Z', d_{Z'}, z', \t')$ is a quasi-static cone.

If $\t(z_1)<\t(z_j)$, since $d_Z(z_1, z_j) \le 2$ and $\t(z_1) \le \t(z_j)-2 \beta^2$, we conclude that $z_1$ is $(k+1, \ep, \ep^{1/2}\beta)$-symmetric by Definition \ref{defnalmostsymmetric}, provided that $\delta \le  \delta(n, Y,  \ep, \beta)$. Similarly, if $\t(z_j)<\t(z_1)$ , then $z_j$ is $(k+1, \ep, \ep^{1/2}\beta)$-symmetric.

In either case, we obtain a contradiction to \eqref{eq:contranumber}. Therefore, the original statement holds.
\end{proof}

By Lemma \ref{lem:keycover}, we prove the following covering lemma for the quantitative singular strata, whose proof is based on \cite{cheeger2013lower}; see also \cite[Proposition 11.2]{bamler2020structure}. Note that in \cite[Theorem 8.14]{fang2025RFlimit}, the conclusion has already been proved for $k=n-2$.

\begin{prop}\label{prop:quansingcover}
Given $k \in \{0, 1,\ldots, n-2\}$, $z_0 \in Z$, $\ep>0$ and $r>0$ with $\t(z_0)-2r^2 \in \III^-$, for any $\sigma \in (0, \ep)$, there exist $x_1, x_2, \ldots, x_N \in B^*(z_0, r)$ with $N \le C(n, Y, \ep) \sigma^{-k-\ep}$ and
	\begin{equation*}
		\MS^{\ep,k}_{\sigma r, \ep r} \bigcap B^* (z_0, r) \subset \bigcup_{j=1}^N B^* (x_j, \sigma r).
	\end{equation*}
\end{prop}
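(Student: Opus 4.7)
The approach is a standard quantitative stratification argument via iterative covering on dyadic scales, following Cheeger--Naber \cite{cheeger2013lower} and its Ricci flow adaptation in \cite{bamler2020structure}. Fix parameters $\beta = \beta(n, Y, \ep) \in (0, 1)$ and $\delta = \delta(n, Y, \ep, \beta) > 0$ to be determined, and set dyadic scales $r_j := \beta^{\,j} \ep r$ for $j = 0, 1, \ldots, J$, where $J$ is the smallest integer such that $r_{J+1} < \sigma r / (\ep^{1/2} \beta)$. Starting from the single ball $B^*(z_0, r)$ and a preliminary refinement to scale $r_0 = \ep r$ using $C(n, Y, \ep)$ balls, one iteratively builds a cover of $\MS^{\ep,k}_{\sigma r, \ep r} \cap B^*(z_0, r)$ at each scale $r_j$ by balls centered at points of $\MS^{\ep,k}_{\sigma r, \ep r}$.

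At each step, every ball $B^*(x_{j,\alpha}, r_j)$ in the current cover is refined by one of two rules. In a \emph{good step}, the center $x_{j,\alpha}$ is $(\delta, r_j)$-selfsimilar. Since $x_{j,\alpha} \in \MS^{\ep,k}_{\sigma r, \ep r}$ cannot be $(k+1, \ep, s)$-symmetric for any $s \in [\sigma r, \ep r] \supset [\ep^{1/2}\beta r_j, r_j]$, Lemma \ref{lem:keycover} produces a cover by at most $A_1 := C(n, Y) \beta^{-k}$ balls of radius $r_{j+1}$. In a \emph{bad step}, $x_{j,\alpha}$ fails to be $(\delta, r_j)$-selfsimilar, and a trivial volume-comparison cover via Proposition \ref{prop:volumebound} yields at most $A_2 := C(n) \beta^{-(n+2)}$ balls of radius $r_{j+1}$.

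The heart of the argument is to bound the number of bad steps along any descending chain $B^*(x_{0,\alpha_0}, r_0) \supset \cdots \supset B^*(x_{J,\alpha_J}, r_J)$. Taking $\beta \ll \delta$ ensures the intervals $[\delta r_l^2, \delta^{-1} r_l^2]$ for distinct $l$ are disjoint, so each bad step at level $l$ contributes a drop
\[
\widetilde \WW_{x_{l,\alpha_l}}(\delta r_l^2) - \widetilde \WW_{x_{l,\alpha_l}}(\delta^{-1} r_l^2) > \delta
\]
at its own center. To convert these per-center drops into a global budget, one transfers them to a common reference using the nonexpanding estimate of Lemma \ref{lem:nonexpand} together with the continuity of $\widetilde \WW$ (Proposition \ref{prop:Wconv1}) and its monotonicity (Lemma \ref{lem:monolimit}); combined with the uniform upper and lower bounds on $\widetilde \WW$ on $\III^-$, this yields a chain-independent bound $N_0 = N_0(n, Y, \delta)$ on the number of bad steps. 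Consequently, the total count at scale $r_J$ satisfies
\[
N_J \le C(n, Y, \ep)\, A_1^J\, (A_2/A_1)^{N_0} \le C(n, Y, \ep, \delta)\, \beta^{-kJ}.
\]
Since $\beta^J \asymp \sigma/(\ep^{3/2}\beta)$, one has $\beta^{-kJ} \asymp C(\ep)\sigma^{-k}$; taking $\beta$ small enough that $\log C(n,Y,\ep,\delta)/\log(\beta^{-1}) \le \ep$ absorbs the multiplicative prefactor into $\sigma^{-\ep}$. A final bounded-multiplicity covering of each radius-$r_J$ ball by radius-$\sigma r$ balls then yields the desired bound $N \le C(n, Y, \ep)\sigma^{-k-\ep}$.

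The main technical obstacle is the entropy-drop transfer across different chain centers: one must compare $\widetilde \WW_{x_{l,\alpha_l}}$ at a moving center $x_{l,\alpha_l}$ to a fixed reference in order to bound $N_0$ by the global entropy budget. This is where Lemmas \ref{lem:nonexpand}, \ref{lem:monolimit}, and Proposition \ref{prop:Wconv1} are needed in an essential way. Once this bookkeeping is in place, the dichotomy supplied by Lemma \ref{lem:keycover} and the volume bound of Proposition \ref{prop:volumebound} assemble directly into the claimed estimate.
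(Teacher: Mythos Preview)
Your overall strategy—quantitative stratification with a good/bad dichotomy at each scale, using Lemma~\ref{lem:keycover} for good steps and Proposition~\ref{prop:volumebound} for bad steps—matches the paper's, but your bound on the number of bad steps has a genuine gap. First, your dichotomy is based on whether the \emph{center} $x_{j,\alpha}$ is $(\delta,r_j)$-selfsimilar, whereas Lemma~\ref{lem:keycover} only covers the set $S$ of $(\delta,r_j)$-selfsimilar points in $B^*(x_{j,\alpha},r_j)$; points of $\MS^{\ep,k}_{\sigma r,\ep r}\cap B^*(x_{j,\alpha},r_j)$ that themselves fail self-similarity at scale $r_j$ escape the good-step cover regardless of the center's status. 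Second, your entropy-transfer step is not justified: Lemma~\ref{lem:nonexpand} requires one of the two points to be selfsimilar (which fails precisely at a bad step), and comparing $\widetilde\WW_{x_{l,\alpha_l}}(\delta r_l^2)$ to $\widetilde\WW_p(\delta r_l^2)$ for a fixed reference $p$ along the chain is hopeless at the small end of the interval, since $d_Z(x_{l,\alpha_l},p)\asymp r_l \gg \sqrt{\delta}\,r_l$.

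The paper sidesteps both issues at once. For each \emph{fixed} point $x$, the monotonicity and boundedness of $\tau\mapsto\widetilde\WW_x(\tau)$ already bound the number of scales $j$ at which $x$ fails $(\delta,\beta^j\ep r)$-selfsimilarity by some $Q=Q(Y,\delta,\beta,\ep)$—no comparison across points is required. One then partitions $B^*(z_0,r)$ into at most $m^Q$ sets $E(a_0,\ldots,a_{m-1})$ according to the per-point self-similarity pattern and covers each set separately: when $a_j=0$, \emph{all} points of $E(a_0,\ldots,a_{m-1})$ are $(\delta,\beta^j\ep r)$-selfsimilar, so Lemma~\ref{lem:keycover} applies cleanly to the whole set; when $a_j=1$, the crude volume cover is used. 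The final count is $N\le C(n,Y,\ep)\,m^Q(C_1\beta^{-n-2})^{1+Q}(C_1\beta^{-k})^{m-1}$, and choosing $\beta$ so that $C_1\le\beta^{-\ep/2}$ gives the claim.
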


\begin{proof}
Without loss of generality, we assume $r=1$. In the following proof, we will choose $\beta=\beta(n, Y, \ep)$. Once $\beta$ is fixed, set $\delta=\delta(n, Y, \ep, \beta)$ so that Lemma \ref{lem:keycover} holds.

We first prove that there exists $x_1, x_2, \ldots, x_N \in B^*(z_0, 1)$ with $N \le C(n, Y, \ep) \sigma^{-k-\ep}$ such that
	\begin{equation} \label{eq:weakcover}
		\MS^{\ep,k}_{\ep^{1/2}\sigma , \ep } \bigcap B^* (z_0, 1) \subset \bigcup_{j=1}^{N} B^* (x_j, \sigma).
	\end{equation}
Once \eqref{eq:weakcover} is established, we immediately obtain the covering
	\begin{equation*}
		\MS^{\ep,k}_{\sigma , \ep } \bigcap B^* (z_0, 1) \subset \bigcup_{j=1}^{N} B^* (x_j, \ep^{-1/2}\sigma).
	\end{equation*}
By applying Proposition \ref{prop:volumebound}, each ball $B^* (x_j, \ep^{-1/2}\sigma)$ can be further covered by at most $C(n, Y, \ep)$ balls of radius $\sigma$. This completes the proof.

To prove \eqref{eq:weakcover}, it suffices to prove the conclusion for $\sigma=\beta^m \ep$, where $m\in\N$. First, note that for any $x \in B^*(z_0,r)$, the number of indices $j\in\N$ such that $x$ is not $(\delta,\beta^j \ep)$-selfsimilar is bounded by 
	\begin{equation*}
Q=Q(Y,\delta,\beta, \ep)=Q(n, Y, \beta, \ep).
	\end{equation*}

For any finite sequence $a_0,a_1,\ldots,a_{m-1}\in \{0,1\}$, define
	\begin{align*}
		E(a_0,a_1,\ldots,a_{m-1}):=\{x \in B^*(z_0,1) \mid x \ \mathrm{is}\ (\delta,\beta^j \ep)-\mathrm{selfsimilar\ if\ and\ only\ if}\ a_j=0\}.
	\end{align*}
If $a_0+a_1+\ldots +a_{m-1}>Q$, then $E(a_0,a_1,\ldots,a_{m-1})=\emptyset$. Thus, there are at most $m^Q$ such non-empty sets which together cover $B^*(z_0,1)$.
	
	\begin{claim}\label{cla:indcover}
		There exists a constant $C_1=C_1(n,Y)$ such that the following holds. 
		
For any $j\in \{0,1,\ldots,m-1\}$, any $x \in B^*(z_0,1)$, and any $a_0,a_1,\ldots,a_{m-1}\in \{0,1\}$, one can find $\{y_i\}_{1\leq i\leq N'}\subset \MS^{\ep,k}_{\ep^{1/2}\sigma, \ep} \bigcap E(a_0,a_1,\ldots,a_{m-1})\bigcap B^*(x,\beta^j  \ep)$ such that
		\begin{align*}
			\MS^{\ep,k}_{\ep^{1/2}\sigma, \ep}\bigcap E(a_0,a_1,\ldots,a_{m-1})\bigcap B^*(x,\beta^j \ep)\subset \bigcup_{i=1}^{N'} B^*(y_i,\beta^{j+1} \ep),
		\end{align*}
where $N'\leq C_1\beta^{-k}$ if $a_j=0$ and $N'\leq C_1 \beta^{-n-2}$ if $a_j=1$. 
	\end{claim}
	
	In fact, if $a_j=0$, applying Lemma \ref{lem:keycover} with $r=\beta^j \ep$ yields the desired covering. If $a_j=1$, the covering follows directly from Proposition \ref{prop:volumebound}. This proves the claim.
	
Note that $B^*(z_0, 1)$ can be covered by at most $C(n, Y, \ep)$ balls of radius $\ep$. Thus, by applying Claim \ref{cla:indcover} inductively, $\MS^{\ep,k}_{\ep^{1/2}\sigma, \ep}\bigcap E(a_0,a_1,\ldots,a_{m-1})$ can be covered by at most 
			\begin{align*}
C(n, Y, \ep)(C_1 \beta^{-n-2})^{1+\sum a_j}(C_1 \beta^{-k})^{m-1-\sum a_j}
		\end{align*}
balls of the form $B^*(y,\sigma)$ where $y \in B^*(z_0,1)$. Therefore, $\MS^{\ep,k}_{\ep^{1/2}\sigma, \ep}\bigcap B^*(z_0,1)$ can be covered by at most
	\begin{align*}
N\leq C(n, Y, \ep) m^Q (C_1 \beta^{-n-2})^{1+Q}(C_1 \beta^{-k})^{m-1} \leq C(n, Y, \beta, \ep) m^Q C_1^m \beta^{-mk}
		\end{align*}
	such balls. 
	
Now, we choose $\beta=\beta(n, Y, \ep)$ such that $\beta^{-\ep/2}\geq C_1$. Then
	\begin{align*}
N\leq C(n, Y, \ep) m^Q \beta^{-mk-m\ep/2} \leq C(n,Y, \ep) \beta^{-mk-m\ep} \le C(n,Y, \ep) \sigma^{-k-\ep},
		\end{align*}	
which completes the proof.
\end{proof}

Based on Proposition \ref{prop:quansingcover}, we obtain the following result by the same argument as in \cite[Corollaries 8.15, 8.16, 8.17]{fang2025RFlimit}.

\begin{cor}\label{cor:kthcover}
Given $z_0 \in Z$, $\ep>0$ and $r>0$ with $\t(z_0)-2 r^2 \in \III^-$, the following statements are true for any $k \in \{0, 1,\ldots, n-2\}$.
		\begin{enumerate}[label=\textnormal{(\alph{*})}]
			\item For any $\delta \in (0, \ep)$,
	\begin{align*}
\abs{B^*_{\delta r} \lc \MS^{\ep, k}_{\delta r,\ep r} \rc \cap B^* (z_0, r)} \le C(n,Y, \ep) \delta^{n+2-k-\ep} r^{n+2}.
	\end{align*}
			
		\item For any $\delta \in (0, \ep)$ and $t \in \R$,
	\begin{align*}
\abs{B^*_{\delta r} \lc \MS^{\ep, k}_{\delta r,\ep r} \rc \cap B^* (z_0, r) \cap Z_t}_t \le C(n,Y, \ep) \delta^{n-k-\ep} r^{n}.
	\end{align*}		
	
		\item The Minkowski dimension satisfies
			\begin{align*}
		\dim_{\MMM} \mathcal S^k \le k.
	\end{align*}
				\end{enumerate}
\end{cor}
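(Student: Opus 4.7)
The plan is to derive all three statements directly from the covering Proposition \ref{prop:quansingcover} combined with the volume estimates of Proposition \ref{prop:volumebound} and its time-slice analogue, following the strategy of \cite[Corollaries 8.15--8.17]{fang2025RFlimit}.

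For part (a), I would apply Proposition \ref{prop:quansingcover} with scale $\sigma = \delta$ to cover $\MS^{\ep,k}_{\delta r, \ep r} \cap B^*(z_0, r)$ by at most $N \le C(n, Y, \ep)\delta^{-k-\ep}$ balls $B^*(x_j, \delta r)$. The $\delta r$-neighborhood of this set is then covered by the enlarged balls $B^*(x_j, 2\delta r)$ (possibly after covering each neighborhood ball by a bounded number of such balls), and Proposition \ref{prop:volumebound} gives $|B^*(x_j, 2\delta r)| \le C(n)(\delta r)^{n+2}$. Summing yields
\begin{equation*}
\abs{B^*_{\delta r}\lc \MS^{\ep,k}_{\delta r, \ep r}\rc \cap B^*(z_0, r)} \le N \cdot C(n)(\delta r)^{n+2} \le C(n, Y, \ep)\delta^{n+2-k-\ep} r^{n+2}.
\end{equation*}

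For part (b), the argument is parallel, but I would intersect each covering ball with the time-slice $Z_t$. Here I would invoke the time-slice volume bound (analogous to \cite[Corollary 8.16]{fang2025RFlimit}), which follows from the conjugate heat kernel estimates and the structure of $(Z_t, d^Z_t)$ established in the companion paper: each intersection $B^*(x_j, 2\delta r) \cap Z_t$ has $n$-dimensional measure at most $C(n, Y)(\delta r)^n$ because the time direction contributes two effective dimensions. Summing over the $N$ balls gives the claimed bound $C(n, Y, \ep)\delta^{n-k-\ep} r^n$.

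For part (c), I would use the identity
\begin{equation*}
\MS^k = \bigcup_{\ep \in (0, L^{-1})} \bigcap_{0 < r < \ep L} \MS^{\ep,k}_{r, \ep L},
\end{equation*}
recalled earlier in the excerpt. Fix any $\ep' > 0$ and choose $\ep < \ep'$ small enough. For each fixed $\ep$, part (a) applied on any bounded region yields $|B^*_{\delta r}(\MS^{\ep,k}_{\delta r, \ep r})| \le C \delta^{n+2-k-\ep}$ for arbitrarily small $\delta$, which by the definition of Minkowski dimension (with codimension $2$ for the time direction absorbed into the ambient volume exponent $n+2$) gives $\dim_{\MMM}(\bigcap_r \MS^{\ep,k}_{r, \ep L}) \le k + \ep$. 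Taking the countable union over $\ep \to 0$ yields $\dim_{\MMM}\MS^k \le k$.

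The routine steps are the volume summation in (a) and the Minkowski-dimension bookkeeping in (c); the only nontrivial point is the time-slice volume estimate used in (b), since a generic metric $d^*$-ball $B^*(x, s)$ need not be contained in a single time slice and the induced measure on $Z_t$ requires the structure theory of $(Z_t, d^Z_t)$. I would handle this by either citing the corresponding estimate from \cite[Corollary 8.16]{fang2025RFlimit} directly or reproducing its proof: reduce to the smooth Ricci flow approximants $\XX^l$ via the $\hat C^\infty$-convergence of Theorem \ref{thm:intro3}, apply the Gaussian heat-kernel upper bound to control $|B_t(x, s)|_{g(t)}$, and pass to the limit.
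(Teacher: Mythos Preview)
Your proposal is correct and follows exactly the approach indicated in the paper, which simply states that the corollary follows from Proposition \ref{prop:quansingcover} by the same argument as in \cite[Corollaries 8.15, 8.16, 8.17]{fang2025RFlimit}. Your sketch of parts (a)--(c) is in fact more detailed than the paper's own one-line proof and correctly identifies the covering-plus-volume-bound mechanism, including the need to invoke the time-slice volume estimate from the companion paper for (b).
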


Note that a similar result was proved in \cite[Theorem 1.9 (a)]{bamler2020structure}. However, our definition of the stratum $\mathcal S^k$ differs from that in \cite{bamler2020structure}: in our setting, a point $z \in \mathcal S^k$ may admit a tangent flow that is $k$-splitting and quasi-static. For further discussion, we refer the reader to \cite[Section 8]{fang2025RFlimit}.

\section{Rectifiability of cylindrical singularities}\label{secrectifiablitycyl}

In this section, we prove the rectifiability of cylindrical singularities in a noncollapsed Ricci flow limit space. Throughout, we fix a noncollapsed Ricci flow limit space $(Z,d_Z,\t)$, which is obtained as a pointed Gromov-Hausdorff limit of a sequence in $\MM(n, Y, T)$. As before, we set 
\begin{align*}
\III:=[-0.98 T,0] \quad \text{and} \quad \III^-:=(-0.98 T,0].
	\end{align*}
Moreover, the regular part is given by a Ricci flow spacetime $(\RR, \t, \partial_\t, g^Z)$. For simplicity, we use $B^*(x,r)$ instead of $B_Z^*(x,r)$ for the metric balls in $Z$ with respect to $d_Z$.

\subsection{Discrete Lojasiewicz inequality for Ricci flow limit spaces}

In this section, we first prove the summability result of the $\widetilde \WW$-entropy.

\begin{prop}\label{sumWonRFlimit}
	For any $0<\ep\leq\ep(n,Y)$, $\alpha\in (1/4,1)$, and $\delta\leq\delta(n,Y,\ep,\alpha)$, the following holds. Suppose
	\begin{align*}
		\left|\widetilde \WW_{z}(s_1)-\widetilde \WW_{z}(s_2)\right|<\delta,
	\end{align*}
for $0<s_1<s_2$, and for any $s\in [s_1,s_2]$, $z$ is $(k,\delta,\sqrt{s})$-cylindrical (see Definition \ref{def:almost0}). Then
	\begin{align*}
		\sum_{s_1\leq r_j=2^{-j} \leq s_2}\left|\widetilde \WW_{z}(r_j)-\widetilde \WW_{z}(r_{j-1})\right|^{\alpha}<\ep.
	\end{align*}
\end{prop}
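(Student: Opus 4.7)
The plan is to lift the discrete Lojasiewicz inequality for closed Ricci flows from \cite[Theorem 1.3, Corollary 1.4]{FLloja05} to the limit space $(Z, d_Z, \t)$ via approximation and careful control of the entropy at scales outside the measure-zero set $J^z$. Since all summands are nonnegative, it suffices to bound every finite partial sum $\sum_{j \in J}|\widetilde\WW_z(r_j)-\widetilde\WW_z(r_{j-1})|^{\alpha}$ by $\ep$, independently of the finite index set $J$ of dyadic scales in $[s_1,s_2]$. Fix such a $J$ and choose approximating points $z_l^* \in \XX^l$ with $z_l^* \to z$ in the pointed Gromov--Hausdorff sense.

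First I would handle the discontinuity set $J^z$. Since $J^z$ has measure zero in $\R_{>0}$ and $\widetilde\WW_z$ is left-continuous by Definition \ref{defnWRFlimit1}, for each $j \in J$ I select sequences $\tau_j^{(m)} \nearrow r_j$ with $\tau_j^{(m)} \notin J^z$, so that $\widetilde\WW_z(r_j) = \lim_m \WW_z(\tau_j^{(m)})$. Analogously I pick $s_1^{(m)} \searrow s_1$ and $s_2^{(m)} \nearrow s_2$ outside $J^z$. By Proposition \ref{prop:Wconv1}, $\WW_{z_l^*}(\tau + o_l(1)) \to \WW_z(\tau)$ for every $\tau \notin J^z$, where $o_l(1) := \t(z)-\t_l(z_l^*) \to 0$.

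Next I would transfer the hypotheses to $\XX^l$. The $(k,\delta,\sqrt{s})$-cylindricality condition is open in the Gromov--Hausdorff topology and jointly continuous in the scale parameter $s$; combined with the $\hat{C}^\infty$-convergence $\XX^l \to Z$ and compactness of $[\log s_1^{(m)}, \log s_2^{(m)}]$, this upgrades to the uniform statement that $z_l^*$ is $(k, 2\delta, \sqrt{s})$-cylindrical for every $s \in [s_1^{(m)}, s_2^{(m)}]$ once $l$ is large. Together with the endpoint entropy convergence one gets $|\WW_{z_l^*}(s_1^{(m)}+o_l(1))-\WW_{z_l^*}(s_2^{(m)}+o_l(1))|<2\delta$. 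Applying the closed-flow discrete Lojasiewicz inequality \cite[Theorem 1.3, Corollary 1.4]{FLloja05} to $z_l^*$ with parameters $(k,2\delta,\alpha)$ and threshold $\ep/2$, provided $\delta \leq \delta(n,Y,\ep,\alpha)$, yields
\begin{align*}
\sum_{j \in J}\left|\WW_{z_l^*}(r_j+o_l(1))-\WW_{z_l^*}(r_{j-1}+o_l(1))\right|^{\alpha}<\ep/2.
\end{align*}
Taking first $l\to\infty$---using the monotonicity of the smooth $\WW_{z_l^*}$ in $\tau$ to sandwich its value at $r_j+o_l(1)$ between the convergent values at $\tau_j^{(m)}$ and $\tau_{j-1}^{(m)}$---and then $m\to\infty$, I obtain the desired bound on the partial sum.

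The main obstacle lies in the bookkeeping at $J^z$: the pointwise convergence $\WW_{z_l^*}(\tau)\to\WW_z(\tau)$ from Proposition \ref{prop:Wconv1} fails precisely on this set, while the target inequality involves $\widetilde\WW_z$ evaluated at the dyadic scales $r_j$, which need not lie outside $J^z$. This is resolved by exploiting the left-continuity of $\widetilde\WW_z$ built into Definition \ref{defnWRFlimit1}, the monotonicity of the smooth $\WW_{z_l^*}$ in $\tau$, and the freedom to choose $\tau_j^{(m)}$ outside the countable set $J^z$. A secondary technical point is the uniform scale transfer of the cylindrical condition from $z$ to $z_l^*$ on the entire interval $[s_1, s_2]$, not merely at finitely many scales; this is achieved via continuity of the closeness condition in the rescaling parameter combined with compactness in the logarithmic scale.
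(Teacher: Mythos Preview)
Your approach is correct and closely related to the paper's, but the order of operations differs. The paper first establishes the \emph{pointwise} Lojasiewicz inequality on the limit space,
\[
\bigl|\widetilde\WW_z(r_j)-\Theta_{n-k}\bigr|^{1+\theta}\le C(n,Y)\bigl(\widetilde\WW_z(r_{j+1})-\widetilde\WW_z(r_{j-1})\bigr),
\]
by applying \cite[Lemma~2.27, Theorem~6.26]{FLloja05} on each smooth approximant at a single scale $s$ chosen so that $s,\,s/2,\,2s\notin J^z$, passing to the limit via Proposition~\ref{prop:Wconv1}, and then letting $s\nearrow r_j$ using left-continuity of $\widetilde\WW_z$. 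The summability then follows from the argument of \cite[Corollary~6.28]{FLloja05} applied directly on $Z$. You instead apply the packaged summability \cite[Corollary~1.4]{FLloja05} on each $\XX^l$ and pass the whole sum to the limit. Both routes are valid; the paper's localizes the limiting step to three scales at a time, which makes the $J^z$ bookkeeping transparent, whereas your route avoids re-running the summation argument at the cost of a more delicate limit.

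One point to tighten: your sandwich of $\WW_{z_l^*}(r_j)$ between $\WW_{z_l^*}(\tau_j^{(m)})$ and $\WW_{z_l^*}(\tau_{j-1}^{(m)})$ does not by itself control the individual differences $\WW_{z_l^*}(r_j)-\WW_{z_l^*}(r_{j-1})$ well enough at potential jump scales. The clean fix is to work at the shifted scales throughout: since $\widetilde\WW_z(r_j)-\widetilde\WW_z(r_{j-1})=\lim_m\bigl(\widetilde\WW_z(\tau_j^{(m)})-\widetilde\WW_z(\tau_{j-1}^{(m)})\bigr)$ by left-continuity, and $\tau_j^{(m)}\notin J^z$ so $\widetilde\WW_z(\tau_j^{(m)})=\lim_l\WW_{z_l^*}(\tau_j^{(m)})$, it suffices to bound $\sum_j\bigl(\WW_{z_l^*}(\tau_j^{(m)})-\WW_{z_l^*}(\tau_{j-1}^{(m)})\bigr)^\alpha$. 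For large $m$ one has $r_{j+1}<\tau_j^{(m)}<r_j<\tau_{j-1}^{(m)}<r_{j-1}$, so by monotonicity and subadditivity of $x\mapsto x^\alpha$ this sum is at most $2\sum_j\bigl(\WW_{z_l^*}(r_j)-\WW_{z_l^*}(r_{j-1})\bigr)^\alpha$, which is controlled by the smooth discrete Lojasiewicz. The factor $2$ is absorbed by shrinking $\delta$.
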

\begin{proof}
As in the proof of \cite[Corollary 6.28]{FLloja05}, it suffices to show that for any $j$ with $s_1\leq r_j\leq s_2$ and any $\theta\in (1/3,1)$, we have
	\begin{align}\label{sumW1}
		\left|\widetilde \WW_{z}(r_j)-\Theta_{n-k}\right|^{1+\theta}\leq C(n,Y)\lc \widetilde \WW_{z}(r_{j+1})-\widetilde \WW_{z}(r_{j-1})\rc,
	\end{align}
	provided that $\delta \le \delta(n, Y, \theta)$.
	
Assume that $\XX^l=\{M_l^n,(g_l(t))_{t\in \III_l}\}\in\MM(n,Y,T)$ with $ z_l^*\in\XX^l$ satisfy
		\begin{align*}
		\left(\XX^l,d_l^*, z_l^*,\t_l\right)\xrightarrow[l\to\infty]{\quad \hat C^\infty \quad}\left(Z,d_Z,z,\t\right).
	\end{align*} 
	Then, by our assumption, $z_l^*$ is $(k, 2\delta, \sqrt{r_j})$-cylindrical for any sufficiently large $l$. By \cite[Lemma 2.27]{FLloja05} and \cite[Theorem 6.26]{FLloja05}, we have
		\begin{align*}
		\left|\WW_{z_l^*}(s)-\Theta_{n-k}\right|^{1+\theta}\leq C(n,Y)\lc\WW_{z_l^*}(s/2)- \WW_{z_l^*}(2s)\rc.
	\end{align*}
	for any $s \in [r_j/2, 2r_j]$. Letting $l \to \infty$ and using Proposition \ref{prop:Wconv1}, we conclude that for any $\ep'>0$, there exists $s \in (r_j-\ep', r_j]$ with $s, s/2, 2s \notin J^z$ such that
		\begin{align*}
		\left|\WW_{z}(s)-\Theta_{n-k}\right|^{1+\theta}\leq C(n,Y)\lc\WW_{z}(s/2)- \WW_{z}(2s)\rc.
	\end{align*}	
Taking $\ep' \to 0$, we obtain from Definition \ref{defnWRFlimit1} that
			\begin{align*}
\left|\widetilde \WW_{z}(r_j)-\Theta_{n-k}\right|^{1+\theta}\leq C(n,Y)\lc \widetilde \WW_{z}(r_{j+1})-\widetilde \WW_{z}(r_{j-1})\rc.
	\end{align*}	
In other words, \eqref{sumW1} holds, which completes the proof.
\end{proof}

\subsection{Basic properties of almost cylindrical points}

We begin with the following lemma, which shows that the entire flow is rapidly clearing out after an almost cylindrical singularity.

\begin{lem}[Rapid clearing out]\label{clearout}
Suppose $z\in Z$ is $(k,\delta,r)$-cylindrical (see Definition \ref{def:almost0}). For any $\ep>0$, if $\delta\leq\delta(n,Y,\ep)$, we have 
	\begin{align*}
		\t^{-1}\big([\ep r^2, \infty)\big)\bigcap B^*(z,\ep^{-1}r)=\emptyset.
	\end{align*}
\end{lem}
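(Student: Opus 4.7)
The plan is to argue by contradiction, reducing to the fact that the model cylinder $\bar{\mathcal C}^k$ contains no points of positive time. After rescaling so that $r=1$ and translating so that $\t(z)=0$, I would assume for contradiction that there exists $w\in B^{*}(z,\ep^{-1})$ with $\t(w)\ge\ep$. The hypothesis that $z$ is $(k,\delta,1)$-cylindrical (Definition \ref{def:almost0}) says that $(Z,d_Z,z,\t)$ is $\delta$-close to $(\bar{\mathcal C}^k, d^{*}_{\mathcal C}, p^{*}, \t)$ over $[-\delta^{-1},\delta^{-1}]$ in the sense of Definition \ref{defn:close}.

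The key observation is that $\t$ on $\bar{\mathcal C}^k$ takes values only in $(-\infty,0]$: the smooth flow $\mathcal C^k$ is defined for $t<0$, and as recorded in Subsection \ref{cacp} the metric completion adds only the spine at $t=0$. Consequently, the image of the time-preserving embedding $\phi:U\to Z$ from Definition \ref{defn:close}(a)-(b) lies in $Z_{[-\delta^{-1},0)}$.

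Next I would choose $\delta=\delta(\ep)$ small enough that $\ep^{-1}\le \delta^{-1}-\delta$ and $\delta^{2}<\ep$. Since $|\t(w)|\le\ep^{-1}\le\delta^{-1}$, the point $w$ lies in $B^{*}_{Z}(z,\delta^{-1}-\delta)\cap Z_{[-\delta^{-1},\delta^{-1}]}$, so property (d) of Definition \ref{defn:close} supplies some $x\in U$ with $d_Z(w,\phi(x))\le\delta$. Combining the parabolic inequality $|\t(y_1)-\t(y_2)|\le d_Z^{2}(y_1,y_2)$ with the time-preserving property of $\phi$, I would estimate
\[
\t(w)\le \t(\phi(x))+d_Z^{2}(w,\phi(x))\le 0+\delta^{2}<\ep,
\]
contradicting $\t(w)\ge\ep$.

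I do not anticipate any serious obstacle here: the statement is essentially a direct packaging of (i) the model $\bar{\mathcal C}^k$ having no points with $\t>0$, and (ii) the Lipschitz control of $\t$ by $d_Z^{2}$ built into every parabolic space. The only subtlety is to ensure that both the spatial ball $B^{*}(z,\ep^{-1})$ and the relevant time window fit strictly inside the scale on which the closeness is asserted, which is arranged by choosing $\delta$ small depending only on $\ep$.
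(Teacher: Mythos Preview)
Your approach is correct and essentially the same idea as the paper's, though carried out directly rather than via a limiting argument. The paper argues by contradiction along a sequence: if the conclusion fails for $(k,l^{-2},1)$-cylindrical points $z_l$, the offending points $y_l$ converge to some $y_\infty\in\bar{\mathcal C}^k_{[\ep,\infty)}$, which is empty. You instead work at a single fixed $\delta$ and use property (d) of Definition~\ref{defn:close} together with the time-preserving nature of $\phi$ to force $\t(w)<\delta^2<\ep$. Both arguments rest on the same fact---that $\bar{\mathcal C}^k$ has no points of positive time---and both are short; yours has the slight advantage of being quantitative without passing to a limit.

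One minor slip: from $w\in B^*(z,\ep^{-1})$ the parabolic inequality gives $|\t(w)|\le d_Z^2(z,w)\le \ep^{-2}$, not $\ep^{-1}$. So to ensure $w\in Z_{[-\delta^{-1},\delta^{-1}]}$ you need $\delta\le\ep^{2}$ rather than $\delta\le\ep$. This is harmless since $\delta$ is allowed to depend on $\ep$, and the rest of the argument goes through unchanged.
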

\begin{proof}
Without loss of generality, we assume $r=1$. Suppose that the conclusion fails. Then we can find a sequence of Ricci flow limit spaces $(Z^l, d_{Z^l}, \t_l)$ and $(k, l^{-2},1)$-cylindrical points $z_l\in Z^l$ such that
	\begin{align*}
	y_l \in	\t_l^{-1}\big([\ep , \infty)\big)\bigcap B^*(z_l,\ep^{-1}).
	\end{align*}
It follows from Definition \ref{def:almost0} that
	\begin{align*}
		(Z^l, d_{Z^l},z_l, \t_l)\xrightarrow[l\to\infty]{\quad \hat C^\infty \quad }(\bar{\mathcal C}^k, d_{\mathcal C}^*, p^*,\t).
	\end{align*}
	Moreover, $y_l\xrightarrow[l\to\infty]{}y_\infty\in \bar{\mathcal C}^k_{[\ep,\infty)} \cap B^*(p^*, 2\ep^{-1})$. However, it is clear that $\bar{\mathcal C}^k_{[\ep, \infty)}$ is empty, which gives the contradiction. This finishes the proof.
\end{proof}

Next, we prove

\begin{prop}\label{prop:almostconstant}
Suppose $z \in Z$ is $(k, \delta, r)$-cylindrical with respect to $\mathcal L_{z,r}$. For any $\ep>0$, if $\delta \le \delta(n, Y, \ep)$, then the following conclusions hold.
	\begin{enumerate}[label=\textnormal{(\roman{*})}]
		\item For any $\tau \in [\ep r^2, \ep^{-1} r^2]$,
	\begin{align*}
	\abs{\widetilde \WW_z(\tau)-\Theta_{n-k}}<\ep.
	\end{align*}
		\item Any point in $ B^*(z, \ep^{-1} r) \cap B^*_{\delta r}\lc \mathcal L_{z,r} \rc$ is $(k, \ep, s)$-cylindrical for any $s \in [\ep r, \ep^{-1} r]$.
		\item If $x \in B^*(z, \ep^{-1} r) \cap Z_{[-r^2, r^2]}$ is $(k, \delta, s)$-cylindrical with respect to $\mathcal L_{x, s}$ for some $s \in [\ep r, \ep^{-1} r]$, then
\begin{align*}
d_{\mathrm{H}} \lc \mathcal L_{x, s} \cap B^*(x, \ep^{-1} s), \mathcal L_{z, r} \cap B^*(x, \ep^{-1} s)  \rc < \ep r,
\end{align*}
where $d_{\mathrm{H}}$ denotes the Hausdorff distance with respect to $d_Z$.
	\end{enumerate}	
\end{prop}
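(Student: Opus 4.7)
The plan is to prove all three statements by contradiction and compactness, exploiting the $\hat C^\infty$-convergence of Theorem \ref{thm:intro3}, the weak compactness of the moduli space (Theorem \ref{thm:intro1}), and the translation symmetry of the model space $\bar{\mathcal C}^k$ along its spine. Throughout, we normalize $r=1$.

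For part (i), assume the conclusion fails. Then there exist $\ep>0$, a sequence of noncollapsed Ricci flow limit spaces $Z^l$, points $z_l\in Z^l$ that are $(k, l^{-2}, 1)$-cylindrical with respect to some $\LL_{z_l,1}$, and $\tau_l\in[\ep, \ep^{-1}]$ with $|\widetilde{\WW}_{z_l}(\tau_l)-\Theta_{n-k}|\ge \ep$. Passing to a subsequence, $\tau_l\to\tau_\infty\in[\ep,\ep^{-1}]$ and
\begin{equation*}
(Z^l, d_{Z^l}, z_l,\t_l)\xrightarrow[l\to\infty]{\quad \hat{C}^\infty \quad}(\bar{\mathcal C}^k, d^*_{\mathcal C}, p^*,\t).
\end{equation*}
Since $\bar{\mathcal C}^k$ is self-similar, $\WW_{p^*}(\tau)=\Theta_{n-k}$ for every $\tau>0$. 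By Proposition \ref{prop:Wconv1}, the exceptional set $J^{p^*}$ has measure zero, so we may choose $\tau_1<\tau_\infty<\tau_2$ with $\tau_1,\tau_2\notin J^{p^*}$ and $[\tau_1,\tau_2]\subset[\ep/2, 2\ep^{-1}]$. Proposition \ref{prop:Wconv1} gives $\widetilde{\WW}_{z_l}(\tau_i)\to\Theta_{n-k}$ for $i=1,2$, and the monotonicity $\widetilde{\WW}_{z_l}(\tau_2)\le \widetilde{\WW}_{z_l}(\tau_l)\le\widetilde{\WW}_{z_l}(\tau_1)$ (for large $l$) squeezes $\widetilde{\WW}_{z_l}(\tau_l)\to\Theta_{n-k}$, contradicting the choice of $\tau_l$.

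For part (ii), suppose again for contradiction that there exist $z_l$ which are $(k,l^{-2},1)$-cylindrical with respect to $\LL_{z_l,1}$, points $x_l\in B^*(z_l,\ep^{-1})\cap B^*_{l^{-2}}(\LL_{z_l,1})$, and scales $s_l\in[\ep,\ep^{-1}]$ such that $x_l$ is not $(k,\ep,s_l)$-cylindrical. After passing to a subsequence, $s_l\to s_\infty\in[\ep,\ep^{-1}]$, and under the $\hat{C}^\infty$-convergence to $(\bar{\mathcal C}^k, d^*_{\mathcal C}, p^*, \t)$, the points $x_l$ converge to some $x_\infty$. The condition $x_l\in B^*_{l^{-2}}(\LL_{z_l,1})$ forces $x_\infty\in\bar{\mathcal C}^k_0=\R^k\times\{0\}$, i.e., $x_\infty$ lies on the spine. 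Because $\bar{\mathcal C}^k$ admits an isometric $\R^k$-action by translation of the Euclidean factor, the rescaled pointed flow $(\bar{\mathcal C}^k, s_\infty^{-1}d^*_{\mathcal C}, x_\infty, s_\infty^{-2}\t)$ is isometric to $(\bar{\mathcal C}^k, d^*_{\mathcal C}, p^*,\t)$. Composing the $\hat{C}^\infty$-convergence with this isometry yields that $x_l$ is $(k,\ep,s_l)$-cylindrical for all sufficiently large $l$, a contradiction.

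For part (iii), the same compactness procedure gives $z_l$ as above and $x_l\in B^*(z_l,\ep^{-1})\cap Z^l_{[-1,1]}$ which are $(k,l^{-2},s_l)$-cylindrical with respect to $\LL_{x_l,s_l}$, and yet $d_{\mathrm H}(\LL_{x_l,s_l}\cap B^*(x_l,\ep^{-1}s_l),\,\LL_{z_l,1}\cap B^*(x_l,\ep^{-1}s_l))\ge\ep$. Pass to a subsequence with $s_l\to s_\infty$ and $x_l\to x_\infty\in\bar{\mathcal C}^k_{[-1,1]}$. By Lemma \ref{clearout} (rapid clearing-out) applied at scale $s_\infty$ to $x_l$ for large $l$, we see $x_\infty\in\bar{\mathcal C}^k_{(-\infty,0]}$; and the $(k,l^{-2},s_l)$-cylindricality of $x_l$, combined with the uniqueness of cylindrical tangent flows (Definition \ref{def:ccc} together with the classification \cite[Theorem 2.23]{FLloja05}), forces $x_\infty$ to lie on the spine $\bar{\mathcal C}^k_0=\LL_{z_l,1}\text{-limit}$. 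Using the cylinder's translation symmetry as in part (ii), the $\ep$-map realizing the $(k,l^{-2},s_l)$-cylindricality of $x_l$ sends the spine of the model $\bar{\mathcal C}^k$ to a set which, after passing to the limit, coincides with $\bar{\mathcal C}^k_0$ near $x_\infty$; hence $\LL_{x_l,s_l}\cap B^*(x_l,\ep^{-1}s_l)$ converges in Hausdorff distance to $\bar{\mathcal C}^k_0\cap B^*(x_\infty,\ep^{-1}s_\infty)$, which is precisely the limit of $\LL_{z_l,1}\cap B^*(x_l,\ep^{-1}s_l)$. This contradicts the standing lower bound $\ep$ and completes the argument.

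The main obstacle is part (iii): we must identify, in the limit, both approximate spines $\LL_{x_l,s_l}$ and $\LL_{z_l,1}$ with the \emph{same} $k$-plane in $\bar{\mathcal C}^k$. This relies on Lemma \ref{lem:comparedis1} (which shows that the $d^*_{\mathcal C}$-distance on the spine is bi-Lipschitz equivalent to the Euclidean distance) to prevent degeneration of the spine under the Gromov--Hausdorff convergence, and on the uniqueness of the cylindrical tangent flow to guarantee that the image of the spine under any approximating $\ep$-map is essentially rigid.
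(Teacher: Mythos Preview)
Your proposal is correct and follows the paper's approach exactly: contradiction, compactness, and $\hat C^\infty$-convergence to the model $\bar{\mathcal C}^k$ (the paper proves only (i) in detail and declares (ii), (iii) analogous). One technical point you omit is that Proposition~\ref{prop:Wconv1} is stated for sequences in closed Ricci flows, so---as the paper does---you should first pass to a diagonal subsequence so that each $Z^l$ is itself a closed Ricci flow before invoking it.
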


\begin{proof}
We only prove (i), as (ii) and (iii) can be proved similarly by a limiting argument.

Without loss of generality, we assume that $\t(z)=0$ and $r=1$. Suppose the conclusion (i) fails. Then we can find a sequence of Ricci flow limit spaces $(Z^l, d_{Z^l}, \t_l)$, which is obtained as the limit of a sequence in $\mathcal M(n, Y, T_l)$, and $z_l \in Z^l$ that is $(k, l^{-2}, 1)$-cylindrical, but there exists $s_l \in [\ep, \ep^{-1}]$ such that
\begin{align}\label{eq:almost001}
\abs{\widetilde \WW_{z_l}(s_l)- \Theta_{n-k}} \ge \ep.
\end{align}
By passing to a diagonal subsequence, we may assume that each $(Z^l, d_{Z^l}, \t_l)$ is given by a closed Ricci flow. By our assumption, we have
	\begin{align*}
		(Z^l, d_{Z^l},z_l, \t_l)\xrightarrow[l\to\infty]{\quad \hat C^\infty \quad }(\bar{\mathcal C}^k, d_{\mathcal C}^*, p^*,\t).
	\end{align*}
Then it follows from Proposition \ref{prop:Wconv1} that for almost all $\tau>0$,
	\begin{align*}
	\lim_{l \to \infty} \WW_{z_l}(\tau)=\WW_{p^*}(\tau)=\Theta_{n-k}.
	\end{align*}

Thus, it follows from Definition \ref{defnWRFlimit1} and the monotonicity that $\widetilde \WW_{z_l}(s_l) \to \Theta_{n-k}$, which contradicts \eqref{eq:almost001}. In sum, the proof is complete.
\end{proof}

Next, we show

\begin{prop}\label{prop:almostconstanta}
Suppose $z \in Z$ is $(k, \delta, r)$-cylindrical with respect to $\mathcal L_{z,r}$ and $\vec{u}=(u_1,\ldots, u_k)$ is a $(k, \delta, r)$-splitting map at $z$. For any $\ep>0$, if $\delta \le \delta(n, Y, \ep)$, then the set
\begin{align*}
V_{z, \ep^{-1} r}:=\vec{u} \lc \mathcal L_{z, r} \cap B^*(z, \ep^{-1} r) \rc \subset \R^k
\end{align*}
satisfies
\begin{align*}
d_{\mathrm{H}} \lc V_{z, \ep^{-1} r} \cap B(\vec{0}^k, \ep_0 \ep^{-1} r), B(\vec{0}^k, \ep_0 \ep^{-1} r)  \rc < \ep r,
\end{align*}
where $B(\vec{0}^k, \ep_0 \ep^{-1} r)$ denotes the ball in $\R^k$, $d_{\mathrm{H}}$ denotes the Hausdorff distance with respect to the Euclidean metric, and $\ep_0$ is the spacetime distance constant.
\end{prop}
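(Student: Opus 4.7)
The plan is to argue by contradiction via a compactness argument. Suppose the conclusion fails for some fixed $\ep>0$; after rescaling we may take $\t(z)=0$ and $r=1$. Then there exist noncollapsed Ricci flow limit spaces $(Z^l,d_{Z^l},\t_l)$, points $z_l\in Z^l$ that are $(k,l^{-2},1)$-cylindrical with respect to spines $\LL_{z_l,1}$, and $(k,l^{-2},1)$-splitting maps $\vec u^l=(u_1^l,\ldots,u_k^l)$ at $z_l$, for which
\[
d_{\mathrm H}\lc V_{z_l,\ep^{-1}}\cap B(\vec{0}^k,\ep_0\ep^{-1}),\,B(\vec{0}^k,\ep_0\ep^{-1})\rc\ge \ep.
\]
By Definition \ref{def:almost0}, after passing to a subsequence we have
\[
(Z^l,d_{Z^l},z_l,\t_l)\xrightarrow[l\to\infty]{\hat C^\infty}(\bar{\mathcal C}^k,d^*_{\mathcal C},p^*,\t).
\]
Using the uniform gradient and Hessian estimates for almost splitting maps (Definition \ref{defnsplittingmap}, Proposition \ref{integralbound}, Lemma \ref{constructionofsplittingfunction1}(iii), and hypercontractivity), $\vec u^l$ subconverges smoothly on compact subsets of the regular part of $\bar{\mathcal C}^k_{(-10,0)}$ to a $(k,0,1)$-splitting map $\vec u^\infty$ in the sense of Definition \ref{defnsplittingmap1}.

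On the regular part of $\bar{\mathcal C}^k$, $\vec u^\infty$ satisfies $\na^2 u_i^\infty\equiv 0$ and $\la\na u_i^\infty,\na u_j^\infty\ra\equiv\delta_{ij}$, so each $u_i^\infty$ is affine on the $\R^k$-factor, constant on the $S^{n-k}$-factor, and time-independent. Combined with $u_i^\infty(p^*)=0$ (inherited from $u_i^l(z_l)=0$) and the fact that $p^*$ corresponds to the origin of $\R^k$ on the spine, we conclude $\vec u^\infty=A\circ\pi_{\R^k}$ for some $A\in\mathrm O(k)$. Using the reproduction formula at any regular time $-\gamma<0$, $\vec u^\infty$ extends continuously to $\bar{\mathcal C}^k_0$ with the same expression.

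Given any $\vec v$ in the interior of $B(\vec{0}^k,\ep_0\ep^{-1})$, set $x^*:=(A^{-1}\vec v,0)\in\bar{\mathcal C}^k_0$; by Lemma \ref{lem:comparedis1}, $d_{\mathcal C}^*(p^*,x^*)\le\ep_0^{-1}|\vec v|<\ep^{-1}$, so $x^*\in B^*(p^*,\ep^{-1})\cap\bar{\mathcal C}^k_0$. Let $\tilde\phi_l$ denote the $l^{-2}$-map from Definition \ref{defn:close}, and put $y_l:=\tilde\phi_l(x^*)\in\LL_{z_l,1}\cap B^*(z_l,\ep^{-1})$ for large $l$. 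Applying the reproduction formula at time $-\gamma$ for a fixed small $\gamma>0$,
\[
\vec u^l(y_l)=\int \vec u^l(\cdot,-\gamma)\,\mathrm{d}\nu_{y_l;-\gamma},\qquad \vec u^\infty(x^*)=\int \vec u^\infty(\cdot,-\gamma)\,\mathrm{d}\nu_{x^*;-\gamma}.
\]
Theorem \ref{thm:intro3}(d), applied to closed Ricci flow approximations, gives $\nu_{y_l;-\gamma}\to\nu_{x^*;-\gamma}$ in $C^\infty_{\loc}$ on the regular part; combined with smooth convergence of $\vec u^l$ on compact regular pieces and uniform $L^p(\nu_{\cdot;-\gamma})$-bounds coming from hypercontractivity and Proposition \ref{integralbound}, we deduce $\vec u^l(y_l)\to\vec u^\infty(x^*)=\vec v$. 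For large $l$ this places a point of $V_{z_l,\ep^{-1}}\cap B(\vec{0}^k,\ep_0\ep^{-1})$ within $\ep/2$ of $\vec v$, contradicting the assumed lower bound on Hausdorff distance.

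The hard part is the last convergence step, in which $y_l$ tends to a singular spine point $x^*$: the integrand $\vec u^l(\cdot,-\gamma)$ has quadratic growth at infinity and the measures $\nu_{y_l;-\gamma}$ only converge weakly, so one must glue smooth convergence on compact regular sets to a tail estimate. The required $L^p$-tail bound follows from the weighted gradient estimate of Lemma \ref{constructionofsplittingfunction1}(iii) together with the Gaussian-type decay of $\nu_{\cdot;-\gamma}$ from Proposition \ref{integralbound}; the remaining steps are standard compactness and identification of parallel affine functions on $\bar{\mathcal C}^k$.
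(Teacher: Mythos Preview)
Your proposal is correct and follows essentially the same contradiction/compactness strategy as the paper: pass to the model $\bar{\mathcal C}^k$, identify the limiting splitting map with the standard $\R^k$-coordinates (up to an element of $\mathrm O(k)$), and deduce that images of spine points fill out the Euclidean ball. The paper handles the convergence at the singular slice $\bar{\mathcal C}^k_0$ by citing the reproduction formula together with \cite[Theorem 5.20]{fang2025RFlimit}, whereas you spell out this step via the reproduction formula at a fixed negative time combined with hypercontractivity/tail bounds to justify passing to the limit; both arguments amount to the same mechanism.
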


\begin{proof}
Without loss of generality, we assume that $\t(z)=0$ and $r=1$. Suppose the conclusion fails. Then we can find a sequence of Ricci flow limit spaces $(Z^l, d_{Z^l}, \t_l)$, which is obtained as the limit of a sequence in $\mathcal M(n, Y, T_l)$, and $z_l \in Z^l$ such that $z_l$ is $(k, l^{-2}, 1)$-cylindrical with respect to $\mathcal L_{z_l, 1}$. Moreover, there exist a $(k, l^{-2}, 1)$-splitting map $\vec{u}^l=(u^l_1, \ldots, u^l_k)$ at $z_l$.

By our assumption, 
	\begin{align*}
		(Z^l, d_{Z^l},z_l, \t_l)\xrightarrow[l\to\infty]{\quad \hat C^\infty \quad }(\bar{\mathcal C}^k, d_{\mathcal C}^*, p^*,\t).
	\end{align*}
Moreover, $\mathcal L_{z_l, 1}$ converge to $\bar{\mathcal C}^k_0$. Passing to a subsequence if necessary, it follows that the maps $\vec{u}^l$ converge to the standard coordinate functions $\vec{y} = (y_1, \ldots, y_k)$ on $\bar{\mathcal C}^k_{[-10, 0]}$. This convergence is evident on $\bar{\mathcal C}^k_{[-10, 0)}$, and the convergence at time slice $\bar{\mathcal C}^k_0$ follows from the reproduction formula and \cite[Theorem 5.20]{fang2025RFlimit}.

 Therefore, we conclude from Lemma \ref{lem:comparedis1} that
\begin{align*}
d_{\mathrm{H}} \lc V_{z_l, \ep^{-1}} \cap B(\vec{0}^k, \ep_0 \ep^{-1}), B(\vec{0}^k, \ep_0 \ep^{-1})  \rc \le \Psi(l^{-1}),
\end{align*}
which gives a contradiction.
 
In sum, the proof is complete.
\end{proof}

Next, we prove the converse conclusion regarding \cite[Lemma 2.27]{FLloja05}.

\begin{lem} \label{lem:smoothcyl2}
Suppose $z \in Z_{\III^-}$ is $(\delta, r)$-selfsimilar and there exists a diffeomorphism $\varphi$ from $\{\bar b:=2\sqrt{\bar f(-1)} \le \delta^{-1}\} \subset \bar M$ onto a subset of $\RR_{-r^2}$ such that on $\{\bar b \le \delta^{-1}\}$,
\begin{align*}
\| r^{-2} \varphi^* g^Z_{-r^2}-\bar g(-1)\|_{C^{[\delta^{-1}]}}+\| \varphi^* f_z(-r^2)-\bar f(-1)\|_{C^{[\delta^{-1}]}} \le \delta,
\end{align*}
where $(\bar g(-1), \bar f(-1))$ comes from $\bar{\mathcal C}^k$ (see Subsection \ref{cacp}). For any $\ep>0$, if $\delta \le \delta(n, Y, \ep)$, then $z$ is $(k, \ep, s)$-cylindrical for any $s \in [\ep r, \ep^{-1} r]$.
\end{lem}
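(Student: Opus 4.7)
\medskip

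\noindent \textbf{Proof proposal for Lemma \ref{lem:smoothcyl2}.}
The plan is to argue by contradiction and compactness. Suppose the statement fails. Then there exist $\ep>0$, a sequence of noncollapsed Ricci flow limit spaces $(Z^l, d_{Z^l}, \t_l)$ arising as pointed Gromov--Hausdorff limits of sequences in $\MM(n, Y, T_l)$, points $z_l \in Z^l_{\III^-}$, and scales $r_l$ such that $z_l$ is $(l^{-1}, r_l)$-selfsimilar and admits a diffeomorphism $\varphi_l$ from $\{\bar b \le l\}\subset \bar M$ into $\RR^l_{-r_l^2}$ with
\begin{equation*}
\| r_l^{-2} \varphi_l^* g^{Z^l}_{-r_l^2}-\bar g(-1)\|_{C^{l}}+\| \varphi_l^* f_{z_l}(-r_l^2)-\bar f(-1)\|_{C^{l}} \le l^{-1},
\end{equation*}
yet $z_l$ is not $(k,\ep,s_l)$-cylindrical for some $s_l \in [\ep r_l, \ep^{-1} r_l]$. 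After translating time and parabolically rescaling so that $\t_l(z_l)=0$ and $r_l=1$, I would apply Theorem \ref{thm:intro1} to extract a $\hat C^\infty$-subsequential limit
\begin{equation*}
(Z^l, d_{Z^l}, z_l, \t_l) \xrightarrow[l \to \infty]{\hat C^\infty} (Z^\infty, d_{Z^\infty}, z_\infty, \t_\infty).
\end{equation*}

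Next, I would show that $(Z^\infty, d_{Z^\infty}, z_\infty, \t_\infty)$ is isometric to $(\bar{\mathcal C}^k, d_{\mathcal C}^*, p^*, \t)$. By Proposition \ref{prop:Wconv1} together with the monotonicity in Definition \ref{defnWRFlimit1}, the $(l^{-1},1)$-selfsimilarity of $z_l$ forces $\widetilde \WW_{z_\infty}(\tau)$ to be constant in $\tau$, so $(Z^\infty, d_{Z^\infty}, z_\infty, \t_\infty)$ is a Ricci shrinker space in the sense of Definition \ref{def:rss}. In particular, on the regular part $(\RR^\infty, g^{Z^\infty})$ the shrinker equation $\Ric(g^{Z^\infty})+\na^2 f_{z_\infty}=(2\tau)^{-1} g^{Z^\infty}$ holds, and by Theorem \ref{thm:intro5} each time slice $\RR^\infty_t$ is connected for $t<0$. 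Using the smooth closeness hypothesis on $\varphi_l$, the diffeomorphisms $\varphi_l$ converge (after composition with the diffeomorphisms $\phi_l$ of Theorem \ref{thm:intro3}) to a smooth isometric embedding $\varphi_\infty : \bar M \hookrightarrow \RR^\infty_{-1}$ with $\varphi_\infty^* g^{Z^\infty}_{-1}=\bar g(-1)$ and $\varphi_\infty^* f_{z_\infty}(-1) = \bar f(-1)$; combined with the codimension-four singular estimate of Theorem \ref{thm:intro5}(d), $\varphi_\infty$ must be surjective onto $\RR^\infty_{-1}$, so $(\RR^\infty_{-1}, g^{Z^\infty}_{-1}) \cong (\R^k \times S^{n-k}, g_E \times g_{S^{n-k}})$ and $f_{z_\infty}(-1) = \bar f(-1)$.

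Having identified the time slice $t=-1$, I would propagate the identification to all $t<0$ by flowing along the shrinker vector field $X = |\t|(\partial_\t - \na f_{z_\infty})$ as in the proof of Lemma \ref{lem:imply1}. This yields a one-parameter family of self-similar diffeomorphisms $\boldsymbol{\psi}^s$ which, composed with $\varphi_\infty$, produces an isometry between $(\RR^\infty_{(-\infty,0)},g^{Z^\infty}, f_{z_\infty})$ and the standard cylindrical shrinker $(\mathcal C^k, \bar g, \bar f)$. Passing to metric completions with respect to $d_{Z^\infty}$ and $d_{\mathcal C}^*$ identifies $Z^\infty_{(-\infty,0]}$ with $\bar{\mathcal C}^k$, and Lemma \ref{clearout} (or a direct selfsimilarity argument) shows that $Z^\infty_{(0,\infty)} = \emptyset$. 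Thus $(Z^\infty, d_{Z^\infty}, z_\infty, \t_\infty) \cong (\bar{\mathcal C}^k, d_{\mathcal C}^*, p^*, \t)$, so by Definition \ref{defn:close} and Notation \ref{not:2}, $z_l$ is $(k, \ep, s)$-cylindrical for every $s \in [\ep, \ep^{-1}]$ and all sufficiently large $l$, contradicting the choice of $s_l$.

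The main obstacle I anticipate is ensuring that the smooth closeness hypothesis---which is formulated only on the single time slice $t=-r^2$ and only over the sublevel set $\{\bar b \le \delta^{-1}\}$---is strong enough to identify $\RR^\infty_{-1}$ globally with the full cylinder $\R^k \times S^{n-k}$. This requires that $\{\bar b \le \delta^{-1}\}$ exhausts $\bar M$ as $\delta \to 0$ and that the convergence $\varphi_l \circ \phi_l^{-1}$ is compatible with the underlying Gromov--Hausdorff convergence in the noncompact directions of $\R^k$; verifying this compatibility and ruling out loss of mass at infinity in the Wasserstein sense (so that the limiting potential is genuinely $\bar f(-1)$ on all of $\R^k \times S^{n-k}$) is the delicate technical point.
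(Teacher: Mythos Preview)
Your proposal is correct and follows essentially the same contradiction-and-compactness route as the paper: rescale, pass to a $\hat C^\infty$-limit, identify the limit as a Ricci shrinker space whose $t=-1$ slice is the full cylinder, propagate by self-similarity, and conclude $Z^\infty=\bar{\mathcal C}^k$. The only notable difference is that for the step $Z^\infty_{(0,\infty)}=\emptyset$ the paper invokes \cite[Theorem 7.25]{fang2025RFlimit} directly (the cylinder is collapsed, hence no positive-time slice survives), whereas you appeal to Lemma~\ref{clearout}; the paper's citation is cleaner since Lemma~\ref{clearout} formally requires $(k,\delta,r)$-cylindricality as input rather than being applied to the limit space itself.
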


\begin{proof}
Without loss of generality, we assume $\t(z)=0$ and $r=1$. Suppose the conclusion fails. Then we can find a sequence of Ricci flow limit spaces $(Z^l, d_{Z^l}, \t_l)$, which is obtained as the limit of a sequence in $\mathcal M(n, Y, T_l)$ and $z_l \in Z^l$ such that $z_l$ is $(l^{-1}, 1)$-selfsimilar, and there exists a diffeomorphism $\varphi_l$ from $\{\bar b \le l\}$ onto a subset of the regular part $\RR^l_{-1}$ of $Z^l$ such that 
\begin{align} \label{eq:cylin1}
\| \varphi_l^* g^{Z^l}_{-1}-\bar g(-1)\|_{C^{l}}+\| \varphi_l^* f_{z_l}(-1)-\bar f(-1)\|_{C^{l}} \le l^{-1}.
\end{align}
However, $z_l$ is not $(k, \ep, s_l)$-cylindrical for some $s_l \in [\ep, \ep^{-1}]$.

By our assumption, we may assume
	\begin{align*}
		(Z^l, d_{Z^l},z_l, \t_l)\xrightarrow[l\to\infty]{\quad \hat C^\infty \quad }(Z, d_Z, z,\t),
	\end{align*}
for which $(Z, d_Z, z,\t)$ is a noncollapsed Ricci flow limit space over $(-\infty, 0]$. Moreover, as in the proof of Lemma \ref{lem:imply1}, we conclude that 
\begin{align*}
\Ric(g^Z) + \nabla^2 f_z = \frac{g^Z}{2|\t|}
\end{align*}
holds on $\RR_{(-\infty, 0)}$, and $\RR_t$ is connected for $t<0$. By \eqref{eq:cylin1} and the self-similarity, we conclude that
\begin{align*}
Z_{(-\infty, 0)}=\RR_{(-\infty, 0)}=\bar{\mathcal C}^k_{(-\infty, 0)}.
\end{align*}
Thus, it follows from \cite[Theorem 7.25]{fang2025RFlimit} that $Z_{(0,\infty)}$ is empty. In particular, we have $Z=\bar{\mathcal C}^k$. However, this implies that $z_l$ is $(k, \ep, s_l)$-cylindrical for sufficiently large $l$, contradicting our assumption.

In sum, the proof is complete.
\end{proof}

Next, we prove

\begin{prop} \label{prop:uniformcsmooth}
Let $\XX=\{M^n,(g(t))_{t \in \III^{++}}\} \in \mathcal M(n, Y, T)$. Suppose $z^* \in M \times \III$ is $(k, \delta, r)$-cylindrical and $ \WW_{z^*}(\delta \bar r^2)- \WW_{z^*}(\delta^{-1} r^2)<\delta$ for some $0<\bar r<r$.  For any $\ep>0$, if $\delta \le \delta(n, Y, \ep)$, then $z^*$ is $(k, \ep, s)$-cylindrical for any $s \in [\bar r, r]$.
\end{prop}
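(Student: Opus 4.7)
The approach is contradiction and compactness. I would suppose the conclusion fails, so there exist $\ep > 0$, a sequence $\XX^l \in \MM(n,Y,T)$, spacetime points $z^*_l \in M_l \times \III$, radii $0 < \bar r_l < r_l$ and $s_l \in [\bar r_l, r_l]$ such that $z^*_l$ is $(k, l^{-1}, r_l)$-cylindrical, satisfies $\WW_{z^*_l}(l^{-1}\bar r_l^2) - \WW_{z^*_l}(l r_l^2) < l^{-1}$, yet is not $(k, \ep, s_l)$-cylindrical. Parabolically rescaling each $\XX^l$ by $s_l$ and shifting time so that $z^*_l$ lies at time $0$ produces $\tilde \XX^l \in \MM(n, Y, T_l)$ where $T_l := T/s_l^2$. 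The cylindrical hypothesis at scale $r_l$ forces $l r_l^2 \le 0.98 T$, hence $T_l \ge T/r_l^2 \ge l/0.98 \to \infty$. Setting $\tilde r_l := r_l/s_l \ge 1$ and $\tilde{\bar r}_l := \bar r_l/s_l \le 1$, in $\tilde \XX^l$ the point $z^*_l$ is $(k, l^{-1}, \tilde r_l)$-cylindrical, satisfies $\WW_{z^*_l}(l^{-1} \tilde{\bar r}_l^2) - \WW_{z^*_l}(l \tilde r_l^2) < l^{-1}$, and is not $(k, \ep, 1)$-cylindrical.

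Next, I would apply Theorem \ref{thm:intro1} to the rescaled sequence and, passing to a subsequence, obtain
$$(M_l \times \tilde \III_l, \tilde d^*_l, z^*_l, \tilde \t_l) \xrightarrow[l \to \infty]{\quad \hat C^\infty \quad} (Z, d_Z, z, \t),$$
a noncollapsed Ricci flow limit space whose time image contains $\R_-$ thanks to $T_l \to \infty$. The entropy pinching at scales $l^{-1}\tilde{\bar r}_l^2 \le l^{-1}$ and $l \tilde r_l^2 \ge l$, combined with Proposition \ref{prop:Wconv1} and Definition \ref{defnWRFlimit1}, would give that $\widetilde \WW_z(\tau)$ is constant on $(0, \infty)$. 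By Lemma \ref{lem:monolimit}, the Ricci shrinker equation $\Ric(g^Z) + \nabla^2 f_z = g^Z/(2\tau)$ holds on $\RR_{(-\infty, 0)}$, and the identity $(\tau \NN_z)'(\tau) = \WW_z(\tau)$ from Proposition \ref{propNashentropy}(ii), together with the lower bound $\NN_z \ge -Y$ and monotonicity, forces $\NN_z$ to be constant as well. Hence $(Z, d_Z, z, \t)$ is a Ricci shrinker space in the sense of Definition \ref{def:rss}.

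Passing to a further subsequence, I may assume either $\tilde r_l \to r_\infty \in [1, \infty)$ or $\tilde r_l \to \infty$. In either case, the $(k, l^{-1}, \tilde r_l)$-cylindrical hypothesis together with the $\hat C^\infty$-convergence forces the diagonal limit of $(Z, \tilde r_l^{-1} d_Z, z, \tilde r_l^{-2} \t)$ to be isometric to $(\bar{\mathcal C}^k, d^*_{\mathcal C}, p^*, \t)$. Since $Z$ is a Ricci shrinker space, the self-similarity diffeomorphisms $\boldsymbol\psi^s$ constructed in the proof of Lemma \ref{lem:imply1} provide, for every $\lambda > 0$, an isometry $(Z, \lambda^{-1} d_Z, z, \lambda^{-2} \t) \cong (Z, d_Z, z, \t)$ fixing the tip $z$. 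It follows that $(Z, d_Z, z, \t)$ itself is isometric to $\bar{\mathcal C}^k$. Smooth convergence of $\tilde \XX^l$ to $Z \cong \bar{\mathcal C}^k$ then yields that $z^*_l$ is $(k, \Psi(l^{-1}), 1)$-cylindrical for all sufficiently large $l$, contradicting the failure assumption once $\Psi(l^{-1}) < \ep$.

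The main obstacle I anticipate is the self-similarity step, in which the cylindrical identification must be transferred from scale $\tilde r_l$ to scale $1$. The subcase $\tilde r_l \to \infty$ requires a diagonal argument that reconciles the pGH convergence at scale $1$ with the cylindrical approximations at diverging scales, and one must verify that $\boldsymbol\psi^s$ extends from $\RR$ across the singular tip $z$ in the metric completion---just as on $\bar{\mathcal C}^k$ the rescaling fixes the spine $\R^k \times \{0\}$ and in particular $p^*$. Once this self-similarity is in hand, the rest of the argument is routine compactness and continuity of entropy.
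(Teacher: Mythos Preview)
Your compactness approach has a genuine gap in the subcase $\tilde r_l \to \infty$, and this gap cannot be closed by a diagonal argument. The pointed $\hat C^\infty$-convergence $\tilde\XX^l \to Z$ controls balls of any \emph{fixed} radius $R$, but says nothing about balls of radius $\tilde r_l \to \infty$; conversely, the $(k,l^{-1},\tilde r_l)$-cylindrical hypothesis describes $\tilde\XX^l$ only at scale $\tilde r_l$. There is no mechanism linking the two: the self-similarity of the limit $Z$ gives $(Z,\lambda^{-1}d_Z,z,\lambda^{-2}\t)\cong(Z,d_Z,z,\t)$, but it does \emph{not} give $(\tilde\XX^l,\tilde r_l^{-1}\tilde d^*_l,z^*_l)\to Z$. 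Concretely, all the hypotheses force on $Z$ (when $\tilde r_l\to\infty$) is that it is a Ricci shrinker space with $\NN_z\equiv\Theta_{n-k}$, and this entropy value alone does not characterize $\bar{\mathcal C}^k$ among Ricci shrinker spaces. Your argument, as it stands, would only prove the weaker statement that cylindricality propagates across a \emph{bounded} range of scales---essentially Proposition~\ref{prop:almostconstant}(ii)---which cannot be iterated because each application degrades $\delta$ to $\ep$.

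The paper's proof is fundamentally different: it is an open--closed continuity argument on a quantitative closeness radius $\mathbf r_A(s)$ (from \cite{FLloja05}). The crucial input is the {\L}ojasiewicz-type inequality \cite[Theorem~6.24]{FLloja05}, which says that once $\mathbf r_A(s)$ exceeds a threshold, it satisfies $\mathbf r_A(s)\ge\sqrt{-\log\bigl(\WW_{z^*}(s^2/2)-\WW_{z^*}(2s^2)\bigr)}$. Under the global entropy pinch $\WW_{z^*}(\delta\bar r^2)-\WW_{z^*}(\delta^{-1}r^2)<\delta$, the right-hand side is uniformly large on $[\bar r,r]$, so $\mathbf r_A$ cannot drop below the threshold anywhere on that interval. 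This quantitative rigidity of the cylinder is precisely what replaces the missing diagonal step, and it is not available from soft compactness.
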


\begin{proof}
Without loss of generality, we assume $r=1$ and $\t(z^*)=0$.

For each $s \in [\bar r, 1]$, define $\mathbf{r}_A(s)$ as $\mathbf{r}_A$-radius (see \cite[Definition 5.1 (A)]{FLloja05}) for the weighted Riemannian manifold $(M, s^{-2}g(-s^2), f_{z^*}(-s^2))$.

By Lemma \ref{lem:smoothcyl2}, there exists a constant $L_1=L_1(n, Y, \ep)>1$ such that if $\mathbf{r}_A(s) \ge L_1$, then $z^*$ is $(k, \ep, s)$-cylindrical.

Furthermore, by \cite[Theorem 6.24]{FLloja05}, there exists $L_2=L_2(n, Y)>1$ such that if $\mathbf{r}_A(s) \ge L_2$, then
	\begin{align} \label{eq:uniformcy1}
\mathbf{r}_A(s) \ge \sqrt{-\log\lc \WW_{z^*}(s^2/2)- \WW_{z^*}(2s^2) \rc}.
	\end{align}

Now set $L=\max\{L_1, L_2\}$. Since $z^*$ is $(k, \delta, 1)$-cylindrical, \cite[Lemma 2.27]{FLloja05} implies that $\mathbf{r}_A(1) \ge 2L$, provided that $\delta$ is sufficiently small. 

Define $r'$ as the infimum in $[\bar r, 1]$ such that $\mathbf{r}_A(s) \ge 2L$ for any $s \in [r', 1]$. If $r'>\bar r$, then by continuity we must have $\mathbf{r}_A(r')=2L$. However, this contradicts \eqref{eq:uniformcy1} if $\delta \le \delta(n, Y, \ep)$. 

Thus, $\mathbf{r}_A(s) \ge 2L$ for any $s \in [\bar r, 1]$, and hence $z^*$ is $(k, \ep, s)$-cylindrical for all $s \in [\bar r, 1]$.
\end{proof}

Next, we have the following definition.

\begin{defn}
	We say $z\in Z$ is \textbf{uniformly $(k,\ep, r)$-cylindrical}\index{uniformly $(k,\ep, r)$-cylindrical} if $z$ is $(k,\ep,s)$-cylindrical for any $s\in (0,r]$.
\end{defn}

By taking the limit and using Propositions \ref{prop:Wconv1} and \ref{prop:uniformcsmooth}, we obtain the following result.

\begin{prop} \label{prop:uniformc}
Suppose $z \in Z$ is $(k, \delta, r)$-cylindrical and $\widetilde \WW_z(\delta\bar r^2)-\widetilde \WW_z(\delta^{-1} r^2)<\delta$ for some $0<\bar r<r$.  For any $\ep>0$, if $\delta \le \delta(n, Y, \ep)$, then $z$ is $(k, \ep, s)$-cylindrical for any $s \in [\bar r, r]$. In particular, if $z \in \MS^k_{\mathrm{c}}\setminus \MS^{k-1}_{\mathrm c}$ is $(k, \delta, r)$-cylindrical with $\delta \le \delta(n, Y, \ep)$, then $z$ is uniformly $(k,\ep, r)$-cylindrical.
\end{prop}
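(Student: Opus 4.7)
The plan is to deduce both assertions from the smooth-flow analogue Proposition \ref{prop:uniformcsmooth} via a Gromov--Hausdorff approximation, using the entropy-continuity statement Proposition \ref{prop:Wconv1} to carry the $\widetilde{\WW}$-pinching hypothesis onto the approximating smooth Ricci flows; the ``in particular'' statement will additionally use the uniqueness of cylindrical tangent flows \cite[Theorem 2.23]{FLloja05}.

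For the first assertion I would pick a sequence $z_l^{*}\in\XX^l$ converging to $z$ in the Gromov--Hausdorff sense. The $\hat C^\infty$-closeness of the rescaled flows to $\bar{\mathcal C}^k$ makes $z_l^{*}$ a $(k,2\delta,r)$-cylindrical point for all large $l$. Using that $J^z$ has Lebesgue measure zero, I would choose times $\tau_1\in(\delta\bar r^2,2\delta\bar r^2)\setminus J^z$ and $\tau_2\in((2\delta)^{-1}r^2,\delta^{-1}r^2)\setminus J^z$; Proposition \ref{prop:Wconv1} then gives $\WW_{z_l^{*}}(\tau_i)\to\widetilde{\WW}_z(\tau_i)$ for $i=1,2$. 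Combining this with the monotonicity of $\widetilde{\WW}_z$ and the standing hypothesis yields
\[
\widetilde{\WW}_z(\tau_1)-\widetilde{\WW}_z(\tau_2)\le\widetilde{\WW}_z(\delta\bar r^2)-\widetilde{\WW}_z(\delta^{-1}r^2)<\delta,
\]
so that $\WW_{z_l^{*}}(\tau_1)-\WW_{z_l^{*}}(\tau_2)<2\delta$ for $l$ large; the smooth-flow monotonicity of $\WW_{z_l^{*}}$ in $\tau$ then sandwiches
\[
\WW_{z_l^{*}}(2\delta\bar r^2)-\WW_{z_l^{*}}((2\delta)^{-1}r^2)\le\WW_{z_l^{*}}(\tau_1)-\WW_{z_l^{*}}(\tau_2)<2\delta.
\]
Choosing $\delta(n,Y,\ep)$ small enough, Proposition \ref{prop:uniformcsmooth} applied to $z_l^{*}$ with parameter $2\delta$ and target precision $\ep/2$ gives that $z_l^{*}$ is $(k,\ep/2,s)$-cylindrical for every $s\in[\bar r,r]$, and passing $l\to\infty$ in the $\hat C^\infty$-sense transfers this to $z$.

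For the ``in particular'' statement, I would first observe that a point $z\in\MS^k_{\mathrm c}\setminus\MS^{k-1}_{\mathrm c}$ must have all tangent flows isometric to $\bar{\mathcal C}^k$ (any $\bar{\mathcal C}^l$ with $l<k$ would place $z$ in $\MS^{k-1}_{\mathrm c}$), and by \cite[Theorem 2.23]{FLloja05} this tangent flow is unique. This forces $\widetilde{\WW}_z(\tau)\to\Theta_{n-k}$ as $\tau\to 0^{+}$, and monotonicity of $\widetilde{\WW}_z$ then gives $\widetilde{\WW}_z(\tau)\le\Theta_{n-k}$ for every $\tau>0$. After further shrinking $\delta(n,Y,\ep)$ so that Proposition \ref{prop:almostconstant}(i) yields $\Theta_{n-k}-\widetilde{\WW}_z(\delta^{-1}r^2)<\delta$, I would obtain, for every $\bar r\in(0,r)$,
\[
\widetilde{\WW}_z(\delta\bar r^2)-\widetilde{\WW}_z(\delta^{-1}r^2)\le\Theta_{n-k}-\widetilde{\WW}_z(\delta^{-1}r^2)<\delta,
\]
so that the first assertion applies at scale $\bar r$. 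Since $\bar r$ is arbitrary, $z$ would then be $(k,\ep,s)$-cylindrical for all $s\in(0,r]$, i.e.\ uniformly $(k,\ep,r)$-cylindrical.

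The main obstacle I anticipate is the coordination of the $\delta$-versus-$\ep$ thresholds: the single constant $\delta(n,Y,\ep)$ produced by the proposition must simultaneously fall below the threshold of Proposition \ref{prop:uniformcsmooth} (for output precision $\ep/2$) and below the threshold of Proposition \ref{prop:almostconstant}(i) chosen so that $\Theta_{n-k}-\widetilde{\WW}_z(\delta^{-1}r^2)<\delta$. Managing the jump set $J^z$, where $\widetilde{\WW}_z$ may differ from $\WW_z$, is a minor complication resolved by density of $(0,\infty)\setminus J^z$ together with the monotonicity argument described above.
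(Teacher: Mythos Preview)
Your argument is correct and follows precisely the route the paper indicates (the paper's proof is simply ``by taking the limit and using Propositions~\ref{prop:Wconv1} and~\ref{prop:uniformcsmooth}''); you have supplied the details.

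One small clarification on the constant issue you flag for the ``in particular'' statement: as written, shrinking $\delta$ and then invoking Proposition~\ref{prop:almostconstant}(i) to control $\widetilde{\WW}_z(\delta^{-1}r^2)$ is slightly circular, because the scale $\delta^{-1}r^2$ moves with $\delta$. The clean fix is to use two thresholds. First let $\delta_1=\delta_1(n,Y,\ep)$ be the constant from the first assertion. Then set $\delta_0=\min\{\delta_1,\delta_{\ref{prop:almostconstant}}(n,Y,\delta_1)\}$. If $z$ is $(k,\delta_0,r)$-cylindrical then it is $(k,\delta_1,r)$-cylindrical, and Proposition~\ref{prop:almostconstant}(i) applied with output precision $\delta_1$ (which is now legitimate since $\delta_0\le\delta_{\ref{prop:almostconstant}}(n,Y,\delta_1)$) gives $|\widetilde{\WW}_z(\delta_1^{-1}r^2)-\Theta_{n-k}|<\delta_1$; combined with $\widetilde{\WW}_z(\delta_1\bar r^2)\le\Theta_{n-k}$ this yields the pinching needed to apply the first assertion at level $\delta_1$. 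This is exactly what ``further shrinking $\delta(n,Y,\ep)$'' should mean, but the first assertion must be invoked with the intermediate $\delta_1$, not with the final $\delta_0$.
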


\subsection{Cylindrical neck regions}

We introduce the following notion of a cylindrical neck region, adapted from \cite[Definition 4.1]{fang2025volume} with an additional parameter $\cc$ that encodes finer quantitative information. Related neck-region notions for noncollapsed Ricci limit spaces appear in \cite[Definition 3.1]{jiang20212} and \cite[Definition 2.4]{Cheeger2018RectifiabilityOS}.

\begin{defn}[Cylindrical neck region]\label{defiofcylneckregion}
Given constants $\delta>0$, $\cc \in (0, 10^{-10 n })$, $r>0$ and $z \in Z_{\III^-}$ with $\t(z)-2\delta^{-1}r^2 \in \III^-$, we call a subset $\NNN \subset B^*(z, 2r)$ a \textbf{$(k,\delta, \cc, r)$-cylindrical neck region}\index{$(k,\delta,\cc,r)$-cylindrical neck region} if $\NNN=B^*(z,2r)\setminus B_{r_x}^*(\CCC)$, where $\CCC \subset B^*(z, 2r)$ is a nonempty closed subset with $r_x: \CCC \to \R_{+}:=[0, +\infty)$, satisfies:
	\begin{itemize}[leftmargin=*, label={}]
		\item \emph{(n1)} for any $x, y \in \CCC$, $d_Z(x, y) \ge \cc^2(r_x+r_y);$\index{$\cc$}
		\item \emph{(n2)} for all $x\in \CCC$,
		$$\widetilde \WW_{x}(\delta r^2_x)-\widetilde \WW_{x}(\delta^{-1} r^2)<\delta;$$
		\item \emph{(n3)} for each $x\in \CCC$ and $\cc^2 r_x\leq s\leq 2r$, $x$ is $(k,\delta,s)$-cylindrical with respect to $\mathcal L_{x,s};$
		\item \emph{(n4)} for each $x\in \CCC$ and $\cc^{-5} r_x\leq s\leq r-d_Z(x, z)/2$, we have $\mathcal{L}_{x,s} \cap B^*(x, s) \subset B^*_{\cc s}(\CCC)$ and $\CCC\bigcap B^*(x,s)\subset B^*_{\cc s}(\mathcal{L}_{x,s})$.
	\end{itemize}
Here, $\CCC$\index{$\CCC$} is called the \textbf{center} of the neck region, and $r_x$\index{$r_x$} is referred to as the \textbf{radius function}. We decompose $\CCC=\CCC_0\bigcup\CCC_{+}$\index{$\CCC_0$}\index{$\CCC_+$}, where $r_x>0$ on $\CCC_+$ and $r_x=0$ on $\CCC_0$. In addition, we use the notation
	\begin{align*}
	B^*_{r_x}(\CCC):=\CCC_0 \bigcup \bigcup_{x \in \CCC_+} B^*(x, r_x).
	\end{align*}
\end{defn}

It is clear from $(n1)$ above that $r_x:\CCC\to \R_{+}$ is a Lipschitz function with Lipschitz constant $\cc^{-2}$. Moreover, it follows from $(n2)$ and Definition \ref{defnselfsimilarity1} that any $x \in \CCC$ is $(\delta, s)$-selfsimilar for any $s \in [r_x, r]$. In addition, it is easy to check that the restriction of a cylindrical neck region to any small ball centered at points in $\CCC$ is still a cylindrical neck region.

The following result is immediate from Proposition \ref{prop:uniformc}.

\begin{lem}\label{uniformcyl}
	Given $\zeta>0$ and $z\in \MS^k_{\mathrm{c}}\setminus \MS^{k-1}_{\mathrm c}$, there exists a constant $r_z>0$ such that any point in $\lc\MS_{\mathrm c}^k\setminus \MS^{k-1}_{\mathrm c} \rc \bigcap B^*(z,r_z)$ is uniformly $(k,\zeta, r_z)$-cylindrical.
\end{lem}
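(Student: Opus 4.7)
The plan is to apply Proposition~\ref{prop:uniformc} uniformly to all cylindrical singular points $w$ in a small neighborhood of $z$, with the key input being that the tangent flow at each such $w$ is again $\bar{\mathcal C}^k$. Let $\delta_0=\delta(n,Y,\zeta)$ denote the constant from Proposition~\ref{prop:uniformc}.

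First, since $z\in\MS^k_{\mathrm c}\setminus\MS^{k-1}_{\mathrm c}$, Definition~\ref{def:ccc} gives that the tangent flow at $z$ is isometric to $\bar{\mathcal C}^k$. Therefore, I would choose a radius $r_1>0$ (depending on $z$ and $\zeta$) so that $z$ is $(k,\delta_1,r_1)$-cylindrical for any prescribed small $\delta_1\le\delta(n,Y,\delta_0)$. Next, for $r_z\ll r_1$ to be fixed below, take any $w\in B^*(z,r_z)$. Using the Gromov--Hausdorff approximation between $(Z,r_1^{-1}d_Z,z,\cdot)$ and $(\bar{\mathcal C}^k,d^*_{\mathcal C},p^*,\t)$ from Definition~\ref{defn:close}, together with the translational symmetry of $\bar{\mathcal C}^k$ along the spine $\R^k$ and parabolic rescaling in time, a basepoint shift from $z$ to $w$ will produce an approximation to $\bar{\mathcal C}^k$ at scale $r_1/2$ with error $O(r_z/r_1)$. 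Hence, for $r_z$ sufficiently small, $w$ is $(k,\delta_0,r_1/2)$-cylindrical.

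The second ingredient is entropy control. If furthermore $w\in\MS^k_{\mathrm c}\setminus\MS^{k-1}_{\mathrm c}$, then the tangent flow at $w$ is unique (see \cite[Theorem~8.11]{FLloja05}) and coincides with $\bar{\mathcal C}^k$, which forces $\widetilde\WW_w(\tau)\to\Theta_{n-k}$ as $\tau\searrow 0$, while Proposition~\ref{prop:almostconstant}(i) applied at scale $r_1/2$ gives $|\widetilde\WW_w(\delta_0^{-1}(r_1/2)^2)-\Theta_{n-k}|<\delta_0/2$. These two facts together yield
\begin{equation*}
\widetilde\WW_w(\delta_0\bar r^2)-\widetilde\WW_w(\delta_0^{-1}(r_1/2)^2)<\delta_0
\end{equation*}
for every $\bar r>0$ sufficiently small (depending on $w$).

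Finally, I would apply Proposition~\ref{prop:uniformc} to $w$ at scale $r=r_1/2$ with the above $\bar r$, concluding that $w$ is $(k,\zeta,s)$-cylindrical for all $s\in[\bar r,r_1/2]$; letting $\bar r\searrow 0$ then gives uniform $(k,\zeta,r_1/2)$-cylindricity of $w$. Choosing $r_z\le r_1/2$ together with the smallness imposed in the basepoint-shift step, every $w\in(\MS^k_{\mathrm c}\setminus\MS^{k-1}_{\mathrm c})\cap B^*(z,r_z)$ is uniformly $(k,\zeta,r_z)$-cylindrical. The only subtle point in the whole argument is the basepoint shift in the first step, since $\bar{\mathcal C}^k$ does not enjoy full translational symmetry on the $S^{n-k}$-factor; this is however harmless because we only require closeness at the macroscopic scale $r_1/2$, at which any displacement of order $r_z\ll r_1$ is absorbed into the error tolerance $\delta_0$.
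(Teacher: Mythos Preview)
Your proposal is correct and follows essentially the same strategy as the paper: first show that every nearby $w$ is $(k,\delta_0,r_1/2)$-cylindrical at a single scale, then invoke Proposition~\ref{prop:uniformc} to upgrade this to uniform cylindricity. The paper's proof differs only in packaging: it first applies the ``In particular'' clause of Proposition~\ref{prop:uniformc} to $z$ itself (obtaining uniform cylindricity of $z$), then handles the basepoint transfer by an unspecified ``limiting argument'', and finally invokes the same clause of Proposition~\ref{prop:uniformc} for $w$ directly rather than verifying the entropy pinching by hand.

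Two remarks on your presentation. First, your invocation of ``translational symmetry along the spine'' in the basepoint shift is a red herring: a general $w\in B^*(z,r_z)$ need not lie near $\LL_{z,r_1}$, and the cylinder has no time-translation symmetry, so a direct quantitative basepoint shift runs into the issue that the time-preserving condition in Definition~\ref{defn:close} must be respected exactly. Your concluding sentence correctly identifies that the displacement $r_z\ll r_1$ is absorbed at scale $r_1/2$, but the cleanest justification is precisely the limiting argument the paper alludes to (if the conclusion failed along a sequence with $\delta_1\to 0$ and $r_z/r_1\to 0$, the limit would be $\bar{\mathcal C}^k$ based at $p^*$, a contradiction). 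Second, your separate entropy step is unnecessary: once $w$ is $(k,\delta_0,r_1/2)$-cylindrical and $w\in\MS^k_{\mathrm c}\setminus\MS^{k-1}_{\mathrm c}$, the second sentence of Proposition~\ref{prop:uniformc} gives uniform $(k,\zeta,r_1/2)$-cylindricity immediately.
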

\begin{proof}
By our assumption and Proposition \ref{prop:uniformc}, for any $\ep>0$, there exists a constant $r>0$ such that $z$ is uniformly $(k, \ep, r)$-cylindrical. By a limiting argument, given $\ep'>0$, if $\ep \le \ep(n, Y, \ep')$, then there exists a small constant $r' \in (0, r/2)$ such that any $w \in B^*(z, r')$ is also $(k, \ep', r/2)$-cylindrical. Then, it follows from Proposition \ref{prop:uniformc} again that any $w \in \lc\MS_{\mathrm c}^k\setminus \MS^{k-1}_{\mathrm c}\rc \bigcap B^*(z, r')$ is uniformly $(k, \zeta, r')$-cylindrical, provided that $\ep' \le \ep'(n, Y, \zeta)$. After choosing a small $\ep$, the proof is complete.
\end{proof}

\begin{prop}\label{cylneckdecomp}
Given $\delta>0$ and $\cc \in (0, 10^{-10 n})$, for any $x_0\in \MS_{\mathrm c}^k\setminus \MS^{k-1}_{\mathrm c}$, there exist a constant $r_0>0$ and $\CCC=\CCC_0\bigcup\CCC_+\subset B^*(x_0,2r_0)$ with $r_x:\CCC\to \R_{+}$ satisfying $r_x>0$ on $\CCC_+$ and $r_x=0$ on $\CCC_0$, such that the following hold:
	\begin{enumerate}[label=\textnormal{(\roman{*})}]
		\item $\NNN=B^*(x_0,2r_0)\setminus B^*_{r_x}(\CCC)$ is a $(k,\delta,\cc, r_0)$-cylindrical neck region.
		\item $\lc\MS_{\mathrm c}^k \setminus \MS^{k-1}_{\mathrm c}\rc \bigcap B^*(x_0,r_0)\subset\CCC_0$.
	\end{enumerate}
\end{prop}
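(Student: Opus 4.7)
My plan is to construct $\CCC$ by first assigning to every candidate basepoint the largest scale at which the cylindrical structure fails, and then extracting a maximally separated subfamily via Vitali. Fix an auxiliary parameter $\zeta \ll \delta$, to be chosen as a function of $n, Y, \delta, \cc$. Since $x_0 \in \MS_{\mathrm c}^k \setminus \MS^{k-1}_{\mathrm c}$, Lemma \ref{uniformcyl} yields a radius $r_0 > 0$ such that every point of $(\MS_{\mathrm c}^k \setminus \MS^{k-1}_{\mathrm c}) \cap B^*(x_0, 4 r_0)$ is uniformly $(k, \zeta, 4 r_0)$-cylindrical. Proposition \ref{prop:almostconstant}(iii) then ensures that the corresponding spines at different basepoints are mutually Hausdorff close across the relevant scales, which will be used repeatedly when comparing local geometry at different points of $\CCC$.

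For each $y \in B^*(x_0, 2 r_0)$, set
\[ r_y := \inf\bigl\{ s \in [0, 2 r_0] \,:\, y \text{ is } (k, \delta/2, t)\text{-cylindrical for every } t \in [s, 2 r_0] \text{ and } \widetilde\WW_y(\delta s^2) - \widetilde\WW_y(\delta^{-1} r_0^2) < \delta/2 \bigr\}, \]
with the convention $r_y = +\infty$ when the defining set is empty. Lemma \ref{uniformcyl} together with Definition \ref{defnselfsimilarity1} forces $r_y = 0$ for every $y \in (\MS_{\mathrm c}^k \setminus \MS^{k-1}_{\mathrm c}) \cap B^*(x_0, 2 r_0)$, once $\zeta$ is chosen small enough. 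Let $\mathcal A := \{ y \in B^*(x_0, 2 r_0) : r_y < \infty \}$, and extract a maximal family $\{y_i\} \subset \mathcal A$ with $r_{y_i} > 0$ subject to $d_Z(y_i, y_j) \ge \cc^2 (r_{y_i} + r_{y_j})$ for $i \neq j$. Declare $\CCC_+ := \{y_i\}$, $\CCC_0 := \{ y \in \mathcal A : r_y = 0 \}$, and $\CCC := \CCC_+ \cup \CCC_0$ with the natural radius function. By construction, $(\MS_{\mathrm c}^k \setminus \MS^{k-1}_{\mathrm c}) \cap B^*(x_0, r_0) \subset \CCC_0$, proving (ii), while axioms (n1)--(n3) are immediate from the Vitali selection and the definition of $r_y$, using Proposition \ref{prop:uniformc} to propagate cylindricality down to the scale $\cc^2 r_x$ once $\zeta$ is taken sufficiently small relative to $\delta$ and $\cc$.

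The main obstacle is axiom (n4). The containment $\CCC \cap B^*(x, s) \subset B^*_{\cc s}(\mathcal{L}_{x, s})$ follows by combining Proposition \ref{prop:almostconstant}(iii) (closeness of the spines $\mathcal{L}_{y, 2s}$ and $\mathcal{L}_{x, 2s}$ for $y \in \CCC \cap B^*(x, s)$) with the fact that $y$ lies on its own spine up to the base-point error coming from Definition \ref{def:almost0}; tracing the closeness down from scale $2s$ to $s$ produces the required $\cc s$-approximation once $\zeta$ is small. The harder direction $\mathcal{L}_{x, s} \cap B^*(x, s) \subset B^*_{\cc s}(\CCC)$ proceeds as follows. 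Given $p \in \mathcal{L}_{x, s}$, transfer $p$ to a nearby point $p' \in Z$ via the $\Psi(\zeta) s$-Gromov--Hausdorff approximation associated with $x$; either $p'$ remains almost cylindrical at every scale below $s$, in which case a blow-up/limiting argument, combined with the clearing-out Lemma \ref{clearout} and Proposition \ref{prop:uniformc}, forces $p' \in \MS_{\mathrm c}^k \setminus \MS^{k-1}_{\mathrm c} \subset \CCC_0$; or $r_{p'} \in (0, s]$, in which case maximality of the Vitali selection produces some $y_j \in \CCC_+$ with $d_Z(p', y_j) \le \cc^2(r_{p'} + r_{y_j})$, placing $p$ in $B^*_{\cc s}(\CCC)$. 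The delicate regime is when $r_{p'}$ is of order $s$, so that the term $\cc^2 r_{y_j}$ threatens to exceed $\cc s$; this is resolved by induction on scales, combined with the improved Hausdorff closeness given by Proposition \ref{prop:almostconstant}(iii) and the $\cc^5$-threshold built into Definition \ref{defiofcylneckregion}(n4). This matches the pattern of neck constructions carried out previously by the authors in the mean curvature flow setting, but the parabolic framework here forces additional care to consistently identify spines across scales via the closeness statements of Proposition \ref{prop:almostconstant}.
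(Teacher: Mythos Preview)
Your approach differs from the paper's: you define a stopping-time $r_y$ and extract $\CCC_+$ by a Vitali selection, whereas the paper builds $\CCC$ by an iterative scale-by-scale covering of the spine at geometric scales $\gamma^l r_0$ (with $\gamma=\cc^5$), sorting balls into ``good'' and ``bad'' at each step and passing to a Hausdorff limit. The paper's construction makes (n4) essentially automatic, since by design the centers at step~$l$ form a $2\cc^2\gamma^l r_0$-net in the spine at scale $\gamma^{l-1}r_0$.

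Your argument has a genuine gap in (n4), precisely in the direction $\mathcal L_{x,s}\cap B^*(x,s)\subset B^*_{\cc s}(\CCC)$. Given $p'\in\mathcal L_{x,s}$ with $r_{p'}\in(0,s]$, Vitali maximality yields some $y_j\in\CCC_+$ with $d_Z(p',y_j)\le\cc^2(r_{p'}+r_{y_j})$, but there is \emph{no a priori bound on $r_{y_j}$ in terms of $s$}. If $r_{y_j}\gg s/\cc$, then $\cc^2 r_{y_j}\gg \cc s$ and the desired containment fails. You recognize a ``delicate regime'' but misdiagnose it as arising when $r_{p'}\sim s$; the actual problem is uncontrolled $r_{y_j}$, independent of $r_{p'}$. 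A rigorous fix would need a Lipschitz-type estimate $|r_y-r_z|\le L\,d_Z(y,z)$ on $\mathcal A$, so that $r_{y_j}\lesssim r_{p'}$; but such an estimate does not follow from your setup, since being $(k,\delta/2,t)$-cylindrical at $y$ does \emph{not} transfer to nearby $z$ unless $z$ lies on the spine $\mathcal L_{y,t}$. Your ``induction on scales'' remark does not supply this.

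There is a related secondary gap: for $p'\in\mathcal L_{x,s}$ you need $p'\in\mathcal A$, i.e.\ $(k,\delta/2,t)$-cylindrical for \emph{all} $t\in[s,2r_0]$, not only for $t$ near $s$. This requires tracking $p'$ along the chain of spines $\mathcal L_{x,t}$ as $t$ ranges over many dyadic scales, with error accumulation controlled by Proposition~\ref{prop:almostconstant}(iii). The paper handles exactly this bookkeeping via its inductive construction (see the estimate $d_Z(z,\LL_{y'_{g_i^j},\gamma^j r_0})\le\sum_{m=j}^l\ep\gamma^m r_0$ in the proof), which has no analogue in your Vitali scheme.
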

\begin{proof}
In the proof, a ball $B^*(y,r)$ is called a \textbf{good ball} if there exists $z\in B^*(y,r)$ such that $z$ is $(k,\zeta,r)$-cylindrical, where $\zeta$ is a fixed constant to be determined. Otherwise, we call $B^*(y,r)$ a \textbf{bad ball}. Without loss of generality, we assume $\delta \ll \mathfrak c$. Moreover, throughout the proof, the constants $\zeta$ and $\ep$ will be chosen so that
	\begin{align*}
		\zeta\ll\ep\ll\delta.
	\end{align*}
	
	First, we determine $r_0$. Since $x_0\in \MS_{\mathrm c}^k\setminus \MS^{k-1}_{\mathrm c}$, it follows from Lemma \ref{uniformcyl} that we can find $r_0>0$ so that any point in $\lc\MS_{\mathrm c}^k \setminus \MS^{k-1}_{\mathrm c}\rc \bigcap B^*(x_0,2r_0)$ is uniformly $(k, \zeta, 2r_0)$-cylindrical with $\t(x_0)-5 \zeta^{-1} r_0^2 \in \III^-$. In particular, $B^*(x_0,r_0)$ is a good ball. 
	
We assume that $x_0$ itself is $(k,\zeta,r_0)$-cylindrical with respect to $\LL_{x_0,r_0}$. Then it follows from Proposition \ref{prop:almostconstant} (iii) that any $(k,\zeta, r_0)$-cylindrical point $z\in B^*(x_0,2r_0)$ satisfies 
	\begin{align*}
z\in B^*(x_0,2r_0)\bigcap B^*_{\ep r_0}(\LL_{x_0,r_0}).
	\end{align*}
In particular, we have
	\begin{align}\label{MSkinclustion1}
		\lc\MS_{\mathrm c}^k \setminus \MS^{k-1}_{\mathrm c}\rc\bigcap B^*(x_0,2 r_0)\subset B^*_{\ep r_0}\left(\LL_{x_0,r_0}\right) \bigcap B^*(x_0,2 r_0).
	\end{align}
	
By Proposition \ref{prop:almostconstant} (i) (ii), we can choose a small $\ep$ so that any point $z\in B^*_{\ep r_0}\left(\LL_{x_0,r_0}\right)\bigcap B^*(x_0,2r_0)$ is $(k, \delta^2, s)$-cylindrical for any $s \in [\delta r_0, \delta^{-1} r_0]$ and satisfies
	\begin{align}\label{neckdecom1}
		\left|\widetilde \WW_z(s^2)-\Th_{n-k}\right|\leq \delta^2.
	\end{align}
	
Set $\gamma=\cc^5$ and $L^1=\LL_{x_0,r_0} \bigcap B^*(x_0,2r_0)$. Then we choose a maximal $2 \cc^2 \gamma r_0$-separated set $\{y_\alpha\} \subset L^1$. In particular, $\left\{B^*(y_\alpha, \cc^2 \gamma r_0)\right\}$ are pairwise disjoint and
	\begin{align*}
L^1 \subset \bigcup_\alpha B^*(y_\alpha, 2 \cc^2	\gamma r_0).
	\end{align*}
	
	We can re-index those balls $B^*(y_\alpha, \gamma r_0)$ by good balls with index $g^1_i$ and bad balls with index $b^1_i$ and hence rewrite 
	\begin{align}\label{constcylneck1}
		B^*_{\ep r_0}\left(\LL_{x_0,r_0}\right)\bigcap B^*(x_0,2r_0)\subset \bigcup_{g^1_i} B^*(y_{g^1_i}, \gamma r_0)\bigcup\bigcup _{b^1_i}B^*(y_{b^1_i}, \gamma r_0).
	\end{align}
	
Then we set
	\begin{align*}
		\NNN^1:=B^*(x_0,2r_0)\setminus \lc\bigcup_{g^1_i} B^*(y_{g^1_i}, \gamma r_0)\bigcup\bigcup _{b^1_i}B^*(y_{b^1_i}, \gamma r_0)\rc.
	\end{align*}
From our construction and \eqref{neckdecom1}, we know that $\NNN^1$ is a $(k,\delta,\cc, r_0)$-cylindrical neck region with center $\bigcup_i\{y_{g^1_i}, y_{b^1_i}\}$ and radius function $r_x \equiv \gamma r_0$ for $x \in \bigcup_i\{y_{g^1_i}, y_{b^1_i}\}$. Indeed, condition $(n1)$ follows from our construction. $(n2)$ follows from \eqref{neckdecom1}. Moreover, $(n3)$ and $(n4)$ follow from our construction and Proposition \ref{prop:almostconstant}.

Moreover, by the definition of bad balls, there exists no $(k, \zeta, \gamma r_0)$-cylindrical point in $\bigcup_{b_i^1} B^*(y_{b_i^1}, \gamma r_0)$. Thus, it follows from \eqref{MSkinclustion1} and \eqref{constcylneck1} that
	\begin{align*}
		\lc\MS_{\mathrm c}^k \setminus \MS^{k-1}_{\mathrm c}\rc\bigcap B^*(x_0,r_0)\subset \bigcup_{g^1_i}B^*(y_{g_i^1}, \gamma r_0).
	\end{align*}

Next, we use induction to construct a sequence of $(k, \delta, \cc, r_0)$-cylindrical neck regions $\NNN^l$ iteratively.

Suppose that for some $l \ge 1$, we have constructed a $(k, \delta, \cc, r_0)$-cylindrical neck region
	\begin{align}\label{constcylneck2b}
		\NNN^l:= B^*(x_0, 2r_0)\setminus \lc\bigcup_{g^l_i}B^*(y_{g_i^l}, \gamma^l r_0)\bigcup\bigcup_{1\leq j\leq l}\bigcup_{b_i^j}B^*(y_{b_i^j},\gamma^j r_0)\rc
	\end{align}
such that 
\begin{itemize}
\item each $B^*(y_{g_i^l}, \gamma^l r_0)$ is a good ball, while each $B^*(y_{b_i^j},\gamma^j r_0)$ is a bad ball.

\item each $y_{g_i^l}$ is $(k, \delta^2, s)$-cylindrical for any $s \in [\delta \gamma^l r_0, \delta^{-1} r_0]$, and satisfies
	\begin{align*}
\left|\widetilde \WW_{y_{g_i^l}}(s^2)-\Theta_{n-k}\right|\leq \delta^2;
	\end{align*}

\item each $y_{b_i^j}$ (for $1 \le j \le l$) is $(k, \delta^2, s)$-cylindrical for any $s \in [\delta \gamma^j r_0, \delta^{-1} r_0]$, and satisfies
	\begin{align*}
\left|\widetilde \WW_{y_{b_i^j}}(s^2)-\Theta_{n-k}\right|\leq \delta^2;
	\end{align*}
	
\item we have
	\begin{align} \label{neckdecom1a1}
		\lc\MS_{\mathrm c}^k \setminus \MS^{k-1}_{\mathrm c}\rc \bigcap B^*(x_0,r_0)\subset \bigcup_{g^l_i}B^*(y_{g_i^l}, \gamma^l r_0).
	\end{align}
\end{itemize}

For each good ball $B^*(y_{g_i^l}, \gamma^l r_0)$, we can find a $(k,\zeta,\gamma^l r_0)$-cylindrical point $y_{g_i^l}' \in B^*(y_{g_i^l}, \gamma^l r_0)$ with respect to $\LL_{y_{g_i^l}',\gamma^l r_0}$. Then we have
	\begin{align*}
		\lc\MS_{\mathrm c}^k \setminus \MS^{k-1}_{\mathrm c}\rc \bigcap B^*(y_{g_i^l}, \gamma^l r_0)\subset B^*_{\ep\gamma^l r_0}\left(\LL_{y_{g_i^l}',\gamma^l r_0}\right).
	\end{align*}
By the same argument as before, we conclude that any point $z \in B^*_{\ep \gamma^l r_0}\lc\LL_{y_{g_i^l}',\gamma^l r_0} \rc\bigcap B^*(y_{g_i^l}',2\gamma^l r_0)$ is $(k, \delta^2, s)$-cylindrical for any $s \in [\delta \gamma^l r_0, \delta^{-1} \gamma^l r_0]$ and satisfies
	\begin{align}\label{neckdecom1a}
		\left|\widetilde \WW_z(s^2)-\Th_{n-k}\right|\leq \delta^2.
	\end{align}	
	
We set 
	\begin{align*}
L^{l+1}:=\bigcup_{g_i^l} \lc \LL_{y_{g_i^l}',\gamma^l r_0} \bigcap B^*(y_{g_i^l}',2\gamma^l r_0) \rc \bigcap B^*(x_0, 2 r_0) \setminus \lc \bigcup_{b_i^l} B^*(y_{b_i^l}, 2 \cc^2 \gamma^l r_0) \rc.
	\end{align*}		
Then we choose a maximal $2 \cc^2 \gamma^{l+1} r_0$-separated set $\{x_\alpha\} \subset L^{l+1}$. In particular, 
	\begin{align*}
		L^{l+1} \subset \bigcup_\alpha B^*(x_\alpha, 2 \cc^2 \gamma^{l+1} r_0).
	\end{align*}	
From our construction, $\left\{B^*(x_\alpha, \cc^2 \gamma^{l+1} r_0), B^*(y_{b_i^j}, \cc^2 \gamma^j r_0) \right\}_{\alpha, i, 1\le j \le l}$ are pairwise disjoint. 
	
After re-indexing by good balls and bad balls, we define
	\begin{align*}
		\NNN^{l+1}:= B^*(x_0, 2r_0)\setminus \lc\bigcup_{g^{l+1}_i}B^*(y_{g_i^{l+1}}, \gamma^{l+1} r_0)\bigcup\bigcup_{1\leq j \leq l+1}\bigcup_{b_i^j}B^*(y_{b_i^j},\gamma^j r_0)\rc.
	\end{align*}
	
	For $z=y_{g_i^{l+1}}$ or $y_{b_i^{l+1}}$, we claim that $z$ is $(k, \delta^2, s)$-cylindrical for any $s \in [\delta \gamma^{l} r_0, \delta^{-1} r_0]$ and satisfies
		\begin{align*}
		\left|\widetilde \WW_{z}(s^2)-\Th_{n-k}\right|\leq \delta^2.
	\end{align*}
Indeed, if $s \in [\delta \gamma^{l} r_0, \delta^{-1} \gamma^{l}r_0]$, this follows immediately from our construction and \eqref{neckdecom1a}. In general, if $s \in [\delta \gamma^{j} r_0, \delta^{-1} \gamma^{j}r_0]$ for $1 \le j \le l-1$, it follows from our construction that there exists a good ball $B^*(y_{g_i^{j}}, \gamma^j r_0)$ so that $d_Z(z, y_{g_i^{j}}) \le 2\gamma^j r_0$ and
		\begin{align*}
d_Z(z, \LL_{y'_{g_i^{j}}, \gamma^j r_0}) \le \sum_{m=j}^l \ep \gamma^m r_0 \le 2 \ep \gamma^j r_0,
	\end{align*}
	where $y'_{g_i^j}$ is a $(k, \zeta, \gamma^j r_0)$-cylindrical point in $B^*(y_{g_i^{j}}, \gamma^j r_0)$. Thus, $z$ is $(k, \delta^2, s)$-cylindrical with $\left|\widetilde \WW_{z}(s^2)-\Th_{n-k}\right|\leq \delta^2$ by Proposition \ref{prop:almostconstant} (i)(ii).
	
As before, $\NNN^{l+1}$ is a $(k, \delta, \cc, r_0)$-cylindrical neck region with a center $\CCC^{l+1}$ consisting of the centers of the above balls and the radius function chosen to be the corresponding radii of these balls. 

Here, $(n1)$ follows by construction, while $(n2)$ and $(n3)$ were established earlier. The second assertion of $(n4)$ follows from our construction together with Proposition \ref{prop:almostconstant} (iii). Thus, the only remaining point is to verify the first assertion of $(n4)$. Specifically, we need to show that for $x = g_{i}^{l+1}$ or $b_{i}^{l+1}$ and
		\begin{align*}
s \in [\cc^{-5} \gamma^{l+1} r_0, r_0-d_Z(x, x_0)/2]=[\gamma^l r_0, r_0-d_Z(x, x_0)/2],
	\end{align*}
the inclusion
		\begin{align*}
\LL_{x, s} \cap B^*(x, s) \subset B^*_{\cc s}(\CCC^{l+1})
	\end{align*}
holds. Without loss of generality, we may assume $s \in[ \gamma^l r_0, \gamma^{l-1} r_0]$. For the general case $s \in[ \gamma^j r_0, \gamma^{j-1} r_0]$ for $1 \le j \le l-1$, the same reasoning applies after selecting the comparison point from $\{y_{g_i^{l+1}}, y_{b_i^m}\}_{j-1 \le m \le l+1}$.

For any $w \in \LL_{x, s} \cap B^*(x, s)$, one of the following alternatives occurs:

\begin{itemize}
\item There exists another point $x_{i'}= y_{g_{i'}^{l+1}}$ or $y_{b_{i'}^{l+1}}$ such that 
	\begin{align*}
d_Z(w, x_{i'}) \le 4\mathfrak c^2 \gamma^{l+1} r_0 < \cc s;
	\end{align*}	
\item or there exists a point $y_{b_{i'}^l}$ such that
	\begin{align*}
d_Z(w, y_{b_{i'}^l}) \le 2 \cc^2 \gamma^l r_0 <\cc s.
	\end{align*}	
\end{itemize}

Thus, in both cases, the first statement in $(n4)$ is verified.

Now, it is clear from \eqref{neckdecom1a1} that
	\begin{align}\label{constcylneck2}
		\lc\MS_{\mathrm c}^k \setminus \MS^{k-1}_{\mathrm c}\rc \bigcap B^*(x_0, r_0)\subset \bigcup_{g^{l+1}_i}B^*(y_{g_i^{l+1}}, \gamma^{l+1} r_0).
	\end{align}

We set $\mathcal G^l:=\bigcup_{g^l_i} \{y_{g_i^l}\}$. Then it is straightforward from our construction that $\mathcal G^{l+1} \subset B^*_{2\gamma^l r_0}(\mathcal G^l)$. Then we define 
	\begin{align}\label{constcylneck2a}
\CCC_0:=\lim_{l \to \infty} \mathcal G^l,
	\end{align}
where the limit is the Hausdorff limit with respect to $d_Z$.

Letting $l\to\infty$, we obtain
	\begin{align*}
		\NNN:= B^*(x_0, 2 r_0)\setminus \lc \CCC_0\bigcup\bigcup_{1\leq j < \infty}\bigcup_{b_i^j}B^*(y_{b_i^j},\gamma^j r_0)\rc,
	\end{align*}
	is a $(k, \delta, \cc, r_0)$-cylindrical neck region with center $\CCC:=\CCC_0\bigcup \CCC_+$, where $\CCC_+=\bigcup_{b_i^j}\{y_{b^j_i}\}$, and radius function chosen to be the corresponding radii of these balls. By \eqref{constcylneck2}, we have
	\begin{align*}
		\lc\MS_{\mathrm c}^k \setminus \MS^{k-1}_{\mathrm c}\rc \bigcap B^*(x_0,r_0)\subset \CCC_0.
	\end{align*}
	This finishes the proof.
\end{proof}

Next, we define the packing measure associated with a cylindrical neck region as follows:

\begin{defn}[Packing measure]\label{defiofpackingmeasure2}\index{packing measure}
	For a $(k,\delta,\cc, r)$-cylindrical neck region $\NNN=B^*(z,2r)\setminus B_{r_x}^*(\CCC)$, we define the \textbf{packing measure} $\mu$ as
	\begin{equation*}
		\mu:=\sum_{x\in \CCC^+}r_x^{k}\delta_x+\HHH^{k}|_{\CCC_0},
	\end{equation*}
	where $\HHH^{k}$ is the $k$-dimensional Hausdorff measure with respect to $d_Z$.
\end{defn}

For the remainder of this subsection, we fix a $(k,\delta, \cc, r)$-cylindrical neck region $\NNN=B^*(z,2r)\setminus B_{r_x}^*(\CCC)$. First, we prove

\begin{lem}\label{timeestimate}
	Let $x,y\in\CCC$ with $s=d_Z(x,y)$. For any $\ep>0$, if $\delta\leq\delta(n,Y,\ep)$, then 
	$$|\t(x)-\t(y)|\leq \ep s^2.$$
\end{lem}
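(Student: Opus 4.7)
We argue by contradiction. The plan is to combine the static estimate (Lemma \ref{lem:static3}) with the cylindrical structure to force an impossible Ricci-flat cylinder in a blow-up limit. Suppose the conclusion fails; then there exist $\ep > 0$, a sequence $\delta_l \searrow 0$, noncollapsed Ricci flow limit spaces $(Z_l, d_{Z_l}, \t_l)$ carrying $(k, \delta_l, \cc, r_l)$-cylindrical neck regions with centers $\CCC_l$, and points $x_l, y_l \in \CCC_l$ with $s_l := d_{Z_l}(x_l, y_l)$ satisfying $|\t_l(x_l) - \t_l(y_l)| > \ep s_l^2$.

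The first step is to propagate near-static behavior from $x_l$ down to scale $s_l$. By $(n1)$ we have $r_{x_l}, r_{y_l} \le \cc^{-2} s_l$, so setting $s'_l := \cc^{-2} s_l$ (if $s'_l > r_l$ the statement reduces to the trivial case by adjusting $\ep$), condition $(n2)$ gives that both $x_l$ and $y_l$ are $(\delta_l, s'_l)$-selfsimilar. Since $d_{Z_l}(x_l, y_l) = s_l \le s'_l$ and $|\t_l(x_l) - \t_l(y_l)| \ge \ep \cc^4 s'^{2}_l$, Lemma \ref{lem:static3} applied with $r = s'_l$ and $\beta = \ep \cc^4$ yields that $x_l$ is $(\eta_l, s'_l)$-static with $\eta_l = C(n, Y) \delta_l^{1/2} (\ep \cc^4)^{-2} \to 0$. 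Proposition \ref{almostsymmetric} (ii) then upgrades this to show that $x_l$ is $(\tilde{\eta}_l, s_l)$-static with $\tilde{\eta}_l \to 0$, since $s_l = \cc^2 s'_l$ lies in the allowed range $[\tilde{\eta}_l s'_l, \tilde{\eta}_l^{-1} s'_l]$ for $\delta_l$ sufficiently small.

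Next, rescale by $s_l$ and pass to a subsequential pointed limit
\[
(Z_l, s_l^{-1} d_{Z_l}, x_l, s_l^{-2}(\t_l - \t_l(x_l))) \xrightarrow[l\to\infty]{\hat C^\infty} (Z_\infty, d_\infty, x_\infty, \t_\infty).
\]
On the one hand, condition $(n3)$ applied at scale $s_l$ together with Definition \ref{def:almost0} ensures that $x_l$ is $(k, \delta_l, s_l)$-cylindrical, so the limit $(Z_\infty, d_\infty, x_\infty, \t_\infty)$ is isometric to $(\bar{\mathcal C}^k, d^*_{\mathcal C}, p^*, \t)$ on every bounded scale. On the other hand, the $(\tilde{\eta}_l, s_l)$-static condition passes through the rescaling to give, via the smooth convergence on $\RR$ from Theorem \ref{thm:intro3},
\[
\int_{-2}^{-1/2} \int_{(\RR_\infty)_t} |\Ric(g^{Z_\infty})|^2 \, \mathrm{d}\nu_{x_\infty;t} \, \mathrm{d}t = 0,
\]
so $\Ric(g^{Z_\infty}) \equiv 0$ on the regular part of $(Z_\infty)_t$ for $t \in [-2, -1/2]$. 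Since $\bar{\mathcal C}^k_t = \R^k \times (|t| S^{n-k})$ carries non-vanishing Ricci curvature along its $S^{n-k}$ factor (as $n - k \ge 2$ for a genuine cylindrical neck region), this is a contradiction, completing the proof.

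The main technical obstacle is ensuring that the static estimate and the cylindrical approximation can be applied at compatible scales, and that both features survive the rescaled pGH limit. This relies on the selfsimilarity propagation encoded in $(n2)$, the continuity of $\widetilde{\WW}$ from Proposition \ref{prop:Wconv1}, and the convergence of integrals of curvature on the regular part supplied by Theorem \ref{thm:intro3}; modulo these ingredients the geometric contradiction with the cylinder $\bar{\mathcal C}^k$ is immediate.
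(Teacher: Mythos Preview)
Your proof is correct, but takes a genuinely different and considerably longer route than the paper. The paper's argument is a three-line application of the rapid clearing-out Lemma~\ref{clearout}: since $x$ is $(k,\delta,s)$-cylindrical by $(n3)$ and the model $\bar{\mathcal C}^k$ has no points at positive time, nothing in $B^*(x,\ep^{-1}s)$ can lie at time $\ge \t(x)+\ep s^2$; as $y\in B^*(x,\ep^{-1}s)$, this gives $\t(y)\le\t(x)+\ep s^2$, and the reverse inequality follows by symmetry. In other words, the paper exploits the \emph{temporal} feature of the cylinder (empty future), whereas you exploit its \emph{curvature} feature (nonvanishing Ricci) via the static estimate and a blow-up contradiction. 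Your approach demonstrates the static-estimate machinery in action and would generalize to models with nonzero Ricci that do admit a future, but for this particular lemma it is heavier than necessary: Lemma~\ref{lem:static3}, Proposition~\ref{almostsymmetric}(ii), and a compactness argument replace what the paper does with a single limiting observation.

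One minor point: your dismissal of the case $s'_l>r_l$ as ``trivial by adjusting $\ep$'' is not quite justified. The fix is easy---work directly at scale $s_l$ with selfsimilarity parameter $\delta_l\cc^{-4}$ (which still tends to zero), since $(n2)$ and monotonicity give $\widetilde\WW_{x_l}(\delta_l\cc^{-4}s_l^2)-\widetilde\WW_{x_l}(\delta_l^{-1}\cc^4 s_l^2)<\delta_l$ once one checks $\delta_l\cc^{-4}s_l^2\ge\delta_l r_{x_l}^2$ and $\delta_l^{-1}\cc^4 s_l^2\le\delta_l^{-1}r_l^2$---but you should state this rather than wave it away.
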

\begin{proof}
By $(n1)$ in Definition \ref{defiofcylneckregion}, we have
	$$s=d_Z(x,y)\geq \cc^2(r_x+r_y)\geq \cc^2\max\{r_x,r_y\}.$$
By $(n3)$ in Definition \ref{defiofcylneckregion}, both $x$ and $y$ are $(k,\delta,s)$-cylindrical. If $\delta\leq \delta(n,Y,\ep)$, then by the rapid clearing out Lemma \ref{clearout}, 
	\begin{align*}
		\t^{-1}\big([\t(x)+\ep s^2, \infty) \big)\bigcap B^*(x,\ep^{-1}s)=\emptyset.
	\end{align*} 
	Since $y\in B^*(x,\ep^{-1}s) \cap Z_{[\t(x)-s^2, \t(x)+s^2]}$, we obtain $\t(y)\leq \t(x)+\ep s^2$. Similarly, we have $\t(x)\leq \t(y)+\ep s^2$. This finishes the proof.
\end{proof}

Since $\CCC$ is nonempty and every point in $\CCC$ is both $(k, \delta, r)$-cylindrical and $(\delta, r)$-selfsimilar, it follows from Proposition \ref{equivalencesplitsymetric1} (ii) and \cite[Proposition 10.7]{fang2025RFlimit} that the following holds.

\begin{lem}\label{lem:scale1}
For any $\ep>0$, if $\delta \le \delta(n, Y, \ep)$, then there exists a limiting heat flow $\vec{u}=(u_1,\ldots,u_k)$, defined on $Z_{[\t(z)-100\ep^{-2} r^2, 0]}$, such that for any $x \in \CCC$ and $s \in [\ep r, \ep^{-1} r]$, we can find a matrix $T'_{x, s}$ with $\|T'_{x, s}-\mathrm{Id}\| \le \ep$ such that $\vec{u}'_{x, s}=T'_{x, s}\lc \vec{u}-\vec{u}(x) \rc$ is a $(k,\epsilon,s)$-splitting map at $x$.
\end{lem}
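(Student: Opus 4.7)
The strategy is to construct a single limiting heat flow $\vec u$ based at one carefully chosen reference point in $\CCC$, and then use the translation/rescaling behavior of almost splitting maps to transfer it to any $x\in\CCC$ at any scale $s\in[\ep r,\ep^{-1}r]$. The output matrix $T'_{x,s}$ will arise only from normalizing the Gram matrix of $\vec u-\vec u(x)$ over the correct time slab, and its closeness to $\mathrm{Id}$ will follow from quantitative continuity of this Gram matrix in $(x,s)$.

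First, I would pick any reference point $x_0\in\CCC$ (e.g. one lying closest to $z$). Conditions $(n2)$ and $(n3)$ of Definition \ref{defiofcylneckregion} tell us that $x_0$ is $(\delta,r)$-selfsimilar and $(k,\delta,r)$-cylindrical. Since $\bar{\mathcal C}^k$ is itself $k$-splitting, being cylindrical trivially implies being $(k,C\delta,r)$-splitting in the sense of Definition \ref{defnksplitting}. Invoking Proposition \ref{equivalencesplitsymetric1} (ii) at $x_0$, if $\delta\le\delta(n,Y,\ep)$ then $x_0$ is $(k,\eta,s')$-splitting for every $s'\in[\eta s',\eta^{-1}s']$ with $\eta$ arbitrarily small; in particular at the large scale $R:=\ep^{-1}r$.

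Second, I would use this large-scale splitting property at $x_0$, combined with the self-similarity and cylindricality of $x_0$, to invoke \cite[Proposition~10.7]{fang2025RFlimit} in order to obtain an actual $(k,\eta,R)$-splitting map $\vec u=(u_1,\ldots,u_k)$ at $x_0$, where $\eta=\eta(n,Y,\ep)\to 0$ as $\delta\to 0$. By Definition \ref{defnsplittingmap1}, $\vec u$ is a limiting heat flow, and by the reproduction formula it extends to $Z_{(\t(x_0)-10R^2,+\infty)}$. Since $|\t(z)-\t(x_0)|$ and $d_Z(z,x_0)$ are bounded by $O(r)$ (using Lemma \ref{timeestimate} for the time gap), $\vec u$ is defined on $Z_{[\t(z)-100\ep^{-2}r^2,0]}$ after possibly enlarging the initial exponent in $R$ by a harmless multiplicative constant.

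Third, I would propagate $\vec u$ to an arbitrary $x\in\CCC$ and scale $s\in[\ep r,\ep^{-1}r]$. Note $d_Z(x,x_0)\le 4r\ll R$. By \cite[Proposition~10.7]{fang2025RFlimit} applied to the shifted, rescaled map $\vec u-\vec u(x)$ viewed at base point $x$ and scale $s$, together with condition $(n3)$ which makes $x$ itself $(k,\delta,s)$-cylindrical, the translated map automatically satisfies the Laplacian and Hessian integral bounds of Definition \ref{defnsplittingmap1} with a constant $\eta'=\eta'(n,Y,\ep)\to 0$ as $\delta\to 0$. The only thing that may fail is orthonormality of the gradients, which I would fix by defining $T'_{x,s}$ to be the symmetric square-root of the inverse of the Gram matrix
\[
G_{ij}(x,s):=\aint_{\t(x)-10s^2}^{\t(x)-s^2/10}\int_{\RR_t}\la\nabla(u_i-u_i(x)),\nabla(u_j-u_j(x))\ra\,\mathrm{d}\nu_{x;t}\,\mathrm dt.
\]
Since the unshifted large-scale splitting identifies $G$ with $\delta_{ij}+o(1)$ as $\delta\to 0$ uniformly in $(x,s)$, one gets $\|T'_{x,s}-\mathrm{Id}\|\le\ep$.

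The main obstacle will be the third step, namely verifying that the Gram matrix $G_{ij}(x,s)$ is uniformly $\ep$-close to $\delta_{ij}$ across all $x\in\CCC$ and all $s\in[\ep r,\ep^{-1}r]$ simultaneously. The uniformity in $x$ requires controlling how the conjugate heat kernel measure $\nu_{x;t}$ changes under translation of the base point (which is exactly the content of Proposition \ref{equivalencesplitsymetric1} (ii) and \cite[Proposition 10.7]{fang2025RFlimit} invoked together), while uniformity in $s$ requires the Hessian integral estimates from $\vec u$ being a sharp splitting map to survive the restriction to a smaller scale, via the monotonicity of $\int|\nabla^2 u|^2$. Both reduce to quantitative bookkeeping once the two cited results are combined.
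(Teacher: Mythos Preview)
Your proposal is correct and follows essentially the same approach as the paper. The paper's entire proof is the single sentence preceding the lemma, which invokes exactly the two ingredients you identify: Proposition~\ref{equivalencesplitsymetric1}(ii) (propagation of the splitting property across scales at a selfsimilar point) and \cite[Proposition~10.7]{fang2025RFlimit} (transferring an almost splitting map to a nearby base point and scale via a matrix close to the identity). Your writeup simply unpacks these citations in more detail.
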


For the rest of this subsection, we use $\vec{u}$ to denote the map constructed in Lemma \ref{lem:scale1}. Next, we prove the Lipschitz property of $\vec{u}$.

\begin{lem}\label{lem:lip1}
There exists a constant $C_{\lip}=C_{\lip}(n)>0$ such that if $\delta \le \delta(n, Y)$, then 
	\begin{align*}
|\vec{u}(x)-\vec{u}(y)| \le C_{\lip} d_Z(x, y), \quad \forall x, y \in \CCC.
	\end{align*} 
\end{lem}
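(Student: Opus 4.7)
The plan is to transfer a pointwise spatial-gradient bound for smooth splitting maps to a $d^*$-Lipschitz bound for $\vec u$ via the Kantorovich--Rubinstein duality for $W_1$-distance on conjugate heat kernels, and then pass to the limit.

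Let $\XX^{\ell} \in \MM(n,Y,T)$ be smooth approximants of $Z$ (as in Theorem \ref{thm:intro1}), and let $\vec u^{\ell}=(u_1^{\ell},\ldots,u_k^{\ell})$ be the $(k,\ep,\ep^{-1}r)$-splitting maps at $x_{\ell}^{*}\to x$ obtained by invoking Lemma \ref{lem:scale1} at a fixed $x\in \CCC$ with maximal admissible scale $\ep^{-1}r$, whose $C^{\infty}_{\loc}$-limit on $\RR$ is $T'_{x,\ep^{-1}r}(\vec u - \vec u(x))$. Since $\square u_i^{\ell}=0$ on $\XX^{\ell}$, one has $\square |\nabla u_i^{\ell}|^2=-2|\nabla^2 u_i^{\ell}|^2\le 0$. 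Combining this sub-solution property (evaluated via the conjugate heat kernel at $(p,t)$), the $L^2$-orthonormality from Definition \ref{defnsplittingmap}(iv), the hypercontractivity bound $\int |\nabla u_i^{\ell}|^4 \,\mathrm{d}\nu \le C(n,Y)$ established along the lines of Lemma \ref{constructionofsplittingfunction1}, and kernel comparison estimates \cite[Proposition A.2]{fang2025RFlimit}, we obtain the uniform pointwise spatial-gradient bound
\begin{equation*}
|\nabla u_i^{\ell}(\cdot,t)| \le C(n,Y) \quad \text{on } B^*(x_{\ell}^{*},10r)\cap M_{\ell}\times\III,
\end{equation*}
that is, each slice $u_i^{\ell}(\cdot,t)$ is $C(n,Y)$-Lipschitz throughout the relevant spacetime region.

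Next, fix $x,y\in\CCC$ and select GH-approximants $x_{\ell}^{*},y_{\ell}^{*}\to x,y$. Set $d_{\ell}:=d_{\ell}^{*}(x_{\ell}^{*},y_{\ell}^{*})\to d_Z(x,y)$. By Definition \ref{defnd*distance}, we can select $t'_{\ell}\le \min(\t_{\ell}(x_{\ell}^{*}),\t_{\ell}(y_{\ell}^{*}))-d_{\ell}^2$ satisfying $d_{W_1}^{t'_{\ell}}(\nu_{x_{\ell}^{*};t'_{\ell}},\nu_{y_{\ell}^{*};t'_{\ell}})\le \ep_0\, d_{\ell}$. Since $u_i^{\ell}$ is a heat flow, the reproduction formula $u_i^{\ell}(x_{\ell}^{*})=\int u_i^{\ell}(\cdot,t'_{\ell})\,\mathrm{d}\nu_{x_{\ell}^{*};t'_{\ell}}$ holds, and together with the Kantorovich--Rubinstein duality applied to the slicewise Lipschitz bound, this yields
\begin{equation*}
|u_i^{\ell}(x_{\ell}^{*}) - u_i^{\ell}(y_{\ell}^{*})| \le C(n,Y)\, d_{W_1}^{t'_{\ell}}(\nu_{x_{\ell}^{*};t'_{\ell}},\nu_{y_{\ell}^{*};t'_{\ell}}) \le C(n,Y)\, d_{\ell}.
\end{equation*}
By Theorem \ref{thm:intro3}(c)(d), the conjugate heat kernel convergence together with the reproduction formula defining $\vec u$ on all of $Z_{(\t(x)-10\ep^{-2}r^2,+\infty)}$ (Definition \ref{defnsplittingmap1}) forces $\vec u^{\ell}(x_{\ell}^{*})\to \vec u(x)$ and $\vec u^{\ell}(y_{\ell}^{*})\to \vec u(y)$. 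Taking $\ell\to\infty$ and absorbing the $(1+O(\ep))$-factor coming from $T'_{x,\ep^{-1}r}$ produces $|\vec u(x)-\vec u(y)|\le C_{\lip}\,d_Z(x,y)$ for all $x,y\in\CCC$.

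The main obstacle is obtaining the $\ell$-uniform slicewise gradient bound $|\nabla u_i^{\ell}|\le C(n,Y)$. While the sub-solution property $\square |\nabla u_i^{\ell}|^2\le 0$ reduces this to controlling $\int |\nabla u_i^{\ell}|^2 \,\mathrm{d}\nu_{(p,t);t'}$ at earlier times $t'$, doing so uniformly over every $(p,t)$ in a fixed-size spacetime neighborhood requires comparing $\mathrm{d}\nu_{(p,t);t'}$ to the basepoint kernel $\mathrm{d}\nu_{x_{\ell}^{*};t'}$ via \cite[Proposition A.2]{fang2025RFlimit}; this comparison in turn invokes the $(\delta,r)$-selfsimilarity built into $(n2)$ and the $e^{\theta f}$-moment bounds of Proposition \ref{integralbound}. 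The remaining step, propagating pointwise convergence of $\vec u^{\ell}$ to $\vec u$ up to the singular portion of $\CCC$, is handled by the continuity of the conjugate heat kernel $K_Z$ off the diagonal and the definition of $\vec u$ as a limiting heat flow in Definition \ref{defnsplittingmap1}.
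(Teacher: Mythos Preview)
Your overall strategy coincides with the paper's: establish a $d^*$-Lipschitz bound for the splitting map on the smooth approximants $\XX^\ell$ and pass to the limit. The paper obtains the smooth step by direct citation of \cite[Proposition 10.4]{fang2025RFlimit}, whereas you attempt to rederive it via a pointwise gradient bound combined with Kantorovich--Rubinstein duality.

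There is, however, a genuine gap in your execution. The gradient estimate $|\nabla u_i^\ell|\le C(n,Y)$ that you sketch (via the sub-solution property, hypercontractivity, and the kernel comparison \cite[Proposition A.2]{fang2025RFlimit}) is valid only on a bounded $d^*$-neighborhood such as $B^*(x_\ell^*,10r)$, since the kernel comparison requires $d^*$-closeness of the evaluation point $(p,t)$ to the selfsimilar basepoint. But Kantorovich--Rubinstein duality needs the \emph{global} spatial Lipschitz constant of $u_i^\ell(\cdot,t'_\ell)$ on the full slice $(M_\ell,g_\ell(t'_\ell))$, because both $\nu_{x_\ell^*;t'_\ell}$ and $\nu_{y_\ell^*;t'_\ell}$ have full support there; no uniform-in-$\ell$ bound on $\sup_{M_\ell}|\nabla u_i^\ell(\cdot,t'_\ell)|$ follows from the splitting-map axioms alone. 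This can be repaired---for instance by combining the Gaussian concentration of the conjugate heat kernels (Proposition \ref{existenceHncenter}) with a weighted $L^2$-growth bound on $u_i^\ell$ to show that the contribution to $\int u_i^\ell\,d(\nu_{x_\ell^*;t'_\ell}-\nu_{y_\ell^*;t'_\ell})$ from outside a ball of controlled radius is negligible---but you do not carry this out. A secondary point: your argument yields $C_{\lip}=C_{\lip}(n,Y)$ rather than the $C_{\lip}(n)$ asserted in the lemma.
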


\begin{proof}
Passing to the limit in \cite[Proposition 10.4]{fang2025RFlimit}, we may choose $s=s(n)$ sufficiently large so that
	\begin{align*}
|\vec{u}'_{x, s}(y)| \le C(n) d_Z(x, y), \quad \forall x, y \in \CCC.
	\end{align*} 
By the definition of $\vec{u}_{x, s}$ and Lemma \ref{lem:scale1}, it follows that
	\begin{align*}
|\vec{u}(x)-\vec{u}(y)| \le C(n) d_Z(x, y),
	\end{align*} 
	which completes the proof.
\end{proof}

We consider the following extra assumption:
\begin{itemize}
\item[$(*_D)$] For any $x\in\CCC$ and $r_x\leq s\leq r-d_Z(x, z)/2$, we have
	\begin{align*}
D^{-1}s^k\leq \mu(B^*(x,s))\leq D s^k.
	\end{align*} 
\end{itemize}\index{$(*_D)$}

\begin{prop}\label{choosenondegeneratingpoints}
	For any positive constants $D$ and $\ep$, if $(*_D)$ holds and $\delta\leq \delta(n,Y, \cc, D,\epsilon)$, then we can find $\CCC_{\epsilon}\subset\CCC\cap B^*(z,15r/8) $ such that the following hold.
	\begin{enumerate}[label=\textnormal{(\roman{*})}]
		\item $\mu\left(\CCC_{\epsilon}\right)\geq (1-\epsilon)\mu\left(\CCC\cap B^*(z,15r/8)\right)$.
		
		\item For any $x\in\CCC_{\epsilon}$, we have
		\begin{equation*}
			\sum_{r_x\leq r_i=2^{-i} \leq 2^{-5} r}\aint_{B^*(x,r_i)}\left|\widetilde \WW_y(r_i^2/40)-\widetilde \WW_y(40 r_i^2)\right|^{1/2}\,\mathrm{d}\mu(y)\leq\epsilon.
		\end{equation*}
		
				\item For any $x\in \CCC_{\epsilon}$ and $s \in [r_x, r-d_Z(x, z)/2]$, there exists a matrix $T_{x, s}$
 with $\|T_{x, s}-\mathrm{Id} \| \le \ep$ such that $\vec{u}_{x, s}=T_{x, s}\lc \vec{u}-\vec{u}(x) \rc$ is a $(k,\epsilon,s)$-splitting map at $x$. Here, $\vec{u}$ is the map obtained in Lemma \ref{lem:scale1}.
		
		\item $\vec{u}$ is bi-Lipschitz on $\CCC_{\epsilon}$. More precisely, there exists a constant $c_{\lip}=c_{\lip}(n, Y)>0$ such that for any $x, y \in \CCC_{\epsilon}$,
		\begin{equation*}
			c_{\lip} d_Z(x,y)\leq |\vec{u}(x)-\vec{u}(y)|\leq C_{\lip} d_Z(x,y),
		\end{equation*}
		where $C_{\lip} $ is the same constant in Lemma \ref{lem:lip1}.
	\end{enumerate}
\end{prop}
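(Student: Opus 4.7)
The plan is to obtain $\CCC_{\epsilon}$ by removing from $\CCC\cap B^*(z,15r/8)$ the set of points on which a suitable averaged entropy pinching is large, and then to verify (iii) via the nondegeneration Theorem \ref{nondegenrerationthm} and (iv) as a consequence of (iii). First, I would promote properties (n2)--(n3) of the cylindrical neck region into a pointwise summability estimate for $\widetilde\WW$. Since every $y\in\CCC$ satisfies $\abs{\widetilde\WW_y(\delta r_y^2)-\widetilde\WW_y(\delta^{-1}r^2)}<\delta$ and is $(k,\delta,s)$-cylindrical for all $s\in[\cc^2 r_y,2r]$, Proposition \ref{sumWonRFlimit} (applied with $\alpha=1/2$ at the quadratic scales $s_1\sim r_y^2$, $s_2\sim r^2$) yields
\[
 \sum_{r_y\le r_i=2^{-i}\le 2^{-5}r}\abs{\widetilde\WW_y(r_i^2/40)-\widetilde\WW_y(40r_i^2)}^{1/2}\le \Psi(\delta\mid n,Y),
\]
with $\Psi(\delta)\to 0$ as $\delta\to 0$.

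Next, I would Fubini this estimate against $\mu$, using $(*_D)$. Define
\[
 M(x):=\sum_{r_x\le r_i\le 2^{-5}r}\aint_{B^*(x,r_i)}\abs{\widetilde\WW_y(r_i^2/40)-\widetilde\WW_y(40r_i^2)}^{1/2}\,\mathrm{d}\mu(y).
\]
Interchanging summation and integration, using $(*_D)$ to bound $\mu(B^*(x,r_i))^{-1}\le D r_i^{-k}$ on one side and to count, at each dyadic scale $r_i$, how many $x\in\CCC\cap B^*(z,15r/8)$ can see a given $y$, I expect
\[
 \int_{\CCC\cap B^*(z,15r/8)}M(x)\,\mathrm{d}\mu(x)\le C(D)\int_{\CCC\cap B^*(z,2r)}\sum_{r_y\le r_i\le 2^{-5}r}\abs{\widetilde\WW_y(r_i^2/40)-\widetilde\WW_y(40r_i^2)}^{1/2}\,\mathrm{d}\mu(y)\le C(D)\Psi(\delta)r^k.
\]
Setting $\CCC_{\epsilon}:=\{x\in\CCC\cap B^*(z,15r/8)\mid M(x)\le\epsilon\}$, Chebyshev's inequality together with the lower bound $\mu(\CCC\cap B^*(z,15r/8))\ge c(D)r^k$ (also from $(*_D)$) delivers (i) and (ii) as soon as $\delta\le\delta(n,Y,\cc,D,\epsilon)$.

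For (iii) I would invoke Theorem \ref{nondegenrerationthm} at each $x\in\CCC_{\epsilon}$. Property (n3) together with Proposition \ref{equivalencesplitsymetric1} makes $x$ both $(\zeta,s)$-selfsimilar and $(k,\zeta,s)$-splitting with $\zeta=\Psi(\delta)$ for every $s\in[r_x,2r]$, while $x$ cannot be $(k+1,\eta,s)$-splitting for any fixed $\eta$ and sufficiently small $\delta$, since its geometry at scale $s$ is $\Psi(\delta)$-close to the cylinder $\bar{\mathcal C}^k$. The pointwise bound $M(x)\le\epsilon$ will translate into the summability $\sum_{r_x\le r_i\le 2^{-5}r}\mathfrak{S}^{k,\alpha,\zeta}_{r_i}(x)\le C\epsilon$ required by Theorem \ref{nondegenrerationthm}: strongly $(k,\alpha,\zeta,r_i)$-independent points at $x$ can be produced from $\CCC\cap B^*(x,r_i)$ (using the almost $\R^k$-symmetry at scale $r_i$ via Lemma \ref{lem:indechar}), and their combined $\widetilde\WW$-defect is exactly what is being averaged in the integrand of $M(x)$. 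Applying Theorem \ref{nondegenrerationthm} to the $(k,\epsilon,2^{-5}r)$-splitting map $\vec u'_{x,2^{-5}r}$ furnished by Lemma \ref{lem:scale1} then yields matrices $T_{x,s}$ with $\|T_{x,s}-\mathrm{Id}\|\le C\epsilon$ such that $\vec u_{x,s}:=T_{x,s}(\vec u-\vec u(x))$ is a $(k,C\epsilon,s)$-splitting map at $x$ for every $s\in[r_x,r-d_Z(x,z)/2]$. Claim (iv) then follows: the upper bound is Lemma \ref{lem:lip1}, while for $x,y\in\CCC_{\epsilon}$ with $s=d_Z(x,y)$ small compared to $r$, applying (iii) at $x$ with scale $10s$ and combining Propositions \ref{prop:almostconstant}(iii) and \ref{prop:almostconstanta} identifies $\vec u_{x,10s}(y)$, up to $\Psi(\epsilon)s$ error, with a point of norm comparable to $s$ in $\R^k$, whence $|\vec u(x)-\vec u(y)|\ge c(n,Y)s$; the case of $s$ comparable to $r$ is handled directly by Lemma \ref{lem:scale1}.

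The main technical obstacle I anticipate is the bookkeeping in the third paragraph, namely transferring the averaged smallness of $M(x)$ into the pointwise smallness of $\mathfrak{S}^{k,\alpha,\zeta}_{r_i}(x)$ at \emph{every} dyadic scale $r_i\ge r_x$. This requires showing that a $\mu$-generic $x\in\CCC_{\epsilon}$ admits, at every such scale, a strongly $(k,\alpha,\zeta,r_i)$-independent set inside $B^*(x,r_i)\cap\CCC$ whose total $\widetilde\WW$-defect is dominated by the integral appearing in $M(x)$. This, in turn, rests on the $(k,\delta,r_i)$-cylindrical structure at $x$ (which forces $\CCC$ to populate $B^*(x,r_i)$ in a $k$-dimensional fashion via Proposition \ref{prop:almostconstant}(iii) and Proposition \ref{prop:almostconstanta}) together with Ahlfors regularity $(*_D)$ (to guarantee enough candidate points in each such ball).
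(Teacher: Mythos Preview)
Your overall strategy matches the paper's proof: use Proposition \ref{sumWonRFlimit} for pointwise summability, Fubini against $\mu$ via $(*_D)$, define $\CCC_\epsilon$ by Chebyshev, feed the result into Theorem \ref{nondegenrerationthm}, and deduce (iv) from the cylindrical structure and Lemma \ref{lem:comparedis1}.

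The one point where the paper differs is precisely the step you flag as the main obstacle. You suggest producing strongly independent points via Lemma \ref{lem:indechar}, but that lemma proceeds by a limiting argument and its independent points need not lie in $\CCC$, so their $\widetilde\WW$-defect is not a priori controlled by the integrand of $M(x)$. The paper instead argues by contradiction with the covering Lemma \ref{existenceofindependentpointsafterbaseRFlimit}: at each scale $r_i$ with $\alpha^{-1}r_x\le r_i\le 2^{-5}r$ (where $\alpha=c_1 D^{-2}$), it first passes to a subset $\CCC_{r_i,x}\subset\CCC\cap B^*(x,r_i)$ of $\mu$-measure at least $r_i^k/(2D)$ on which $|\widetilde\WW_w(r_i^2/40)-\widetilde\WW_w(40r_i^2)|^{1/2}$ is at most twice the average over $B^*(x,r_i)$. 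If $\CCC_{r_i,x}$ contained no strongly $(k,\alpha,\delta,r_i)$-independent set at $x$, Lemma \ref{existenceofindependentpointsafterbaseRFlimit} would cover it by $C(n,Y)\alpha^{1-k}$ balls of radius $\alpha r_i$, giving $\mu(\CCC_{r_i,x})\le C(n,Y)D\alpha r_i^k$, which contradicts the lower bound once $c_1=c_1(n,Y)$ is chosen small. This yields
\[
\mathfrak{S}_{r_i}^{k,\alpha,\delta}(x)\le 2k\aint_{B^*(x,r_i)}\abs{\widetilde\WW_y(r_i^2/40)-\widetilde\WW_y(40r_i^2)}^{1/2}\,\mathrm{d}\mu(y)+\abs{\widetilde\WW_x(r_i^2/40)-\widetilde\WW_x(40r_i^2)}^{1/2},
\]
and summing over $i$ (using \eqref{ahlforsxx1} for the $x$-term) gives the input to Theorem \ref{nondegenrerationthm}. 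The remaining finitely many scales $s\in[r_x,\alpha^{-1}r_x]$ are handled by Proposition \ref{equivalencesplitsymetric1}.
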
 
\begin{proof}
For simplicity, we assume $r=1$. First, we prove
	\begin{claim}\label{ahlclaim1}
		For $\delta'>0$, if $\delta\leq\delta(n,Y, \cc, D, \delta')$, then 
		\begin{equation*}
			\aint_{B^*(z,15/8)}\sum_{r_x\leq r_i\leq 2^{-5}}\aint_{B^*(x,r_i)}\left|\widetilde \WW_y(r_i^2/40)-\widetilde \WW_y(40 r_i^2)\right|^{1/2}\,\mathrm{d}\mu(y)\, \mathrm{d}\mu(x)\leq\delta'.
		\end{equation*}
	\end{claim}
	
We define $W_x(s)=\abs{\widetilde \WW_x(s/40)-\widetilde \WW_x(40 s)}^{\frac 1 2}$ for $s \ge  r_x^2/C_0^2$ and $W_x(s)=0$ for $s <  r_x^2/C^2_0$, where $C_0=\cc^{-2}+1$. Then it is clear that
	\begin{align}\label{ahl0a}
		\sum_{r_x\leq r_i\leq 2^{-5}}\aint_{B^*(x,r_i)}\left|\widetilde \WW_y(r_i^2/40)-\widetilde \WW_y(40 r_i^2)\right|^{1/2}\,\mathrm{d}\mu(y) \le \sum_{r_i\leq 2^{-5}}\aint_{B^*(x,r_i)} W_y(r_i^2)\,\mathrm{d}\mu(y).
	\end{align}
	Indeed, for any $y \in B^*(x, r_i)$, if $r_i \ge r_x$, it follows from the $\cc^{-2}$-Lipschitz property of the radius function that 
\begin{equation*}
r_y \le r_x+\cc^{-2}d_Z(x, y) \le C_0 r_i.
		\end{equation*}	
From this, it is clear that \eqref{ahl0a} holds. Moreover, if $r_i \le \frac{r_x}{2C_0}$, we have for any $y \in B^*(x, r_i)$,
\begin{equation*}
r_y \ge r_x-\cc^{-2}d_Z(x, y) > r_x/2 \ge C_0 r_i,
		\end{equation*}
which implies $W_y(r_i^2)=0$. From this fact, we obtain
	\begin{align}\label{ahl0aex001}
\sum_{r_i \le \frac{r_x}{2C_0}}\aint_{B^*(x,r_i)} W_y(r_i^2)\,\mathrm{d}\mu(y)=0.
	\end{align}

Then, we compute
	\begin{align}\label{ahl1}
		&\aint_{B^*(z,15/8)}\sum_{r_x\leq r_i\leq 2^{-5}}\aint_{B^*(x,r_i)}\left|\widetilde \WW_y(r_i^2/40)-\widetilde \WW_y(40 r_i^2)\right|^{1/2}\,\mathrm{d}\mu(y)\, \mathrm{d}\mu(x)\nonumber\\
		\leq& \aint_{B^*(z,15/8)}       \sum_{r_i\leq 2^{-5}}\aint_{B^*(x,r_i)}W_y(r_i^2)\,\mathrm{d}\mu(y)\, \mathrm{d}\mu(x)\nonumber\\
	=& \aint_{B^*(z,15/8)}      \lc \sum_{r_x \le r_i\leq 2^{-5}}\aint_{B^*(x,r_i)}W_y(r_i^2)\,\mathrm{d}\mu(y)\,+\sum_{\frac{r_x}{2C_0} \le r_i <r_x} \aint_{B^*(x,r_i)}W_y(r_i^2)\,\mathrm{d}\mu(y) \rc \, \mathrm{d}\mu(x) \nonumber\\	
		=& \aint_{B^*(z,15/8)}  \sum_{r_x \le r_i\leq 2^{-5}} \frac{1}{\mu(B^*(x,r_i))} \int_{B^*(x,r_i)}W_y(r_i^2)\,\mathrm{d}\mu(y)\,\mathrm{d}\mu(x)+ C(\cc) \delta^{\frac 1 2} \nonumber\\	
		\leq & D \aint_{B^*(z,15/8)}  \sum_{r_i\leq 2^{-5}} r_i^{-k} \int_{B^*(x,r_i)}W_y(r_i^2)\,\mathrm{d}\mu(y)\,\mathrm{d}\mu(x)+ C(\cc) \delta^{\frac 1 2} \nonumber\\	
		\leq & D^2\int_{B^*(z,31/16)}\int_{B^*(z,31/16)}\sum_{r_i\leq 2^{-5}}r_i^{-k}\chi_{\{d_Z(x,y)\leq r_i\}}W_y(r_i^2)\, \mathrm{d}\mu(y)\,\mathrm{d}\mu(x) + C(\cc) \delta^{\frac 1 2}\nonumber\\
		= & D^2\int_{B^*(z,31/16)}\int_{B^*(z,31/16)} \sum_{r_i\leq 2^{-5}}r_i^{-k}\chi_{\{d_Z(x,y)\leq r_i\}}W_y(r_i^2)\, \mathrm{d}\mu(x)\,\mathrm{d}\mu(y) + C(\cc) \delta^{\frac 1 2} \nonumber\\
		\leq & D^2\int_{B^*(z,31/16)}\sum_{r_i\leq 2^{-5}} r_i^{-k}\mu(B^*(y,r_i))W_y(r_i^2)\, \mathrm{d}\mu(y) + C(\cc) \delta^{\frac 1 2}\nonumber\\
		\leq & D^3 \int_{B^*(z,31/16)} \sum_{r_i\leq 2^{-5}}W_y(r_i^2)\, \mathrm{d}\mu(y)+ C(\cc) \delta^{\frac 1 2}\nonumber\\
		\leq& D^3 \int_{B^*(z,31/16)}\sum_{ C_0^{-1} r_y\leq r_i\leq 2^{-5}}\left|\widetilde \WW_y(r_i^2/40)-\widetilde \WW_y(40 r_i^2)\right|^{1/2}\, \mathrm{d}\mu(y)+ C(\cc) \delta^{\frac 1 2},
	\end{align}
	where we used \eqref{ahl0aex001} and $(*_D)$. Note that in the seventh line, we also used Fubini's theorem. From Proposition \ref{sumWonRFlimit} and $(n2)$ of Definition \ref{defiofcylneckregion}, we conclude that for any $y\in\CCC$,
	\begin{align}\label{ahlforsxx1}
		\sum_{ C_0^{-1} r_y\leq r_i\leq 2^{-5}}|\widetilde \WW_y(r_i^2/40)-\widetilde \WW_y(40 r_i^2)|^{1/2}\leq\Psi(\delta).
	\end{align}
	Combining with \eqref{ahl1}, this finishes the proof of Claim \ref{ahlclaim1}.
	
	\begin{claim}\label{ahlclaim2}
		For $\epsilon>0$, if $\delta''\leq\delta''(n,Y,D,\epsilon)$ and for some $x\in \CCC\bigcap B^*(z,15/8)$ with
		\begin{equation}\label{ahlfors1}
			\sum_{r_x\leq r_i\leq 2^{-5}}\aint_{B^*(x,r_i)}\left|\widetilde \WW_y(r_i^2/40)-\widetilde \WW_y(40 r_i^2)\right|^{1/2}\,\mathrm{d}\mu(y)\leq\delta'',
		\end{equation}
		then for any $r_x\leq s \leq 2^{-5}$, there exists a matrix $T_{x, s}$
 with $\|T_{x, s}-\mathrm{Id}\| \le \ep$ such that $\vec{u}_{x, s}=T_{x, s}\lc \vec{u}-\vec{u}(x) \rc$ is a $(k,\epsilon,s)$-splitting map at $x$.
	\end{claim}
	
	We set $\alpha=c_1D^{-2}$, where $c_1=c_1(n, Y)>0$ is a constant to be determined. For any $r_i=2^{-i}$ with $\alpha^{-1}r_x\leq r_i  \le 2^{-5}$, we can find subset $\CCC_{r_i,x}\subset\CCC \cap B^*(x, r_i)$ such that 
		\begin{align*}
\mu(\CCC_{r_i,x}) \ge \frac{1}{2}\mu \lc \CCC \cap B^*(x, r_i)\rc \ge  \frac{r_i^k}{2 D}
	\end{align*}
	and for any $w\in \CCC_{r_i,x}$, we have
	\begin{align}\label{ahlfors1a}
		\left|\widetilde \WW_w(r_i^2/40)-\widetilde \WW_w(40 r_i^2)\right|^{1/2}\leq 2\aint_{B^*(x,r_i)}\left|\widetilde \WW_y(r_i^2/40)-\widetilde \WW_y(40 r_i^2)\right|^{1/2}\,\mathrm{d}\mu(y).
	\end{align}

From Lemma \ref{timeestimate}, we know that for $w\in \CCC_{r_i,x}$, $|\t(w)-\t(x)|\leq \Psi(\delta)r_i^2$. By Definition \ref{defnselfsimilarity}, any point $w\in\CCC_{r_i,x}$ is $(\delta, s)$-selfsimilar for $r_w \leq s \leq 1$.

We claim that there must exist strongly $(k, \alpha, \delta, r_i)$-independent points in $\CCC_{r_i,x}$ at $x$. Indeed, suppose otherwise. Then by Lemma \ref{existenceofindependentpointsafterbaseRFlimit}, we can find $x_i\in\CCC_{r_i,x}$ for $1\le i \le N$ with $N\leq C(n,Y)\alpha^{1-k}$ and
	\begin{align*}
		\CCC_{r_i,x}\subset \bigcup_{i=1}^N B^*(x_i, \alpha r_i).
	\end{align*} 
	Then by assumption $(*_D)$, we have
	\begin{align*}
		\frac{r_i^k}{2D}\leq\mu(\CCC_{r_i,x})\leq \sum_i^N\mu \big(B^*(x_i,\alpha r_i)\big)\leq N D(\alpha r_i)^k\leq C(n,Y)D \alpha r_i^k.
	\end{align*}
	By the definition of $\alpha$, we obtain a contradiction if $c_1$ is chosen to be small.

Let $\{y_j\}_{1 \le j \le k} \subset \CCC_{r_i,x}$ be strongly $(k,\alpha,\delta,r_i)$-independent points at $x$. Then by \eqref{ahlfors1a}, we have
	\begin{align*}
		\left|\widetilde \WW_{y_j}(r_i^2/40)-\widetilde \WW_{y_j}(40 r_i^2)\right|^{1/2}\leq 2\aint_{B^*(x,r_i)}\left|\widetilde \WW_y(r_i^2/40)-\widetilde \WW_y(40 r_i^2)\right|^{1/2}\, \mathrm{d}\mu(y),
	\end{align*}
	which implies
	\begin{align*}
		\mathfrak{S}_{r_i}^{k,\alpha,\delta}(x)\leq 2k \aint_{B^*(x,r_i)}\abs{\widetilde \WW_y(r_i^2/40)-\widetilde \WW_y(40 r_i^2)}^{1/2}\,\mathrm{d}\mu(y)+\abs{\widetilde \WW_x(r_i^2/40)-\widetilde \WW_x(40 r_i^2)}^{1/2}.
	\end{align*}
	Combining this with \eqref{ahlforsxx1} and \eqref{ahlfors1}, we have
		\begin{align*}
\sum_{(2\alpha)^{-1}r_x \leq r_i\leq 2^{-5}}	\mathfrak{S}_{r_i}^{k,\alpha,\delta}(x) \leq C(n) \delta''.
	\end{align*}
	Thus, Claim \ref{ahlclaim2} follows from Proposition \ref{equivalencesplitsymetric1} and Theorem \ref{nondegenrerationthm}.

Now, we define $\CCC_\ep$ as:
	\begin{align*}
		\CCC_\ep:=\left\{x\in \CCC\cap B^*(z,15 /8)\big|\sum_{r_x\leq r_i\leq 2^{-5}}\aint_{B^*(x,r_i)}\left|\widetilde \WW_y(r_i^2/40)-\widetilde \WW_y(40 r_i^2)\right|^{1/2}\,\mathrm{d}\mu(y)<\delta''\right\}.
	\end{align*}
If $\delta'' \ll \ep$ and $\delta' \ll \delta''$, then statement (ii) holds. Moreover, by Claim \ref{ahlclaim1}, we obtain
		\begin{align*}
\mu\left(\CCC_{\epsilon}\right)\geq (1-\epsilon)\mu\left(\CCC\cap B^*(z,15/8)\right).
	\end{align*}
which implies statement (i). Furthermore, statement (iii) follows from Claim \ref{ahlclaim2}.

Next, we prove the bi-Lipschitz property of $\vec{u}$ on the set $\CCC_{\ep}$. Note that the upper bound has already been proved by Lemma \ref{lem:lip1}, so we focus on the lower bound.

Given any $x, y \in \CCC_{\ep}$, define $s := d_Z(x, y) \geq \cc^{2} \max\{r_x, r_y\}$. By $(n3)$ of Definition \ref{defiofcylneckregion} and Lemma \ref{timeestimate}, we have $|\t(x) - \t(y)| \leq \Psi(\delta) s^2$. Moreover, $x$ is $(k, \delta, s)$-cylindrical. Also, by Claim \ref{ahlclaim2}, the map $\vec{u}_{x, s}=T_{x, s}\lc \vec{u}-\vec{u}(x) \rc$ is a $(k,\Psi(\delta),s)$-splitting map at $x$.

By Definition \ref{def:almost0} and Proposition \ref{prop:almostconstant} (iii), $\vec{u}_{x,s}$ is close to the standard coordinate functions on $\bar{\mathcal{C}}^k$, and $y$ is close to a point in $\bar{\mathcal{C}}^k_0$, provided that $\delta$ is sufficiently small. Applying a limiting argument and Lemma \ref{lem:comparedis1}, we obtain
		\begin{align*}
s \le C(n, Y) |\vec{u}_{x, s}(y)|.
	\end{align*}
Using the definition of $\vec{u}_{x,s}$, this implies
		\begin{align*}
s \le C(n, Y) |\vec{u}(x)-\vec{u}(y)|.
	\end{align*}

This establishes the bi-Lipschitz property of $\vec{u}$ on $\CCC_{\ep}$ and completes the proof of statement (iv), thereby concluding the proof of the proposition.
\end{proof}

To establish the Ahlfors regularity of the measure $\mu$, we first prove

\begin{lem} \label{lem:inductahlfors}
Under the assumption $(*_D)$, if $\cc \le \cc(n)$ and $\delta \le \delta(n, Y, \cc, D)$, then for any $x \in \CCC$ and $r_x\leq s\leq r-d_Z(x, z)/2$, the following holds:
		\begin{equation*}
		 C^{-1}_0(n,Y, \cc) s^{k}\leq\mu(B^*(x,s))\leq C_0(n, Y, \cc) s^{k}.
		\end{equation*}
\end{lem}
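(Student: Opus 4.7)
The plan is to bootstrap the a priori Ahlfors bound with constant $D$ into one depending only on $n$, $Y$, and $\cc$, by exploiting the bi-Lipschitz map $\vec u$ provided by Proposition \ref{choosenondegeneratingpoints}. Since the hypothesis $(*_D)$ holds for the neck region as a whole, and since for any $x\in \CCC$ and $s\in[r_x,r-d_Z(x,z)/2]$ the restriction of $\NNN$ to $B^*(x,2s)$ is again (up to harmless rescaling) a $(k,\delta,\cc,s)$-cylindrical neck region with the same parameter $D$, one may apply Proposition \ref{choosenondegeneratingpoints} at each such scale $s$. This yields, for each $x$ and $s$, a subset $\CCC_{\ep,x,s}\subset \CCC\cap B^*(x,15s/8)$ of measure at least $(1-\ep)\mu(\CCC\cap B^*(x,15s/8))$, together with a matrix $T_{x,s}$ with $\|T_{x,s}-\Id\|\le \ep$ so that $\vec u_{x,s}:=T_{x,s}(\vec u-\vec u(x))$ is a $(k,\ep,s)$-splitting map at $x$ and is bi-Lipschitz on $\CCC_{\ep,x,s}$ with constants $c_{\lip}(n,Y)$ and $C_{\lip}(n)$ that are independent of $D$.

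For the upper bound, I will use $\vec u_{x,s}$ to push the measure $\mu$ forward to $\R^k$. On the good set, bi-Lipschitzness yields $\HHH^k(\CCC_{\ep,x,s}\cap B^*(x,s))\le c_{\lip}^{-k}\HHH^k_{\R^k}(B(\vec u(x),C_{\lip}s))\le C(n,Y)s^k$. For the contribution of the discrete part $\CCC_+\cap B^*(x,s)$, condition $(n1)$ of Definition \ref{defiofcylneckregion} makes the balls $\{B^*(y,\cc^2 r_y)\}_{y\in \CCC_+}$ pairwise disjoint, and the $\cc$-proximity in $(n4)$ together with Proposition \ref{prop:almostconstanta} ensures that $\vec u$ is a near isometry on the disks $\LL_{y,r_y}$. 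Consequently, the Euclidean balls $\{B(\vec u(y),c\cc^2 r_y)\}$ are pairwise disjoint in $\R^k$, and their total $k$-volume is bounded by $C(n)(C_{\lip}s)^k$; summing gives $\sum_{y\in \CCC_+\cap B^*(x,s)}r_y^k\le C(n,Y,\cc)s^k$. Combining with the bound on $\HHH^k|_{\CCC_0\cap B^*(x,s)}$ obtained by exhausting the complement of $\CCC_+$ via $\CCC_{\ep,x,s}$ and a measure-theoretic induction on scale, we obtain $\mu(B^*(x,s))\le C_0(n,Y,\cc)s^k$.

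For the lower bound, I will show that $\vec u_{x,s}$ maps $\CCC\cap B^*(x,s)$ onto a set whose Euclidean $k$-measure is comparable to $s^k$. Since $x$ is $(k,\delta,s)$-cylindrical with respect to $\LL_{x,s}$, Proposition \ref{prop:almostconstanta} asserts that $\vec u_{x,s}(\LL_{x,s}\cap B^*(x,s))$ is Hausdorff-close to the Euclidean ball $B(\vec 0^k,\ep_0 s/2)$. Condition $(n4)$ then implies that $\LL_{x,s}\cap B^*(x,s)\subset B^*_{\cc s}(\CCC)$, so $\vec u_{x,s}(\CCC\cap B^*(x,s))$ is itself $\Psi(\cc)$-dense in $B(\vec 0^k,\ep_0 s/2)$. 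Pulling back Lebesgue measure through the bi-Lipschitz restriction of $\vec u_{x,s}$ to $\CCC_{\ep,x,s}$, and using Proposition \ref{choosenondegeneratingpoints}(i) to guarantee that the bi-Lipschitz image still covers a definite fraction of this Euclidean ball, yields $\mu(B^*(x,s))\ge c_0(n,Y,\cc)s^k$.

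The main obstacle is the treatment of the discrete contributions from $\CCC_+$: a priori each term $r_y^k$ could accumulate in a way that depends on $D$, and only after exploiting the packing disjointness $(n1)$ and translating it to disjointness in $\R^k$ via the near-isometry of $\vec u$ at scale $r_y$ (which in turn requires the uniform bi-Lipschitz constants from Proposition \ref{choosenondegeneratingpoints} to be available at every scale simultaneously) can these discrete masses be absorbed into a $D$-independent bound. This is precisely where the scale-invariance of the conclusion of Proposition \ref{choosenondegeneratingpoints} is essential, and where the smallness assumption $\cc\le \cc(n)$ is used to keep the $\Psi(\cc)$-errors under control throughout the iteration.
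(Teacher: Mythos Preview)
Your upper bound is essentially the paper's argument: restrict to the good set $\CCC_\ep$ furnished by Proposition~\ref{choosenondegeneratingpoints}, use the bi-Lipschitz lower bound together with $(n1)$ to make the balls $\{B(\vec u(y),c_{\lip}\cc^2 r_y)\}_{y\in\CCC_\ep}$ disjoint in $\R^k$, and bound the total by the volume of a fixed Euclidean ball. That part is fine.

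The lower bound, however, has a genuine gap. From the fact that $\vec u_{x,s}(\CCC\cap B^*(x,s))$ is $\Psi(\cc)s$-dense in $B(\vec 0^k,\ep_0 s/2)$ you cannot deduce that the image has Lebesgue measure comparable to $s^k$: a $\Psi(\cc)s$-dense set can have measure zero. Your appeal to Proposition~\ref{choosenondegeneratingpoints}(i) does not help, since that statement compares $\mu(\CCC_{\ep,x,s})$ to $\mu(\CCC\cap B^*(x,15s/8))$---both quantities you are trying to bound from below---and says nothing about the Euclidean measure of the image. The bi-Lipschitz property only transports measure one way here: it gives $\HHH^k_{\R^k}(\vec u(\CCC_{\ep,x,s}))$ comparable to $\mu(\CCC_{\ep,x,s})$, which is circular.

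What the paper actually does is prove a \emph{covering} statement, not a density statement: it shows
\[
B(\vec 0^k,\ep_0 s/2)\subset \bigcup_{y\in\CCC\cap B^*(x,s)} A_{y,\cc^{-5}r_y}^{-1}\big(\bar B(\vec u(y),\cc^{-5}r_y)\big),
\]
where the $A_{y,s'}$ are lower-triangular matrices supplied by the geometric transformation theorem (Theorem~\ref{thm:existtransRFL}), which makes $A_{y,s'}(\vec u-\vec u(y))$ a $(k,\ep,s')$-splitting map at \emph{every} center $y\in\CCC$ and \emph{every} scale $s'\in[\cc^2 r_y,2s]$, with $\det A_{y,s'}\ge 2^{-k}$. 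The covering is established by a contradiction argument: pick a point $z'$ not covered, find the minimal scale $\bar s$ at which some transformed ball reaches it, and use $(n4)$ plus Proposition~\ref{prop:almostconstanta} at scale $2\bar s$ to produce a center reaching $z'$ at scale $\bar s/2$. The lower bound then follows by comparing Euclidean volumes, since each ellipsoid in the cover has volume at most $C\cc^{-5k}r_y^k$. This multi-scale transformation structure is the missing ingredient in your argument; Proposition~\ref{choosenondegeneratingpoints} alone, which only gives bi-Lipschitzness on a large-measure subset at a single scale, is not sufficient.
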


\begin{proof}
Note that by Definition \ref{defiofcylneckregion}, $\NNN'=\NNN \cap B^*(x, 2s)$ is also a $(k, \delta,\cc, s)$-neck region with center $\CCC'=\CCC \cap B^*(x, 2s)$ and the same radius function. 

We first prove the upper bound. By Proposition \ref{choosenondegeneratingpoints}, there exists $\CCC'' \subset \CCC' \cap B^*(x, 15 s/8)$ so that
		\begin{align}\label{volupp1}
			\mu\left(\CCC''\right)\geq \mu\left(\CCC'\bigcap B^*(x,15s/8)\right)/2.
		\end{align}
We write $\CCC''=\CCC''_+\bigcup \CCC''_0$, where $\CCC''_+=\CCC''\bigcap\CCC_+$ and $\CCC''_0=\CCC''\bigcap \CCC_0$.

By Definition \ref{defiofcylneckregion} and Proposition \ref{choosenondegeneratingpoints} (iv), $\left\{B\lc\vec{u}(y), c_{\lip} \cc^2 r_y \rc \right\}_{y\in\CCC''}$ are mutually disjoint. Moreover, by Lemma \ref{lem:lip1}, we have
		\begin{align*}
|\vec{u}(x)-\vec{u}(y)| \le C_{\lip} d_Z(x, y) \le C(n) s
	\end{align*}
for any $y\in \CCC''$. Thus, we estimate
		\begin{align*}
			\mu(\CCC'')&=\sum_{y\in \CCC''_+}r_y^{k}+\HHH^k\left(\CCC''_0\right)\\
			&\leq C(n,Y) \cc^{-2k} \sum_{y\in\CCC_{1/2}} \abs{B(\vec{u}(y), c_{\lip} \cc^2 r_y)}+C(n,Y) \abs{\vec{u}(\CCC''_0)} \leq C(n,Y, \cc) s^k,
		\end{align*}
where we use $|\cdot|$ to denote the volume in $\R^k$. Combining with \eqref{volupp1}, we obtain
		\begin{align*}
\mu\left(B^*(x,s)\right)\leq\mu\left(B^*(x,15s/8)\bigcap\CCC\right)\leq 2\mu\left(\CCC''\right)\leq C(n,Y, \cc) s^k.
		\end{align*}

Next, we prove the lower bound. By Lemma \ref{lem:scale1} and Theorem \ref{thm:existtransRFL}, for a small constant $\ep>0$ to be determined later, if $\delta \le \delta(n, Y, \ep)$, then for every point $y\in\CCC\bigcap B^*(x,s)$ and every scale $s' \in [\cc^2 r_y, 2s]$, there exists a lower triangular $k \times k$ matrix $A_{y, s'}$ such that $A_{y, s'}( \vec{u}-\vec{u}(y))$ is a $(k, \ep, s')$-splitting map at $y$. Without loss of generality, we assume $\vec u(x)=\vec{0}^k$, $A_{x, s}=\mathrm{Id}$ and $\|A_{y, s}-\mathrm{Id}\| \le \ep$ for any $y\in\CCC\bigcap B^*(x,s)$. Moreover, $A_{y, s'}$ satisfies conclusions (ii) and (iii) in Theorem \ref{thm:existtransRFL}. 

Note that since $y$ is $(k, \delta, s')$-cylindrical for any $s' \in [\cc^2 r_y, s]$, it follows that $y$ is not $(k+1, \eta, s')$-splitting for some constant $\eta=\eta(n, Y)$. Hence, the conditions of Theorem \ref{thm:existtransRFL} are readily verified.

Next, we claim:
	\begin{equation}\label{lowerboundahlforsregularity}
		B(\vec{0}^k, \ep_0 s/2)\subset\bigcup_{y\in\CCC\bigcap B^*(x,s)}\left\{A_{y,\cc^{-5} r_y}^{-1}\lc \bar{B}(\vec u(y),\cc^{-5} r_y)\rc\right\},
	\end{equation}
	where $A_{y,s}^{-1}\lc \bar{B}(\vec u(y),s)\rc:=\vec u(y)+A_{y,s}^{-1}\bar{B}(\vec 0^k, s)$.

Once \eqref{lowerboundahlforsregularity} is proved, it follows that
	\begin{align*}
		0<C^{-1}(n,Y) s^k\leq \left|B(\vec{0}^k,\ep_0 s/2)\right|&\leq C(k)\cc^{-5k} \sum_{y\in \CCC_+\bigcap \bar{B}^*(y,s)} \det\lc A_{y,\cc^{-5} r_y}^{-1} \rc r_y^k+\abs{\vec{u}\big(\CCC_0\bigcap B^*(x,s)\big)}\\
		&\leq C(n) \cc^{-5k} \mu \left(B^*(x,s)\right),
	\end{align*}
	where we used the fact that $\det(A_{y, \cc^{-5} r_y}) \ge 2^{-k}$ and $\vec{u}$ is $C(n)$-Lipschitz.

	To prove \eqref{lowerboundahlforsregularity}, we argue by contradiction. Assume that there exists $z'\in B(\vec 0^k, \ep_0 s/2)$ not covered by $\bigcup_{y\in\CCC\bigcap B^*(x,s)}\left\{A_{y,\cc^{-5} r_y}^{-1}\lc \bar{B}(\vec u(y),\cc^{-5} r_y)\rc\right\}$. Then for any $y\in\CCC\bigcap B^*(x,s)$, set $s_y$ to be the minimal number $s' \geq \cc^{-5}  r_y$ such that $z' \in A_{y,s'}^{-1}\lc \bar{B}(\vec u(y),s')\rc$. Then define $\bar s=\min_{y\in\CCC\bigcap B^*(x,s)}s_y$. 
	
	We fix a small parameter $\ep'>0$ to be determined later. Note that by Proposition \ref{prop:almostconstanta}, if $\ep \le \ep(n, Y, \ep')$, then we can find $x'\in \LL_{x,s}\bigcap B^*(x,3s/4)$ such that $|\vec u(x')-z'|\leq \ep' s$. Then using $(n4)$ in Definition \ref{defiofcylneckregion} and $C(n)$-Lipschitz property of $\vec u$, we can find $x''\in \CCC\bigcap B^*(x,s)$ such that 
			\begin{align} \label{lowerextra001}
|\vec u(x'')-z'|\leq \ep' s+C(n) \cc s.
	\end{align}
Since $\vec u$ and $A_{x'', s}(\vec u-\vec u(x''))$ are both $(k, \ep, s)$-splitting maps at $x$ and $x''$, respectively, and $|\t(x'')-\t(x)| \le \Psi(\delta) s$ by Lemma \ref{timeestimate}, it follows from the argument of Lemma \ref{lem:compmatrix} that
		\begin{align*}
\rVert A_{x'',s}-\mathrm{Id}\rVert \le \ep',
	\end{align*}
Combining this with \eqref{lowerextra001}, we obtain
		\begin{align*}
\abs{A_{x'', s} \lc \vec{u}(x'')-z' \rc} \le 2 \ep' s+2C(n)\cc s.
	\end{align*}
If we choose $\cc \le \cc(n)$ and $\ep'$ to be sufficiently small, then we have
			\begin{align} \label{lowerextra002}
\bar s\leq s_{x''}\leq 2(\ep' s+C(n) \cc s)< \frac{s}{100}.
	\end{align}
	
	 Choose $\bar y\in\CCC\bigcap B^*(x,s)$ such that $\bar s= s_{\bar y}$. Then we must have $\bar s> \cc^{-5} r_{\bar y}$ and $z' \in A_{\bar y,2\bar s}^{-1}\lc \bar{B}(\vec u(\bar y),2\bar s)\rc$. Thus,
		\begin{align*}
A_{\bar y,2\bar s}z'\in \bar{B} \lc A_{\bar y,2\bar s}\vec u(\bar y), 2\bar s \rc.
	\end{align*}

	Since $A_{\bar y,2\bar s}(\vec{u}-\vec{u}(\bar y))$ is a $(k,\ep,2\bar s)$-splitting map at $\bar y$ and $\bar y$ is $(k, \ep, 2\bar s)$-cylindrical, it follows from Proposition \ref{prop:almostconstanta} that there exists $y' \in \mathcal L_{\bar y, 2\bar s} \cap B^*(\bar y, 2\bar s)$ such that
			\begin{align}\label{eq:lowerextra1}
\abs{A_{\bar y,2\bar s} \lc \vec{u}(y')-z' \rc} \le \ep' \bar s,
	\end{align}
	provided that $\ep \le \ep(n, Y, \ep')$.
	
In particular, by Theorem \ref{thm:existtransRFL} (ii), this implies that 
			\begin{align} \label{lowerextra003}
\abs{\vec u(y')-z'} \le \ep' \lc \frac{s}{\bar s} \rc^{C(n) \ep} \bar s \le \frac{\ep_0 s}{100},
	\end{align}
where we used \eqref{lowerextra002}, provided that $\ep \le \ep(n, Y)$ is small.

Since $\vec u$ is a $(k, \ep, s)$-splitting map at $x$, it follows from \eqref{lowerextra003} and a limiting argument that $y'\in B^*(x,3s/4)$. Moreover, from $(n4)$ in Definition \ref{defiofcylneckregion}, we can find $y'' \in \CCC\bigcap B^*(\bar y, 2\bar s)$ so that $d_Z(y', y'') \le 2 \cc \bar s$, which gives that $y''\in B^*(x,(3/4+\cc)s)\subset B^*(x,s)$. 

Since $A_{\bar y,2\bar s}(\vec{u}-\vec{u}(\bar y))$ is $C_{\lip}$-Lipschitz by Lemma \ref{lem:lip1}, we obtain that 
			\begin{align}\label{eq:lowerextra2}
\abs{A_{\bar y,2\bar s} \lc \vec{u}(y')-\vec{u}(y'') \rc} \le 2C(n) \cc \bar s.
	\end{align}

Combining \eqref{eq:lowerextra1} and \eqref{eq:lowerextra2}, it implies that
	\begin{align}\label{equ:lowA3}
\abs{A_{\bar y,2\bar s} \lc \vec{u}(y'')-z' \rc} \le (2C(n) \cc+\ep') \bar s.
	\end{align}

Since $A_{\bar y, 2\bar s}(\vec{u}-\vec{u}(\bar y))$ and $A_{y'', 2\bar s}(\vec{u}-\vec{u}(y''))$ are both $(k, \ep, 2\bar s)$-splitting map at $\bar y$ and $y''$, respectively, and $|\t(y'')-\t(\bar y)| \le \Psi(\delta) \bar s$ by Lemma \ref{timeestimate}, it follows from the argument of Lemma \ref{lem:compmatrix} that
		\begin{align*}
		 \rVert A_{\bar y, 2\bar s}\circ A_{y'', 2\bar s}^{-1}-\mathrm{Id}\rVert\leq \ep'.
	\end{align*}
Combining this with \eqref{equ:lowA3}, it follows that
	\begin{align}\label{equ:lowA4}
\abs{A_{y'',2\bar s} \lc \vec{u}(y'')-z' \rc} \le 2(2C(n) \cc+\ep') \bar s.
	\end{align}
On the other hand, it follows from Lemma \ref{lem:compmatrix} (or equation \eqref{equ:compmatrix}), we have
	\begin{align*}
		 \rVert  A_{y'', 2\bar s}^{-1} \circ A_{y'', \bar s/2} -\mathrm{Id}\rVert\leq C(n) \ep,
	\end{align*}
which, when combined with \eqref{equ:lowA4}, implies
	\begin{align}\label{equ:lowA6}
\abs{A_{y'',\bar s/2} \lc \vec{u}(y'')-z' \rc} \le 3(2C(n) \cc+\ep') \bar s.
	\end{align}
	
Thus, if $\cc \le \cc(n)$ and $\ep \le \ep(n, Y)$, we would have $3(2C(n) \cc+\ep')<1/2$. However, \eqref{equ:lowA6} then implies that $z'\in A_{y'',\bar s/2}^{-1}\lc B(\vec u(y''),\bar s/2)\rc$, which contradicts the definition of $\bar s$.

In sum, we have proved \eqref{lowerboundahlforsregularity}, thereby completing the proof of the theorem.
\end{proof}

Next, we prove the Ahlfors regularity of $\mu$ under the extra assumption that $\inf_{x\in\CCC}r_x>0$.

\begin{lem}\label{lem:inductahlfors2}
	Suppose $\inf_{x\in\CCC}r_x>0$. If $\cc \le \cc(n)$ and $\delta \le \delta(n, Y, \cc)$, then for any $x\in \CCC$ and $r_x\leq s\leq r-d_Z(x, z)/2$, the following holds:
		\begin{equation*}
			 C^{-1}(n,Y,\cc)s^{k}\leq\mu\big(B^*(x,s)\big)\leq C(n,Y, \cc) s^{k}.
		\end{equation*}
\end{lem}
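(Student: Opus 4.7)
The plan is to reduce Lemma \ref{lem:inductahlfors2} to the already established Lemma \ref{lem:inductahlfors}. The conclusions of the two lemmas are literally the same, so the only task is to verify that the hypothesis $(*_{D_0})$ holds with some $D_0 = D_0(n, Y, \cc)$ depending only on $n$, $Y$, $\cc$ (and not on $\inf_{\CCC} r_x$ or on the neck region). Once this is done, Lemma \ref{lem:inductahlfors} applies because the smallness requirement $\delta \le \delta(n, Y, \cc, D_0)$ collapses to $\delta \le \delta(n, Y, \cc)$, which is our hypothesis, and produces the sharp constant $C_0(n, Y, \cc)$.

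To produce $(*_{D_0})$ with a constant depending only on $n, Y, \cc$, I will argue by a bootstrap on dyadic scale. First observe that $\inf r_x > 0$ forces $\CCC = \CCC_+$ to be a finite, well-separated set of points (by $(n1)$ combined with Proposition \ref{prop:volumebound}), so the packing measure is the locally finite sum $\mu = \sum_{y\in\CCC} r_y^k\delta_y$. The base case is the scale $s \in [r_x, 2r_x]$: the lower bound $\mu(B^*(x,s)) \ge r_x^k \ge 2^{-k} s^k$ is immediate, while the upper bound follows from a dyadic decomposition of $\CCC\cap B^*(x,2r_x)$ according to the relative size $r_y/r_x$, where each dyadic shell is counted via the separation $(n1)$, the $\cc^{-2}$-Lipschitz property of $r_y$, and the volume estimate in Proposition \ref{prop:volumebound}. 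The resulting constant depends only on $n, Y, \cc$.

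The inductive step propagates the base bound to larger scales $s \in [r_x, r - d_Z(x,z)/2]$. Given the bound at all scales $\le s$ and all base points, I cover $\CCC\cap B^*(x, 2s)$ by a Vitali family of $(s/2)$-balls centered at a maximal $(s/2)$-separated subset of $\CCC\cap B^*(x, 2s)$, apply the inductive hypothesis in each sub-ball, and sum. The essential ingredient preventing the Ahlfors constant from doubling at each dyadic step is the cylindrical structure: by $(n4)$, the centers $\CCC\cap B^*(x,2s)$ lie in the $\cc s$-neighborhood of the $k$-dimensional flat $\mathcal L_{x,2s}$, so the cardinality of the Vitali cover is bounded by $C(n, Y, \cc)\,2^k$ rather than $C(n, Y, \cc)\,2^{n+2}$. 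This $k$-dimensional count is quantified via the almost-splitting map $\vec u$ from Lemma \ref{lem:scale1}, which is $C_{\lip}(n)$-Lipschitz on $\CCC$ by Lemma \ref{lem:lip1} and, at scales comparable to the local radius, is approximately isometric by Proposition \ref{prop:almostconstanta} (this last fact is what makes the bi-Lipschitz lower bound $|\vec u(y)-\vec u(y')|\ge c(n,Y)\,d_Z(y,y')$ available for pairs of nearby centers at the scale under consideration, without needing the prior hypothesis $(*_D)$).

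The main obstacle is exactly this last point: avoiding circularity in the inductive step. The bi-Lipschitz property of $\vec u$ on $\CCC$ as derived in Proposition \ref{choosenondegeneratingpoints}(iv) requires $(*_D)$ as input, which is what we are trying to prove. The plan to break the circle is to use only the local almost-isometry provided by Lemma \ref{lem:scale1} combined with the model closeness in Proposition \ref{prop:almostconstanta}, applied at the specific scale of the Vitali ball rather than globally; this suffices for the counting bound in the induction step because we only need separation at a single scale, not uniform bi-Lipschitz control across all scales. Once $(*_{D_0})$ is obtained with $D_0 = D_0(n, Y, \cc)$, Lemma \ref{lem:inductahlfors} finishes the proof.
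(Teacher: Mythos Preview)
Your dyadic induction on the scale $s$ has a genuine gap: even with the cylindrical structure, the Vitali covering count $N \le C(n,Y,\cc)\,2^k$ does not stabilize the Ahlfors constant. Covering $\CCC\cap B^*(x,2s)$ by $N$ balls of radius $s/2$ and applying the inductive bound yields $\mu(B^*(x,2s)) \le N\,D\,(s/2)^k = (N\,4^{-k})\,D\,(2s)^k$, and since $N\,4^{-k}>1$ (the factor $C(n,Y,\cc)>1$ is not removable, and in any case a $k$-ball of radius $2s$ needs strictly more than $4^k$ balls of radius $s/2$), the Ahlfors constant picks up a fixed multiplicative factor at each of the $\sim\log_2(r/\inf_\CCC r_y)$ steps, destroying uniformity in the neck region. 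Your proposed fix for the circularity---using Proposition~\ref{prop:almostconstanta} with a splitting map ``at the single scale of the Vitali ball''---does not work either: Lemma~\ref{lem:scale1} only furnishes a splitting map at scales $s\in[\ep r,\ep^{-1}r]$ comparable to the top scale, and although one can build a fresh $(k,\Psi(\delta),s)$-splitting map at each smaller scale from $(n3)$, these are different maps that do not assemble into a single bi-Lipschitz embedding $\CCC\hookrightarrow\R^k$ without the nondegeneration theorem, which itself requires $(*_D)$. (Your base case has the same defect: the purely volumetric count over dyadic shells $r_y\sim 2^{-i}r_x$ produces $\sum_i C\,2^{i(n+2-k)}r_x^k$, which diverges; the cylindrical structure is already needed here.)

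The paper runs the induction on a different parameter: the depth $j$ defined by $\inf_\CCC r_y \ge 2^{-j}r$, quantified over \emph{all} neck regions simultaneously. The base case $(A_5)$ is trivial because there are only $L_1(n,Y,\cc)$ centers. The inductive passage $(A_{j_0})\Rightarrow(A_{j_0+1})$ first yields, for a neck region with $\inf_\CCC r_y\ge 2^{-j_0-1}r$, a crude bound $(*_{D_3})$ with a worse constant $D_3=D_3(n,Y,\cc)$ (using the inductive hypothesis on sub-neck regions for $s\le r/2$, and a direct count for $s>r/2$), and then invokes Lemma~\ref{lem:inductahlfors} with $D=D_3$---legitimate because $D_3$ depends only on $n,Y,\cc$, so $\delta\le\delta(n,Y,\cc)$ suffices---to reset the constant back to $C_0(n,Y,\cc)\le D_2$. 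Thus Lemma~\ref{lem:inductahlfors} is applied \emph{inside} the induction as a self-improvement at every step, not once at the end; this bootstrap is precisely what prevents the accumulation you cannot avoid.
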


\begin{proof}
We say that the property $(A_j)$ holds with a constant $D$ if any $(k, \delta, \cc, r)$-cylindrical neck region $\NNN$ with $\delta \le \delta(n, Y, \cc)$ and $\inf_{y\in\CCC} r_y\geq 2^{-j} r$ satisfies $(*_D)$.

First, note that by Proposition \ref{prop:volumebound}, there are at most $L_1=L_1(n, Y, \cc)$ mutually disjoint balls of size $\cc^2 2^{-5} r$ contained in a ball of size $2r$. Thus, it follows from Definition \ref{defiofpackingmeasure2} that the property $(A_5)$ holds with a constant $D_1=D_1(n, Y, \cc)$.

We set $D_0:= C_0(n, Y, \cc)$, where $C_0$ is the constant from Lemma \ref{lem:inductahlfors}. Then, we define $D_2=D_0+D_1$. We set $D_3=100^k \cc^{-2k} D_2 L_1$ and require $\delta \le \delta(n, Y, \cc, D_3)$ as in Lemma \ref{lem:inductahlfors}.

Assume that the property $(A_{j_0})$ holds with $D_2$ with $j_0 \ge 2$. Given a $(k,\delta, \cc, r)$-cylindrical neck region $\NNN \subset B^*(z, 2r)$ with $\inf_{y\in\CCC} r_y\geq 2^{-j_0-1} r$, we consider $x \in \CCC$ and $r_x\leq s\leq r-d_Z(x, z)/2$.

If $s \le r/2$, then $\inf_{y\in\CCC} r_y\geq 2^{-j_0-1} r \ge 2^{-j_0}s$. By our induction assumption, we have
	\begin{align}\label{inductionstep2}
		D_2^{-1} s^k \leq \mu\left(B^*(x,s)\right)\leq D_2 s^k.
	\end{align}

If $s >r/2$, then $B^*(x,s)\subset B^*(z,3r/2)$. If $r_x \le 2^{-4}r$, then by the inductive assumption,
	\begin{align*}
\mu \left(B^*(x, s)\right) \ge \mu \left(B^*(x,2^{-4} r)\right)  \ge 16^{-k} D_2^{-1} r^k \ge 16^{-k}  D_2^{-1} s^k.
	\end{align*}
If $r_x>2^{-4}r$, then by Definition \ref{defiofpackingmeasure2},
	\begin{align*}
\mu \left(B^*(x, s)\right) \ge r_x^k \ge 16^{-k} r^k \ge 16^{-k} s^k.
	\end{align*}
Thus, in either case, 
	\begin{align}\label{inductionstep2a}
\mu \left(B^*(x, s)\right) \ge 16^{-k}  D_2^{-1} s^k.
	\end{align}
	
On the other hand, let $\CCC':=\{y_i\}_{1 \le N}$ to be the subset of $B^*(z,3r/2) \bigcap \CCC$ such that $r_{y_i} \ge 2^{-4}r$. Clearly, $N \le L_1$. Moreover, $r_{y_i} \le r_x+\cc^{-2} d_Z(x, y_i) \le (1+3\cc^{-2})r$.

Next, choose a maximal $2^{-4}r$-separated set $\{x_j\}$ of $\lc B^*(z,3r/2) \bigcap \CCC\rc \setminus \CCC'$. For each $j$, the inductive assumption gives
	\begin{align*}
\mu \left(B^*(x_j,2^{-4} r)\right)\leq 16^{-k} D_2 r^k.
	\end{align*}
Therefore, by Definition \ref{defiofpackingmeasure2},
	\begin{align}\label{inductionstep2b}
\mu \left(B^*(x, s)\right) \le L_1 \lc (1+3\cc^{-2})^k+16^{-k} D_2 \rc r^k \le L_1 2^k \lc (1+3\cc^{-2})^k+16^{-k} D_2 \rc s^k.
	\end{align}

Combining \eqref{inductionstep2a} and \eqref{inductionstep2b}, we conclude that the property $(A_{j_0+1})$ holds with the constant $D_3$. Hence, we may apply Lemma \ref{lem:inductahlfors} to deduce that for any $x \in \CCC$ and $r_x\leq s\leq r-d_Z(x, z)/2$, we have
	\begin{align*}
D_0^{-1} s^k \le \mu\left(B^*(x,s)\right) \le D_0 s^k.
	\end{align*}
Since $D_0 \le D_2$, it follows that $(A_{j_0+1})$ holds with constant $D_2$. By induction, we conclude that for all $j \ge 5$, the property $(A_j)$ holds with constant $D_2$. This completes the proof.
\end{proof}

Next, we prove the Ahlfors regularity for the $(k,\delta, \cc,r)$-cylindrical neck region constructed in Proposition \ref{cylneckdecomp}.

\begin{thm}[Ahlfors regularity]\label{ahlforsregucyl1}
Given $\ep>0$, if $\cc \le \cc(n)$ and $\delta \le \delta(n, Y, \cc,\ep)$, then for the $(k,\delta, \cc,r_0)$-cylindrical neck region $\NNN=B^*(x_0 ,2r_0)\setminus B_{r_x}^*(\CCC)$ constructed in Proposition \ref{cylneckdecomp}, any $x\in\CCC$ and $r_x\leq s\leq r_0-d_Z(x, x_0)/2$, we have
	\begin{align} \label{eq:extraaf}
		 C^{-1}(n,Y, \cc) s^k\leq\mu(B^*(x,s))\leq C(n,Y, \cc)s^k.
	\end{align} 
Moreover, we can find a countable collection of $\HHH^k$-measurable subsets $ E_i\subset \CCC_0$ such that $\HHH^k(\CCC_0\setminus\bigcup_i E_i)=0$ and for each $i$, there exists a bi-Lipschitz map $u_i:E_i\to \R^k$, where $\R^k$ is equipped with the standard Euclidean distance, such that
	\begin{align*}
\sqrt{|\t(x)-\t(y)|}\leq\ep d_Z(x,y),\quad \forall x, y \in E_i.
	\end{align*} 
	\end{thm}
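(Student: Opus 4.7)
The plan is to prove (i) by approximating $\NNN$ via the intermediate finite-generation neck regions $\NNN^l$ constructed in the proof of Proposition \ref{cylneckdecomp}, all of which have strictly positive radius function and hence satisfy the hypotheses of Lemma \ref{lem:inductahlfors2}, and then pass to the limit. For (ii), the plan is to iterate Proposition \ref{choosenondegeneratingpoints} at successive scales and invoke Lemma \ref{timeestimate} for the time control.

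For (i), recall that $\NNN = \lim_l \NNN^l$, where each $\NNN^l$ is a $(k, \delta, \cc, r_0)$-cylindrical neck region with center $\CCC^l = \mathcal G^l \cup \bigcup_{j \le l} \{y_{b_i^j}\}$ and radius function uniformly bounded below by $\gamma^l r_0 > 0$. Lemma \ref{lem:inductahlfors2} applies to each $\NNN^l$ and yields a constant $C = C(n, Y, \cc)$, independent of $l$, such that the associated packing measure $\mu^l = \sum_{x \in \CCC^l_+} r_x^k \delta_x + \sum_{y \in \mathcal G^l}(\gamma^l r_0)^k \delta_y$ satisfies $C^{-1} s^k \le \mu^l(B^*(y, s)) \le C s^k$ at every $y \in \CCC^l$ and admissible $s$. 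For a point $x \in \CCC_+$, it lies in $\CCC^l$ for all large $l$ and the estimate passes directly to $\mu$. For $x \in \CCC_0$, pick $y_l \in \mathcal G^l$ with $y_l \to x$; the Ahlfors regularity for $\mu^l$ at $y_l$ transfers to $\mu$ at $x$ once one identifies the weak limit of the atomic measures $\sum_{y \in \mathcal G^l}(\gamma^l r_0)^k \delta_y$ with $\HHH^k|_{\CCC_0}$ up to bounded factors, using that $\{\mathcal G^l\}$ is $\cc^2 \gamma^l r_0$-separated and forms a $\gamma^l r_0$-net of $\CCC_0$ by construction. The upper bound $\HHH^k(\CCC_0 \cap B^*(x, s)) \le C s^k$ can then be obtained from efficient $\mathcal G^l$-ball covers of $\CCC_0$, while the lower bound can be deduced from the bi-Lipschitz property of the map $\vec u$ produced by Proposition \ref{choosenondegeneratingpoints} applied to $\NNN^l$ (whose hypothesis $(*_D)$ is verified by the uniform estimate on $\mu^l$).

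For (ii), with (i) in hand, $\mu$ satisfies $(*_D)$ for some $D = D(n, Y, \cc)$. Fix a small parameter $\ep_j \searrow 0$; Proposition \ref{choosenondegeneratingpoints} applied on every sub-ball $B^*(q, \rho)$ with $q \in \CCC$ and $\rho \le r_0 - d_Z(q, x_0)/2$ yields a subset $\CCC_{\ep_j}(q, \rho) \subset \CCC \cap B^*(q, 15\rho/8)$ of relative $\mu$-measure at least $1 - \ep_j$ on which the limiting heat flow $\vec u$ of Lemma \ref{lem:scale1} is bi-Lipschitz with universal constants. A standard Vitali-type iteration---cover $\CCC_0$ by a countable collection of such sub-balls at decreasing scales, remove the extracted bi-Lipschitz piece at each step, and recurse on the (relatively small) bad set---together with the Ahlfors regularity of $\mu$ forces the residual exceptional set to have $\mu$-measure zero, hence $\HHH^k$-measure zero on $\CCC_0$ (since $\mu|_{\CCC_0} = \HHH^k|_{\CCC_0}$). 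Relabeling the countable collection of pieces as $\{E_i\}$ and restricting $\vec u$ to each gives the desired bi-Lipschitz maps $u_i : E_i \to \R^k$. The time bound $\sqrt{|\t(x) - \t(y)|} \le \ep d_Z(x, y)$ follows directly from Lemma \ref{timeestimate} applied with parameter $\ep^2$, since $E_i \subset \CCC$ and $\delta \le \delta(n, Y, \ep^2)$ has been imposed at the outset.

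The main obstacle I foresee is making the limiting argument in (i) fully rigorous at points of $\CCC_0$. One must carefully relate the atomic contribution $\sum_{y \in \mathcal G^l}(\gamma^l r_0)^k \delta_y$ at generation $l$ to the Hausdorff measure $\HHH^k|_{\CCC_0}$, which requires quantitative control on both the density and the separation of $\mathcal G^l$ inside $\CCC_0$ inherited from the inductive step in the proof of Proposition \ref{cylneckdecomp}. The bi-Lipschitz structure coming from the application of Proposition \ref{choosenondegeneratingpoints} to the pre-limit regions $\NNN^l$ (together with the uniform Ahlfors estimate from Lemma \ref{lem:inductahlfors2}) is the key tool that makes this comparison tractable and universal in $l$.
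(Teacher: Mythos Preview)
Your proposal is correct and follows the paper's approach. The paper resolves the obstacle you flag by using Minkowski content $\MMM^k$ as a bridge: covering arguments show that $\mu^\infty$ (the weak limit of $\mu^l$) is comparable to $\MMM^k$ on $\CCC_0$, the universal bound $\HHH^k \le C(k)\MMM^k$ then yields $\mu \le C\mu^\infty$, and after proving rectifiability of $\CCC_0$ (via Proposition \ref{choosenondegeneratingpoints} on the pre-limit regions $\NNN^l$, exactly as you propose) Federer's theorem equating $\HHH^k$ and $\MMM^k$ on rectifiable sets closes the reverse inequality---so rectifiability is in fact established \emph{before} the full Ahlfors bound, interleaving (i) and (ii) rather than treating them sequentially.
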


\begin{proof}
From the construction in the proof of Proposition \ref{cylneckdecomp}, there exist a sequence of $(k,\delta, \cc,  r_0)$-cylindrical neck region $\NNN^l$ (see \eqref{constcylneck2b}) with center $\CCC^l$ and radius function $r_x^l$ such that
		\begin{enumerate}[label=\textnormal{(\roman{*})}]
			\item $r^l_{x}\to r_x$ uniformly, 
			
			\item $\NNN^l$ converge to $\NNN$ in the Hausdorff distance, and
			
			\item $\inf_{x\in\CCC^l}r^l_{x}\geq \gamma^l r_0$,
		\end{enumerate}
 where $\gamma=\cc^5$. Moreover, it follows from \eqref{constcylneck2a} that for any $l \ge 1$,
 	\begin{align}\label{covering1a}
\CCC_0 \subset B^*_{2\gamma^l r_0}(\mathcal G^l).
	\end{align}

Let $\mu^l$ be the packing measure with respect to $\NNN^l$. We assume that $\mu^{\infty}$ is the limit of $\mu^l$ in the weak sense. It is clear that $\mu^{\infty}=\mu$ on $\CCC_+$.

Set $r_l:=2 \gamma^l r_0$. For any $x \in \CCC_0$ and $s \in (0, r_0-d(x_0, x)/2)$, we have
	\begin{align}\label{covering1}
		r_l^{k-n-2}\abs{B^*_{r_l} \lc \CCC_0\cap B^*(x, s)\rc} \leq C(n,Y, \cc)\mu^l \lc B^*_{2r_l}\lc B^*(x, s) \cap \CCC_0 \rc\rc
	\end{align}
provided that $r_l<s$. To see this, let $\{B^*(x^l_i, r_l),\, x^l_i \in \mathcal G^l \cap B^*_{r_l}\lc B^*(x, s) \cap \CCC_0 \rc \}_{1 \le i \le N}$ be a covering of $B^*(x, s) \cap \CCC_0$. Note that by \eqref{covering1a}, this covering is possible. 

By Lemma \ref{lem:inductahlfors2}, $\mu(B^*(x^l_i, r_l)) \ge c(n, Y, \cc) r_l^k>0$, since the radius function is $r_l$ at $x_i^l$. Since $\{B^*(x_i^l, \cc^2 r_l/2)\}$ are disjoint, it follows from Proposition \ref{prop:volumebound} that any point belongs to at most $C(n, Y, \cc)$ balls in $\{B^*(x_i^l, r_l/2)\}$. Consequently, we conclude that the cardinality $N$ satisfies
	\begin{align*}
N \le C(n, Y, \cc) r_l^{-k} \mu^l \lc B^*_{2r_l}\lc B^*(x, s) \cap \CCC_0 \rc \rc.
	\end{align*}
By Proposition \ref{prop:volumebound}, we obtain
	\begin{align*}
		\abs{B^*_{r_l} \lc \CCC_0\cap B^*(x, s)\rc} \le N \max_i \abs{B^*(x^l_i, 2 r_l)} \le C(n,Y, \cc) \mu_l\left( B^*_{2r_l}\lc B^*(x, s) \cap \CCC_0 \rc\right) r_l^{n+2-k},
	\end{align*}
which yields \eqref{covering1}. 
	
Similarly, since $\{B^*(x_i^l, \cc^2 r_l)\}$ are disjoint, we have
	\begin{align*}
N \le \frac{\abs{B^*_{2 r_l}\lc B^*(x, s) \cap \CCC_0 \rc}}{\min_{i} \abs{B^*(x^l_i, \cc^2 r_l)}}.
	\end{align*}
By Proposition \ref{prop:volumebound}, we obtain
	\begin{align*}
N \le C(n, Y, \cc) r_l^{-n-2} \abs{B^*_{2 r_l}\lc B^*(x, s) \cap \CCC_0 \rc}.
	\end{align*}	
Since $\mu^l(B^*(x^l_i, 2r_l)) \le C(n, Y, \cc) r_l^k$ by Lemma \ref{lem:inductahlfors2}, we have
	\begin{align} \label{equ:minkov2}
 \mu^l \lc B^*_{r_l}\lc B^*(x, s) \cap \CCC_0 \rc \rc \le C(n, Y, \cc) N r_l^k \le C(n, Y, \cc) \abs{B^*_{2 r_l}\lc B^*(x, s) \cap \CCC_0 \rc} r_l^{k-n-2}.
	\end{align}

Taking $l\to\infty$, we get the following estimate for Minkowski content $\MMM^k$ from \eqref{covering1} and \eqref{equ:minkov2} (cf. \cite[Definition 8.9]{fang2025RFlimit}):
	\begin{align} \label{eq:minkvo}
C^{-1}(n,Y, \cc) \MMM^k\left(B^*(x,s)\cap\CCC_0\right)\leq \mu^{\infty}\left(B^*(x, s)\cap\CCC_0\right) \le C(n, Y, \cc) \MMM^k\left(B^*(x,s)\cap\CCC_0\right).
	\end{align}

From this, we have
	\begin{align*}
		\mu\left(B^*(x,s)\cap\CCC_0\right)=\HHH^k\left(B^*(x,s)\cap\CCC_0\right)\leq C(k)\MMM^k \left(B^*(x,s)\cap\CCC_0\right)\leq C(n,Y, \cc)\mu^{\infty}\left(B^*(x,s)\cap\CCC_0 \right).
	\end{align*}	
Consequently, we have proved
	\begin{align}\label{covering2aa}
 \mu \le C(n, Y, \cc) \mu^{\infty}.
	\end{align}

Next, we prove that $\CCC_0$ is $k$-rectifiable with respect to $d_Z$. Fix $\ep>0$, $x \in \CCC_0$, $s \le r_0-d_Z(x, x_0)/2$ and $\delta\leq\delta(n,Y,\ep)$. We define the neck region $\NNN^l_{x,s}:=B^*(x, 2s) \setminus B^*_{r_y^l}(\CCC^l_{x, s})$ with center $\CCC^l_{x, s}:=\CCC^l \cap B^*(x, 2s)$. Note that by Lemma \ref{lem:inductahlfors2}, the corresponding packing measure, denoted by $\mu^l_{x,s}$, satisfies the Ahlfors regularity. Thus, by Proposition \ref{choosenondegeneratingpoints}, we can find $(k,\delta,s)$-splitting map $\vec{u}^l_{x,s}$ and $\CCC^l_{x,s,\ep}\subset \CCC^l_{x,s}$ such that $\vec{u}^l_{x,s}:\CCC^l_{x,s,\ep}\to \R^k$ is $C(n, Y)$-bi-Lipschitz and 
	\begin{align}\label{appro1}
		\mu^l_{x,s}\left(B^*(x,s)\setminus\CCC^l_{x,s,\ep}\right)<\ep s^k.
	\end{align}
Next, we set
	\begin{align*}
\mu_{x,s}:=\lim_{l\to\infty}\mu^l_{x,s}, \quad \CCC_{x,s,\ep}:=\lim_{l\to\infty}\CCC^l_{x,s,\ep}, \quad \vec{u}_{x,s}:=\lim_{l\to\infty} \vec{u}^l_{x,s}.
	\end{align*}
Here, the first limit refers to convergence in the weak sense, while the second refers to Hausdorff convergence. From \eqref{covering2aa} and \eqref{appro1}, we have
	\begin{align}\label{appro2}
		\mu\left(B^*(x,s)\setminus \CCC_{x,s,\ep}\right)\leq C(n,Y, \cc)\mu_{x,s}\left(B^*(x,s)\setminus \CCC_{x,s,\ep}\right)\leq C(n,Y, \cc)\ep s^k.
	\end{align}
Moreover, $\vec{u}_{x,s}$ is a $C(n, Y)$-bi-Lipschitz map on $\CCC_{x,s,\ep}$. Set $E_{x,s,\ep}:=\CCC_0\bigcap \CCC_{x,s,\ep}$. Since the Ricci flow limit space $Z$ is separable (see Theorem \ref{thm:intro1}), we can choose a countable dense subset $\{x_i\}$ of $\CCC_0$ and define
	\begin{align*}
		E:=\bigcup_{s\in\Q\bigcap \lc 0, r_0-d_Z(x_0, x_i)/2 \rc,x_i}E_{x_i,s,\ep}.
	\end{align*}
By \eqref{appro2} and a standard geometric measure theory argument, we obtain that $\HHH^k(\CCC_0\setminus E)=0$.  Indeed, if $\HHH^k(\CCC_0\setminus E)>0$, then there exists a $k$-density point $y \in \CCC_0\setminus E$ (see \cite[Theorem 3.6]{leon83}). In other words, we have
	\begin{align*}
\limsup_{s \to 0} \frac{\HHH^k \lc B^*(y, s) \cap (\CCC_0\setminus E) \rc}{s^k} \ge c(k)>0.
	\end{align*}
In particular, we can find a small $s \in (0, r_0-d_Z(x_0, y))$ such that $\HHH^k \lc B^*(y, s) \cap (\CCC_0\setminus E) \rc \ge c(k) s^k/2$. Then, there exist $x_i$ close to $y$ and $s' \in \Q\bigcap \lc 0, r_0-d_Z(x_0, x_i)/2 \rc$ close to $s$ such that
	\begin{align*}
\HHH^k \lc B^*(x_i, s') \cap (\CCC_0\setminus E) \rc \ge c(k) (s')^k/2,
	\end{align*}
which contradicts \eqref{appro1}. Consequently, we have $\HHH^k(\CCC_0\setminus E)=0$ and hence $\CCC_0$ is rectifiable.

Next, we show that
	\begin{align}\label{covering2a}
 \mu^{\infty} \le C(n, Y, \cc) \mu.
	\end{align}
Indeed, by \eqref{eq:minkvo},
	\begin{align*}
		\mu^{\infty}\left(B^*(x,s)\cap\CCC_0\right)\leq C(n,Y, \cc)\HHH^k \left(B^*(x,s)\cap\CCC_0\right)=C(n, Y, \cc) \mu\left(B^*(x,s)\cap\CCC_0\right),
	\end{align*}
since the Hausdorff measure and the Minkowski content are equivalent on the $k$-rectifiable set $\CCC_0$ (see \cite[Theorem 3.2.29]{Federer69}). Combining \eqref{covering2a} with Lemma \ref{lem:inductahlfors2}, we obtain the Ahlfors regularity \eqref{eq:extraaf} for $\mu$.

With \eqref{eq:extraaf} established, the last statement of the theorem follows directly from Lemma \ref{clearout} and Proposition \ref{choosenondegeneratingpoints}. This completes the proof.
\end{proof}

We now prove the main theorem of the subsection.

\begin{thm} \label{recticylsing2}
For any $k \in \{0,1, \ldots, n-2\}$, the set $\MS^k_{\mathrm{c}}$ is horizontally parabolic $k$-rectifiable with respect to $d_Z$.
\end{thm}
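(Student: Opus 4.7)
The plan is to argue by induction on $k \in \{0, 1, \ldots, n-2\}$, reducing the problem to rectifiability of the top stratum $\MS^k_{\mathrm{c}} \setminus \MS^{k-1}_{\mathrm{c}}$ at each step. The base case $k=0$ requires showing that $\MS^0_{\mathrm{c}}$ is countable (since horizontal parabolic $0$-rectifiability amounts to countability). This should follow from Proposition \ref{prop:uniformc}: each $z \in \MS^0_{\mathrm{c}}$ is uniformly $(0, \ep, r_z)$-cylindrical for some $r_z>0$, and standard clearing-out arguments together with Lemma \ref{clearout} preclude accumulation of such points, so separability of $Z$ forces $\MS^0_{\mathrm{c}}$ to be at most countable.

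For the inductive step, assume $\MS^{k-1}_{\mathrm{c}}$ is horizontally parabolic $(k-1)$-rectifiable, hence also horizontally parabolic $k$-rectifiable (the lower-dimensional rectifiable pieces are trivially $\HHH^k$-null). It remains to handle $\MS^k_{\mathrm{c}} \setminus \MS^{k-1}_{\mathrm{c}}$. Here the strategy is to apply the local neck decomposition: for every $x_0 \in \MS^k_{\mathrm{c}} \setminus \MS^{k-1}_{\mathrm{c}}$, Proposition \ref{cylneckdecomp} (applied with suitable $\delta, \cc$ chosen so that Theorem \ref{ahlforsregucyl1} is available) produces an $r_0 = r_0(x_0) > 0$ and a $(k, \delta, \cc, r_0)$-cylindrical neck region $\NNN = B^*(x_0, 2r_0) \setminus B^*_{r_x}(\CCC)$ with
\begin{equation*}
\lc \MS^k_{\mathrm{c}} \setminus \MS^{k-1}_{\mathrm{c}} \rc \bigcap B^*(x_0, r_0) \subset \CCC_0.
\end{equation*}

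Next, invoke Theorem \ref{ahlforsregucyl1} to conclude that $\CCC_0$ admits a countable decomposition $\CCC_0 = \bigsqcup_i E_i \sqcup N$ with $\HHH^k(N)=0$, where each $E_i$ carries a bi-Lipschitz map into $\R^k$ and the time function satisfies the Reifenberg-type estimate $\sqrt{|\t(x)-\t(y)|} \le \ep\, d_Z(x,y)$ on $E_i$. This is precisely the data required by Definition \ref{defnrectifiablity} for horizontal parabolic $k$-rectifiability. Thus each local piece $\lc \MS^k_{\mathrm{c}} \setminus \MS^{k-1}_{\mathrm{c}} \rc \cap B^*(x_0, r_0)$ inherits horizontal parabolic $k$-rectifiability.

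Finally, by separability of $(Z, d_Z)$ (Theorem \ref{thm:intro1}), the family $\{B^*(x_0, r_0(x_0))\}_{x_0 \in \MS^k_{\mathrm{c}} \setminus \MS^{k-1}_{\mathrm{c}}}$ admits a countable subcover of $\MS^k_{\mathrm{c}} \setminus \MS^{k-1}_{\mathrm{c}}$. Combining this countable union of horizontally parabolic $k$-rectifiable sets with the inductive hypothesis on $\MS^{k-1}_{\mathrm{c}}$ completes the induction. The substantive analytic work has already been absorbed into Proposition \ref{cylneckdecomp} and Theorem \ref{ahlforsregucyl1}, so the main subtlety in writing the proof is merely bookkeeping: verifying that the choice of $\delta, \cc$ can be made uniformly (depending only on $n, Y, \ep$) so that a single countable family of bi-Lipschitz charts with controlled $\ep$ suffices, which is handled by taking a diagonal subsequence over a sequence $\ep_j \searrow 0$.
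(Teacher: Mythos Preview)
Your proposal is correct and follows essentially the same route as the paper: the core work is absorbed into Proposition~\ref{cylneckdecomp} and Theorem~\ref{ahlforsregucyl1}, and the proof of Theorem~\ref{recticylsing2} is then a countable covering argument. The paper handles $\MS^{k-1}_{\mathrm{c}}$ slightly differently---rather than inducting, it simply cites Corollary~\ref{cor:kthcover}(c) to conclude $\dim_{\MMM}\MS^{k-1}\le k-1$, hence $\HHH^k(\MS^{k-1}_{\mathrm{c}})=0$ directly---and for $k=0$ it observes that the tangent flow at any $z\in\MS^0_{\mathrm{c}}$ is the round shrinking $S^n$, so $z$ is isolated in $\MS$; but these are cosmetic differences, and your induction achieves the same end.
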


\begin{proof} 
The case $k \in \{1, \ldots, n-2\}$ follows directly from Corollary \ref{cor:kthcover} (c), Proposition \ref{cylneckdecomp}, Theorem \ref{ahlforsregucyl1}, and a standard countable covering argument. The case $k=0$ is simpler: any point $z \in \MS^0_{\mathrm{c}}$ must be isolated, since every tangent flow at $z$ is the standard sphere $S^n$.
\end{proof}

We end this subsection with the following result.

\begin{cor}\label{conntimeslicea}
Each connected component of $\MS_{\mathrm c}^2$ is contained in a time slice.
\end{cor}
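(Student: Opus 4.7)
The strategy is to reduce the statement to the claim that $\HHH^1(\t(\MS^2_{\mathrm c})) = 0$ and then invoke continuity. Since $(Z, d_Z, \t)$ is a parabolic space, $|\t(x) - \t(y)| \le d_Z(x,y)^2$, so $\t\colon Z \to \R$ is continuous. For any connected component $K$ of $\MS^2_{\mathrm c}$, the image $\t(K)$ is connected in $\R$, hence an interval; once $\HHH^1(\t(K)) \le \HHH^1(\t(\MS^2_{\mathrm c})) = 0$ is established, this interval must collapse to a point, and $K$ lies in a single time slice.

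To prove the $\HHH^1$-vanishing, I will stratify $\MS^2_{\mathrm c} = \MS^0_{\mathrm c} \sqcup (\MS^1_{\mathrm c} \setminus \MS^0_{\mathrm c}) \sqcup (\MS^2_{\mathrm c} \setminus \MS^1_{\mathrm c})$. The set $\MS^0_{\mathrm c}$ is countable (cylindrical points with spherical tangent flow are isolated by rapid clearing out), so $\t(\MS^0_{\mathrm c})$ is $\HHH^1$-null. For $k \in \{1, 2\}$, I fix a compact set $K \subset Z$ and apply Proposition \ref{cylneckdecomp} with parameter $\delta>0$ to cover $(\MS^k_{\mathrm c} \setminus \MS^{k-1}_{\mathrm c}) \cap K$ by finitely many $(k, \delta, \cc, r_j)$-cylindrical neck regions $\NNN_j = B^*(x_j, 2 r_j) \setminus B^*_{r_x}(\CCC_j)$ with $x_j \in \CCC_{0, j}$ and $(\MS^k_{\mathrm c} \setminus \MS^{k-1}_{\mathrm c}) \cap B^*(x_j, r_j) \subset \CCC_{0, j}$. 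On each center $\CCC_j$, Lemma \ref{timeestimate} yields the sharp time estimate $|\t(x) - \t(y)| \le \Psi(\delta) d_Z(x, y)^2$. Combining this with the upper Ahlfors bound of Theorem \ref{ahlforsregucyl1}, I cover $\CCC_{0, j} \cap B^*(x_j, r_j)$ by $d_Z$-balls of radius $\rho$: Ahlfors forces at most $C r_j^k \rho^{-k}$ such balls, each of whose $\t$-image is an interval of length $\le 4\Psi(\delta)\rho^2$. For $k = 2$ this gives $\HHH^1(\t(\CCC_{0, j} \cap B^*(x_j, r_j))) \le 4C\Psi(\delta) r_j^2$ independently of $\rho$, while for $k=1$ the analogous bound $\le 4C\Psi(\delta) r_j \rho$ tends to zero as $\rho \to 0$. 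To sum over $j$, I apply the 5B-covering lemma to extract a disjoint subfamily $\{B^*(x_{j_l}, r_{j_l}/5)\}$ whose enlargements still cover; the lower Ahlfors bound then gives
\[
\sum_l r_{j_l}^2 \le C \sum_l \mu_{j_l}\bigl(B^*(x_{j_l}, r_{j_l}/5)\bigr) \le C\, \HHH^2(\MS^2_{\mathrm c} \cap K^{\mathrm{exp}}),
\]
where $K^{\mathrm{exp}}$ is a fixed bounded enlargement of $K$ and the right-hand side is finite by the local $\HHH^2$-finiteness of $\MS^2_{\mathrm c}$ (itself a by-product of the Ahlfors upper bound on neck centers, since $\HHH^2(\MS^1_{\mathrm c})=0$ by the dimension estimate of Corollary \ref{cor:kthcover}). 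Altogether $\HHH^1(\t(K \cap \MS^2_{\mathrm c})) \le C \Psi(\delta)\, \HHH^2(\MS^2_{\mathrm c} \cap K^{\mathrm{exp}})$, and letting $\delta \to 0$ yields zero. Exhausting $Z$ by compact sets using separability (Theorem \ref{thm:intro1}) concludes $\HHH^1(\t(\MS^2_{\mathrm c})) = 0$.

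The main technical obstacle is controlling $\sum_j r_j^2$ uniformly in $\delta$: Proposition \ref{cylneckdecomp} produces neck regions whose scales $r_j = r_j(\delta)$ generally shrink as $\delta \to 0$, so a naive bound by $4C\Psi(\delta) \sum_j r_j^2$ does not obviously vanish. The 5B-covering combined with the two-sided Ahlfors regularity of the packing measure $\mu_j$ resolves this by feeding the disjoint subfamily into the fixed quantity $\HHH^2(\MS^2_{\mathrm c} \cap K^{\mathrm{exp}})$. A subsidiary point is that I work directly with the neck-region description of $\MS^2_{\mathrm c}$ rather than invoking Theorem \ref{recticylsing2} as a black box, since an $\HHH^2$-null subset permitted by the definition of horizontal parabolic rectifiability could a priori have positive image under $\t$; the neck-region approach bypasses the null set entirely because $(\MS^k_{\mathrm c}\setminus\MS^{k-1}_{\mathrm c}) \cap B^*(x_j, r_j)$ is literally contained in the center $\CCC_{0, j}$, on which the strong time estimate of Lemma \ref{timeestimate} applies pointwise.
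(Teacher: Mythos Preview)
Your overall strategy coincides with the paper's: reduce to $\HHH^1(\t(\MS^2_{\mathrm c}))=0$, cover by cylindrical neck regions, use the time estimate together with Ahlfors regularity, and conclude by connectedness. The gap is in your summation step. The packing measures $\mu_{j_l}$ are distinct measures, one per neck region, and each carries point masses $\sum_{x\in\CCC^+_{j_l}} r_x^2\delta_x$ in addition to $\HHH^2|_{\CCC_{0,j_l}}$. Disjointness of the balls $B^*(x_{j_l},r_{j_l}/5)$ therefore does \emph{not} give $\sum_l \mu_{j_l}(B^*(x_{j_l},r_{j_l}/5)) \le C\,\HHH^2(\MS^2_{\mathrm c}\cap K^{\mathrm{exp}})$: the atomic part of $\mu_{j_l}$ is not dominated by any global $\HHH^2$-measure, and the lower Ahlfors bound you invoke is for the full $\mu_{j_l}$, not for $\HHH^2|_{\CCC_{0,j_l}}$. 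So the bound on $\sum_l r_{j_l}^2$ uniform in $\delta$ is unjustified as written.

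The paper avoids this entirely by decoupling the two small parameters. It fixes a single $\delta_0$, builds the neck regions once (so the $r_j$ are fixed), and then on each compact set $S=\CCC_{0,j}\cap\overline{B^*(x_j,r_j)}$ applies Lemma \ref{uniformcyl}: since $S\subset\MS^2_{\mathrm c}\setminus\MS^1_{\mathrm c}$, for every $\ep>0$ there is $r_\ep>0$ with each $x\in S$ uniformly $(2,\ep,r_\ep)$-cylindrical, whence Lemma \ref{clearout} gives $|\t(x)-\t(y)|\le\Psi(\ep)d_Z(x,y)^2$ for $d_Z(x,y)\le r_\ep$. A cover of $S$ by balls with $\sum_i s_i^2\le C\,\HHH^2(S)\le C'r_j^2$ (finite by Ahlfors) then yields $\HHH^1(\t(S))\le\Psi(\ep)r_j^2$; sending $\ep\to 0$ gives $\HHH^1(\t(S))=0$ for each $j$, and a countable union finishes. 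No summation of $r_j^2$ is needed. Incidentally, your worry about the $\HHH^2$-null exceptional set in the rectifiability theorem is unfounded: since $|\t(x)-\t(y)|\le d_Z(x,y)^2$ always, any $\HHH^2$-null set has $\HHH^1$-null $\t$-image (cover by balls $B^*(y_i,s_i)$ with $\sum s_i^2$ small; the $\t$-images are intervals of length $\le 2s_i^2$). The paper uses exactly this to dispose of $\MS^1_{\mathrm c}$ in one line.
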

\begin{proof}
It follows from Proposition \ref{cylneckdecomp} and a standard covering argument that, for some sufficiently small $\delta>0$, 
	\begin{align*}
		\lc \MS_{\mathrm c}^2\setminus \MS_{\mathrm c}^1\rc \subset \bigcup_i \lc \CCC_{i, 0} \bigcap B^*(x_i, r_i) \rc,
	\end{align*}
where $\NNN_i=B^*(x_i, 2r_i) \setminus B^*_{r_x}(\CCC_i)$ is a $(2, \delta, r_i)$-neck region with $x_i \in \MS_{c}^2$. Moreover, by the definition of the cylindrical neck region, we have $\CCC_{i, 0} \bigcap B^*(x_i, 2r_i) \subset \MS_c^2$.

We claim that
	\begin{align}\label{eq:connected002}
\HHH^1\lc \t\big(\MS_{\mathrm c}^2\setminus \MS_{\mathrm c}^1\big) \rc=0,
	\end{align}
where $\HHH^1$ denotes the $1$-dimensional Hausdorff measure on $\R$. To prove \eqref{eq:connected002}, it suffices to show that for each $i$:
	\begin{align}\label{eq:connected003}
\HHH^1\lc \t \lc \CCC_{i, 0} \bigcap B^*(x_i, r_i) \rc \rc=0.
	\end{align}
Let $S:=\CCC_{i, 0} \bigcap \overline{B^*(x_i, r_i)}$, which is a closed subset of $\MS^2_c$. By Theorem \ref{ahlforsregucyl1}, we have
	\begin{align*}
\HHH^2(S) \le \HHH^2 \lc \CCC_{i, 0} \bigcap B^*(x_i, 3r_i/2) \rc \le C(n, Y) r_i^2<\infty.
	\end{align*}
Since $S$ is compact, Lemma \ref{uniformcyl} implies that for any $\ep>0$, there exists $r_\ep>0$ such that each $x \in S$ is uniformly $(2, \ep, r_{\ep})$-cylindrical. In particular, by Lemma \ref{clearout}, we have
	\begin{align}\label{eq:connected005}
|\t(x)-\t(y)| \le \Psi(\ep) d_Z^2(x, y).
	\end{align}
for any $x, y \in S$ with $d_Z(x, y) \le r_\ep$. 

By the definition of Hausdorff measure, there exists a countable cover $\{B^*(y_j, s_j)\}$ of $S$ with $s_j<r_{\ep}/2$ such that
	\begin{align}\label{eq:connected006}
\sum_j s_j^2 \le C(n, Y) r_i^2.
	\end{align}
Then, from \eqref{eq:connected005}, for any $x , y \in B^*(y_j, s_j)$, we obtain
	\begin{align*}
|\t(x)-\t(y)| \le \Psi(\ep) d_Z^2(x, y) \le \Psi(\ep) s_j^2.
	\end{align*}
Thus, it implies
	\begin{align*}
\HHH^1\lc \t(B^*(y_j, s_j) \bigcap S) \rc \le \Psi(\ep) s_j^2
	\end{align*}
and summing over all $j$, it follows from \eqref{eq:connected006} that
	\begin{align*}
\HHH^1\lc \t(S) \rc \le \Psi(\ep) r_i^2.
	\end{align*}
Since $\ep$ is arbitrary, we conclude that $\HHH^1\lc \t(S) \rc=0$, which proves \eqref{eq:connected003}, and hence \eqref{eq:connected002}.

On the other hand, it follows from Corollary \ref{cor:kthcover} (c) that $\HHH^2 ( \MS_{\mathrm c}^1 )=0$. Since the time-function $\t$ is $2$-H\"older, a similar argument yields $\HHH^1 \lc \t(\MS_{\mathrm c}^1) \rc=0$. Combining this with \eqref{eq:connected002}, we obtain
	\begin{align}\label{eq:connected002a}
\HHH^1\lc \t\big(\MS_{\mathrm c}^2\big) \rc=0.
	\end{align}
It now follows from \eqref{eq:connected002a} that the image under $\t$ of any connected component of $\MS_{\mathrm c}^2$ must be a single point. This completes the proof.
\end{proof}

\subsection{Rectifiability of quotient cylindrical singularities}\label{subsec:quo}

In this subsection, we consider the quotient cylindrical singularities.

As before, we set 
\begin{align*}
\mathcal C^{m}_{-1}=(\bar M,\bar g,\bar f)=\left(\R^{m}\times S^{n-m}, g_E \times g_{S^{n-m}}, \frac{|\vec{x}|^2}{4}+\frac{n-m}{2}+\Theta_{n-m} \right).
\end{align*}

For any finite group $\Gamma \leqslant \mathrm{Iso}(\mathcal C^{m}_{-1})$ acting freely on $\bar M$, we set $\pi: \bar M \to \bar M/\Gamma$ to be the natural quotient map and define
\begin{align} \label{eq:quoc}\index{$\mathcal C^{m}(\Gamma)$}
\mathcal C^{m}(\Gamma)_{-1}=(\bar M_{\Gamma}, \bar g_{\Gamma},\bar f_{\Gamma}),
\end{align}
where $\bar M_{\Gamma}=\bar M/\Gamma$, $\bar g_{\Gamma}$ is the quotient metric of $\bar g$, and $\bar f_{\Gamma}$ is defined so that
\begin{align*} 
\pi^* \bar f_{\Gamma}=\bar f-\log |\Gamma|.
\end{align*}

Also, we define
\begin{align*}\index{$\Theta_{n-m}(\Gamma)$}
\Theta_{n-m}(\Gamma):=\Theta_{n-m}-\log |\Gamma|.
\end{align*}

Let $\mathcal C^m_k(\Gamma)_{-1}$ be a quotient cylinder as in \eqref{eq:quoc} so that $\mathcal C^{m}(\Gamma)_{-1}$ splits off an $\R^k$ exactly. We define $\mathcal C^m_k(\Gamma)=(\bar M, (\bar g_{\Gamma}(t))_{t<0}, (\bar f_{\Gamma}(t))_{t<0})$ to be the associated Ricci flow, where $t=0$ corresponds to the singular time, and the potential function is given by
\begin{align*}
\pi^* \bar f_{\Gamma}(t)=\frac{|\vec{x}|^2}{4|t|}+\frac{n-m}{4}+\Theta_{n-m}(\Gamma).
\end{align*}

Let $\bar{\mathcal C}^m_k(\Gamma)$\index{$\bar{\mathcal C}^m_k(\Gamma)$} be the completion of $\mathcal C^m_k(\Gamma)$, equipped with the spacetime distance $d_{\mathcal C}^*$, defined with respect a spacetime distance constant $\ep_0=\ep_0(n, Y)$. We also define the base point $p^*$ as the limit of $(\bar p, t)$ as $t \nearrow 0$ with respect to $d_{\mathcal C}^*$, where $\bar p \in \mathcal C^m_k(\Gamma)_{-1}$ is a minimum point of $\bar f_{\Gamma}(-1)$. For any $t<0$, we have
	\begin{align*}
\nu_{p^*;t}=(4\pi |t|)^{-\frac n 2} e^{-\bar f_{\Gamma}(t)} \,\mathrm{d}V_{\bar g_{\Gamma}(t)}.
	\end{align*}

Next, we consider a general noncollapsed Ricci flow limit space $(Z, d_Z, \t)$ over $\III$, obtained as the limit of a sequence in $\mathcal M(n,Y, T)$. Then, we have the following definition similar to Definition \ref{def:ccc}.

\begin{defn}
A point $z \in Z_{\III^-}$ is called \textbf{a quotient cylindrical point regarding $\bar{\mathcal C}^m_k(\Gamma)$} if a tangent flow at $z$ is $\bar{\mathcal C}^m_k(\Gamma)$ for some $k$, $m$ and $\Gamma$.
\end{defn}

It follows from \cite[Theorem 8.11]{FLloja05} that at any quotient cylindrical point, the tangent flow is unique. Thus, we have the following definition similar to Definition \ref{def:cylindr}.

\begin{defn}\label{def:cylindrq}\index{$\MS_{\mathrm{qc}}^k(m, \Gamma)$}
Let $\MS_{\mathrm{qc}}^k(m, \Gamma)$ be the subset of $Z_{\III^-}$ consisting of all singular points at which one tangent flow is isometric to $\bar{\mathcal C}^m_k(\Gamma)$. Moreover, we define
	\begin{align*}
\MS_{\mathrm{qc}}^k=\bigcup_{m, \Gamma, 0 \le j \le k} \MS_{\mathrm{qc}}^j(m, \Gamma).
	\end{align*}\index{$\MS_{\mathrm{qc}}^k$}
It is clear that $\MS_{{\mathrm{c}}}^k \subset \MS_{\mathrm{qc}}^k \subset \MS^k$. 
\end{defn}

Next, we consider the quantitative version.

\begin{defn}
A point $z \in Z_{\III^-}$ is called \textbf{$(k,\ep,r)$-quotient cylindrical regarding $\bar{\mathcal C}^m_k(\Gamma)$}\index{$(k,\ep,r)$-quotient cylindrical regarding $\bar{\mathcal C}^m_k(\Gamma)$} if $\t(z)-\ep^{-1} r^2 \in \III^-$ and
	  \begin{align*}
(Z, r^{-1} d_Z, z, r^{-2}(\t-\t(z))) \quad \text{is $\ep$-close to} \quad (\bar{\mathcal C}^m_k(\Gamma) ,d^*_{\mathcal C}, p^*,\t) \quad \text{over} \quad [-\ep^{-1}, \ep^{-1}].
  \end{align*} 
Let $\tilde \phi$ be an $\ep$-map map from Definition \ref{defn:close}, which is defined from $B^*(p^*,\ep^{-1}) \cap \bar{\mathcal C}^m_k(\Gamma)_{[-\ep^{-1}, \ep^{-1}]}$ to $Z_{[\t(z)-\ep^{-1} r^2, \t(z)+\ep^{-1} r^2]}$, where $B^*(p^*,\ep^{-1})$ is the metric ball in $\bar{\mathcal C}^m_k(\Gamma)$ with respect to $d_{\mathcal C}^*$. Then, we define\emph{:}
\begin{align*}
\LL_{z,r}:=\tilde \phi \lc B^*(p^*,\ep^{-1}) \cap \mathrm{spine}(\bar{\mathcal C}^m_k(\Gamma)) \rc,\index{$\LL_{z,r}$}
\end{align*}
	and say that $z$ is $(k,\ep,r)$-quotient cylindrical regarding $\bar{\mathcal C}^m_k(\Gamma)$ \textbf{with respect to $\LL_{z,r}$}. In general, we say that $z$ is $(k,\ep,r)$-quotient cylindrical if it is $(k,\ep,r)$-quotient cylindrical regarding some $\bar{\mathcal C}^m_k(\Gamma)$. 
\end{defn}

By using the Lojasiewicz inequality in \cite[Theorem 8.9]{FLloja05}, we can argue in the same way as Proposition \ref{sumWonRFlimit} to prove the following summability result.

\begin{prop}
	For any $0<\ep\leq\ep(n,Y)$, $\alpha\in (1/4,1)$, and $\delta\leq\delta(n,Y,\ep,\alpha)$, the following holds. Suppose
	\begin{align*}
		\left|\widetilde \WW_{z}(s_1)-\widetilde \WW_{z}(s_2)\right|<\delta,
	\end{align*}
for $0<s_1<s_2$, and for any $s\in [s_1,s_2]$, $z$ is $(k,\delta,\sqrt{s})$-quotient cylindrical regarding $\bar{\mathcal C}^m_k(\Gamma)$. Then
	\begin{align*}
		\sum_{s_1\leq r_j=2^{-j} \leq s_2}\left|\widetilde \WW_{z}(r_j)-\widetilde \WW_{z}(r_{j-1})\right|^{\alpha}<\ep.
	\end{align*}
\end{prop}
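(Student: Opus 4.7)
The plan is to mirror the proof of Proposition \ref{sumWonRFlimit} verbatim, with the cylindrical model $\bar{\mathcal C}^k$ replaced by its quotient $\bar{\mathcal C}^m_k(\Gamma)$ and the asymptotic value $\Theta_{n-k}$ replaced by $\Theta_{n-m}(\Gamma)$. Following the standard discrete iteration of \cite[Corollary 6.28]{FLloja05}, it suffices to establish the pointwise discrete Lojasiewicz inequality
\begin{equation*}
\bigl|\widetilde \WW_{z}(r_j)-\Theta_{n-m}(\Gamma)\bigr|^{1+\theta}\leq C(n,Y)\bigl(\widetilde \WW_{z}(r_{j+1})-\widetilde \WW_{z}(r_{j-1})\bigr)
\end{equation*}
for every $j$ with $s_1\leq r_j\leq s_2$ and every $\theta\in(1/3,1)$, provided $\delta\leq\delta(n,Y,\theta)$; summing the resulting geometric series, reindexing and tuning $\delta$ then yields the stated $\alpha$-summable bound by $\varepsilon$.

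To produce this pointwise inequality, I would first pick a sequence $\XX^l\in\MM(n,Y,T)$ with points $z_l^*\in\XX^l$ such that $(\XX^l,d_l^*,z_l^*,\t_l)\xrightarrow{\hat C^\infty}(Z,d_Z,z,\t)$. By the hypothesis, for each fixed $s\in[r_j/2,2r_j]$ and all sufficiently large $l$, the point $z_l^*$ is $(k,2\delta,\sqrt{s})$-quotient cylindrical regarding $\bar{\mathcal C}^m_k(\Gamma)$. I would then invoke the smooth quotient-cylindrical Lojasiewicz inequality \cite[Theorem 8.9]{FLloja05}, together with the smooth counterpart of \cite[Lemma 2.27]{FLloja05} adapted to quotient cylinders, to obtain
\begin{equation*}
\bigl|\WW_{z_l^*}(s)-\Theta_{n-m}(\Gamma)\bigr|^{1+\theta}\leq C(n,Y)\bigl(\WW_{z_l^*}(s/2)-\WW_{z_l^*}(2s)\bigr)
\end{equation*}
uniformly in $l$, for any $s\in[r_j/2,2r_j]$.

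Next, I would pass to the limit. Since $J^z\subset(0,\t(z)+0.98T)$ has Lebesgue measure zero by Proposition \ref{prop:Wconv1}, for any $\varepsilon'>0$ I can find $s\in(r_j-\varepsilon',r_j]$ with $s$, $s/2$, and $2s$ all outside $J^z$; Proposition \ref{prop:Wconv1} then gives $\WW_{z_l^*}(s)\to\WW_z(s)$ and analogously at $s/2$, $2s$. Letting $l\to\infty$ and then $\varepsilon'\to0$, the monotonicity of $\tau\mapsto\widetilde\WW_z(\tau)$ built into Definition \ref{defnWRFlimit1} lets me replace the one-sided limits by $\widetilde\WW_z(r_j)$, $\widetilde\WW_z(r_{j+1})$, $\widetilde\WW_z(r_{j-1})$, yielding the desired discrete Lojasiewicz inequality.

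The core ingredient that is genuinely distinct from the pure cylindrical case is \cite[Theorem 8.9]{FLloja05}, which already packages the Lojasiewicz-Simon estimate for quotient cylindrical shrinkers; assuming this theorem, the rest of the argument is essentially formal. The only point requiring mild care is making sure the smooth quotient-cylindrical closeness at $z_l^*$ is valid uniformly on the whole scale window $[r_j/2,2r_j]$, which follows from the analogue of Proposition \ref{prop:uniformc} in the quotient setting (obtained by the same proof via Lemma \ref{lem:smoothcyl2} applied to the model $\bar{\mathcal C}^m_k(\Gamma)$). I expect this uniform-in-scale quotient-cylindrical closeness to be the only nontrivial technical point; everything else is a direct transcription of the proof of Proposition \ref{sumWonRFlimit}.
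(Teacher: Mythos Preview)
Your proposal is correct and follows essentially the same approach as the paper, which simply states that one argues exactly as in Proposition \ref{sumWonRFlimit} but invokes the quotient-cylindrical Lojasiewicz inequality \cite[Theorem 8.9]{FLloja05} in place of \cite[Theorem 6.26]{FLloja05}. Your added remark about uniform-in-scale closeness is a slight overcomplication---the paper only needs $(k,2\delta,\sqrt{r_j})$-quotient-cylindricality at the single dyadic scale $r_j$ to run the Lojasiewicz estimate on the window $[r_j/2,2r_j]$---but this does no harm.
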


Similar to Proposition \ref{prop:uniformc}, we also have

\begin{prop} \label{prop:uniformcq}
Suppose $z \in Z$ is $(k,\delta,r)$-quotient cylindrical regarding $\bar{\mathcal C}^m_k(\Gamma)$ and $\widetilde \WW_z(\delta \bar r^2)-\widetilde \WW_z(\delta^{-1} r^2)<\delta$ for some $0<\bar r<r$.  For any $\ep>0$, if $\delta \le \delta(n, Y, \ep)$, then $z$ is $(k,\ep,s)$-quotient cylindrical regarding $\bar{\mathcal C}^m_k(\Gamma)$ for any $s \in [\bar r, r]$. In particular, if $z \in \MS^k_{\mathrm{qc}}\setminus \MS^{k-1}_{\mathrm{qc}}$ is $(k,\delta,r)$-quotient cylindrical regarding $\bar{\mathcal C}^m_k(\Gamma)$ and $\delta \le \delta(n, Y, \ep)$, then $z$ is \textbf{uniformly $(k,\ep,r)$-quotient cylindrical regarding $\bar{\mathcal C}^m_k(\Gamma)$}, meaning that $z$ is $(k,\ep,s)$-quotient cylindrical regarding $\bar{\mathcal C}^m_k(\Gamma)$ for any $s \in (0, r]$.
\end{prop}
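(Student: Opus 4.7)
My plan is to follow the same two-step template used in the proof of Proposition~\ref{prop:uniformc}: first establish the smooth analog on a fixed closed Ricci flow, then pass to the limit for Ricci flow limit spaces. The adaptation to the quotient setting is essentially bookkeeping, because every ingredient we used in the cylindrical case has a quotient-cylindrical counterpart already available in \cite{FLloja05}.

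First I would prove a ``smooth recognition'' analog of Lemma~\ref{lem:smoothcyl2}: if $z\in Z_{\III^-}$ is $(\delta,r)$-selfsimilar and the weighted Riemannian manifold $(\RR_{-r^2}, r^{-2}g^Z_{-r^2}, f_z(-r^2))$ is $C^{[\delta^{-1}]}$-close to $(\bar{M}_\Gamma,\bar g_\Gamma,\bar f_\Gamma)$ on the set $\{2\sqrt{\bar f_\Gamma(-1)}\le\delta^{-1}\}$, then $z$ is $(k,\ep,s)$-quotient cylindrical regarding $\bar{\mathcal C}^m_k(\Gamma)$ for $s\in[\ep r,\ep^{-1}r]$. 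The proof is the contradiction/compactness argument used in Lemma~\ref{lem:smoothcyl2}: pull back by $\varphi_l$, pass to a limit, use the Ricci-shrinker equation from Lemma~\ref{lem:imply1} to conclude that the limiting shrinker space must equal $\bar{\mathcal C}^m_k(\Gamma)$ on its negative part (identical metric and potential on an exhaustion), then invoke \cite[Theorem~7.25]{fang2025RFlimit} to rule out a positive part.

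Next I would prove the smooth persistence statement, which is the quotient analog of Proposition~\ref{prop:uniformcsmooth}. For a smooth closed flow $\XX\in\MM(n,Y,T)$ and a point $z^*\in M\times\III$ which is $(k,\delta,r)$-quotient cylindrical regarding $\bar{\mathcal C}^m_k(\Gamma)$ with $\WW_{z^*}(\delta\bar r^2)-\WW_{z^*}(\delta^{-1}r^2)<\delta$, define the curvature radius $\mathbf r_A(s)$ of the rescaled weighted manifold $(M,s^{-2}g(-s^2),f_{z^*}(-s^2))$ and run the continuity argument of Proposition~\ref{prop:uniformcsmooth} verbatim, replacing:
\begin{itemize}
\item the cylinder recognition \cite[Lemma~2.27]{FLloja05} and its converse by the quotient versions (which follow from the step above together with \cite[Lemma~2.27]{FLloja05} applied to the universal cover, since $\bar g_\Gamma$ lifts to $\bar g$);
\item the entropy lower bound on $\mathbf r_A(s)$ of \cite[Theorem~6.24]{FLloja05} by its quotient analog, which is exactly the content of \cite[Theorem~8.9]{FLloja05} (the quantitative Lojasiewicz inequality for quotient cylinders).
\end{itemize}
The continuity argument is then formally identical to the one in Proposition~\ref{prop:uniformcsmooth}: if $\mathbf r_A$ were to drop below the threshold at some $s'\in[\bar r,r]$, the quotient Lojasiewicz bound would contradict the pinching hypothesis. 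Then the statement of Proposition~\ref{prop:uniformcq} on the limit space follows from a standard diagonal argument using Proposition~\ref{prop:Wconv1}, Definition~\ref{defnWRFlimit1}, and the smooth persistence statement, exactly as Proposition~\ref{prop:uniformc} is deduced from Proposition~\ref{prop:uniformcsmooth}.

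For the second assertion, suppose $z\in\MS^k_{\mathrm{qc}}\setminus\MS^{k-1}_{\mathrm{qc}}$ is $(k,\delta,r)$-quotient cylindrical regarding $\bar{\mathcal C}^m_k(\Gamma)$. By the tangent flow uniqueness \cite[Theorem~8.11]{FLloja05} the tangent flow at $z$ is $\bar{\mathcal C}^m_k(\Gamma)$, hence $\widetilde\WW_z(\tau)\nearrow\Theta_{n-m}(\Gamma)$ as $\tau\searrow 0$; by the monotonicity of $\widetilde\WW_z$ this forces $\widetilde\WW_z(\tau)\le\Theta_{n-m}(\Gamma)$ for every $\tau>0$. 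On the other hand, the quotient analog of Proposition~\ref{prop:almostconstant}(i) (proved identically by replacing $\Theta_{n-k}$ with $\Theta_{n-m}(\Gamma)$) yields $\widetilde\WW_z(\delta^{-1}r^2)\ge\Theta_{n-m}(\Gamma)-\Psi(\delta)$. Consequently $\widetilde\WW_z(\delta\bar r^2)-\widetilde\WW_z(\delta^{-1}r^2)\le\Psi(\delta)$ for every $\bar r\in(0,r]$, so the hypothesis of the first part is met at every scale $\bar r$, giving the uniform quotient cylindricality. The main obstacle in this plan is verifying that \cite[Theorem~8.9]{FLloja05} and \cite[Theorem~8.11]{FLloja05} supply exactly the quantitative inputs that \cite[Theorem~6.24]{FLloja05} and \cite[Theorem~2.23]{FLloja05} supplied in the proof of Proposition~\ref{prop:uniformc}; once that is confirmed the rest is a careful recycling of the earlier arguments.
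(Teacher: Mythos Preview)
Your proposal is correct and follows exactly the route the paper intends: the paper does not give a separate proof of Proposition~\ref{prop:uniformcq} but merely states ``Similar to Proposition~\ref{prop:uniformc}, we also have'', and your outline---proving a quotient analog of Lemma~\ref{lem:smoothcyl2}, then the smooth persistence analog of Proposition~\ref{prop:uniformcsmooth} with \cite[Theorem~6.24]{FLloja05} replaced by the quotient Lojasiewicz inequality \cite[Theorem~8.9]{FLloja05}, then passing to the limit via Proposition~\ref{prop:Wconv1}---is precisely this adaptation. Your concern about whether \cite[Theorem~8.9]{FLloja05} supplies the needed quantitative input is answered affirmatively by the paper's own use of that result earlier in Subsection~\ref{subsec:quo}.
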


Next, we define the quotient cylindrical neck region and its packing measure as in Definitions \ref{defiofcylneckregion} and \ref{defiofpackingmeasure2}.

\begin{defn}[Quotient cylindrical neck region]\label{defiofcylneckregionquo}
Given $\delta>0$, $\cc \in (0, 10^{-10n})$, $r>0$ and $z \in Z_{\III^-}$ with $\t(z)-2\delta^{-1}r^2 \in \III^-$, we call a subset $\NNN \subset B^*(z, 2r)$ a \textbf{$(k,\delta, \cc,r)$-quotient cylindrical neck region regarding $\bar{\mathcal C}^m_k(\Gamma)$}\index{$(k,\delta, \cc,r)$-quotient cylindrical neck region} if $\NNN=B^*(z,2r)\setminus B_{r_x}^*(\CCC)$, where $\CCC \subset B^*(z, 2r)$ is a nonempty closed subset with $r_x: \CCC \to \R_{+}$\index{$\CCC$}\index{$\CCC_0$}\index{$\CCC_+$}\index{$r_x$}, satisfies:
	\begin{itemize}[leftmargin=*, label={}]
		\item \emph{(n1)} for any $x, y \in \CCC$, $d_Z(x, y) \ge \cc^2(r_x+r_y);$
		\item \emph{(n2)} for all $x\in \CCC$,
		$$\widetilde \WW_{x}(\delta r^2_x)-\widetilde \WW_{x}(\delta^{-1} r^2)<\delta;$$
		\item \emph{(n3)} for each $x\in \CCC$ and $\cc^2 r_x\leq s\leq 2 r$, $x$ is $(k,\delta,r)$-quotient cylindrical regarding $\bar{\mathcal C}^m_k(\Gamma)$ with respect to $\mathcal L_{x,s};$
		\item \emph{(n4)} for each $x\in \CCC$ and $\cc^{-5} r_x\leq s\leq r-d_Z(x, z)/2$, we have $\mathcal{L}_{x,s} \cap B^*(x, s) \subset B^*_{\cc s}(\CCC)$ and $\CCC\bigcap B^*(x,s)\subset B^*_{\cc s}(\mathcal{L}_{x,s})$.
	\end{itemize}
Furthermore, we define the \textbf{packing measure}\index{packing measure} $\mu$ as
	\begin{equation*}
		\mu:=\sum_{x\in \CCC^+}r_x^{k}\delta_x+\HHH^{k}|_{\CCC_0}.
	\end{equation*}
\end{defn}

Analogous to Proposition \ref{cylneckdecomp}, we obtain the following construction of the neck region. Note that in this setting, one must consider the entropy $\Theta_{n-m}(\Gamma)$ instead.

\begin{prop}\label{cylneckdecompquo}
Given $\delta>0$ and $\cc \in (0, 10^{-10n})$ and $x_0\in \MS_{\mathrm{qc}}^k(m, \Gamma)$, we can find a constant $r_0>0$ and $\CCC=\CCC_0\bigcup\CCC_+\subset B^*(x_0,2r_0)$ with $r_x:\CCC\to\R^+$ which satisfies $r_x>0$ on $\CCC_+$, $r_x=0$ on $\CCC_0$, such that the following hold:
	\begin{enumerate}[label=\textnormal{(\roman{*})}]
		\item $\NNN=B^*(x_0,2r_0)\setminus B^*_{r_x}(\CCC)$ is a $(k,\delta, \cc,r_0)$-quotient cylindrical neck region regarding $\bar{\mathcal C}^m_k(\Gamma)$.
		\item $\MS_{\mathrm{qc}}^k(m, \Gamma) \bigcap B^*(x_0,r_0)\subset\CCC_0$.
	\end{enumerate}
\end{prop}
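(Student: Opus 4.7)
The plan is to mimic the proof of Proposition \ref{cylneckdecomp} verbatim, replacing the model space $\bar{\mathcal C}^k$ by $\bar{\mathcal C}^m_k(\Gamma)$ throughout and using the quotient analogs of each ingredient that was used in the cylindrical case. First, since $x_0\in\MS_{\mathrm{qc}}^k(m,\Gamma)$ admits at least one tangent flow isometric to $\bar{\mathcal C}^m_k(\Gamma)$, the uniqueness of tangent flows at quotient cylindrical points (\cite[Theorem 8.11]{FLloja05}) together with Proposition \ref{prop:uniformcq} gives a scale $r_0>0$ such that $x_0$ is uniformly $(k,\zeta,2r_0)$-quotient cylindrical regarding $\bar{\mathcal C}^m_k(\Gamma)$, where $\zeta\ll\ep\ll\delta$ are parameters to be fixed later. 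A standard limiting argument, analogous to the one used in Lemma \ref{uniformcyl}, then shows that after possibly shrinking $r_0$, every point in $\MS_{\mathrm{qc}}^k(m,\Gamma)\bigcap B^*(x_0,2r_0)$ is uniformly $(k,\zeta,2r_0)$-quotient cylindrical regarding the same model $\bar{\mathcal C}^m_k(\Gamma)$.

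Next, I will establish the quotient analog of Proposition \ref{prop:almostconstant}: if $z$ is $(k,\delta,r)$-quotient cylindrical regarding $\bar{\mathcal C}^m_k(\Gamma)$ with respect to $\LL_{z,r}$ and $\delta\leq\delta(n,Y,\ep)$, then (i) $|\widetilde\WW_z(\tau)-\Theta_{n-m}(\Gamma)|<\ep$ for $\tau\in[\ep r^2,\ep^{-1}r^2]$, (ii) every $x\in B^*(z,\ep^{-1}r)\cap B^*_{\delta r}(\LL_{z,r})$ is $(k,\ep,s)$-quotient cylindrical regarding $\bar{\mathcal C}^m_k(\Gamma)$ for $s\in[\ep r,\ep^{-1}r]$, and (iii) for nearby quotient cylindrical points $x$ at comparable scale $s$, the spines $\LL_{x,s}$ and $\LL_{z,r}$ are Hausdorff close on $B^*(x,\ep^{-1}s)$. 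Each of these statements is proved by a contradiction/compactness argument identical to the one used for Proposition \ref{prop:almostconstant}, now exploiting Proposition \ref{prop:Wconv1} together with the uniqueness of tangent flows at quotient cylindrical points to pin the limit to $\bar{\mathcal C}^m_k(\Gamma)$ rather than $\bar{\mathcal C}^k$.

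With these tools in hand, I will run the iterative good-ball/bad-ball construction of Proposition \ref{cylneckdecomp} word for word. Starting from a fixed $(k,\zeta,r_0)$-quotient cylindrical representative with spine $\LL_{x_0,r_0}$, I choose a maximal $2\cc^2\gamma r_0$-separated subset of $\LL_{x_0,r_0}\cap B^*(x_0,2r_0)$ where $\gamma=\cc^5$, partition it into good balls (those containing a $(k,\zeta,\gamma r_0)$-quotient cylindrical point regarding $\bar{\mathcal C}^m_k(\Gamma)$) and bad balls, and iterate inside each good ball using the quotient analog of Proposition \ref{prop:almostconstant} (iii) to control how the spines match up across scales. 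The limit $\CCC=\CCC_0\cup\CCC_+$ is built as in \eqref{constcylneck2a}; membership of $\MS_{\mathrm{qc}}^k(m,\Gamma)\cap B^*(x_0,r_0)$ in $\CCC_0$ follows because any such point is uniformly $(k,\zeta,2r_0)$-quotient cylindrical regarding $\bar{\mathcal C}^m_k(\Gamma)$ and hence cannot lie in any bad ball at any scale.

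The main obstacle, exactly as in Proposition \ref{cylneckdecomp}, is verifying condition (n4) of Definition \ref{defiofcylneckregionquo} at all scales, i.e.\ the mutual Hausdorff closeness between the iteratively constructed center $\CCC$ and the spines $\LL_{x,s}$ at every point $x\in\CCC$ and every scale $s\in[\cc^{-5}r_x,r_0-d_Z(x,x_0)/2]$. The verification proceeds by telescoping the spine-comparison estimate from part (iii) of the quotient analog of Proposition \ref{prop:almostconstant} across the dyadic scales $\gamma^j r_0$, together with the covering properties built into the construction, and is identical to the argument given after \eqref{constcylneck2b}; no new ideas are required beyond what is already in the cylindrical case.
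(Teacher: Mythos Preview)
Your proposal is correct and follows exactly the approach the paper takes: the paper states only that the construction is ``analogous to Proposition \ref{cylneckdecomp}'' with the entropy $\Theta_{n-m}(\Gamma)$ in place of $\Theta_{n-k}$, and you have spelled out precisely this substitution together with the quotient analogs of the needed ingredients (Proposition \ref{prop:uniformcq}, the quotient version of Proposition \ref{prop:almostconstant}, and the same iterative covering).
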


Now, one can proceed as in the previous subsection to establish the following result. The only difference lies in the proof of the analog of Proposition \ref{choosenondegeneratingpoints} (iv), where a version of Lemma \ref{lem:comparedis1} is required for $\bar{\mathcal C}^m_k(\Gamma)$ and points in the spine of $\bar{\mathcal C}^m_k(\Gamma)$. This can, however, be readily obtained from \cite[Lemma 8.5]{fang2025RFlimit}.

\begin{prop}[Ahlfors regularity]\label{ahlforsregucyl1quo}
Given $\ep>0$, if $\cc \le \cc(n)$ and $\delta \le \delta(n, Y, \cc,\ep)$, then for the $(k,\delta,r_0)$-quotient cylindrical neck region $\NNN=B^*(z,2r_0)\setminus B_{r_x}^*(\CCC)$ constructed in Proposition \ref{cylneckdecompquo}, there exists a constant $C=C(n,Y, \cc)>1$ such that for any $x\in\CCC$ and $r_x\leq s\leq r_0-d_Z(x, x_0)/2$,
	\begin{align*}
	 C^{-1} s^k\leq\mu(B^*(x,s))\leq C s^k.
	\end{align*} 
Moreover, we can find a countable collection of $\HHH^k$-measurable subsets $ E_i\subset \CCC_0$ such that $\HHH^k(\CCC_0\setminus\bigcup_i E_i)=0$ and for each $i$, there exists a bi-Lipschitz map $u_i:E_i\to \R^k$, where $\R^k$ is equipped with the standard Euclidean distance, such that
	\begin{align*}
\sqrt{|\t(x)-\t(y)|}\leq\ep d_Z(x,y),\quad \forall x, y \in E_i.
	\end{align*} 
\end{prop}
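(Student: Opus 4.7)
The plan is to follow the blueprint of Theorem \ref{ahlforsregucyl1} step by step, adapting each intermediate result to the quotient setting, and isolating the one place where the geometry of $\bar{\mathcal C}^m_k(\Gamma)$ (rather than $\bar{\mathcal C}^k$) enters in an essential way. Most of the ingredients transfer without change. The rapid clearing-out Lemma \ref{clearout} and the time estimate on the center (Lemma \ref{timeestimate}) use only that positive-time slices in the model are empty, which holds for $\bar{\mathcal C}^m_k(\Gamma)$ just as for $\bar{\mathcal C}^k$; the summability estimate on $|\widetilde\WW_y(r_i^2/40)-\widetilde\WW_y(40r_i^2)|^{1/2}$ along the center is the quoted quotient Lojasiewicz inequality. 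Because $\bar{\mathcal C}^m_k(\Gamma)$ splits off $\R^k$ exactly by construction, Proposition \ref{equivalencesplitsymetric1}(ii) together with \cite[Proposition 10.7]{fang2025RFlimit} produces, in exactly the same way as Lemma \ref{lem:scale1}, a single limiting heat flow $\vec u=(u_1,\ldots,u_k)$ that is a $(k,\ep,s)$-splitting map at each $x\in\CCC$ and each scale $s\in[\ep r,\ep^{-1} r]$, and the Lipschitz upper bound $|\vec u(x)-\vec u(y)|\le C_{\lip}d_Z(x,y)$ on $\CCC$ follows from \cite[Proposition 10.4]{fang2025RFlimit}.

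Next I would reproduce Proposition \ref{choosenondegeneratingpoints} in the quotient setting. Parts (i), (ii) and (iii) are purely measure-theoretic and splitting-map arguments: under the Ahlfors hypothesis $(*_D)$, averaging the entropy pinching and applying Fubini and the quotient Lojasiewicz inequality gives the large-measure subset $\CCC_\ep$; strongly $(k,\alpha,\delta,r_i)$-independent points are then produced by Lemma \ref{existenceofindependentpointsafterbaseRFlimit} precisely as before, and Theorem \ref{nondegenrerationthm} combined with Proposition \ref{equivalencesplitsymetric1} upgrades $\vec u$ to a $(k,\ep,s)$-splitting map at each scale $s\in[r_x,r-d_Z(x,z)/2]$ after multiplying by a matrix $T_{x,s}$ close to the identity. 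The one place that is not automatic is the bi-Lipschitz lower bound in (iv): for $x,y\in\CCC_\ep$ with $s=d_Z(x,y)$, one must pass through an $\ep$-map from $\bar{\mathcal C}^m_k(\Gamma)$ to conclude $s\le C(n,Y)|\vec u(x)-\vec u(y)|$. In the pure-cylinder case this is Lemma \ref{lem:comparedis1}, which compares $d^*_{\mathcal C}$ on the spine with the Euclidean norm of the splitting coordinates. Here, since $\mathrm{spine}(\bar{\mathcal C}^m_k(\Gamma))$ is again the Euclidean $\R^k$ factor split off by $\bar{\mathcal C}^m_k(\Gamma)$, the corresponding bilateral comparison is provided by \cite[Lemma 8.5]{fang2025RFlimit}, and the proof of (iv) then goes through without change.

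With this version of Proposition \ref{choosenondegeneratingpoints} in place, the inductive packing estimates of Lemmas \ref{lem:inductahlfors} and \ref{lem:inductahlfors2} are essentially combinatorial: given $(*_D)$ at scale $s$, the Lipschitz property of $\vec u$ pushes forward the packing measure to a disjoint family of Euclidean balls to give the upper bound, while the geometric transformation Theorem \ref{thm:existtransRFL} together with Proposition \ref{prop:almostconstanta} (the latter again via \cite[Lemma 8.5]{fang2025RFlimit} in the quotient model) produces the covering \eqref{lowerboundahlforsregularity} that yields the lower bound. The bootstrap from $(A_j)$ to $(A_{j+1})$ in Lemma \ref{lem:inductahlfors2} is formal and carries over verbatim, so the Ahlfors regularity holds whenever $\inf_{\CCC}r_x>0$. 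The removal of this extra hypothesis then proceeds exactly as in the proof of Theorem \ref{ahlforsregucyl1}: approximating $\NNN$ by $\NNN^l$ with radii uniformly bounded below by $\gamma^l r_0$ from the construction in Proposition \ref{cylneckdecompquo}, passing the packing measures to a weak limit $\mu^\infty$, and using the Minkowski-content sandwich \eqref{eq:minkvo} together with $k$-rectifiability of $\CCC_0$ (obtained from Proposition \ref{choosenondegeneratingpoints}(iv) by the same countable-dense-subset construction) to identify $\mu^\infty$ with $\HHH^k|_{\CCC_0}$ on the zero-radius part and thus deduce the Ahlfors regularity of $\mu$ itself. The bi-Lipschitz parametrization of $\CCC_0$ by subsets of $\R^k$, together with the $\sqrt{|\t(x)-\t(y)|}\le\ep d_Z(x,y)$ control on each piece, is then immediate from Proposition \ref{choosenondegeneratingpoints}(iv) and the rapid clearing-out Lemma \ref{clearout}.

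The main obstacle, as foreshadowed in the discussion preceding the statement, is the bi-Lipschitz lower bound on $\CCC_\ep$. The potential concern is that $\Gamma$ may act nontrivially on the $S^{n-m}$ factor, and in principle one must verify that the spine identification survives the $\ep$-close embedding of a neighborhood of $\mathrm{spine}(\bar{\mathcal C}^m_k(\Gamma))$ into $Z$. Once the appropriate version of \cite[Lemma 8.5]{fang2025RFlimit} for the spine of $\bar{\mathcal C}^m_k(\Gamma)$ is in hand — which the excerpt explicitly asserts — this obstruction dissolves, and every other step reduces to a reference to the corresponding step in the cylindrical case.
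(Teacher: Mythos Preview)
Your proposal is correct and follows essentially the same approach as the paper: reproduce the cylindrical argument verbatim, with the single nontrivial adaptation being the bi-Lipschitz lower bound in the analog of Proposition \ref{choosenondegeneratingpoints}(iv), which requires the quotient version of Lemma \ref{lem:comparedis1} supplied by \cite[Lemma 8.5]{fang2025RFlimit}. You have correctly identified both the overall strategy and the one place where the geometry of $\bar{\mathcal C}^m_k(\Gamma)$ enters in a way that is not automatic.
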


Since $Z$ has entropy bounded below by $-Y$, there are at most $C(n, Y)$ conjugacy classes for the group $\Gamma$. Therefore, by combining Propositions \ref{cylneckdecompquo} and \ref{ahlforsregucyl1quo}, we obtain the following:
\begin{thm}\label{thmrectquotient}
For any $k \in \{0,1, \ldots, n-2\}$, $\MS_{\mathrm{qc}}^k$ is horizontally parabolic $k$-rectifiable with respect to $d_Z$-distance.
\end{thm}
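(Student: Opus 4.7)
The plan is to reduce everything to the two structural results just established: the construction of a quotient cylindrical neck region around each quotient cylindrical point (Proposition \ref{cylneckdecompquo}) and the Ahlfors regularity plus rectifiability of its center (Proposition \ref{ahlforsregucyl1quo}). The starting observation is the decomposition
\begin{equation*}
\MS_{\mathrm{qc}}^k=\bigcup_{0\le j\le k}\bigcup_{(m,\Gamma)}\MS_{\mathrm{qc}}^j(m,\Gamma),
\end{equation*}
where the inner union ranges over a \emph{finite} collection of pairs $(m,\Gamma)$: by the entropy lower bound $\Theta_{n-m}(\Gamma)=\Theta_{n-m}-\log|\Gamma|\ge -Y$, both the splitting index $m\in\{k,\ldots,n-2\}$ and the conjugacy class of $\Gamma\le \mathrm{Iso}(\mathcal C^m_{-1})$ are restricted to finitely many possibilities (as highlighted in the paragraph preceding the theorem).

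First I would dispose of the lower strata. For $j<k$, Corollary \ref{cor:kthcover}(c) gives $\dim_{\MMM}\MS^j\le j<k$, so $\HHH^k(\MS_{\mathrm{qc}}^j)\le \HHH^k(\MS^j)=0$; such null sets are horizontally parabolic $k$-rectifiable in the trivial sense. The substantive contribution is thus confined to the top stratum $\MS_{\mathrm{qc}}^k(m,\Gamma)$ for each fixed $(m,\Gamma)$.

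For the top stratum, fix $\ep>0$, choose $\cc\le \cc(n)$ with $\cc\in(0,10^{-10n})$, and then $\delta=\delta(n,Y,\cc,\ep)$ small enough for Proposition \ref{ahlforsregucyl1quo} to apply. For each $x_0\in\MS_{\mathrm{qc}}^k(m,\Gamma)$, Proposition \ref{cylneckdecompquo} furnishes a radius $r_0(x_0)>0$ and a $(k,\delta,\cc,r_0)$-quotient cylindrical neck region $\NNN_{x_0}=B^*(x_0,2r_0)\setminus B^*_{r_x}(\CCC_{x_0})$ regarding $\bar{\mathcal C}^m_k(\Gamma)$ whose center satisfies
\begin{equation*}
\MS_{\mathrm{qc}}^k(m,\Gamma)\cap B^*(x_0,r_0)\subset \CCC_{x_0,0}.
\end{equation*}
Using separability of $(Z,d_Z)$ (Theorem \ref{thm:intro1}), I would extract a countable subcover $\{B^*(x_0^{(i)},r_0(x_0^{(i)}))\}_{i\in\N}$ of $\MS_{\mathrm{qc}}^k(m,\Gamma)$ and apply Proposition \ref{ahlforsregucyl1quo} to each $\CCC_{x_0^{(i)},0}$. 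This produces, up to a $\HHH^k$-null subset, a countable family of bi-Lipschitz images into $\R^k$ on which the horizontal condition $\sqrt{|\t(x)-\t(y)|}\le \ep\, d_Z(x,y)$ holds, which is exactly what Definition \ref{defnrectifiablity} demands.

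The hard work has already been carried out inside Propositions \ref{cylneckdecompquo} and \ref{ahlforsregucyl1quo} (and further inside Theorem \ref{nondegenrerationthm} and the Lojasiewicz-type summability in Proposition \ref{sumWonRFlimit}), so the only obstacle at this stage is the bookkeeping: verifying that one can genuinely choose a single $\delta$ and $\cc$ uniformly across all of the finitely many $(m,\Gamma)$'s and then assembling the countable covering via separability. Once that is done, the finite union over $(m,\Gamma)$ combined with the $\HHH^k$-null contribution from the strata $j<k$ yields the claimed horizontal parabolic $k$-rectifiability of $\MS_{\mathrm{qc}}^k$.
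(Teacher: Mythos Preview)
Your proposal is correct and follows essentially the same approach as the paper: the paper's argument is just the terse statement that there are only finitely many conjugacy classes of $\Gamma$ by the entropy bound, so one combines Propositions \ref{cylneckdecompquo} and \ref{ahlforsregucyl1quo} (together with Corollary \ref{cor:kthcover}(c) for the lower strata and a standard countable covering, as in the analogous proof of Theorem \ref{recticylsing2}). Your write-up simply spells out the bookkeeping that the paper leaves implicit.
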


Similar to Corollary \ref{conntimeslicea}, we also have

\begin{cor}\label{conntimesliceaq}
Each connected component of $\MS_{\mathrm{qc}}^2$ is contained in a time slice.
\end{cor}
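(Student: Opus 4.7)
The plan is to mimic the argument of Corollary \ref{conntimeslicea}, replacing the cylindrical objects by their quotient counterparts. Concretely, I aim to show
\begin{align*}
\HHH^1\bigl(\t(\MS_{\mathrm{qc}}^2)\bigr)=0,
\end{align*}
where $\HHH^1$ is the $1$-dimensional Hausdorff measure on $\R$. Once this is established, the time function $\t$ is continuous on $Z$, so the image of any connected component of $\MS_{\mathrm{qc}}^2$ under $\t$ is a connected subset of $\R$ of zero $\HHH^1$-measure, hence a single point.

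To bound $\HHH^1\bigl(\t(\MS_{\mathrm{qc}}^2\setminus \MS_{\mathrm{qc}}^1)\bigr)$, I will first apply Proposition \ref{cylneckdecompquo} and a standard countable covering argument to write
\begin{align*}
\MS_{\mathrm{qc}}^2\setminus \MS_{\mathrm{qc}}^1 \subset \bigcup_i \bigl( \CCC_{i,0} \cap B^*(x_i, r_i)\bigr),
\end{align*}
where each $\NNN_i=B^*(x_i,2r_i)\setminus B^*_{r_x}(\CCC_i)$ is a $(2,\delta,\cc,r_i)$-quotient cylindrical neck region regarding some $\bar{\mathcal C}^{m_i}_2(\Gamma_i)$, and the pointed $\WW$-entropy is $\Psi(\delta)$-close to the constant $\Theta_{n-m_i}(\Gamma_i)$. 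It then suffices to show, for each fixed $i$, that
\begin{align*}
\HHH^1\bigl( \t(S_i)\bigr) = 0, \qquad S_i := \CCC_{i,0}\cap \overline{B^*(x_i, r_i)}.
\end{align*}
By Proposition \ref{ahlforsregucyl1quo}, $\HHH^2(S_i) \le C(n,Y)r_i^2 < \infty$. The set $S_i$ is compact, and by Proposition \ref{prop:uniformcq} (the quotient analog of Proposition \ref{prop:uniformc}), for any $\ep>0$ one can find $r_\ep>0$ such that every $x \in S_i$ is uniformly $(2,\ep,r_\ep)$-quotient cylindrical regarding $\bar{\mathcal C}^{m_i}_2(\Gamma_i)$. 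The quotient cylinder $\bar{\mathcal C}^{m_i}_2(\Gamma_i)$ has a two-dimensional spine contained in the time-slice $t=0$, and its completion satisfies the analog of the rapid clearing-out Lemma \ref{clearout}. Hence the key quadratic time control
\begin{align*}
|\t(x)-\t(y)| \le \Psi(\ep)\, d_Z^2(x,y), \qquad \forall\, x,y \in S_i \text{ with } d_Z(x,y) \le r_\ep,
\end{align*}
follows by the same limiting argument as in the cylindrical case.

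With this quadratic modulus in hand, the $\HHH^1$-estimate is a routine covering computation: choose a countable cover $\{B^*(y_j,s_j)\}$ of $S_i$ with $s_j < r_\ep/2$ and $\sum_j s_j^2 \le C(n,Y)r_i^2$ (using $\HHH^2(S_i) < \infty$); each $\t(S_i \cap B^*(y_j,s_j))$ has $\HHH^1$-measure $\le \Psi(\ep)\, s_j^2$; summing and letting $\ep \to 0$ gives $\HHH^1(\t(S_i))=0$. Taking a countable union yields $\HHH^1\bigl(\t(\MS_{\mathrm{qc}}^2\setminus \MS_{\mathrm{qc}}^1)\bigr)=0$. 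Finally, since $\MS_{\mathrm{qc}}^1 \subset \MS^1$ has $\HHH^2(\MS_{\mathrm{qc}}^1)=0$ by Corollary \ref{cor:kthcover}(c) and $\t$ is $2$-H\"older with respect to $d_Z$, we also get $\HHH^1\bigl(\t(\MS_{\mathrm{qc}}^1)\bigr)=0$, completing the proof. The main technical point is verifying the quadratic time control from quotient quantitative cylindrical behavior, but this reduces to the same clearing-out/limit argument used for $\bar{\mathcal C}^k$, since the only feature used is that the model flow's spine sits in a single time slice.
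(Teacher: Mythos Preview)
Your proposal is correct and follows exactly the approach the paper intends: the paper states Corollary \ref{conntimesliceaq} with no proof beyond ``Similar to Corollary \ref{conntimeslicea}, we also have,'' and your argument is precisely that---the proof of Corollary \ref{conntimeslicea} with cylindrical objects replaced by their quotient analogs (Propositions \ref{cylneckdecompquo}, \ref{ahlforsregucyl1quo}, \ref{prop:uniformcq}, and the quotient version of Lemma \ref{clearout}). One small remark: to get uniform $(2,\ep,r_\ep)$-quotient cylindrical behavior on the compact set $S_i$ you implicitly need the quotient analog of Lemma \ref{uniformcyl}, not just Proposition \ref{prop:uniformcq}; but this analog follows from Proposition \ref{prop:uniformcq} by the identical compactness argument, so there is no gap.
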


\section{Analysis of singularities in dimension four}\label{sec:nd}

In this section, we prove a neck decomposition theorem for noncollapsed Ricci flow limit spaces in dimension four. As an application, we establish the rectifiability and derive a sharp volume estimate for the singular set. Also, we derive the $L^1$-curvature bounds for four-dimensional closed Ricci flows.

Throughout this section, we fix a noncollapsed Ricci flow limit space $(Z,d_Z,\t)$ , which is obtained as a pointed Gromov-Hausdorff limit of a sequence in $\MM(4, Y, T)$. As before, we set 
\begin{align*}
\III:=[-0.98 T,0] \quad \text{and} \quad \III^-:=(-0.98 T,0].
	\end{align*}
Moreover, the regular part is given by a Ricci flow spacetime $(\RR, \t, \partial_\t, g^Z)$. For simplicity, we use $B^*(x,r)$ instead of $B_Z^*(x,r)$ for the metric balls in $Z$ with respect to $d_Z$.

\subsection{Overview of four-dimensional singularities}

Recall that the singular set $\MS \subset Z_{\III^-}$ has the following stratification:
\begin{equation*}
	\mathcal S^0 \subset \mathcal S^1 \subset \mathcal S^{2}=\mathcal S.
\end{equation*}

Given a point $z \in \MS$, we consider the following cases.

\textbf{Case A}: $z \in \MS^2 \setminus \MS^1$.

In this case, there exists a tangent flow $(Z', d_{Z'}, z', \t')$ at $z$, which is $2$-symmetric, but not $3$-symmetric. Let $(\RR', \t', \partial_{\t'}, g^{Z'}_t)$ denote its regular part. Then $\RR'_{(-\infty, 0)}$ has vanishing Ricci curvature or splits off an $\R^2$. By the classification of all $3$-dimensional Ricci shrinkers (see \cite{hamilton1993formations, perelman2002entropy, naber2010noncompact, ni2008classification, CCZ08}), we conclude that
	\begin{align*}
\lc \RR'_{-1}, g^{Z'}_{-1} \rc= S^2 \times \R^2,\,\, \RP^2 \times \R^2 \text{ or } \R^4/\Gamma.
	\end{align*}
Thus, $(Z', d_{Z'}, z', \t')$ is isometric to $\bar{\mathcal C}^2$, $\bar{\mathcal C}^2_2(\Z_2)$ or $\R^4/\Gamma\times \R$. In the first two cases, the tangent flow at $z$ is unique. We therefore decompose
\begin{align*}
\MS^2 \setminus \MS^1 = \lc \MS^2_{\mathrm{qc}}\setminus\MS^1_{\mathrm {qc}}\rc \sqcup \MS^2_{\mathrm{F}},
\end{align*}
where $\MS^2_{\mathrm{F}} \subset \MS^2$ denotes the set of points for which at least one tangent flow is isometric to $\R^4/\Gamma \times \R$.

\textbf{Case B}: $z \in \MS^1 \setminus \MS^0$.

In this case, there exists a tangent flow $(Z', d_{Z'}, z', \t')$ at $z$, which is $1$-symmetric, but not $2$-symmetric. Arguing as in Case A, we obtain
	\begin{align*}
\lc \RR'_{-1}, g^{Z'}_{-1} \rc= S^3/\Gamma \times \R\text{ or } (S^2 \times_{\Z_2} \R) \times \R.
	\end{align*}
Hence, $(Z', d_{Z'}, z', \t')$ is isometric to $\bar{\mathcal C}^1_1(\Gamma)$ or $\bar{\mathcal C}^2_1(\Z_2)$. In particular, any tangent flow at $z$ is unique, and we conclude that
\begin{align*}
\MS^1 \setminus \MS^0 = \MS^1_{\mathrm{qc}}\setminus \MS^0_{\mathrm{qc}}.
\end{align*}

\textbf{Case C}: $z \in \MS^0$.

In this case, any tangent flow $(Z', d_{Z'}, z', \t')$ at $z$ is either a quasi-static cone, so that $Z'_{(-\infty, t_a]}$ is isometric to $\R^4/\Gamma \times (-\infty, t_a]$ for some arrival time $t_a \in [0, \infty)$, or else its regular part $\lc \RR'_{-1}, g^{Z'}_{-1} \rc$ has nonvanishing Ricci curvature and does not split off any line.

We claim that in the former case, one must have $t_a=0$. Indeed, suppose $t_a>0$ and $(Z', d_{Z'}, z', \t')$ arises as the $\hat C^\infty$-limit of a sequence 
\begin{align*}
\lc Z, r_i^{-1} d_Z, z, r_i^{-2}(\t-\t(z)) \rc, \quad r_i \to 0.
\end{align*}
Then we can choose another sequence $s_i \to 0$ so that
\begin{align*}
\lc Z, (r_i s_i)^{-1} d_Z, z, (r_i s_i)^{-2}(\t-\t(z)) \rc
\end{align*}
converges to $\R^4/\Gamma \times \R$, contradicting the definition of $\MS^0$.

Next, we prove

\begin{prop} \label{prop:countable}
$\MS^0$ is a countable set.
\end{prop}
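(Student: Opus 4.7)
The plan is to show that $\MS^0$ is discrete in $(Z, d_Z)$; combined with the separability of $(Z, d_Z)$ provided by Theorem~\ref{thm:intro1}, discreteness yields countability. I fix $z_0 \in \MS^0$ and argue by contradiction, assuming there exists a sequence $z_i \in \MS^0 \setminus \{z_0\}$ with $r_i := d_Z(z_i, z_0) \searrow 0$. Applying the weak compactness theorem (Theorem~\ref{thm:intro1}) to the parabolic rescalings $(Z, r_i^{-1} d_Z, z_0, r_i^{-2}(\t - \t(z_0)))$ and passing to a subsequence, one obtains $\hat C^\infty$-convergence to a tangent flow $(Z', d_{Z'}, z_0', \t')$ at $z_0$, and simultaneously the rescaled images of $z_i$ converge to some $z_\star \in Z'$ with $d_{Z'}(z_\star, z_0') = 1$. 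Since the regular part of $Z'$ is open, $z_\star$ is a singular point of $Z'$.

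The key intermediate claim is that $z_\star$ lies in the spine of the Ricci shrinker space $Z'$, i.e.\ that $\NN^{Z'}_{z_\star}(\tau) \equiv \Theta_{z_0}$, where $\Theta_{z_0} := \lim_{\tau \searrow 0^+} \NN_{z_0}(\tau)$. Continuity of Nash entropy under $\hat C^\infty$-convergence, which follows from Proposition~\ref{prop:Wconv1}, gives
\begin{equation*}
\NN^{Z'}_{z_\star}(\tau) = \lim_{i \to \infty} \NN_{z_i}(r_i^2 \tau).
\end{equation*}
The lower bound $\NN^{Z'}_{z_\star}(\tau) \ge \Theta_{z_0}$ is immediate from Perelman's monotonicity: for $\bar\tau > 0$ and $i$ large enough that $r_i^2 \tau \le \bar\tau$, one has $\NN_{z_i}(r_i^2 \tau) \ge \NN_{z_i}(\bar\tau) \to \NN_{z_0}(\bar\tau) \to \Theta_{z_0}$ as $i \to \infty$ and then $\bar\tau \searrow 0^+$. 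The matching upper bound $\NN^{Z'}_{z_\star}(\tau) \le \Theta_{z_0}$ is obtained by combining the monotonicity of $\NN^{Z'}_{z_\star}(\cdot)$ with the Ricci shrinker asymptotic $\lim_{\tau \to \infty} \NN^{Z'}_{z_\star}(\tau) = \Theta_{z_0}$, which pins $\NN^{Z'}_{z_\star}$ to the constant value $\Theta_{z_0}$ and hence forces $z_\star$ to be self-similar.

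With $z_\star$ and $z_0'$ identified as two distinct self-similar points of $Z'$, the spine structure established in \cite[Appendix~D]{fang2025RFlimit} together with \cite[Proposition~8.4]{fang2025RFlimit} forces $Z'$ either to split off a spatial line (if $\t'(z_\star) = \t'(z_0')$) or to be a static or quasi-static cone (if $\t'(z_\star) \ne \t'(z_0')$). Combining this with the dichotomy for tangent flows at $\MS^0$-points recorded in Case~C above, one derives a contradiction in each subcase: in the positive-Ricci non-splitting case the conclusion of the previous sentence is directly incompatible with the assumption on $Z'$, while in the quasi-static case with $t_a = 0$ the point $z_\star$ must lie in the non-static positive-time part of $Z'$, where rigidity of the orbifold-resolving shrinker excludes additional self-similar points off the base. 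In either subcase $Z'$ turns out to be $1$-symmetric in the sense of Definition~\ref{defnsymmetricsoliton}, contradicting $z_0 \in \MS^0$.

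The main obstacle is the upper bound $\NN^{Z'}_{z_\star}(\tau) \le \Theta_{z_0}$: since the map $z \mapsto \Theta_z$ is only lower semicontinuous, transferring entropy control from $z_0$ to the nearby singular points $z_i$ at the small scale $r_i^2 \tau$ is not automatic. The expected resolution is to apply the quantitative static estimate (Theorem~\ref{staticestimate}) to the pair $(z_0, z_i)$ at scale $r_i$, combined with the discrete Lojasiewicz summability of Proposition~\ref{sumWonRFlimit}, both of which propagate entropy concentration between nearby points at comparable scales and, in the rescaled limit, pin $\NN^{Z'}_{z_\star}(\tau)$ to the shrinker value $\Theta_{z_0}$.
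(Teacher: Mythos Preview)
Your approach has a genuine gap in the step you correctly flag as the main obstacle: the upper bound $\NN^{Z'}_{z_\star}(\tau) \le \Theta_{z_0}$. Your proposed argument via the Ricci shrinker asymptotic $\lim_{\tau \to \infty} \NN^{Z'}_{z_\star}(\tau) = \Theta_{z_0}$ has the inequality pointing the wrong way: since $\NN^{Z'}_{z_\star}(\cdot)$ is nonincreasing in $\tau$, the large-$\tau$ limit only gives $\NN^{Z'}_{z_\star}(\tau) \ge \Theta_{z_0}$, which is the lower bound you already have. The upper bound genuinely requires $\limsup_i \NN_{z_i}(0) \le \Theta_{z_0}$, and this does not follow from $z_i \to z_0$ because $z \mapsto \NN_z(0)$ is only lower semicontinuous. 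Your proposed fixes do not help either: Theorem~\ref{staticestimate} controls a Ricci curvature integral from two selfsimilar points at different times and says nothing about transferring the small-scale entropy value $\NN_{z_i}(0)$ from $z_0$ to $z_i$, while Proposition~\ref{sumWonRFlimit} is specific to quotient-cylindrical points and is inapplicable here.

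The paper does not prove that $\MS^0$ is discrete in $Z$; indeed the argument above suggests that this may simply be false. Instead, the paper works with the quantitative sets $S_\ep = \bigcap_{0<r<\ep} \MS^{\ep,0}_{r,\ep}$ and shows that the map $I(z) = (z, \NN_z(0))$ has discrete image in $Z \times \R$. This reduces the problem to exactly the case you can handle: one only needs to rule out a sequence $z_i \to z_0$ in $S_\ep$ with $\NN_{z_i}(0) \to \NN_{z_0}(0)$, and under that extra hypothesis the upper bound on $\NN^{Z'}_{z_\star}(\tau)$ follows immediately from monotonicity, placing $z_\star$ on the spine. The contradiction at the end also uses the quantitative $S_\ep$ condition rather than $\MS^0$ directly: once $Z'$ is a quasi-static cone and $\t'(z_\star) \ne 0$, one finds scales $s_j$ at which $z_0$ (or $z_j$) is $\ep$-close to $\R^4/\Gamma \times \R$, violating $z_0 \in S_\ep$.
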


\begin{proof}
Recall that we have
\begin{align*}
\MS^{0}=\bigcup_{\ep \in \mathbb Q \cap (0, 1)} \bigcap_{0<r<\ep } \MS^{\ep,0}_{r, \ep}.
\end{align*}
To finish the proof, it suffices to show that $S_\ep:=\bigcap_{0<r<\ep } \MS^{\ep,0}_{r, \ep}$ is countable for any $\ep>0$.

For any $z\in Z$, we define the entropy at $z$ by
\begin{align*}\index{$\NN_z(0)$}
	\NN_z(0):=\lim_{\tau\searrow 0}\NN_z(\tau).
\end{align*}
This limit exists because $\NN_z(\tau)$ is nonpositive and nonincreasing with respect to $\tau$. 

\begin{claim}\label{claim:local}
For any $z \in S_\ep$, there exists a constant $\delta>0$ such that there is no point $x \in S_\ep \bigcap B^*(z, \delta)$ satisfying $\abs{\NN_{x}(0)-\NN_z(0)}<\delta$.
	\end{claim}

Suppose, for contradiction, that the claim fails. Then there exists a sequence $x_j \in S_\ep$ such that $x_j \to z$ and $\NN_{x_j}(0) \to \NN_z(0)$. Define $r_j:=d_Z(x_j,z)$, and let $(Z',d_{Z'},z',\t')$ be a subsequential limit of $\lc Z,r_j^{-1}d_Z,z,r_j^{-2}(\t-\t(z)) \rc$. It is clear from \cite[Lemma 7.2]{fang2025RFlimit} that $(Z',d_{Z'},z',\t')$ is a Ricci shrinker space, and
\begin{align}\label{equ:entrtang}
\NN_{z'}(\tau)=\NN_z(0), \quad \text{for all} \quad \tau>0.
\end{align}

We assume $x_j$ converge to $x' \in Z'$ so that $d_{Z'}(z', x')=1$. For any $\tau>0$, it follows from \cite[Lemma 7.2]{fang2025RFlimit} again and our assumption that
\begin{align*}
\NN_{x'}(\tau) =\lim_{j \to \infty} \NN_{x_j}(\tau r_j^2) \le \lim_{j \to \infty} \NN_{x_j}(0)=\NN_z(0).
\end{align*}
Therefore, by \cite[Lemma D.3]{fang2025RFlimit} and \eqref{equ:entrtang}, we deduce that $x'$ lies on the spine of $(Z', d_{Z'}, z', \t')$. By \cite[Propositions D.5, D.8]{fang2025RFlimit}, $(Z', d_{Z'}, z', \t')$ is $1$-splitting, a static cone, or a quasi-static cone. Since $z \in S_\ep$, it must be a quasi-static cone. In particular, $\t'(x') \ne 0$.

If $\t'(x') > 0$, then for $s_j:=\sqrt{|\t'(x')|} r_j \ep$ and large $j$,
\begin{align*}
\lc Z, s_j^{-1} d_Z, z, s_j^{-2}(\t-\t(z)) \rc
\end{align*}
is $\ep$-close to $\R^4/\Gamma \times \R$, contradicting the assumption that $z \in S_\ep$. Similarly, if $\t'(x') < 0$, then for large $j$ and $s_j$ as above,
\begin{align*}
\lc Z, s_j^{-1} d_Z, x_j, s_j^{-2}(\t-\t(x_j)) \rc
\end{align*}
is $\ep$-close to $\R^4/\Gamma \times \R$, again contradicting $x_j \in S_\ep$. Hence, Claim \ref{claim:local} follows.
	
Now, define a map $I:S_\ep \to Z\times \R$ by
\begin{align*} 
I(z)=\lc z,\NN_z(0) \rc.
\end{align*}
It is clear that $I$ is injective. By Claim \ref{claim:local}, for each $z_0 \in S_\ep$, there exists $\delta_0>0$ such that 
	\begin{align*} 
\mathrm{image}(I) \bigcap B_P\lc I(z_0) , \delta_0 \rc=\{I(z_0)\},
\end{align*}
	where $B_p$ denotes the metric ball in $Z\times \R$ with respect to the product metric $d_Z \times d_E$. Since $Z$ is separable by \cite[Theorem 1.3]{fang2025RFlimit}, so is $Z \times \R$. Hence, $\mathrm{image}(I)$ is countable, and therefore $S_\ep$ is also countable.
	
This completes the proof.
\end{proof}

\begin{rem}
By the same argument, the conclusion that $\MS^0$ is countable holds for noncollapsed Ricci flow limit spaces in all dimensions.
\end{rem}

\subsection{Flat neck regions}

In this subsection, we consider the model space:
\begin{align*}
\mathcal F^a(\Gamma)=\R^4/\Gamma \times (-\infty, a],
\end{align*}
where $a \in [0, \infty]$ is a constant, and $\Gamma \leqslant \mathrm{O}(4)$ is a nontrivial finite group acting freely on $S^3$. Since $Z$ has entropy bounded below by $-Y$, there are at most $C(Y)$ such $\Gamma$, up to conjugacy, in $\mathrm{O}(4)$. Note that $\mathcal F^a(\Gamma)$ is equipped with the spacetime distance $d^*_{\mathcal F}$, defined with respect to a spacetime distance constant $\ep_0=\ep_0(4, Y)=\ep_0(Y)$. We use $B^*_{\mathcal F^a(\Gamma)}(\cdot,\cdot)$ to denote the $d^*$-balls in $\mathcal F^a(\Gamma)$. We also define the base point $p^*=([\vec 0^4], 0)$, where $[\vec 0^4]$ is the image of $\vec 0^4 \in \R^4$ to $\R^4/\Gamma$.

Next, we consider the quantitative closeness.

\begin{defn}
A point $z \in Z_{\III^-}$ is called \textbf{$(\ep, r)$-close}\index{$(k,r)$-close} to $\mathcal F^a(\Gamma)$ if $\t(z)-\ep^{-1} r^2 \in \III^-$ and
	  \begin{align*}
(Z, r^{-1} d_Z, z, r^{-2}(\t-\t(z))) \quad \text{is $\ep$-close to} \quad \lc \mathcal F^a(\Gamma),d^*_{\mathcal F}, p^*,\t \rc \quad \text{over} \quad [-\ep^{-1}, \min\{\ep^{-1}, a\}].
  \end{align*} 
 Let $\tilde \phi$ be an $\ep$-map from Definition \ref{defn:close}, which is from $B_{\mathcal F^a(\Gamma)}^*(p^*,\ep^{-1}) \bigcap \mathcal F^a(\Gamma)_{[-\ep^{-1}, \min\{\ep^{-1}, a\}]}$ to $Z_{[\t(x)-\ep^{-1} r^2, \t(x)+\min\{\ep^{-1}, a\}r^2]}$. Then, we define\emph{:}
\begin{align*}
\LL_{x,r}:=\tilde \phi \lc  [\vec 0^4] \times [-\ep^{-1}, \min\{\ep^{-1}, a\}]\rc,
\end{align*}
and we say that $x$ is $(\ep, r)$-close to $\mathcal F^a(\Gamma)$ \textbf{with respect to $\LL_{x,r}$}.
\end{defn}

Parallel to cylindrical or quotient cylindrical neck regions, we introduce the notion of \textbf{flat neck regions}, which, together with quotient cylindrical neck regions, plays a central role in the decomposition of Ricci flow limit spaces in dimension $4$.

\begin{defn}[Flat neck region]\label{defnneckgeneral}
Given constants $\delta>0$, $\cc \in (0, 10^{-40})$, $r>0$ and $z \in Z_{\III^-}$ with $\t(z)-2\delta^{-1}r^2 \in \III^-$, we call a subset $\NNN \subset B^*(z, 2r)$ a \textbf{$(\delta, \cc, r)$-flat neck region regarding $\mathcal F^0(\Gamma)$}\index{$(\delta, \cc, r)$-flat neck region} if $\NNN=B^*(z,2r)\setminus B_{r_x}^*(\CCC)$, where $\CCC \subset B^*(z, 2r)$ is a nonempty closed subset with $r_x: \CCC \to \R_{+}$, satisfies\emph{:}
	\begin{itemize}[leftmargin=*, label={}]
		\item \emph{(n1)} for any $x, y \in \CCC$, $d_Z(x, y) \ge \cc^2(r_x+r_y);$
		
		\item \emph{(n2)} for all $x\in \CCC$,
		\begin{align*}
\widetilde \WW_{x}(\delta r^2_x)-\widetilde \WW_{x}(\delta^{-1} r^2)<\delta;
	\end{align*}	
		\item \emph{(n3)} for each $x\in \CCC$ and $\cc^2 r_x\leq s\leq 2 r$, $x$ is $(\delta, s)$-close to $\mathcal F^0(\Gamma)$ with respect to $\LL_{x,s};$

\item \emph{(n4)} for each $x\in \CCC$ and $\cc^{-5} r_x\leq s\leq r-d_Z(x, z)/2$, we have $\mathcal{L}_{x,s} \bigcap B^{*,-}(x, s) \subset B^*_{\cc s}(\CCC)$ and $\CCC\bigcap B^{*,-}(x,s)\subset B^*_{\cc s}(\mathcal{L}_{x,s})$, where
\begin{align*}
B^{*,-}(x, s):=B^{*}(x, s) \bigcap \{y \in Z \mid \t(y) \le \t(x)\}.
\end{align*}
\end{itemize}
	As before, $\CCC$ is called the \textbf{center} of the flat neck region, and $r_x$ is referred to as the \textbf{radius function}. We decompose $\CCC=\CCC_0\bigcup\CCC_{+}$, where $r_x>0$ on $\CCC_+$ and $r_x=0$ on $\CCC_0$. In addition, the corresponding \textbf{packing measure} is defined as
\begin{align*}\index{packing measure}
	\mu:=\sum_{x\in \CCC_{+}}r_x^{2}\delta_x+\HHH^{2}|_{\CCC_0}.
\end{align*}
\end{defn}

Next, we prove

\begin{lem} \label{lem:bilitpsta}
	Let $\NNN \subset B^*(z, 2r)$ be a $(\delta, \cc, r)$-flat neck region regarding $\mathcal F^0(\Gamma)$. If $\delta \le \delta(Y)$, then for any $x \in \CCC$ and $s \in (0, 2r]$,
		\begin{align*}
0<c_0(Y) d_Z^2(x, y)\leq |\t(x)-\t(y)|\leq d_Z^2(x,y)
	\end{align*} 
	for any $y \in B^{*,-}(x, s) \cap \CCC$. In particular, $\CCC_0$ is vertically parabolic $2$-rectifiable.
\end{lem}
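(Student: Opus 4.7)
The plan is to establish the bi-Lipschitz estimate
\begin{equation*}
\sqrt{c_0(Y)}\, d_Z(x,y)\ \le\ \sqrt{|\t(x)-\t(y)|}\ \le\ d_Z(x,y)
\end{equation*}
for all $x,y\in\CCC$ satisfying the hypothesis, and then read off rectifiability directly from Definition \ref{defnrectifiablity}. The upper bound is free: since $(Z,d_Z,\t)$ is a parabolic space by Theorem \ref{thm:intro1}, $|\t(x)-\t(y)|\le d_Z^2(x,y)$ automatically. The content is the lower bound, whose geometric heart is a model inequality on $\mathcal F^0(\Gamma)=\R^4/\Gamma\times\R_-$: for any two spine points $p=([\vec 0^4],t_p)$ and $q=([\vec 0^4],t_q)$, one has $d^*_{\mathcal F}(p,q)\le C(Y)\sqrt{|t_p-t_q|}$. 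This is a direct computation from Definition \ref{defnd*distance}: the conjugate heat kernels based at $p$ and $q$ are explicit static Gaussians centered at $[\vec 0^4]$, so the $W_1$-Wasserstein distance at time $t_p-r^2$ is controlled by the difference of their widths, and the $\ep_0 r$-threshold is reached at $r=O_Y(\sqrt{|t_p-t_q|})$.

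For the lower bound, fix distinct $x,y\in\CCC$ with $y\in B^{*,-}(x,s)\cap\CCC$ and set $s_0:=d_Z(x,y)$. Choose the scale $\tilde s:=2\max(s_0,\cc^{-5}r_x)$. Property $(n3)$ at scale $\tilde s$ furnishes an $\ep$-map realizing the $(\delta,\tilde s)$-closeness of $x$ to $\mathcal F^0(\Gamma)$ and identifying $\LL_{x,\tilde s}$ with the image of the spine segment. Property $(n4)$ at scale $\tilde s\ge\cc^{-5}r_x$ then produces $y'\in\LL_{x,\tilde s}$ with $d_Z(y,y')\le\cc\tilde s$. Transporting the model inequality back to $Z$ through the $\ep$-map gives $d_Z(x,y')\le C(Y)\sqrt{|\t(x)-\t(y')|}+\Psi(\delta)\tilde s$, while the parabolic-space upper bound yields $|\t(y')-\t(y)|\le\cc^2\tilde s^2$. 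Combining via the triangle inequality,
\begin{equation*}
d_Z(x,y)\ \le\ d_Z(x,y')+d_Z(y,y')\ \le\ C(Y)\sqrt{|\t(x)-\t(y)|}+\bigl(C(Y)\cc+\cc+\Psi(\delta)\bigr)\tilde s.
\end{equation*}
In the generic regime $s_0\ge\cc^{-5}r_x$ one has $\tilde s=2s_0=2d_Z(x,y)$, and because $\cc\le 10^{-40}$ and $\delta\le\delta(Y)$ the additive error is absorbed into the left-hand side, giving the claim with $c_0(Y):=(2C(Y))^{-2}$.

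The main obstacle is the degenerate regime $s_0<\cc^{-5}r_x$, which can only occur when $x\in\CCC_+$ and forces $r_y\sim r_x\sim s_0$ by $(n1)$; here $\tilde s=2\cc^{-5}r_x$ is no longer comparable to $d_Z(x,y)$ and the absorption above fails. The cleanest way to handle this is by a compactness/contradiction argument: if the lower bound fails on a sequence of $(\delta_i,\cc,r_i)$-flat neck regions with $\delta_i\to 0$ together with pairs $x_i,y_i\in\CCC_i$ in this regime, rescale by $s_{0,i}$ and pass to an $\hat C^\infty$-limit as in Theorems \ref{thm:intro1} and \ref{thm:intro3}; the limit pairs become distinct apex-type points of a flat-cone Ricci shrinker space at unit distance sharing the same time slice, contradicting the fact that the static cone $\R^4/\Gamma\times\R$ admits exactly one apex per time slice. (Alternatively, one can keep everything quantitative by applying $(n3)$ symmetrically to both $x$ and $y$ at the common scale $\cc^2r_x$ and comparing the two $\ep$-maps via a near-isometry between neighborhoods of $p^*$.)

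Granting the bi-Lipschitz estimate, the vertical parabolic $2$-rectifiability of $\CCC_0$ is immediate from Definition \ref{defnrectifiablity} with $k=2$: the target $\R^{k-2}\times\R=\R$ with the standard parabolic distance is just $(s,t)\mapsto\sqrt{|s-t|}$, and the time function $\t\vert_{\CCC_0}:\CCC_0\to\R$ is tautologically time-preserving; the double inequality says precisely that it is bi-Lipschitz in the parabolic metric. Taking the single set $Z_1:=\CCC_0$ in the definition completes the proof.
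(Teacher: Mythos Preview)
Your approach reaches the correct conclusion but takes a detour that the paper avoids. You locate $y'\in\LL_{x,\tilde s}$ via $(n4)$, which forces $\tilde s\ge\cc^{-5}r_x$ and thus manufactures your ``degenerate regime''; it also yields only $d_Z(y,y')\le\cc\tilde s$, so in the generic-regime absorption the error $2(C(Y)+1)\cc\,d_Z(x,y)$ need not be below $\tfrac12 d_Z(x,y)$, since the lemma imposes no hypothesis on $\cc$ beyond $\cc<10^{-40}$ while $C(Y)=c(Y)^{-1/2}$ may be arbitrarily large. The paper instead works directly at the natural scale $s_0=d_Z(x,y)$: by $(n1)$ one has $s_0\ge\cc^2 r_x$, so $(n3)$ applies at $x$ at scale $s_0$ with no range problem; then one argues that if $d_Z(y,\LL_{x,s_0})\ge\ep s_0$ the point $y$ sits in the regular part of the model and is therefore $(3,\ep,\ep^2 s_0)$-symmetric, which by the self-similarity from $(n2)$ propagates to $(3,2\ep,s_0)$-symmetry, contradicting $(n3)$ at $y$ (since $y\in\CCC$ is itself $(\delta,s_0)$-close to the singular model $\mathcal F^0(\Gamma)$). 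This produces $y'\in\LL_{x,s_0}$ with $d_Z(y,y')\le\ep s_0$ for $\ep$ arbitrarily small (depending on $\delta$), so the absorption is automatic and there is no case split. Your compactness argument for the degenerate case is in fact this same mechanism in contradiction form---``two apex-type points on the same time slice at unit distance'' is precisely ``$y$ cannot be simultaneously close to $\mathcal F^0(\Gamma)$ and lie in the regular part of the model at $x$''---so you could have run it uniformly and dispensed with $(n4)$ altogether.
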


\begin{proof}
The second inequality is obvious, so we only prove the first one. 

We set $s_0=d_Z(x, y)$. Then it is clear that $s \ge s_0  \ge \cc^2 (r_x+r_y)$. Then by $(n3)$ of Definition \ref{defnneckgeneral}, we know that $x$ is $(\delta, s_0)$-close to $\mathcal F^0(\Gamma)$ with respect to $\mathcal L_{x, s_0}$.

Since our model space is the static cone $\R^4/\Gamma \times \R_-$ and $\t(x) \ge \t(y)$, it follows that for any $\ep>0$, if $\delta \le \delta(Y, \ep)$ and $d_Z(y, \mathcal L_{x, s_0}) \ge \ep s_0$, then $y$ is $(3, \ep, \ep^2 s_0)$-symmetric. However, this would imply—by condition $(n2)$ of Definition \ref{defnneckgeneral}---that $y$ is $(3, 2\ep, s_0)$-symmetric, leading to a contradiction.

Thus, if $\delta \le \delta(Y, \ep)$, we can find $y' \in \mathcal L_{x, s_0
}$ so that
	\begin{align*} 
d_Z(y, y') \le \ep s_0,
	\end{align*}
which implies	
		\begin{align*}
|\t(y)-\t(y')| \le \ep^2 s^2_0.
	\end{align*}
	
On the other hand, it follows from \cite[Lemma 7.24]{fang2025RFlimit} that if $\delta$ is sufficiently small,
		\begin{align*}
|\t(x)-\t(y')|  \ge c(Y) d^2_Z(x, y') >0.
	\end{align*}

Combining the above inequalities, we obtain
		\begin{align*}
|\t(x)-\t(y)| \ge |\t(x)-\t(y')|-|\t(y)-\t(y')| \ge c(Y) (1-\ep)^2 s_0^2-\ep^2 s^2_0 \ge c(Y) s_0^2/2,
	\end{align*} 
provided that $\ep \le \ep(Y)$ is small. 

The last conclusion follows from a standard covering argument, which completes the proof.
\end{proof}

In the following, we will always assume $\delta \le \delta(Y)$ so that Lemma \ref{lem:bilitpsta} holds.

\begin{prop}[Ahlfors regularity---flat case]\label{ahlforsregforstaticneck}
	Let $\NNN \subset B^*(z, 2r)$ be a $(\delta, \cc, r)$-flat neck region. If $\cc\leq \cc(Y)$, then for any $x\in\CCC$ and $r_x\leq s\leq r-d_Z(x, z)/2$,
	\begin{align}\label{eq:bilipextra001}
C^{-1}(Y, \cc) s^2 \leq  \mu(B^{*,-}(x,s))\leq\mu(B^*(x,s))\leq C(Y, \cc) s^2.
	\end{align} 
\end{prop}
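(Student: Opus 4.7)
The plan is to derive both inequalities from the observation that $\t|_\CCC$ is essentially bi-Lipschitz when $d_Z$ is squared, i.e.,
\[
c_0(Y)\, d_Z(y_1,y_2)^2 \;\le\; |\t(y_1)-\t(y_2)| \;\le\; d_Z(y_1,y_2)^2, \qquad \forall\, y_1,y_2\in\CCC.
\]
The lower inequality follows from Lemma \ref{lem:bilitpsta} applied to the pair $(y_1,y_2)$ ordered in time (say $\t(y_2)\le\t(y_1)$, so $y_2\in B^{*,-}(y_1,d_Z(y_1,y_2))\cap\CCC$), while the upper one is automatic from the fact that $Z$ is a parabolic space. A direct consequence, via a cover comparison, is that the two-dimensional Hausdorff measure $\HHH^2$ on $\CCC_0$ with respect to $d_Z$ is comparable, with constants depending only on $Y$, to the one-dimensional Lebesgue measure of $\t(\CCC_0)\subset\R$. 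This reduces both inequalities in \eqref{eq:bilipextra001} to estimating the length of the $\t$-image.

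For the upper bound, condition $(n1)$ of Definition \ref{defnneckgeneral} combined with the bi-Lipschitz inequality above gives, for any distinct $y_1,y_2\in\CCC$,
\[
|\t(y_1)-\t(y_2)| \;\ge\; c_0(Y)\,\cc^4\,(r_{y_1}+r_{y_2})^2,
\]
so the half-open intervals $\{[\t(y),\t(y)+c_0(Y)\cc^4 r_y^2)\}_{y\in\CCC_+\cap B^*(x,s)}$ are pairwise disjoint and contained in a $\t$-interval of length $O(s^2)$ centered at $\t(x)$; summing their lengths yields $\sum_{y\in\CCC_+\cap B^*(x,s)} r_y^2 \le C(Y,\cc)\,s^2$. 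Meanwhile the bi-Lipschitz equivalence gives $\HHH^2(\CCC_0\cap B^*(x,s))\le C(Y)\,\HHH^1(\t(\CCC_0\cap B^*(x,s)))\le 2C(Y)\,s^2$, which establishes the upper estimate.

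For the lower bound, the subrange $r_x\le s\le \cc^{-5}r_x$ is immediate from $\mu(B^{*,-}(x,s))\ge r_x^2\ge \cc^{10} s^2$ (when $x\in\CCC_+$; the case $x\in\CCC_0$ is handled by the bi-Lipschitz equivalence on a small time-neighborhood of $\t(x)$). In the main range $s\ge \cc^{-5} r_x$, condition $(n3)$ together with the structure of the model $\mathcal F^0(\Gamma)=\R^4/\Gamma\times\R_-$ implies that $\t(\LL_{x,s}\cap B^{*,-}(x,s))$ contains essentially the full interval $[\t(x)-c(Y)s^2,\t(x)]$, and condition $(n4)$ then forces $\t(\CCC\cap B^*(x,2s))$ to be $\cc^2 s^2$-dense in this interval. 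To upgrade this density into a $\mu$-lower bound, we argue as in Lemma \ref{lem:inductahlfors2}, inductively on $\inf_{y\in\CCC}r_y$: assuming the Ahlfors lower bound on sub-balls and applying $(n4)$ at each dyadic subscale, one assembles a covering of $[\t(x)-c(Y)s^2,\t(x)]$ of total length $\asymp s^2$ whose pull-back under the bi-Lipschitz estimate, repackaged into the weights defining $\mu$, yields the desired uniform constant; the case $\inf r_y=0$ then follows by Hausdorff convergence of truncated flat neck regions, as in the proof of Theorem \ref{ahlforsregucyl1}. The main obstacle is closing this induction with a constant depending only on $Y$ and $\cc$: in the cylindrical setting (Lemma \ref{lem:inductahlfors}) this required the transformation theorem for splitting maps (Theorem \ref{thm:existtransRFL}), but here the one-dimensional spine of $\mathcal F^0(\Gamma)$ allows us to work directly with the time-function; the delicate point is to show that the union $\bigcup_{y\in\CCC_+\cap B^*(x,2s)} [\t(y)-C(Y,\cc) r_y^2,\t(y)] \cup \t(\CCC_0\cap B^{*,-}(x,2s))$ covers a definite fraction of $[\t(x)-c(Y)s^2,\t(x)]$ with $C(Y,\cc)$ independent of $\inf_{y\in\CCC} r_y$.
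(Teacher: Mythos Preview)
Your upper bound is correct and matches the paper's argument: push $\CCC$ forward to $\R$ via $\t$, use the bi-Lipschitz estimate from Lemma \ref{lem:bilitpsta} together with $(n1)$ to obtain pairwise disjoint parabolic intervals of length $\asymp \cc^4 r_y^2$, and sum.

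For the lower bound, you are taking a detour that you yourself do not close. The induction on $\inf_{y\in\CCC} r_y$ followed by a Hausdorff-limit argument, modeled on Lemma \ref{lem:inductahlfors2} and Theorem \ref{ahlforsregucyl1}, is unnecessary here precisely because the spine of $\mathcal F^0(\Gamma)$ is one-dimensional and the time function already plays the role of the splitting map. The paper proves the covering claim
\[
P^-(\t(x),\, c_1(Y)\,s) \;\subset\; \bigcup_{y\in \CCC\cap B^{*,-}(x,s)} \overline{P^-(\t(y),\, \cc^{-5} r_y)}
\]
directly, where $P^-(a,\rho):=\{b\le a:\sqrt{a-b}\le\rho\}$. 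The point you are missing is the choice of radius $\cc^{-5} r_y$ rather than $r_y$: if some $a$ lies outside the union, pick $z_0\in\CCC\cap B^{*,-}(x,s)\cap Z_{[a,\t(x)]}$ minimizing $\bar s:=\sqrt{\t(z_0)-a}$; then necessarily $\bar s>\cc^{-5} r_{z_0}$, so condition $(n4)$ applies at scale $\bar s$, and since the model $\mathcal F^0(\Gamma)$ has $\LL_{z_0,\bar s}$ extending a definite amount \emph{below} $\t(z_0)$, one finds $z_1\in\LL_{z_0,\bar s}\cap B^{*,-}(z_0,\bar s)$ with $\t(z_1)-a<(1-c(Y))\bar s^2$, and by $(n4)$ a nearby $z_2\in\CCC$ with $\t(z_2)-a<\bar s^2$ --- contradicting minimality of $\bar s$. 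Once the covering holds, summing the lengths $\cc^{-10}r_y^2$ over $\CCC_+$ and using the bi-Lipschitz bound on $\CCC_0$ gives $\mu(B^{*,-}(x,s))\ge c(Y)\cc^{10}s^2$ in one step, with no induction and no dependence on $\inf r_y$. The ``delicate point'' you flag disappears once the covering intervals are taken at the scale where $(n4)$ becomes available.
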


\begin{proof}
Note that by Definition \ref{defiofcylneckregion}, $\NNN'=\NNN \cap B^*(x, 2s)$ is also a $(\delta, \cc, s)$-neck region with center $\CCC'=\CCC \cap B^*(x, 2s)$ and the same radius function. Thus, we only prove \eqref{eq:bilipextra001} for the case $x=z \in \CCC$ and $s=r$.

We choose a maximal cover $\{B^*(x_i, r/8)\}_{1 \le i \le N}$ of $B^*(z,r) \cap \CCC$ such that $x_i \in B^*(z,r) \cap \CCC$ and $\{B^*(x_i, r/16)\}$ are pairwise disjoint. By Proposition \ref{prop:volumebound}, $N \le C(Y)$.
 
On $\R$, we set the parabolic ball $P(a, s'):=\{b \in \R \mid d_P(b, a):=\sqrt{|b-a|} \le s'\}$. For each ball $B^*(x_i, r/8)$, Lemma \ref{lem:bilitpsta} applies. Thus, we have
		\begin{align*}
			\mu(B^*(x_i, r/8))&=\sum_{y\in B^*(x_i, r/8) \cap \CCC_+}r_y^{2}+\HHH^2 \lc B^*(x_i, r/8) \cap \CCC_0 \rc\\
			&\leq C(Y) \cc^{-2}\sum_{B^*(x_i, r/8) \cap \CCC_+} \HHH_P^2 \lc P(\t(y), c(Y) \cc^2 r_y) \rc+C(Y) \HHH_P^2 \lc\t\lc B^*(x_i, r/8) \cap \CCC_0 \rc \rc\leq C(Y, \cc) r^2,
		\end{align*}
where we choose a small $c(Y)$ so that $\{P(\t(y), c(Y) \cc^2 r_y)\}_{y \in B^*(x_i, r/8) \cap \CCC} \subset \R$ are mutually disjoint. Moreover, we use $\HHH_P^2$ to denote the $2$-Hausdorff measure on $\R$ with respect to the parabolic distance $d_P$. Thus, we obtain $\mu(B^*(z, r)) \le C(Y, \cc) r^2$.

On the other hand, we set $P^-(a, s'):=\{b \in P(a, s') \mid  b \le a\}$ and claim that
		\begin{align} \label{eq:timelower}
P^-(\t(z), c_1(Y) r) \subset \bigcup_{y \in B^{*, -}(z, r) \cap \CCC} \overline{P^-(\t(y),  \cc^{-5} r_y)},
		\end{align}
		where $c_1:=\sqrt{c_0}/100$, and $c_0=c_0(Y)$ is the constant from Lemma \ref{lem:bilitpsta}.
		
Once \eqref{eq:timelower} is established, it follows that
	\begin{align*}
		0&< C^{-1}(Y) r^2 \leq \HHH_P^2\lc P^-(\t(z), c_1 r) \rc \\
		&\leq \sum_{y \in B^{*, -}(z, r) \cap \CCC_+} \HHH_P^2\lc \overline{P^-(\t(y), \cc^{-5} r_y)} \rc+\HHH_P^2\lc \t \lc \CCC_0 \bigcap B^{*, -}(z, r) \rc \rc \\
		& \leq \cc^{-10} \sum_{y \in B^{*, -}(z, r) \cap \CCC_+} r_y^2+\HHH^2 \lc \CCC_0 \bigcap B^{*,-}(z, r) \rc \le \cc^{-10} \mu \lc B^{*, -}(z, r) \rc.
	\end{align*}
	
Assume now, for contradiction, that \eqref{eq:timelower} fails. Then there exists 
	\begin{align*}
a \in P^-(\t(z), c_1(Y) r) \setminus \bigcup_{y \in B^{*, -}(z, r) \cap \CCC} \overline{P^-(\t(y), \cc^{-5} r_y)}.
	\end{align*}
For any $x \in \CCC \bigcap B^{*,-}(z, r) \bigcap Z_{[a, \t(z)]}$, define $s_x=\sqrt{\t(x)-a}$ and set $\bar s:=\min_{x \in \CCC \bigcap B^*(z, r) \bigcap Z_{[a, \t(z)]}} s_x>0$. By our choice of $c_1$ and Lemma \ref{lem:bilitpsta}, we can find $z_0 \in \CCC \bigcap B^*(z, r/4) \bigcap Z_{[a, \t(z)]}$ with $\bar s=s_{z_0}$. In particular, we have $a+\bar s^2=\t(z_0)\leq\t(z)$.

Since $\bar s \ge \cc^{-5} r_{z_0}$, we consider $\LL_{z_0, \bar s}$. Given that our model space is $\R^4/\Gamma \times \R_-$, we can find $z_1 \in B^{*,-}(z_0, \bar s) \cap \LL_{z_0, \bar s}$ so that $0< \t(z_1)-a< (1-c(Y))\bar s^2$. By $(n4)$ of Definition \ref{defnneckgeneral}, there exists $z_2 \in \CCC$ so that $d_Z(z_1, z_2) \le \cc \bar s$. If $\cc<\cc(Y)$, we have $z_2 \in B^{*,-}(z, r) \cap \CCC$. Moreover, 
	\begin{align*}
\abs{\t(z_2)-\t(z_1)} \le d^2_Z(z_1, z_2) \le  \cc^2 \bar s^2,
	\end{align*}
which implies that $0<\t(z_2)-a<(1-c(Y)+\cc^2)\bar s^2<\bar s^2$---a contradiction.

Hence, we conclude that claim \eqref{eq:timelower} must hold, completing the proof.
\end{proof}

\subsection{Neck decomposition theorem}

We begin this subsection by proving the following result:

\begin{prop}\label{prop:almostconstant1}
Suppose $z \in Z$ is $(\delta, r)$-close to $\mathcal F^a(\Gamma)$ with respect to $\mathcal L_{z,r}$. Then the following conclusions hold.
	\begin{enumerate}[label=\textnormal{(\roman{*})}]
		\item If $\delta \le \delta(Y, \ep)$, there exists a constant $W$ such that for any $x \in \mathcal L_{z, r} \cap B^*(z, \ep^{-1} r)$ and $\tau \in [\ep r^2, \ep^{-1} r^2]$,
	\begin{align*}
	\abs{\widetilde \WW_x(\tau)-W}<\ep.
	\end{align*}
		\item If $\delta \le \delta( Y, \ep, \eta)$, then any point in $B^*_{\delta r}(\mathcal L_{z, r}) \cap B^*(z, \ep^{-1} r)$ is not $(3, \eta, s)$-symmetric for any $s \in [\ep r, \ep^{-1} r]$.
		
		\item If $\delta \le \delta(Y, \ep)$ and $x \in B^*_{\delta r}(\mathcal L_{z, r}) \cap B^*(z, \ep^{-1} r)$ is $(\delta, s)$-close to $\mathcal F^0(\Gamma')$ for some $s \in [\ep r, \ep^{-1} r]$, then $\Gamma'=\Gamma$ and
	\begin{align*}
d_{\mathrm{H}} \lc \mathcal L_{x, s} \cap B^*(x, \ep^{-1} s), \mathcal L_{z, r} \cap B^{*,-}(x, \ep^{-1} s)  \rc < \ep s.
\end{align*}	
	\end{enumerate}	
\end{prop}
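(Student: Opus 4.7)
All three parts follow from a single contradiction and compactness scheme, modeled on the proofs of Propositions \ref{prop:almostconstant} and \ref{prop:almostconstanta}. After normalizing $r=1$ and $\t(z)=0$, we suppose one of (i)--(iii) fails along a sequence $(Z^l, d_{Z^l}, z_l, \t_l)$ of noncollapsed Ricci flow limit spaces in which $z_l$ is $(\delta_l, 1)$-close to $\mathcal F^{a_l}(\Gamma_l)$ with $\delta_l \to 0$. Since the entropy bound restricts $|\Gamma_l|$ to finitely many values, $\Gamma_l$ lies in finitely many conjugacy classes in $\mathrm{O}(4)$, so we may assume $\Gamma_l = \Gamma$ is fixed. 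After passing to a subsequence, $a_l \to a_\infty \in [0,\infty]$, and Theorems \ref{thm:intro1} and \ref{thm:intro3} together with the closeness assumption give
\begin{align*}
(Z^l, d_{Z^l}, z_l, \t_l) \xrightarrow[l\to\infty]{\quad \hat C^\infty \quad} (\mathcal F^{a_\infty}(\Gamma), d^*_{\mathcal F}, p^*, \t).
\end{align*}

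For (i), the violating sequence $x_l \in \mathcal L_{z_l,1} \cap B^*(z_l, \ep^{-1})$ with $\tau_l \in [\ep, \ep^{-1}]$ subconverges to a spine point $x_\infty \in [\vec 0^4]\times(-\infty, a_\infty]$ and $\tau_\infty \in [\ep, \ep^{-1}]$. On the static flat cone $\R^4/\Gamma \times (-\infty, a_\infty]$ the conjugate heat kernel based at any spine point coincides with the Gaussian on $\R^4/\Gamma$, and a direct integration of \eqref{defWentropy} yields $\widetilde \WW_{x_\infty}(\tau) \equiv -\log|\Gamma|$ for every $\tau > 0$; Proposition \ref{prop:Wconv1} then forces $\widetilde \WW_{x_l}(\tau_l), \widetilde \WW_{z_l}(\tau_l) \to -\log|\Gamma|$, contradicting the assumed gap and identifying $W = -\log|\Gamma|$. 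For (ii), the failure point $x_l$ subconverges to a spine point $x_\infty$ whose parabolic rescaling at any scale $s_\infty \in [\ep, \ep^{-1}]$ is isometric either to $\R^4/\Gamma \times \R$ or to a quasi-static cone $\R^4/\Gamma \times (-\infty, b]$; since $\Gamma$ acts freely and nontrivially on $S^3$, the factor $\R^4/\Gamma$ admits no further Euclidean splitting, and neither model is $3$-symmetric in the sense of Definition \ref{defnsymmetricsoliton}, which yields the contradiction once $\eta$ is fixed. For (iii), rescaling the failure sequence at $x_l$ by $s_l$ produces on one side $\mathcal F^0(\Gamma')$ with $\Gamma' = \lim \Gamma'_l$, and on the other side the rescaling of $\mathcal F^{a_\infty}(\Gamma)$ at the spine point $x_\infty$, whose negative part is $\R^4/\Gamma \times \R_-$; these must agree, forcing $\Gamma' = \Gamma$, and the Hausdorff identification of the spines then follows from the convergence of the defining $\ep$-maps.

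The principal technical difficulty is the degenerate regime $a_\infty = \infty$, where the limit becomes the static cone $\R^4/\Gamma \times \R$ rather than a genuine quasi-static cone; one must verify that Proposition \ref{prop:Wconv1} still applies in this case, which is done using the uniform monotonicity of $\widetilde \WW$ (Lemma \ref{lem:monolimit}) and the integration-by-parts identity from the proof of that proposition. A secondary subtlety is that the $\ep$-maps $\tilde\phi_l$ defining $\mathcal L_{z_l,1}$ and $\mathcal L_{x_l, s_l}$ are not canonical, so one must verify that any subsequential limit of these spine sets agrees with the metric-theoretic spine of $\mathcal F^{a_\infty}(\Gamma)$; this follows from Definition \ref{defn:close} together with the uniqueness of tangent flows at quotient cylindrical points \cite[Theorem 8.11]{FLloja05}.
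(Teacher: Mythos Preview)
Your proposal is correct and follows essentially the same contradiction-and-compactness scheme as the paper's proof; the paper only writes out part (i) explicitly and declares (ii), (iii) to be similar limiting arguments, while you supply more detail for all three parts and correctly identify the constant $W=-\log|\Gamma|$. One minor point: your invocation of \cite[Theorem~8.11]{FLloja05} at the end is misplaced---that result concerns uniqueness of tangent flows at quotient cylindrical points, which is not what is at stake here. The convergence of the sets $\mathcal L_{z_l,1}$ to the spine of $\mathcal F^{a_\infty}(\Gamma)$ follows directly from Definition~\ref{defn:close}, since the $\ep$-maps $\tilde\phi_l$ are Gromov--Hausdorff approximations and the spine $[\vec 0^4]\times(-\infty,a_\infty]$ is a fixed subset of the model; no tangent-flow uniqueness is needed.
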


\begin{proof}
We only prove (i), as (ii) and (iii) can be proved similarly by a limiting argument.

Without loss of generality, we assume that $\t(z)=0$ and $r=1$. Suppose the conclusion (l) fails. Then we can find a sequence of Ricci flow limit spaces $(Z^l, d_{Z^l}, \t_l)$, which is obtained as the limit of a sequence in $\mathcal M(4, Y, T_l)$, and $z_l \in Z^l$ such that $z_l$ is $(l^{-2}, 1)$-close to $\mathcal F^{a_l}(\Gamma_l)$ with respect to $\mathcal L_{z_l, 1}$, but the conclusion fails.

After passing to a diagonal subsequence, we may assume that $a_l \to a$, each $(Z^l, d_{Z^l}, \t_l)$ is given by a closed Ricci flow, and $\Gamma_l=\Gamma$ for all $l$. By our assumption, we have
	\begin{align*}
		(Z^l, d_{Z^l},z_l, \t_l)\xrightarrow[l\to\infty]{\quad \hat C^\infty \quad } \lc \mathcal F^a(\Gamma),d^*_{\mathcal F}, p^*,\t \rc.
	\end{align*}
Then, it is clear that $\mathcal L_{z_l, 1}$ converge to the spine $ [\vec 0^4] \times  (-\infty, a]$ on which $\widetilde \WW$ is constant.
	
Thus, we obtain a contradiction by Proposition \ref{prop:Wconv1} and monotonicity if $l$ is sufficiently large.
\end{proof}

The main result of this subsection is the following neck decomposition theorem in dimension $4$.

\begin{thm}[Neck decomposition theorem]\label{neckdecomgeneral}
For any constants $\delta>0$, $\eta>0$ and $k \in \{1, 2\}$, if $\zeta \le \zeta(Y, \delta, \eta)$, then the following holds.

Given $z_0 \in Z$ with $\t(z_0)-2 \zeta^{-2} r_0^2 \in \III^-$, we have the decomposition\emph{:}
	\begin{align*}
		&B^*(z_0,r_0)\subset \bigcup_a\big(\NNN'_a\bigcap B^*(x_a,r_a)\big)\bigcup\bigcup_b B^*(x_b,r_b)\bigcup S^{k,\delta,\eta},\\
		&S^{k,\delta,\eta}\subset \bigcup_a\big(\CCC_{0,a}\bigcap B^*(x_a,r_a)\big)\bigcup\tilde{S}^{k,\delta,\eta},
	\end{align*}
with the following properties\emph{:}
	\begin{enumerate}[label=\textnormal{(\alph{*})}]
		\item For each $a$, $\NNN_a=B^*(x_a,2r_a)\setminus B^*_{r_x}(\CCC_a)$ is either a $(k, \delta, \cc, r_a)$-quotient cylindrical neck region or a $(\delta, \cc, r_a)$-flat neck region, where $\cc=\cc(Y)$ if $k=2$ and $\cc=\cc(Y, \eta)$ if $k=1$. In the former case, we set $\NNN_a'=\NNN_a;$ in the latter, $\NNN_a'$ denotes the modified $\delta$-region associated with $\NNN_a$ (see Definition \ref{def:modi}).
		
		\item For each $b$, there exists a point in $B^*(x_b,2 r_b)$ which is $(k+1,\eta,r_b)$-symmetric.
		
		\item The following content estimates hold\emph{:}
		\begin{align*}
		\begin{dcases}
\sum_a r_a^2+\sum_b r_b^2+\HHH^2(S^{2,\delta,\eta})\leq C(Y) r_0^2 \quad &\text{if} \quad k=2,\\
\sum_a r_a+\sum_b r_b+\HHH^1(S^{1,\delta,\eta})\leq C(Y, \eta) r_0 \quad &\text{if} \quad k=1.
\end{dcases}
	\end{align*}	
	
		\item $\HHH^k(\tilde{S}^{k,\delta,\eta})=0$. 
	\end{enumerate}
\end{thm}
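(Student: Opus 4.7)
The plan is to prove the decomposition by an inductive covering scheme on scales, with classification at each ball dictated by its tangent behavior. Start with $B^*(z_0,r_0)$ and at each stage sort the current balls $B^*(x,s)$ into three types: (b) there exists some point in $B^*(x,2s)$ that is $(k+1,\eta,s)$-symmetric, in which case we keep $B^*(x,s)$ as a $b$-ball and stop refining it; (a) the center $x$ is not $(k+1,\eta,s)$-symmetric but is $(\delta,s)$-close to one of the admissible models (i.e.\ $\bar{\mathcal C}^k$ or $\bar{\mathcal C}^m_k(\Gamma)$ in the quotient cylindrical case, or $\mathcal F^0(\Gamma)$ in the flat case), in which case we build the corresponding neck region $\NNN_a$ via Propositions~\ref{cylneckdecompquo} and (the obvious analog for flat necks) and cover $B^*(x,s)\cap \NNN'_a$ as type $a$; (c) none of the above, in which case we pass to a finer scale by covering $B^*(x,s)$ with balls of radius $\zeta s$. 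The classification of $3$-dimensional Ricci shrinkers, together with Cases~A, B, C of the overview, ensures that in dimension four, when $k=2$, the $(k+1,\eta,s)$-asymmetric balls at their limiting scale must fall into exactly these two model families; for $k=1$ any such ball is forced into a quotient cylindrical model by Case~B. The quantitative uniqueness in Propositions~\ref{prop:uniformc} and \ref{prop:uniformcq} guarantees that the two model classes (quotient cylindrical vs.\ flat) are mutually exclusive at nearby scales, which is why no mixed-type neck appears.

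The content estimate (c) is then obtained scale by scale by combining Ahlfors regularity. Each time we cover a ball $B^*(x,s)$ by either (a) a neck region, whose center $\CCC_a$ satisfies the $k$-Ahlfors bound $c\,s^k\le\mu(B^*(x,s))\le C s^k$ by Proposition~\ref{ahlforsregucyl1quo} (quotient cylindrical case) or Proposition~\ref{ahlforsregforstaticneck} (flat case); (b) a $b$-ball counted once; or (c) a finite union of $\zeta s$-scale balls with universally bounded multiplicity by Proposition~\ref{prop:volumebound}. Summing $s^k$-contents over one step of the iteration and invoking the Ahlfors lower bound to absorb the type (a) contribution yields a geometric decay of the total $s^k$-mass still to be decomposed, which in the limit produces $\sum_a r_a^k+\sum_b r_b^k\le C(Y)r_0^k$, and at the same time shows that the residual set (points that are refined at every scale) has $\HHH^k$-measure zero, giving $\HHH^k(\tilde S^{k,\delta,\eta})=0$. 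The identification $S^{k,\delta,\eta}\subset \bigcup_a(\CCC_{0,a}\cap B^*(x_a,r_a))\cup\tilde S^{k,\delta,\eta}$ is automatic from the definition of neck region centers and the cascade of refinements.

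The main technical obstacle will be the positive-time part of flat neck regions. By Definition~\ref{defnneckgeneral} a flat neck region $\NNN$ controls only $B^{*,-}(x,s)$ for $x\in\CCC$; points in $B^*(x_a,2r_a)$ with $\t>\t(x_a)$ are not seen. To handle them I introduce the \emph{modified $\delta$-region} $\NNN'_a$: starting from the flat neck $\NNN_a$ with model $\R^4/\Gamma\times\R_-$, any spacetime point in $B^*(x_a,r_a)\cap \t^{-1}(\t(x_a),\infty)$ lying close to the vertical spine $\LL_{x_a,r_a}$ must, by the quasi-static structure in \cite[Corollary 7.22]{fang2025RFlimit} and Proposition~\ref{prop:almostconstant1}(i), have strictly larger pointed entropy $\widetilde\WW$ than $-\log|\Gamma|+\Theta_0$, by a definite amount $\kappa=\kappa(Y)>0$. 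Cover that positive part by a new generation of balls centered at such higher-entropy points, and iterate: each generation strictly decreases the supremum of $\widetilde\WW$ reachable, and Perelman's monotonicity together with the uniform lower bound $-Y$ forces the process to terminate after $N\le N(Y,\kappa)=N(Y)$ steps. Excluding these finitely many auxiliary balls from the definition of $\NNN_a$ gives $\NNN'_a$, and their combined content is absorbed into the $b$-balls or into subsequent neck regions via the same scale induction.

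Finally, the key quantitative ingredient that drives the whole induction — ensuring that in case (a) the neck regions can actually be built at scale $s$ uniformly in $Y,\delta,\eta$ — is the smallness threshold $\zeta\le\zeta(Y,\delta,\eta)$, which I choose so that (i) Propositions~\ref{cylneckdecomp}/\ref{cylneckdecompquo} apply with parameter $\delta$, (ii) Lemma~\ref{uniformcyl} and Proposition~\ref{prop:uniformcq} give the uniform closeness to the same model across $[\zeta s, s]$, and (iii) the flat-to-quotient dichotomy is rigid at the chosen scales. Once $\zeta$ is fixed, the three content sums are controlled by the Ahlfors inequalities and a finite iteration argument on entropy levels; the Hausdorff-measure bound on $S^{k,\delta,\eta}$ follows because every point in $S^{k,\delta,\eta}$ either lies in some $\CCC_{0,a}$ (which carries the $k$-dimensional measure from Ahlfors regularity) or is in the null residual set $\tilde S^{k,\delta,\eta}$.
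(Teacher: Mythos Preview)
Your overall architecture (iterative covering, neck regions for model-close balls, entropy drops to terminate) is on the right track, but there is a genuine gap in the treatment of your type (c) balls. You write that for a ball which is neither $(k+1,\eta,s)$-symmetric nor $(\delta,s)$-close to a model you ``pass to a finer scale by covering $B^*(x,s)$ with balls of radius $\zeta s$'' and then claim a ``geometric decay of the total $s^k$-mass still to be decomposed.'' This decay does not follow from the tools you cite: by Proposition~\ref{prop:volumebound} a ball of radius $s$ requires $\sim\zeta^{-6}$ sub-balls of radius $\zeta s$, so the naive $k$-content of the refined family is $\zeta^{k-6}s^k\gg s^k$. The Ahlfors lower bound only helps once a neck region has actually been built; it says nothing about balls that are \emph{not} model-close. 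Without an additional mechanism, nothing prevents a ball from remaining type (c) at every scale, and the iteration diverges.

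The missing idea, which the paper supplies, is the \emph{pinching set} $\PP_{\zeta,r}=\{y:\widetilde\WW_y(\zeta^2 r^2)\le \bar W+\zeta\}$ relative to a global entropy floor $\bar W$, together with a five-way classification (their $b,c,d,e$-balls) rather than your three-way split. Balls with \emph{large} pinching content (their $c$-balls) contain enough almost-selfsimilar points to force, via Lemma~\ref{contentconesplitting} and the classification of three-dimensional shrinkers, closeness to one of the two models; this is where neck regions get built (Propositions~\ref{cballdecomposition1}--\ref{cballdecomposition3}). Balls with \emph{small but nonzero} pinching content (their $d$-balls) are exactly where your geometric-decay claim becomes true, and it is proved in Proposition~\ref{dballdecomposition}: most sub-balls of a $d$-ball are $b$- or $e$-balls, and the residual $c,d$-content shrinks by a factor $C(Y)\beta$. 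Balls with \emph{empty} pinching set (their $e$-balls) are where your entropy-drop argument belongs---not just for the positive-time part of flat necks, but globally: on an $e$-ball the floor $\bar W$ rises by $\zeta$, so after $O(Y/\zeta)$ iterations no $e$-balls remain. Your positive-time discussion is essentially a special case of this $e$-ball mechanism; the paper runs it uniformly over all scales and locations. Once you introduce the pinching set and separate the $c/d/e$ roles, the rest of your outline goes through as in Proposition~\ref{inductivedecomposition} and the proof of Theorem~\ref{neckdecomgeneral}.
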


Throughout the proof, we fix $z_0 \in Z$ and $r_0>0$ such that $\t(z_0)-2 \zeta^{-2} r_0^2 \in \III^-$. We will only consider balls $B^*(x, r)$ with $x \in B^*(z_0, 4 r_0)$ and $r \le r_0-d_Z(x, z_0)/4$.

\begin{defn}[Pinching set]
We define 
\begin{align}\label{infNash}
	\bar W:=\inf _{z\in B^*(z_0,4 r_0)}\widetilde \WW_z(\zeta^{-2}r_0^2).
\end{align}
and introduce the pinching set
	\begin{equation*}\index{$\PP_{\zeta,r}$}
		\PP_{\zeta,r}:= \{y\in B^*(z_0,4 r_0) \mid \widetilde \WW_y(\zeta^2 r^2)\leq \bar W+\zeta\}.
	\end{equation*}
For any $x \in B^*(z_0,4 r_0)$ with $r \le r_0-d_Z(x, z_0)/4$, we define the localized pinching set:
	\begin{equation*}\index{$\PP_{\zeta,r}(x)$}
		\PP_{\zeta,r}(x):=\PP_{\zeta,r} \bigcap B^*(x, 2.5 r),
	\end{equation*}
and set
		\begin{equation}\label{eq:pinchtime}\index{$t_p(x, r)$}
		t_p(x, r):= \sup_{y \in \PP_{\zeta,r}(x)} \t(y).
	\end{equation}
\end{defn}

\begin{defn}[Different types of balls]
	For any constants $\delta>0$, $\eta>0$, $\cc \in (0, 10^{-40})$ and $\beta>0$, we define the following different types of balls $B^*(x, r)$ with $x \in B^*(z_0, 4 r_0)$ and $r \le r_0-d_Z(x, z_0)/4$.
	\begin{enumerate}[label=\textnormal{(\alph{*})}]
		\item A ball $B^*(x_a,r_a)$ is called an $a$-ball if $\NNN_a=B^*(x_a,2r_a)\setminus B^*_{r_x}(\CCC_a)$ is either a $(k,\delta, \cc, r_a)$-quotient cylindrical neck region or a $(\delta, \cc, r_a)$-flat neck region.
		
		\item A ball $B^*(x_b,r_b)$ is called a $b$-ball if there exists a $(k+1,\eta, r_b)$-symmetric point in $B^*(x_b, 2 r_b)$. 
		
		\item A ball $B^*(x_c,r_c)$ is called a $c$-ball if it is not a $b$-ball and satisfies $\mathcal{P}_{\zeta,r_c}(x_c) \cap B^*(x_c, r_c) \ne \emptyset$ and
			\begin{align}\label{eq:contentextra1}
\abs{B_{3 \beta r_c}^*\lc \mathcal{P}_{\zeta,r_c}(x_c) \rc} > D \beta^{7-k}r_c^{6},
	\end{align}		
	where $D=D(Y)$ is a positive constant to be determined in Lemma \ref{contentconesplitting}.
	
	\item A ball $B^*(x_d,r_d)$ is called a $d$-ball if it is not a $b$-ball and satisfies $\mathcal{P}_{\zeta,r_d}(x_d) \cap B^*(x_d, r_d) \ne \emptyset$ and
					\begin{align}\label{eq:extravol2}
0<\abs{B_{3 \beta r_d}^*\lc \mathcal{P}_{\zeta,r_d}(x_d) \rc} \le  D \beta^{7-k}r_d^{6}.
	\end{align}		
	
		\item A ball $B^*(x_e,r_e)$ is called an $e$-ball if $\mathcal{P}_{\zeta,r_e}(x_e) \cap B^*(x_e, r_e) = \emptyset$.
	\end{enumerate}
\end{defn}

By our definition, every ball $B^*(x, r)$ with $x \in B^*(z_0, 4 r_0)$ and $r \le r_0-d_Z(x, z_0)/4$ must belong to one of the categories: $b$-ball, $c$-ball, $d$-ball, or $e$-ball. However, it is possible that $B^*(x, r)$ is both a $b$-ball and an $e$-ball.

For a $c$-ball, we first prove

\begin{lem} \label{contentconesplitting}
Given $\eta>0$, $\beta>0$, $k \in \{1, 2\}$ and $\ep>0$, if $\zeta \le \zeta(Y, \eta, \ep, \beta)$ and $B^*(x, r)$ is a $c$-ball, then any point $y_0 \in \mathcal{P}_{\zeta,r}(x)$ is either $(k,\ep,r)$-quotient cylindrical or $(\ep, r)$-close to $\mathcal F^a(\Gamma)$ for some $a \ge 0$ and $\Gamma \leqslant \mathrm{O}(4)$.
\end{lem}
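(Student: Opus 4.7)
I would prove this by a contradiction/compactness argument, reducing everything to a classification of $k$-symmetric but not $(k+1)$-symmetric Ricci shrinker spaces in dimension four. Suppose the conclusion fails. Then there exist $\ep_0>0$ and sequences $\zeta_i\searrow 0$, noncollapsed Ricci flow limit spaces $Z^i$ (each arising from some $\MM(4,Y,T)$-sequence), $c$-balls $B^*(x_i,r)$ (after rescaling, $r=1$), and points $y_0^i\in\PP_{\zeta_i,1}(x_i)$ that fail both conclusions. Extract a subsequential $\hat C^\infty$-limit
\begin{align*}
(Z^i,d^i,y_0^i,\t^i-\t^i(y_0^i))\xrightarrow{\hat C^\infty}(Z^\infty,d^\infty,y_0^\infty,\t^\infty).
\end{align*}
From the definition of $\PP_{\zeta_i,1}(x_i)$ together with the monotonicity of $\widetilde\WW$ and the inequality $\widetilde\WW_{y_0^i}(\zeta_i^{-2})\geq \bar W_i$, one gets $\widetilde\WW_{y_0^i}(\zeta_i^2)-\widetilde\WW_{y_0^i}(\zeta_i^{-2})\to 0$, so $y_0^i$ is $(\delta_i,1)$-selfsimilar with $\delta_i\to 0$. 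By Lemma \ref{lem:imply1} (and its limiting version) the limit space $(Z^\infty,y_0^\infty)$ is a Ricci shrinker space.

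Next I would use the content lower bound defining a $c$-ball to extract strongly independent pinching points. In the limit, the pinching set converges to a closed subset of the selfsimilar spine of $Z^\infty$, and the volume bound $\abs{B^*_{3\beta}(\PP_{\zeta_i,1}(x_i))}>D\beta^{7-k}$ persists. The contrapositive of Lemma \ref{existenceofindependentpointsafterbaseRFlimit}, combined with Proposition \ref{prop:volumebound} (which gives $|B^*(z,\alpha)|\leq C\alpha^{6}$), shows that if there were no strongly $(k,c\beta,\delta_i,1)$-independent subset of the pinching set at some base point $z^i$, then the selfsimilar set inside $B^*(x_i,3)$ would be covered by $\leq C(Y)(c\beta)^{1-k}$ balls of radius $c\beta$, and the $3\beta$-tube around it would have volume $\leq C(Y)\beta^{7-k}$; for $D=D(Y)$ chosen sufficiently large (greater than this universal constant), this contradicts \eqref{eq:contentextra1}. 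Hence $k$ strongly independent selfsimilar points exist in the limit. Since $y_0^\infty$ itself is selfsimilar and $\hat C^\infty$-convergence preserves the relevant integral quantities (via Lemma \ref{eigenconvRF}), these points can be re-centered at $y_0^\infty$ to produce a strongly $(k,c\beta,\zeta,1)$-independent set at $y_0^\infty$. Theorem \ref{thmsharpsplittingRFlimit} then yields a $(k,\ep',1)$-splitting map at $y_0^\infty$ with $\ep'\to 0$, so by Proposition \ref{equivalencesplitsymetric1} the limiting Ricci shrinker at $y_0^\infty$ is $k$-symmetric.

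For the upper bound on symmetry I would exploit the no-$b$-ball hypothesis. In the Ricci shrinker space $(Z^\infty,y_0^\infty)$, self-similarity means the model structure of $y_0^\infty$ coincides (up to rescaling) with the tangent flow at $y_0^\infty$. If this tangent flow were $(k+1)$-symmetric, then, after propagating along the shrinker vector field $X=|\t|(\partial_\t-\na f)$ — which acts by rescaling and preserves entropy — one could locate a $(k+1,\eta,1)$-symmetric point of $Z^i$ inside $B^*(x_i,2)$ for large $i$, contradicting the $c$-ball assumption. Hence the model is $k$-symmetric but not $(k+1)$-symmetric. The classification of three-dimensional Ricci shrinkers (\cite{hamilton1993formations, perelman2002entropy, naber2010noncompact, ni2008classification, CCZ08}) as reviewed in the paper's Cases A and B then forces the tangent flow at $y_0^\infty$ to be isometric either to a quotient cylinder $\bar{\mathcal C}^m_k(\Gamma)$ with $m\geq k$ or to a (quasi-)static cone $\mathcal F^a(\Gamma)$. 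By $\hat C^\infty$-convergence, this implies that for all sufficiently large $i$ the point $y_0^i$ is $(k,\ep_0,1)$-quotient cylindrical or $(\ep_0,1)$-close to some $\mathcal F^a(\Gamma)$, yielding the desired contradiction.

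The main obstacle is the third step: transferring the no-$(k+1,\eta,r)$-symmetry, which a priori only holds on $B^*(x,2r)$, to an obstruction at $y_0^\infty$ — since $y_0$ is only constrained to lie in $B^*(x,2.5r)$. The remedy is the rigidity of shrinker structure: the pinching forces constancy of $\widetilde\WW$ on the limit of the pinching set, so all such limit points are self-similar and the geometry is invariant under the shrinker vector field, allowing one to slide any hypothetical $(k+1)$-symmetry along the spine into $B^*(x^\infty,2)$. A secondary technical point is the correct calibration of the constant $D=D(Y)$, which must dominate the covering constant in Lemma \ref{existenceofindependentpointsafterbaseRFlimit} together with the universal constant from Proposition \ref{prop:volumebound}.
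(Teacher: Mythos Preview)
Your compactness strategy would work, but there is a genuine gap in the step where you extract strongly $(k,c\beta,\delta_i,1)$-independent points. Lemma \ref{existenceofindependentpointsafterbaseRFlimit} only covers selfsimilar points in the narrow time slab $[\t(z_0)-\alpha^2/L^2,\t(z_0)+\alpha^2/L^2]$; it says nothing about pinching points at substantially different times. When the model is a flat or quasi-static cone $\mathcal F^a(\Gamma)$, the spine $\{[\vec 0]\}\times(-\infty,a]$ is a vertical line, no two distinct spine points satisfy condition (iii) of Definition \ref{defnindependentpointsRFlimit}, and yet the content lower bound \eqref{eq:contentextra1} is still satisfied. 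In that regime your covering contradiction fails, no splitting map is produced, and your argument misses the entire ``close to $\mathcal F^a(\Gamma)$'' branch of the conclusion.

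The paper's proof is shorter and avoids passing to a limit: take a maximal $\beta r$-separated set $S\subset\PP_{\zeta,r}(x)$ containing $y_0$. If $|S|\le C(Y)\beta^{1-k}$, Proposition \ref{prop:volumebound} gives $|B^*_{3\beta r}(\PP_{\zeta,r}(x))|\le D(Y)\beta^{7-k}r^6$, contradicting \eqref{eq:contentextra1} and pinning down $D$. If $|S|>C(Y)\beta^{1-k}$, one reruns the two-case dichotomy from the proof of Lemma \ref{lem:keycover} (with $k$ in place of $k+1$): either all of $S$ lies in a common time slab and Lemma \ref{existenceofindependentpointsafterbaseRFlimit} legitimately yields $k$-independent points, hence a $(k,\ep,r)$-symmetric model at $y_0$; or two points of $S$ have well-separated times, and Lemma \ref{lem:static3} with Proposition \ref{almostsymmetric} (ii) gives closeness to $\mathcal F^a(\Gamma)$. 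This dichotomy is exactly what your argument is missing. Once you insert it (or, in your limit picture, appeal directly to the spine classification \cite[Lemma D.5]{fang2025RFlimit}), the $2r$-versus-$2.5r$ issue you flag also dissolves: the no-$b$-ball condition can be applied at a pinching point inside $B^*(x,r)$ (one exists by the $c$-ball definition) and transfers to $y_0$ because all pinching points share the same selfsimilar model at scale $r$.
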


\begin{proof}
Let $S:=\{y_i\}_{0 \le i \le N}$ be a maximal $\beta r$-separated subset of $\mathcal{P}_{\zeta,r}(x)$ containing $y_0$. Since $B^*(x, r)$ is not a $b$-ball, arguing as in the proof of Lemma \ref{lem:keycover}, we conclude that if $N \ge C(Y) \beta^{1-k}$, then $y_0$ is either $(k, \ep, r)$-symmetric---which must in turn be $(k, \ep, r)$-quotient cylindrical---or $y_0$ is $(\ep, r)$-close to $\mathcal F^a(\Gamma)$, provided that $\zeta \le \zeta( Y, \eta, \ep, \beta)$. 

If instead $N \le C(Y) \beta^{1-k}$, then $\mathcal{P}_{\zeta,r}(x)$ can be covered by at most $C(Y) \beta^{1-k}$ balls of radius $\beta r$. Combining this with Proposition \ref{prop:volumebound}, we obtain
		\begin{align*}
\abs{B_{3 \beta r}^*\lc \mathcal{P}_{\zeta,r}(x) \rc} \le D(Y) \beta^{7-k} r^{6}.
	\end{align*}	
By \eqref{eq:contentextra1}, this leads to a contradiction.

Therefore, the proof is complete.
\end{proof}

In the setting of Lemma \ref{contentconesplitting}, if there exists a point $y \in \mathcal{P}_{\zeta,r}(x)$ that is $(k, \ep, r)$-quotient cylindrical regarding $\bar{\mathcal C}^m_k(\Gamma)$, then, by rigidity, any other point $y' \in \mathcal{P}_{\zeta,r}(x) $ is also $(k, \ep, r)$-quotient cylindrical regarding $\bar{\mathcal C}^m_k(\Gamma)$. 

On the other hand, if one point $y \in \mathcal{P}_{\zeta,r}(x) $ is $(k, \ep)$-close to $\mathcal F^a(\Gamma)$, then any other point $y' \in \mathcal{P}_{\zeta,r}(x)$ is $(k, \ep)$-close to $\mathcal F^{a'}(\Gamma)$. In this case, we define the following time-function:
		\begin{align} \label{eq:statictime}\index{$t_s(x, r)$}
t_s(x, r):=\sup \left\{ ar^2+\t(y) \mid y \in \mathcal{P}_{\zeta,r}(x) \cap B^*(x, r) \text{ is $(\ep, r)$-close to } \mathcal F^a(\Gamma) \right\}.
	\end{align}	
It is clear that if $\zeta\le \zeta(Y, \eta, \ep)$, we always have
		\begin{align} \label{eq:timea1}
t_s(x, r) \ge t_p(x, r),
	\end{align}	
where $t_p(x, r)$ is the function in \eqref{eq:pinchtime}.

By Lemma \ref{contentconesplitting} and the subsequent discussion, each $c$-ball is modeled either on a quotient cylinder or on a flat cone. We refer to the former as a \textbf{type-I} $c$-ball and to the latter as a \textbf{type-II} $c$-ball.

We now further decompose $c$-balls into neck regions according to their types. For type-I $c$-balls, the decomposition is similar to that in Proposition \ref{cylneckdecomp}.

\begin{prop}[Decomposition of type-I $c$-balls]\label{cballdecomposition1}
	For $z\in B^*(z_0, 4 r_0)$ and $ s \le r_0-d_Z(z, z_0)/4$, let $B^*(z,s)$ be a type-I $c$-ball. For any constants $\delta>0$, $\eta>0$, $\cc \in (0, 10^{-40})$ and $\beta>0$, if $\zeta \le \zeta( Y, \delta, \eta, \cc, \beta)$, then we have the decomposition
	\begin{equation*}
		B^*(z,2s)\subset \left(\CCC_0\bigcup\NNN\right)\bigcup\bigcup_b B^*(x_b,r_b)\bigcup \bigcup_d B^*(x_d,r_d)\bigcup \bigcup_e B^*(x_e,r_e)
	\end{equation*}
	satisfying
	\begin{enumerate}[label=\textnormal{(\roman{*})}]
		\item for each $b$, $B^*(x_b,r_b)$ is a $b$-ball with $r_b \le \cc^5 s;$
				\item for each $d$, $B^*(x_d,r_d)$ is a $d$-ball with $r_d \le \cc^5 s;$
		\item for each $e$, $B^*(x_e,r_e)$ is an $e$-ball with $r_e \le \cc^5 s;$
		\item $\displaystyle \NNN:=B^*(z,2s)\setminus \lc\CCC_0\bigcup\bigcup_b B^*(x_b,r_b)\bigcup \bigcup_d B^*(x_d,r_d)\bigcup \bigcup_e B^*(x_e,r_e)\rc$ is a $(k,\delta, \cc, s)$-quotient neck region regarding some $\bar{\mathcal C}^m_k(\Gamma);$
		\item the following content estimate holds:
						\begin{align*}
\HHH^{k} \lc \CCC_0 \bigcap B^*(z,3s/2) \rc+\sum_{x_b \in B^*(z,3s/2)} r_b^{k}+\sum_{x_d \in B^*(z,3s/2)}r_d^{k}+\sum_{x_e \in B^*(z,3s/2)} r_e^{k}\leq C(Y, \cc) s^k.
	\end{align*}				
		\end{enumerate} 		
\end{prop}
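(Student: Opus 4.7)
The plan is to mimic the iterative construction of Proposition \ref{cylneckdecomp}, while simultaneously classifying every sub-ball that fails to be a ``good'' (quotient cylindrical) ball as $b$, $d$, or $e$. Since $B^*(z,s)$ is a type-I $c$-ball, Lemma \ref{contentconesplitting} (with a sufficiently small choice of $\zeta$ relative to $Y,\eta,\delta,\beta$) furnishes a point $y_0\in \PP_{\zeta,s}(z)\cap B^*(z,s)$ which is $(k,\zeta,s)$-quotient cylindrical regarding some fixed $\bar{\mathcal C}^m_k(\Gamma)$, together with a spine $\LL_{y_0,s}$. The rigidity of quotient cylinders, together with Proposition \ref{prop:almostconstant} transferred to the quotient setting, ensures that the same model $\bar{\mathcal C}^m_k(\Gamma)$ (and nearby spine) governs every pinching point in $\PP_{\zeta,s}(z)$, so the initial data of the neck construction are available at the base scale.

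Setting $\gamma:=\cc^5$, I would cover the $\zeta s$-neighbourhood of $\LL_{y_0,s}\cap B^*(z,2s)$ by a maximal $2\cc^2\gamma s$-separated collection $\{B^*(y_\alpha,\gamma s)\}$, and then call a sub-ball \emph{good} if it contains a $(k,\zeta,\gamma s)$-quotient cylindrical point, and \emph{bad} otherwise. The key observation is that if $\zeta$ is small enough to trigger Lemma \ref{contentconesplitting} at scale $\gamma s$ with the same parameters $\eta,\ep,\beta$, then every $c$-type sub-ball automatically contains a $(k,\zeta,\gamma s)$-quotient cylindrical point; hence any bad sub-ball is not of $c$-type, and by exhaustion must be a $b$-, $d$-, or $e$-ball. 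Iterating this dichotomy, at each scale $\gamma^j s$ we refine the good balls by the same covering construction, collect all bad balls into the three lists, and propagate the pinching $(n2)$ via $\widetilde\WW_{y_0}\le \bar W+\zeta$ together with the uniform bound $|\widetilde\WW_y-\Theta_{n-m}(\Gamma)|\le\Psi(\zeta)$ supplied by Proposition \ref{prop:uniformcq}.

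Passing to the limit exactly as in Proposition \ref{cylneckdecomp} produces $\CCC=\CCC_0\cup\CCC_+$ and the region $\NNN=B^*(z,2s)\setminus B^*_{r_x}(\CCC)$; checking that $\NNN$ satisfies the four axioms $(n1)$--$(n4)$ of Definition \ref{defiofcylneckregionquo} follows verbatim from the cylindrical case using Propositions \ref{prop:almostconstant} and \ref{prop:almostconstanta} in the quotient form. Since every bad ball is born at scale $\gamma s$ or smaller, its radius satisfies $r_b,r_d,r_e\le\cc^5 s$, which gives properties $(i)$--$(iv)$.

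For the content estimate $(v)$, I would invoke the Ahlfors regularity of the packing measure $\mu=\sum_{x\in\CCC_+}r_x^k\delta_x+\HHH^k|_{\CCC_0}$ on $\NNN$ from Proposition \ref{ahlforsregucyl1quo}. Each bad ball $B^*(x_\bullet,r_\bullet)$ with $x_\bullet\in B^*(z,3s/2)$ corresponds, by the covering construction, to a unique point of $\CCC_+\cap B^*(z,2s)$ with radius comparable to $r_\bullet$ up to $\cc^{\pm 2}$, so that
\[
\sum_{x_b\in B^*(z,3s/2)} r_b^k+\sum_{x_d\in B^*(z,3s/2)} r_d^k+\sum_{x_e\in B^*(z,3s/2)} r_e^k+\HHH^k(\CCC_0\cap B^*(z,3s/2))\le C(Y,\cc)\,\mu(B^*(z,7s/4))\le C(Y,\cc) s^k.
\]
The principal technical obstacle will be bookkeeping the hierarchy $\zeta\ll\ep\ll\delta$ uniformly across scales: the smallness of $\zeta$ must be chosen once and for all to guarantee that Lemma \ref{contentconesplitting} can be applied at every generation with the same parameters $(\eta,\beta)$, so that no new $c$-ball is generated and the pinching budget $\widetilde\WW_{y_0}(\zeta^2 s^2)-\widetilde\WW_{y_0}(\delta^{-1}r_x^2)$ remains below $\delta$ for every $y_0\in\CCC$. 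This single parameter choice dictates the admissible thresholds $\cc\le\cc(Y)$ (or $\cc(Y,\eta)$ when $k=1$) and forces the final $\zeta\le\zeta(Y,\delta,\eta,\cc,\beta)$ appearing in the statement.
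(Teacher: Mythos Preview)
Your approach is essentially that of the paper: iteratively cover the spine at scale $\gamma^j s$, sort the sub-balls, and pass to the limit. However, your ``key observation'' has a genuine gap. You assert that Lemma \ref{contentconesplitting} forces every $c$-type sub-ball to contain a $(k,\zeta,\gamma s)$-quotient cylindrical point, but the lemma only gives a dichotomy: any point of the pinching set is either $(k,\ep,\gamma s)$-quotient cylindrical \emph{or} $(\ep,\gamma s)$-close to some $\mathcal F^a(\Gamma')$. Nothing in Lemma \ref{contentconesplitting} by itself rules out the second alternative, so your claim that ``any bad sub-ball is not of $c$-type'' is unjustified as written.

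The paper closes this gap as follows. From the parent scale one first establishes (as in Claim \ref{checknecks}) the entropy pinching $|\widetilde\WW_x(\tau)-\bar W|\le\delta^3$ for all $x$ on the spine and all $\tau\in[(\delta\gamma)^2,\delta^{-2}]$. If a sub-$c$-ball were type-II, then at scale $\gamma$ its model would be a flat cone; but the center $x_{c^1}$ lies near the spine and is therefore $(k,\delta^2,s)$-quotient cylindrical at scale $s=1$ with $\widetilde\WW_{x_{c^1}}(\delta\gamma^2)-\widetilde\WW_{x_{c^1}}(\delta^{-1})<\delta$. Proposition \ref{prop:uniformcq} then forces $x_{c^1}$ to remain $(k,\ep,\gamma)$-quotient cylindrical regarding the \emph{same} $\bar{\mathcal C}^m_k(\Gamma)$, contradicting the type-II hypothesis. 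You invoke Proposition \ref{prop:uniformcq} only at the end for the ``pinching budget'', but its real role here is precisely this exclusion of the flat-cone branch, and it must be fed the entropy pinching from the parent scale, not just the initial pinching of $y_0$.

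A secondary remark: the paper iterates on $c$-balls (after showing they are all type-I), not on ``good'' balls. Your good/bad scheme would also iterate on any $b$-, $d$-, or $e$-ball that happens to contain a quotient cylindrical point, which is harmless for the content estimate but complicates the verification of $(n4)$ and makes the bookkeeping of $\CCC_+$ less clean. Aligning your iteration with the $b/c/d/e$ classification, as the paper does, avoids this.
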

\begin{proof}
Without loss of generality, we assume $s=1$. In the proof, we choose a small parameter $\ep \ll \delta$ and assume that Lemma \ref{contentconesplitting} holds for constants $\ep$, $\eta$, $\beta$ and $\zeta$ by default. Moreover, we may assume $\delta \ll \cc$ and define $\gamma:=\cc^{5}$.

Since $B^*(z,1)$ is a type-I $c$-ball, it follows from Lemma \ref{contentconesplitting} that there exists $q\in \PP_{\zeta,1}(z) \cap B^*(z, 1)$ such that $q$ is $(k,\ep,1)$-quotient cylindrical regarding $\bar{\mathcal C}^m_k(\Gamma)$ with respect to $\mathcal{L}_{q,1}$. We define
\begin{align*}
L^1:=\mathcal{L}_{q,1} \bigcap B^*(z, 2).
	\end{align*}	
	
Choose a maximal $2\cc^2 \gamma$-separated set $\{x_{f^1}\} \subset L^1$. In particular, $\{B^*(x_{f^1}, \cc^2 \gamma)\}$ are pairwise disjoint and
\begin{align*}
L^1\subset \bigcup_{f^1}B^*(x_{f^1}, 2 \cc^2\gamma).
	\end{align*}	

 According to the type of $B^*(x_{f^1},\gamma)$, we write
\begin{align*}
L^1 \subset \bigcup_{b^1} B^*(x_{b^1}, \gamma)\bigcup\bigcup_{c^1} B^*(x_{c^1}, \gamma)\bigcup\bigcup_{d^1} B^*(x_{d^1}, \gamma)\bigcup\bigcup_{e^1} B^*(x_{e^1}, \gamma).
	\end{align*}		

We set 
\begin{align*}
\NNN^1:= B^*(z,2)\setminus \lc\bigcup_{b^1} B^*(x_{b^1}, \gamma)\bigcup\bigcup_{c^1} B^*(x_{c^1}, \gamma)\bigcup\bigcup_{d^1} B^*(x_{d^1}, \gamma)\bigcup\bigcup_{e^1} B^*(x_{e^1}, \gamma)\rc.
	\end{align*}	
Moreover, we define $\CCC^1:= \{x_{b^1},x_{c^1},x_{d^1},x_{e^1}\}$ with radius $r_x:\equiv \gamma$.

	\begin{claim}\label{checknecks}
If $\ep \le \ep(Y, \delta)$ and $\zeta \le \zeta(Y, \delta, \eta, \ep, \cc, \beta)$, then $\NNN^1$ is a $(k,\delta, \cc, 1)$-quotient cylindrical neck region regarding $\bar{\mathcal C}^m_k(\Gamma)$.
	\end{claim}

Indeed, property $(n1)$ follows from the construction. For $(n2)$, since $q \in \PP_{\zeta,1}(z)$, we have
\begin{align*}
		\left|\widetilde \WW_q(\tau)-\bar W\right|\leq \delta^4,\quad \forall \tau \in \left[(\delta\gamma)^2,\delta^{-2}\right],
	\end{align*}
if $\zeta$ is sufficiently small. Moreover, it follows from the generalization of Proposition \ref{prop:almostconstant} in the quotient cylindrical case that if $\ep$ is small, then for any $x \in \mathcal{L}_{q,1} \bigcap B^*(z, 2)$,
	\begin{align*}
		\left|\widetilde \WW_x(\tau)-\widetilde \WW_q(\tau)\right|\leq \delta^4,\quad \forall \tau \in \left[(\delta\gamma)^2,\delta^{-2}\right].
	\end{align*}
Combining the above two inequalities, we have for any $x \in \mathcal{L}_{q,1} \bigcap B^*(z, 2)$,
	\begin{align} \label{eq:extrac11}
		\left|\widetilde \WW_x(\tau)-\bar W\right|\leq \delta^3, \quad \forall \tau \in \left[(\delta\gamma)^2,\delta^{-2}\right],
	\end{align}
	which establishes $(n2)$. Moreover, $(n3)$ and $(n4)$ follow from our construction. This completes the proof of Claim \ref{checknecks}.

Each $c$-ball $B^*(x_{c^1},\gamma)$ must be of type-I, since otherwise the model space at scale $\gamma$ is a flat cone, which contradicts \eqref{eq:extrac11} by Proposition \ref{prop:uniformcq}. Thus, we can repeat the above process. Specifically, for each $c$-ball $B^*(x_{c^1},\gamma)$, we can find $q_{c^1}\in \PP_{\zeta, \gamma}(x_{c^1}) \cap B^*(x_{c^1},\gamma)$ which is $(k,\ep, \gamma)$-quotient cylindrical regarding $\bar{\mathcal C}^m_k(\Gamma)$ with respect to $\LL_{q_{c^1}, \gamma}$. Notice that the model space is the same $\bar{\mathcal C}^m_k(\Gamma)$ by Proposition \ref{prop:uniformcq} and \eqref{eq:extrac11}.

We define 
	\begin{align*}
L^2:=B^*(z, 2) \bigcap \bigcup_{c^1} \lc \LL_{q_{c^1}, \gamma} \bigcap B^*(x_{c^1}, 2\gamma) \rc \setminus\lc\bigcup_{b^1} B^*(x_{b^1}, 2 \cc^2 \gamma)\bigcup\bigcup_{d^1} B^*(x_{d^1}, 2 \cc^2 \gamma)\bigcup\bigcup_{e^1} B^*(x_{e^1}, 2 \cc^2 \gamma)\rc.
	\end{align*}

Choose a maximal $2\cc^2 \gamma^2$-separated set $\{x_{f^2}\} \subset L^2$. After doing this process for each $c$-ball $B^*(x_{c^1},\gamma)$, we can re-index the above balls by $b,c,d$-balls and define
	\begin{align*}
		\NNN^2:= B^*(z,2)\setminus\lc\bigcup_{c^2} B^*(x_{c^2}, \gamma^2)\bigcup\bigcup_{1\leq j\leq 2} \lc \bigcup_{b^j} B^*(x_{b^j}, \gamma^j)\bigcup\bigcup_{d^j} B^*(x_{d^j}, \gamma^j)\bigcup\bigcup_{e^j}B^*(x_{e^j}, \gamma^j) \rc\rc.
	\end{align*}
Moreover, we define $\CCC^2:= \{x_{b^1}, x_{b^2},x_{d^1},x_{d^2},x_{e^1},x_{e^2}, x_{c^2}\}$, with the radius function chosen to be the corresponding radii of the associated balls. Then, one can check as in Claim \ref{checknecks} and the proof of Proposition \ref{cylneckdecomp} that $\NNN^2$ is a $(k,\delta, \cc, 1)$-quotient cylindrical neck region regarding $\bar{\mathcal C}^m_k(\Gamma)$.
	
	Repeating the above decomposition for $l$ steps, we obtain
	\begin{align*}
		\NNN^l:= B^*(z,2)\setminus\lc\bigcup
		_{c^l} B^*(x_{c^l}, \gamma^l)\bigcup\bigcup_{1\leq j\leq l} \lc \bigcup_{b^j} B^*(x_{b^j}, \gamma^j)\bigcup\bigcup_{d^j} B^*(x_{d^j}, \gamma^j)\bigcup\bigcup_{e^j}B^*(x_{e^j}, \gamma^j) \rc \rc.
	\end{align*}
	It follows as before that $\NNN^l$ is a $(k,\delta, \cc, 1)$-quotient cylindrical neck region regarding $\bar{\mathcal C}^m_k(\Gamma)$.
	
	Set $\mathcal G^l:= \bigcup_{c^l}\{x_{c^l}\}$. It is clear from our construction that $\mathcal G^{l+1} \subset B^*_{2\gamma^l}(\mathcal G^l)$. Then we denote the Hausdorff limit of $\mathcal G^l$ by $\CCC_0$ and define
	\begin{align*}
		\NNN:= B^*(z,2)\setminus\lc\CCC_0\bigcup\bigcup_{1\leq j<\infty}\lc \bigcup_{b^j} B^*(x_{b^j}, \gamma^j)\bigcup\bigcup_{d^j} B^*(x_{d^j}, \gamma^j)\bigcup\bigcup_{e^j}B^*(x_{e^j}, \gamma^j) \rc\rc.
	\end{align*}
From our construction, $\NNN$ is a $(k,\delta, \cc, 1)$-quotient cylindrical neck region regarding $\bar{\mathcal C}^m_k(\Gamma)$, with centers given by $\CCC_0$ together with the centers of the associated balls, and the radius function chosen to be the corresponding radii of those balls.

We rewrite the above decomposition as:
	\begin{equation*}
		B^*(z,2)\subset \left(\CCC_0\bigcup\NNN\right)\bigcup\bigcup_b B^*(x_b,r_b)\bigcup \bigcup_d B^*(x_d,r_d)\bigcup \bigcup_e B^*(x_e,r_e).
	\end{equation*}
	
The quotient cylindrical neck region $\NNN$ we construct satisfies Ahlfors regularity with a constant $C(Y, \cc)>1$, in the same sense as Theorem \ref{ahlforsregucyl1} and Proposition \ref{ahlforsregucyl1quo}. This follows directly from the construction of $\NNN$ and the argument used in the proof of Theorem \ref{ahlforsregucyl1}.

Thus, the following content estimate holds:
	\begin{align*}
\HHH^{k} \lc \CCC_0 \bigcap B^*(z,3/2) \rc+\sum_{x_b \in B^*(z,3/2)} r_b^{k}+\sum_{x_d \in B^*(z,3/2)}r_d^{k}+\sum_{x_e \in B^*(z,3/2)} r_e^{k}\leq C(Y, \cc),
	\end{align*}
which completes the proof of the type-I $c$-ball decomposition.
\end{proof}

For type-II $c$-balls, we consider the cases $k=1$ and $k=2$ separately.

\begin{prop}[Decomposition of type-II $c$-balls: $k=2$] \label{cballdecomposition2}
	For $z\in B^*(z_0, 4 r_0)$ and $ s \le r_0-d_Z(z, z_0)/4$, let $B^*(z,s)$ be a type-II $c$-ball with $k=2$. For any constants $\delta>0$, $\eta>0$, $\cc \in (0, 10^{-40})$ and $\beta>0$, if $\cc \le \cc( Y)$ and $\zeta \le \zeta( Y, \delta, \eta, \cc, \beta)$, then we have the decomposition
	\begin{equation*}
		B^*(z,2s)\subset \left(\CCC_0\bigcup\NNN'\right)\bigcup\bigcup_b B^*(x_b,r_b)\bigcup \bigcup_d B^*(x_d,r_d)\bigcup \bigcup_e B^*(x_e,r_e)\bigcup \bigcup_{e'} B^*(x_{e'},r_{e'})
	\end{equation*}
	satisfying
	\begin{enumerate}[label=\textnormal{(\roman{*})}]
		\item for each $b$, $B^*(x_b,r_b)$ is a $b$-ball with $r_b \le \cc^5 s;$
				\item for each $d$, $B^*(x_d,r_d)$ is a $d$-ball with $r_d \le \cc^5 s;$
		\item for any $e$ and $e'$, the balls $B^*(x_e,r_e)$ and $B^*(x_{e'}, r_{e'})$ are $e$-balls with $\max\{r_e,r_{e'}\} \le \cc^5 s;$

		\item $\displaystyle \NNN:=B^*(z,2s)\setminus \lc\CCC_0\bigcup\bigcup_b B^*(x_b,r_b)\bigcup \bigcup_d B^*(x_d,r_d)\bigcup \bigcup_e B^*(x_e,r_e)\rc$ is a $(\delta, \cc, s)$-flat neck region regarding some $\mathcal F^0(\Gamma);$
		
		\item any point in $\displaystyle \NNN':=\NNN \setminus \lc \bigcup_{e'} B^*(x_{e'},r_{e'}) \rc$ is $(3, \delta,  \delta d_Z(x, \CCC))$-symmetric\emph{;}

		\item the following content estimate holds:
						\begin{align*}
&\HHH^{2}\lc \CCC_0 \bigcap B^*(z, 3s/2) \rc+\sum_{x_b \in B^*(z, 3s/2)} r_b^{2}+\sum_{x_d \in B^*(z, 3s/2)}r_d^{2} \\
+&\sum_{x_e \in B^*(z, 3s/2)} r_e^{2}+\sum_{x_{e'} \in B^*(z, 3s/2)} r_{e'}^{2}\leq C(Y, \cc) s^2.
	\end{align*}			
	\end{enumerate}		
\end{prop}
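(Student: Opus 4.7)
The plan is to mimic the inductive construction used in Proposition \ref{cballdecomposition1}, but replace the cylindrical model with the flat cone model $\mathcal F^{0}(\Gamma)$, and introduce an auxiliary collection of $e'$-balls to deal with the positive part of the neck region that is not controlled by the static cone picture. Without loss of generality, assume $s=1$. By Lemma \ref{contentconesplitting} applied with $\ep\ll\delta$, there exists a point $q\in\PP_{\zeta,1}(z)\cap B^*(z,1)$ which is $(\ep,1)$-close to $\mathcal F^{a}(\Gamma)$ for some $a\ge 0$ and some $\Gamma\leqslant\mathrm{O}(4)$ with respect to a spine $\LL_{q,1}$. The generalization of Proposition \ref{prop:almostconstant1} (i) then forces the $\widetilde\WW$-entropy to be almost constant along $\LL_{q,1}\cap B^*(z,2)$ with value close to $\bar W$; combined with \eqref{eq:timea1} and the definition of the pinching set, this pins down $\Gamma$ and forces the arrival time of any nearby flat cone to lie in the time interval $[t_{p}(z,1)-\Psi(\zeta),t_{s}(z,1)+\Psi(\zeta)]$. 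This, in turn, fixes the model $\mathcal F^{0}(\Gamma)$ (after a time translation) used throughout the construction.

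Set $\gamma:=\cc^{5}$, $L^{1}:=\LL_{q,1}\cap B^*(z,2)$, and take a maximal $2\cc^{2}\gamma$-separated subset of $L^{1}$. Classify each centered $\gamma$-ball as a $b$-, $c$-, $d$- or $e$-ball and, exactly as in Claim \ref{checknecks}, verify axioms $(n1)$--$(n4)$ of Definition \ref{defnneckgeneral} for the resulting set $\NNN^{1}$. Now the crucial point: in the type-II setting, the property that forbids a $c$-ball from switching to the quotient cylindrical model at smaller scales is Proposition \ref{prop:uniformcq}---the two models cannot coexist once entropy pinching \eqref{eq:extrac11} is in force. Hence every subsequent $c$-ball must again be of type-II regarding the same $\mathcal F^{0}(\Gamma)$. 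Repeating the construction within each $c$-ball, and reindexing by $b$, $d$, $e$ and remaining $c$-balls, yields a sequence of $(\delta,\cc,1)$-flat neck regions $\NNN^{\ell}$. Taking the Hausdorff limit of the centers of the $\ell$-th generation $c$-balls produces a closed set $\CCC_{0}$, and the limiting object
\[
\NNN:=B^{*}(z,2)\setminus\Bigl(\CCC_{0}\cup\bigcup_{b}B^{*}(x_{b},r_{b})\cup\bigcup_{d}B^{*}(x_{d},r_{d})\cup\bigcup_{e}B^{*}(x_{e},r_{e})\Bigr)
\]
is verified to be a $(\delta,\cc,1)$-flat neck region regarding $\mathcal F^{0}(\Gamma)$, proving (i)--(iv).

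The novel ingredient is the $e'$-family, which is needed because the model $\mathcal F^{0}(\Gamma)$ only controls the negative part of the flat cone and says nothing intrinsic about points with time coordinate exceeding the arrival time. For any $x\in\NNN$, let $\rho(x):=d_{Z}(x,\CCC)$. On the negative half $\{y\in B^{*}(x,\delta^{-1}\rho(x))\mid\t(y)\le\t(x)\}$, the neck-region axioms together with Proposition \ref{prop:almostconstant1} (iii) imply that $x$ is already $(3,\delta,\delta\rho(x))$-symmetric; this is the analogue of the standard fact used in Proposition \ref{cballdecomposition1}. However, when $\t(x)$ is strictly larger than the local arrival time associated to the nearest center point, the tangent flow at scale $\rho(x)$ can have $\widetilde\WW$-entropy strictly smaller than $\bar W+\zeta$, so $x$ fails the pinching condition. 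Define the $e'$-family to be a Vitali cover of the set
\[
E':=\{x\in\NNN\mid x\text{ is not }(3,\delta,\delta\rho(x))\text{-symmetric}\}
\]
by balls $B^{*}(x_{e'},r_{e'})$ with $r_{e'}\le\cc^{5}$, chosen so that each center $x_{e'}$ lies in $\PP_{\zeta,r_{e'}}(x_{e'})^{c}$ (i.e.\ strictly above the arrival time). Setting $\NNN':=\NNN\setminus\bigcup_{e'}B^{*}(x_{e'},r_{e'})$ then gives (iv).

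For the content estimate (v), the Hausdorff mass of $\CCC_{0}$ and the sum of squared radii of all $b$-, $d$-, $e$-balls with centers in $\CCC$ are controlled by $C(Y,\cc)$ directly from the Ahlfors regularity of flat neck regions established in Proposition \ref{ahlforsregforstaticneck}. The remaining sum $\sum r_{e'}^{2}$ is bounded by a standard Vitali argument: each $e'$-ball is centered at a point whose pointed entropy at scale $r_{e'}$ is strictly less than $\bar W+\zeta$, so by the monotonicity of $\widetilde\WW$ and Proposition \ref{prop:Wconv1}, each such ball contributes to a definite entropy drop, which together with the volume bound of Proposition \ref{prop:volumebound} yields the summability. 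The main obstacle, and the step I expect to require the most care, is precisely this last point: ruling out infinite accumulation of $e'$-balls and showing that the positive part of a quasi-static cone is genuinely ``thin'' in spacetime. This is where the specific structure of static cones $\R^{4}/\Gamma\times(-\infty,t_{a}]$ (cf.\ \cite[Proposition 7.23]{fang2025RFlimit}) must be used, together with a careful comparison between the two functions $t_{p}(\cdot,\cdot)$ and $t_{s}(\cdot,\cdot)$ to ensure the $e'$-balls are disjointly enlarged in a Vitali-type fashion.
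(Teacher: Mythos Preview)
Your iterative scheme and the use of Proposition \ref{prop:uniformcq} to rule out model switching are correct, and the construction of $\NNN$ as a limiting flat neck region follows the paper's approach. However, the treatment of the $e'$-balls contains a genuine gap.

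First, your definition of the $e'$-family as a post-hoc Vitali cover of the ``bad set'' $E'\subset\NNN$ does not give enough control. In the paper, the $e'$-balls (denoted $e^{(j)}$) are constructed \emph{at each step $j$ of the iteration}: they cover the positive-time region $Z_{(t_s(x_{c^{j-1}},\gamma^{j-1})+2\xi^2\gamma^{2j},\infty)}$ inside each $c^{j-1}$-ball, and crucially one distinguishes \emph{static} $c$-balls (where $t_s(x,r)\ge\t(x)+(2.5r)^2$) from \emph{quasi-static} ones. Only quasi-static $c$-balls produce $e^{(j)}$-balls, and at each level the number of quasi-static $c$-balls is at most $C(Y)\cc^{-4}$ (since $|\t(x_{c^{j,q}})-t_p| \le 9\gamma^{2j}$).

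Second, your content argument for $\sum r_{e'}^2$ is incorrect. You write that each $e'$-ball ``contributes to a definite entropy drop,'' but this is the mechanism for the $e$-balls in the \emph{outer} induction of Theorem \ref{neckdecomgeneral}, not here. Within a single type-II $c$-ball decomposition there is no entropy gain to exploit: all centers already satisfy $|\widetilde\WW-\bar W|\le\delta^3$. The paper instead bounds $\sum_{e^{(l)}} r_{e^{(l)}}^2$ by a combinatorial argument: assign to each $e^{(l)}$-ball a tuple $(a_0,\ldots,a_{l-2})\in\{0,1\}^{l-1}$ recording whether each ancestor $c$-ball was static or quasi-static, and observe that a static $c^{k,s}$-ball producing a quasi-static $c^{k+1,q}$-ball must have a nearby $b^k$-, $d^k$-, $e^k$-, or $e^{k+1}$-ball (this uses the careful definition of $\tilde\LL_{z,1}$ via \eqref{equ:defLL1}--\eqref{equ:defLL2}). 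This yields the bound \eqref{equ:Nksq} on the number $N_q^{k,s}$, and after summing the resulting geometric series (requiring $\cc\le\cc(Y)$ so that $C(Y)\gamma^{6/5}\le 1/2$) one gets $\sum r_{e'}^2\le C(Y,\cc)$ from the already-established Ahlfors bound on the $b$-, $d$-, $e$-families. Your Vitali argument does not recover this structure.
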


\begin{proof}
		Without loss of generality, we assume $s=1$. In the proof, we choose a small parameter $\ep \ll \delta$ and assume that Lemma \ref{contentconesplitting} holds for constants $\ep$, $\eta$, $\beta$ and $\zeta$ by default. Moreover, we may assume $\delta \ll \cc$ and define $\gamma:=\cc^{5}$.
		
	Since $B^*(z,1)$ is a type-II $c$-ball, the time-function $t_s(z, 1)$ (see \eqref{eq:statictime}) is well defined, and \eqref{eq:timea1} holds. For simplicity, we call a type-II $c$-ball $B^*(x, r)$ \textbf{static} if $t_s(x, r) \ge \t(x)+(2.5 r)^2$ and \textbf{quasi-static} if $t_s(x, r) < \t(x)+(2.5 r)^2$.
	
	By the definition of $t_s$, there exists $q\in \PP_{\zeta,1}(z) \cap B^*(z, 1)$ such that $q$ is $(\ep, 1)$-close to $\mathcal F^a(\Gamma)$ with respect to $\LL_{q, 1}$, where $a=t_s(z,1)-\t(q)$. We define $\tilde\LL_{z,1} \subset \LL_{q, 1}$\index{$\tilde \LL_{z, 1}$} as follows.
\begin{itemize}
\item If $t_p(z,1)\geq \sup\{\t(y) \mid y\in B^*(z, 2.3)\}-\xi$, we set 
		\begin{align}\label{equ:defLL1}
			\tilde\LL_{z,1}:=\LL_{q, 1} \bigcap B^*(z,2.2).
		\end{align} 
	
\item If $t_p(z,1)< \sup\{\t(y) \mid y\in B^*(z, 2.3)\}-\xi$, we set
		\begin{align}\label{equ:defLL2}
			\tilde\LL_{z,1}:=\LL_{q, 1} \bigcap B^*(z,2.3).
		\end{align} 
\end{itemize}
Here, $\xi=\xi(Y) \in (0, 10^{-4})$ is a small constant to be determined.

We set $L^1:=\tilde\LL_{z,1}$ and choose a maximal $2\cc^2 \gamma$-separated set $\{x_{f^1}\} \subset L^1$. In particular, $\{B^*(x_{f^1}, \cc^2 \gamma)\}$ are pairwise disjoint and
\begin{align*}
L^1 \subset \bigcup_{f^1}B^*(x_{f^1}, 2 \cc^2\gamma).
	\end{align*}	

According to the type of $B^*(x_{f^1},\gamma)$, we can write
		\begin{align*}
L^1 \subset \bigcup_{b^1} B^*(x_{b^1}, \gamma)\bigcup\bigcup_{c^1} B^*(x_{c^1}, \gamma)\bigcup\bigcup_{d^1} B^*(x_{d^1}, \gamma)\bigcup\bigcup_{e^1} B^*(x_{e^1}, \gamma).
		\end{align*}	

Arguing as in the proof of Proposition \ref{cballdecomposition1}, we conclude that each $c$-ball $B^*(x_{c^1}, \gamma)$ is of type-II with the same group $\Gamma$. We can reindex the $c$-balls $\bigcup_{c^1}B^*(x_{c^1},\gamma)$ by 
		\begin{align*}
			\bigcup_{c^{1,s}}B^*(x_{c^{1,s}},\gamma)\bigcup \bigcup_{c^{1,q}}B^*(x_{c^{1,q}},\gamma)
		\end{align*}
based on whether they are static or quasi-static.		

By a limiting argument, we conclude that if $\t(x_{c^1}) \le t_p(z, 1)-9 \gamma^2$, then $B^*(x_{c^1},\gamma)$ is a static $c$-ball, provided that $\zeta \le \zeta(Y, \ep, \eta, \cc, \beta)$. Moreover, by the definition of $t_p$, any ball $B^*(x_{f^1},\gamma)$ with $\t(x_{f^1}) \ge t_p(z, 1)+9 \gamma^2$ must be an $e$-ball. Consequently, any quasi-static $c$-ball $B^*(x_{c^{1,q}},\gamma)$ must satisfy 
		\begin{align*}
|\t(x_{c^{1,q}})-t_p(z, 1)| \le 9 \gamma^2.
		\end{align*}	
Thus, the number of $\{c^{1,q}\}$ is at most $C(Y) \cc^{-4}$.

Furthermore, by a limiting argument and using pseudolocality theorem, if $\ep \le \ep(Y, \delta, \cc)$ and $\zeta \le \zeta(Y, \ep, \eta, \cc, \beta)$, then we can choose a small constant $\xi=\xi(Y)>0$ such that
		\begin{align*}
y\in Z_{(-\infty, t_s(z,1)+2\xi^2\gamma^2)}\bigcap B^*(z,2)
		\end{align*}	
 with $d_Z(y, \tilde \LL_{z, 1})=s\geq \gamma$ is $(3,\delta, \delta  s)$-symmetric for some constant $c(Y)>0$.

Next, we choose a cover $\{B^*(x_{e^{(1)}},\xi \gamma)\}$ of 
		\begin{align*}
\lc Z_{(t_s(z,1)+2\xi^2 \gamma^2,\infty)}\setminus \bigcup_{f^1}B^*(x_{f^1},\gamma)\rc\bigcap B^*(z,2)
		\end{align*}	
		such that $\{B^*(x_{e^{(1)}}, \xi\gamma/2)\}$ are pairwise disjoint. Note that if $B^*(z, 1)$ itself is static, then the above set is empty. By \eqref{eq:timea1}, all these balls are $e$-balls, and the total number of $\{B^*(x_{e^{(1)}}, \xi\gamma)\}$ is, by Proposition \ref{prop:volumebound}, bounded by $C(Y) \gamma^{-6}$. 

	Now we set 
		\begin{align*}
			\NNN^1:= B^*(z,2)\setminus \lc\bigcup_{b^1} B^*(x_{b^1}, \gamma)\bigcup\bigcup_{c^1} B^*(x_{c^1}, \gamma)\bigcup\bigcup_{d^1} B^*(x_{d^1}, \gamma)\bigcup\bigcup_{e^1} B^*(x_{e^1}, \gamma)\rc,
		\end{align*}
		and 
		\begin{align*}
			\NNN^{1,\prime}:=\NNN^1\setminus \lc\bigcup_{e^{(1)}}B^*(x_{e^{(1)}}, \xi\gamma)\rc.
		\end{align*} 	
		Moreover, we define $\CCC^1:= \{x_{b^1},x_{c^1},x_{d^1},x_{e^1}\}$ with radius $r_x:= \gamma$. 
		
		\begin{claim}\label{checkneck}
If $\ep \le \ep(Y, \delta, \cc)$ and $\zeta \le \zeta(Y, \ep, \eta, \cc, \beta)$, then $\NNN^1$ is a $(\delta, \cc, 1)$-flat neck region regarding $\mathcal F^0(\Gamma)$. Furthermore, any point in $\NNN^{1,\prime}$ is $(3, \delta,  \delta  d_Z(x, \CCC^1))$-symmetric.
		\end{claim}
		
Indeed, property $(n1)$ follows from the construction. For $(n2)$, since $q \in \PP_{\zeta,1}(z)$, we have
		\begin{align*}
			\left|\widetilde \WW_q(\tau)-\bar W\right|\leq \delta^4,\quad \forall \tau \in \left[(\delta\gamma)^2,\delta^{-2}\right],
		\end{align*}
if $\zeta$ is sufficiently small. Moreover, it follows from Proposition \ref{prop:almostconstant1} (i) that for any $x\in\tilde\LL_{z,1} $,
		\begin{align*}
			\left|\widetilde \WW_x(\tau)-\widetilde \WW_q(\tau)\right|\leq \delta^4,\quad \forall \tau \in \left[(\delta\gamma)^2,\delta^{-2}\right].
		\end{align*}
		Combining the above two inequalities, we have for any $x \in \tilde\LL_{z,1}$,
		\begin{align*}
			\left|\widetilde \WW_x(\tau)-\bar W\right|\leq \delta^3, \quad \forall \tau \in \left[(\delta\gamma)^2,\delta^{-2}\right],
		\end{align*}
		which establish $(n2)$. $(n3)$ and $(n4)$ follow from our construction and Proposition \ref{prop:almostconstant1} (iii). Finally, the last conclusion follows from our construction. This completes the proof of Claim \ref{checkneck}. 

For each $c$-ball $B^*(x_{c^1},\gamma)$, we can define the set $\tilde \LL_{x_{c^1}, \gamma}$ as in \eqref{equ:defLL1} and \eqref{equ:defLL2}. Then, we define
		\begin{align*}
		L^2:=	\bigcup_{c^{1}} \tilde\LL_{x_{c^{1}},\gamma} \setminus\lc\bigcup_{b^1} B^*(x_{b^1}, 2 \cc^2 \gamma)\bigcup\bigcup_{d^1} B^*(x_{d^1}, 2  \cc^2  \gamma)\bigcup\bigcup_{e^1} B^*(x_{e^1}, 2  \cc^2  \gamma)\rc
		\end{align*}
and choose a maximal $2\cc^2 \gamma^2$-separated set $\{x_{f^2}\} \subset L^2$. In particular, $\{B^*(x_{f^2}, \cc^2 \gamma^2)\}$ are pairwise disjoint and
\begin{align*}
L^2 \subset \bigcup_{f^2}B^*(x_{f^2}, 2 \cc^2\gamma^2).
	\end{align*}	

Next, we choose a cover $\{B^*(x_{e^{(2)}},  \xi\gamma^2)\}$ of 
		\begin{align*}
			\bigcup_{c^{1,q}}\lc Z_{\big(t_s(x_{c^{1,q}},\gamma)+2\xi^2\gamma^4,\infty\big)}\setminus \bigcup_{f^2}B^*(x_{f^2},\gamma^2)\rc\bigcap B^*(x_{c^{1,q}}, 2\gamma),
		\end{align*}
		such that $\{B^*(x_{e^{(2)}},  \xi\gamma^2/2)\}$ are pairwise disjoint. Note that each $B^*(x_{e^{(2)}},  \xi\gamma^2)$ is an $e$-ball by definition and \eqref{eq:timea1}.
		
We reindex all $f^2$-balls according to their types:
		\begin{align*}
B^*(x_{b^2},\gamma^2),\,B^*(x_{c^2},\gamma^2),\,B^*(x_{d^2},\gamma^2),\, B^*(x_{e^2},\gamma^2).
		\end{align*}
Moreover, we can reindex the $c$-balls $\{B^*(x_{c^2},\gamma^2)\}$ by
		\begin{align*}
			\bigcup_{c^{2,s}}B^*(x_{c^{2,s}},\gamma^2)\bigcup \bigcup_{c^{2,q}}B^*(x_{c^{2,q}},\gamma^2)
		\end{align*}
	based on whether they are static or quasi-static.		

Now, we set 
		\begin{align*}
			\NNN^2:=B^*(z,2)\setminus \lc \bigcup_{c^2}B^*(x_{c^2},\gamma^2)\bigcup \bigcup_{1\leq j\leq 2} \lc \bigcup_{b^j}B^*(x_{b^j},\gamma^j)\bigcup \bigcup_{d^j}B^*(x_{d^j},\gamma^j)\bigcup \bigcup_{e^j}B^*(x_{e^j},\gamma^j) \rc \rc,
		\end{align*}
with $\CCC^2:= \{x_{b^1}, x_{b^2},x_{d^1},x_{d^2},x_{e^1},x_{e^2}, x_{c^2}\}$ and the radius function chosen to be the corresponding radii of the associated balls. Moreover, we set
		\begin{align*}
			\NNN^{2,\prime}:=\NNN^2\setminus\lc \bigcup_{1\leq j\leq 2}\bigcup_{e^{(j)}} B^*(x_{e^{(j)}}, \xi\gamma^j)\rc.
		\end{align*} 
Arguing as in Claim \ref{checkneck}, we know that $\NNN^2$ is a $(\delta,\cc, 1)$-flat neck region regarding $\mathcal F^0(\Gamma)$. Furthermore, any point in $\NNN^{2,\prime}$ is $(3, \delta, \delta  d_Z(x, \CCC^2))$-symmetric.

For all $c$-balls $B^*(x_{c^2},\gamma^2)$, we can repeat the above decomposition process. After $l$ steps, we obtain
		\begin{align*}
			\NNN^l:= B^*(z,2)\setminus\lc\bigcup
			_{c^l} B^*(x_{c^l}, \gamma^l)\bigcup\bigcup_{1\leq j\leq l}\lc\bigcup_{b^j} B^*(x_{b^j}, \gamma^j)\bigcup\bigcup_{d^j} B^*(x_{d^j}, \gamma^j)\bigcup\bigcup_{e^j}B^*(x_{e^j}, \gamma^j)\rc\rc,
		\end{align*}
		and corresponding $\bigcup_{1\leq j\leq l}\bigcup_{e^{(j)}}B^*(x_{e^{(j)}},\xi\gamma^j)$. 	It follows as before that $\NNN^l$ is a $(\delta,\cc, 1)$-flat neck region regarding $\mathcal F^0(\Gamma)$ with center $\CCC^l$ consisting of the centers of the balls and the radius function chosen to be the corresponding radii of the associated balls. Moreover, we define
			\begin{align*}
\NNN^{l,\prime}:=\NNN^l\setminus\lc\bigcup_{1\leq j\leq l}\bigcup_{e^{(j)}}B^*(x_{e^{(j)}},\xi\gamma^j)\rc. 
		\end{align*}	
	Then each point in $\NNN^{l,\prime}$ is $(3, \delta, \delta  d_Z(x, \CCC^l))$-symmetric. As before, we can reindex the $c$-balls $\bigcup_{c^l} B^*(x_{c^l}, \gamma^l)$ by 
		$$\bigcup_{c^{l,s}}B^*(x_{c^{l,s}},\gamma^l)\bigcup \bigcup_{c^{l,q}}B^*(x_{c^{l,q}},\gamma^l)$$
	based on whether they are static or quasi-static.

Next, we carry out the content estimates for all these balls up to the $l$-th step. For simplicity, we use $r_{b^j},\,r_{c^j},\,r_{d^j},\,r_{e^j},\,r_{e^{(j)}}$ to denote the corresponding radii of these balls. Moreover, we call a $c$-ball of the form $B^*(x_{c^j}, \gamma^j)$ a \textbf{$c^{j}$-ball}. Similarly, $B^*(x_{c^{j, s}}, \gamma^j)$ and $B^*(x_{c^{j, q}}, \gamma^j)$ are referred to as a \textbf{$c^{j, s}$-ball} and a \textbf{$c^{j, q}$-ball}, respectively. The notations $b^{j}$-ball, $d^j$-ball, $e^j$-ball and $e^{(j)}$-ball are defined in the same way. Moreover, we say that a $c^j$-ball \textbf{produces} a $b^{j+1}$-ball, $c^{j+1}$-ball, $d^{j+1}$-ball, $e^{j+1}$-ball, or $e^{(j+1)}$-ball if such a ball arises in the decomposition of the $c^j$-ball. 
		
From our construction, we obtain several facts that will be used below:
\begin{itemize}
\item Only a $c^{j, q}$-ball can produce $e^{(j+1)}$-balls. In this case, the total number of $e^{(j+1)}$-balls produced by a $c^{j, q}$-ball is at most $C(Y) \gamma^{-6}$.
	
\item Each $c^j$-ball can produce at most $C(Y) \cc^{-4} \gamma^{-2}$ $c^{j+1, s}$-balls.

\item Each $c^j$-ball can produce at most $C(Y) \cc^{-4}$ $c^{j+1, q}$-balls.

\item If a $c^{j, s}$-ball can produce a $c^{j+1, q}$-ball, then $t_p(x_{c^{j, s}}, \gamma^j)- \sup\{\t(y) \mid y\in B^*(x_{c^{j, s}}, 2.3\gamma^j)\}<-\xi \gamma^{2j}$.
\end{itemize}
		
For each $e^{(l)}$-ball, we can assign an $(l-1)$-tuple $(a_0,\ldots,a_{l-2})$ with $a_i \in \{0, 1\}$ as follows. We know that the $e^{(l)}$-ball is produced by some $c^{l-1,q}$-ball. If this $c^{l-1,q}$-ball is produced by a $c^{l-2,s}$-ball, we set $a_{l-2}=1$. Otherwise, if it is produced by a $c^{l-2,q}$-ball, we set $a_{l-2}=0$. 

In general, for any $1\leq k\leq l-2$, if the $c^k$-ball in the chain is produced by a $c^{k-1,s}$-ball, we set $a_{k-1}=1$; if it is produced by a $c^{k-1,q}$-ball, we set $a_{k-1}=0$.

For any $e^{(l)}$-ball, let $S_{e^{(l)}}$ denote the largest number $k \in [0, l-2]$ such that $a_{k}=1$. If no such $k$ exists, we set $S_{e^{(l)}}=-1$. Then it is clear that the number of $e^{(l)}$-balls such that $S_{e^{(l)}}=k$ is at most
		\begin{align*}
			N_q^{k,s}\cdot (C(Y) \cc^{-4})^{l-1-k}\cdot C(Y)\gamma^{-6} \le N_q^{k,s} (C(Y))^{l-k} \cc^{-4(l-1-k)}\gamma^{-6},
		\end{align*}
where $N_q^{k,s}$ is the total number of $c^{k,s}$-balls that can produce $c^{k+1,q}$-balls. Here, we set $N^{-1,s}_q=1$.

Therefore, we have the following content estimate:
		\begin{align}\label{equ:econt1}
			\sum_{e^{(l)}}r_{e^{(l)}}^{2}=\sum_{-1\leq k\leq l-2}\sum_{S_{e^{(l)}}=k}r_{e^{(l)}}^{2}\leq \sum_{-1\leq k\leq l-2}N_q^{k,s} (C(Y))^{l-k} \gamma^{2l-6-4(l-k-1)/5},
		\end{align}
where we used the fact that $\cc=\gamma^{1/5}$.

		On the other hand, we claim that for each $c^{k,s}$-ball $B^*(x_{c^{k,s}},\gamma^k)$ that can produce a $c^{k+1,q}$-ball, there must exist a ball of the form
			\begin{align*}
B^*(x_{b^{k}}, \gamma^k),\,B^*(x_{d^{k}},\gamma^k),\,B^*(x_{e^{k}}, \gamma^k),\, \text{ or }B^*(x_{e^{k+1}},\gamma^{k+1})
		\end{align*}	
contained in $B^*(x_{c^{k,s}},10\gamma^k)$. Indeed, since $B^*(x_{c^{k,s}},\gamma^k)$ can produce a $c^{k+1,q}$-ball, we know $t_p(x_{c^{k,s}}, \gamma^k)- \sup\{\t(y) \mid y\in B^*(x_{c^{k,s}}, 2.3\gamma^k)\}<-\xi \gamma^{2k}$. If no ball of the form
			\begin{align*}
B^*(x_{b^{k}}, \gamma^k),\,B^*(x_{d^{k}}, \gamma^k),\,B^*(x_{e^{k}}, \gamma^k)
		\end{align*}	
lies inside $B^*(x_{c^{k,s}},10\gamma^k)$, then by \eqref{equ:defLL2}, we can find an $e^{k+1}$-ball with its center on $\tilde \LL_{x_{c^{k,s}},\gamma^k}$, provided $\cc \le \cc(Y)$. This implies such an $e^{k+1}$-ball is contained in $B^*(x_{c^{k,s}},10\gamma^k)$. This establishes the claim.
	
For each $c^{k,s}$-ball that can produce a $c^{k+1,q}$-ball, we assign to it a $b^k$-, $d^k$-, $e^k$- or $e^{k+1}$-ball as described in the claim above. This assignment defines a map:
			\begin{align*}
\iota_k: \left\{1, \ldots, N^{k, s}_q \right\} \longrightarrow  \left \{b^k, d^k, e^k, e^{k+1} \right \}.
		\end{align*}	
By Proposition \ref{prop:volumebound} and condition $(n1)$ of Definition \ref{defnneckgeneral}, each element in the image of $\iota_k$ has at most $C(Y, \gamma)$ preimages.

Consequently, we obtain that for $k\in \{0,1,\ldots,l-2\}$,
		\begin{align}\label{equ:Nksq}
N_q^{k,s} \le C(Y, \gamma) \lc \gamma^{-2k} \lc\sum_{b^k}r_{b^k}^{2}+\sum_{d^k}r_{d^k}^{2}+\sum_{e^{k}}r_{e^{k}}^{2}\rc+\gamma^{-2k-2} \sum_{e^{k+1}}r_{e^{k+1}}^{2} \rc,
		\end{align}		
where $\{x_{b^0}, x_{d^0}, x_{e^0}\}$ is empty.

	Note that
			\begin{align*}
\sum_{S_{e^{(l)}}=-1} r_{e^{(l)}}^2\leq (C(Y)\cc^{-4})^{l}\cdot C(Y)\gamma^{-6}\cdot \gamma^{2l}\leq C(Y,\gamma)(C(Y) \gamma^{6/5})^{l}.
		\end{align*}		
Combining this with \eqref{equ:econt1} and \eqref{equ:Nksq}, we have
		\begin{align}\label{equ:econt2}
		\sum_{e^{(l)}}r_{e^{(l)}}^{2}	\leq & C(Y,\gamma)(C(Y) \gamma^{6/5})^{l} \notag \\
		&+C(Y, \gamma)\sum_{0\leq k\leq l-2}(C(Y) \gamma^2)^{l-k} \gamma^{-4(l-k-1)/5} \lc  \gamma^{-6}\lc\sum_{b^k}r_{b^k}^{2}+\sum_{d^k}r_{d^k}^{2}+\sum_{e^{k}}r_{e^{k}}^{2}\rc+\gamma^{-8} \sum_{e^{k+1}}r_{e^{k+1}}^{2}\rc\nonumber\\
			\leq &C(Y, \gamma)\sum_{0\leq k\leq l-1}(C(Y) \gamma^{6/5})^{l-k}\lc\sum_{b^k}r_{b^k}^{2}+\sum_{d^k}r_{d^k}^{2}+\sum_{e^{k}}r_{e^{k}}^{2}\rc+C(Y,\gamma)(C(Y) \gamma^{6/5})^{l}.
		\end{align}  
		
Set $\mathcal G^l:=\bigcup_{c^l} \{x_{c^l}\}$. It is clear from our construction that $\mathcal G^{l+1} \subset B^*_{2\gamma^l}(\mathcal G^l)$. Then we denote the Hausdorff limit of $\mathcal G^l$ by $\CCC_0$. We define
		\begin{align*}
			\NNN:= B^*(z,2)\setminus\lc\CCC_0\bigcup\bigcup_{1\leq j<\infty}\lc\bigcup_{b^j} B^*(x_{b^j}, \gamma^j)\bigcup\bigcup_{d^j} B^*(x_{d^j}, \gamma^j)\bigcup\bigcup_{e^j}B^*(x_{e^j}, \gamma^j)\rc\rc.
		\end{align*}
		
	From our construction, $\NNN$ is a $(\delta,\cc, 1)$-flat neck region regarding $\mathcal F^0(\Gamma)$ with centers given by $\CCC_0$ together with the centers of the associated balls, and the radius function chosen to be the corresponding radii of those balls. Moreover, we define
			\begin{align*}
\NNN':=\NNN \setminus\lc\bigcup_{1\leq j < \infty}\bigcup_{e^{(j)}}B^*(x_{e^{(j)}},\xi\gamma^j)\rc. 
		\end{align*}	
Note that each point in $\NNN'$ is $(3, \delta,  \delta  d_Z(x, \CCC))$-symmetric. 		
		
We rewrite the above decomposition as:
		\begin{equation*}
			B^*(z,2)\subset \left(\CCC_0\bigcup\NNN' \right)\bigcup\bigcup_b B^*(x_b,r_b)\bigcup \bigcup_d B^*(x_d,r_d)\bigcup \bigcup_e B^*(x_e,r_e)\bigcup \bigcup_{e'}B^*(x_{e'}, r_{e'}),
		\end{equation*}
where $\bigcup_{e'}B^*(x_{e'},r_{e'}):=\bigcup_{1\leq j < \infty}\bigcup_{e^{(j)}}B^*(x_{e^{(j)}},\xi\gamma^j)$. 

By Proposition \ref{ahlforsregforstaticneck}, the following content estimate holds:
		\begin{align}\label{equ:ahcont}
			\HHH^{2}\lc \CCC_0 \bigcap B^*(z, 1.6) \rc+\sum_{x_b \in B^*(z, 1.6)}r_b^{2}+\sum_{x_d \in B^*(z, 1.6)}r_d^{2}+\sum_{x_e \in B^*(z, 1.6)}r_e^{2}\leq C(Y, \cc).
		\end{align}
		Moreover, by \eqref{equ:econt2} and \eqref{equ:ahcont}, the following content estimate holds:
		\begin{align*}
			&\sum_{x_{e'} \in B^*(z, 1.5)}r_{e'}^{2}=\sum_{l=1}^\infty \sum_{e^{(l)}} r_{e^{(l)}}^{2} \\
			\leq& C(Y, \gamma)\sum_{l=1}^\infty \sum_{0\leq k\leq l} (C(Y) \gamma^{6/5})^{l-k}\lc\sum_{x_{b^k} \in B^*(z, 1.6)}r_{b^k}^{2}+\sum_{x_{d^k} \in B^*(z, 1.6)}r_{d^k}^{2}+\sum_{x_{e^k} \in B^*(z, 1.6)}r_{e^{k}}^{2}\rc+C(Y,\gamma)\sum_{l=1}^\infty (C(Y) \gamma^{6/5})^{l}\nonumber\\
			\leq& C(Y, \gamma)\sum_{k=0}^\infty \sum_{k \le l <\infty}(C(Y) \gamma^{6/5})^{l-k}\lc\sum_{x_{b^k} \in B^*(z, 1.6)}r_{b^k}^{2}+\sum_{x_{d^k} \in B^*(z, 1.6)}r_{d^k}^{2}+\sum_{x_{e^k} \in B^*(z, 1.6)}r_{e^{k}}^{2}\rc+C(Y,\gamma)\nonumber\\
			\leq& C(Y, \gamma)\sum_{k=1}^\infty\lc\sum_{x_{b^k} \in B^*(z, 1.6)}r_{b^k}^{2}+\sum_{x_{d^k} \in B^*(z, 1.6)}r_{d^k}^{2}+\sum_{x_{e^k} \in B^*(z, 1.6)}r_{e^{k}}^{2}\rc+C(Y,\gamma)\leq C(Y, \gamma)=C(Y, \cc).
		\end{align*}
		Here, we have chosen $\cc \le \cc(Y)$ such that $C(Y) \gamma^{6/5} \leq 1/2$.
		This completes the proof of the decomposition.
	\end{proof}

\begin{prop}[Decomposition of type-II $c$-balls: $k=1$] \label{cballdecomposition3}
	For $z\in B^*(z_0, 4 r_0)$ and $ s \le r_0-d_Z(z, z_0)/4$, let $B^*(z,s)$ be a type-II $c$-ball with $k=1$. For any constants $\delta>0$, $\eta>0$, $\cc \in (0, 10^{-40})$ and $\beta>0$, if $\cc \le \cc( Y, \eta)$ and $\zeta \le \zeta( Y, \delta, \eta, \cc, \beta)$, then we have the decomposition
	\begin{equation*}
		B^*(z,2s)\subset \left(\CCC_0\bigcup\NNN'\right)\bigcup\bigcup_b B^*(x_b,r_b) \bigcup \bigcup_d B^*(x_d,r_d)\bigcup \bigcup_e B^*(x_e,r_e)\bigcup \bigcup_{e'} B^*(x_{e'},r_{e'})
	\end{equation*}
	satisfying
	\begin{enumerate}[label=\textnormal{(\roman{*})}]
		\item for each $b$, $B^*(x_b,r_b)$ is a $b$-ball with $r_b \le \cc^5 s;$
				\item for each $d$, $B^*(x_d,r_d)$ is a $d$-ball with $r_d \le \cc^5 s;$
		\item for any $e$ and $e'$, the balls $B^*(x_e,r_e)$ and $B^*(x_{e'}, r_{e'})$ are $e$-balls with $\max\{r_e,r_{e'}\} \le \cc^5 s;$

		\item $\displaystyle \NNN:=B^*(z,2s)\setminus \lc\CCC_0\bigcup\bigcup_b B^*(x_b,r_b)\bigcup\bigcup_{b'} B^*(x_{b'},r_{b'})\bigcup \bigcup_d B^*(x_d,r_d)\bigcup \bigcup_e B^*(x_e,r_e)\rc$ is a $(\delta, \cc, s)$-flat neck region regarding some $\mathcal F^0(\Gamma)$ and for each $b'$, $B^*(x_{b'},r_{b'})$ is a $b$-ball with $r_{b'} \le \cc^5 s;$
		
		\item any point in $\displaystyle \NNN':= \NNN  \bigcup \bigcup_{ b'} B^*(x_{ b'},r_{ b'}) \setminus \lc \bigcup_{e'} B^*(x_{e'},r_{e'}) \rc$ is $(2, \delta, \delta d_Z(x, \CCC'))$-symmetric, where
			\begin{equation*}
\CCC':=\CCC_0 \bigcup_{b, d, e} \{x_b, x_d, x_e\};
	\end{equation*}
		
\item the following content estimates hold\emph{:}
						\begin{align*}
\sum_{x_b \in B^*(z, 3s/2)} r_b+\sum_{x_d \in B^*(z, 3s/2)}r_d+\sum_{x_e \in B^*(z, 3s/2)} r_e \le s \quad \text{and} \quad \sum_{x_{e'} \in B^*(z, 3s/2)} r_{e'} \leq C(Y, \cc) s.
	\end{align*}		
	
\item for any $r \in (0, 1]$, we have
\begin{align*}
\abs{B^*_{rs} \lc \CCC' \bigcap B^*(z, 3s/2) \rc} \le  C(Y, \eta) r^{5.1} s^6;
	\end{align*}		
	in particular, $\HHH^{1}\lc \CCC' \bigcap B^*(z, 3s/2) \rc=0$.
	
	\end{enumerate}		
\end{prop}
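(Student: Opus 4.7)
The argument will closely mirror the construction in Proposition \ref{cballdecomposition2}, building iteratively at dyadic scales $\gamma^j=\mathfrak c^{5j}$ a flat neck region $\NNN$ regarding some $\mathcal F^0(\Gamma)$. At each step, starting from the quasi-spine $\tilde{\mathcal L}_{q,1}$ of the approximating $\mathcal F^a(\Gamma)$-model furnished by the type-II hypothesis, I would pick a maximal $2\mathfrak c^2\gamma^j$-separated subset and classify the resulting balls as $b$, $c$, $d$, $e$ or (from quasi-static regions) $e^{(j)}$-balls. The proof that $\NNN$ is a bona fide $(\delta,\mathfrak c,s)$-flat neck region, and that points in $\NNN$ outside the centers inherit a quantitative symmetry property, is identical to Claim \ref{checkneck} and the subsequent analysis in Proposition \ref{cballdecomposition2}, using Proposition \ref{prop:almostconstant1} in place of its quotient-cylindrical counterpart.

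The essential novelty in the $k=1$ case is the introduction of $b'$-balls. Since $\R^4/\Gamma\times\R_-$ is $2$-symmetric (by Definition \ref{defnsymmetricsoliton} applied to a static cone), every point on the spine of $\mathcal F^0(\Gamma)$ is $(2,\eta,r)$-symmetric at all scales provided the closeness parameter is much smaller than $\eta$. Concretely, for $\mathfrak c\le\mathfrak c(Y,\eta)$ and $\zeta$ chosen accordingly, the flat-neck center $\CCC_0$ together with the chain of $c$-ball centers is covered by $b$-balls obtained via a Vitali selection at each dyadic scale using the Ahlfors regularity for flat necks (Proposition \ref{ahlforsregforstaticneck}); I relabel these as $b'$-balls, include them in the residual region $\NNN'$, and verify (iv) by combining the highly symmetric behavior in $\NNN$ away from $\CCC$ with the $(2,\eta)$-symmetry inherited inside each $b'$-ball. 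The content estimates in (v) and (vi) then follow by rerunning the bookkeeping of Proposition \ref{cballdecomposition2} in the $r^1$-scaling: one establishes the analogs of \eqref{equ:Nksq} and \eqref{equ:econt2} with $2$ replaced by $1$, and closes the geometric series once $\mathfrak c=\mathfrak c(Y,\eta)$ is chosen small enough that $C(Y)\gamma^{\alpha}<1/2$ for a suitable $\alpha>0$; the constant-free bound $\sum r_b+\sum r_d+\sum r_e\le s$ in (v) emerges by absorbing the combinatorial constants into this choice of $\mathfrak c$.

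The most delicate assertion is the volume estimate (vii), where the exponent $r^{5.1}$ is strictly better than the dimensional baseline $r^5$ that one would obtain from merely combining Proposition \ref{prop:volumebound} with the $1$-content bound $\sum r\le s$. The plan is to exploit that every $x\in\CCC'$ fails to be $(2,\eta,r)$-symmetric at all scales $r\le s$: otherwise, by construction, $x$ would have been absorbed into a $b$-, $b'$-, or $e'$-ball. Hence $\CCC'\subset\MS^{\eta,1}_{0,s}$ locally, and Corollary \ref{cor:kthcover}(a) applied with $n=4$, $k=1$ gives
\begin{equation*}
\abs{B^*_{rs}\bigl(\CCC'\bigcap B^*(z,3s/2)\bigr)}\le C(Y,\ep)\,r^{5-\ep}s^6
\end{equation*}
for any $\ep>0$. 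To upgrade the exponent from $5-\ep$ to $5.1$ (a genuine dimension drop below $1$ for $\CCC'$), I would iterate the decomposition one additional step at every dyadic scale, using the fact that $\CCC'$ is missed by a definite fraction of the $\mu$-mass at each scale --- a consequence of the Ahlfors regularity of $\NNN$ and the chain of $b'$-extractions. The main obstacle will be making this bootstrap quantitative and independent of the scale of $\NNN$: it requires that the constant $C(Y,\eta)$ in the stratification estimate be summable over dyadic scales, which in turn demands a sharp rate in $\eta$ in the quantitative $(2,\eta,r)$-symmetry statement. Once any exponent strictly greater than $5$ is obtained, the final conclusion $\HHH^1(\CCC')=0$ follows by standard covering.
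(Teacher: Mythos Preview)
Your outline misses the key mechanism that makes the $k=1$ case work, and as a result both (vi) and (vii) do not follow from what you propose. You correctly observe that $\R^4/\Gamma\times\R_-$ is $2$-symmetric, but you do not use this observation the way the paper does. In the paper's construction, one first notes that any ball $B^*(x_{f^j},\gamma^j)$ centered on $\tilde{\mathcal L}$ with $\t(x_{f^j}) < t_p - 2\eta^{-1}\gamma^{2j}$ is \emph{automatically} a $b$-ball (by a limiting argument, provided $\cc\le\cc(\eta)$). The $b'$-balls are exactly these $b^{j,s}$-balls arising from the standard classification, not a separate Vitali cover of $\CCC_0$. The crucial consequence is that the remaining center points $\{x_{b^{j,q}},x_{c^j},x_{d^j},x_{e^j}\}$ are confined to a time-strip of thickness $O(\eta^{-1}\gamma^{2j})$ near $t_p$, so their cardinality is at most $(C(Y)\eta^{-1}\cc^{-4})^j$. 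This counting gives $\sum_{b^{j,q},c^j,d^j,e^j} r \le (C(Y)\eta^{-1}\gamma^{1/5})^j$, and summing the geometric series (after choosing $\cc\le\cc(Y,\eta)$ so that $C(Y)\eta^{-1}\gamma^{1/5}\le 1/2$) yields (vi) directly. Your proposal to ``rerun the bookkeeping of Proposition~\ref{cballdecomposition2} in the $r^1$-scaling'' and invoke analogs of \eqref{equ:Nksq} and \eqref{equ:econt2} does not work: the Ahlfors regularity in Proposition~\ref{ahlforsregforstaticneck} is a $2$-dimensional estimate and gives no control on $\sum r^1$; the $k=2$ static/quasi-static $c$-ball tracking is neither needed nor applicable here.

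For (vii), the same time-strip counting gives $|\CCC^{l,\prime}|\le (C(Y)\eta^{-1}\cc^{-4})^l$. Since $\CCC'\subset B^*_{4\gamma^l}(\CCC^{l,\prime})$, Proposition~\ref{prop:volumebound} yields $|B^*_{\gamma^l}(\CCC'\cap B^*(z,3/2))|\le C(Y)(C(Y)\eta^{-1}\cc^{-4})^l\gamma^{6l}$; choosing $\cc\le\cc(Y,\eta)$ so that $C(Y)\eta^{-1}\gamma^{1/5}\le\gamma^{1/10}$ gives the $r^{5.1}$ bound immediately. Your route via Corollary~\ref{cor:kthcover} only gives $r^{5-\ep}$ (and requires first checking $\CCC'\subset\MS^{\eta,1}$, which is not obvious for the $d$- and $e$-centers), and the proposed bootstrap to $r^{5.1}$ via ``definite fraction of $\mu$-mass'' is neither the paper's argument nor clearly correct.
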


\begin{proof}
Without loss of generality, we assume $s=1$. The construction of the flat neck region $\NNN$ is analogous to that in Proposition \ref{cballdecomposition2}, and we only sketch the proof. As before, we assume $\delta \ll \cc$ and set $\gamma:=\cc^5$.

Since $B^*(z,1)$ is a type-II $c$-ball, by the definition of $t_s$, there exists $q\in \PP_{\zeta,1}(z) \cap B^*(z, 1)$ such that $q$ is $(\ep, 1)$-close to $\mathcal F^a(\Gamma)$ with respect to $\LL_{q, 1}$, where $a=t_s(z,1)-\t(q)$. We define
\begin{align*}
			\tilde\LL_{z,1}:=\LL_{q, 1} \bigcap B^*(z,2.2) \bigcap Z_{(-\infty, t_p(z, 1)]}.
	\end{align*}			

We set $L^1=\tilde \LL_{z, 1}$ and choose a maximal $2\cc^2 \gamma$-separated set $\{x_{f^1}\} \subset L^1$. In particular, $\{B^*(x_{f^1}, \cc^2 \gamma)\}$ are pairwise disjoint and
\begin{align*}
L^1 \subset \bigcup_{f^1}B^*(x_{f^1}, 2 \cc^2\gamma).
	\end{align*}	

By a limiting argument, if $\cc \le \cc(\eta)$, $\ep \le \ep(Y,  \eta, \cc)$ and $\zeta \le \zeta(Y, \ep, \eta, \cc, \beta)$, then any ball $B^*(x_{f^1}, \gamma)$ satisfying $\t(x_{f^1})<t_p(z, 1)-2 \eta^{-1} \gamma^2$ must be a $b$-ball.

According to the type of $B^*(x_{f^1},\gamma)$, we can write
		\begin{align*}
L^1 \subset \bigcup_{b^1} B^*(x_{b^1}, \gamma)\bigcup\bigcup_{c^1} B^*(x_{c^1}, \gamma)\bigcup\bigcup_{d^1} B^*(x_{d^1}, \gamma)\bigcup\bigcup_{e^1} B^*(x_{e^1}, \gamma).
		\end{align*}	

Then we set 
		\begin{align*}
			\NNN^1:= B^*(z,2)\setminus \lc\bigcup_{b^1} B^*(x_{b^1}, \gamma)\bigcup\bigcup_{c^1} B^*(x_{c^1}, \gamma)\bigcup\bigcup_{d^1} B^*(x_{d^1}, \gamma)\bigcup\bigcup_{e^1} B^*(x_{e^1}, \gamma)\rc.
		\end{align*}
Moreover, we define $\CCC^1:= \{x_{b^1},x_{c^1},x_{d^1},x_{e^1}\}$ with radius $r_x:= \gamma$. As in the proof of Proposition \ref{cballdecomposition2}, $\NNN^1$ is a $(\delta, \cc, 1)$-flat neck region regarding $\mathcal F^0(\Gamma)$.

We can reindex the $b$-balls $\bigcup_{b^1}B^*(x_{b^1},\gamma)$ as 
		\begin{align*}
			\bigcup_{b^{1,s}}B^*(x_{b^{1,s}},\gamma)\bigcup \bigcup_{b^{1,q}}B^*(x_{b^{1,q}},\gamma)
		\end{align*}
in the following way: if a $b$-ball $B^*(x_{b^1},\gamma)$ satisfies $\t(x_{b^1}) \le t_p(z, 1)-2 \eta^{-1} \gamma^2$, then it is designated as $B^*(x_{b^{1,s}},\gamma)$; otherwise, it is designated as $B^*(x_{b^{1,q}},\gamma)$. 

By another limiting argument, if $\cc \le \cc(\eta)$, $\ep \le \ep(Y, \delta, \eta, \cc)$ and $\zeta \le \zeta(Y, \ep, \eta, \cc, \beta)$, then there exists $\xi=\xi(Y)>0$ such that any point
		\begin{align*}
y\in Z_{(-\infty, t_p(z,1)+2\xi^2\gamma^2)}\bigcap B^*(z,2)
		\end{align*}	
 with $d_Z(y, \CCC^{1, \prime})=s\geq \gamma$ is $(2,\delta, \delta  s)$-symmetric for some constant $c(Y)>0$, where 
 		\begin{align*}
\CCC^{1, \prime}:=\CCC^1 \setminus \bigcup_{b^{1, s}}\{x_{b^{1, s}}\}.
		\end{align*}

 Next, we choose a cover $\{B^*(x_{e^{(1)}},\xi\gamma)\}$ of 
		\begin{align*}
\lc Z_{(t_p(z,1)+2 \xi^2\gamma^2,\infty)}\setminus \bigcup_{f^1}B^*(x_{f^1},\gamma)\rc\bigcap B^*(z,2)
		\end{align*}	
		such that $\{B^*(x_{e^{(1)}}, \xi\gamma/2)\}$ are pairwise disjoint. By \eqref{eq:timea1}, all these balls are $e$-balls, and the total number of $\{B^*(x_{e^{(1)}}, \xi\gamma)\}$ is bounded by $C(Y) \gamma^{-6}$. 
		
We define
  		\begin{align*}
\NNN^{1, \prime}=\NNN^1 \bigcup \bigcup_{b^{1, s}} B^*(x_{b^{1,s}},\gamma) \setminus \lc\bigcup_{e^{(1)}}B^*(x_{e^{(1)}}, \xi\gamma)\rc.
		\end{align*}	
 Then it is clear from our construction that any point $y$ in $\NNN^{1, \prime}$ is $(2,\delta, \delta d_Z(y, \CCC^{1, \prime}))$-symmetric.
  
It is clear that if $x_{f^1} \in \{x_{b^{1,q}}, x_{c^1}, x_{d^1}, x_{e^1}\}$, then $\t(x_{f^1}) \ge  t_p(z, 1)-2\eta^{-1}\gamma^2$. Thus, we have the content estimate
 		\begin{align} \label{eq:extraconten1}
\sum_{b^{1,q}} r_{b^{1, q}}+\sum_{c^1} r_{c^1}+\sum_{d^1} r_{d^1}+\sum_{e^1} r_{e^1} \le C(Y) \gamma \eta^{-1}\cc^{-4}
		\end{align}	
		and
	 		\begin{align} \label{eq:extraconten1a}
\sum_{e^{(1)}} r_{e^{(1)}} \le C(Y) \gamma^{-5}.
		\end{align}		
		
Similar to the proof of Proposition \ref{cballdecomposition2}, after repeating the above decomposition for $l$ times, we obtain \begin{align*}
			\NNN^l:= B^*(z,2)\setminus\lc\bigcup
			_{c^l} B^*(x_{c^l}, \gamma^l)\bigcup\bigcup_{1\leq j\leq l} \lc \bigcup_{b^j} B^*(x_{b^j}, \gamma^j)\bigcup\bigcup_{d^j} B^*(x_{d^j}, \gamma^j)\bigcup\bigcup_{e^j}B^*(x_{e^j}, \gamma^j) \rc \rc,
		\end{align*}
together with the corresponding collection$\bigcup_{1\leq j\leq l}\bigcup_{e^{(j)}}B^*(x_{e^{(j)}}, \xi\gamma^j)$. 	

As before, $\NNN^l$ is a $(\delta,\cc, 1)$-flat neck region regarding $\mathcal F^0(\Gamma)$, with center $\CCC^l$ consisting of the centers of the balls and radius function given by their associated radii.

Moreover, we define
  		\begin{align*}
\NNN^{l, \prime}=\NNN^l \bigcup \lc \bigcup_{1 \le j \le l} \bigcup_{b^{j, s}} B^*(x_{b^{j,s}},\gamma^j) \rc \setminus \lc \bigcup_{1 \le j \le l}\bigcup_{e^{(j)}}B^*(x_{e^{(j)}}, \xi\gamma^j)\rc.
		\end{align*}		
Then any point $y$ in $\NNN^{l, \prime}$ is $(2,\delta, \delta d_Z(y, \CCC^{l, \prime}))$-symmetric, where
		  		\begin{align*}
\CCC^{l, \prime}=\CCC^l \setminus \lc \bigcup_{1 \le j \le l} \bigcup_{b^{j,s}} \{x_{b^{j, s}}\} \rc.
		\end{align*}		
		
Similar to \eqref{eq:extraconten1} and \eqref{eq:extraconten1a}, we obtain the following facts that will be used below:
\begin{itemize}
\item For each $j \in [1, l]$, the total number of $\{x_{b^{j, q}}, x_{c^j}, x_{d^j}, x_{e^j}\}$ is at most $\lc C(Y) \eta^{-1}\cc^{-4} \rc^{j}$.
	
\item The total number of $\{x_{e^{(j)}}\}$ is at most $C(Y) \gamma^{-6}\lc C(Y) \eta^{-1} \cc^{-4}  \rc^{j-1}$.
\end{itemize}		
		
Consequently, we compute
\begin{align} \label{eq:extraconten2}
\sum_{b^{j,q}} r_{b^{j, q}}+\sum_{d^j} r_{d^j}+\sum_{e^j} r_{e^j} \le \lc C(Y) \eta^{-1} \cc^{-4} \gamma \rc^j=\lc C(Y)\eta^{-1} \gamma^{1/5} \rc^j 
		\end{align}
	where we used the fact that $\cc=\gamma^{1/5}$. Also, we obtain
\begin{align} \label{eq:extraconten3}
\sum_{e^{(j)}} r_{e^{(j)}} \le C(Y) \gamma^{-5} \lc C(Y) \eta^{-1}\cc^{-4} \gamma \rc^{j-1} =C(Y) \gamma^{-5}\lc C(Y) \eta^{-1}\gamma^{1/5}  \rc^{j-1}.
		\end{align}	
	
Set $\mathcal G^l:=\bigcup_{x_{c^l}} \{x_{c^l}\}$. It is clear from our construction that $\mathcal G^{l+1} \subset B^*_{2\gamma^l}(\mathcal G^l)$. Then we denote the Hausdorff limit of $\mathcal G^l$ by $\CCC_0$. We define
		\begin{align*}
			\NNN:= B^*(z,2)\setminus\lc\CCC_0\bigcup\bigcup_{1\leq j<\infty} \lc \bigcup_{b^j} B^*(x_{b^j}, \gamma^j)\bigcup\bigcup_{d^j} B^*(x_{d^j}, \gamma^j)\bigcup\bigcup_{e^j}B^*(x_{e^j}, \gamma^j) \rc \rc.
		\end{align*}
		
	From our construction, $\NNN$ is a $(\delta,\cc, 1)$-flat neck region regarding $\mathcal F^0(\Gamma)$ with centers given by $\CCC_0$ together with the centers of the associated balls, and the radius function chosen to be the corresponding radii of those balls. Moreover, we define
  		\begin{align*}
\NNN'=\NNN \bigcup \lc \bigcup_{1 \le j <\infty} \bigcup_{b^{j, s}} B^*(x_{b^{j,s}},\gamma^j) \rc \setminus \lc \bigcup_{1 \le j <\infty}\bigcup_{e^{(j)}}B^*(x_{e^{(j)}},\xi\gamma^j)\rc.
		\end{align*}		
Note that each point in $\NNN'$ is $(2, \delta,  \delta  d_Z(x, \CCC'))$-symmetric, where
		  		\begin{align*}
\CCC'=\CCC \setminus \lc \bigcup_{1 \le j <\infty}\bigcup_{b^{j, s}} \{x_{b^{j, s}}\} \rc.
		\end{align*}				
		
We rewrite the above decomposition as:
		\begin{equation*}
			B^*(z,2)\subset \left(\CCC_0\bigcup\NNN' \right)\bigcup\bigcup_b B^*(x_b,r_b)\bigcup \bigcup_d B^*(x_d,r_d)\bigcup \bigcup_e B^*(x_e,r_e)\bigcup \bigcup_{e'}B^*(x_{e'}, r_{e'}),
		\end{equation*}
where 
		  		\begin{align*}
\bigcup_{b}B^*(x_{b},r_{b}):=&\bigcup_{1\leq j < \infty}\bigcup_{b^{j, q}}B^*(x_{b^{j, q}},\gamma^j),\\
\bigcup_{b'}B^*(x_{b'},r_{b'}):=&\bigcup_{1\leq j < \infty}\bigcup_{b^{j, s}}B^*(x_{b^{j, s}},\gamma^j),	  \\		
\bigcup_{e'}B^*(x_{e'},r_{e'}):=&\bigcup_{1\leq j < \infty}\bigcup_{e^{(j)}}B^*(x_{e^{(j)}},\xi \gamma^j).
		\end{align*}	

From \eqref{eq:extraconten2}, we compute
\begin{align*}
&\sum_{x_b \in B^*(z, 3/2)} r_b+\sum_{x_d \in B^*(z, 3/2)}r_d+\sum_{x_e \in B^*(z, 3/2)} r_e \le \sum_{1 \le j <\infty} \lc C(Y) \eta^{-1} \gamma^{1/5} \rc^j  \le 1,
	\end{align*}		
where we choose $\cc \le \cc(Y, \eta)$ so that $C(Y) \eta^{-1} \gamma^{1/5}  \le 1/2$.

Similarly, by \eqref{eq:extraconten3}, we obtain
\begin{align*}
\sum_{x_{e'} \in B^*(z, 3/2)} r_{e'} \le \sum_{1 \le j <\infty} C(Y) \gamma^{-5}\lc C(Y) \eta^{-1} \gamma^{1/5}  \rc^{j-1} \le C(Y, \cc).
	\end{align*}	

To complete the proof, it remains to show (vii). Indeed, since the total number of elements in $\CCC^{l,\prime}$ is at most $\lc C(Y) \eta^{-1} \cc^{-4} \rc^l$, we conclude that
\begin{align*}
\abs{B^*_{\gamma^l} \lc \CCC' \bigcap B^*(z, 3/2) \rc} \le \abs{B^*_{4\gamma^l}\lc \CCC^{l,\prime} \bigcap B^*(z, 1.6) \rc} \le C(Y) \lc C(Y) \eta^{-1}\cc^{-4} \rc^{l} \gamma^{6l} \le C(Y) \lc C(Y) \eta^{-1} \gamma^{1/5} \rc^l \gamma^{5l}.
	\end{align*}	
We choose $\cc \le \cc(Y, \eta)$ such that 	$C(Y) \eta^{-1} \gamma^{1/5} \le \gamma^{1/10}$. From this, the conclusion easily follows.

In sum, the proof is complete.
\end{proof}

Based on Propositions \ref{cballdecomposition2} and \ref{cballdecomposition3}, it is natural to introduce the following concept.

\begin{defn} \label{def:modi}\index{modified $\delta$-region associated with $\NNN$}
The set $\NNN'$ obtained in Proposition \ref{cballdecomposition2} or Proposition \ref{cballdecomposition3} is called the \textbf{modified $\delta$-region associated with} $\NNN$. 
\begin{itemize}
\item In the case $k=2$, we have $\NNN' \subset \NNN$, and any point $x \in \NNN'$ is $(3, \delta, \delta d_Z(x, \CCC))$-symmetric.
	
\item In the case $k=1$, we have $\CCC' \subset \CCC$, and any point $x \in \NNN'$ is $(2, \delta, \delta d_Z(x, \CCC'))$-symmetric. Here, $\CCC'$ is called the \textbf{modified center}.
\end{itemize}
\end{defn}

\begin{prop}[Decomposition of $d$-balls]\label{dballdecomposition}
	For $z\in B^*(z_0, 4 r_0)$ and $ s \le r_0-d_Z(z, z_0)/4$, let $B^*(z,s)$ be a $d$-ball with $k \in \{1, 2\}$. If $\beta \le \beta(Y)$, we have the decomposition\emph{:}
	\begin{equation*}
B^*(z,s)\subset S_d^k\bigcup\bigcup_b B^*(x_b,r_b)\bigcup\bigcup_c B^*(x_c,r_c)\bigcup\bigcup_e B^*(x_e,r_e)
	\end{equation*}
	satisfying
	\begin{enumerate}[label=\textnormal{(\roman{*})}]
		\item for each $b$, $B^*(x_b,r_b)$ is a $b$-ball with $r_b \le \beta s;$
\item for each $c$, $B^*(x_c,r_c)$ is a $c$-ball with $r_c \le \beta s;$
		\item for each $e$, $B^*(x_e,r_e)$ is an $e$-ball with $r_e \le \beta s;$
		\item $S_d^k \subset \MS$ with $\HHH^{k}(S_d^k)=0;$
		\item the following content estimates hold:
		\begin{align*}
			\sum_b r_b^{k}+\sum_e r_e^{k}\leq C(Y) \beta^{k-6} s^k \quad \text{and} \quad \sum_c r_c^{k}\leq C(Y)\beta s^k.
		\end{align*}
	\end{enumerate}
\end{prop}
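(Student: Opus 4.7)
The plan is to decompose $B^*(z,s)$ by a two-part cover at every dyadic scale $\beta^l s$: a pinching-set cover whose cardinality is controlled by the $d$-ball hypothesis \eqref{eq:extravol2}, together with a complement cover that is automatically composed of $e$-balls. The decisive observation is the scale-inclusion $\mathcal{P}_{\zeta,r'}\subset\mathcal{P}_{\zeta,r}$ for $r'\le r$, which is immediate from $\widetilde{\WW}_y(\cdot)$ being nonincreasing: $\widetilde{\WW}_y(\zeta^2 r^2)\le\widetilde{\WW}_y(\zeta^2 r'^2)\le\bar W+\zeta$. Consequently, whenever $y\in B^*(z,s)$ satisfies $d_Z(y,\mathcal{P}_{\zeta,s}(z))>3\beta s$, one has $\mathcal{P}_{\zeta,\beta s}(y)\subset\mathcal{P}_{\zeta,s}\cap B^*(y,2.5\beta s)=\mathcal{P}_{\zeta,s}(z)\cap B^*(y,2.5\beta s)=\emptyset$ (using $B^*(y,2.5\beta s)\subset B^*(z,2.5s)$ for $\beta\le 1/5$), so $B^*(y,\beta s)$ is automatically an $e$-ball.

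For the single-scale step at scale $s$, I take a maximal $2\beta s$-separated set $\{x_i\}\subset\mathcal{P}_{\zeta,s}(z)$. The pairwise disjoint balls $\{B^*(x_i,\beta s)\}$ lie in $B^*_{3\beta s}(\mathcal{P}_{\zeta,s}(z))$, and Proposition \ref{prop:volumebound} together with \eqref{eq:extravol2} bounds their cardinality by $C(Y)\beta^{1-k}$; the doubled balls cover $\mathcal{P}_{\zeta,s}(z)$, and each is classified as a $b$-, $c$-, $d$-, or $e$-ball. The complement $B^*(z,s)\setminus B^*_{3\beta s}(\mathcal{P}_{\zeta,s}(z))$ is Vitali-covered by balls $B^*(y_j,\beta s)$ centered outside the $3\beta s$-neighborhood, whose cardinality is $\le C(Y)\beta^{-6}$ by Proposition \ref{prop:volumebound}, and each is an $e$-ball by the observation above. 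The total $k$-content of the pinching cover is $\le C(Y)\beta^{1-k}(\beta s)^k=C(Y)\beta s^k$, and of the complement cover is $\le C(Y)\beta^{-6}(\beta s)^k=C(Y)\beta^{k-6}s^k$.

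Iterating on the $d$-balls produced at each step, let $D_l$ denote the total $r^k$-content of $d$-balls at step $l$; the single-scale estimate yields $D_l\le C(Y)\beta D_{l-1}$, hence $D_l\le(C(Y)\beta)^l s^k$, so choosing $\beta\le\beta(Y)$ with $C(Y)\beta\le 1/2$ forces $D_l\to 0$ geometrically. Summing the newly created $b$-, $c$-, $e$-content step by step yields $\sum_c r_c^k\le\sum_{l\ge 1}C(Y)\beta D_{l-1}\le C(Y)\beta s^k$ and $\sum_b r_b^k+\sum_e r_e^k\le\sum_{l\ge 1}C(Y)\beta^{k-6}D_{l-1}\le C(Y)\beta^{k-6}s^k$. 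Setting $S_d^k:=\bigcap_l\bigcup_i B^*(x_{d_i^l},\beta^l s)$, its Minkowski $k$-content is bounded by $\liminf_l D_l=0$, so $\HHH^k(S_d^k)=0$. For the inclusion $S_d^k\subset\MS$: any $x\in S_d^k$ lies in a $d$-ball $B^*(x_d,\beta^l s)$ at every scale, which by definition is not a $b$-ball, so $B^*(x_d,2\beta^l s)\ni x$ contains no $(k+1,\eta,\beta^l s)$-symmetric point. Since regular points in dimension $4$ have all small-scale rescalings converging to the Euclidean model $(\R^4\times\R,d^*_{E,\ep_0})$, which is $(k+1,\eta,r)$-symmetric for every small $r$, the point $x$ must be singular.

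The main obstacle is the sharp separation in estimate (v) between the $c$-content (only the small factor $\beta$) and the $b$-, $e$-content (the much larger $\beta^{k-6}$). This is forced by the two-part structure of the single-scale cover: the pinching-set cover is tightly constrained by \eqref{eq:extravol2} to at most $C(Y)\beta^{1-k}$ balls—from which the $b$-, $c$-, and $d$-balls must all come—while the complement cover is allowed $C(Y)\beta^{-6}$ $e$-balls. Maintaining this separation uniformly in the iteration requires verifying that complement balls at each subsequent scale remain $e$-balls and never re-enter the pinching regime, which is precisely what the scale-inclusion $\mathcal{P}_{\zeta,\beta r}\subset\mathcal{P}_{\zeta,r}$ guarantees.
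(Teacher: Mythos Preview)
Your proposal is correct and follows essentially the same strategy as the paper: both exploit the scale-monotonicity $\mathcal{P}_{\zeta,r'}\subset\mathcal{P}_{\zeta,r}$ to confine the $c$- and $d$-balls near $\mathcal{P}_{\zeta,s}(z)$, where the $d$-ball hypothesis \eqref{eq:extravol2} limits their number to $C(Y)\beta^{1-k}$, and then iterate geometrically on the $d$-balls.

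The only difference is organizational. The paper takes a single Vitali cover of $B^*(z,s)$ by $\beta s$-balls and observes afterward that any $c$- or $d$-ball among them (having $\mathcal P_{\zeta,\beta s}(x)\cap B^*(x,\beta s)\ne\emptyset$) automatically satisfies $B^*(x,\beta s/2)\subset B^*_{3\beta s}(\mathcal{P}_{\zeta,s}(z))$, so their pairwise-disjoint half-balls fit inside a set of volume $\le D\beta^{7-k}s^6$; you instead build separate pinching and complement covers from the outset. Your split has a small gap as written: the doubled pinching balls $\{B^*(x_i,2\beta s)\}$ cover only $\mathcal{P}_{\zeta,s}(z)$ itself, not its full $3\beta s$-neighborhood, so together with your complement cover of $B^*(z,s)\setminus B^*_{3\beta s}(\mathcal{P}_{\zeta,s}(z))$ they need not exhaust $B^*(z,s)$. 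Enlarging to $B^*(x_i,5\beta s)$ (and rescaling $\beta$ so that all radii stay $\le\beta s$) fixes this without affecting any content estimate. As a byproduct, your route gives the slightly sharper bound $\sum_b r_b^k\le C(Y)\beta s^k$, since in your setup $b$-balls can only arise from the pinching cover; the paper does not separate $b$ from $e$ at this stage and simply records the joint bound $\sum_b r_b^k+\sum_e r_e^k\le C(Y)\beta^{k-6}s^k$.
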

\begin{proof}
Without loss of generality, we assume $s=1$. 

We first choose a maximal cover $\{B^*(x_{f^1}, \beta)\}$ of $B^*(z,1)$ with $x_{f^1} \in B^*(z, 1)$ so that $\{B^*(x_{f^1},\beta/2)\}$ pairwise disjoint. According to their types, we can write
		\begin{align} \label{eq:decomd001}
B^*(z,1)\subset \bigcup_{b^1} B^*(x_{b^1},\beta)\bigcup\bigcup_{c^1} B^*(x_{c^1},\beta)\bigcup\bigcup_{d^1} B^*(x_{d^1},\beta)\bigcup\bigcup_{e^1} B^*(x_{e^1},\beta).
		\end{align}
By Proposition \ref{prop:volumebound}, we have the following content estimate:
		\begin{align*}
	\sum_{b^1} \beta^{k}+\sum_{c^1} \beta^{k}+\sum_{d^1} \beta^{k}+\sum_{e^1} \beta^{k}\leq C(Y) \beta^{k-6}=:C_1.
		\end{align*}
	
	On the other hand, since $B^*(z,1)$ is a $d$-ball, we have $\abs{B^*_{3 \beta}\lc\mathcal{P}_{\zeta,1}(z) \rc} \leq D \beta^{7-k}$ by \eqref{eq:extravol2}. Then for $c$-ball $B^*(x_{c^1},\beta)$ and $d$-ball $B^*(x_{d^1},\beta)$, we know that neither $\mathcal{P}_{\zeta,\beta}(x_{c^1})$ nor $\mathcal{P}_{\zeta,\beta}(x_{d^1})$ is empty. Hence, by definition, we have
	\begin{align*}
		\mathcal{P}_{\zeta,\beta}(x_{c^1})\bigcup\mathcal{P}_{\zeta,\beta}(x_{d^1})\subset \mathcal{P}_{\zeta,1}(z)
	\end{align*}
which implies
	\begin{align*}
	B^*(x_{c^1}, \beta/2) \bigcup B^*(x_{d^1}, \beta/2) \subset B^*_{3 \beta}\lc\mathcal{P}_{\zeta,1}(z) \rc.
	\end{align*}
	Therefore, if we denote the number of $c$-balls and $d$-balls in \eqref{eq:decomd001} by $N_c$ and $N_d$, respectively, it follows from Proposition \ref{prop:volumebound} that
			\begin{align*}
			(N_c+N_d)\beta^{6}\leq C(Y) \beta^{7-k},			
		\end{align*}
		which implies that $N_c+N_d\leq C(Y) \beta^{1-k}$.	Thus, we have the following better content estimate for $c$-balls and $d$-balls:
	\begin{align}\label{eq:decomd002}
		\sum_{c^1}\beta^{k}+\sum_{d^1} \beta^{k}\leq C(Y) \beta.
	\end{align}
	
For each $d$-ball $B^*(x_{d^1},\beta)$, we can repeat the above decomposition to get
			\begin{align*}
\bigcup_{d^1}B^*(x_{d^1},\beta)\subset\bigcup_{b^2} B^*(x_{b^2},\beta^2)\bigcup\bigcup_{c^2} B^*(x_{c^2},\beta^2)\bigcup\bigcup_{d^2} B^*(x_{d^2},\beta^2)\bigcup\bigcup_{e^2} B^*(x_{e^2},\beta^2).
		\end{align*}
		
By Proposition \ref{prop:volumebound} and \eqref{eq:decomd002}, we obtain
	\begin{align*}
		\sum_{d^2}(\beta^2)^{k} \leq \big(C(Y) \beta \big)^2, \quad \sum_{b^2}(\beta^2)^{k}+\sum_{e^2}(\beta^2)^{k} \leq C_1 C(Y) \beta, \quad \text{and} \quad	\sum_{c^2}(\beta^2)^{k}\leq (C(Y)\beta)^2.
	\end{align*}

We write the decomposition of $B^*(z,1)$ as		
					\begin{align*}
B^*(z,1)\subset\bigcup_{d^2}B^*(x_{d^2},\beta^2)\bigcup_{1\leq j\leq 2}\lc\bigcup_{b^j} B^*(x_{b^j},\beta^j)\bigcup\bigcup_{c^j} B^*(x_{c^j},\beta^j)\bigcup\bigcup_{e^j} B^*(x_{e^j},\beta^j)\rc
		\end{align*}
with the content estimates:
	\begin{align*}
		\sum_{d^2}(\beta^2)^{k}&\leq \big(C(Y)\beta\big)^2,\\
		\sum_{1 \le j \le 2} \lc \sum_{b^j}(\beta^j)^{k}+\sum_{e^j}(\beta^j)^{k} \rc&\leq C_1+C_1 C(Y) \beta,\\
		\sum_{1 \le j \le 2}\sum_{c^j}(\beta^j)^{k}&\leq C(Y)\beta+(C(Y)\beta)^2.
	\end{align*}

	After repeating the above decomposition for $l$ steps, we obtain
						\begin{align*}
B^*(z,1)\subset\bigcup_{d^l}B^*(x_{d^l},\beta^l)\bigcup_{1\leq j\leq l}\lc\bigcup_{b^j} B^*(x_{b^j},\beta^j)\bigcup\bigcup_{c^j} B^*(x_{c^j},\beta^j)\bigcup\bigcup_{e^j} B^*(x_{e^j},\beta^j)\rc,
		\end{align*}
	with the content estimates:
	\begin{align}
		\sum_{d^l}(\beta^l)^{k}&\leq \big(C(Y)\beta\big)^l, \label{eq:decomd003a}\\
		\sum_{1 \le j \le l} \lc \sum_{b^j}(\beta^j)^{k}+\sum_{e^j}(\beta^j)^{k} \rc&\leq  C_1 \sum_{0 \le j \le l-1} (C(Y)\beta)^j, \label{eq:decomd003b}\\
		\sum_{1 \le j \le l}\sum_{c^j}(\beta^j)^{k}&\leq \sum_{1 \le j \le l} (C(Y) \beta)^j. \label{eq:decomd003c}
	\end{align}
	We assume that $\beta$ is small enough so that $C(Y)\beta<1/2$. From \eqref{eq:decomd003b} and \eqref{eq:decomd003c}, we have for any $l \ge 1$,
\begin{align*}
		\sum_{1 \le j \le l} \lc \sum_{b^j}(\beta^j)^{k}+\sum_{e^j}(\beta^j)^{k} \rc\leq  C_1 C(Y) \quad \text{and} \quad	\sum_{1 \le j \le l}\sum_{c^j}(\beta^j)^{k} \leq C(Y) \beta. 
		\end{align*}
		
	Set $S^{k, l}_d:=\bigcup_{d^l}\{x_{d^l}\}$. By our construction, we have $S^{k, l+1}_d \subset B^*_{\beta^l}\lc S^{k, l}_d \rc$. Then we denote the Hausdorff limit of $S^{k, l}_d$ by $S^k_d$. From \eqref{eq:decomd003a}, we obtain
		\begin{align*}
		\abs{B^*_{\beta^l} \lc S^{k, l}_d  \rc} \le \sum_{d^l} \abs{B^*(x_{d^l}, \beta^l)} \le C(Y) \sum_{d^l}\beta^{6l}\leq C(Y)\left(C(Y) \beta \right)^l\beta^{(6-k)l}.
	\end{align*}
	Thus, we obtain $\HHH^{k}(S^k_d)=0$. Moreover, since any $x_{d^l}$ is not $(k+1, \eta, \beta^l)$-symmetric, it is easy to show that $S^k_d \subset \MS$. 
	
	In sum, the proof is complete.
\end{proof}

\begin{prop}[Inductive decomposition]\label{inductivedecomposition}
Given $k \in \{1, 2\}$, $z\in B^*(z_0, 4r_0)$ and $ s \le r_0-d_Z(z, z_0)/4$, for any constants $\delta>0$, $\eta>0$, $\cc \in (0, 10^{-40})$ and $\beta>0$, if $\cc \le \cc(Y, \eta)$ \emph{(}for $k=2$ it suffices to assume $\cc \le \cc(Y)$\emph{)}, $\beta\le \beta(Y, \cc)$ and $\zeta \le \zeta( Y, \delta, \eta, \cc, \beta)$, we have the decomposition:
	\begin{equation*}
		B^*(z,s)\subset \bigcup_a \left((\CCC_{0,a}\bigcup\NNN'_a)\bigcap B^*(x_a,r_a)\right)\bigcup \bigcup_b B^*(x_b,r_b)\bigcup\bigcup_e B^*(x_e,r_e)\bigcup S_0,
	\end{equation*}
	satisfying
	\begin{enumerate}[label=\textnormal{(\roman{*})}]
		\item for all $a$, $\NNN_a\subset B^*(x_a,2r_a)$ is either a $(k, \delta, \cc, r_a)$-quotient cylindrical neck region or a $(\delta, \cc, r_a)$-flat neck region with center $\CCC_a=\CCC_{a, 0} \bigcup \CCC_{a, +}$ \emph{(}in the former case, we set $\NNN_a'=\NNN_a;$ in the latter, $\NNN_a'$ is the modified $\delta$-region associated with $\NNN_a$\emph{);}		
		\item for all $b$, $B^*(x_b,r_b)$ is a $b$-ball\emph{;}
		\item for all $e$, $B^*(x_e,r_e)$ is an $e$-ball\emph{;}
		\item $S_0\subset \MS$ with $\HHH^{k}(S_0)=0;$
		\item the following content estimate holds:
		\begin{align*}
			\sum_a \lc r_a^{k}+\HHH^k \lc \CCC_{0, a} \bigcap B^*(x_a,r_a) \rc \rc+\sum_b r_b^{k}+\sum_e r_e^{k}\leq C(Y, \cc, \beta) s^k.
		\end{align*}
	\end{enumerate}
\end{prop}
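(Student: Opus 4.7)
The plan is to establish the decomposition by iterating the $c$-ball and $d$-ball decomposition procedures already in hand, alternating between Propositions \ref{cballdecomposition1}--\ref{cballdecomposition3} (applied to $c$-balls) and Proposition \ref{dballdecomposition} (applied to $d$-balls), until only $a$-, $b$-, $e$-balls and a residual $\HHH^k$-null set remain. To begin, I would fix the parameters in the order $\cc \le \cc(Y,\eta)$, then $\beta \le \beta(Y,\cc)$ sufficiently small that $C(Y,\cc)\beta \le 1/2$, and then $\zeta \le \zeta(Y,\delta,\eta,\cc,\beta)$ so that each of Propositions \ref{cballdecomposition1}, \ref{cballdecomposition2}, \ref{cballdecomposition3}, \ref{dballdecomposition} is applicable at every scale encountered in the iteration.

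At the initial step I classify $B^*(z,s)$ as $b$-, $c$-, $d$-, or $e$-ball. If it is a $b$- or $e$-ball, it enters the corresponding collection directly. If it is a $c$-ball of type I (resp.\ type II), I apply Proposition \ref{cballdecomposition1} (resp.\ Proposition \ref{cballdecomposition2} for $k=2$ or Proposition \ref{cballdecomposition3} for $k=1$) to produce an $a$-ball whose neck region $\NNN$ has modified $\delta$-region $\NNN'$, together with finitely many sub-$b$-, $d$-, and $e$-balls at scales $\le \cc^5 s$. If it is a $d$-ball, I apply Proposition \ref{dballdecomposition} to obtain sub-$b$-, $c$-, $e$-balls at scales $\le \beta s$ and an $\HHH^k$-null set $S_d^k$ which I place into $S_0$. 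I then recurse on every sub-$c$- and sub-$d$-ball produced, and take the limit.

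The convergence analysis rests on the following two content bounds that I read off from the cited propositions. A single $c$-ball decomposition of content $C_0 = r^k$ produces at most $C(Y,\cc)C_0$ total $k$-content across its $a$-, $b$-, $d$-, $e$-balls. A single $d$-ball decomposition of content $C_0$ produces $c$-ball content at most $C(Y)\beta C_0$ (the sharper bound in Proposition \ref{dballdecomposition}(v)) and $b$-, $e$-ball content at most $C(Y)\beta^{k-6}C_0$. Composing one full cycle $c\to d\to c$, the surviving $c$-content contracts by the factor $C(Y,\cc)\cdot C(Y)\beta \le 1/2$, which guarantees geometric decay of residual $c$- and $d$-content. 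Summing the contributions from $a$-, $b$-, $e$-balls over all stages yields a total $k$-content bounded by $C(Y,\cc,\beta)s^k$, giving (v). The residual limit set is then $S_0 = \bigl(\bigcup_j S_{d,j}^k\bigr) \cup S_\infty$, where $S_\infty$ is the Hausdorff limit of the centers of the shrinking $c$- and $d$-balls; the first piece has $\HHH^k$-measure zero term-by-term by Proposition \ref{dballdecomposition}(iv), while $\HHH^k(S_\infty) = 0$ follows from the geometric decay of the covering radii via a standard Minkowski content estimate. That $S_0 \subset \MS$ follows because every such limit point fails to be $(k+1,\eta,r)$-symmetric at arbitrarily small scales, hence cannot be regular.

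The main technical obstacle is the bookkeeping required to verify that the parameters $\delta,\eta,\cc,\beta,\zeta$, fixed once at the outset, remain admissible uniformly across all iteration levels. In particular, the defining constant $\bar W$ of the pinching set $\PP_{\zeta,r}$ is global, so that applying the $c$-ball or $d$-ball decomposition to a sub-ball $B^*(x,r')$ at a deeper scale requires checking that $\PP_{\zeta,r'}(x)$ is computed with the same $\bar W$; this is consistent because the propositions as stated work off the same ambient $\zeta$-pinching data, and the hypothesis $\zeta \le \zeta(Y,\delta,\eta,\cc,\beta)$ is scale-invariant. A secondary subtlety is ensuring that the modified $\delta$-region $\NNN'_a$ in the flat case correctly absorbs the extra $e'$-balls produced inside Propositions \ref{cballdecomposition2}--\ref{cballdecomposition3}; these $e'$-balls are themselves $e$-balls and so are recursively processed in subsequent iterations, and their cumulative $k$-content is already controlled by the same geometric-series estimate.
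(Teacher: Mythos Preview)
Your proposal is correct and follows essentially the same argument as the paper. The paper organizes the iteration slightly differently---it bundles one application of Proposition~\ref{dballdecomposition} followed immediately by Propositions~\ref{cballdecomposition1}--\ref{cballdecomposition3} on the resulting $c$-balls into a single ``round,'' so that only $d$-balls survive between rounds and the contraction $\sum_{d^l} r_{d^l}^k \le (C(Y,\cc)\beta)^l$ is tracked directly---but this is merely a bookkeeping choice equivalent to your $c \to d \to c$ cycle. One small clarification: within this proposition the $e'$-balls are not recursively processed but simply added to the output $e$-collection (their recursive treatment happens later, in the proof of Theorem~\ref{neckdecomgeneral}, where the entropy level $\bar W$ is incremented); your content bound for them is nonetheless correct.
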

\begin{proof}
This follows from the iterative decompositions of $c$-balls and $d$-balls. Without loss of generality, we assume $s=1$.
	
	We consider $B^*(z,1)$ by types. If $B^*(z,1)$ is a $b$-ball or an $e$-ball, then we are done. So we only need to consider the remaining two cases in the following argument. Assume first that $B^*(z,1)$ is a $d$-ball. By Proposition \ref{dballdecomposition}, we can write
	\begin{align*}
		B^*(z,1)\subset \bigcup_{b'} B^*(x_{b'},r_{b'})\bigcup \bigcup_{c'} B^*(x_{c'},r_{c'})\bigcup \bigcup_{e'} B^*(x_{e'},r_{e'})\bigcup \tilde{S}^1,
	\end{align*}
	with $\tilde{S}^1\subset \MS$ satisfying $\HHH^k(\tilde{S}^1)=0$ and the following content estimates:
	\begin{align*}
		\sum_{b'} r_{b'}^{k}+\sum_{e'} r_{e'}^{k}\leq C(Y , \beta) \quad \text{and} \quad	\sum_{c'} r_{c'}^{k}&\leq C(Y) \beta.
	\end{align*}
	
For each $c$-ball $B^*(x_{c'},r_{c'})$, we can apply Proposition \ref{cballdecomposition1}, \ref{cballdecomposition2}, or \ref{cballdecomposition3} to obtain
	\begin{align*}
		B^*(x_{c'},r_{c'})\subset \lc (\CCC_{0, c'}\bigcup\NNN_{c'}')\bigcap B^*(x_{c'},r_{c'})\rc \bigcup\bigcup_{b''} B^*(x_{b''},r_{b''})\bigcup\bigcup_{d''} B^*(x_{d''},r_{d''})\bigcup\bigcup_{e''} B^*(x_{e''},r_{e''}),
	\end{align*} 
together with the content estimate:
	\begin{align*}
\HHH^{k}\lc\CCC_{0, c'} \bigcap B^*(x_{c'},r_{c'}) \rc+\sum_{b''} r_{b''}^{k}+\sum_{d''} r_{d''}^{k}+\sum_{e''}r_{e''}^{k}\leq C(Y, \cc)r_{c'}^k.
	\end{align*}
	Here, $\NNN_{c'}'=\NNN_{c'}$ if $\NNN_{c'} \subset B^*(x_{c'}, 2r_{c'})$ is a quotient cylindrical neck region, while $\NNN_{c'}'$ denotes the modified $\delta$-region associated with $\NNN_{c'}$ if $\NNN_{c'}$ is a flat neck region.

		Combining the above two decompositions and according to the type of these balls, we write
	\begin{align*}
		B^*(z,1)\subset &\bigcup_{a^1}\left(\CCC_{0,a^1}\bigcup \NNN'_{a^1}\bigcap B^*(x_{a^1}, r_{a^1})\right)\bigcup \bigcup_{b^1}B^*(x_{b^1},r_{b^1})\bigcup \bigcup_{d^1}B^*(x_{d^1},r_{d^1})\\
		&\bigcup \bigcup_{e^1}B^*(x_{e^1},r_{e^1})\bigcup \tilde{S}^1
	\end{align*}
	with $\tilde{S}^1\subset \MS$ satisfying $\HHH^k(\tilde{S}^1)=0$ and the following content estimate:
	\begin{align*}
		&\sum_{b^1}r_{b^1}^{k}+\sum_{e^1}r_{e^1}^{k} \leq C(Y, \beta)+C(Y,\cc)\beta=:C_1,\\
		&\sum_{a^1}\lc r_{a^1}^{k}+\HHH^{k}\lc\CCC_{0,a^1} \bigcap B^*(x_{a^1}, r_{a^1})\rc\rc+\sum_{d^1}r_{d^1}^{k}\leq C(Y, \cc) \beta.
	\end{align*}
	
	Now we apply the above decomposition to each $d$-ball $B^*(x_{d^1},r_{d^1})$ to get the second decomposition
	\begin{align*}
		B^*(z,1)\subset &\bigcup_{1\leq j\leq 2}\lc\bigcup_{a^j}\big(\CCC_{0,a^j}\bigcup \NNN'_{a^j}\bigcap B^*(x_{a^j}, r_{a^j})\big)\bigcup \bigcup_{b^j}B^*(x_{b^j},r_{b^j})\bigcup \bigcup_{e^j}B^*(x_{e^j},r_{e^j})\bigcup \tilde{S}^j\rc\\
		&\bigcup \bigcup_{d^2}B^*(x_{d^2},r_{d^2})
	\end{align*}
	with $\bigcup_{1\leq j\leq 2} \tilde{S}^j\subset \MS$ satisfying $\HHH^k(\bigcup_{1\leq j\leq 2}\tilde{S}^j)=0$ and the following content estimate:
	\begin{align*}
	\sum_{1\leq j\leq 2} \sum_{a^j}\lc r_{a^j}^{k}+\HHH^{k}\lc \CCC_{0,a^j} \bigcap B^*(x_{a^j}, r_{a^j}) \rc\rc &\le \sum_{1\le j \le 2} \lc C(Y, \cc) \beta \rc^j,\\
	\sum_{1\leq j\leq 2}\lc\sum_{b^j}r_{b^j}^{k}+\sum_{e^j}r_{e^j}^{k} \rc&\leq C_1 \sum_{0 \le j \le 1}\lc C(Y, \cc) \beta \rc^j,\\
		\sum_{d^2}r_{d^2}^{k}&\leq \left(C(Y, \cc) \beta \right)^2.
	\end{align*}
	
	Repeating this decomposition $l$ times, we get
	\begin{align*}
		B^*(z,1)\subset &\bigcup_{1\leq j\leq l}\lc\bigcup_{a^j}\big(\CCC_{0,a^j}\bigcup \NNN'_{a^j}\bigcap B^*(x_{a^j}, r_{a^j})\big)\bigcup \bigcup_{b^j}B^*(x_{b^j},r_{b^j})\bigcup \bigcup_{e^j}B^*(x_{e^j},r_{e^j})\bigcup \tilde{S}^j\rc\\
		&\bigcup \bigcup_{d^l}B^*(x_{d^l},r_{d^l})
	\end{align*}
	with $\bigcup_{1\leq j\leq l} \tilde{S}^j \subset\MS$ satisfying $\HHH^k(\bigcup_{1\leq j\leq l}\tilde{S}^j)=0$ and the following content estimate:
	\begin{align}
	\sum_{1\leq j\leq l} \sum_{a^j} \lc r_{a^j}^{k}+\HHH^{k}\lc\CCC_{0,a^j} \bigcap B^*(x_{a^j}, r_{a^j}) \rc\rc &\le \sum_{1\le j \le l} \lc C(Y, \cc) \beta \rc^j, \label{dcontentinduca}\\
	\sum_{1\leq j\leq l}\lc\sum_{b^j}r_{b^j}^{k}+\sum_{e^j}r_{e^j}^{k} \rc&\leq C_1 \sum_{0 \le j \le l-1}\lc C(Y, \cc) \beta \rc^j, \label{dcontentinducb}\\
		\sum_{d^l}r_{d^l}^{k}&\leq \lc C(Y, \cc) \beta \rc^l \label{dcontentinducc}.
	\end{align}
	
	Choose $\beta$ small such that $C(Y, \cc) \beta  \leq 1/2$. Set $\tilde{S}_{d^l}=\bigcup_{d^l}\{x_{d^l}\}$ and $(\tilde{S}_{d^l})_{2}:=\bigcup_{d^l}B^*(x_{d^l}, 2 r_{d^l})$. By construction, $(\tilde{S}_{d^{l+1}})_{2}\subset(\tilde{S}_{d^l})_{2}$. Set 
	\begin{align*}
		\tilde{S}_d:=\bigcap_{l=1}^\infty (\tilde{S}_{d^l})_{2}.
	\end{align*}
Thus, by construction, we have $\tilde{S}_d\subset\MS$. Define
	\begin{align*}
S_0:=\tilde{S}_d\bigcup\bigcup_{l=1}^\infty\tilde{S}^l.
	\end{align*}
 It is clear from \eqref{dcontentinducc} that $S_0 \subset \MS$ with $\HHH^k(S_0)=0$. Moreover, the content estimate follows from \eqref{dcontentinduca} and \eqref{dcontentinducb}.

	If $B^*(z,1)$ is a $c$-ball, we first apply Proposition \ref{cballdecomposition1}, \ref{cballdecomposition2}, or \ref{cballdecomposition3} to obtain a decomposition involving only $b$-balls, $d$-balls, $e$-balls and neck regions. Then, applying the above decomposition procedure recursively to each $d$-ball completes the proof.
\end{proof}

Now, we are ready to prove Theorem \ref{neckdecomgeneral}.

\begin{proof}[Proof of Theorem \ref{neckdecomgeneral}]

For given constants $\delta$ and $\eta$, we fix the constants as follows.
\begin{itemize}
\item If $k=1$, set $\cc=\cc(Y, \eta)$, $\beta=\beta(Y, \cc)=\beta(Y, \eta)$ and $\zeta \le \zeta( Y, \delta, \eta, \cc, \beta)=\zeta(Y, \delta, \eta)$ so that the conclusions of Proposition \ref{inductivedecomposition} hold.
	
\item If $k=2$, set $\cc=\cc(Y)$, $\beta=\beta(Y, \cc)=\beta(Y)$ and $\zeta \le \zeta( Y, \delta, \eta, \cc, \beta)=\zeta(Y, \delta, \eta)$ so that the conclusions of Proposition \ref{inductivedecomposition} hold.
\end{itemize}	

Moreover, in the following proof, the constant $C=C(Y)$ if $k=2$ and $C=C(Y, \eta)$ if $k=1$.

For simplicity, we consider the function
\begin{align*}
	\bar W_{x, r}(\zeta):=\inf _{y \in B^*(x,4 r)}\widetilde \WW_y(\zeta^{-2} r^2)
\end{align*}
for any $x \in B^*(z_0, 4r_0)$ and $r \le r_0-d_Z(x, z_0)/4$. In particular, $\bar W_{z_0, r_0}(\zeta)=\bar W$ in \eqref{infNash}.

	Choose a maximal cover $\{B^*(x_i,\zeta r_0)\}$ of $B^*(z_0, r_0)$ such that $x_i \in B^*(z_0, r_0)$ and $\{B^*(x_i,\zeta/2)\}$ are pairwise disjoint. We set $r_1:=\zeta r_0$. Note that
	\begin{align}\label{decom1}
		\bar W_{x_i,r_1}(\zeta)=\inf_{y\in B^*(x_i,4\zeta r_0)}\widetilde \WW_y(r_0^2) \geq \inf_{y\in B^*(z_0,4)}\widetilde \WW_y(r_0^2)=\bar W_{z_0,r_0}(1).
	\end{align}
	
	Apply Proposition \ref{inductivedecomposition} to each $B^*(x_i, r_1)$. In this context, we consider a new quantity $\bar W_1:=\bar W_{x_i,r_1}(\zeta)$, and define the updated pinching sets and localized pinching set as
	\begin{equation*}
		\PP'_{\zeta,r}:= \{y\in B^*(x_i, 4 r_1) \mid \widetilde \WW_y(\zeta^2 r^2)\leq \bar W_1+\zeta\}.
	\end{equation*}
	and
		\begin{equation*}
	\PP'_{\zeta,r}(x):= \PP'_{\zeta,r} \bigcap B^*(x, 2.5 r).
	\end{equation*}

Accordingly, the definitions of $b$-balls, $c$-balls, $d$-balls and $e$-balls are now made with respect to $\bar W_1$ and the updated pinching set $\PP'_{\zeta,r}(x)$.
	
From Proposition \ref{inductivedecomposition}, we obtain
	\begin{align*}
		B^*(z_0,r_0)\subset \bigcup_{a^1}\lc(\NNN'_{a^1}\bigcup\CCC_{0,a^1})\bigcap B^*(x_{a^1},r_{a^1})\rc\bigcup\bigcup_{b^1} B^*(x_{b^1},r_{b^1})\bigcup\bigcup_{e^1} B^*(x_{e^1}',r_{e^1}')\bigcup S_0^1,
	\end{align*}
	with $r_{a^1},r_{b^1},r_{e^1}'\leq \zeta r_0$, $S_0^1\subset \MS$ and $\HHH^{k}(S_0^1)=0$. Moreover, we have the following content estimates:
			\begin{align*}
\sum_{a^1} (r_{a^1})^k+\sum_{b^1} (r_{b^1})^k+\sum_{e^1}(r_{e^1}')^k+\HHH^k\left(\bigcup_{a^1} \lc \CCC_{0,{a^1}} \bigcap B^*(x_{a^1},r_{a^1}) \rc \right)\leq C r_0^k.
	\end{align*}	
	
	For each $e$-ball, by \eqref{decom1}, we have
	\begin{align}\label{decom3}
		 \bar W_{x'_{e^1},r_{e^1}'}(\zeta^{-1})=\inf_{y\in B^*(x_{e^1}', 4r_{e^1}')}\widetilde \WW_y\left((\zeta r'_{e^1})^2\right)\geq \inf_i\left\{\bar W_{x_i, r_1}(\zeta)\right\}+\zeta\geq \bar W_{z_0,r_0}(1)+\zeta.
	\end{align}
	Moreover, for each $e$-ball $B^*(x_{e^1}', r_{e^1}')$, choose a maximal cover $\{B^*(x_{e^1}'',\zeta r_{e^1}')\}$ with $x_{e^1}'' \in B^*(x_{e^1}', r_{e^1}')$ such that $x_{e^1}''\in B^*(x_{e^1}', r_{e^1}')$ and $\{B^*(x_{e^1}'',\zeta r_{e^1}'/2)\}$ are pairwise disjoint. Set $r_{e^1}'':=\zeta r_{e^1}'$. Therefore by \eqref{decom3}, it follows that
	\begin{align*}
		\bar W_{x_{e^1}'',r_{e^1}''}(1)=\inf_{y\in B^*(x_{e^1}'', 4r''_{e^1})}\widetilde \WW_y\left((\zeta r'_{e^1})^2\right)\geq \bar W_{z_0,r_0}(1)+\zeta.
	\end{align*}
	
	Combining the above two decompositions, we obtain
		\begin{align*}
		B^*(z_0,r_0)\subset \bigcup_{a^1}\lc(\NNN'_{a^1}\bigcup\CCC_{0,a^1})\bigcap B^*(x_{a^1},r_{a^1})\rc\bigcup\bigcup_{b^1} B^*(x_{b^1},r_{b^1})\bigcup\bigcup_{e^1} B^*(x_{e^1},r_{e^1})\bigcup S_0^1,
	\end{align*}
	with $S_0^1\subset \MS$ and $\HHH^{k}(S_0^1)=0$. Moreover, we have the following content estimates:
				\begin{align*}
\sum_{a^1} r_{a^1}^k+\sum_{b^1} r_{b^1}^k+\sum_{e^1} r_{e^1}^k+\HHH^k\left(\bigcup_{a^1}\lc \CCC_{0,{a^1}} \bigcap B^*(x_{a^1},r_{a^1}) \rc\right)\leq C r_0^k.
	\end{align*}	
And for each $e$-ball, $\bar W_{x_{e^1},r_{e^1}}(1)\geq \bar W_{z_0,r_0}(1)+\zeta$.
	
	Repeat the above process for each $e$-ball $B^*(x_{e^1},r_{e^1})$, we get the following second decomposition
		\begin{align*}
		B^*(z_0,r_0)\subset \bigcup_{a^2}\lc(\NNN'_{a^2}\bigcup\CCC_{0,a^2})\bigcap B^*(x_{a^2},r_{a^2})\rc\bigcup\bigcup_{b^2} B^*(x_{b^2},r_{b^2})\bigcup\bigcup_{e^2} B^*(x_{e^2},r_{e^2})\bigcup S_0^2,
	\end{align*}
	with $S_0^2\subset \MS$ and $\HHH^{k}(S_0^2)=0$. Moreover, we have the following content estimates:
\begin{align*}
&\sum_{a^2} r_{a^2}^k+\sum_{b^2} r_{b^2}^k+\sum_{e^2} r_{e^2}^k+\HHH^k\left(\bigcup_{a^2} \lc \CCC_{0,{a^2}} \bigcap B^*(x_{a^2},r_{a^2}) \rc\right)\leq C r_0^k.
	\end{align*}	
Moreover, for each $e$-ball $B^*(x_{e^2},r_{e^2})$, we have $\bar W_{x_{e^2},r_{e^2}}(1)\geq \bar W_{z_0,r_0}(1)+2\zeta$.
	
We apply Proposition \ref{inductivedecomposition} to each $e$-ball $B^*(x_{e^2},r_{e^2})$ to continue the decomposition process. This process must terminate in at most $C(Y)/\zeta$ steps, since our Ricci flow limit space $Z$ is obtained from a sequence of closed Ricci flows in $\MM(n, Y, T)$. In other words, after the final step, no $e$-balls remain. 

Define
	\begin{align*}
		\tilde{S}^{k,\delta,\eta}:=\bigcup_i S_0^i.
	\end{align*}
After re-indexing the balls, we obtain the following decomposition:
	\begin{align*}
		B^*(z_0,r_0)\subset \bigcup_a\big(\NNN'_a\bigcup\CCC_{0,a}\bigcap B^*(x_a,r_a)\big)\bigcup\bigcup_b B^*(x_b,r_b)\bigcup \tilde{S}^{k,\delta,\eta},
	\end{align*}
	with $\tilde{S}^{k,\delta,\eta}\subset\MS$ and $\HHH^{k}(\tilde{S}^{k,\delta,\eta})=0$. Moreover, the following content estimates hold:
	\begin{align*}
\sum_a r_a^{k}+\sum_b r_b^{k}+\HHH^{k}\left(\bigcup_a\lc \CCC_{0, a} \bigcap B^*(x_a,r_a) \rc\right)\leq C r_0^k.
	\end{align*}	

Finally, set
	\begin{align*}
S^{k,\delta,\eta}=\tilde{S}^{k,\delta,\eta}\bigcup \bigcup_a\CCC_{0,a}.
	\end{align*}
This completes the proof.
\end{proof}

\subsection{Rectifiability and volume estimates of singular sets}

Recall that for each stratum $\MS^k$, we have
\begin{align*}
\MS^{k}=\bigcup_{\ep \in (0, 1)} \bigcap_{0<r<\ep } \MS^{\ep,k}_{r, \ep}.
\end{align*}
where $\MS^{\ep,k}_{r_1,r_2}$ is the quantitative singular stratum in Definition \ref{introdefnquantiSS}.

We now fix all related constants by the following lemma, whose proof is straightforward from our definition of neck regions and a limiting argument.

\begin{lem}\label{lem:constant}
Given $\ep>0$, there exist small positive constants $\delta=\delta(Y, \ep)$, $\eta=\eta(Y, \ep)$ and $\zeta=\zeta(Y, \ep)$ such that the following properties hold:
\begin{enumerate}[label=\textnormal{(\roman{*})}]
		\item Theorem \ref{neckdecomgeneral} holds with the constants $\delta$, $\eta$ and $\zeta$.

		\item If $\NNN_a=B^*(x_a,2r_a)\setminus B^*_{r_x}(\CCC_a)$ is a $(k,\delta, \cc(Y), r_a)$-quotient neck region for $k \in \{1, 2\}$, then any $y\in B^*(x_a, 3r_a/2) \setminus B^*_{r_x/4}(\CCC_a)$ is $(3,\ep,  s)$-symmetric for any $s \le \ep d_Z(y,\CCC_a)$.
		
		\item If $\NNN_a=B^*(x_a,2r_a)\setminus B^*_{r_x}(\CCC_a)$ is a $(\delta, \cc(Y), r_a)$-flat neck region and $k=2$, then any $y\in \NNN'_a \bigcup \lc B^*(x_a, 3r_a/2) \bigcap \lc B^*_{r_x}(\CCC_a) \setminus B^*_{r_x/4}(\CCC_a) \rc \rc$ is $(3,\ep, s)$-symmetric for any $s \le \ep d_Z(y,\CCC_a)$, where $\NNN_a'$ is the modified $\delta$-region associated with $\NNN_a$.
		
		\item If $\NNN_a=B^*(x_a,2r_a)\setminus B^*_{r_x}(\CCC_a)$ is a $(\delta, \cc(Y), r_a)$-flat neck region and $k=1$, then any $y\in \NNN'_a \bigcap  B^*(x_a, 3r_a/2)$ is $(2,\ep, s)$-symmetric for any $s = \ep d_Z(y,\CCC'_a)$, where $\NNN_a'$ is the modified $\delta$-region associated with $\NNN_a$.
		
		\item If $B^*(x_b, r_b)$ is a $b$-ball, then any $y \in B^*(x_b, 3r_b/2)$ is $(k+1, \ep, \ep r_b)$-symmetric if $k=2$, and $(k+1, \ep, r)$-symmetric for some $r \in (c(Y, \ep) r_b, r_b)$ if $k=1$.
	\end{enumerate}	
\end{lem}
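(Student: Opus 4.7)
\medskip

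The plan is to choose the three parameters in the order $\ep \to (\delta,\eta) \to \zeta$, with each stage a standard compactness/contradiction argument using the rigidity of the relevant model spaces. First I fix $\delta=\delta(Y,\ep)$ and $\eta=\eta(Y,\ep)$ small enough to guarantee (ii)--(v), and only then invoke Theorem \ref{neckdecomgeneral} with these $\delta,\eta$ to pick $\zeta \le \zeta(Y,\delta,\eta)=\zeta(Y,\ep)$ securing (i). The consistency check is that (ii)--(v) do not themselves constrain $\zeta$: each is a property of neck regions and $b$-balls once these have been produced, not of the decomposition algorithm.

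For (ii) I argue by contradiction. If the statement fails, I obtain a sequence of $(k,\delta_l,\cc(Y),r_l)$-quotient cylindrical neck regions $\NNN^l$ with $\delta_l \to 0$ and points $y_l \in B^*(x_l,3r_l/2)\setminus B^*_{r_{x}/4}(\CCC^l)$ with $s_l \le \ep d_Z(y_l,\CCC^l)$ at which $y_l$ fails to be $(3,\ep,s_l)$-symmetric. After rescaling so that $d_Z(y_l,\CCC^l)=1$ and passing to a pointed Gromov--Hausdorff limit, the limiting space is modeled on $\bar{\mathcal C}^m_k(\Gamma)$ (by Proposition \ref{prop:uniformcq}), and $y_l$ converges to a point lying off the spine $\R^k\times\{[0]\}$; at such a point the tangent flow splits off an $\R^{k+1}$ and is $3$-symmetric, contradicting the failure of $(3,\ep,s_l)$-symmetry at scale $s_l \le \ep$. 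The analogous argument for (iii) and (iv) is already essentially built into Propositions \ref{cballdecomposition2} and \ref{cballdecomposition3}: the modified $\delta$-region $\NNN'_a$ was defined so that each of its points is $(3,\delta,\delta d_Z(x,\CCC))$-symmetric (resp.\ $(2,\delta,\delta d_Z(x,\CCC'))$-symmetric when $k=1$), and at a point that lies near $\CCC_a$ but outside $B^*_{r_x/4}(\CCC_a)$ the model space $\mathcal F^0(\Gamma)$ is smooth, so a further limiting argument upgrades $\delta$-closeness to $\ep$-closeness once $\delta \le \delta(Y,\ep)$.

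For (v) with $k=2$: a $b$-ball contains a $(3,\eta,r_b)$-symmetric point $z_b^* \in B^*(x_b,2r_b)$. A tangent model at $z_b^*$ is a $3$-symmetric Ricci shrinker space; by Theorem \ref{thm:intro5} and the classification recalled in Section \ref{sec:nd}, restricted to a small neighborhood this model splits off an $\R^3$-factor or is a $1$-splitting static cone of flat type, and in either case is smooth and $3$-symmetric at every point of $B^*(z_b^*,3r_b) \setminus \{z_b^*\}$ at all scales. A standard compactness argument then shows that if $\eta \le \eta(Y,\ep)$, then \emph{every} point of $B^*(x_b,3r_b/2)$ is $(3,\ep,\ep r_b)$-symmetric. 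The main obstacle is the $k=1$ subcase of (v): here the model space is only $2$-symmetric and one can only promote $2$-symmetry, not $3$-symmetry, at a scale that depends on how far the model is from being $3$-symmetric; this is why the statement only claims $(k+1,\ep,r)$-symmetry for some $r\in(c(Y,\ep)r_b,r_b)$ rather than at the fixed scale $\ep r_b$. I handle this by an additional contradiction argument: if no such $r$ existed, the rescalings of the model would fail $(2,\ep)$-symmetry at every scale in $(c,1)r_b$, which contradicts the fact that a neighborhood of a $(2,\eta,r_b)$-symmetric point in a $2$-symmetric model must be $(2,\ep)$-symmetric at some definite scale by a three-annulus argument, with the constant $c(Y,\ep)$ tracking the gap between this intermediate scale and $r_b$.

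\medskip

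Once $\delta$ and $\eta$ are fixed by the compactness arguments above, the final step is to choose $\zeta$ so small that Theorem \ref{neckdecomgeneral} applies and so that any auxiliary applications of pseudolocality, almost-splitting, or entropy continuity used in the limits above go through uniformly. All constants are then of the form declared in the statement, and the lemma follows.
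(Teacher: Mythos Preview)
Your proposal is correct and follows exactly the approach the paper indicates: the paper states only that the proof is ``straightforward from our definition of neck regions and a limiting argument,'' and your compactness/contradiction arguments for (ii)--(v), followed by invoking Theorem \ref{neckdecomgeneral} to fix $\zeta$, are precisely that. Two small clarifications that do not affect validity: in (v) for $k=2$ the model of a $(3,\eta,r_b)$-symmetric point in dimension four is, by Definition \ref{defnalmostsymmetric}, always $\R^4\times(-\infty,t_a]$ (no separate static-cone case), and in (ii) the relevant input is the neck-region axiom (n3) rather than Proposition \ref{prop:uniformcq}.
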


Based on Lemma \ref{lem:constant}, we define
\begin{align} \label{eq:largerneck}
\NNN_a'':=
\begin{dcases}
 B^*(x_a, 3r_a/2) \setminus B^*_{r_x/4}(\CCC_a), \quad &\text{for case (ii)};\\
 \NNN'_a \bigcup \lc B^*(x_a, 3r_a/2) \bigcap \lc B^*_{r_x}(\CCC_a) \setminus B^*_{r_x/4}(\CCC_a) \rc \rc, \quad &\text{for case (iii)}.
\end{dcases}
\end{align}

Next, we prove the rectifiability for the singular strata $\MS^0$, $\MS^1$ and $\MS^2$.

\begin{thm} \label{thm:rec1}
For any $k \in \{ 0, 1, 2\}$, $\MS^k$ is parabolic $k$-rectifiable.
\end{thm}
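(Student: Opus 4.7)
The plan is to handle the three cases $k \in \{0, 1, 2\}$ separately, each building on progressively stronger tools from the preceding sections.

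For $k=0$, the statement follows immediately from Proposition \ref{prop:countable}, which asserts that $\MS^0$ is countable. Writing each singleton as a set $Z_i$ in Definition \ref{defnrectifiablity} (with the trivial map into $\R^0$) shows that $\MS^0$ is parabolic $0$-rectifiable. For $k=1$, observe that by Corollary \ref{cor:kthcover}(c), $\MS^0$ has Minkowski dimension at most $0$, so $\HHH^1(\MS^0)=0$, making $\MS^0$ trivially parabolic $1$-rectifiable. By the decomposition \eqref{intro:decomp2}, $\MS^1 \setminus \MS^0 = \MS^1_{\mathrm{qc}} \setminus \MS^0_{\mathrm{qc}}$, which is horizontally parabolic $1$-rectifiable by Theorem \ref{thmrectquotient}. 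Combining these yields parabolic $1$-rectifiability of $\MS^1$.

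For $k=2$, the main case, apply the decomposition \eqref{intro:decomp1}:
\begin{equation*}
\MS^2 = \MS^1 \sqcup \lc\MS^2_{\mathrm{qc}} \setminus \MS^1_{\mathrm{qc}}\rc \sqcup \MS^2_{\mathrm{F}}.
\end{equation*}
Corollary \ref{cor:kthcover}(c) gives $\HHH^2(\MS^1) = 0$, so $\MS^1$ is trivially parabolic $2$-rectifiable, while Theorem \ref{thmrectquotient} yields horizontal parabolic $2$-rectifiability of $\MS^2_{\mathrm{qc}}$. The remaining task is to show that $\MS^2_{\mathrm{F}}$ is vertically parabolic $2$-rectifiable. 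I would cover $Z_{\III^-}$ by countably many balls $B^*(z_0^{(j)}, r_0^{(j)})$ with $\t(z_0^{(j)}) - 2\zeta^{-2}(r_0^{(j)})^2 \in \III^-$ and apply Theorem \ref{neckdecomgeneral} (with $k=2$) to each, using a sequence of parameters $(\delta_i, \eta_i) \to 0$ calibrated via Lemma \ref{lem:constant}. Since a point $z \in \MS^2_{\mathrm{F}}$ is not $(3, \eta_i, r)$-symmetric at any sufficiently small scale, such a point eventually avoids every $b$-ball $B^*(x_b, r_b)$. Consequently, modulo the $\HHH^2$-null remainder $\tilde S^{2,\delta_i,\eta_i}$, these points are contained in $\bigcup_a \CCC_{0,a}$.

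The main obstacle will be verifying that points of $\MS^2_{\mathrm{F}}$ are captured only by centers of \emph{flat} neck regions and not by centers of quotient cylindrical neck regions. This relies on the mutual exclusivity of the two neck types: by Proposition \ref{prop:uniformcq}, the tangent flow at a uniformly quotient cylindrical point is unique and of quotient cylindrical type, so it cannot coincide with $\R^4/\Gamma \times \R$; therefore $\MS^2_{\mathrm{F}} \cap \MS^2_{\mathrm{qc}} = \emptyset$, and a pointwise scale argument combined with the quantitative uniqueness forbids a point with flat-cone tangent from being captured at fine scales by a quotient cylindrical neck. Once this separation is established, Lemma \ref{lem:bilitpsta} shows that each flat neck center $\CCC_{0,a}$ is vertically parabolic $2$-rectifiable, and assembling the countable union with the rectifiability of $\MS^2_{\mathrm{qc}}$ and the $\HHH^2$-null sets $\MS^1$ and $\tilde S^{2,\delta_i,\eta_i}$ completes the proof.
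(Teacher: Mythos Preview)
Your proposal is essentially correct and follows the same overall strategy as the paper, but the execution for $k=2$ is more complicated than necessary and has a small gap.

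The paper applies Theorem \ref{neckdecomgeneral} with a \emph{single} choice of parameters $\delta,\eta$ (fixed via Lemma \ref{lem:constant} for a sufficiently small $\ep=\ep(Y)$), rather than a sequence $(\delta_i,\eta_i)\to 0$. The key observation is that, by Lemma \ref{lem:constant}(ii)(iii)(v), every point in $\bigcup_a\bigl(\NNN'_a\cap B^*(x_a,r_a)\bigr)\cup\bigcup_b B^*(x_b,r_b)$ is $(3,\ep,s)$-symmetric for some small $s>0$, and hence \emph{regular} once $\ep$ is small enough. Therefore any singular point---in particular any point of $\MS^2_{\mathrm F}$---is automatically forced into $\bigcup_a\CCC_{0,a}\cup\tilde S^{2,\delta,\eta}$. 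Your ``eventual avoidance'' argument for the $b$-balls would work, but it requires tracking a sequence of decompositions, and you omit the analogous argument for the regions $\NNN'_a$ (which a priori could also contain points of $\MS^2_{\mathrm F}$). The paper's single-shot regularity argument handles both at once.

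For the separation of flat versus quotient cylindrical neck regions, the paper's argument is again more direct: if $z\in\MS^2_{\mathrm F}\cap\CCC_{0,a}$ and $\NNN_a$ were quotient cylindrical, then by $(n3)$ of Definition \ref{defiofcylneckregionquo} the point $z$ would be $(k,\delta,s)$-quotient cylindrical for all $s\in(0,2r_a]$, forcing (via Proposition \ref{prop:uniformcq}) every tangent flow at $z$ to be quotient cylindrical---contradicting the existence of the flat-cone tangent flow $\R^4/\Gamma\times\R$. This is the same content as your separation argument, just phrased at the level of the neck-region definition rather than as a pointwise scale argument.
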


\begin{proof}
We prove the the case $k \in \{ 0, 1, 2\}$ separately.

\textbf{Case $k=0$:} This has already been proved in Proposition \ref{prop:countable}.

\textbf{Case $k=1$:} Since $\MS^1 \setminus \MS^0 = \MS^1_{\mathrm{qc}}\setminus \MS^0_{\mathrm{qc}}$ and $\HHH^1(\MS^0)=0$, it follows from Theorem \ref{thmrectquotient} that $\MS^1$ is horizontally parabolic $1$-rectifiable.

\textbf{Case $k=2$:} Recall that we have
\begin{align*}
\MS^2 \setminus \MS^1 = \lc \MS^2_{\mathrm{qc}}\setminus \MS^1_{\mathrm{qc}}\rc \sqcup \MS^2_{\mathrm{F}}.
\end{align*}
By Theorem \ref{thmrectquotient}, it remains only to show that $\MS^2_{\mathrm{F}}$ is vertically parabolic $2$-rectifiable. 

Fix any $z_0 \in \MS^2_{\mathrm{F}}$. By Theorem \ref{neckdecomgeneral} (with $k=2$), there exists a sufficiently small $r_0>0$ such that
\begin{align} \label{eq:4drec003}
\MS^2_{\mathrm{F}} \bigcap B^*(z_0,r_0)\subset \bigcup_a \lc \NNN'_a \bigcup \CCC_{0,a} \bigcap B^*(x_a,r_a) \rc \bigcup \bigcup_b B^*(x_b,r_b)\bigcup \tilde S^{2,\delta,\eta}.
\end{align}
By Lemma \ref{lem:constant}, choosing $\ep$ sufficiently small ensures that
\begin{align*}
\bigcup_a\lc \NNN'_a \bigcap B^*(x_a,r_a)\rc \bigcup \bigcup_b B^*(x_b,r_b)
\end{align*}
consists entirely of regular points. Therefore, from \eqref{eq:4drec003}, we obtain
\begin{align*}
\MS^2_{\mathrm{F}} \bigcap B^*(z_0,r_0) \subset \bigcup_a\lc \CCC_{0,a} \bigcap B^*(x_a,r_a) \rc \bigcup \tilde{S}^{2,\delta,\eta}.
\end{align*}

It is clear from the definition of $\MS^2_{\mathrm{F}}$ that if $\MS^2_{\mathrm{F}} \bigcap \CCC_{0,a} \ne \emptyset$, then $\NNN_a$ is a static $(\delta, \cc(Y), r_a)$-flat neck region, which implies, by Lemma \ref{lem:bilitpsta}, that $\CCC_{0,a}$ is vertically parabolic $2$-rectifiable. Thus, we conclude that $\MS^2_{\mathrm{F}} \bigcap B^*(z_0,r_0)$ is vertically parabolic $2$-rectifiable.

A standard covering argument then shows that $\MS^2_{\mathrm{F}}$ is vertically parabolic $2$-rectifiable, completing the proof.
\end{proof}

\begin{cor} \label{cor:tangunique}
For $\HHH^2$-a.e. $x \in \MS_{\mathrm{F}}^2$, the tangent flow at $x$ is backward unique in the sense that the negative part of any tangent flow at $x$ is isometric to $\R^4/\Gamma \times \R_-$.
\end{cor}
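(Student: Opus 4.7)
The plan is to combine the neck decomposition (Theorem \ref{neckdecomgeneral}) with a countable refinement as $\delta \searrow 0$: at $\HHH^2$-a.e. point of $\MS^2_{\mathrm{F}}$, one should fall inside the center $\CCC_{0,a}$ of a flat neck region at a sequence of quantitative closeness scales $\delta_i \searrow 0$, and property $(n3)$ of Definition \ref{defnneckgeneral} will then force every tangent flow to agree with $\R^4/\Gamma \times \R_-$ on the negative part.

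First I would fix a sequence $\delta_i \searrow 0$ and, for each $i$, apply Theorem \ref{neckdecomgeneral} with $k = 2$ and the constants from Lemma \ref{lem:constant}, together with a countable covering argument exactly as in the proof of Theorem \ref{thm:rec1}. This should produce an $\HHH^2$-null set $N_i$ such that every $z \in \MS^2_{\mathrm{F}} \setminus N_i$ is contained in a center $\CCC_{0,a(i,z)}$ of a $(\delta_i, \cc(Y), r_{a(i,z)})$-flat neck region regarding $\mathcal F^0(\Gamma_{i,z})$, for some finite subgroup $\Gamma_{i,z} \leqslant \mathrm{O}(4)$ acting freely on $S^3$. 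Here I would use that centers of quotient cylindrical neck regions consist of $(2, \delta_i, s)$-quotient cylindrical points for all $s > 0$; by choosing $\delta_i$ small enough, their tangent flows are forced to be cylindrical (isometric to $\bar{\mathcal C}^2$ or $\bar{\mathcal C}^2_2(\Z_2)$) and hence cannot contain $\R^4/\Gamma \times \R$. Setting $E := \MS^2_{\mathrm{F}} \setminus \bigcup_i N_i$ gives $\HHH^2(\MS^2_{\mathrm{F}} \setminus E) = 0$.

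Next, I would fix $z \in E$. Since the entropy lower bound $-Y$ permits only finitely many conjugacy classes of groups $\Gamma \leqslant \mathrm{O}(4)$ acting freely on $S^3$, a pigeonhole argument allows me to pass to a subsequence in $i$ along which $\Gamma_{i,z}$ is conjugate to a single group $\Gamma_z$. Because $r_z = 0$ on $\CCC_{0,a(i,z)}$, property $(n3)$ of Definition \ref{defnneckgeneral} gives that $z$ is $(\delta_i, s)$-close to $\mathcal F^0(\Gamma_z)$ for every $s \in (0, 2 r_{a(i,z)}]$. I would then let $(Z', d_{Z'}, z', \t')$ be any tangent flow at $z$, realized as the $\hat C^\infty$-limit of $(Z, s_j^{-1} d_Z, z, s_j^{-2}(\t - \t(z)))$ for some $s_j \searrow 0$; for each fixed $i$, I would choose $j_i$ large enough that $s_{j_i} \le r_{a(i,z)}$ and that the $j_i$-th rescaling is itself close enough to $(Z', d_{Z'}, z', \t')$. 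Composing the resulting $\ep$-maps via a triangle-type inequality for Definition \ref{defn:close} should then yield that $(Z'_{(-\infty, 0]}, d_{Z'}, z', \t')$ is $3\delta_i$-close to $(\mathcal F^0(\Gamma_z), d^*_{\mathcal F}, p^*, \t)$ over $[-\delta_i^{-1}, 0]$. Letting $i \to \infty$ and invoking uniqueness of pointed Gromov--Hausdorff limits then gives that the negative part of every tangent flow at $z$ is isometric to $\R^4/\Gamma_z \times \R_-$, which is the required backward uniqueness.

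The main bookkeeping concern will be the diagonal limit in the last step: I need to verify that $(\delta_i, s_{j_i})$-closeness to the model at each finite stage combines correctly with the $\hat C^\infty$-convergence of the rescalings to transfer to honest closeness of the tangent flow itself, in the precise sense of Definition \ref{defn:close}. This should reduce to a routine compatibility check for composition of $\ep$-maps together with the fact that a single point $z$ is modeled on $\mathcal F^0(\Gamma_z)$ on the \emph{entire} scale interval $(0, 2 r_{a(i,z)}]$, not just a discrete sequence; but the two-parameter argument in $i$ and $j$ must be organized carefully via a diagonal subsequence in order to avoid circularities.
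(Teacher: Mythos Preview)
Your proposal is correct and follows the same architectural template as the paper's proof---namely, invoke Theorem~\ref{neckdecomgeneral} (via the reasoning in the proof of Theorem~\ref{thm:rec1}) so that, off an $\HHH^2$-null set, every point of $\MS^2_{\mathrm F}$ lands in the center $\CCC_{0,a}$ of a flat neck region, and then use property $(n3)$ of Definition~\ref{defnneckgeneral} at all scales.

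The material difference is how you pass from ``$(\delta,r)$-close to $\mathcal F^0(\Gamma)$ for all $r$'' to an \emph{isometry} of the negative part of each tangent flow with $\R^4/\Gamma\times\R_-$. The paper performs a single neck decomposition at one fixed $\delta$ and then asserts this passage directly; the implicit justification is a rigidity (gap) statement for Ricci shrinker spaces: in dimension four, any Ricci shrinker space whose negative part is $\delta$-close to a flat cone $\R^4/\Gamma$ must \emph{be} that flat cone, which follows from the discrete classification of tangent-flow models at points of $\MS^2\setminus\MS^1$ recorded in Section~\ref{sec:nd}. Your approach bypasses this rigidity entirely by running the decomposition for a sequence $\delta_i\searrow 0$, discarding a countable union of null sets, and using a diagonal argument together with pigeonhole on $\Gamma$ to upgrade $\delta_i$-closeness to isometry in the limit. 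This is more elementary and more robust (it would transplant to higher dimensions once a neck decomposition is available), at the cost of the bookkeeping you flagged. Incidentally, your pigeonhole on $\Gamma_{i,z}$ is not strictly necessary: since $r_x=0$ on $\CCC_0$, the scale intervals $(0,2r_{a(i,z)}]$ all reach down to zero, so for any two indices $i,i'$ there is a common small scale at which $z$ is simultaneously $(\delta_i,s)$-close to $\mathcal F^0(\Gamma_{i,z})$ and $(\delta_{i'},s)$-close to $\mathcal F^0(\Gamma_{i',z})$, forcing $\Gamma_{i,z}=\Gamma_{i',z}$ once $\delta_1$ is small enough.
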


\begin{proof}
As in the proof of Theorem \ref{thm:rec1}, for any $z_0 \in \MS_{\mathrm{F}}^2$, there exists $r_0>0$ such that 
\begin{align*}
\MS^2_{\mathrm{F}} \bigcap B^*(z_0,r_0) \subset \bigcup_a\lc\CCC_{0,a} \bigcap B^*(x_a,r_a)\rc \bigcup \tilde{S}^{2,\delta,\eta}.
\end{align*}
For any $x \in \MS^2_{\mathrm{F}} \bigcap B^*(z_0,r_0) \bigcap \CCC_{0,a} \bigcap B^*(x_a,r_a)$, it follows from the definition of the flat neck region that $x$ is $(\ep, r)$-close to $\mathcal F^0(\Gamma)$ for any $0<r \le r_a$. Therefore, the negative part of any tangent flow at $x$ is given by $\R^4/\Gamma \times \R_-$. 

A standard covering argument then completes the proof.
\end{proof}

Next, we prove the optimal volume estimates for quantitative singular strata.

\begin{thm}\label{thmrectifiabledim4}
For any $z_0 \in Z$ with $\t(z_0)-2 r_0^2 \in \III^-$, we have
	\begin{align}\label{contentestiS0}
\abs{B^*_{rr_0}\lc\MS^{\ep,2}_{rr_0,\ep r_0}\rc\bigcap B^*(z_0, r_0) } \leq C(Y,\ep)r^{4} r_0^{6}
	\end{align}
for any $r \in (0, \ep )$. In particular, we have
	\begin{align}\label{contentestiS0a}
		\HHH^2\left(\bigcap_{0<r<\ep} \MS^{\ep,2}_{rr_0,\ep r_0}\bigcap B^*(z_0, r_0)\right)\leq C(Y,\ep) r_0^{6}.
	\end{align}
\end{thm}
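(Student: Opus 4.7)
The second estimate \eqref{contentestiS0a} follows from \eqref{contentestiS0} via the standard relationship between Minkowski content and Hausdorff measure: setting $E := \bigcap_{0<r<\ep} \MS^{\ep,2}_{rr_0,\ep r_0}\cap B^*(z_0,r_0)$, one has $E\subset \MS^{\ep,2}_{rr_0,\ep r_0}$ for every small $r$, so $|B^*_{rr_0}(E)|\le C(Y,\ep)r^4 r_0^6$. Taking $r\to 0$ and extracting a Vitali subcover from covers of $E$ by balls of radius $rr_0$ with bounded overlap then gives $\HHH^2(E)\le C(Y,\ep)r_0^6$. For the main estimate \eqref{contentestiS0}, fix the constants $\delta=\delta(Y,\ep)$, $\eta=\eta(Y,\ep)$, $\zeta=\zeta(Y,\ep)$ via Lemma \ref{lem:constant} and apply Theorem \ref{neckdecomgeneral} with $k=2$ to obtain a decomposition
\[
B^*(z_0,r_0)\subset \bigcup_a (\NNN'_a\cup \CCC_{0,a})\cap B^*(x_a,r_a)\cup \bigcup_b B^*(x_b,r_b)\cup \tilde S^{2,\delta,\eta},
\]
with $\sum_a r_a^2+\sum_b r_b^2\le C(Y)r_0^2$ and $\HHH^2(\tilde S^{2,\delta,\eta})=0$. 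The key localization comes from Lemma \ref{lem:constant}: every $y\in \NNN''_a$ is $(3,\ep,\ep d_Z(y,\CCC_a))$-symmetric, and every $y\in B^*(x_b,3r_b/2)$ is $(3,\ep,\ep r_b)$-symmetric. Since points of $\MS^{\ep,2}_{rr_0,\ep r_0}$ admit no $(3,\ep,s)$-symmetry for $s\in [rr_0,\ep r_0]$, this forces $d_Z(y,\CCC_a)<rr_0/\ep$ whenever $y\in \NNN'_a\cap \MS^{\ep,2}_{rr_0,\ep r_0}$, and forces $r_b<rr_0/\ep$ for any $b$-ball meeting $\MS^{\ep,2}_{rr_0,\ep r_0}$.

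For the neck contribution, I would use the Ahlfors regularity of the packing measure---Theorem \ref{ahlforsregucyl1} for quotient cylindrical necks and Proposition \ref{ahlforsregforstaticneck} for flat necks---combined with a standard Minkowski-content estimate to obtain
\[
\bigl|B^*_{rr_0}\bigl(B^*_{rr_0/\ep}(\CCC_a)\cap B^*(x_a,r_a)\bigr)\bigr|\le C(Y,\ep)(rr_0)^4 r_a^2.
\]
Summing over $a$ and applying the content bound yields a total of $C(Y,\ep)r^4 r_0^6$. For the $b$-ball contribution, I would iterate Theorem \ref{neckdecomgeneral}: inside each $b$-ball $B^*(x_b,r_b)$ with $rr_0\le r_b<rr_0/\ep$, reapply the decomposition at scale $r_b$, producing new neck regions (controllable at scale $r_b$) and smaller $b$-balls. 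Iterate until the remaining $b$-balls satisfy $r_b<rr_0$; the $rr_0$-neighborhood of each such residual ball has volume $\le C(rr_0)^6\le C(rr_0)^4 r_b^2$ provided that $r_b\ge c\,rr_0$ in the terminal level, which can be arranged because at each iteration the ball sizes shrink by a definite factor depending on $\zeta=\zeta(\ep)$.

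The main obstacle is that the content bound $\sum r_b^2\le C(Y)r_0^2$ in Theorem \ref{neckdecomgeneral} grows multiplicatively by $C(Y)$ under each iteration inside a $b$-ball, so naïvely running $N\sim \log(1/r)$ iterations would yield an unacceptable $r^{-\alpha}$ loss. The resolution---in analogy with the arguments of \cite[\S 7]{Cheeger2018RectifiabilityOS} and the mean curvature flow treatment in \cite{fang2025volume}---is to organize the iteration as a single tree structure rooted at $B^*(z_0,r_0)$, in which at every level one first extracts the full neck contribution via Ahlfors regularity at the current scale and then passes only to those $b$-children whose sizes have strictly decreased by a uniform factor $\gamma=\gamma(Y,\ep)$. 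The neck contributions from successive levels form a geometric series in $\gamma$ that converges to $C(Y,\ep)r^4 r_0^6$, while the terminal $b$-balls all satisfy $r_b<rr_0$ and together contribute at most $C(Y,\ep)(rr_0)^4 \sum_\text{leaves} r_b^2\le C(Y,\ep)r^4 r_0^6$ via the single ambient content bound (not the iterated one). Combining this with the residual estimate $|B^*_{rr_0}(\tilde S^{2,\delta,\eta})|\le C(Y,\ep)r^4 r_0^6$, inherited from the iterative construction of $\tilde S$, yields \eqref{contentestiS0}.
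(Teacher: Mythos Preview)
Your handling of the neck contribution via Ahlfors regularity of the packing measure is correct and matches the paper. The gap is in your treatment of the $b$-balls: the proposed iteration is both unnecessary and not justified as written. Your claim that the leaves of the tree satisfy $r_b\ge c\,rr_0$ fails because the $b$-balls produced by Theorem \ref{neckdecomgeneral} can have arbitrarily small radii; and your appeal to ``the single ambient content bound (not the iterated one)'' for $\sum_{\text{leaves}} r_b^2$ is circular, since the leaves are produced by repeated applications of the decomposition, each multiplying the content constant by $C(Y)$.

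The paper avoids iteration entirely. After one application of the neck decomposition and after peeling off, via Ahlfors regularity, the part of $\MS^{\ep,2}_{rr_0,\ep r_0}$ lying in the tube $B^*_{O(\ep^{-1}rr_0)}(\CCC_a)$ for each $a$ with $r_a\ge 100\ep^{-1}rr_0$, one takes a maximal $rr_0$-separated set $\{y_i\}_{i\le K}$ in the remaining piece of $\MS^{\ep,2}_{rr_0,\ep r_0}\cap B^*(z_0,3r_0/2)$. For each such $y_i$, the ball $B^*(y_i,rr_0/2)$ can only meet $a$-balls with $r_a\le 100\ep^{-1}rr_0$, $b$-balls with $r_b\le 2\ep^{-1}rr_0$ (by Lemma \ref{lem:constant}(v), exactly as you observed), or the set $S^{2,\delta,\eta}$. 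Since $S^{2,\delta,\eta}$ is parabolic $2$-rectifiable it has zero $(n+2)$-volume, and for the small balls one has $r_a^6\le C(\ep)(rr_0)^4 r_a^2$ and $r_b^6\le C(\ep)(rr_0)^4 r_b^2$. A single volume comparison
\[
c(Y)K(rr_0)^6\le \sum_i|B^*(y_i,rr_0/2)|\le \sum_{\text{small }a}|B^*(x_a,r_a)|+\sum_{\text{small }b}|B^*(x_b,r_b)|\le C(\ep)(rr_0)^4\Bigl(\sum_a r_a^2+\sum_b r_b^2\Bigr)
\]
then gives $K\le C(Y,\ep)(rr_0)^{-2}r_0^2$ directly from the content estimate of Theorem \ref{neckdecomgeneral}. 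The point you missed is that once $r_b\le O(\ep^{-1}rr_0)$, no further refinement is needed: the trivial bound $r_b^6\le C(\ep)(rr_0)^4 r_b^2$ already converts the $2$-content into the correct $6$-volume. Your separate estimate $|B^*_{rr_0}(\tilde S^{2,\delta,\eta})|\le C(Y,\ep)r^4 r_0^6$ is also unnecessary (and does not obviously follow from $\HHH^2(\tilde S^{2,\delta,\eta})=0$); what is used is only that $|S^{2,\delta,\eta}|=0$.
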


\begin{proof}
Given $\ep>0$, we choose $\delta$, $\eta$ and $\zeta$ according to Lemma \ref{lem:constant}. Without loss of generality, we assume $r_0=1$ and $r \le \ep/100$. Moreover, by a standard covering argument, we may assume $\t(z_0)-2 \zeta^{-2} \in \III^-$.

By Theorem \ref{neckdecomgeneral} (with $k=2$), we have
\begin{align} \label{eq:4drec004}
B^*(z_0,2)\subset \bigcup_a\lc \NNN'_a\bigcap B^*(x_a,r_a)\rc \bigcup \bigcup_b B^*(x_b,r_b)\bigcup S^{2,\delta,\eta}
\end{align}
	so that $S^{2,\delta,\eta}$ is parabolic $2$-rectifiable and
\begin{align}\label{contentesti1}
			\sum_a r^2_a+\sum_b r^2_b+\HHH^2\left(S^{2,\delta,\eta}\right)\leq C(Y).
		\end{align}	
Since $r \le 1/100$, we have $B^*_{r}(\MS^{\ep,2}_{r,\ep}) \bigcap B^*(z_0, 1) \subset B^*_r \lc \MS^{\ep,2}_{r, \ep} \cap B^*(z_0, 3/2) \rc$. 

For any $y\in \MS^{\ep,2}_{r,\ep }\cap B^*(z_0, 3/2) \cap \NNN''_a$, where $\NNN''_a$ is defined in \eqref{eq:largerneck}, it follows from Lemma \ref{lem:constant} (ii) (iii) that $d_Z(y,\CCC_a)<2 \ep^{-1} r$, since otherwise $y$ would be a $(3, \ep, 2 r)$-symmetric point, contradicting the fact that $y \in \MS^{\ep,2}_{r,\ep}$.
	
	If $r_a \ge 100 \ep^{-1} r$, we set $W_a=\{z \in \CCC_a \mid d_Z(y,z)=d_Z(y,\CCC_a) \text{ for some } y \in \MS^{\ep,2}_{r,\ep} \bigcap \NNN''_a\}$. Define $s=16 \ep^{-1} r$. By a standard covering argument, there exists a maximal set $\{x_i \mid x_i\in W_a\}_{1\leq i\leq K_a}$ such that $\{B^*(x_i,s)\}_{1\leq i\leq K_a}$ covers $W_a$ and $\{B^*(x_i,s/2)\}_{1\leq i\leq K_a}$ are pairwise disjoint. In particular, we have
	\begin{align*}
B^*(x_i,s) \subset B^*(x_a,5r_a/3)
\end{align*}
by our assumption. Let $\mu_a$ denote the corresponding packing measure on $\NNN_a$. Then, by Proposition \ref{ahlforsregucyl1quo} or \ref{ahlforsregforstaticneck}, we have
	\begin{align*}
	0<	K_ac(Y)s^2 \leq \sum_{i=1}^{K_a} \mu_a\big(B^*(x_i,s/2)\big) \le \mu_a \big(B^*(x_a,5r_a/3)\big)\leq C(Y)r^2_a,
	\end{align*}
where the last inequality can be derived from Ahlfors regularity after covering $B^*(x_a,5r_a/3)$ by balls of size $r_a/100$. Thus, we conclude that $K_a \le C(Y) r^2_a r^{-2}$. In addition, we have 
	\begin{align} \label{eq:subextra1}
\MS^{\ep,2}_{r,\ep} \bigcap B^*(z_0, 3/2) \bigcap \NNN''_a \subset \bigcup_{i=1}^{K_a} B^*(x_i,2s),
	\end{align}
which implies
	\begin{align*}
\abs{B^*_r \lc \MS^{\ep,2}_{r,\ep} \bigcap B^*(z_0, 3/2) \bigcap \NNN''_a \rc} \leq\sum_{i=1}^{K_a} \abs{B^*(x_i, 3s)} \leq C(Y, \ep) K_a r^{6}\leq C(Y, \ep) r^2_a r^{4}.
	\end{align*}
	
Next, we set
	\begin{align*}
		\MS':= \MS^{\ep,2}_{r,\ep} \bigcap \bigcup_{r_a \ge 100 \ep^{-1}r} \NNN''_a.
	\end{align*}
	In sum, we obtain
	\begin{equation}\label{sub1}
		\abs{B_r^* \lc\MS' \rc} \leq C(Y, \ep) \sum_{a} r^2_a r^{4} \le C(Y, \ep) r^{4}, 
	\end{equation}
	where we have used \eqref{contentesti1} for the final inequality.
	
	Next, we take a maximal $r$-separated set $\{y_i\}_{1 \le i \le K}$ of $\MS^{\ep,2}_{r,\ep} \bigcap B^*(z_0, 3/2) \setminus \MS'$.  We define the subset
		\begin{equation*}
		I^a:=\left\{ 1 \le i \le K \mid B^*(y_i,r/2) \bigcap \NNN'_a \bigcap B^*(x_a,r_a) \ne \emptyset \text{ and } r_a \ge 100 \ep^{-1} r \right\}.
	\end{equation*}
	
It follows from our definition of $\MS'$ that for $i \in I^a$, $y_i \in B^*(x_a, 3r_a/2) \bigcap B^*_{r_x/4}(\CCC_a)$. We choose $z_i \in \CCC_a$ such that $d_i:=d_Z(z_i, y_i) \le r_{z_i}/4$. Then it is clear that $d_i+r/2 \ge r_{z_i}$ since otherwise $B^*(y_i, r/2) \bigcap \NNN'_a =\emptyset$. From this, we obtain that
	\begin{align}\label{sub1b}
	r \ge \frac{3}{2} r_{z_i} \quad \text{and} \quad d_i \le \frac{r}{6}.
	\end{align}	
For $i, j \in I^a$, it follows from \eqref{sub1b} that $d_Z(z_i, z_j) \ge d_Z(y_i, y_j)-d_i-d_j \ge 2r/3$. In other words, the balls $\{B^*(z_i, r/3)\}_{i \in I^a}$ are mutually disjoint. By Proposition \ref{prop:volumebound}, we conclude that any point in $\bigcup_{i \in I^a} B^*(z_i, 2r/3)$ belongs to at most $C(Y)$ balls in $\{B^*(z_i, 2r/3)\}_{i \in I^a}$. Thus, it follows from Theorem \ref{neckdecomgeneral} (a) that
	\begin{align*}
	0<	|I^a| c(Y) r^2 \leq \sum_{i \in I^a} \mu_a\big(B^*(z_i,2r/3)\big) \le C(Y) \mu_a \big(B^*(x_a,5r_a/3)\big)\leq C(Y)r^2_a,
	\end{align*}
	which implies $|I^a| \le C(Y) r^2_a r^{-2}$. Taking the sum for all possible $a$ and using \eqref{contentesti1}, we obtain
	\begin{align}\label{sub1c}
\abs{\bigcup_{a} I^a} \le C(Y)  r^{-2}.
	\end{align}

Next, we set $J:=\{1, \ldots, K\} \setminus \bigcup_{a} I^a$. For $i \in J$, if $B^*(y_i,r/2) \bigcap \NNN'_a \bigcap B^*(x_a,r_a) \ne \emptyset$, then $r_a \le 100 \ep^{-1} r$. On the other hand, if $B^*(y_i,r/2) \bigcap B^*(x_b,r_b) \ne \emptyset$, then it follows from Lemma \ref{lem:constant} (v) that $r_b \le 2 \ep^{-1} r$. Otherwise, $y_i \in B^*(x_b, 3r_b/2)$ and hence is $(3, \ep, \ep r_b)$-symmetric. However, this contradicts the fact that $y_i \in \MS^{\ep,2}_{r,\ep}$. 
	
It follows from the decomposition \eqref{eq:4drec004} that
	\begin{align*}
		\bigcup_{i \in J} B^*(y_i,r/2) \subset & \bigcup_{r_a \le 100 \ep^{-1} r} B^*(x_a,r_a)\bigcup\bigcup_{r_b \le 2 \ep^{-1} r}B^*(x_b,r_b)\bigcup S^{2,\delta,\eta}.
	\end{align*}
	By comparing the volumes, it follows that
	\begin{align*}
		0<c(Y)|J| r^6 \leq &\sum_{i \in J}\abs{B^*(y_i,r/2)}		\leq \sum_{r_a \le 100 \ep^{-1} r} \abs{B^*(x_a,r_a)}+\sum_{r_b \le 2 \ep^{-1} r}\abs{B^*(x_b,r_b)}+\abs{S^{2,\delta,\eta}}\\
		\leq & C(Y) \lc \sum_{r_a \le 100 \ep^{-1} r}r_a^{6}+\sum_{r_b \le 2 \ep^{-1} r}r_b^{6} \rc	\leq  C(Y,\ep)r^{4} \lc \sum_{a}r^2_a+\sum_{b}r^2_b \rc\leq C(Y, \ep)r^{4},
	\end{align*}
	where we have used the fact that $r_a\leq 100 \ep^{-1} r$, $r_b\leq 2 \ep^{-1} r$ and $\abs{S^{2,\delta,\eta}}=0$ since $S^{2,\delta,\eta}$ is parabolic $2$-rectifiable.
	
Consequently, $|J| \leq C(Y, \ep) r^{-2}$. Combining this with \eqref{sub1c} and using the fact that all $\{B^* (y_i, 2r)\}_{1\leq i\leq K}$ cover $B^*_r\lc \MS^{\ep,2}_{r,\ep} \bigcap B^*(z_0, 3/2) \setminus \MS' \rc$, we obtain
	\begin{align}\label{sub2}
	\abs{B^*_r\lc \MS^{\ep,2}_{r,\ep} \bigcap B^*(z_0, 3/2) \setminus \MS' \rc } \le \sum_{i=1}^K \abs{B^*(y_i,2r)} \leq C(Y)Kr^{6}\leq C(Y, \ep)r^{4}.
	\end{align}
	Combining \eqref{sub1} and \eqref{sub2}, we conclude that for any $r \in (0,\ep)$,
	\begin{equation} \label{eq:main1x}
\abs{B_r^* \lc \MS^{\ep,2}_{r,\ep} \bigcap B^*(z_0, 3/2) \rc}\leq C(Y, \ep) r^{4},
	\end{equation}
	which completes the proof of \eqref{contentestiS0}.
	
Since $\bigcap_{0<r<\ep} \MS^{\ep,2}_{r,\ep} \subset \MS^{\ep, 2}_{r, \ep}$ for any $r \in (0, \ep)$, it follows from \eqref{eq:main1x} that
	\begin{equation*}
\abs{ B^*_r\lc \bigcap_{0<r<\ep} \MS^{\ep,2}_{r,\ep} \bigcap B^*(z_0,3/2) \rc}\leq C(Y, \ep) r^{4}
	\end{equation*}
	for any $r \in (0, \ep)$. From this, it is clear that
	\begin{align*}
		\HHH^2 \left(\bigcap_{0<r<\ep} \MS^{\ep,2}_{r,\ep} \bigcap B^*(z_0,3/2) \right)\leq C(Y,\epsilon),
	\end{align*}	
	which yields \eqref{contentestiS0a} and completes the proof.
\end{proof}

Similarly, we have

\begin{thm}\label{thmrectifiabledim4x}
For any $z_0 \in Z$ with $\t(z_0)-2 r_0^2 \in \III^-$, we have
	\begin{align*}
\abs{B^*_{rr_0}\lc\MS^{\ep,1}_{rr_0,\ep r_0}\rc\bigcap B^*(z_0, r_0) } \leq C(Y,\ep)r^{5} r_0^{6}
	\end{align*}
for any $r \in (0, \ep )$. In particular, we have
	\begin{align*}
		\HHH^1\left(\bigcap_{0<r<\ep} \MS^{\ep,1}_{rr_0,\ep r_0}\bigcap B^*(z_0, r_0)\right)\leq C(Y,\ep) r_0^{6}.
	\end{align*}
\end{thm}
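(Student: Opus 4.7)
The plan is to replicate the proof of Theorem~\ref{thmrectifiabledim4} using the $k=1$ version of Theorem~\ref{neckdecomgeneral}. Fix $\ep>0$, choose $\delta,\eta,\zeta$ via Lemma~\ref{lem:constant}, normalize $r_0=1$ and $r\le\ep/100$, and apply the decomposition
\begin{align*}
B^*(z_0,2)\subset \bigcup_a(\NNN'_a\cap B^*(x_a,r_a))\cup\bigcup_b B^*(x_b,r_b)\cup S^{1,\delta,\eta},
\end{align*}
with $\sum_a r_a+\sum_b r_b+\HHH^1(S^{1,\delta,\eta})\le C(Y,\ep)$. As in the $k=2$ case, I would split the argument into the contribution from large-scale necks ($r_a\ge 100\ep^{-1}r$) and a maximal $r$-separated set argument for the complement.

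For each large quotient-cylindrical neck $\NNN_a$, Lemma~\ref{lem:constant}(ii) together with the defining property of $\MS^{\ep,1}_{r,\ep}$ forces any relevant $y$ to satisfy $d_Z(y,\CCC_a)\le C\ep^{-1}r$. Covering the nearest-point projection onto $\CCC_a$ by balls of size $\sim\ep^{-1}r$ and using the $1$-Ahlfors regularity of the packing measure (Proposition~\ref{ahlforsregucyl1quo}) bounds the ball count by $C(Y,\ep)r_a/r$ and the associated tubular volume by $C(Y,\ep)r_a r^5$; summing against $\sum r_a\le C(Y,\ep)$ gives $C(Y,\ep)r^5$.

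The genuinely new step is the large-scale flat neck. The flat-neck packing measure is only $2$-Ahlfors regular, so the naive argument produces $r^4$ and is insufficient. Instead I would invoke the sharper content estimate (vii) of Proposition~\ref{cballdecomposition3}, applied to the modified center $\CCC'_a$:
\begin{align*}
\abs{B^*_{s'}(\CCC'_a\cap B^*(x_a,3r_a/2))}\le C(Y,\eta)(s'/r_a)^{5.1}r_a^6.
\end{align*}
Lemma~\ref{lem:constant}(iv) only guarantees $(2,\ep,s)$-symmetry at the scale $s=\ep d_Z(y,\CCC'_a)$, but since $s\le 2\ep\zeta<\ep$ the hypothesis $y\in\MS^{\ep,1}_{r,\ep}$ still forces $d_Z(y,\CCC'_a)\le\ep^{-1}r$. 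Taking $s'=C\ep^{-1}r$ in (vii) bounds the flat-neck contribution by $C(Y,\ep)r^{5.1}r_a^{0.9}$; since $r_a\ge 100\ep^{-1}r$ yields $r_a^{0.9}\le C(Y,\ep)r_ar^{-0.1}$, we get $\sum_a r_a^{0.9}\le C(Y,\ep)r^{-0.1}$ and the total is $C(Y,\ep)r^5$. The main technical concern is verifying that (vii) propagates through the iterative construction in Proposition~\ref{inductivedecomposition} to every flat neck in the final cover, not only to those arising directly from Proposition~\ref{cballdecomposition3}; this should follow by tracking the modified center through the inductive decomposition.

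For the maximal $r$-separated set $\{y_i\}$ outside large necks, each $B^*(y_i,r/2)$ is covered by small-neck balls ($r_a\le 100\ep^{-1}r$), $b$-balls, or $S^{1,\delta,\eta}$. Lemma~\ref{lem:constant}(v) with $k=1$ forces any intersecting $b$-ball to satisfy $r_b\le C(Y,\ep)r$, and $\abs{S^{1,\delta,\eta}}=0$ since it has finite $\HHH^1$-measure in a $6$-dimensional parabolic volume. Comparing the disjoint volumes $c(Y)\abs{J}r^6\le \sum_i\abs{B^*(y_i,r/2)}\le \sum r_a^6+\sum r_b^6\le C(Y,\ep)r^5$ bounds the count $\abs{J}\le C(Y,\ep)r^{-1}$, so this part contributes $C(Y,\ep)r^5$. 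Combining with the large-neck estimate yields the volume bound, and the $\HHH^1$ estimate follows by intersecting over $r\to 0$ as in the $k=2$ case.
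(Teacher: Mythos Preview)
Your proposal is correct and follows essentially the same route as the paper. The paper likewise splits the $a$-balls into quotient-cylindrical and flat types, handles the large flat necks via Proposition~\ref{cballdecomposition3}(vii) applied to the modified center $\CCC'_{a}$ (obtaining the same $r^{5.1}r_a^{0.9}$ bound, which it converts to $r^5 r_a$ using $r\le r_a$), and treats the remainder---quotient-cylindrical necks of all sizes, small flat necks, $b$-balls, and $S^{1,\delta,\eta}$---exactly as in the $k=2$ proof. Your flagged concern about (vii) propagating through Proposition~\ref{inductivedecomposition} is not an issue: each flat neck region $\NNN_a$ in the final cover is produced by a single invocation of Proposition~\ref{cballdecomposition3} applied to a type-II $c$-ball, and is not modified thereafter, so (vii) holds for it directly.
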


\begin{proof}
The proof is similar to that of Theorem \ref{thmrectifiabledim4}, and we only sketch it. As before, we assume $r_0=1$, $r \le \ep/100$ and fix constants $\delta$, $\eta$ and $\zeta$.

By Theorem \ref{neckdecomgeneral} (with $k=1$), we have
\begin{align*} 
B^*(z_0,2)\subset \bigcup_a\lc \NNN'_a\bigcap B^*(x_a,r_a)\rc \bigcup \bigcup_b B^*(x_b,r_b)\bigcup S^{1,\delta,\eta}.
\end{align*}
so that $S^{1,\delta,\eta}$ is parabolic $1$-rectifiable and
\begin{align}\label{eq:k1000}
			\sum_a r_a+\sum_b r_b+\HHH^1\left(S^{1,\delta,\eta}\right)\leq C(Y, \ep).
		\end{align}	
		
We reindex 
\begin{align*} 
\bigcup_a\big(\NNN'_a\bigcap B^*(x_a,r_a)\big)=\bigcup_{a'}\big(\NNN_{a'}\bigcap B^*(x_{a'},r_{a'})\big) \bigcup \bigcup_{a''}\big(\NNN'_{a''}\bigcap B^*(x_{a''},r_{a''})\big)
\end{align*}
so that each $\NNN_{a'}$ is a quotient cylindrical neck region and each $\NNN_{a''}$ is a flat neck region. For simplicity, we set
\begin{align*} 
S_0:= \MS^{\ep,1}_{r,\ep } \bigcap B^*(z_0, 3/2)\bigcap B^*_r \lc \bigcup_{r_{a''} \ge 100 \ep^{-1}r}  \lc \NNN'_{a''} \bigcap B^*(x_{a''},r_{a''}) \rc  \rc.
\end{align*}

According to the definition, for any $y \in S_0$, there exists $a''$ with $r_{a''} \ge 100 \ep^{-1}r$ such that $d_Z \lc y,  \NNN'_{a''} \bigcap B^*(x_{a''},r_{a''}) \rc \le r$. Then it follows from Lemma \ref{lem:constant} (ii) (iv) that $d_Z(y,\CCC'_{a''})<2 \ep^{-1} r$, since otherwise $y$ would be a $(2, \ep , \ep d_Z(y,\CCC'_{a''}))$-symmetric point, contradicting the fact that $y \in \MS^{\ep,1}_{r,\ep}$.

Thus, we conclude from Proposition \ref{cballdecomposition3} that 
	\begin{align} \label{eq:k1001}
\abs{B_r^*\lc S_0 \rc} \le &  \sum_{r_{a''} \ge 100 \ep^{-1}r} \abs{B_{3\ep^{-1} r}^*\lc \CCC'_{a''} \rc} \le \sum_{r_{a''} \ge 100 \ep^{-1}r} C(Y, \ep) r^{5.1} r^{0.9}_{a''} \notag \\
 \le & \sum_{r_{a''} \ge 100 \ep^{-1}r} C(Y, \ep) r^{5} r_{a''} \le C(Y, \ep) r^5,
	\end{align}
	where we used \eqref{eq:k1000} for the last inequality.

Next, we set
	\begin{align*}
S_1=\MS^{\ep,1}_{r,\ep} \bigcap B^*(z_0, 3/2) \setminus S_0.
	\end{align*}
Then it follows from the definition of $S_0$ that
\begin{align*} 
B^*_r \lc S_1 \rc \subset \bigcup_{a'}\lc \NNN_{a'}\bigcap B^*(x_{a'},r_{a'})\rc \bigcup  \bigcup_{r_{a''} \le 100 \ep^{-1} r}\lc \NNN'_{a''}\bigcap B^*(x_{a''},r_{a''})\rc \bigcup \bigcup_b B^*(x_b,r_b)\bigcup S^{1,\delta,\eta}.
\end{align*}

By the same argument as in the proof of Theorem \ref{thmrectifiabledim4}, we have
	\begin{equation}\label{eq:k1003}
		\abs{B_r^* \lc S_1 \rc} \leq C(Y, \ep) r^5.
	\end{equation}
Combining \eqref{eq:k1001} and \eqref{eq:k1003}, the proof is complete.
\end{proof}

As an application of Theorem \ref{thmrectifiabledim4}, we have

\begin{thm}
For any $z_0 \in Z$ with $\t(z_0)-2r_0^2 \in \III^-$, we have
	\begin{align*}
\HHH^2 \lc \MS \bigcap B^*(z_0, r_0) \rc \leq C(Y) r_0^{6}.
	\end{align*}
\end{thm}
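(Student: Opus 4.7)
The plan is to deduce the bound from Theorem \ref{thmrectifiabledim4} via the identity
\begin{align*}
\MS \bigcap B^*(z_0, r_0) = \bigcap_{0<r<\ep} \MS^{\ep,2}_{r r_0,\ep r_0} \bigcap B^*(z_0, r_0)
\end{align*}
for a suitably small $\ep=\ep(Y)>0$. Granted this identity, Theorem \ref{thmrectifiabledim4} immediately yields
\begin{align*}
\HHH^2\lc \MS \bigcap B^*(z_0,r_0) \rc \le C(Y,\ep) r_0^6 = C(Y) r_0^6,
\end{align*}
where the resulting constant depends only on $Y$ because $\ep$ does.

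The inclusion $\supset$ in the displayed identity is formal: a point $z$ in the right-hand side fails to be $(3,\ep,s)$-symmetric at every sufficiently small scale, hence cannot admit a $3$-symmetric tangent flow, so lies in $\MS^{n-2}=\MS^{2}=\MS$. For $\subset$, the key step is to establish a \emph{uniform regularity threshold}: there exists $\ep_*=\ep_*(Y)>0$ such that every $(3,\ep_*,s)$-symmetric point $z \in Z_{\III^-}$ is a regular point. I would prove this by noting that Definition \ref{defnalmostsymmetric} constrains the model for a $(3,\ep,s)$-symmetric point in dimension four to a quasi-static Euclidean cone $\lc \R^{4} \times (-\infty, t_a], d^*_{E, \ep_0}, (\vec 0,0),\t \rc$, on which the Nash entropy at the base point vanishes identically. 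Continuity of $\NN$ under $\hat C^\infty$-convergence (\cite[Lemma 7.2]{fang2025RFlimit}) then yields $\NN_z(s^2 \tau_0) \ge -\ep_n/2$ for a fixed $\tau_0$ once $\ep_*\le\ep_*(Y)$, and the monotonicity of $\tau\mapsto\NN_z(\tau)$ upgrades this to $\NN_z(0)\ge -\ep_n/2 > -\ep_n$, so $z$ is regular by the criterion recalled in Section \ref{secprel}.

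With $\ep:=\min\{\ep_*,1/2\}$ fixed, any $z \in \MS\bigcap B^*(z_0,r_0)$ satisfies $\t(z)-\ep r_0^2 \in \III^-$ (using $\t(z_0)-2r_0^2\in\III^-$ and $\ep<1$), and cannot be $(3,\ep,s)$-symmetric for any $s\in(0,\ep r_0]$, since otherwise it would be regular. This places $z$ in $\MS^{\ep,2}_{r r_0,\ep r_0}$ for every $r\in(0,\ep)$, establishing $\subset$ and completing the reduction. The principal difficulty is the uniform-$\ep$ claim in the second paragraph; this rigidity is special to dimension four, where Definition \ref{defnalmostsymmetric} forces the only admissible $(n-1)$-symmetric model to be flat Euclidean space, and this is precisely what allows the final constant to depend on $Y$ alone.
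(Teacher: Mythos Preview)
Your proposal is correct and follows essentially the same route as the paper: both reduce to the identity $\MS \cap B^*(z_0,r_0) = \bigcap_{0<r<\ep}\MS^{\ep,2}_{rr_0,\ep r_0}\cap B^*(z_0,r_0)$ for a fixed $\ep=\ep(Y)$ and then invoke Theorem \ref{thmrectifiabledim4}. The only difference is that the paper obtains the inclusion $\subset$ by citing \cite[Theorem 8.14]{fang2025RFlimit}, whereas you supply a self-contained $\ep$-regularity argument (model $\R^4\times(-\infty,t_a]$, Nash entropy continuity, and the regularity criterion $\NN_z(0)\ge -\ep_n$); your argument is precisely the content of that cited result in the case at hand.
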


\begin{proof}
It follows from \cite[Theorem 8.14]{fang2025RFlimit} that if $\ep \le \ep(Y)$, $\MS \bigcap B^*(z_0, r_0) \subset \MS^{\ep, 2}_{r r_0, \ep r_0}$ for any $r \in (0, \ep)$. Thus, for some fixed $\ep=\ep(Y)$,
	\begin{align*}
\MS \bigcap B^*(z_0, r_0) = \bigcap_{0<r<\ep} \MS^{\ep,2}_{r r_0,\ep r_0} \bigcap B^*(z_0, r_0)
	\end{align*}	
which yields the conclusion by \eqref{contentestiS0a}.
\end{proof}

We also have the following estimate for each time-slice.

\begin{thm}\label{thmvolumeslice}
Fix $k\in\{1,2\}$. For any $z_0 \in Z$ with $\t(z_0)-2 r_0^2 \in \III^-$ and $t \in \R$, we have
	\begin{align*}
\abs{B^*_{rr_0}\lc\MS^{\ep,k}_{rr_0,\ep r_0}\rc\bigcap B^*(z_0, r_0) \bigcap Z_t}_t \leq C(Y,\ep)r^{4-k} r_0^{4}, \quad \forall r \in (0, \ep).
	\end{align*}
\end{thm}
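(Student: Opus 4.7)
The plan is to mimic the proofs of Theorems \ref{thmrectifiabledim4} and \ref{thmrectifiabledim4x} almost verbatim, replacing each $(n+2)$-dimensional spacetime volume estimate by its $n$-dimensional time-slice analog. Normalize $r_0=1$ and $r\leq \ep/100$, fix $\delta,\eta,\zeta$ via Lemma \ref{lem:constant}, and apply Theorem \ref{neckdecomgeneral} to produce the decomposition
\[
B^*(z_0,2)\subset \bigcup_a\bigl(\NNN'_a\cap B^*(x_a,r_a)\bigr)\cup \bigcup_b B^*(x_b,r_b)\cup S^{k,\delta,\eta}.
\]
The sole new analytic ingredient is the uniform two-sided slice volume bound
\[
c(n,Y) s^{n}\leq |B^*(x,s)\cap Z_t|_t\leq C(n) s^{n}
\]
for $\t(x)-2s^2 \in \III^-$ and $|\t(x)-t|\leq s^2/4$; this follows in the smooth case from the on-diagonal heat-kernel bound together with Proposition \ref{existenceHncenter}, and passes to the limit via the $\hat C^{\infty}$-convergence of Theorem \ref{thm:intro3}.

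In the regime $r_a\geq 100\ep^{-1}r$, Lemma \ref{lem:constant} localizes each relevant $y\in \MS^{\ep,k}_{r,\ep}\cap B^*(z_0,3/2)$ within $2\ep^{-1}r$ of $\CCC_a$ (when $k=2$) or of $\CCC'_a$ (when $k=1$). One covers these points by balls $\{B^*(x_i, s)\}_{1\leq i\leq K_a}$ with $s=16\ep^{-1}r$ and $\{B^*(x_i, s/2)\}$ disjoint. Ahlfors $k$-regularity (Propositions \ref{ahlforsregucyl1quo} and \ref{ahlforsregforstaticneck}, covering the quotient cylindrical case and the $k=2$ flat case) gives $K_a\leq C(Y,\ep)r_a^k r^{-k}$. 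Applying the slice upper bound to each ball and summing via $\sum_a r_a^k\leq C(Y,\eta)$ from Theorem \ref{neckdecomgeneral}(c) yields a contribution $\leq C(Y,\ep)r^{4-k}$. For the complementary points one repeats the maximal $r$-separated argument of the spacetime proofs within the slice $Z_t$ itself: the separated slice balls $\{B^*(y_j,r/3)\cap Z_t\}$ are pairwise disjoint of $n$-volume $\geq c(Y,\ep) r^4$ each, are contained in the union of the small neck regions, the small $b$-balls, and the $\HHH^k$-null set $S^{k,\delta,\eta}$, and the spacetime content bound in Theorem \ref{neckdecomgeneral}(c) caps their cardinality by $C(Y,\ep)r^{-k}$, contributing a further $C(Y,\ep)r^{4-k}$ to the slice volume.

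The main obstacle is the $k=1$ flat neck case, where the modified center $\CCC'_a$ is not Ahlfors-regular and Proposition \ref{cballdecomposition3}(vii) only provides the Minkowski content bound $|B^*_\rho(\CCC'_a)|\leq C(Y,\eta) \rho^{5.1} r_a^{0.9}$. The workaround is to establish the slice analog
\[
|B^*_\rho(\CCC'_a)\cap Z_t|_t\leq C(Y,\eta) \rho^{3.1} r_a^{0.9},
\]
which follows by revisiting the inductive construction in the proof of Proposition \ref{cballdecomposition3} and counting slice volumes of the underlying balls at each step using the new slice bound. Since $\rho^{3.1}r_a^{0.9}\leq \rho^{3}r_a$ for $\rho\leq r_a$, summing over the flat necks with the content estimate $\sum_a r_a\leq C(Y,\eta)$ recovers the correct $r^{4-1}=r^{3}$ homogeneity. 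Once this slice Minkowski bound is in place, the remainder of the argument transcribes unchanged from the spacetime proof of Theorem \ref{thmrectifiabledim4x}.
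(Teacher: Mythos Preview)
Your overall approach matches the paper's: rerun the proofs of Theorems \ref{thmrectifiabledim4} and \ref{thmrectifiabledim4x} with slice volumes in place of spacetime volumes. However, the paper's execution is cleaner and avoids a detour you take. You claim to need a two-sided slice volume bound, in particular a lower bound $|B^*(x,s)\cap Z_t|_t\ge c(n,Y)s^n$ under $|\t(x)-t|\le s^2/4$. This is both unnecessary and risky: the centers $y_j$ of your maximal $r$-separated set lie in $\MS^{\ep,k}_{r,\ep}$, not in $Z_t$, so the condition $|\t(y_j)-t|\le s^2/4$ is not guaranteed and $B^*(y_j,r/3)\cap Z_t$ could well be empty. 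The paper instead keeps all the cardinality counts $K_a$, $|I^a|$, $|J|$ \emph{literally unchanged} from the spacetime proofs (these use only Proposition \ref{prop:volumebound} and the packing-measure Ahlfors regularity), and only at the final step replaces each per-ball estimate $|B^*(\cdot,\rho)|\le C\rho^6$ by the slice upper bound $|B^*(\cdot,\rho)\cap Z_t|_t\le C\rho^4$ from \cite[Proposition 5.34]{fang2025RFlimit}. No slice lower bound is needed. You effectively acknowledge this when you invoke ``the spacetime content bound in Theorem \ref{neckdecomgeneral}(c)'' for the cardinality, so the issue is more a confusion in the writeup than a mathematical gap.

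Your treatment of the $k=1$ flat-neck contribution is correct and spells out what the paper leaves implicit (``the case $k=1$ can be treated in the same way''): the slice Minkowski bound $|B^*_\rho(\CCC'_a)\cap Z_t|_t\le C(Y,\eta)\rho^{3.1}r_a^{0.9}$ indeed follows by rerunning the ball-count in the proof of Proposition \ref{cballdecomposition3}(vii) with $\gamma^{4l}$ in place of $\gamma^{6l}$ for the per-ball contribution.
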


\begin{proof}
Without loss of generality, we set $r_0 = 1$. In the proof below, we focus on the case $k = 2$, using the same notation as in the proof of Theorem \ref{thmrectifiabledim4}, since the case $k = 1$ can be treated in the same way.

It follows from \eqref{eq:subextra1} and \cite[Proposition 5.34]{fang2025RFlimit} that if $r_a\geq 100\ep^{-1}r$, then
	\begin{align*}
\abs{B^*_r \lc \MS^{\ep,2}_{r,\ep} \bigcap B^*(z_0, 3/2) \bigcap \NNN''_a \rc \bigcap Z_t}_t \leq\sum_{i=1}^{K_a} \abs{B^*(x_i, 3s) \bigcap Z_t}_t \leq C(Y, \ep) K_a r^{4}\leq C(Y, \ep) r^2_a r^{2},
	\end{align*}
which implies
	\begin{equation}\label{subextra001}
		\abs{B_r^* \lc\MS' \rc \bigcap Z_t}_t \leq C(Y, \ep) \sum_{a} r^2_a r^{2} \le C( Y,\ep) r^{2}. 
	\end{equation}
By the same argument, we obtain as \eqref{sub2} that
	\begin{align}\label{subextra002}
	\abs{B^*_r\lc \MS^{\ep,2}_{r,\ep} \bigcap B^*(z_0, 3/2) \setminus \MS' \rc \bigcap Z_t }_t \le \sum_{i=1}^K \abs{B^*(y_i,2r) \bigcap Z_t}_t \leq CKr^{4}\leq C(Y, \ep)r^{2}.
	\end{align}
Combining \eqref{subextra001} and \eqref{subextra002}, we conclude that for any $r \in (0, \ep)$,
	\begin{equation*}
\abs{B_r^* \lc \MS^{\ep,2}_{r,\ep} \bigcap B^*(z_0, 3/2) \rc \bigcap Z_t}_t \leq C(Y, \ep) r^{2}, 
	\end{equation*}
	which completes the proof.
\end{proof}

As a corollary, we prove

\begin{cor} \label{cor:curvatureradius}
For any $z_0 \in Z$ with $\t(z_0)-2 r_0^2 \in \III^-$, $r <1$ and $t \in \R$, we have
	\begin{align*}
\abs{\{r_{\Rm}< r r_0 \} \bigcap B^*(z_0, r_0)\bigcap Z_t}_t \le C(Y) r^2 r_0^4.
	\end{align*}
\end{cor}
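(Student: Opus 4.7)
The strategy is to reduce the statement to Theorem \ref{thmvolumeslice} via an $\ep$-regularity bridge between quantitative symmetry and the curvature radius. Specifically, I would first establish the following $\ep$-regularity: there exist constants $\ep_1 = \ep_1(Y) \in (0,1)$ and $c_1 = c_1(Y) > 0$ such that if $z \in Z_{\III^-}$ is $(3, \ep_1, s)$-symmetric, then $r_{\Rm}(z) \ge c_1 s$. In dimension $n = 4$, Definition \ref{defnalmostsymmetric} forces the model space of a $3$-symmetric Ricci shrinker to be the flat $(\R^4 \times (-\infty, t_a], d^*_{E,\ep_0}, (\vec 0, 0), \t)$, which has infinite curvature radius at its base point; a standard contradiction argument combined with the $\hat C^\infty$ convergence of Theorem \ref{thm:intro3} then produces the bound, since any failing sequence would subconverge smoothly on the regular part to a flat model while retaining $r_{\Rm} \to 0$.

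Fixing $\ep := \min\{\ep_1, 1/2\}$ and contraposing, every point $z \in B^*(z_0, r_0) \cap Z_t$ with $r_{\Rm}(z) < r r_0$ fails to be $(3, \ep, s)$-symmetric for any $s \ge c_1^{-1} r r_0$, since otherwise the previous step would yield $r_{\Rm}(z) \ge c_1 s \ge r r_0$. Combined with the fact that $z \in B^*(z_0, r_0)$ and $\t(z_0) - 2 r_0^2 \in \III^-$ imply $\t(z) - \ep r_0^2 \in \III^-$, this gives the inclusion
\begin{equation*}
\{r_{\Rm} < r r_0\} \cap B^*(z_0, r_0) \subset \MS^{\ep, 2}_{c_1^{-1} r r_0,\, \ep r_0}
\end{equation*}
whenever $r < c_1 \ep$; the remaining range $r \in [c_1 \ep, 1)$ is handled trivially by the uniform volume bound of Proposition \ref{prop:volumebound}.

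Applying Theorem \ref{thmvolumeslice} with $k = 2$ at scale $r' := r/c_1$ then yields
\begin{equation*}
\bigl|\{r_{\Rm} < r r_0\} \cap B^*(z_0, r_0) \cap Z_t\bigr|_t
\le \bigl|B^*_{r' r_0}(\MS^{\ep, 2}_{r' r_0,\, \ep r_0}) \cap B^*(z_0, r_0) \cap Z_t\bigr|_t
\le C(Y, \ep)\, (r')^2 r_0^4
\le C(Y)\, r^2 r_0^4,
\end{equation*}
which is the desired estimate. The only nontrivial step is the $\ep$-regularity lemma; once that is in place the rest of the proof is essentially definitional and relies entirely on Theorem \ref{thmvolumeslice}.
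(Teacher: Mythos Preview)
Your proposal is correct and follows essentially the same approach as the paper. The paper invokes \cite[Theorem 8.14]{fang2025RFlimit} directly for the inclusion $\{r_{\Rm} < r\} \subset \MS^{\ep,2}_{r,\ep}$ (when $\ep=\ep(Y)$ is small) and then applies Theorem \ref{thmvolumeslice}; your $\ep$-regularity lemma is exactly the content behind that citation, so the only difference is that you spell out the contradiction argument rather than cite it.
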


\begin{proof}
Without loss of generality, we assume $r_0=1$ and $r \le r(Y) \ll 1$.

By \cite[Theorem 8.14]{fang2025RFlimit}, we have
	\begin{align*}
\{r_{\Rm}< r \} \subset \MS^{\ep, 2}_{ r, \ep},
	\end{align*} 
if $\ep = \ep(Y)$ is sufficiently small. By Theorem \ref{thmvolumeslice}, we obtain
	\begin{align*}
\abs{\{r_{\Rm}< r \} \bigcap B^*(z_0, 1) \bigcap Z_t}_t \leq C(Y) r^2.
	\end{align*}
\end{proof}

Thanks to Corollary \ref{cor:curvatureradius}, the cutoff functions constructed in \cite[Proposition 8.20]{fang2025RFlimit} admit improved estimates in dimension four.:
\begin{prop}\label{pro:cutoffdim4}
For any $z \in Z_{\III^-}$, we can find a family of smooth cutoff functions $\{\eta_{r, A}\}$ satisfying the following properties for $r \le r(Y)$.
	\begin{enumerate}[label=\textnormal{(\arabic{*})}]
		\item $r_{\Rm}\geq r$ and $d_Z(z, \cdot) \le 2A$ on $\{\eta_{r, A}>0\}$.
		\item $\eta_{r, A}=1$ on $\{r_{\Rm}\geq 2r\} \cap B^*(z, A)$.
		\item $\displaystyle r|\na \eta_{r, A}|+r^2|\partial_\t\eta_{r, A}|+r^2|\na^2\eta_{r, A}|\leq C$, where $C$ is a universal constant.
		\item For any $L$ with $\t(z)-2L^2 \in \III^-$, there exists a constant $C=C(Y, L, A)>0$ such that
		 		\begin{align*}
			\iint_{\RR_{[\t(z)-L^2, \t(z)+L^2]} \cap \{0<\eta_{r, A}<1\}} 1\,\mathrm{d}V_{g^Z_t}\mathrm{d}t\leq Cr^{4}.
		\end{align*}
			Moreover, for any $t \in [\t(z)-L^2, \t(z)+L^2]$, we have
		\begin{align*}
			\int_{\RR_t \cap \{0<\eta_{r, A}<1\}}1 \,\mathrm{d}V_{g^Z_t} \leq Cr^{2}.
		\end{align*}
	\end{enumerate}
\end{prop}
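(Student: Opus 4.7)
The plan is to adopt verbatim the cutoff functions $\eta_{r,A}$ constructed in \cite[Proposition 8.20]{fang2025RFlimit}; that construction already yields smooth cutoff functions satisfying properties (1), (2), and (3) in arbitrary dimension, and the only statement in need of sharpening is the volume estimate (4), where the companion paper produces the weaker bounds $Cr^{4-\ep}$ (spacetime) and $Cr^{2-\ep}$ (time-slice). The point of the present proposition is that in dimension four the $\ep$-loss can be removed by invoking Corollary \ref{cor:curvatureradius} and Theorem \ref{thmrectifiabledim4}.

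The first step is to observe that, by properties (1) and (2), the transition region $\{0<\eta_{r,A}<1\}$ is contained in $\{r\le r_{\Rm}<2r\}\cap B^*(z,2A)$, together with a thin layer near $\partial B^*(z,A)$ that, by the construction in \cite[Proposition 8.20]{fang2025RFlimit}, is itself contained in $\{r_{\Rm}<2r\}$ (the distance and $r_{\Rm}$ cutoffs are grafted together so that the distance cutoff only becomes nontrivial where $r_{\Rm}$ is already of order $r$). Consequently, both integrals in (4) reduce to bounding the measure of $\{r_{\Rm}<2r\}\cap B^*(z,2A)$ intersected with either a single time slice or a spacetime slab.

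For the time-slice bound I would apply Corollary \ref{cor:curvatureradius} with $r_0=2A$, which immediately yields
$$\bigl|\{r_{\Rm}<2r\}\cap B^*(z,2A)\cap Z_t\bigr|_t\le C(Y,A)\,r^{2}$$
and hence the second inequality in (4). For the spacetime bound, by \cite[Theorem 8.14]{fang2025RFlimit} there exists a constant $\bar\ep=\bar\ep(Y)>0$ such that $\{r_{\Rm}<2r\}\subset\MS^{\bar\ep,2}_{2r,\bar\ep}$ once $r\le r(Y)$; covering $B^*(z,2A)$ by at most $C(Y,A)$ $d^*$-balls of unit radius (using Proposition \ref{prop:volumebound}) and applying Theorem \ref{thmrectifiabledim4} to each of them produces
$$\bigl|B^*_{2r}(\MS^{\bar\ep,2}_{2r,\bar\ep})\cap B^*(z,2A)\bigr|\le C(Y,A)\,r^{4}.$$
Restricting to $\RR_{[\t(z)-L^2,\t(z)+L^2]}$ absorbs the $L$-dependence into $C(Y,L,A)$, and the first inequality in (4) follows.

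The main obstacle is the first step, namely verifying that the distance-cutoff transition region in the construction of \cite[Proposition 8.20]{fang2025RFlimit} does not contribute a term of order $A^{5}r$ (which would swamp the $r^{4}$ bound for small $r$). This requires revisiting the original construction to confirm that the distance cutoff can be arranged to vary only inside $\{r_{\Rm}<2r\}$; once this bookkeeping is carried out, the present proof reduces to the two volume estimates above with no further analytic input.
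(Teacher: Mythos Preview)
Your approach is correct and matches the paper's, which simply asserts that the proposition follows by applying Corollary \ref{cor:curvatureradius} to the cutoff functions of \cite[Proposition 8.20]{fang2025RFlimit}. Your ``main obstacle'' is not actually an obstacle: the construction in \cite[Proposition 8.20]{fang2025RFlimit} already arranges that $\{0<\eta_{r,A}<1\}\subset\{r_{\Rm}<2r\}\cap B^*(z,2A)$---this is precisely why the general-dimension bound there is $C(n,Y,A,\ep)\,r^{4-\ep}$ with no additive $A$-dependent term in $r$---so substituting the sharp four-dimensional volume estimates (Corollary \ref{cor:curvatureradius} for the slice, Theorem \ref{thmrectifiabledim4} for the spacetime integral) yields (4) without further work.
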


We also remark that, by the same reason, the cutoff functions on Ricci shrinker spaces constructed in \cite[Proposition 8.23]{fang2025RFlimit} also have better estimates in dimension four; we leave the precise formulation to the interested readers.

Next, we show that in dimension four, the modified pointed $\widetilde{\mathcal W}$-entropy in Definition \ref{defnWRFlimit1} coincides with the pointed $\mathcal W$-entropy in Definition \ref{defnentropy}:
 \begin{prop}\label{prop:agree}
 Let $(Z, d_Z, \t)$ be a noncollapsed Ricci flow limit space arising as the pointed Gromov--Hausdorff limit of a sequence in $\MM(4, Y, T)$. Then for any $z\in Z_{\III^-}$ and any $\tau>0$ such that $\t(z)-\tau\in \III^-$, we have
 	\begin{align*}
 		\widetilde{\mathcal W}_z(\tau)=\WW_z(\tau).
 	\end{align*}
 \end{prop}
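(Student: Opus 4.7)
To prove Proposition \ref{prop:agree}, I reduce to showing that the exceptional set $J^z$ of Definition \ref{defndiscrete} is empty, i.e., that the integration-by-parts identity
\begin{align*}
	\int_{\RR_{\t(z)-\tau}} \bigl(\Delta f_z-|\nabla f_z|^2\bigr)\, \mathrm{d}\nu_{z;\t(z)-\tau}=0
\end{align*}
holds at every admissible $\tau$. Once this is established, Definition \ref{defnWRFlimit1} forces $\widetilde{\WW}_z\equiv\WW_z$, and the proposition follows.

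Fix $\tau$ with $t:=\t(z)-\tau\in\III^-$. The plan is to apply the cutoff functions $\eta_{r,A}$ from Proposition \ref{pro:cutoffdim4} based at $z$ and run the divergence theorem on the smooth Riemannian manifold $\RR_t$:
\begin{align*}
	\int_{\RR_t}\eta_{r,A}\bigl(\Delta f_z-|\nabla f_z|^2\bigr)\, \mathrm{d}\nu_{z;t} = -\int_{\RR_t}\nabla\eta_{r,A}\cdot\nabla f_z\, \mathrm{d}\nu_{z;t}.
\end{align*}
I would then send $r\to 0$ for each fixed $A$, and finally $A\to\infty$. On the left-hand side, dominated convergence applies because $|\Delta f_z|+|\nabla f_z|^2\in L^1(\nu_{z;t})$ by the time-slice estimate of Proposition \ref{integralbound} (passed to the limit via Fatou), and the limit gives $\int_{\RR_t}(\Delta f_z-|\nabla f_z|^2)\, \mathrm{d}\nu_{z;t}$. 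The work is therefore concentrated in showing the right-hand side vanishes in the same limit.

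To handle the right-hand side I would apply Cauchy--Schwarz:
\begin{align*}
	\left|\int\nabla\eta_{r,A}\cdot\nabla f_z\, \mathrm{d}\nu_{z;t}\right| \le \left(\int|\nabla\eta_{r,A}|^2\, \mathrm{d}\nu_{z;t}\right)^{1/2}\left(\int_{\{0<\eta_{r,A}<1\}}|\nabla f_z|^2\, \mathrm{d}\nu_{z;t}\right)^{1/2}.
\end{align*}
Using the Gaussian upper bound $K_Z(z;\cdot)\le C(Y,A,\tau)$ on bounded regions of $\RR_t$, the Lipschitz estimate $|\nabla\eta_{r,A}|\le C/r$ from Proposition \ref{pro:cutoffdim4}, and the sharp dimension-four volume bound $\mathrm{Vol}_{g^Z_t}(\{0<\eta_{r,A}<1\})\le Cr^2$ from the same proposition, the first factor is uniformly bounded in $r$ by $C(Y,A,\tau)$. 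The second factor tends to zero as $r\to 0$ by absolute continuity of the $L^1$-density $|\nabla f_z|^2$, since the sets $\{0<\eta_{r,A}<1\}\cap\RR_t$ have $\nu_{z;t}$-measure collapsing to zero. Hence the right-hand side vanishes, and after letting $A\to\infty$ via one more application of dominated convergence we obtain the desired identity.

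The main technical obstacle is ensuring that the Cauchy--Schwarz factor $\int|\nabla\eta_{r,A}|^2\,\mathrm{d}\nu_{z;t}$ stays uniformly bounded in $r$. In general dimension only $\mathrm{Vol}(\{0<\eta_{r,A}<1\})\le Cr^{n-2-\epsilon}$ is available, producing a divergent prefactor $r^{-\epsilon}$ already in dimension four; this would defeat the argument. The sharp $\epsilon$-free estimate $Cr^2$ of Corollary \ref{cor:curvatureradius}, absorbed into Proposition \ref{pro:cutoffdim4}, is precisely the dimension-four phenomenon that keeps the first factor bounded, so that the dominated-convergence decay of the second factor can force the boundary term to vanish and yields $J^z=\emptyset$.
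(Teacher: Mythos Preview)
Your proof is correct and follows essentially the same approach as the paper: reduce to showing $J^z=\emptyset$, apply integration by parts against the cutoff $\eta_{r,A}$ from Proposition \ref{pro:cutoffdim4}, and use the sharp four-dimensional slice volume estimate $\mathrm{Vol}(\{0<\eta_{r,A}<1\})\le Cr^2$ together with $|\nabla f_z|^2\in L^1(\nu_{z;t})$ to kill the boundary term. The only cosmetic difference is that the paper first pulls out $|\nabla\eta_{r,A}|\le C/r$ and then applies Cauchy--Schwarz to $\int_{\{0<\eta<1\}}|\nabla f_z|\,\mathrm{d}\nu$, arriving at the same factorization; both routes implicitly use a heat-kernel upper bound to pass from the $\mathrm{d}V$ volume estimate in Proposition \ref{pro:cutoffdim4}(4) to a $\mathrm{d}\nu$ bound.
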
 
 \begin{proof}
 	By Definitions \ref{defndiscrete} and \ref{defnWRFlimit1}, it suffices to show that for any $z\in Z_{\III^-}$ and any $\tau>0$ such that $\t(z)-\tau\in \III^-$, it holds that 
 	\begin{equation}\label{equ:entropycoincide1}
\int_{\RR_{\t(z)-\tau}} \Delta f_z-|\na f_z|^2 \,\mathrm{d}\nu_{z;\t(z)-\tau} = 0.
		\end{equation}
In fact, let $\eta_{r,A}$ be the cutoff functions from Proposition \ref{pro:cutoffdim4}. Then, using integration by parts, we obtain
\begin{align}\label{equ:coin1}
	\int_{\RR_{\t(z)-\tau}}\eta_{r,A}\Delta f_z\,\mathrm{d}\nu_{z;\t(z)-\tau}=\int_{\RR_{\t(z)-\tau}}\eta_{r,A}|\na f_z|^2 \,\mathrm{d}\nu_{z;\t(z)-\tau}-\int_{\RR_{\t(z)-\tau}}\la \na \eta_{r,A}, \na  f_z\ra \,\mathrm{d}\nu_{z;\t(z)-\tau}.
\end{align}
By Proposition \ref{integralbound} and smooth convergence on the regular part, we have
\begin{align}\label{equ:coincide4}
	\int_{\RR_{\t(z)-\tau}}|\Delta f_z|+|\na f_z|^2\,\mathrm{d}\nu_{z;\t(z)-\tau} <\infty,
\end{align}
which implies 
\begin{align}\label{equ:coincide2}
	\lim_{\substack{r\to 0\\ A\to\infty}}\int_{\RR_{\t(z)-\tau}}\eta_{r,A}\lc \Delta f_z-|\na f_z|^2 \rc\,\mathrm{d}\nu_{z;\t(z)-\tau}=\int_{\RR_{\t(z)-\tau}}\Delta f_z-|\na f_z|^2\,\mathrm{d}\nu_{z;\t(z)-\tau}.
\end{align}

On the other hand, by Proposition \ref{pro:cutoffdim4}, it follows that there exists a constant $C>0$ independent of $A$ and $r$ such that 
\begin{align}\label{equ:coincide3}
&\left|\int_{\RR_{\t(z)-\tau}}\la \na \eta_{r,A}, \na  f_z\ra \,\mathrm{d}\nu_{z;\t(z)-\tau}\right| \leq  Cr^{-1}\int_{\RR_{\t(z)-\tau}\bigcap  \{0<\eta_{r, A}<1\}}|\na f_z|\,\mathrm{d}\nu_{z;\t(z)-\tau}\nonumber\\
\leq& C r^{-1}\lc \int_{\RR_{\t(z)-\tau} \bigcap  \{0<\eta_{r, A}<1\}}|\na f_z|^2\,\mathrm{d}\nu_{z;\t(z)-\tau}\rc^{1/2}\lc \int_{\RR_{\t(z)-\tau}\bigcap  \{0<\eta_{r, A}<1\}} 1 \,\mathrm{d}\nu_{z;\t(z)-\tau}\rc^{1/2}\nonumber\\
\leq& C \lc \int_{\RR_{\t(z)-\tau} \bigcap  \{0<\eta_{r, A}<1\}}|\na f_z|^2\,\mathrm{d}\nu_{z;\t(z)-\tau}\rc^{1/2}.
\end{align}
Combining \eqref{equ:coincide4} with \eqref{equ:coincide3}, we have
\begin{align}\label{equ:coincide5}
	\lim_{A\to\infty}\lim_{r\to 0}\int_{\RR_{\t(z)-\tau}}\la \na \eta_{r,A}, \na  f_z\ra \,\mathrm{d}\nu_{z;\t(z)-\tau}=0.
\end{align}
From \eqref{equ:coin1}, \eqref{equ:coincide2} and \eqref{equ:coincide5}, we conclude that \eqref{equ:entropycoincide1} holds, which completes the proof.
 \end{proof}

\subsection{\texorpdfstring{$L^1$}{L1}-curvature bounds for four-dimensional closed Ricci flows}

Let $\XX=\{M^4, (g(t))_{t\in [-T, 0)}\}$ be a four-dimensional closed Ricci flow with entropy bounded below by $-Y$ such that $T<\infty$ and $0$ is the first singular time. Assume that $(Z, d_Z, \t)$ is the $d^*$-completion of $\XX_{[-0.98T, 0)}$. 

\begin{thm}\label{thm:RmL1y}
With the above assumptions, there exists a constant $C>0$ depending on $Y$, $T$ and $\mathrm{diam}_{g(-0.99T)}(M)$ such that for any $t\in [-T/2,0)$, we have
	\begin{align*}
		\int_{M} |\Rm|\,\mathrm{d}V_{g(t)} \le \int_{M} r^{-2}_{\Rm}\,\mathrm{d}V_{g(t)}\leq C.
	\end{align*}
\end{thm}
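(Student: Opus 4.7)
The first inequality is immediate: by the very definition of the curvature radius, $|\Rm|(x,t) \le r_{\Rm}(x,t)^{-2}$ pointwise, and integrating gives the bound. So the whole task is the second inequality.

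The plan is to realize $\XX$ as a (trivial) noncollapsed Ricci flow limit and apply the neck decomposition theorem to each time slice. I form $(Z, d_Z, \t)$ as the $d^*$-completion of $\XX_{[-0.98T, 0)}$, so that Theorem \ref{neckdecomgeneral} is available. Using the entropy lower bound together with the diameter bound at $t = -0.99T$, standard distance-distortion estimates for Ricci flows with entropy bounded below give a uniform diameter bound $\mathrm{diam}_{g(t)}(M) \le D(Y, T, \mathrm{diam}_{g(-0.99T)}(M))$ for $t \in [-T/2, 0)$. Fix $\delta = \delta(Y)$, $\eta = \eta(Y)$, $\zeta = \zeta(Y)$ as in Lemma \ref{lem:constant}, and a fixed scale $r_0 = r_0(Y, T)$ small enough that $t - 2\zeta^{-2} r_0^2 \in \III^-$. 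For each fixed $t$, cover $M \times \{t\}$ by at most $N = N(Y, T, D)$ balls $B^*(z_i, r_0)$ with $z_i \in M \times \{t\}$.

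To each base ball I apply Theorem \ref{neckdecomgeneral} with $k = 2$ to obtain
\begin{equation*}
B^*(z_i, r_0) \subset \bigcup_a \bigl(\NNN'_a \cap B^*(x_a, r_a)\bigr) \cup \bigcup_b B^*(x_b, r_b) \cup S^{2, \delta, \eta},
\end{equation*}
with the content bound $\sum_a r_a^2 + \sum_b r_b^2 + \HHH^2(S^{2, \delta, \eta}) \le C(Y) r_0^2$. I then aim to prove the pointwise-by-piece bound $\int r_{\Rm}^{-2}\, dV_{g(t)} \le C(Y) r_*^2$ on the intersection of each piece with $M_t$, where $r_*$ is the scale of the piece. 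For a $b$-ball, Lemma \ref{lem:constant}(v) gives $(3, \epsilon, \epsilon r_b)$-symmetry; since the only $3$-symmetric Ricci shrinker in dimension four is the static $\R^4 \times \R$, quantitative closeness forces $r_{\Rm} \ge c(Y) r_b$ on $B^*(x_b, r_b)$, and combining with the time-slice volume estimate \cite[Proposition 5.34]{fang2025RFlimit} yields the bound. For a quotient cylindrical neck region $\NNN_a = \NNN'_a$ modeled on $\bar{\mathcal C}^2_2(\Gamma)$, set $\tau = \t(x_a) - t$; the $\delta$-closeness to the model forces $r_{\Rm}(y) \ge c(Y)\sqrt{\tau}$ on $\NNN_a \cap M_t$ together with $|\NNN_a \cap M_t|_t \le C(Y) r_a^2 \tau$ (from the product structure $\R^2 \times (S^2/\Gamma)$ with $S^2$-fiber of area $\sim \tau$), so the integral is $\le C(Y) r_a^2$ uniformly in $\tau$. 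For a flat neck region $\NNN'_a$ modeled on $\mathcal F^0(\Gamma)$, Lemma \ref{lem:constant}(iii) gives $(3, \delta, \delta \, d_Z(y, \CCC_a))$-symmetry at each $y \in \NNN'_a$, whence $r_{\Rm}(y) \ge c(Y) d_Z(y, \CCC_a)$; since $\CCC_a$ is vertically $2$-rectifiable (time-like), its intersection with any fixed time slice is essentially $0$-dimensional, so the $s$-tubular neighborhood of $\CCC_a$ in $M_t$ has $4$-volume $\le Cs^4$, and a dyadic annular decomposition yields the $r_a^2$ bound. The remainder $S^{2,\delta,\eta}$ has $4$-dimensional Lebesgue measure zero on any time slice and contributes nothing. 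Summing against the content estimate then gives $\int_{B^*(z_i, r_0) \cap M_t} r_{\Rm}^{-2}\, dV \le C(Y) r_0^2$, and summing over the $N$ covering balls yields the uniform bound.

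The main technical obstacle will be the pointwise curvature-radius estimate on the neck regions. Extracting $r_{\Rm}(y) \gtrsim \sqrt{\tau}$ in the cylindrical case and $r_{\Rm}(y) \gtrsim d_Z(y, \CCC_a)$ in the flat case from the qualitative $\hat C^\infty$-closeness to the model will require a standard contradiction-compactness argument, being careful to respect the parabolic neighborhood implicit in the definition of $r_{\Rm}$. In the cylindrical regime one must handle both the thin range $\tau \lesssim r_a^2$ (where the $S^2$-fiber radius controls $|\Rm|$) and the thick range $\tau \gtrsim r_a^2$ (where the parabolic-ball definition of $r_{\Rm}$ is truncated at scale $r_a$, yielding $r_{\Rm} \sim r_a$ and volume $\sim r_a^4$, again giving integral $\sim r_a^2$). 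Once these pointwise bounds are in hand, the rest of the argument is bookkeeping using the content estimate from Theorem \ref{neckdecomgeneral}.
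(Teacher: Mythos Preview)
Your overall strategy matches the paper's: apply the neck decomposition (Theorem \ref{neckdecomgeneral} with $k=2$) to a bounded collection of balls covering $M_t$, bound $\int r_{\Rm}^{-2}\,\mathrm{d}V_{g(t)}$ on each piece by $C(Y) r_*^2$, and sum against the content estimate. The treatment of $b$-balls and of $S^{2,\delta,\eta}$ is the same.

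The difference lies in how you handle the two types of neck region. In the flat case your outline is exactly the paper's, but the paper makes the key step precise using Lemma \ref{lem:bilitpsta}: since the time function is bi-Lipschitz on $\CCC_a$, any two points $x',y' \in \CCC_a$ with $|\t(x')-\t(y')| \le Cr^2$ satisfy $d_Z(x',y') \le C(Y)r$, so the annulus $A_{a,r} = \{y \in \NNN'_a \cap B^*(x_a,r_a) \cap Z_t : d_Z(y,\CCC_a) \in [r/2,2r]\}$ is contained in a single $d^*$-ball of radius $C(Y)r$, giving $|A_{a,r}|_t \le C(Y)r^4$. In the quotient cylindrical case your argument is organized differently: you propose the volume bound $|\NNN_a \cap M_t|_t \le C(Y) r_a^2\tau$ from the model product structure, but this is not immediate from $\delta$-closeness at scale $r_a$ when $\tau \ll r_a^2$. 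The paper instead runs a Vitali covering of $Z_{a,t} := \NNN_a \cap B^*(x_a,r_a) \cap Z_t$ by balls $B^*(x_i, 10\,d_{x_i})$ where $d_{x_i} = d_Z(x_i,\CCC_a)$, obtains $r_{\Rm}^{-2}(y) \le C(Y) d_{x_i}^{-2}$ on each (via $(n3)$ at the nearest center $x_i'$ and the fact that $\t(x_i) \le \t(x_i') - c(Y)d_{x_i}^2$), and then controls $\sum_i d_{x_i}^2$ via the Ahlfors regularity of the packing measure (Proposition \ref{ahlforsregucyl1quo}): the balls $B^*(x_i',d_{x_i})$ are disjoint, each carries $\mu$-mass $\ge c(Y)d_{x_i}^2$, and their union sits in $B^*(x_a, 3r_a/2)$ with $\mu$-mass $\le C(Y) r_a^2$. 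Your $r_a^2\tau$ volume estimate is recoverable from this same Ahlfors regularity (since $d_{x_i} \sim \sqrt{\tau}$ on $Z_{a,t}$), so the gap is bridgeable, but you should be aware that this is the mechanism.
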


\begin{proof}
For small parameters $\delta>0$ and $\eta>0$ to be determined later, we choose $\zeta=\zeta(Y,\delta,\eta)$ such that Theorem \ref{neckdecomgeneral} holds with $k=2$. Specifically, for any $z_0\in Z_{[-T/2 ,0]}$, set $r_0:=\zeta\sqrt{T/8}$. Then we obtain the following decomposition:
\begin{align*}
	&B^*(z_0,r_0)\subset \bigcup_a\big(\NNN'_a\bigcap B^*(x_a,r_a)\big)\bigcup\bigcup_b B^*(x_b,r_b)\bigcup S^{2,\delta,\eta},\\
	&S^{2,\delta,\eta}\subset \bigcup_a\big(\CCC_{0,a}\bigcap B^*(x_a,r_a)\big)\bigcup\tilde{S}^{2,\delta,\eta},
\end{align*}
with the following properties:
\begin{enumerate}[label=\textnormal{(\alph{*})}]
	\item For each $a$, $\NNN_a=B^*(x_a,2r_a)\setminus B^*_{r_x}(\CCC_a)$ is either a $(2, \delta, \cc, r_a)$-quotient cylindrical neck region or a $(\delta, \cc, r_a)$-flat neck region, where $\cc=\cc(Y)$. In the former case, we set $\NNN_a'=\NNN_a$; in the latter, $\NNN_a'$ denotes the modified $\delta$-region associated with $\NNN_a$.
	
	\item For each $b$, there exists a point in $B^*(x_b,2 r_b)$ which is $(3,\eta,r_b)$-symmetric.
	
	\item The following content estimate holds:
	\begin{align}\label{equ:contenta}
		\sum_a r_a^2+\sum_b r_b^2+\HHH^2(S^{2,\delta,\eta})\leq C(Y) r_0^2.
	\end{align}	
\end{enumerate}

Fix $t\in [-T/2,0)$. Since $S^{2,\delta,\eta} \subset Z_0$ and $Z_t$ is smooth, we have
\begin{align*}
	B^*(z_0,r_0)\bigcap Z_t\subset \bigcup_a\big(\NNN'_a\bigcap B^*(x_a,r_a)\bigcap Z_t\big)\bigcup\bigcup_b \lc B^*(x_b,r_b)\bigcap Z_t\rc.
\end{align*}

From Lemma \ref{lem:constant} (v), it follows that for any $\ep>0$, if $\eta\leq\eta(Y,\ep)$, then any point $y\in B^*(x_b,3r_b/2)$ is $(3,\ep,\ep r_b)$-symmetric. If $\ep=\ep(Y)$ is small, then by \cite[Theorem 8.13]{fang2025RFlimit}, we obtain that $r^{-2}_{\Rm}(y) \leq C(Y)r_b^{-2}$. 
Therefore, by \cite[Proposition 3.17 (i)]{fang2025RFlimit}, we have
\begin{align}\label{equ:RmL1a}
	\int_{B^*(x_b,r_b)\bigcap Z_t}r^{-2}_{\Rm}\,\mathrm{d}V_{g(t)}\leq C(Y)r_b^{-2}\cdot r_b^4\leq C(Y)r_b^2.
\end{align}
Next, we consider the $a$-balls in the following two cases. 

\textbf{Case 1:} $\NNN_a=B^*(x_a,2r_a)\setminus B^*_{r_x}(\CCC_a)$ is a $(2, \delta, \cc, r_a)$-quotient cylindrical neck region.

In this case, $\NNN_a'=\NNN_a$. For any $x\in \NNN_a\bigcap B^*(x_a,r_a)\bigcap Z_t=:Z_{a,t}$, we set $d_x:=d_Z(x,\CCC_a)=d_Z(x,x')$ for some $x'\in \CCC_a$. By definition, we have $d_x\geq r_{x'}$. Applying the Vitali covering argument, we can choose $\{x_i\}\subset Z_{a,t}$ such that for any $i\neq j$, $d_Z(x_i,x_j)\geq 2(d_{x_i}+d_{x_j})$,
and moreover,
\begin{align}\label{equ:RmL1d}
	Z_{a,t}\subset \bigcup_i B^*(x_i, 10d_{x_i}).
\end{align}

By Definition \ref{defiofcylneckregionquo} $(n3)$, for each $i$, $x_i'\in Z$ is $(2,\delta,d_{x_i})$-quotient cylindrical (regarding $\bar \CC^2$ or $\bar \CC^2_2(\mathbb{Z}_2)$). Moreover, by Definition \ref{defiofcylneckregionquo} $(n4)$ and the fact $d_Z(x_i,\CCC_a)=d_{x_i}$, there exists a constant $c(Y)>0$ such that $t=\t(x_i)\leq \t(x_i')-c(Y)d_{x_i}^2$. Hence, by the quotient version of \cite[Lemma 2.27]{FLloja05}, if $\delta\leq\delta(Y,\ep)$, then every $y\in B^*(x_i,10d_{x_i})\bigcap Z_t$ is $(3,\ep,\ep d_{x_i})$-symmetric for $\ep\leq\ep (Y)$, and therefore satisfies the curvature bound $r^{-2}_{\Rm}(y) \leq C(Y)d_{x_i}^{-2}$. 

Applying \cite[Proposition 3.17 (i)]{fang2025RFlimit}, we obtain
\begin{align}\label{equ:RmL1b}
	\int_{B^*(x_i,10d_{x_i})\bigcap Z_{a,t}}r^{-2}_{\Rm}\,\mathrm{d}V_{g(t)}\leq C(Y)d_{x_i}^2.
\end{align}

On the other hand, by Ahlfors regularity (Proposition \ref{ahlforsregucyl1quo}), if $d_{x_i}\leq r_a/100$, 
\begin{align*}
\mu \lc B^*(x_i',d_{x_i}) \rc \geq c(Y)d_{x_i}^2>0.
\end{align*}
Combining this with \eqref{equ:RmL1b} gives
\begin{align}\label{equ:RmL1c}
	\int_{B^*(x_i,10d_{x_i})\bigcap Z_{a,t}}r^{-2}_{\Rm}\,\mathrm{d}V_{g(t)}\leq C(Y)\mu \lc B^*(x_i',d_{x_i}) \rc.
\end{align}
By construction, for $i\neq j$ we have $B^*(x_i',d_{x_i})\bigcap B^*(x_j',d_{x_j})=\emptyset$ and if $d_{x_i}\leq r_a/100$, then $B^*(x_i',d_{x_i})\subset B^*(x_a,3r_a/2)$. Moreover, by Proposition \ref{prop:volumebound}, the number of indices $i$ with $d_{x_i}\geq r_a/100$ is finite and bounded by $C(Y)$. For these indices, the integrals as in \eqref{equ:RmL1b} can be estimated directly by $C(Y)r_a^2$. 

Finally, combining \eqref{equ:RmL1d} with \eqref{equ:RmL1c}, we obtain
\begin{align}\label{equ:RmL1g}
	\int_{Z_{a,t}}r^{-2}_{\Rm}\,\mathrm{d}V_{g(t)} \leq & C(Y)\sum_{d_{x_i}\leq r_a/100}\mu \lc B^*(x_i',d_{x_i}) \rc+C(Y)r_a^2 \notag \\
	 \leq & C(Y)\mu(B^*(x_a,3r_a/2))+C(Y)r_a^2\leq C(Y)r_a^2,
\end{align}
where the last inequality follows from Ahlfors regularity after covering $B^*(x_a,3r_a/2)$ by balls of radius $r_a/100$.

\textbf{Case 2:} $\NNN_a=B^*(x_a,2r_a)\setminus B^*_{r_x}(\CCC_a)$ is a $(\delta, \cc, r_a)$-flat neck region.

In this case, every point $y\in\NNN_a'$ is $(3,\delta, \delta d_Z(y,\CCC_a))$-symmetric. Hence, if we choose $\delta=\delta(Y)$ and compare with the model space $\R^4/\Gamma\times (-\infty,0]$ at scale $d_Z(y,\CCC_a)$, we obtain
\begin{align}\label{equ:RmL1fex01}
r^{-2}_{\Rm}(y) \leq C(Y)d_Z(y,\CCC_a)^{-2}.
\end{align}

For any $r\in (0,r_a]$, set 
\begin{align*}
A_{a,r}:=\{y\in \NNN'_a\bigcap B^*(x_a,r_a)\bigcap Z_t \mid d_Z(y,\CCC_a)\in [r/2,2r]\}.
\end{align*}
We choose a small constant $c_1=c_1(Y)>0$ to be determined later.

If $r \in [c_1 r_a, r_a]$, then by \cite[Proposition 3.17(i)]{fang2025RFlimit} we have $|A_{a, r}|_t \le C r_a^4$ for a universal constant $C$. Hence, by \eqref{equ:RmL1fex01},
\begin{align}\label{equ:RmL1fex02}
	\int_{A_{a, r}} r^{-2}_{\Rm}\,\mathrm{d}V_{g(t)}\leq C(Y) r^2.
\end{align}

If $r\in (0, c_1 r_a]$, fix $y \in A_{a, r}$ and choose $y' \in \CCC_a$ realizing the distance, i.e. $d_Z(y, \CCC_a)=d_Z(y, y')$. For any $x \in A_{a, r}$, pick $x' \in \CCC_a$ with $d_Z(x, \CCC_a)=d_Z(x, x')$. Then $\abs{\t(x)-\t(x')} \le d_Z^2(x, x') \le 4r^2$. By Lemma \ref{lem:bilitpsta}, it follows that $d^2_Z(y', x') \le C(Y)|\t(x')-\t(y')| \le C(Y) r^2$. Note that if $c_1=c_1(Y)$ is chosen to be small, the assumption of Lemma \ref{lem:bilitpsta} can be satisfied. Hence, $A_{a, r} \subset B^*(y', C(Y) r)$. Applying \cite[Proposition 3.17(i)]{fang2025RFlimit} again yields $|A_{a, r}|_t \le C(Y) r^4$, which, together with \eqref{equ:RmL1fex01}, implies \eqref{equ:RmL1fex02} as well.

Choose $r=2^{-i}r_a$ for $i \ge 0$. Then summing \eqref{equ:RmL1fex02} gives
\begin{align}\label{equ:RmL1e}
	\int_{\NNN_a'\bigcap Z_t}r^{-2}_{\Rm}\,\mathrm{d}V_{g(t)}\leq\sum_{i=0}^\infty\int_{A_{a, 2^{-i}r_a}} r^{-2}_{\Rm}\,\mathrm{d}V_{g(t)}\leq C(Y) r_a^2.
\end{align}

Combining \eqref{equ:RmL1a}, \eqref{equ:RmL1g}, and \eqref{equ:RmL1e}, together with the content estimate \eqref{equ:contenta}, we obtain
\begin{align}\label{equ:RmL1l}
	\int_{B^*(x_0,r_0)\bigcap Z_t}r^{-2}_{\Rm}\,\mathrm{d}V_{g(t)}\leq C(Y)\lc\sum_{a}r_a^2+\sum_b r_b^2\rc \leq C(Y)r_0^2.
\end{align}

We now fix constants $\delta=\delta(Y),\,\eta=\eta(Y)$ and $\zeta=\zeta(Y)$ so that all the above estimates hold. By \cite[Equation (9.1)]{fang2025RFlimit}, the diameter of $(Z,d_Z)$ is bounded by a constant depending on $Y$, $T$ and $\mathrm{diam}_{g(-0.99T)}(M)$. Consequently, for any $t \in [-T/2, 0)$, $Z_t$ can be covered by at most $C(Y)$ balls $B^*(z_i, r_0)$ with $z_i \in Z_{[-T/2, 0]}$. Applying \eqref{equ:RmL1l} on each ball yields
\begin{align*}
	\int_{Z_t}r^{-2}_{\Rm}\,\mathrm{d}V_{g(t)}\leq C,
\end{align*}
where $C$ depends only on $Y$, $T$ and $\mathrm{diam}_{g(-0.99T)}(M)$.
\end{proof}

\begin{rem}
With the same argument, Theorem \ref{thm:RmL1y} extends to all $4$-dimensional Ricci flow limit spaces. More precisely, let $(Z,d_Z,\t)$ be a noncollapsed Ricci flow limit space arising as a pointed Gromov--Hausdorff limit of a sequence in $\MM(4,Y,T)$. Then for any $z_0\in Z$ with $\t(z_0)-2r_0^2\in\III^-$ and for any $t\in\R$, one has
\begin{align*}
\int_{B^*(z_0,r_0) \cap \RR_t} r_{\Rm}^{-2}\,\mathrm{d}V_{g_t^Z}\le C(Y) r_0^2.
\end{align*}
\end{rem}

By essentially the same argument as in Theorem \ref{thm:RmL1y}, one likewise obtains integral estimates for $\na^k \Rm$ for any $k \ge 1$.

\begin{thm}\label{thm:RmLky}
With the above assumptions, for any integer $k \ge 1$, there exists a constant $C_k>0$ depending on $k$, $Y$, $T$ and $\mathrm{diam}_{g(-0.99T)}(M)$ such that for any $t\in [-T/2,0)$, we have
	\begin{align*}
		\int_{M} |\na^k\Rm|^{\frac{2}{k+2}}\,\mathrm{d}V_{g(t)}\leq C_k.
	\end{align*}
\end{thm}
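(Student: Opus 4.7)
The plan is to reduce the higher-derivative estimate to the $L^1$-curvature-radius bound already established in Theorem \ref{thm:RmL1y}, by exploiting the fact that the exponent $2/(k+2)$ is precisely the one that renders $|\nabla^k \Rm|^{2/(k+2)}$ comparable to $r_{\Rm}^{-2}$.

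The key pointwise ingredient is the following consequence of Shi's interior derivative estimates: at any smooth spacetime point $(x,t)$ in the flow $\XX$, we have
\begin{equation*}
|\nabla^k \Rm|(x,t) \le C_k\, r_{\Rm}(x,t)^{-(k+2)}.
\end{equation*}
Indeed, by definition of $r_{\Rm}$ (see \cite[Definition 2.10]{fang2025RFlimit}) the parabolic cylinder $P(x,t,r_{\Rm}(x,t))$ admits a uniform $|\Rm| \le r_{\Rm}^{-2}$ bound, and Shi's estimates on this cylinder (after parabolic rescaling by $r_{\Rm}$) yield the stated bound on $|\nabla^k \Rm|$ at the center. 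Raising to the power $2/(k+2)$ gives the pointwise inequality
\begin{equation*}
|\nabla^k \Rm|^{\frac{2}{k+2}}(x,t) \le C_k\, r_{\Rm}(x,t)^{-2}.
\end{equation*}

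Integrating this pointwise inequality over $M$ at time $t \in [-T/2, 0)$ and invoking Theorem \ref{thm:RmL1y} immediately yields
\begin{equation*}
\int_M |\nabla^k \Rm|^{\frac{2}{k+2}} \, \mathrm{d}V_{g(t)} \le C_k \int_M r_{\Rm}^{-2} \, \mathrm{d}V_{g(t)} \le C_k,
\end{equation*}
with $C_k$ depending only on $k$, $Y$, $T$ and $\mathrm{diam}_{g(-0.99T)}(M)$, as desired.

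Alternatively, one can follow the neck-decomposition route used in the proof of Theorem \ref{thm:RmL1y} line by line: on each $b$-ball one has $r_{\Rm} \ge c(Y) r_b$ and hence $|\nabla^k \Rm|^{2/(k+2)} \le C_k r_b^{-2}$ pointwise, contributing $C_k r_b^2$ after integration; on a quotient cylindrical neck region a Vitali cover by balls $B^*(x_i, 10 d_{x_i})$ combined with the curvature estimate $r_{\Rm}^{-1} \le C(Y) d_{x_i}^{-1}$ and Ahlfors regularity gives a total contribution $C_k r_a^2$; on a flat neck region the dyadic decomposition in $d_Z(\cdot, \CCC_a)$ together with $r_{\Rm}^{-1} \le C(Y) d_Z(\cdot, \CCC_a)^{-1}$ again yields $C_k r_a^2$. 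Summing via the content estimate \eqref{equ:contenta} and covering $Z_t$ by finitely many balls of radius $r_0$ concludes the proof. The only substantive point in either route is the verification of Shi's estimate at scale $r_{\Rm}$, which is standard; no genuine new obstacle arises beyond what was already handled for $k=0$ in Theorem \ref{thm:RmL1y}.
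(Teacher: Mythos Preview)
Your proposal is correct. The paper does not spell out a proof but only remarks that the result follows ``by essentially the same argument as in Theorem \ref{thm:RmL1y}'', which is exactly your second (neck-decomposition) route; your first route, reducing directly to the $r_{\Rm}^{-2}$ bound of Theorem \ref{thm:RmL1y} via the pointwise Shi estimate $|\nabla^k\Rm|^{2/(k+2)}\le C_k\,r_{\Rm}^{-2}$, is an even cleaner shortcut that the paper's phrasing does not explicitly highlight but which is entirely valid and makes the dependence on the earlier theorem transparent.
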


Using Theorem \ref{thm:RmL1y}, we obtain a refinement of \cite[Corollary 9.4 and Proposition 9.5]{fang2025RFlimit}. We note that the corresponding estimate is already known in the setting of compact Kähler Ricci flows, where it follows from properties of the Kähler forms.

\begin{cor} \label{cor:volume4d}
There exists a constant $C$ depending on $Y$, $T$ and $\mathrm{diam}_{g(-0.99T)}(M)$ such that for any $s, t \in [-T/2, 0)$,
	\begin{align*}
\abs{|M|_t-|M|_s} \le C |t-s|.
	\end{align*} 
\end{cor}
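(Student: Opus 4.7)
The plan is to reduce the volume-difference estimate to the $L^1$ curvature bound just proved in Theorem \ref{thm:RmL1y}, via the standard first variation formula for volume under Ricci flow. Since the noncollapsed Ricci flow limit space $(Z,d_Z,\t)$ is obtained as the $d^*$-completion of $\XX_{[-0.98T,0)}$, for every $t\in [-T/2,0)$ the slice $(Z_t, g^Z_t)$ coincides with the smooth Riemannian manifold $(M,g(t))$, and the slice volume $|M|_t$ from \cite[Definition 5.33]{fang2025RFlimit} agrees with the Riemannian volume $\mathrm{Vol}_{g(t)}(M)$.

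First, I would invoke the classical volume evolution identity along Ricci flow,
\begin{equation*}
\frac{d}{dt}\,\mathrm{Vol}_{g(t)}(M)=-\int_M \scal\,\mathrm{d}V_{g(t)},
\end{equation*}
which is valid on the smooth time interval $[-T,0)$. This reduces the problem to controlling the spacetime integral of the scalar curvature on $M\times[-T/2,0)$.

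Next, I would bound the scalar curvature pointwise by the full curvature tensor: since $|\scal|\le C(n)|\Ric|\le C(n)|\Rm|$ and moreover $|\Rm|\le r_{\Rm}^{-2}$ by the very definition of the curvature radius (see \cite[Definition 2.10]{fang2025RFlimit}), we obtain
\begin{equation*}
\int_M |\scal|\,\mathrm{d}V_{g(t)}\le C\int_M r_{\Rm}^{-2}\,\mathrm{d}V_{g(t)}\le C,
\end{equation*}
for every $t\in[-T/2,0)$, where the last inequality is precisely Theorem \ref{thm:RmL1y} and $C$ depends only on $Y$, $T$ and $\mathrm{diam}_{g(-0.99T)}(M)$. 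Integrating the volume evolution formula from $s$ to $t$ and applying this uniform bound then yields
\begin{equation*}
\bigl||M|_t-|M|_s\bigr|\le \int_{\min\{s,t\}}^{\max\{s,t\}}\int_M|\scal|\,\mathrm{d}V_{g(\tau)}\,\mathrm{d}\tau\le C|t-s|,
\end{equation*}
which is the desired Lipschitz estimate.

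There is essentially no obstacle here: the entire content of the corollary is the pointwise-in-time $L^1$ bound on $\Rm$ established in Theorem \ref{thm:RmL1y}, and the rest is a one-line application of the volume variation formula. The only minor point to be careful about is identifying the limit-space volume $|M|_t$ with the smooth Riemannian volume for $t\in[-T/2,0)$, but this is immediate from the construction of $Z$ as the $d^*$-completion of the smooth flow together with \cite[Proposition 3.17]{fang2025RFlimit}.
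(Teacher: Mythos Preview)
Your proof is correct and follows essentially the same approach as the paper: both reduce the estimate to the volume evolution identity $\frac{d}{dt}|M|_t=-\int_M \scal\,\mathrm{d}V_{g(t)}$ and then apply Theorem \ref{thm:RmL1y} to bound the scalar curvature integral. Your additional remarks on identifying $|M|_t$ with the smooth Riemannian volume are harmless but unnecessary here, since the flow is still smooth on $[-T/2,0)$.
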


\begin{proof}
This follows directly from Theorem \ref{thm:RmL1y} together with the identity
	\begin{align*}
\diff{}{t} |M|_t=-\int_M \scal \,\mathrm{d}V_{g(t)}.
	\end{align*} 
\end{proof}

\newpage

\appendixpage
\addappheadtotoc
\appendix
\section{Geometric transformation theorem for almost splitting maps} 
\label{app:A} 

In this appendix, we derive a geometric transformation theorem for almost splitting maps on closed Ricci flows and noncollapsed Ricci flow limit spaces. Roughly speaking, under suitable conditions, if a map is almost splitting at a given scale, then at each smaller scale, it can be transformed by a lower triangular matrix so that it remains almost splitting at that scale. Moreover, the growth of these transformations can be quantitatively controlled. Note that a similar geometric transformation theorem was established in the setting of noncollapsed Ricci limit spaces in \cite[Theorem 7.2]{Cheeger2018RectifiabilityOS}.

We begin with the following elementary lemma of linear algebra, whose proof is straightforward and omitted.

\begin{lem} \label{lem:element}
Let $A$ be a $k \times k$ symmetric matrix and $T$ a lower triangular $k\times k$ matrix such that $TAT^{t}=\mathrm{Id}$. Then the following statements hold.
	\begin{enumerate}[label=\textnormal{(\roman{*})}]
		\item If $\|A-\mathrm{Id}\| \le \ep$, then $\|T-\mathrm{Id}\| \le C(k) \ep$.
		
		\item If all diagonal entries of $A$ lie in $[-\Lambda, \Lambda]$, then each diagonal entry of $T$ has absolute value bounded below by $\Lambda^{-\frac 1 2}$.		
	\end{enumerate}	
	Here, the norm $\|\cdot\|$\index{$\|\cdot\|$} is the operator norm of the matrix.
\end{lem}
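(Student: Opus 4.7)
My plan is to treat the two parts independently, using the rearranged identity $A = T^{-1}(T^{-1})^t$ that follows from $TAT^t = \mathrm{Id}$. Part (ii) will serve as a warm-up; part (i) requires a bit more care because the relation $TAT^t=\mathrm{Id}$ only determines $T$ up to a diagonal sign flip, so I will first pin down the signs.

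For (ii), the key point is that lower triangularity is preserved under inversion: $T^{-1}$ is lower triangular with diagonal entries $(T^{-1})_{ii} = 1/T_{ii}$. Writing $A = T^{-1}(T^{-1})^t$ and reading off the diagonal yields
\begin{equation*}
A_{ii} = \sum_{j=1}^{i}\bigl((T^{-1})_{ij}\bigr)^{2} \;\ge\; \bigl((T^{-1})_{ii}\bigr)^{2} = \frac{1}{T_{ii}^{2}}.
\end{equation*}
Under the hypothesis $|A_{ii}| \le \Lambda$ this gives $|T_{ii}| \ge \Lambda^{-1/2}$ immediately.

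For (i), I first normalize. If $\|A-\mathrm{Id}\|\le \ep$ with $\ep$ smaller than some absolute constant, then $A$ is positive definite, and a diagonal sign flip $D = \mathrm{diag}(\pm 1)$ applied to $T$ (i.e.\ replacing $T$ by $DT$) preserves the identity $TAT^{t}=\mathrm{Id}$ and preserves lower triangularity. So we may assume that all diagonal entries of $T$ are positive. Then $T^{-1}$ is the unique Cholesky factor of $A$, and the Cholesky factorization is smooth on the open set of positive-definite symmetric matrices, mapping $\mathrm{Id}$ to $\mathrm{Id}$. A standard implicit-function argument, or a direct row-by-row computation writing $T = \mathrm{Id} + E$ and solving $(\mathrm{Id} + E)A(\mathrm{Id} + E)^{t} = \mathrm{Id}$ for the entries of the lower triangular matrix $E$ inductively (the $(i,j)$-entry of $E$ is determined, at leading order, by a linear combination of entries of $A-\mathrm{Id}$ together with already-determined entries of $E$), yields $\|E\| \le C(k)\|A-\mathrm{Id}\|$ once $\|A-\mathrm{Id}\|$ is below a fixed constant depending only on $k$. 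For the remaining (large $\ep$) regime the bound becomes trivial after enlarging the constant $C(k)$.

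No serious obstacle is expected; the only mildly delicate point is the sign normalization, which is unambiguously fixed by requiring $T$ to be close to $\mathrm{Id}$. Everything else is purely linear algebra, and the two parts combined control both the magnitude of $T-\mathrm{Id}$ and the size of the diagonal of $T$, which is exactly what is needed in the subsequent applications to transformations between almost splitting maps at different scales.
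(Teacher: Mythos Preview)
Your approach is correct and matches what the paper intends (the proof is omitted there as ``straightforward''). Part (ii) is clean; for (i), you correctly identify the sign normalization on the diagonal of $T$ and then invoke smoothness of the Cholesky factorization near $\mathrm{Id}$.

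One small correction: your claim that the large-$\ep$ regime ``becomes trivial after enlarging $C(k)$'' is false. Take $A = \mathrm{diag}(1-\ep, 1, \ldots, 1)$ with $\ep<1$; the (positive-diagonal) lower triangular $T$ is $\mathrm{diag}((1-\ep)^{-1/2}, 1, \ldots, 1)$, and $\|T-\mathrm{Id}\| = (1-\ep)^{-1/2}-1$ blows up as $\ep \nearrow 1$, so no uniform $C(k)$ can work for all $\ep$. The estimate (i) holds only for $\ep$ below a threshold depending on $k$ --- which is all that is ever used in the paper --- and your smoothness argument establishes exactly that. Simply drop the large-$\ep$ sentence, or make the smallness hypothesis explicit.
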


\begin{lem}\label{lem:compmatrix}
	Let $\flow$ be a closed Ricci flow and $x_0^*=(x_0,t_0)\in \XX$ with $[t_0-10r^2,t_0]\subset I$. Assume that $\vec u=(u_1,\ldots, u_k):M\times [t_0-10r^2,t_0]$ is a $(k, \ep,r)$-splitting map. If there is a lower triangular $k\times k$-matrix $T$ such that $\vec u':= T\vec u$ is a $(k,\ep, s)$-splitting map at $x_0^*$ for some $s \in [r/2, r]$, then $\rVert T-\mathrm{Id}\rVert\leq C(n)\ep$.
\end{lem}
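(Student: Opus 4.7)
\medskip

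The plan is to reduce the statement to the elementary linear algebra fact in Lemma \ref{lem:element} (i) by showing that the Gram matrix of $\vec u$ computed at scale $s$ is already $C(n)\ep$-close to the identity.

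First, I would introduce the symmetric matrix
\[
A=(a_{ij}), \qquad a_{ij}:=\aint_{t_0-10s^2}^{t_0-s^2/10}\int_M \langle \nabla u_i,\nabla u_j\rangle\,\mathrm{d}\nu_t\,\mathrm{d}t,
\]
where $\mathrm{d}\nu_t=\mathrm{d}\nu_{x_0^*;t}$. Since $\vec u'=T\vec u$ is $(k,\ep,s)$-splitting at $x_0^*$, Definition \ref{defnsplittingmap} (iv) applied to $\vec u'$ yields
\begin{equation}\label{equ:compapp1}
TAT^{t}=\mathrm{Id}.
\end{equation}
By Lemma \ref{lem:element} (i), it is therefore enough to prove the bound $\|A-\mathrm{Id}\|\le C(n)\ep$.

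To obtain this bound, I would exploit the fact that $\vec u$ itself is almost splitting at the larger scale $r$. A direct computation gives
\[
\frac{\mathrm{d}}{\mathrm{d}t}\int_M \langle\nabla u_i,\nabla u_j\rangle\,\mathrm{d}\nu_t = -2\int_M \langle\nabla^2 u_i,\nabla^2 u_j\rangle\,\mathrm{d}\nu_t,
\]
which, integrated in time and combined with Cauchy--Schwarz together with the Hessian bound in Definition \ref{defnsplittingmap} (iii) for $\vec u$, shows that for any two times $t_1,t_2\in [t_0-10r^2,t_0-r^2/10]$,
\begin{equation}\label{equ:compapp2}
\left|\int_M \langle\nabla u_i,\nabla u_j\rangle\,\mathrm{d}\nu_{t_1}-\int_M \langle\nabla u_i,\nabla u_j\rangle\,\mathrm{d}\nu_{t_2}\right|\le 2\ep.
\end{equation}
Because $s\in[r/2,r]$, the interval $[t_0-10s^2,t_0-s^2/10]$ is contained in $[t_0-10r^2,t_0-r^2/10]$, so averaging \eqref{equ:compapp2} over both time variables and using Definition \ref{defnsplittingmap} (iv) for $\vec u$ (which says that the $r$-scale average of $\langle\nabla u_i,\nabla u_j\rangle-\delta_{ij}$ vanishes) would give $|a_{ij}-\delta_{ij}|\le C(n)\ep$, and hence $\|A-\mathrm{Id}\|\le C(n)\ep$.

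Combining this estimate with \eqref{equ:compapp1} and Lemma \ref{lem:element} (i) yields the desired bound $\|T-\mathrm{Id}\|\le C(n)\ep$. The argument is essentially bookkeeping; the only mild subtlety is making sure that the identity $TAT^{t}=\mathrm{Id}$ is extracted cleanly from the $s$-scale splitting condition on $\vec u'=T\vec u$, which follows directly upon substituting $\vec u'=T\vec u$ into Definition \ref{defnsplittingmap} (iv) and using the linearity of the integral.
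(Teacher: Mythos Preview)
Your overall strategy—reduce to Lemma \ref{lem:element} (i) by showing that the $s$-scale Gram matrix $A$ of $\vec u$ satisfies $TAT^t=\mathrm{Id}$ and $\|A-\mathrm{Id}\|\le C(n)\ep$—is natural, and the identity $TAT^t=\mathrm{Id}$ is correct. The gap is in the step where you claim
\[
[t_0-10s^2,\,t_0-s^2/10]\subset[t_0-10r^2,\,t_0-r^2/10].
\]
This is false for any $s<r$: since $s^2/10<r^2/10$, the right endpoint $t_0-s^2/10$ lies \emph{above} $t_0-r^2/10$. On that extra piece $(t_0-r^2/10,\,t_0-s^2/10]$ you have no Hessian control on $\vec u$ from Definition \ref{defnsplittingmap} (iii), and the pointwise value $\int_M\langle\nabla u_i,\nabla u_j\rangle\,\mathrm{d}\nu_t$ is not known to stay within $C\ep$ of $\delta_{ij}$ there (for instance $\int_M|\nabla u_i|^2\,\mathrm{d}\nu_t$, though nonincreasing, could drop by an amount of order one). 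Since for $s=r/2$ this extra piece occupies a fixed positive fraction of the averaging window, your argument only yields $\|A-\mathrm{Id}\|\le C(n)$, not $\|A-\mathrm{Id}\|\le C(n)\ep$.

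The paper avoids this by averaging over a window contained in \emph{both} scale-intervals, namely $[t_0-2r^2,\,t_0-r^2]\subset[t_0-10r^2,\,t_0-r^2/10]\cap[t_0-10s^2,\,t_0-s^2/10]$ (the second containment uses $s\ge r/2$). On that common window \cite[Proposition 10.2]{fang2025RFlimit} gives that both $\vec u$ and $\vec u'=T\vec u$ have Gram averages within $2\ep$ of $\mathrm{Id}$; one then takes lower-triangular $T_1,T_2$ with $\|T_i-\mathrm{Id}\|\le C(n)\ep$ normalizing each exactly, and concludes $T=T_2^{-1}T_1$ by uniqueness of the Cholesky factorization. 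Your approach can be repaired the same way: redefine $A$ as the Gram average of $\vec u$ over $[t_0-2r^2,\,t_0-r^2]$; then $\|A-\mathrm{Id}\|\le C(n)\ep$ follows from your evolution-equation argument, but $TAT^t=\mathrm{Id}$ becomes only $\|TAT^t-\mathrm{Id}\|\le C(n)\ep$, and you need the (equally elementary) variant of Lemma \ref{lem:element} (i) covering that case.
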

\begin{proof}
	By \cite[Proposition 10.2]{fang2025RFlimit}, we see that for any $1\leq i,j\leq k$,
	\begin{align*}
\abs{\aint_{-2r^2}^{-r^2}\int_M \la \nabla u_i,\nabla u_j\ra  \,\mathrm{d}\nu_{x_0^*;t}\mathrm{d}t-\delta_{ij}} \le 2\ep \quad \text{and} \quad \abs{\aint_{-2r^2}^{-r^2}\int_M \la \nabla u'_i,\nabla u'_j\ra  \,\mathrm{d}\nu_{x_0^*;t}\mathrm{d}t-\delta_{ij}} \le 2\ep.
	\end{align*}	
	
Thus, we can find lower triangular matrices $T_1,T_2$ such that for any $1\leq i,j\leq k$,
		\begin{align*}
\aint_{-2r^2}^{-r^2}\int_M\left\la \nabla (T_1\vec u)_i,\nabla (T_1\vec u)_j\right\ra \,\mathrm{d}\nu_{x_0^*;t}\mathrm{d}t=\delta_{ij} \quad \text{and} \quad
\aint_{-2r^2}^{-r^2}\int_M\left\la \nabla (T_2\vec u')_i,\nabla (T_2\vec u')_j\right\ra \,\mathrm{d}\nu_{x_0^*;t}\mathrm{d}t=\delta_{ij}.
	\end{align*}	

By Lemma \ref{lem:element} (i), we have $\rVert T_1-\mathrm{Id}\rVert+\rVert T_2-\mathrm{Id}\rVert\leq C(n)\ep$.
	
Since $T_2\vec u'=T_2 T\vec u$, it follows from the uniqueness of Cholesky decomposition that $T_1=T_2 T$. Therefore, we obtain
		\begin{align*}
\rVert T-\mathrm{Id}\rVert =\rVert T_2^{-1} T_1-\mathrm{Id}\rVert\leq C(n)\ep,
	\end{align*}		
	where we used the elementary estimate
		\begin{align} \label{eq:elementary}
\|A_1 A_2-\mathrm{Id}\| \le \|A_1-\mathrm{Id}\|+\|A_2-\mathrm{Id}\|+\|A_1-\mathrm{Id}\| \cdot \|A_2-\mathrm{Id}\|
	\end{align}			
	for any $k \times k$ matrices $A_1$ and $A_2$.	
\end{proof}

\begin{thm}[Geometric transformation theorem]\label{thm:existtransRF}
	Given constants $\eta>0$ and $\epsilon>0$, there exist $C=C(n)$ such that the following holds.
	
	Let $\XX=\{(M^n,(g(t))_{t\in I}\}$ be a closed Ricci flow with entropy bounded below by $-Y$ and $x_0^*=(x_0, t_0)\in\XX$ with $[t_0-10r^2,t_0]\subset I$.    
	 Suppose for any $s\in [r_0,  r]$, $x_0^*$ is $(\delta, s)$-selfsimilar, $(k,\delta,s)$-splitting but not $(k+1,\eta,s)$-splitting. Assume that $\vec u=(u_1,\ldots,u_k):M\times [t_0-10r^2,t_0]\to\R^k$ is a $(k,\delta,r)$-splitting map. If $\delta \le \delta(n,Y,\eta, \ep)$, then for each $s\in [r_0,r]$, there exists a lower triangular $k\times k$ matrix $T_s$ such that
	\begin{enumerate}[label=\textnormal{(\roman{*})}]
		\item $\vec u^s=T_s\vec u$ is a $(k,\ep,s)$-splitting map at $x_0^*$.
		\item For any $r_0 \leq s_1\leq s_2\leq r$, we have
		\begin{align*}
\max \left\{ \rVert T_{s_1}^{-1} \circ T_{s_2}\rVert, \rVert T_{s_2}^{-1} \circ T_{s_1}\rVert \right\} \le (1+C\ep)\lc\frac{s_2}{s_1}\rc^{C \ep}.
	\end{align*}			
	
		\item Each diagonal entry of $T_s$ has absolute value bounded below by $1/2$.
	\end{enumerate}
\end{thm}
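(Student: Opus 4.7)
The plan is to construct $T_s$ at each scale $s \in [r_0, r]$ via Cholesky decomposition of the gradient inner-product matrix
$$ A(s) := \left( \aint_{t_0-10s^2}^{t_0-s^2/10} \int_M \langle \nabla u_i, \nabla u_j \rangle \, \mathrm{d}\nu_{x_0^*;t} \, \mathrm{d}t \right)_{1 \le i,j \le k}, $$
so that $T_s$ is the unique lower triangular matrix with positive diagonal entries satisfying $T_s A(s) T_s^t = \mathrm{Id}$. The map $\vec{u}^s := T_s \vec{u}$ will then automatically satisfy $\vec{u}^s(x_0^*) = 0$ and the scale-$s$ orthonormality of Definition \ref{defnsplittingmap} (iv); the remaining Hessian bound of Definition \ref{defnsplittingmap} (iii) will be verified using the Hessian decay machinery of Theorems \ref{hessiandecay} and \ref{geometrictransformation}.

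To establish property (i), I will first note that the selfsimilarity hypothesis at each scale $r_j \in [r_0, r]$, combined with the existence of strongly independent points (via an appropriate smooth-flow analog of Lemma \ref{lem:indechar} obtained by the same limiting argument), yields $\EE_{r_j}^{k,\alpha,\delta}(x_0^*) \le C(n) \sqrt{\delta}$ for $\alpha = 1/5$. The weighted sum $\sum_{s \le r_j \le r}(s/r_j)^\beta \EE_{r_j}^{k,\alpha,\delta}(x_0^*)$ therefore converges geometrically with uniform bound $C(n, \beta) \sqrt{\delta}$, so Theorem \ref{geometrictransformation} (whose proof adapts verbatim to the closed Ricci flow setting) produces
$$ \sum_{i=1}^k \int_{t_0 - 10 s^2}^{t_0 - s^2/10} \int_M |\nabla^2 u_i|^2 \, \mathrm{d}\nu_{x_0^*;t} \, \mathrm{d}t \le C(n, Y, \eta) \sqrt{\delta} $$
for every $s \in [r_0, r]$. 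Writing $\nabla^2 u_i^s = \sum_j (T_s)_{ij} \nabla^2 u_j$, the bound transfers to $\vec{u}^s$ with constant at most $C(n)\|T_s\|^2 \sqrt{\delta} \le \epsilon$, once the norm control on $T_s$ coming from property (ii) is in hand.

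For property (ii) I will proceed by dyadic telescoping. The evolution identity
$$ \frac{\mathrm{d}}{\mathrm{d}t} \int_M \langle \nabla u_i, \nabla u_j \rangle \, \mathrm{d}\nu_{x_0^*;t} = -2 \int_M \langle \nabla^2 u_i, \nabla^2 u_j \rangle \, \mathrm{d}\nu_{x_0^*;t} $$
together with the per-scale Hessian estimate above gives $\|A(s) - A(2s)\| \le C(n, Y, \eta) \sqrt{\delta}$ at each dyadic step, and a perturbation analysis of the Cholesky factorization then yields $\|T_s T_{2s}^{-1} - \mathrm{Id}\| \le C(n) \sqrt{\delta}$. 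Equivalently, once $\vec{u}^s$ and $\vec{u}^{2s}$ are known to be $(k, \epsilon, s)$- and $(k, \epsilon, 2s)$-splitting maps with $\vec{u}^s = (T_s T_{2s}^{-1}) \vec{u}^{2s}$, Lemma \ref{lem:compmatrix} applied at neighboring dyadic scales returns the same bound, and an elementary computation (expanding $T_s = \mathrm{Id} + \sqrt{\delta} E_s$ with $\|E_s\| = O(1)$) gives the analog $\|T_s^{-1} T_{2s} - \mathrm{Id}\| \le C(n) \epsilon$ and likewise for the reversed product. Telescoping through at most $N+1$ adjacent dyadic ratios with $N = \lceil \log_2(s_2/s_1) \rceil$ and applying the elementary inequality \eqref{eq:elementary} iteratively then yields
$$ \max\left\{ \|T_{s_1}^{-1} T_{s_2}\|, \|T_{s_2}^{-1} T_{s_1}\| \right\} \le (1 + C(n) \epsilon)^{N+1} \le (1 + C(n)\epsilon)(s_2/s_1)^{C(n)\epsilon}. $$

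Property (iii) should follow from the non-$(k+1, \eta, s)$-splitting hypothesis: this prevents any diagonal entry of $A(s)$ from exceeding a bound $\Lambda(n,Y,\eta) \le 4$, since otherwise a rescaled coordinate of $\vec{u}$ would extend to a $(k+1)$-th orthogonal splitting direction via a limiting argument analogous to the proof of Proposition \ref{equivalencesplitsymetric1}. Then Lemma \ref{lem:element} (ii) applied to $T_s A(s) T_s^t = \mathrm{Id}$ delivers $|(T_s)_{ii}| \ge \Lambda^{-1/2} \ge 1/2$. The hard part of the argument will be the mutual bootstrap between (i) and (ii): the Hessian bound for $\vec{u}^s$ requires a norm bound on $T_s$, while the adjacent-scale comparison producing that norm bound requires the Hessian estimate already in hand. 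I expect to resolve this circular dependency by running the Cholesky construction inductively, descending one dyadic step at a time starting from $s = r$, where $\|T_r - \mathrm{Id}\| \le C(n) \delta$ by the original $(k, \delta, r)$-splitting hypothesis and the subsequent dyadic step loses only a factor $1 + C(n)\epsilon$ in the operator norm.
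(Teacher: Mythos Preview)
Your approach via Cholesky decomposition of the Gram matrix $A(s)$ and direct application of Theorem \ref{geometrictransformation} has a genuine gap in step (i). The bound you obtain on the Hessian of the \emph{original} map $\vec{u}$ at scale $s$ is indeed $C(n,Y,\eta)\sqrt{\delta}$ uniformly, but transferring this to $\vec{u}^s = T_s\vec{u}$ costs a factor of $\|T_s\|^2$. Property (ii) only gives $\|T_s\| \le (1+C\epsilon)(r/s)^{C\epsilon}$, which is unbounded as $s \searrow r_0$; hence $\|T_s\|^2 \sqrt{\delta}$ need not be $\le \epsilon$ once $r/r_0$ exceeds roughly $\exp(c/\sqrt{\delta})$. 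Your proposed inductive bootstrap does not resolve this: the per-step loss of $(1+C\epsilon)^2$ in the operator norm compounds over $\log_2(r/r_0)$ dyadic steps to exactly the same unbounded factor. Equivalently, the accumulated drift $\|A(s) - A(r)\|$ is controlled only by $\sum_j \int_{-10r_j^2}^{-r_j^2/10}\int |\nabla^2 u|^2\,\mathrm{d}\nu\,\mathrm{d}t \lesssim \sqrt{\delta}\,\log(r/s)$, which diverges, so $A(s)$ may degenerate and $T_s$ blow up. (Incidentally, (iii) is simpler than you suggest: the diagonal entries of $A(s)$ are automatically $\le 1+2\delta$ by the monotonicity $\square|\nabla u_i|^2 \le 0$, with no appeal to the non-$(k+1,\eta,s)$-splitting hypothesis.)

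The paper's proof is structurally different and circumvents this obstruction. It defines $r'$ as the infimum of scales where the transformation succeeds, proves (ii) and (iii) on $[r',r]$ essentially as you outline, and then argues by contradiction that $r' = r_0$. For the contradiction one sets $\vec{u}' = T_{r'}\vec{u}$, fixes a component $w = u_a'$, and decomposes $w = w_0 + \sum_i H_i y_i$ where $y_1,\dots,y_k$ are auxiliary almost-splitting directions and $w_0$ is $L^2$-orthogonal to them. The key is a frequency-versus-growth competition: the spectral gap $s^2\lambda_{k+1}(-s^2) \ge (1+c_0)/2$ coming from the non-$(k+1,\eta,s)$-splitting (Claim \ref{cla:eigen}) forces the frequency $F_{w_0}(t) = \tau\int|\nabla w_0|^2\,\mathrm{d}\nu_t / \int w_0^2\,\mathrm{d}\nu_t$ to exceed $(1+c_1)/2$ wherever the high-mode part of $w$ is not tiny, so $\int w_0^2\,\mathrm{d}\nu_t$ grows at least like $|t|^{1+c_1}$. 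On the other hand, the already-established (ii) on $[r',r]$ yields the polynomial growth $\int w^2\,\mathrm{d}\nu_{-s^2} \le Cs^2(s/r')^{C\epsilon}$ (Claim \ref{cla:growthrate}). Running these two rates against each other over a time interval of length $(\delta')^{-1/10}$ forces $\int|\nabla w_0|^2$ to be small at some time in $[-21,-20]$, which in turn pushes the frequency of $w$ itself down to $1/2 + \Psi(\delta')$ and gives $\int|\nabla^2 w|^2 \le \Psi(\delta')$ at scales just below $r'$---contradicting the minimality of $r'$. This frequency-versus-growth mechanism, rather than a direct transfer of the Hessian bound, is the missing idea in your proposal.
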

\begin{proof}
Without loss of generality, we assume $t_0=0$ and $\ep \le \ep(n, Y, \eta)$ is sufficiently small. We set $\mathrm{d}\nu_t=\mathrm{d}\nu_{x_0^*;t}=(4\pi\tau)^{-n/2}e^{-f}\,\mathrm{d}V_{g(t)}$, where $\tau=-t$. Throughout the proof, constants denoted by $C$ depend only on $n$, and may vary from line to line.
	
Define $r'$ to be the smallest number so that for any $s \in [r', r]$, there exists a lower triangular $k\times k$ matrix $T_s$ for which $\vec u^s:=T_s\vec u$ is a $(k,\ep,s)$-splitting map at $x_0^*$. 

By the definition of $r'$ and Lemma \ref{lem:compmatrix}, for any $r' \le s \le s' \le \min\{2s, r\}$, we have
	\begin{align}\label{equ:compmatrix}
	\rVert T^{-1}_{s}\circ T_{s'}-\mathrm{Id}\rVert+\rVert T^{-1}_{s'}\circ T_{s}-\mathrm{Id}\rVert\leq C \ep.
	\end{align}
It then follows that for any $r'\leq s_1\leq s_2\leq r$,
	\begin{align}\label{equ:trans1}
	\max \left\{\rVert T^{-1}_{s_1}\circ T_{s_2}\rVert,\rVert T^{-1}_{s_2}\circ T_{s_1}\rVert \right\}\leq (1+C\ep)\lc\frac{s_2}{s_1}\rc^{C \ep}.
	\end{align}
Indeed, it follows from \eqref{eq:elementary}, \eqref{equ:compmatrix} and an induction argument that for any $l \ge 1$
	\begin{align*}
	\rVert T^{-1}_{2^{-l}s}\circ T_{s}-\mathrm{Id}\rVert  \leq \lc 1+ C \ep \rc^l-1
	\end{align*}
	as long as $[2^{-l}s, s] \subset [r', r]$. Similarly, we have
		\begin{align*}
\rVert T^{-1}_{s}\circ T_{2^{-l}s}-\mathrm{Id}\rVert \leq \lc 1+ C \ep \rc^l-1.
	\end{align*}
	From the above two inequalities, \eqref{equ:trans1} easily follows.

On the other hand, since $\square |\na u_i|^2 \le 0$, it follows from \cite[Proposition 10.2]{fang2025RFlimit} that
	\begin{align}\label{equ:trans113}
		\aint_{-10s^2}^{-s^2/10}\int_M |\na u_i|^2\,\mathrm{d}\nu_t \mathrm{d}t\leq 1+2\delta.
	\end{align}
Moreover, for any $s \in [r', r]$ and $1\leq i,j\leq k$, we have
	\begin{align}\label{equ:trans114}
		\aint_{-10s^2}^{-s^2/10}\int_M \la\na u^s_i,\na u^s_j\ra\,\mathrm{d}\nu_t \mathrm{d}t=\delta_{ij}.
	\end{align}
	Combining \eqref{equ:trans113} and \eqref{equ:trans114}, it follows from Lemma \ref{lem:element} (ii) that each diagonal entry of $T_s$ has absolute value bounded below by $1/2$, provided $\delta \le \delta (n, Y)$.
	
Therefore, to complete the proof, it remains to show that $r'=r_0$. Suppose, for contradiction, that $r'>r_0$.

\begin{claim}\label{cla:eigen}
Let $0<\lambda_1(t)\leq\lambda_2(t)\leq\ldots$ be the nonzero eigenvalues of $\Delta_f=\Delta-\la \na\cdot,\na f\ra$ at time $t$, counted by multiplicities. Then, we have
	\begin{align}\label{equ:eigen00}
		\abs{s^2\lambda_{i}(-s^2)-\frac{1}{2}} \le \Psi(\delta) 
	\end{align} 
for any $s \in [r_0, r]$ and $1 \le i \le k$, and there exists $c_0=c_0(n, Y, \eta)>0$ such that
	\begin{align}\label{equ:eigen1}
		s^2\lambda_{k+1}(-s^2)\geq \frac{1+c_0}{2}.
	\end{align} 
\end{claim}

First, by Theorem \ref{poincareinequ}, we have $\lambda_i(t) \ge 1/2$. Thus, \eqref{equ:eigen00} follows from \cite[Proposition 10.7, Corollary C.5]{fang2025RFlimit}.

Suppose $s^2\lambda_{k+1}(-s^2)\le 1/2+ \zeta$, then it follows from \cite[Corollary C.4]{fang2025RFlimit} that we can find a $(k+1, C\zeta, s/\sqrt{10})$-splitting map at $x_0^*$, where $C$ is a universal constant. Thus, by Proposition \ref{equivalencesplitsymetric1} (i), we conclude that $x_0^*$ is $(k+1, \Psi(\delta+\zeta), s)$-splitting. This contradicts our assumption if $\zeta\leq\zeta(n,Y,\eta)$ and $\delta\leq\delta(n,Y,\eta)$, which completes the proof of Claim \ref{cla:eigen}.

Next, we set $\vec{u}':=T_{r'} \vec{u}=(u'_1, \ldots, u'_k)$.

\begin{claim}\label{cla:growthrate}
		For each $s\in [r',r]$ and $1\leq i\leq k$, we have
		\begin{align*}
			\int_M (u_i')^2\,\mathrm{d}\nu_{-s^2} \leq Cs^2\left(\frac{s}{r'}\right)^{C \ep}.
		\end{align*}
	\end{claim}
	
For $\vec{u}^s=(u^s_1, \ldots, u^s_k)$, by \cite[Proposition 10.2]{fang2025RFlimit}, we have
	\begin{align}\label{equ:trans115a}
		\int_M |\na u^s_i|^2\,\mathrm{d}\nu_{t}\leq 1+2\ep.
	\end{align}
for any $-10s^2\leq t\leq -s^2/10$. Thus, it follows from Theorem \ref{poincareinequ} that
	\begin{align}\label{equ:trans115}
		\int_M (u_i^s)^2\,\mathrm{d}\nu_{-s^2}\leq 2s^2\int_M |\na u^s_i|^2\,\mathrm{d}\nu_{t}\leq 4s^2.
	\end{align}
Thus, Claim \ref{cla:growthrate} follows from \eqref{equ:trans1}, \eqref{equ:trans115a} and \eqref{equ:trans115} directly.

Next, we set $\phi_i(t)$ for $i \in \mathbb N$ to be the orthonormal basis of eigenfunctions of $\Delta_f$ corresponding to $\lambda_i(t)$. In the following, we assume $r'=1$ for simplicity and choose constants $\delta' \ll L^{-1} \ll 1$, both of which are $\Psi(\delta)$ and will be determined later.

Since $x_0^*$ is $(\delta, s)$-selfsimilar and $(k,\delta,s)$-splitting for any $s\in [r_0,  r]$, it follows from Proposition \ref{equivalencesplitsymetric1} (i) and \cite[Proposition 12.21]{bamler2020structure} that we can find $\vec y=(y_1,\ldots,y_k):M\times [-L,0]\to\mathbb R^k$ such that
	\begin{enumerate}[label=\textnormal{(\alph{*})}]
		\item $\square y_i=0$;
		
		\item $y_i(x_0^*)=0$;
		
		
		\item $\displaystyle \abs{\int_M y_i^2 \,\mathrm{d}\nu_t -2\tau} \le \delta'$ for any $t \in [-L, -L^{-1}]$;
		
		\item $\displaystyle \int_M \left|\la\nabla y_i,\nabla y_j\ra -\delta_{ij}\right|\,\mathrm{d}\nu_t \leq \delta'$ for any $t \in [-L, -L^{-1}]$.
	\end{enumerate}	

For simplicity, we set
	\begin{align*}
y_i'=\frac{1}{\sqrt{2\tau}} y_i, \quad \text{and decompose} \quad y_i'=\sum_{l=1}^{\infty} Y_{i,l} \phi_l,
	\end{align*}
where the convergence is in the $L^2$-sense. Note that $Y_{i,l}$ depends on $t$.

	\begin{claim}\label{cla:frea}
For any $t \in [-L, -1]$ and $i,j \in \{1, \ldots, k\}$, we have
	\begin{align}\label{equ:trans8}
\abs{\sum_{l=1}^k Y_{i,l} Y_{j,l}-\delta_{ij} }+\sum_{l=k+1}^\infty Y_{i, l}^2 \le C(n, Y, \eta) \delta'.
	\end{align}
	\end{claim}	
By (c) and (d) above, we have
	\begin{align}\label{equ:ort1}
		\left|\int_M y_i^2\,\mathrm{d}\nu_t-2\tau\right|\leq \delta'\quad  \mathrm{and}\quad \left|\int_M |\na y_i|^2\,\mathrm{d}\nu_t-1\right|\leq \delta'.
	\end{align}
Since $y_i'=\sum_{l=1}^{\infty} Y_{i,l} \phi_l$, \eqref{equ:ort1} implies
	\begin{align*}
		\left|\sum_{l=1}^{\infty}Y_{i,l}^2-1\right|\leq \frac{\delta'}{2\tau}\quad  \mathrm{and}\quad \left|\sum_{l=1}^{\infty}\lambda_l Y_{i,l}^2-\frac{1}{2\tau}\right|\leq \frac{\delta'}{2\tau}.
			\end{align*}
	Combining these estimates with \eqref{equ:eigen1}, it follows that
	\begin{align}
		\frac{1+\delta'}{2\tau}\geq \sum_{l=1}^{\infty}\lambda_l Y_{i,l}^2\geq & \frac{1}{2\tau}\sum_{l=1}^kY_{i,l}^2+\frac{1+c_0}{2\tau}\sum_{l=k+1}^\infty Y_{i,l}^2 \notag \\
		\geq & \frac{1}{2\tau}\lc 1-\frac{\delta'}{2\tau}-\sum_{l=k+1}^\infty Y_{i,l}^2\rc+\frac{1+c_0}{2\tau}\sum_{l=k+1}^\infty Y_{i,l}^2, \label{equ:ort2a}
	\end{align}
	which implies
	\begin{align*}
	\sum_{l=k+1}^\infty Y_{i,l}^2\leq c_0^{-1} \delta'(1+1/2\tau) \le C(n, Y, \eta) \delta'.
	\end{align*}
Moreover, it follows from \eqref{equ:ort2a} again that
		\begin{align*}
1+\delta'\geq 1-\frac{\delta'}{2\tau}-\sum_{l=k+1}^\infty Y_{i,l}^2+2\tau\sum_{l=k+1}^\infty \lambda_l Y_{i,l}^2,
	\end{align*}
	which implies
		\begin{align}\label{equ:ort2b}
2\tau \sum_{l=k+1}^\infty \lambda_l Y_{i,l}^2 \le C(n, Y, \eta) \delta'.
	\end{align}
	
Next, it follows from (d) above that for $1 \le i \le j \le k$,
		\begin{align*}
\abs{2\tau \sum_{l=1}^\infty \lambda_l Y_{i,l} Y_{j,l}-\delta_{ij} }\le \delta',
	\end{align*}
which, when combined with \eqref{equ:ort2b}, yields
		\begin{align*}
\abs{2\tau \sum_{l=1}^k \lambda_l Y_{i,l} Y_{j,l}-\delta_{ij} }\le C(n, Y, \eta) \delta'.
	\end{align*}
	Since $\lambda_l$ is close to $\frac{1}{2\tau}$ by Claim \ref{cla:eigen}, it is easy to see that \eqref{equ:trans8} holds. This completes the proof of Claim \ref{cla:frea}.

For the rest of the proof, we consider $w:=u'_a$ for some $1 \le a \le k$ and write 
	\begin{align*}
w(t)=w_0(t)+\sum_{i=1}^k H_i(t)y_i(t)=w_0(t)+\sum_{i=1}^k H'_i(t)y'_i(t),
	\end{align*}
for any $t \in [-L, -1]$, where $H'_i(t)=\sqrt{2\tau} H_i(t)$ is chosen so that
	\begin{align}\label{equ:trans7}
		\int_M w_0 y_i\,\mathrm{d}\nu_t=0, \quad \forall 1\le i \le k.
	\end{align} 
Notice that by Claim \ref{cla:frea}, such choices are possible.

	\begin{claim}\label{cla:frebex}
For any $t \in [-L, -1]$ and $1 \le i \le k$,
\begin{align}\label{equ:ortex001}
\abs{H'_i(t)-\int_M wy'_i \, \mathrm{d}\nu_t} \le C(n, Y, \eta) \delta' \lc \int_M w^2\,\mathrm{d}\nu_t \rc^{\frac 1 2}.
	\end{align}
	\end{claim}	

By our definition of $H_i$, we have
	\begin{align*}
\int_M w y_i\, \mathrm{d}\nu_t=\sum_{j=1}^k H_j \int_M y_i y_j \,\mathrm{d}\nu_t=2\tau \sum_{j=1}^k \sum_{l=1}^\infty H_j Y_{i,l}Y_{j,l}
	\end{align*}
for any $t \in [-L, -1]$ and $1 \le i \le k$. By Claim \ref{cla:frea}, the proof of \eqref{equ:ortex001} is clear.

Next, we decompose
	\begin{align*}
w(t)=\sum_{i=1}^\infty J_i(t)\phi_i(t),
	\end{align*}
where the sum is in the $L^2$-sense. Note that by Claim \ref{cla:growthrate}, we have
	\begin{align}\label{equ:trans117}
		\sum_{i=1}^\infty J^2_i(t)=\int_M w^2(t)\,\mathrm{d}\nu_t\leq C\tau^{1+C \ep}\leq C\tau^{2}.
	\end{align}

	\begin{claim}\label{cla:freb}
For any $t \in [-L, -1]$,
\begin{align}\label{equ:ort4}
\int_M \lc \sum_{i=1}^k J_i \phi_i-H_i y_i \rc^2 \,\mathrm{d}\nu_t \le C(n, Y, \eta) \tau^2 \delta'
	\end{align}
	and
	\begin{align}\label{equ:ort4a}
\int_M \abs{\na \lc \sum_{i=1}^k J_i \phi_i-H_i y_i \rc}^2 \,\mathrm{d}\nu_t \le C(n, Y, \eta) \tau \delta'.
	\end{align}
	\end{claim}	

We compute
	\begin{align*}
\int_M w y'_i\,\mathrm{d}\nu_t=\sum_{l=1}^\infty \int_M w Y_{i,l} \phi_l\,\mathrm{d}\nu_t=\sum_{l=1}^\infty Y_{i,l} J_l.
	\end{align*}

By Claim \ref{cla:frea} and \eqref{equ:trans117}, 
	\begin{align*}
\abs{\sum_{l=k+1}^\infty Y_{i,l} J_l} \le \lc \sum_{l=k+1}^\infty Y^2_{i,l} \rc^{\frac 1 2} \lc \sum_{l=k+1}^\infty  J^2_l \rc^{\frac 1 2} \le C(n, Y, \eta) \tau (\delta')^{\frac 1 2}.
	\end{align*}
Thus, we obtain
	\begin{align*} 
\abs{\int_M w y'_i\,\mathrm{d}\nu_t-\sum_{l=1}^k Y_{i,l} J_l} \le C(n, Y, \eta) \tau (\delta')^{\frac 1 2}.
	\end{align*}
Combining this with Claim \ref{cla:frebex}, we have
	\begin{align} \label{eq:transextra002}
\abs{H_i'-\sum_{l=1}^k Y_{i,l} J_l} \le C(n, Y, \eta) \tau (\delta')^{\frac 1 2}.
	\end{align}

Thus, we obtain
	\begin{align} 
\sum_{i=1}^k H'_i y'_i=&\sum_{i=1}^k \lc H'_i-\sum_{l=1}^k Y_{i,l} J_l \rc y_i'+\sum_{i=1}^k \sum_{l=1}^k Y_{i,l} J_l y_i' \notag \\
=& \sum_{i=1}^k \lc H'_i-\sum_{l=1}^k Y_{i,l} J_l \rc y_i'+\sum_{i=1}^k \sum_{l=1}^k \sum_{j=1}^\infty Y_{i,l} Y_{i,j} J_l \phi_j \notag \\
=& \sum_{i=1}^k \lc H'_i-\sum_{l=1}^k Y_{i,l} J_l \rc y_i'+\sum_{i=1}^k \sum_{l=1}^k \sum_{j=1}^k Y_{i,l} Y_{i,j} J_l \phi_j++\sum_{i=1}^k \sum_{l=1}^k \sum_{j=k+1}^\infty Y_{i,l}Y_{i,j}J_l \phi_j \notag \\
=&\sum_{i=1}^k \lc H'_i-\sum_{l=1}^k Y_{i,l} J_l \rc y_i'+ \sum_{l=1}^k J_l \phi_l+\sum_{i=1}^k \sum_{l=1}^k \sum_{j=k+1}^\infty Y_{i,l}Y_{i,j}J_l \phi_j+\sum_{l=1}^k \sum_{j=1}^k \lc \sum_{i=1}^k Y_{i,l}Y_{i,j}-\delta_{jl} \rc J_l \phi_j. \label{eq:transextra003}
	\end{align}
Combining Claim \ref{cla:frea}, \eqref{eq:transextra002} and \eqref{eq:transextra003}, \eqref{equ:ort4} is easily derived. On the other hand, we have
	\begin{align*} 
\int_M |\na \phi_i|^2\,\mathrm{d}\nu_t=\lambda_i 
	\end{align*}
and, by \eqref{equ:ort1}, 
	\begin{align*} 
\left|\int_M |\na y'_i|^2\,\mathrm{d}\nu_t-\frac{1}{2\tau}\right|\leq \frac{\delta'}{2\tau}.
	\end{align*}
Thus, it follows from \eqref{equ:ort2b}, \eqref{eq:transextra002} and \eqref{eq:transextra003} that
	\begin{align*}
& \int_M \abs{\na \lc \sum_{i=1}^k J_i \phi_i-H'_i y'_i \rc}^2 \,\mathrm{d}\nu_t \\
\le & C(n, Y, \eta) \tau \delta'+C(n, Y, \eta) \tau^2 \sum_{j=k+1}^\infty Y_{i,j}^2 \lambda_j+C(n, Y, \eta)\tau (\delta')^2 \le C(n, Y, \eta) \tau \delta'.
	\end{align*}
In sum, the proof of Claim \ref{cla:freb} is complete.

	\begin{claim}\label{cla:freca8}
For any $t \in [-L, -1]$, we have
	\begin{align} \label{eq:transextra003a}
\abs{ \sum_{i=1}^k \sum_{j=k+1}^\infty \int_M J_j\phi_j H_i' y_i' \,\mathrm{d}\nu_t} \le C(n, Y, \eta) \tau^2 (\delta')^{\frac 1 2}
	\end{align}
and for any $\sigma>0$,
	\begin{align} \label{eq:transextra003b}
\abs{\sum_{i=1}^k \sum_{j=k+1}^\infty \lambda_j\int_M J_j\phi_j H_i' y_i' \,\mathrm{d}\nu_t} \le \sigma \sum_{j=k+1}^\infty \lambda_j J_j^2+\frac{C(n, Y, \eta) \tau \delta'}{\sigma}.
	\end{align}
	\end{claim}	

From \eqref{equ:trans8}, we have
	\begin{align*}
\abs{ \sum_{i=1}^k \sum_{j=k+1}^\infty \int_M J_j\phi_j H_i' y_i' \,\mathrm{d}\nu_t}=&\abs{ \sum_{i=1}^k \sum_{j=k+1}^\infty J_j H_i' Y_{i,j}}
 \le  C(n, Y, \eta) \tau \sum_{i=1}^k \sum_{j=k+1}^\infty |J_j|  |Y_{i,j}| \le   C(n, Y, \eta) \tau^2 (\delta')^{\frac 1 2},
\end{align*}
where we used the Cauchy-Schwarz inequality for the last line. Similarly, we have by \eqref{equ:ort2b},
	\begin{align*}
\abs{\sum_{i=1}^k \sum_{j=k+1}^\infty \lambda_j\int_M J_j\phi_j H_i' y_i' \,\mathrm{d}\nu_t} =\abs{\sum_{i=1}^k \sum_{j=k+1}^\infty \lambda_j Y_{i,j}J_j H_i'} \le \sigma \sum_{j=k+1}^\infty \lambda_j J_j^2+\frac{C(n, Y, \eta) \tau \delta'}{\sigma}.
	\end{align*}
In sum, the proof of Claim \ref{cla:freca8} is complete.

Next, we consider the frequency function of $w_0$:
		\begin{align*}
F_{w_0}(t):=\frac{\tau \int_M |\na w_0|^2 \, \mathrm{d}\nu_t}{ \int_M w_0^2 \,\mathrm{d}\nu_t}.
	\end{align*}
Recall that 
		\begin{align*}
w_0=\sum_{i=1}^k(J_i\phi_i-H_iy_i)+\sum_{i=k+1}^{\infty} J_i\phi_i=\sum_{i=1}^k(J_i\phi_i-H'_iy'_i)+\sum_{i=k+1}^{\infty} J_i\phi_i.
	\end{align*}
We compute
		\begin{align}
			\int_M w_0^2\,\mathrm{d}\nu_{t}=&\sum_{i=k+1}^{\infty}J_i^2 +\int_M \lc\sum_{i=1}^k J_i\phi_i-H'_iy'_i \rc^2\,\mathrm{d}\nu_{t}+2\sum_{i=1}^k \sum_{j=k+1}^\infty \int_M J_j\phi_j(J_i\phi_i-H'_iy'_i)\,\mathrm{d}\nu_{t}\notag \\
			=&\sum_{i=k+1}^{\infty}J_i^2 +\int_M \lc\sum_{i=1}^k J_i\phi_i-H'_iy'_i \rc^2\,\mathrm{d}\nu_{t}-2\sum_{i=1}^k \sum_{j=k+1}^\infty \int_M J_j \phi_j H'_i y'_i\,\mathrm{d}\nu_{t} \notag \\
			\leq& \sum_{i=k+1}^{\infty}J_i^2+C(n, Y, \eta) \tau^2 (\delta')^{\frac 1 2}, \label{eq:transextra004a}
		\end{align}
where we used \eqref{equ:ort4} and \eqref{eq:transextra003a} for the last inequality.

Moreover, we calculate
		\begin{align}
			\int_M |\na w_0|^2\,\mathrm{d}\nu_{t}= &\sum_{i=k+1}^{\infty}\lambda_iJ_i^2 +\int_M \left|\na \lc \sum_{i=1}^k J_i\phi_i-H'_iy'_i \rc\right|^2\,\mathrm{d}\nu_{t}-2 \sum_{i=1}^k \sum_{j=k+1}^\infty \int_M J_j\Delta_f\phi_j(J_i\phi_i-H'_iy'_i)\,\mathrm{d}\nu_{t}\notag \\
=&\sum_{i=k+1}^{\infty}\lambda_iJ_i^2 +\int_M \left|\na \lc \sum_{i=1}^k J_i\phi_i-H'_iy'_i \rc\right|^2\,\mathrm{d}\nu_{t}-2 \sum_{i=1}^k \sum_{j=k+1}^\infty \int_M \lambda_jJ_j\phi_j H'_iy'_i\,\mathrm{d}\nu_{t}\notag \\
			\geq&(1- \sigma)\sum_{i=k+1}^{\infty}\lambda_iJ_i^2 -C(n, Y, \eta) \tau \delta' \sigma^{-1}, \label{eq:transextra004b}
		\end{align} 
where we used \eqref{eq:transextra003b} for the last inequality.

Combining \eqref{eq:transextra004a} and \eqref{eq:transextra004b}, we obtain
		\begin{align}
F_{w_0}(t) \ge & \dfrac{(1- \sigma)\sum_{i=k+1}^{\infty}\lambda_i \tau J_i^2 -C(n, Y, \eta) \tau^2 \delta' \sigma^{-1}}{\sum_{i=k+1}^{\infty}J_i^2+C(n, Y, \eta) \tau^2 (\delta')^{\frac 1 2}} \notag \\
\ge & \dfrac{\frac{(1- \sigma)(1+c_0)}{2}\sum_{i=k+1}^{\infty}J_i^2 -C(n, Y, \eta) \tau^2 \delta' \sigma^{-1}}{\sum_{i=k+1}^{\infty}J_i^2+C(n, Y, \eta) \tau^2 (\delta')^{\frac 1 2}}, \label{eq:transextra005a}
		\end{align} 
where we used \eqref{equ:eigen1} for the last inequality. Now, we fix the constant $\sigma=\sigma(n, Y, \eta)$ so that
		\begin{align*}
\frac{(1- \sigma)(1+c_0)}{2}=\frac{1}{2}+\frac{c_0}{3}.
		\end{align*} 
Moreover, we choose $L=(\delta')^{-\frac{1}{10}}$. Thus, \eqref{eq:transextra005a} becomes
		\begin{align}
F_{w_0}(t) \ge &  \lc \frac{1}{2}+\frac{c_0}{3} \rc \frac{\sum_{i=k+1}^{\infty}J_i^2-C(n, Y, \eta) (\delta')^{\frac 4 5}}{\sum_{i=k+1}^{\infty}J_i^2+C(n, Y, \eta) \sum_{i=k+1}^{\infty} (\delta')^{\frac{3}{10}}}. \label{eq:transextra005b}
		\end{align} 

From \eqref{eq:transextra005b}, the next claim is immediate.

	\begin{claim}\label{cla:frec}
For any $t \in [-(\delta')^{-\frac{1}{10}}, -1]$, if $\delta' \le \delta'(n, Y, \eta)$ and $\sum_{i=k+1}^{\infty}J_i^2 \ge (\delta')^{\frac 1 5}$, then
		\begin{align*}
F_{w_0}(t) \ge \frac{1}{2}+\frac{c_0}{4}.
		\end{align*} 
	\end{claim}	

	Since $\square w=0$ and $\square \vec y=0$, we have 
			\begin{align*}
\square w_0=-\square \lc \sum_{i=1}^k H_iy_i \rc=-\sum_{i=1}^k \lc\frac{\mathrm{d}}{\mathrm{d}t}H_i\rc y_i,
		\end{align*} 
		which implies
	\begin{align}
		\frac{\mathrm{d}}{\mathrm{d}t}\int_M w_0^2\,\mathrm{d}\nu_t=&\int_M 2w _0 \square w_0-2|\nabla w_0|^2\,\mathrm{d}\nu_t \notag \\
		=&\int_M 2 -w_0 \lc\sum_{i=1}^k \lc\frac{\mathrm{d}}{\mathrm{d}t} H_i\rc y_i\rc-2|\nabla w_0|^2\,\mathrm{d}\nu_t
		=-2\int_M |\nabla w_0|^2\,\mathrm{d}\nu_t, \label{equ:trans9}
	\end{align}
	where in third equality, we used \eqref{equ:trans7}.

Next, we consider the following two cases.

\textbf{Case 1}: $\sum_{i=k+1}^{\infty}J_i^2 \ge (\delta')^{\frac 1 5}$ for all $t \in [-(\delta')^{-\frac{1}{10}}, -21]$.

In this case, we conclude from Claim \ref{cla:frec} that
	\begin{align}\label{equ:trans13}
		F_{}(t)\geq\frac{1+c_1}{2},
	\end{align}
	where $c_1=c_0/2>0$.	Set $I(t)=\int_M w_0^2\,\mathrm{d}\nu_t$. Then by \eqref{equ:trans9} and \eqref{equ:trans13}, we have
	\begin{align*}
		\frac{\mathrm{d}}{\mathrm{d}t}\log I(t)=\frac{-2\int_M |\nabla w_0|^2\,\mathrm{d}\nu_t}{\int_M w_0^2\,\mathrm{d}\nu_t} 
		= -\frac{2F_{w_0(t)}}{\tau}
		\leq \frac{1+c_1}{t}.
	\end{align*}
By integration, we have for all $t\in [-(\delta')^{-1/10},-21]$ and $s\in [-21,-20]$,
	\begin{align}\label{equ:trans14}
		(21)^{1+c_1}I(t)\geq I(s)(-t)^{1+c_1}.
	\end{align}
	
	On the other hand, by Claim \ref{cla:growthrate}, we have
	\begin{align}\label{equ:trans15}
		I(t)\leq \int_M w^2\,\mathrm{d}\nu_t\leq C(-t)^{1+C \ep}.
	\end{align}
	Therefore, by choosing $t=-(\delta')^{-1/10}$ in \eqref{equ:trans14} and \eqref{equ:trans15}, we obtain that for any $s\in [-21,-20]$,
	\begin{align*}
		I(s)\leq C(\delta')^{\frac{c_1-C \ep}{10}}.
	\end{align*}
		By \eqref{equ:trans9}, we can find $t_1 \in [-21,-20]$ such that
	\begin{align}\label{equ:trans112}
		\int_M |\nabla w_0|^2\,\mathrm{d}\nu_{t_1}\leq C (\delta')^{\frac{c_1-C \ep}{10}} \le C(\delta')^{\frac{c_1}{20}},
	\end{align}
since $\ep \le \ep(n, Y, \eta)$. Thus, we compute
	\begin{align*}
	F_{w}(t_1)=\dfrac{|t_1|\int_M |\na w|^2\,\mathrm{d}\nu_{t_1}}{\int_M w^2\,\mathrm{d}\nu_{t_1}} \le \dfrac{|t_1|\int_M \abs{\na \lc w_0+\sum_{i=1}^k H_i y_i \rc }^2\,\mathrm{d}\nu_{t_1}}{\int_M \lc \sum_{i=1}^k H_i y_i \rc^2\,\mathrm{d}\nu_{t_1}}.
	\end{align*}
Combining \eqref{equ:ort4}, \eqref{equ:ort4a} and \eqref{equ:trans112}, this implies
		\begin{align}
	F_{w}(t_1) \le  \dfrac{|t_1|\int_M \abs{\na \lc \sum_{i=1}^k J_i \phi_i \rc}^2\,\mathrm{d}\nu_{t_1}}{\int_M \lc \sum_{i=1}^k J_i \phi_i \rc^2\,\mathrm{d}\nu_{t_1}}+\Psi(\delta') \le \frac{1}{2}+\Psi(\delta'), \label{equ:trans12a}
	\end{align}
	where we used \eqref{equ:eigen00} for the last inequality since $\delta \ll \delta'$.
	
Then, one can use the same argument as in \cite[Proposition C.3]{fang2025RFlimit} to conclude that
		\begin{align} \label{equ:trans12exa}
\int_{-20}^{-1/100}\int_M |\na^2 w|^2\,\mathrm{d}\nu_t \mathrm{d}t\leq \Psi(\delta').
	\end{align}

\textbf{Case 2}: There exists $t_2\in [-(\delta')^{-1/10},-21]$ such that $\sum_{i=k+1}^{\infty}J_i^2 < (\delta')^{\frac 1 5}$ at $t_2$.

In this case, we have
	\begin{align*}
\int_M w_0^2\,\mathrm{d}\nu_{t_2}=&\int_M w^2\,\mathrm{d}\nu_{t_2}- \int_M \lc \sum_{i=1}^k H_i y_i \rc^2 \,\mathrm{d}\nu_{t_2} \\
=& \sum_{i=1}^k J_i^2(t_2)-\int_M \lc \sum_{i=1}^k H_i y_i \rc^2 \,\mathrm{d}\nu_{t_2}+\sum_{i=k+1}^\infty J_i^2(t_2) \\
\le & \lc \int_M \lc \sum_{i=1}^k J_i \phi_i-H_i y_i \rc^2 \,\mathrm{d}\nu_{t_2}  \rc^{\frac 1 2} \lc \int_M \lc \sum_{i=1}^k J_i \phi_i+H_i y_i \rc^2 \,\mathrm{d}\nu_{t_2}  \rc^{\frac 1 2}+(\delta')^{\frac 1 5} \\
\le & C(n,Y, \eta) (\delta')^{\frac{3}{10}}+(\delta')^{\frac 1 5} \le (\delta')^{\frac 1 6},
	\end{align*}
where we used \eqref{equ:trans117} and \eqref{equ:ort4}. 

	By \eqref{equ:trans9}, for any $t\in [t_2,-20]$,
	\begin{align*}
		\int_M w_0^2\,\mathrm{d}\nu_t\leq (\delta')^{\frac 1 6}.
	\end{align*} 
	Moreover, using \eqref{equ:trans9} again, we can find $t_3\in [-21,-20]$ such that
	\begin{align}\label{equ:trans10}
		\int_M |\nabla w_0|^2\,\mathrm{d}\nu_{t_3}\leq (\delta')^{\frac 1 6}.
	\end{align}
Similar to \eqref{equ:trans12a} and \eqref{equ:trans12exa}, we obtain from \eqref{equ:trans10} that
	\begin{align*}
		F_{w}(t_3)=\dfrac{|t_3|\int_M |\na w|^2\,\mathrm{d}\nu_{t_3}}{\int_M w^2\,\mathrm{d}\nu_{t_3}}\leq  \frac{1}{2}+\Psi(\delta').
	\end{align*}
and	
			\begin{align} \label{equ:trans12exb}
\int_{-20}^{-1/100}\int_M |\na^2 w|^2\,\mathrm{d}\nu_t \mathrm{d}t\leq \Psi(\delta').
	\end{align}
	
	Since $w=u'_a$, combining \eqref{equ:trans12exa} and \eqref{equ:trans12exb}, we conclude that for any $s \in [-1,-1/2]$, there exists a lower triangular $k \times k$ matrix $T_s$ such that $T_s \vec{u}'$ is a $(k, \ep, s)$-splitting map at $x_0^*$, provided that $\delta' \le \delta'(n, Y, \eta)$. This contradicts our assumption, thereby completing the proof.
\end{proof}
	
	Since Theorem \ref{thm:existtransRF} (ii) gives the boundedness of $T_s$ for any fixed scale $s$, we can obtain the same result for Ricci flow limit spaces by taking the limit.
	
	\begin{thm}\label{thm:existtransRFL}
	Given constants $\eta>0$ and $\epsilon>0$, there exist $C=C(n)$ such that the following holds.
	
	Let $(Z,d_Z,\t)$ be a noncollapsed Ricci flow limit space obtained as the limit of a sequence of closed Ricci flows in $\MM(n,Y,T)$. Suppose $z \in Z$ and for any $s\in [r_0,  r]$, $z$ is $(\delta, s)$-selfsimilar, $(k,\delta,s)$-splitting but not $(k+1,\eta,s)$-splitting. Assume that $\vec u=(u_1,\ldots,u_k): Z_{(\t(z)-10r^2,\t(z)]}\to\R^k$ is a $(k,\delta,r)$-splitting map. If $\delta \le \delta(n,Y,\eta, \ep)$, then for each $s\in [r_0,r]$, there exists a lower triangular $k\times k$ matrix $T_s$ such that
	\begin{enumerate}[label=\textnormal{(\roman{*})}]
		\item $\vec u^s=T_s\vec u$ is a $(k,\ep,s)$-splitting map at $z$.
		\item For any $r_0 \leq s_1\leq s_2\leq r$, we have
\begin{align*}
\max \left\{ \rVert T_{s_1}^{-1} \circ T_{s_2}\rVert, \rVert T_{s_2}^{-1} \circ T_{s_1}\rVert \right\} \le (1+C\ep)\lc\frac{s_2}{s_1}\rc^{C \ep}.
	\end{align*}
		\item Each diagonal entry of $T_s$ has absolute value bounded below by $1/2$.
	\end{enumerate}
	\end{thm}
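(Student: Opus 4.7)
The plan is to derive the result from its closed-flow counterpart, Theorem~\ref{thm:existtransRF}, via a $\hat C^\infty$-approximation argument, exploiting the fact that the matrices produced at each fixed scale belong to a finite-dimensional space and are uniformly bounded by the growth estimate (ii).

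First, by Definition~\ref{defnsplittingmap1}, fix a sequence $x_l^*\in\mathcal X^l$ with $x_l^*\to z$ in the Gromov--Hausdorff sense and a sequence of $(k,\delta,r)$-splitting maps $\vec u^l=(u_1^l,\ldots,u_k^l)$ at $x_l^*$ such that $\vec u^l\to\vec u$ in the sense described there. Let $\delta_0=\delta(n,Y,\eta,\ep)$ be the threshold given by Theorem~\ref{thm:existtransRF}, and assume without loss of generality that the hypothesis $\delta\le\delta(n,Y,\eta,\ep)$ of Theorem~\ref{thm:existtransRFL} is taken with $\delta\le\delta_0/4$.

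The first main step is to transfer the hypotheses from $z$ to $x_l^*$ uniformly in $s\in[r_0,r]$. For $(\delta,s)$-selfsimilarity, one uses Proposition~\ref{prop:Wconv1} together with the monotonicity of $\widetilde{\mathcal W}_z$ and $\mathcal W_{x_l^*}$ in $\tau$: the identity
\[
	\mathcal W_{x_l^*}(\tau)\xrightarrow[l\to\infty]{}\widetilde{\mathcal W}_z(\tau)
\]
holds for $\tau\notin J^z$, and since $J^z$ has measure zero, monotonicity upgrades this to a uniform bound $\mathcal W_{x_l^*}(\delta s^2)-\mathcal W_{x_l^*}(\delta^{-1}s^2)\le 2\delta$ valid for every $s\in[r_0,r]$ once $l$ is large enough (independent of $s$, by a standard compactness argument on the compact interval $[r_0,r]$). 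The $(k,\delta,s)$-splitting and the failure of $(k+1,\eta,s)$-splitting properties propagate by similar reasoning: since closeness in Definition~\ref{defnksplitting} is formulated over the fixed parabolic scale $[-10,0]$, and since the convergence $(\mathcal X^l,d_l^*,x_l^*,\t_l)\xrightarrow{\hat C^\infty}(Z,d_Z,z,\t)$ implies, after rescaling by $s^{-1}$, the analogous closeness for the rescaled flows, we obtain that $x_l^*$ is $(k,2\delta,s)$-splitting and not $(k+1,\eta/2,s)$-splitting for every $s\in[r_0,r]$, provided $l$ is sufficiently large. The uniformity in $s$ is ensured by the compactness of $[r_0,r]$ together with the openness of the splitting condition and the closedness of the failure of splitting under limits.

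Next, for each sufficiently large $l$, apply Theorem~\ref{thm:existtransRF} to obtain lower triangular $k\times k$ matrices $T_s^l$, $s\in[r_0,r]$, so that $T_s^l\vec u^l$ is a $(k,\ep,s)$-splitting map at $x_l^*$, the growth estimate
\[
	\max\{\|(T_{s_1}^l)^{-1}\circ T_{s_2}^l\|,\|(T_{s_2}^l)^{-1}\circ T_{s_1}^l\|\}\le (1+C\ep)(s_2/s_1)^{C\ep}
\]
holds for $r_0\le s_1\le s_2\le r$, and each diagonal entry of $T_s^l$ has absolute value at least $1/2$. Normalizing by $T_r^l$ and using (ii), we see that the entire family $\{T_s^l\}_{s\in[r_0,r]}$ is uniformly bounded in operator norm, with a uniform Lipschitz-type control in $\log s$. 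Hence, by the Arzel\`a--Ascoli theorem in the finite-dimensional space of lower triangular $k\times k$ matrices (or, alternatively, by a diagonal subsequence argument applied to a countable dense set of scales and then extended by the growth estimate), we may extract a subsequence such that $T_s^l\to T_s$ for every $s\in[r_0,r]$, with the limit $T_s$ lower triangular and satisfying (ii) and (iii).

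Finally, one passes to the limit in the splitting condition: by the same argument used in the proofs of Proposition~\ref{prop:Wconv1} and of convergence of splitting-map integrals (e.g.\ in Lemma~\ref{eigenconvRF} and Theorem~\ref{hessiandecayRFlimit}), the Hessian integral and the orthogonality identity for $T_s^l\vec u^l$ pass to the limit, yielding that $T_s\vec u$ is a $(k,\ep,s)$-splitting map at $z$ in the sense of Definition~\ref{defnsplittingmap1}. This establishes (i).

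The main technical obstacle is the uniformity in $s\in[r_0,r]$ of the hypothesis transfer in Step~2; the difficulty is that $J^z$ might be uncountable a priori and that the closeness condition defining the splitting strata is quantitative and scale-dependent. The remedy is the monotonicity of $\widetilde{\mathcal W}$ in $\tau$, which forces convergence at every scale after a harmless perturbation of parameters, combined with a compactness argument on $[r_0,r]$; once this is handled, the remainder of the proof is an exercise in extracting a limit of bounded matrices and passing the splitting conditions to the limit.
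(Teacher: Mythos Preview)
Your proposal is correct and follows essentially the same approach as the paper, which proves this theorem by a single remark preceding the statement: ``Since Theorem~\ref{thm:existtransRF} (ii) gives the boundedness of $T_s$ for any fixed scale $s$, we can obtain the same result for Ricci flow limit spaces by taking the limit.'' You have filled in the details the paper omits---transferring the hypotheses from $z$ to $x_l^*$ uniformly in $s\in[r_0,r]$, extracting convergent matrices via the growth bound (ii), and passing the splitting conditions to the limit using the machinery of Lemma~\ref{eigenconvRF} and Theorem~\ref{hessiandecayRFlimit}---and correctly flagged the uniformity-in-$s$ of the hypothesis transfer as the only point requiring care.
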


\newpage
\printindex

\bibliographystyle{alpha}
\bibliography{lojaref3}

\vskip10pt

Hanbing Fang, Mathematics Department, Stony Brook University, Stony Brook, NY 11794, United States; Email: hanbing.fang@stonybrook.edu;\\

Yu Li, Institute of Geometry and Physics, University of Science and Technology of China, No. 96 Jinzhai Road, Hefei, Anhui Province, 230026, China; Hefei National Laboratory, No. 5099 West Wangjiang Road, Hefei, Anhui Province, 230088, China; Email: yuli21@ustc.edu.cn. \\

\end{CJK}
\end{document}